\definecolor{bwgreen}{rgb}{0.183,1,0.5}
\definecolor{bwmagenta}{rgb}{0.7,0.0,0.1}
\definecolor{bwblue}{rgb}{0.317,0.161,1}
\DeclareFontFamily{OT1}{rsfs}{}
\DeclareFontShape{OT1}{rsfs}{n}{it}{<-> rsfs10}{}
\DeclareMathAlphabet{\mathscr}{OT1}{rsfs}{n}{it}
\DeclareFontFamily{OT1}{pzc}{}
\DeclareFontShape{OT1}{pzc}{n}{it}{<->s*[2.2]pzc}{}
\DeclareMathAlphabet{\mathpzc}{OT1}{pzc}{b}{sl}
\newcommand{\rmnum}[1]{\romannumeral #1}
\newcommand{\Rmnum}[1]{\expandafter\@slowromancap\romannumeral #1@}
\DeclareMathOperator{\id}{id}
\DeclareMathOperator{\Frac}{Frac}
\DeclareMathOperator{\ord}{ord} 
\DeclareMathOperator{\nil}{nil}
\newcommand*{\pr}{\rho}
\newcommand*{\ps}{\sigma}
\DeclareMathOperator{\Hom}{Hom}
\DeclareMathOperator{\End}{End}
\DeclareMathOperator{\Ext}{Ext}
\DeclareMathOperator{\Gal}{Gal}
\DeclareMathOperator{\GL}{GL}
\DeclareMathOperator{\SL}{SL}
\DeclareMathOperator{\Aut}{Aut}
\DeclareMathOperator{\Spec}{Spec}
\DeclareMathOperator{\et}{\acute{e}t}
\DeclareMathOperator{\dR}{dR}
\DeclareMathOperator{\Cris}{Cris}
\DeclareMathOperator{\cris}{cris}
\DeclareMathOperator{\tr}{tr}
\DeclareMathOperator{\Pic}{Pic}
\DeclareMathOperator{\Alb}{Alb}
\DeclareMathOperator{\sm}{sm}
\DeclareMathOperator{\Extrig}{Extrig}
\DeclareMathOperator{\Lie}{Lie}
\DeclareMathOperator{\Inf}{Inf}
\DeclareMathOperator{\reg}{reg}
\DeclareMathOperator{\red}{red}
\DeclareMathOperator{\Fil}{Fil}
\DeclareMathOperator{\Rep}{Rep}
\DeclareMathOperator{\sep}{sep}
\DeclareMathOperator{\perf}{rad}
\DeclareMathOperator{\res}{res}
\DeclareMathOperator{\Ig}{Ig}
\DeclareMathOperator{\im}{im}
\DeclareMathOperator{\rad}{rad}
\DeclareMathOperator{\Irr}{Irr}
\DeclareMathOperator{\bal}{bal.}
\DeclareMathOperator{\Ell}{Ell}
\DeclareMathOperator{\Cot}{Cot}
\DeclareMathOperator{\pdiv}{pdiv}
\DeclareMathOperator{\mult}{m}
\DeclareMathOperator{\loc}{ll}
\DeclareMathOperator{\proj}{proj}
\DeclareMathOperator{\incl}{incl}
\DeclareMathOperator{\Null}{null}
\DeclareMathOperator{\sh}{sh}
\DeclareMathOperator{\Sen}{Sen}
\newcommand*{\R}{\ensuremath{\mathbf{R}}}   
\renewcommand*{\c}{\ensuremath{\mathbf{C}}}              
\newcommand*{\Z}{\ensuremath{\mathbf{Z}}}               
\newcommand*{\Q}{\ensuremath{\mathbf{Q}}}                           
\newcommand*{\Qbar}{\overline{\Q}}
\newcommand*{\Kbar}{\overline{K}}    
\newcommand*{\Gm}{\ensuremath{{\mathbf{G}_m}}}   
\newcommand*{\Ga}{\ensuremath{{\mathbf{G}_a}}}   
\newcommand*{\m}{\mathfrak{M}}
\newcommand*{\s}{\mathfrak{S}}
\newcommand*{\A}{\ensuremath{\mathcal{A}}}
\newcommand*{\scrA}{\mathscr{A}}
\newcommand*{\B}{\mathcal{B}}
\newcommand*{\C}{\mathbf{C}}
\newcommand*{\E}{\mathscr{E}}     
\newcommand*{\F}{\mathbf{F}}
\newcommand*{\scrF}{\mathscr{F}}
\newcommand*{\G}{\mathcal{G}}
\newcommand*{\scrG}{\mathscr{G}}  
\newcommand*{\scrH}{\mathscr{H}}                           
\newcommand*{\h}{\mathscr{H}}                               
\newcommand*{\I}{\mathscr{I}}                               
\newcommand*{\J}{\mathcal{J}}
\renewcommand*{\L}{\mathscr{L}}
\newcommand*{\scrM}{\mathscr{M}}
\renewcommand*{\O}{\mathscr{O}}                    
\newcommand*{\X}{\mathcal{X}}     
\newcommand*{\Y}{\mathcal{Y}}
\newcommand*{\scrP}{\mathscr{P}}  
\newcommand*{\scrQ}{\mathscr{Q}}                             
\newcommand*{\scrHom}{\mathscr{H}\mathit{om}}      
\newcommand*{\scrExtrig}{\mathscr{E}\mathit{xtrig}}	
\newcommand*{\scrExt}{\mathscr{E}\mathit{xt}}               
\newcommand*{\scrLie}{\mathscr{L}\mathit{ie}}
\newcommand*{\D}{\ensuremath{\mathbf{D}}}
\newcommand*{\M}{\ensuremath{\mathbf{M}}}
\renewcommand*{\H}{\ensuremath{\mathfrak{H}}}
\newcommand*{\Dual}[1]{{{#1}^t}}
\newcommand*{\VDual}[1]{{{#1}^{\vee}}}
\renewcommand*{\int}{\ensuremath{\mathrm{int}}}
\newcommand*{\e}{\ensuremath{\mathbf{E}}}
\renewcommand*{\a}{\ensuremath{\mathbf{A}}}
\renewcommand*{\SS}{\ensuremath{\underline{\mathrm{ss}}}}
\renewcommand*{\u}[1]{\underline{#1}}
\renewcommand*{\o}[1]{\overline{#1}}
\newcommand*{\wh}[1]{\widehat{#1}}
\newcommand*{\wt}[1]{\widetilde{#1}}
\newcommand*{\nor}[1]{{#1}^{\mathrm{n}}}
\newcommand*{\tens}{\mathop{\otimes}\limits}
\newcommand*{\can}{\text{-}\mathrm{can}}
\newcommand*{\fiber}{\mathop{\times}\limits}
\DeclareMathOperator{\BT}{BT}
\theoremstyle{plain}
  \newtheorem{theorem}{Theorem}
  \newtheorem{proposition}[theorem]{Proposition}
  \newtheorem{lemma}[theorem]{Lemma}
  \newtheorem{corollary}[theorem]{Corollary}
\theoremstyle{definition}
  \newtheorem{definition}[theorem]{Definition}
\theoremstyle{remark}
  \newtheorem{example}[theorem]{Example}
  \newtheorem{remark}[theorem]{Remark}
  \newtheorem{warning}[theorem]{Warning}
\numberwithin{theorem}{subsection}  
\numberwithin{equation}{subsection}
\begin{document}
\title{The Geometry of Hida Families and $\Lambda$-adic Hodge Theory}

\author{Bryden Cais}
\address{University of Arizona, Tucson}
\curraddr{Department of Mathematics, 617 N. Santa Rita Ave., Tucson AZ. 85721}
\email{cais@math.arizona.edu}

\thanks{
	During the writing of this paper, the author was partially supported by an NSA Young Investigator grant
	(H98230-12-1-0238) and an NSF RTG (DMS-0838218).
	}

\dedicatory{To Haruzo Hida, on the occasion of his $60^{\text{th}}$ birthday.}

\subjclass[2010]{Primary: 11F33  Secondary: 11F67, 11G18, 11R23}
\keywords{Hida families, integral $p$-adic Hodge theory, de~Rham cohomology, crystalline cohomology.}
\date{\today}

\begin{abstract}
	We construct $\Lambda$-adic de Rham and crystalline analogues of Hida's
	ordinary $\Lambda$-adic \'etale cohomology, and by exploiting the geometry 
	of integral models of modular curves over the cyclotomic extension
	of $\Q_p$, we prove appropriate finiteness
	and control theorems in each case.  We then employ integral $p$-adic Hodge theory
	to prove $\Lambda$-adic
	comparison isomorphisms between our cohomologies and Hida's \'etale cohomology. 
	As applications of our work, we provide a ``cohomological" construction of the 
	family of $(\varphi,\Gamma)$-modules attached to Hida's ordinary $\Lambda$-adic \'etale
	cohomology by \cite{Dee}, and we give a new and purely geometric 
	proof of Hida's finitenes and control theorems.  We are also able to prove
	refinements of the main theorems in \cite{MW-Hida} and \cite{OhtaEichler};
	in particular, we prove that there is a canonical isomorphism between
	the module of ordinary $\Lambda$-adic cuspforms and the part
	of the crystalline cohomology of the Igusa tower on which Frobenius
	acts invertibly.
\end{abstract}

\maketitle

\section{Introduction}\label{intro}

\subsection{Motivation}

In his landmark papers \cite{HidaGalois} and \cite{HidaIwasawa}, Hida proved that the $p$-adic Galois representations 
attached to ordinary cuspidal Hecke eigenforms by Deligne (\cite{DeligneFormes}, \cite{CarayolReps})
interpolate $p$-adic analytically in the weight variable to a family of $p$-adic representations
whose specializations to integer weights $k\ge 2$ recover
the ``classical" Galois representations attached to weight $k$ cuspidal eigenforms.
Hida's work paved the way for a revolution---
from the pioneering work of Mazur
on Galois deformations to Coleman's construction of $p$-adic families of finite slope overconvergent 
modular forms---and began a trajectory of thought whose fruits include some of the most spectacular
achievements in modern number theory.

Hida's proof is constructive and has at its heart the \'etale cohomology of the tower
of modular curves $\{X_1(Np^r)\}_{r}$ over $\Q$.  More precisely,  
Hida considers the projective limit $H^1_{\et}:=\varprojlim_r H^1_{\et}(X_1(Np^r)_{\Qbar},\Z_p)$
(taken with respect to the trace mappings), which is naturally a module for the 
``big" $p$-adic Hecke algebra $\H^*:=\varprojlim_r \H_r^*$, which is itself an algebra
over the completed group ring $\Lambda:=\Z_p[\![1+p\Z_p]\!]\simeq \Z_p[\![T]\!]$
via the diamond operators.  
Using the idempotent $e^*\in \H^*$ attached to the (adjoint) Atkin operator $U_p^*$ 
to project to the part of $H^1_{\et}$ where $U_p^*$ acts invertibly,
Hida proves in \cite[Theorem 3.1]{HidaGalois} 
(via the comparison isomorphism between \'etale and topological cohomology and explicit 
calculations in group cohomology) that 
$e^* H^1_{\et}$ is finite and free as a module over $\Lambda$, and that the resulting Galois representation
\begin{equation*}
	\xymatrix{
		{\rho: G_{\Q}} \ar[r] & {\Aut_{\Lambda}(e^*H^1_{\et}) \simeq \GL_m(\Z_p[\![T]\!])} 
		}
\end{equation*}
$p$-adically interpolates the representations attached to ordinary cuspidal eigenforms.

By analyzing the geometry of the tower of modular curves, Mazur and Wiles \cite{MW-Hida}
were able to relate the inertial invariants of the local (at $p$) representation $\rho_p$ to the 
\'etale cohomology of the Igusa tower studied in \cite{MW-Analogies}, and in so doing 
proved\footnote{Mazur and Wiles treat only the case of tame level $N=1$.} 
that the ordinary filtration of the Galois representations attached
to ordinary cuspidal eigenforms interpolates:
both the inertial invariants and covariants 
are free of the same finite rank over $\Lambda$ and specialize to the corresponding subquotients
in integral weights $k\ge 2$.  
As an application, they provided examples of cuspforms $f$ and primes $p$
for which the specialization of the associated Hida family of Galois representations 
to weight $k=1$ is not Hodge-Tate,
and so does not arise from a weight one cuspform via the construction of Deligne-Serre
\cite{DeligneSerre}.
Shortly thereafter, Tilouine \cite{Tilouine} clarified the geometric underpinnings
of \cite{HidaGalois} and \cite{MW-Hida}, and removed most of the restrictions on the $p$-component of the
nebentypus of $f$.  Central to both \cite{MW-Hida} and \cite{Tilouine} is a careful study of
the tower of $p$-divisible groups attached to the ``good quotient" modular abelian varieties
introduced in \cite{MW-Iwasawa}.

With the advent of integral $p$-adic Hodge theory, 
and in view of the prominent role
it has played in furthering the trajectory initiated by Hida's work, 
it is natural to ask 
if one can construct Hodge--Tate, de~Rham and crystalline analogues of $e^*H^1_{\et}$,
and if so, to what extent the integral comparison isomorphsms of $p$-adic Hodge theory
can be made to work in $\Lambda$-adic families.
In \cite{OhtaEichler},
Ohta has addressed this question in the case of Hodge cohomology. 
Using the invariant differentials on the tower of $p$-divisible groups studied in \cite{MW-Hida} and \cite{Tilouine},
Ohta constructs a $\Lambda \wh{\otimes}_{\Z_p} \Z_p[\mu_{p^{\infty}}]$-module
from which, via an integral version of the Hodge--Tate comparison isomorphism \cite{Tate}
for ordinary $p$-divisible groups, he is able to recover the semisimplification
of the ``semilinear representation"
$\rho_{p}\wh{\otimes} \O_{\C_p}$, where 
$\C_p$ is, as usual, the $p$-adic completion of an algebraic closure of $\Q_p$.
Using Hida's results, Ohta proves that his Hodge cohomology analogue of
$e^*H^1_{\et}$ is free of finite rank over $\Lambda\wh{\otimes}_{\Z_p} \Z_p[\mu_{p^{\infty}}]$
and specializes to finite level exactly as one expects.  As applications
of his theory, Ohta provides a construction of two-variable $p$-adic $L$-functions
attached to families of ordinary cuspforms differing from that of Kitagawa \cite{Kitagawa},
and, in a subsequent paper \cite{Ohta2}, 
provides a new and streamlined proof of the theorem of Mazur--Wiles \cite{MW-Iwasawa}
(Iwasawa's Main Conjecture for $\Q$; see also \cite{WilesTotallyReal}).
We remark that Ohta's $\Lambda$-adic Hodge-Tate isomorphism is a crucial ingredient
in the forthcoming proof of Sharifi's conjectures \cite{SharifiConj}, \cite{SharifiEisenstein} 
due to Fukaya and Kato \cite{FukayaKato}.

\subsection{Results}\label{resultsintro}

In this paper, we construct the de Rham and crystalline
counterparts to Hida's ordinary $\Lambda$-adic \'etale cohomology and Ohta's 
$\Lambda$-adic Hodge cohomology, and we prove appropriate control 
and finiteness theorems in each case via a careful study of the geometry of
modular curves and abelian varieties.
We then
prove a suitable $\Lambda$-adic 
version of every integral comparison isomorphism one could hope for.
In particular, we are able to recover the entire family
of $p$-adic Galois representations $\rho_{p}$ (and not just its
semisimplification) from our $\Lambda$-adic crystalline cohomology.
As a byproduct of our work, we provide {\em geometric} constructions
of several of the ``cohomologically elusive" semi-linear algebra objects
in $p$-adic Hodge theory, including 
the family of \'etale $(\varphi,\Gamma)$-modules attached to $e^*H^1_{\et}$
by Dee \cite{Dee}.  As an application of our theory, we give a new 
and purely geometric proof of Hida's freeness and control theorems
for $e^*H^1_{\et}$. 

In order to survey our main results more precisely, we introduce
some notation.  Fix an algebraic closure $\Qbar_p$
of $\Q_p$ 
as well as a $p$-power compatible sequence 
$\{\varepsilon^{(r)}\}_{r\ge 0}$ of primitive $p^r$-th roots of unity in $\Qbar_p$.
We set $K_r:=\Q_p(\mu_{p^r})$ and $K_r':=K_r(\mu_N)$, and we
write $R_r$ and $R_r'$ for the rings of integers
in $K_r$ and $K_r'$, respectively.  
Denote by $\scrG_{\Q_p}:=\Gal(\Qbar_p/\Q_p)$ the absolute Galois group
and by $\scrH$
the kernel of the $p$-adic cyclotomic character
$\chi: \scrG_{\Q_p}\rightarrow \Z_p^{\times}$.
We write $\Gamma:=\scrG_{\Q_p}/\scrH \simeq \Gal(K_{\infty}/K_0)$ for the quotient and,
using that $K_0'/\Q_p$ is unramified, we canonically identify $\Gamma$
with $\Gal(K_{\infty}'/K_0')$.
We will denote by $\langle u\rangle$ (respectively $\langle v\rangle_N)$
the diamond operator\footnote{Note that we have $\langle u^{-1}\rangle=\langle u\rangle^*$
and $\langle v^{-1}\rangle_N = \langle v\rangle_N^*$, where $\langle\cdot\rangle^*$
and $\langle \cdot\rangle_N^*$
are the adjoint diamond operators; see \S\ref{tower}.  
}
in $\H^*$ attached to $u^{-1}\in \Z_p^{\times}$
(respectively $v^{-1}\in (\Z/N\Z)^{\times}$) and write 
$\Delta_r$ for the image of the restriction of $\langle\cdot\rangle :\Z_p^{\times}\hookrightarrow \H^*$
to $1+p^r\Z_p\subseteq \Z_p^{\times}$.  For convenience, we put $\Delta:=\Delta_1$,
and for any ring $A$ we write
$\Lambda_{A}:=\varprojlim_r A[\Delta/\Delta_r]$ for the completed group ring
on $\Delta$ over $A$; if $\varphi$ is an endomorphism of $A$, we again write $\varphi$
for the induced endomorphism of $\Lambda_A$ that acts as the identity on $\Delta$.
Finally, we denote by $X_r:=X_1(Np^r)$ the 
usual modular curve over $\Q$ classifying (generalized) elliptic curves
with a $[\mu_{Np^r}]$-structure,
and by $J_r:=J_1(Np^r)$ its Jacobian.

Our first task is to construct a de Rham analogue of Hida's $e^*H^1_{\et}$.
A na\"ive idea would be to mimic Hida's construction, using the
(relative) de Rham cohomology of $\Z_p$-integral models of the modular curves $X_r$
in place of $p$-adic \'etale cohomology.  However, this approach fails due
to the fact that $X_r$ has bad reduction
at $p$, so the relative de Rham
cohomology of integral models does not provide good
$\Z_p$-lattices in the de Rham cohomology of $X_r$ over $\Q_p$.
To address this problem, we use the canoninical integral structures in de Rham cohomology
studied in \cite{CaisDualizing} and the canonical integral model $\X_r$ of $X_r$
over $R_r$ associated to the moduli problem 
$([\bal\ \Gamma_1(p^r)]^{\varepsilon^{(r)}\can};\ [\mu_N])$
\cite{KM} to construct well-behaved integral ``de Rham cohomology" for the tower of modular
curves.  For each $r$, we obtain a short exact sequence of free $R_r$-modules
with semilinear $\Gamma$-action and comuting $\H_r^*$-action
\begin{equation}
	\xymatrix{
		0\ar[r] & {H^0(\X_r, \omega_{\X_r/R_r})} \ar[r] & {H^1(\X_r/R_r)} \ar[r] & {H^1(\X_r,\O_{\X_r})}
		\ar[r] & 0 
	}\label{finiteleveldRseq}
\end{equation}
which is co(ntra)variantly functorial in finite $K_r$-morphisms of the generic fiber $X_r$,
and whose scalar extension to $K_r$ recovers the Hodge filtration of $H^1_{\dR}(X_r/K_r)$.
Extending scalars to $R_{\infty}$ and taking projective limits, 
we obtain a short exact sequence of $\Lambda_{R_{\infty}}$-modules with semilinear $\Gamma$-action
and commuting linear $\H^*$-action 
\begin{equation}
	\xymatrix{
		0\ar[r] & {H^0(\omega)} \ar[r] & {H^1_{\dR}} \ar[r] & {H^1(\O)} 
	}.\label{dRseq}
\end{equation}
Our first main result (see Theorem \ref{main}) is that the ordinary part of (\ref{dRseq}) 
is the correct de Rham analogue of Hida's ordinary $\Lambda$-adic \'etale cohomology:

\begin{theorem}
	There is a canonical short exact sequence of finite free $\Lambda_{R_{\infty}}$-modules
	with semilinear $\Gamma$-action and commuting linear $\H^*$-action
	\begin{equation}
		\xymatrix{
		0\ar[r] & {e^*H^0(\omega)} \ar[r] & {e^*H^1_{\dR}} \ar[r] & {e^*H^1(\O)} \ar[r] & 0
	}.\label{orddRseq}
	\end{equation}
	As a $\Lambda_{R_{\infty}}$-module, $e^*H^1_{\dR}$ is free of rank $2d$, while each of the flanking terms 
	in $(\ref{orddRseq})$ is free of rank $d$, for $d=\sum_{k=3}^{p+1}\dim_{\F_p} S_k(\Gamma_1(N);\F_p)^{\ord}$.
	Applying $\otimes_{\Lambda_{R_{\infty}}} R_{\infty}[\Delta/\Delta_r]$ to $(\ref{orddRseq})$
	recovers the ordinary part of the scalar extension of $(\ref{finiteleveldRseq})$ to $R_{\infty}$.
\end{theorem}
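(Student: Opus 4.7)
The plan is to construct $(\ref{orddRseq})$ as the image of the ordinary idempotent $e^* = \lim_n (U_p^*)^{n!}$ on the inverse limit of the scalar extensions of $(\ref{finiteleveldRseq})$ to $R_\infty$, and then to prove exactness, a control theorem, finite freeness, and the rank formula in that order. At each finite level $r$ the idempotent $e^*$ splits off a direct summand from $(\ref{finiteleveldRseq})$, yielding a short exact sequence of finite $R_r$-modules. Each term becomes a finite $R_\infty[\Delta/\Delta_r]$-module after scalar extension, so the corresponding inverse system satisfies Mittag--Leffler with respect to the trace maps $\X_{r+1} \to \X_r$, and the projective limit produces the canonical short exact sequence $(\ref{orddRseq})$.

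The heart of the proof is a control theorem: for each $r \ge 1$ and each cohomology $H^i$ appearing in $(\ref{dRseq})$, the natural base-change map
\[
e^* H^i \otimes_{\Lambda_{R_\infty}} R_\infty[\Delta/\Delta_r] \longrightarrow e^*\bigl(H^i(\X_r/R_r) \otimes_{R_r} R_\infty\bigr)
\]
is an isomorphism. The approach is to prove that the trace maps on ordinary parts are surjective, via the geometric factorization ``trace $\circ$ inclusion $= U_p^*$'' of the $U_p^*$-correspondence on $\X_r$: since $U_p^*$ acts invertibly on the ordinary part, the trace is forced to be surjective there. The kernel of the trace on ordinary parts between consecutive levels can then be identified with $(\gamma_r - 1)\cdot e^* H^i(\X_{r+1}/R_{r+1})$ for $\gamma_r$ a topological generator of $\Delta_r/\Delta_{r+1}$, yielding the standard Iwasawa-theoretic description.

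Granted control, finite freeness over $\Lambda_{R_\infty} = \varprojlim_r R_\infty[\Delta/\Delta_r]$ follows by a completed-Nakayama argument: one verifies the finite freeness of each specialization $e^*H^i(\X_r/R_r) \otimes_{R_r} R_\infty$ over $R_\infty[\Delta/\Delta_r]$ (which reduces via faithfully flat descent to freeness over $R_r[\Delta/\Delta_r]$), and then the projective limit is free over $\Lambda_{R_\infty}$ of the same rank. The rank is computed at the bottom of the tower by identifying the residue-field fiber of $e^*H^0(\omega)$ with the $e^*$-part of $H^0(\overline{\X}_1,\omega)$ in characteristic $p$; multiplication by the Hasse invariant induces a decomposition into $\bigoplus_{k=3}^{p+1} S_k(\Gamma_1(N);\F_p)^{\ord}$, yielding rank $d$. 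The rank of $e^*H^1(\O)$ is likewise $d$ by a parallel bottom-level analysis (or by Serre duality together with the Atkin--Lehner involution $W_p$ interchanging $e$ and $e^*$), and hence the middle term $e^*H^1_{\dR}$ has rank $2d$ by additivity of rank in the short exact sequence $(\ref{orddRseq})$.

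The main obstacle will be establishing the control theorem integrally. The difficulty is that the integral models $\X_r$ have genuinely non-trivial geometry at the supersingular locus, and neither the integral $U_p^*$-correspondence nor its factorization through the trace is transparent at the integral level (as opposed to the generic fiber). One must analyze the integral $U_p^*$-correspondence through its description on the Katz--Mazur moduli problem $([\bal\ \Gamma_1(p^r)]^{\varepsilon^{(r)}\can};\ [\mu_N])$, verify that the canonical integral de Rham cohomology from \cite{CaisDualizing} is stable under $U_p^*$ and compatible with the trace maps in the tower, and check that the Hodge filtration is preserved by these operations. Once this integral compatibility is established, the remaining arguments are standard Iwasawa-theoretic and Nakayama techniques.
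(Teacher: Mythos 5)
Your overall framework---apply $e^*$ at finite level, pass to the projective limit via the trace maps, establish control, and then invoke a Nakayama argument to lift freeness from $\Lambda_{R_\infty}$-quotients---is the right one, and it is essentially the framework encoded in the paper's Lemma \ref{Technical}. But the proposal has a genuine gap at the step that is actually the technical heart of the theorem, namely the claim that each $e^*H^i(\X_r/R_r)$ is \emph{free as an $R_r[\Delta/\Delta_r]$-module}.

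You write that freeness over $R_\infty[\Delta/\Delta_r]$ ``reduces via faithfully flat descent to freeness over $R_r[\Delta/\Delta_r]$,'' but you give no argument for the latter. This is not a standard fact: $\Delta/\Delta_r$ is a cyclic $p$-group, $R_r$ has residue characteristic $p$, and freeness of a module over $R_r[\Delta/\Delta_r]$ is a strong cohomological-triviality statement that must be proven. Moreover your proposed route to control---surjectivity of the traces via the factorization ``trace $\circ$ inclusion $= U_p^*$'' plus identification of the kernel with $(\gamma_r-1)\cdot e^*H^i$---is incomplete: the trace at level $r+1$ to level $r$ always factors through the $\Delta_r/\Delta_{r+1}$-coinvariants, so its kernel automatically \emph{contains} $(\gamma_r-1)M$; the substantive claim is that equality holds, i.e.\ that the coinvariants inject into level $r$. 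Surjectivity of the trace does not give this. In fact the inclusion $\ker(\pr_*) = (\gamma_r-1)M$ together with surjectivity \emph{is} freeness (a finite $R_r[\Delta/\Delta_r]$-module with free coinvariants of the right rank is free by Nakayama), so you are implicitly assuming the very thing you need to prove.

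What the paper actually does to close this gap is reduce modulo $p$ and exploit the geometry of the special fiber $\o{\X}_r$. The idempotent $e^*$ commutes with $\otimes_{R_r}\F_p$, and the mod-$p$ exact sequence is then identified---via Rosenlicht's description of the dualizing sheaf, the Katz--Mazur description of $\o{\X}_r$ as a union of Igusa curves crossing at the supersingular points, and Ulmer's explicit formula for $\o{\pi}_1, \o{\pi}_2$ on components---with cohomology of the Igusa tower (Proposition \ref{charpord} and Corollary \ref{SplitIgusa}). Freeness over $\F_p[\Delta/\Delta_r]$ is then a consequence of \emph{Nakajima's theorem} on $V$-ordinary differentials for branched $p$-group covers of curves, applied to the cover $\Ig_r \to \Ig_1$, and surjectivity of the reduced trace maps follows from an injectivity statement on Picard schemes (Lemma \ref{MW}). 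These are the ingredients that verify hypotheses (\ref{freehyp}) and (\ref{surjhyp}) of Lemma \ref{Technical}, and they are entirely absent from your proposal. Your comment about the integrality of the $U_p^*$-correspondence being ``the main obstacle'' is a real concern but points in the wrong direction: the paper sidesteps it by making all the hard estimates in characteristic $p$, where the Igusa-curve geometry is tractable and Nakajima's theorem is available. Your rank computation via the Hasse invariant is in the right spirit and agrees with Proposition \ref{MFmodp}, but note it only computes dimension over the residue field; it does not by itself yield the required rank over the group ring $\F_p[\Delta/\Delta_r]$, which again is what Nakajima provides.
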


We then show that the $\Lambda_{R_{\infty}}$-adic Hodge filtration (\ref{orddRseq}) is very nearly ``auto dual".
To state our duality result more succintly,
for any ring homomorphism $A\rightarrow B$,
we will write $(\cdot)_B:=(\cdot)\otimes_A B$ and $(\cdot)_B^{\vee}:=\Hom_B((\cdot)\otimes_A B , B)$ for these
functors from $A$-modules to $B$-modules.  If $G$ is any group of automorphisms of $A$ and $M$
is an $A$-module with a semilinear action of $G$, for any ``crossed" 
homomorphism\footnote{That is, $\psi(\sigma\tau) = \psi(\sigma)\cdot\sigma\psi(\tau)$ for all $\sigma,\tau\in\Gamma$,} 
$\psi:G\rightarrow A^{\times}$ we will write $M(\psi)$ for the 
$A$-module $M$ with ``twisted" semilinear $G$-action given by $g\cdot m:=\psi(g)g m$.
Our duality theorem is (see Proposition \ref{dRDuality}):

\begin{theorem}\label{dRdualityThm}
	The natural cup-product auto-duality of 
	$(\ref{finiteleveldRseq})$ over $R_r':=R_r[\mu_N]$ induces a canonical
	$\Lambda_{R_{\infty}'}$-linear and $\H^*$-equivariant isomorphism 
	of exact sequences
	\begin{equation*}
		\xymatrix{
				0\ar[r] & {e^*H^0(\omega)(\langle\chi\rangle\langle a\rangle_N)_{\Lambda_{R_{\infty}'}}}
				\ar[r]\ar[d]^-{\simeq} & 
				{e^*H^1_{\dR}(\langle\chi\rangle\langle a\rangle_N)_{\Lambda_{R_{\infty}'}}} 
				\ar[r]\ar[d]^-{\simeq} & 
				{e^*H^1(\O)(\langle\chi\rangle\langle a\rangle_N)_{\Lambda_{R_{\infty}'}}} 
				\ar[r]\ar[d]^-{\simeq} & 0\\
	0\ar[r] & {(e^*H^1(\O))^{\vee}_{\Lambda_{R_{\infty}'}}} \ar[r] & 
	{({e^*H^1_{\dR}})^{\vee}_{\Lambda_{R_{\infty}'}}} \ar[r] & 
	{(e^*H^0(\omega))^{\vee}_{\Lambda_{R_{\infty}'}}} \ar[r] & 0
		}
	\end{equation*}
	that is compatible with the natural action of $\Gamma \times \Gal(K_0'/K_0)\simeq \Gal(K_{\infty}'/K_0)$ 
	on the bottom row and the twist of the natural action on the top row by the $\H^*$-valued character 
	$\langle \chi\rangle \langle a\rangle_N$,
	where 
	$a(\gamma) \in (\Z/N\Z)^{\times}$ is
	determined for $\gamma\in \Gal(K_0'/K_0)$ by 
	$\zeta^{a(\gamma)}=\gamma\zeta$ for every $N$-th root of unity $\zeta$.
\end{theorem}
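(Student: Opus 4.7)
The plan is to derive the $\Lambda$-adic duality by taking the projective limit of the cup-product auto-duality at each finite level $r$ and restricting to the $e^*$-ordinary part.

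First, at finite level, Grothendieck--Serre duality for the proper smooth $R_r$-curve $\X_r$ supplies a perfect alternating pairing
\begin{equation*}
H^1(\X_r/R_r) \otimes_{R_r} H^1(\X_r/R_r) \longrightarrow R_r
\end{equation*}
under which $H^0(\X_r,\omega_{\X_r/R_r})$ is its own annihilator; in particular, the scalar extension of (\ref{finiteleveldRseq}) to $R_r'$ becomes auto-dual with the outer terms interchanged. I would next analyze its equivariance. Cup product is adjoint to the transpose of correspondences, so each Hecke operator $T$ has cup-product adjoint $T^*$; in particular $U_p$ and $U_p^*$ are cup-product adjoints, and hence so are the idempotents $e$ and $e^*$. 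The relations $\langle u\rangle^* = \langle u^{-1}\rangle$ and $\langle v\rangle_N^* = \langle v^{-1}\rangle_N$, combined with the Weil-pairing description of the action of the diamond operators on the $[\mu_{Np^r}]$-structure, then identify the cup-product adjoint action of the diamond operators with a twist of the natural action. For $\sigma \in \Gal(K_\infty'/K_0)$, the interaction of the pairing with the chosen compatibility of $\X_r$ with $\varepsilon^{(r)}$ yields exactly the character $\langle\chi\rangle\langle a\rangle_N$: the cyclotomic factor arises from the $\mu_{p^r}$-part of the level structure, while $\langle a\rangle_N$ comes from the tame $\mu_N$-part.

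Second, I would pass to the limit in $r$. The finite-level pairings are compatible with the degeneracy map $\pi : \X_{r+1} \to \X_r$ via
\begin{equation*}
\langle \pi_*\alpha,\beta\rangle_r = \langle \alpha,\pi^*\beta\rangle_{r+1},
\end{equation*}
so they assemble into a pairing of pro-systems with respect to $\pi_*$. Applying $e^*$ on both sides---using that $e^*$ is the cup-product adjoint of $e$, and that $e^*H^1_{\dR}$ is finite free over $\Lambda_{R_\infty}$ by the preceding theorem---produces an honest perfect $\Lambda_{R_\infty'}$-bilinear pairing on $(e^*H^1_{\dR})_{\Lambda_{R_\infty'}}$; passing to $\Lambda_{R_\infty'}$-duals then gives the displayed isomorphism of exact sequences. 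The equivariances from the first paragraph are preserved in the limit, yielding the $\H^*$-equivariance and the asserted twist on the top row.

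The main obstacle I anticipate lies in the equivariance analysis: pinning down the twist to be exactly $\langle\chi\rangle\langle a\rangle_N$ (rather than, say, its inverse or a conjugate) requires careful reconciliation of the geometric Galois action on $\X_r$ with its moduli-theoretic description via $\varepsilon^{(r)}$ and $\mu_N$, together with the translation of the Weil-pairing interactions of the diamond operators into the $\H^*$-valued character. This is a matter of delicate bookkeeping rather than of any conceptual novelty.
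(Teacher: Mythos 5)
There is a genuine gap in the first step of your argument, and it is not just bookkeeping. You observe correctly that cup product interchanges $T$ and $T^*$, so that $e$ and $e^*$ are cup-product adjoints. But this fact implies that the raw cup-product pairing identifies $e^*H^1_{\dR,r}$ with the $R_r$-dual of $eH^1_{\dR,r}$ --- \emph{not} with its own dual. Indeed, if $(\cdot,\cdot)_r$ denotes cup product then $(e^*x, e^*y)=(x, e e^* y)$, and the restriction of $(\cdot,\cdot)_r$ to $e^*H^1_{\dR,r}\times e^*H^1_{\dR,r}$ is in general degenerate: the ordinary and anti-ordinary summands of $H^1_{\dR,r}$ annihilate themselves and pair perfectly only against each other. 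Thus ``applying $e^*$ on both sides'' does not produce a perfect pairing on $e^*H^1_{\dR,r}$, and your limit construction would collapse at the very start. The paper repairs this by inserting the Atkin--Lehner operator: it works with the twisted pairing $\langle x,y\rangle_r := (x, w_r {U_p^*}^r y)$ on $(e^*H^1_{\dR,r})_{R_r'}$. Since $w_r$ conjugates $\H_r$ onto $\H_r^*$ (Proposition \ref{AtkinInterchange}), $w_r$ carries $eH^1_{\dR,r}$ to $e^*H^1_{\dR,r}$, so the twisted pairing \emph{is} perfect and self-adjoint for $\H_r^*$.

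A second, related gap is in your compatibility claim across the tower. You invoke the adjunction $\langle \pi_*\alpha,\beta\rangle_r = \langle\alpha,\pi^*\beta\rangle_{r+1}$, but to produce a $\Lambda_{R_\infty'}$-perfect pairing on the projective limit taken with respect to $\pr_*$ on \emph{both} factors, one needs the relation
\begin{equation*}
\langle \rho_* x,\rho_* y\rangle_{r} = \sum_{\delta\in \Delta_{r}/\Delta_{r+1}} \langle x,\langle\delta\rangle^{-1} y\rangle_{r+1},
\end{equation*}
which is the hypothesis (\ref{pairingchangeinr}) of Lemma \ref{LambdaDuality}. This is precisely why the paper's twist also carries the factor ${U_p^*}^r$: using the relations $\pr w_{r+1}=w_r\ps$, $\ps w_{r+1}=\langle p\rangle_N w_r\pr$ of Proposition \ref{ALinv} together with the identity $\ps^*\pr_* = U_p^*\sum_\delta\langle\delta\rangle^{*-1}$ coming from Lemma \ref{MFtraceLem}, one verifies exactly this compatibility for $\langle x,y\rangle_r = (x, w_r {U_p^*}^r y)$. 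Finally, the precise Galois twist $\langle\chi\rangle\langle a\rangle_N$ is read off not from the Weil pairing but from the commutation relation $(\gamma^*w_r)\gamma = \gamma w_r\langle \chi(\gamma)\rangle^{-1}\langle a(\gamma)\rangle_N^{-1}$ of Proposition \ref{ALinv}, applied to the twisted pairing. In short, the Atkin--Lehner correction to the cup product is the essential ingredient, and your proposal does not supply it.
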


We moreover prove that, as one would expect, the $\Lambda_{R_{\infty}}$-module $e^*H^0(\omega)$ 
is canonically isomorphic to the  module $eS(N,\Lambda_{R_{\infty}})$ of ordinary $\Lambda_{R_{\infty}}$-adic
cusp forms of tame level $N$; see Corollary \ref{LambdaFormsRelation}.

To go further, we study the tower of $p$-divisible groups attached to the ``good quotient" modular abelian
varieties introduced by Mazur-Wiles \cite{MW-Iwasawa}. To avoid technical complications with logarithmic
$p$-divisible groups, following \cite{MW-Hida} and \cite{OhtaEichler}, we will henceforth remove the trivial tame character by working with the sub-idempotent ${e^*}'$ of $e^*$ corresponding to projection to the part
where $\mu_{p-1}\subseteq \Z_p^{\times}\simeq \Delta$ acts {\em non}-trivially.
As is well-known (e.g. \cite[\S9]{HidaGalois} and \cite[Chapter 3, \S2]{MW-Iwasawa}), the $p$-divisible group $G_r:={e^*}'J_r[p^{\infty}]$
over $\Q$ extends to a $p$-divisible group $\G_r$ over $R_r$, and we write 
$\o{\G}_r:=\G_r\times_{R_r} \F_p$ for its special fiber.  Denoting by $\D(\cdot)$
the contravariant Dieudonn\'e module functor on $p$-divisible groups over $\F_p$,
we form the projective limits
\begin{equation}
	\D_{\infty}^{\star}:=\varprojlim_r \D(\o{\G}_r^{\star})\quad\text{for}\quad \star\in \{\et,\mult,\Null\},
	\label{DlimitsDef}
\end{equation}
taken along the mappings induced by 
$\o{\G}_r\rightarrow \o{\G}_{r+1}$.  Each of these is naturally a $\Lambda$-module
equipped with linear (!) Frobenius $F$ and Verscheibung $V$ morphisms satisfying $FV=VF=p$,
as well as a linear action of $\H^*$ and a ``geometric inertia" action of $\Gamma$, which reflects
the fact that the generic fiber of $\G_r$ descends to $\Q_p$.
The $\Lambda$-modules (\ref{DlimitsDef}) have the expected structure (see Theorem \ref{MainDieudonne}):

\begin{theorem}\label{DieudonneMainThm}
	There is a canonical split short exact sequence of finite and free $\Lambda$-modules	
	\begin{equation}
		\xymatrix{
			0 \ar[r] & {\D^{\et}_{\infty}} \ar[r] & {\D_{\infty}} \ar[r] & {\D_{\infty}^{\mult}} \ar[r] & 0.
			}.\label{Dieudonneseq}
	\end{equation} 
	with linear $\H^*$ and $\Gamma$-actions.
	As a $\Lambda$-module, $\D_{\infty}$ is free of rank $2d'$, while $\D_{\infty}^{\et}$
	and $\D_{\infty}^{\mult}$ are free of rank $d'$, where $d':=\sum_{k=3}^p \dim_{\F_p} S_k(\Gamma_1(N);\F_p)^{\ord}$.
	For $\star\in \{\mult,\et,\Null\}$, there are canonical isomorphisms
	\begin{equation*}
		\D_{\infty}^{\star}\tens_{\Lambda} \Z_p[\Delta/\Delta_r] \simeq \D(\o{\G}_r^{\star})
	\end{equation*}
	which are compatible with the extra structures.
	Via the canonical splitting of $(\ref{Dieudonneseq})$, $\D_{\infty}^{\star}$ for $\star=\et$
	$($respetively $\star=\mult$$)$ is
	identified with the maximal subpace of $\D_{\infty}$ on which $F$ $($respectively $V$$)$ acts
	invertibly .	
	The Hecke operator $U_p^*\in \H^*$ acts as $F$ on $\D_{\infty}^{\et}$ and as $\langle p\rangle_N V$ on 
	$\D_{\infty}^{\mult}$,
	while $\Gamma$ acts trivially on $\D_{\infty}^{\et}$ and via $\langle \chi(\cdot)\rangle^{-1}$
	on $\D_{\infty}^{\mult}$.
\end{theorem}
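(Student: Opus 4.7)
The plan is to reduce the structural result about $\D_\infty^{\star}$ to a level-by-level analysis of $\o{\G}_r$ together with a control theorem bridging the finite levels to the projective limit. Since ${e^*}'$ cuts out the ordinary part of $J_r[p^\infty]$, the $p$-divisible group $\o{\G}_r$ is ordinary over the perfect field $\F_p$, and its connected-\'etale sequence
\[
0 \longrightarrow \o{\G}_r^{\mult} \longrightarrow \o{\G}_r \longrightarrow \o{\G}_r^{\et} \longrightarrow 0
\]
splits canonically; on Dieudonn\'e modules this splitting is realized as the slope decomposition of $\D(\o{\G}_r)$ into its maximal $F$-unit summand (identified with $\D(\o{\G}_r^{\et})$, on which $V$ is topologically nilpotent) and its maximal $V$-unit summand (identified with $\D(\o{\G}_r^{\mult})$). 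Functoriality of the slope decomposition in morphisms of ordinary $p$-divisible groups guarantees that the transition maps $\o{\G}_r\to \o{\G}_{r+1}$ respect it, so passing to projective limits yields the split exact sequence (\ref{Dieudonneseq}) of $\Lambda$-modules, with the $\H^*$- and $\Gamma$-actions descending from each finite level via contravariant functoriality of $\D$.

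The main obstacle, and the technical heart of the proof, is the joint control and finiteness statement $\D_\infty^{\star}\otimes_{\Lambda}\Z_p[\Delta/\Delta_r]\simeq \D(\o{\G}_r^{\star})$ together with finite freeness of $\D_\infty$ over $\Lambda$. My approach is to invoke the Mazur--Messing comparison identifying the contravariant Dieudonn\'e module of the $p$-divisible group of the Mazur--Wiles good-quotient abelian scheme over $\F_p$ with its first crystalline cohomology, which in turn matches the mod-$p$ reduction of the integral de Rham cohomology constructed in \S\ref{resultsintro}. Through this dictionary, the finite freeness and control theorem for $e^* H^1_{\dR}$ (and its ${e^*}'$-variant) established in Theorem \ref{main} transfer to $\D_\infty$, once one verifies that the transition maps in the Dieudonn\'e tower correspond under Mazur--Messing to the transition maps on de Rham cohomology. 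The rank $d'=\sum_{k=3}^p\dim_{\F_p}S_k(\Gamma_1(N);\F_p)^{\ord}$ is then the analogue of the rank $d$ from Theorem \ref{main} with upper summation index dropped from $p+1$ to $p$, reflecting that ${e^*}'$ discards the contribution of weight $p+1$ forms carrying the trivial tame character.

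Finally, I would read off the $\H^*$- and $\Gamma$-actions on the flanking pieces from the Eichler--Shimura congruence on the special fibers and Cartier duality. On $\D_\infty^{\et}$, the adjoint Atkin operator $U_p^*$ reduces modulo $p$ to the absolute Frobenius correspondence on the Igusa tower, inducing $F$. Cartier duality exchanges multiplicative and \'etale parts and interchanges $F$ with $V$, and combining this with the nebentypus-twisted auto-duality of $J_r$ accounts for the factor $\langle p\rangle_N$, yielding $U_p^*=\langle p\rangle_N V$ on $\D_\infty^{\mult}$. The group $\Gamma$ acts trivially on $\D_\infty^{\et}$ because the \'etale part of $\G_r$ has unramified generic fiber, while the twist by $\langle \chi(\cdot)\rangle^{-1}$ on $\D_\infty^{\mult}$ is forced by Cartier duality with the \'etale part of the dual $p$-divisible group via the Weil pairing with $\mu_{p^\infty}$.
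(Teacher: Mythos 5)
Your overall strategy is sound, and you correctly identify the two pillars: ordinarity of $\o{\G}_r$ giving the finite-level slope decomposition, and a control theorem bridging finite levels to $\D_\infty$. However, the route you propose for the control theorem is genuinely different from the paper's, and your argument for the $\Gamma$-action on $\D_\infty^{\mult}$ has a potential confusion worth flagging.

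On the control theorem: the paper does \emph{not} derive Theorem~\ref{DieudonneMainThm} from Theorem~\ref{main} via Mazur--Messing. Instead, it verifies the hypotheses of the tower Lemma~\ref{Technical} directly with $A_r=\Z_p$, $I_r=(p)$, using the concrete mod-$p$ identification of Proposition~\ref{GisOrdinary}, which writes $\D(\o{\G}_r^\star)_{\F_p}$ in terms of $V$-ordinary (respectively $F$-ordinary) pieces of the Igusa cohomology. Freeness and control are then read off from Proposition~\ref{IgusaStructure} and Lemma~\ref{CharacterSpaces}. Your proposed transfer from $e^*H^1_{\dR}$ (or its ${e^*}'$-variant) via the crystalline/de~Rham comparison does work in principle, but it presupposes the comparison isomorphism of Proposition~\ref{KeyComparison}, which in the paper is established \emph{after} this theorem. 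More substantively, it requires descending from freeness over $\Lambda_{R_\infty}$ to freeness over $\Lambda$: since $\D(\o{\G}_r)$ is a $\Z_p$-module, this descent needs the isomorphism $\D(\o{\G}_r)\otimes_{\Z_p,\varphi^r}R_r\simeq \D(\G_{r,0})_{R_r}$ from Lemma~\ref{HodgeFilOrdProps} together with faithful flatness (Lemma~\ref{fflatfreedescent}), and you do not make this step explicit. Both approaches ultimately reduce to the same characteristic-$p$ Igusa structure theorems, so neither is circular, but the paper's path is shorter and self-contained at this point in the development.

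On the $\Gamma$-action on $\D_\infty^{\mult}$: your argument ``forced by Cartier duality with the \'etale part of the dual $p$-divisible group via the Weil pairing with $\mu_{p^\infty}$'' risks conflating the diamond-operator twist $\langle\chi(\cdot)\rangle^{-1}$ (an $\H^*$-valued character coming from the geometric inertia action on the moduli problem, explicitly computed on irreducible components in Proposition~\ref{AtkinInertiaCharp}) with the Tate twist $\chi^{-1}$ (which is what the Weil pairing with $\mu_{p^\infty}$ actually produces, and which shows up in the paper as $\D_\infty^{\mult}(-1)$ in Theorem~\ref{FiltrationRecover}). These are different twists living in different ``slots.'' The $\langle\chi\rangle$-twist is accounted for in the paper by the Atkin--Lehner-induced autoduality (Proposition~\ref{GdualTwist}), not by the Weil pairing; the paper instead derives the $\Gamma$-action on $\D_\infty^{\mult}$ directly from the explicit geometric inertia computation in Proposition~\ref{AtkinInertiaCharp} via Propositions~\ref{MWSharpening} and~\ref{SpecialFiberOrdinary}. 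If you insist on a duality argument here, you would need to carefully separate these two twists.

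Your identification of $U_p^*$ acting as $F$ on $\D_\infty^{\et}$ and $\langle p\rangle_N V$ on $\D_\infty^{\mult}$, and your rank computation $d'=d-d_{p+1}$ via the removal of the trivial $\mu_{p-1}$-eigenspace, are both correct, and your observation that ordinarity of $\o{\G}_r$ gives the functorial splitting at each finite level is exactly right.
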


We likewise have the appropriate ``Dieudonn\'e" 
analogue of Theorem \ref{dRdualityThm} (see Proposition \ref{DieudonneDuality}):

\begin{theorem}\label{DDuality}
		There is a canonical $\H^*$-equivariant isomorphism of exact sequences of $\Lambda_{R_0'}$-modules
		\begin{equation*}
			\xymatrix{
	0 \ar[r] & {\D_{\infty}^{\et}(\langle \chi \rangle\langle a\rangle_N)_{\Lambda_{R_0'}}} \ar[r]\ar[d]^-{\simeq} & 
	{\D_{\infty}(\langle \chi \rangle\langle a\rangle_N)_{\Lambda_{R_0'}}}\ar[r]\ar[d]^-{\simeq} & 
	{\D_{\infty}^{\mult}(\langle \chi \rangle\langle a\rangle_N)_{\Lambda_{R_0'}}}\ar[r]\ar[d]^-{\simeq} & 0 \\		
		0\ar[r] & {(\D_{\infty}^{\mult})^{\vee}_{\Lambda_{R_0'}}} \ar[r] & 
				{(\D_{\infty})^{\vee}_{\Lambda_{R_0'}}} \ar[r] & 
				{(\D_{\infty}^{\et})^{\vee}_{\Lambda_{R_0'}}}\ar[r] & 0
			}
		\end{equation*}
		that is $\Gamma\times \Gal(K_0'/K_0)$-equivariant, 
		and intertwines $F$
		$($respectively $V$$)$ on the top row with $V^{\vee}$
		$($respectively $F^{\vee}$$)$ on the bottom.\footnote{Here, $F^{\vee}$ (respectively $V^{\vee}$)
		is the map taking a linear functional $f$ to $\varphi^{-1}\circ f\circ F$ 
		(respectively $\varphi\circ f\circ V$), where $\varphi$
		is the Frobenius automorphism of $R_0'=\Z_p[\mu_N]$.}	
\end{theorem}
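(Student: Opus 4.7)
My plan is to construct the duality first at each finite level $r$ from the canonical principal polarization on $J_r$ together with an Atkin--Lehner-type involution interchanging the $e'$- and ${e^*}'$-idempotents, and then to pass to the inverse limit over $r$. The contravariant Dieudonn\'e functor will convert the finite-level $p$-divisible group duality on special fibers into the desired Dieudonn\'e-module duality.

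At each level $r$, the principal polarization on $J_r$ provides a perfect, Galois-equivariant Weil pairing $J_r[p^{\infty}]\times J_r[p^{\infty}]\to\mu_{p^{\infty}}$ with Hecke adjoint-compatibility $\langle Tx,y\rangle=\langle x,T^*y\rangle$. Restricting one factor to ${e^*}'J_r[p^{\infty}]=\G_r$ yields a perfect pairing of $\G_r$ with $e'J_r[p^{\infty}]$; an Atkin--Lehner-type involution $w_r$ then identifies $e'J_r[p^{\infty}]$ with $\G_r$ up to a character twist. The $\langle\chi\rangle$-component of the twist arises from the $\mu_{p^{\infty}}$-values of the Weil pairing (on which $\Gamma$ acts through $\chi$), while the $\langle a\rangle_N$-component arises from the primitive $N$-th root of unity required to rigidify $w_r$ (on which $\Gal(K_0'/K_0)$ acts through $a$). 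This produces an $\H^*$- and $\Gamma\times\Gal(K_0'/K_0)$-equivariant self-duality of $\G_r$ over $R_r\otimes_{\Z_p}R_0'$ twisted by $\langle\chi\rangle\langle a\rangle_N$. Applying the contravariant Dieudonn\'e functor on special fibers---using that it sends Cartier-dual $p$-divisible groups to linear duals of Dieudonn\'e modules, interchanges $F$ with $V^{\vee}$ and $V$ with $F^{\vee}$, and swaps the \'etale and multiplicative parts---immediately yields the finite-level analogue of the claimed isomorphism of short exact sequences. Passing to $\varprojlim_r$ then requires checking that these finite-level dualities are compatible with the transition maps $\o{\G}_r\to\o{\G}_{r+1}$, which follows from the functoriality of the Weil pairing under the covering maps $X_{r+1}\to X_r$ together with the interplay of the trace and pullback morphisms in the Jacobian tower.

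The main obstacle is pinning down the character twist $\langle\chi\rangle\langle a\rangle_N$ precisely, and thereby verifying all the equivariance assertions. A useful consistency check for the $\langle\chi\rangle$-factor is provided by Theorem~\ref{DieudonneMainThm}: since $\Gamma$ acts trivially on $\D_{\infty}^{\et}$ and via $\langle\chi\rangle^{-1}$ on $\D_{\infty}^{\mult}$, the asserted isomorphism $(\D_{\infty}^{\mult})^{\vee}\simeq\D_{\infty}^{\et}(\langle\chi\rangle\langle a\rangle_N)$ automatically matches $\Gamma$-actions on the two sides, providing a reassuring sanity check. The $\langle a\rangle_N$-component, together with the intertwining of $U_p^*$ with $F$ on $\D_{\infty}^{\et}$ and with $\langle p\rangle_N V$ on $\D_{\infty}^{\mult}$, must be verified through explicit Atkin--Lehner computations in the spirit of~\cite{MW-Hida} and~\cite{OhtaEichler}; this tracking of Hecke and Galois actions through the involution $w_r$ is the technical heart of the argument.
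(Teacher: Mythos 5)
Your overall strategy is essentially the paper's: establish a finite-level self-duality of $\G_r$ via the principal polarization and an Atkin--Lehner twist, pass to Dieudonn\'e modules on special fibers, and take inverse limits using functoriality. But your proposed explanation of where the $\langle\chi\rangle$-twist comes from is wrong, and this is a conceptual error, not merely a technicality. You attribute the $\langle\chi\rangle$-component to the $\mu_{p^{\infty}}$-values of the Weil pairing, i.e.\ to a Tate twist. This cannot be right for two reasons. First, a Tate twist would contribute the cyclotomic character $\chi:\Gamma\to\Z_p^{\times}$, whereas the twist in the statement is by the $\H^*$-valued \emph{diamond} character $\langle\chi\rangle:\gamma\mapsto\langle\chi(\gamma)\rangle$; these are genuinely different objects. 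Second, and more fundamentally, the duality is computed on the special fibers $\o{\G}_r$ over $\F_p$, where the Dieudonn\'e functor satisfies $\D(\Dual{G})\simeq\D(G)^{\vee}$ with no Tate twist at all (the cyclotomic character is trivial over $\F_p$, so $\mu_{p^{\infty}}$ contributes nothing). In fact \emph{both} pieces of the twist $\langle\chi\rangle\langle a\rangle_N$ have the same source: the failure of the Atkin--Lehner automorphism $w_r = w_{\varepsilon^{(r)}}\circ w_{\zeta_N}$ to be $\Gal(K_{\infty}'/K_0)$-equivariant, as recorded in Proposition \ref{ALinv}: $(\gamma^*w_r)\gamma = \gamma w_r\langle \chi(\gamma)\rangle^{-1}\langle a(\gamma)\rangle_N^{-1}$. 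The $w_{\varepsilon^{(r)}}$-piece depends on the choice of $p^r$-th root of unity and produces $\langle\chi\rangle$, exactly as the $w_{\zeta_N}$-piece produces $\langle a\rangle_N$ (which you did identify correctly). This is what the paper packages as Proposition \ref{GdualTwist}, giving the isomorphism $\G_r^{\vee}\simeq\G_r(\langle\chi\rangle\langle a\rangle_N)$ of objects in $\pdiv_{R_r}^{\Gamma}$.

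There is also a technical gap in your passage to the limit. The transition maps $\rho_{r,s}'$ for the dual tower $\{\G_r'\}$ (Definition \ref{ordpdivdefn}) carry a normalizing power of ${U_p^*}^{-1}$ that is not present in the $\rho_{r,s}$ for $\{\G_r\}$. Correspondingly, the finite-level isomorphism $\psi_r:\G_r'|_{K_r'}\to\G_r|_{K_r'}$ must be defined with a compensating factor $({U_p^*}^r)^{\vee}$ pre-composed with $\varphi_r^{-1}$ and $w_r$; without it, the finite-level dualities will not be compatible with the trace transition maps, and the compatibility relation (\ref{pairingchangeinr}) needed for the tower-duality formalism of Lemma \ref{LambdaDuality} will fail. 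Once one inserts this normalization, the required compatibility reduces to the identity $\Pic^0(\pr)\circ\Alb(\ps) = U_p^*\sum_{\delta\in\Delta_r/\Delta_{r+1}}\langle\delta^{-1}\rangle$ on $J_{r+1}$ (equation \ref{PicAlbRelation}), which follows from Lemma \ref{MFtraceLem} applied via Lemma \ref{LieFactorization}. Your statement that compatibility ``follows from the functoriality of the Weil pairing under the covering maps'' glosses over precisely this point, and as written would not produce a well-defined inverse-limit pairing.
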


Just as Mazur-Wiles are able to relate the ordinary-filtration of ${e^*}'H^1_{\et}$
to the \'etale cohomology of the Igusa tower, we can interpret the slope filtraton (\ref{Dieudonneseq})
in terms of the crystalline cohomology of the Igusa tower as follows.
For each $r$, let $I_r^{\infty}$ and $I_r^0$ be the two ``good" irreducible components of
$\X_r\times_{R_r}\F_r$ (see Remark \ref{MWGood}), each of which is isomorphic to the Igusa curve $\Ig(p^r)$
of tame level $N$ and $p$-level $p^r$. For $\star\in \{0,\infty\}$ we form the projective limit
\begin{equation*} 
	H^1_{\cris}(I^{\star}):=\varprojlim_{r} H^1_{\cris}(I_r^{\star}/\Z_p);
\end{equation*}
with respect to the trace mappings on crystalline cohmology induced by the canonical
degeneracy maps on Igusa curves.
Then $H^1_{\cris}(I^{\star})$ is naturally a $\Lambda$-module with linear Frobenius $F$ and Verscheibung $V$ endomorphisms.
Letting $f'$ be the idempotent of $\Lambda$ corresponding to projection to the part
where $\mu_{p-1}\subseteq \Delta\hookrightarrow \Lambda$ acts nontrivially, we prove
(see Theorem \ref{DieudonneCrystalIgusa}):

\begin{theorem}
 There is a canonical isomorphism of $\Lambda$-modules, compatible with $F$ and $V,$
	\begin{equation}
		\D_{\infty} =\D_{\infty}^{\mult}\oplus \D_{\infty}^{\et}\simeq 
		f'H^1_{\cris}(I^{0})^{V_{\ord}} \oplus
						f'H^1_{\cris}(I^{\infty})^{F_{\ord}}.\label{crisIgusa}
	\end{equation}
	which preserves the direct sum decompositions of source and target.
	This isomorphism is Hecke and $\Gamma$-equivariant, with $U_p^*$ and $\Gamma$
	acting as $\langle p\rangle_N V\oplus F$ and 
	$ \langle \chi(\cdot)\rangle^{-1}\oplus \id$, respectively, 
	on each direct sum.	
\end{theorem}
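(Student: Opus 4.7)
The plan is to construct the two slope-component isomorphisms separately and glue them via the canonical splitting provided by Theorem \ref{MainDieudonne}. That theorem identifies $\D_{\infty} = \D_{\infty}^{\et} \oplus \D_{\infty}^{\mult}$ as $\Lambda$-modules with $F$, $V$, $\H^*$, and $\Gamma$ actions, so it suffices to produce canonical $\H^*$- and $\Gamma$-equivariant isomorphisms
$$f'\D_{\infty}^{\et} \xrightarrow{\sim} f'H^1_{\cris}(I^{\infty})^{F_{\ord}} \quad\text{and}\quad f'\D_{\infty}^{\mult} \xrightarrow{\sim} f'H^1_{\cris}(I^{0})^{V_{\ord}}$$
that intertwine $F$ and $V$. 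The idempotent $f'$ is already implicit on the left, because ${e^*}'$ kills the $\mu_{p-1}$-trivial piece of $\D_{\infty}$, and on the right it commutes with the formation of unit-root subspaces.

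First I would establish these identifications at each finite level $r$. The Mazur--Wiles \cite{MW-Hida} and Tilouine \cite{Tilouine} analysis of the tower of good-quotient $p$-divisible groups shows that the two ``good'' components of $\o{\X}_r$ are each canonically isomorphic to the Igusa curve $\Ig(p^r)$, and that after projection by $f'$ the multiplicative (resp.\ \'etale) part of $\o{\G}_r$ is, via the natural maps to and from $\Jac(\o{\X}_r)$, canonically identified with the multiplicative (resp.\ \'etale) part of $\Jac(I_r^0)[p^{\infty}]$ (resp.\ $\Jac(I_r^{\infty})[p^{\infty}]$)---the toric contribution from the dual graph of $\o{\X}_r$ being precisely what $f'$ discards. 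Combining this with crystalline Dieudonn\'e theory for abelian varieties and the standard identifications $\D(\Jac(C)[p^{\infty}])^{\et} \simeq H^1_{\cris}(C/\Z_p)^{F_{\ord}}$ and $\D(\Jac(C)[p^{\infty}])^{\mult} \simeq H^1_{\cris}(C/\Z_p)^{V_{\ord}}$ for an ordinary smooth proper curve $C/\F_p$ yields canonical isomorphisms
$$\D(\o{\G}_r^{\mult}) \simeq f'H^1_{\cris}(I_r^0/\Z_p)^{V_{\ord}}, \qquad \D(\o{\G}_r^{\et}) \simeq f'H^1_{\cris}(I_r^{\infty}/\Z_p)^{F_{\ord}}.$$

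Next I would pass to the projective limit. The transition maps $\o{\G}_r \to \o{\G}_{r+1}$ defining $\D_{\infty}$ must be matched with the trace maps on crystalline cohomology induced by the canonical degeneracy maps $\Ig(p^{r+1}) \to \Ig(p^r)$ that define $H^1_{\cris}(I^{\star})$. This compatibility is moduli-theoretic in origin: both sets of maps come from the same structural morphisms in the Igusa tower, and functoriality of $H^1_{\cris}$ transports one to the other. Taking $\varprojlim_r$ then produces the claimed $\Lambda$-linear isomorphism, while the Hecke- and $\Gamma$-equivariance of the resulting identification follows immediately from Theorem \ref{MainDieudonne}, which already gives $U_p^* = F$ on $\D_{\infty}^{\et}$ and $U_p^* = \langle p\rangle_N V$ on $\D_{\infty}^{\mult}$, with $\Gamma$ acting trivially on the \'etale side and via $\langle\chi(\cdot)\rangle^{-1}$ on the multiplicative side---exactly the formulas asserted in the statement.

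The hard part will be the finite-level geometric identification. One must carefully disentangle the nodal geometry of $\o{\X}_r$: its Picard scheme carries a toric part reflecting the combinatorics of the dual graph and abelian parts coming from the Jacobians of the individual components. The key point is that, after projecting by ${e^*}'$ (equivalently $f'$ on the Dieudonn\'e side), the good-quotient $p$-divisible group sees only the contributions from the two good components $I_r^0$ and $I_r^{\infty}$, with the multiplicative and \'etale parts paired with the correct components in the manner dictated by the Mazur--Wiles ordinary filtration. Verifying this compatibility---and, crucially, checking that the full $\H^*$-action on $J_r$ restricts under the identifications above to the Hecke action on Dieudonn\'e modules prescribed by Theorem \ref{MainDieudonne}---is where the geometric content of the proof lies.
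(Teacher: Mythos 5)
Your proposal follows essentially the same route as the paper: establish finite-level identifications of $\D(\o{\G}_r^{\mult})$ and $\D(\o{\G}_r^{\et})$ with the unit-root parts of the Igusa-curve Dieudonn\'e modules (this is exactly the content of Proposition \ref{SpecialFiberOrdinary}, i.e.\ the exact sequence (\ref{GrSpecialExact})), invoke the classical Mazur--Messing/Illusie isomorphism $\D(J_X[p^\infty])\simeq H^1_{\cris}(X/W)$ with its pullback/trace functoriality, and pass to inverse limits, with the Hecke and $\Gamma$ formulas inherited from Theorem \ref{MainDieudonne}. Two small inaccuracies worth correcting. First, the toric/affine part of $\Pic^0_{\o{\X}_r}$ is killed by factoring through the normalization $\nor{\o{\X}}_r$ and by the map to the abelian variety $\o{\B}_r^*$, \emph{not} by $f'$; the role of $f'$ (equivalently ${e^*}'$) is to discard the trivial $\mu_{p-1}$-eigenspace so that the $\G_r$ are genuine $p$-divisible groups over $R_r$, avoiding logarithmic $p$-divisible groups. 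Second, the identifications $\D(\Jac(C)[p^\infty])^{\et}\simeq H^1_{\cris}(C)^{F_{\ord}}$ and $\D(\Jac(C)[p^\infty])^{\mult}\simeq H^1_{\cris}(C)^{V_{\ord}}$ hold for \emph{any} smooth proper curve $C/\F_p$ --- no ordinariness is required (and the Igusa curves are not ordinary, having supersingular points); they follow formally from the \'etale/multiplicative/local-local decomposition of $\Jac(C)[p^\infty]$ together with Fitting's lemma, as in Proposition \ref{OdaDieudonne}.
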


We note that our ``Dieudonn\'e module" analogue (\ref{crisIgusa}) is a significant
sharpening of its \'etale counterpart \cite[\S4]{MW-Hida}, which is formulated
only up to isogeny (i.e. after inverting $p$).  From $\D_{\infty}$, 
we can recover the $\Lambda$-adic Hodge filtration (\ref{orddRseq}), so
the latter is canonically split (see Theorem \ref{dRtoDieudonneInfty}):

\begin{theorem}\label{dRtoDieudonne}
	There is a canonical $\Gamma$ and $\H^*$-equivariant isomorphism of 
	exact sequences 
	\begin{equation}
	\begin{gathered}
		\xymatrix{
		0 \ar[r] & {{e^*}'H^0(\omega)} \ar[r]\ar[d]^-{\simeq} & 
		{{e^*}'H^1_{\dR}} \ar[r]\ar[d]^-{\simeq} & {{e^*}'H^1(\O)} \ar[r]\ar[d]^-{\simeq} & 0 \\
		0 \ar[r] & {\D_{\infty}^{\mult}\tens_{\Lambda} \Lambda_{R_{\infty}}} \ar[r] &
		{\D_{\infty}\tens_{\Lambda} \Lambda_{R_{\infty}}} \ar[r] &
		{\D_{\infty}^{\et}\tens_{\Lambda} \Lambda_{R_{\infty}}} \ar[r] & 0
		}\label{dRcriscomparison}
	\end{gathered}
	\end{equation}
	where the mappings on bottom row are the canonical inclusion and projection morphisms
	corresponding to the direct sum decomposition $\D_{\infty}=\D_{\infty}^{\mult}\oplus \D_{\infty}^{\et}$.
	In particular, the Hodge filtration exact sequence $(\ref{orddRseq})$ is canonically 
	split, and admits a canonical descent to $\Lambda$.
\end{theorem}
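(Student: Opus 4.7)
The plan is to construct the isomorphism at each finite level $r$ via the crystalline--de Rham comparison for $p$-divisible groups, verify compatibility with the transition maps as $r$ varies, and then pass to the projective limit using the control theorems already established on both sides.

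At finite level $r$, we have the good-quotient $p$-divisible group $\G_r$ over $R_r$, with special fibre $\o{\G}_r$ and generic fibre ${e^*}'J_r[p^{\infty}]$. The Berthelot--Breen--Messing crystalline--de Rham comparison yields a canonical isomorphism of filtered $R_r$-modules
\[
	\D(\o{\G}_r)\tens_{\Z_p} R_r \simeq H^1_{\dR}(\G_r/R_r),
\]
under which $\omega_{\G_r}$ corresponds to the multiplicative part $\D(\o{\G}_r^{\mult})\tens R_r$ (the part on which $V$ is invertible) and the Lie-algebra quotient to the \'etale part $\D(\o{\G}_r^{\et})\tens R_r$ (on which $F$ is invertible). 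Comparing with $(\ref{finiteleveldRseq})$ via the chain of identifications
\[
	H^1_{\dR}(\G_r/R_r) = H^1_{\dR}({e^*}'J_r/R_r) = {e^*}'H^1(\X_r/R_r),
\]
where the first equality is the standard abelian-variety/$p$-divisible-group dictionary and the second is the compatibility of the canonical integral de Rham cohomology of \cite{CaisDualizing} with the good-quotient projection, produces the diagram we seek at level $r$ after extending scalars to $R_\infty$.

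Next I would verify that these finite-level isomorphisms are compatible with the transition maps: the trace maps on de Rham cohomology induced by the degeneracy morphisms $\X_{r+1} \to \X_r$ must match, under the comparison, the maps $\D(\o{\G}_r) \to \D(\o{\G}_{r+1})$ arising from $\o{\G}_r \to \o{\G}_{r+1}$. This is a consequence of the functoriality of Messing's theory together with the compatibility of the crystalline and de Rham trace pairings. Passing to the inverse limit and invoking the control isomorphisms $({e^*}'H^1_{\dR})\tens_{\Lambda_{R_\infty}} R_\infty[\Delta/\Delta_r] \simeq {e^*}'H^1_{\dR}(\X_r/R_r)\tens_{R_r} R_\infty$ (Theorem \ref{main}) together with $\D_\infty^{\star}\tens_\Lambda \Z_p[\Delta/\Delta_r] \simeq \D(\o{\G}_r^{\star})$ (Theorem \ref{DieudonneMainThm}) then yields $(\ref{dRcriscomparison})$. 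Equivariance under $\H^*$ and $\Gamma$ is automatic from the functoriality of the crystalline comparison, since both actions are induced by morphisms of schemes; the canonical splitting of $(\ref{orddRseq})$ and its descent to $\Lambda$ follow at once from the direct-sum decomposition $\D_\infty = \D_\infty^{\mult} \oplus \D_\infty^{\et}$ given by Theorem \ref{DieudonneMainThm}.

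The main obstacle I anticipate is reconciling the canonical integral de Rham cohomology of the \emph{singular} integral model $\X_r$ (which requires the dualising-sheaf machinery of \cite{CaisDualizing}) with the honest de Rham cohomology of the smooth $p$-divisible group $\G_r$. One must argue carefully that the ${e^*}'$-idempotent kills the ``bad'' contribution coming from the non-smooth components of the special fibre of $\X_r$, so that the residual piece is genuinely governed by a good quotient with smooth reduction; and that the two integral structures on ${e^*}'H^1_{\dR}({e^*}'J_r/K_r)$---the one defined via the dualising sheaf of the non-smooth curve, and the one defined by evaluating the crystalline Dieudonn\'e crystal on the PD-thickening $R_r \twoheadrightarrow \F_p$---give the same $R_r$-lattice.
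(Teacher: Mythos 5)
Your overall strategy is the same one the paper uses: compare at finite level via the crystalline description of the Hodge filtration of $\G_r$, then pass to the limit using the control theorems on both sides. The ``main obstacle'' you flag at the end is genuinely the crux, and it is resolved in the paper by Proposition~\ref{KeyComparison} (via the $U_r^*$-factorization diagram~(\ref{UFactorDiag}) for the good quotient $\alpha_r^*:J_r\twoheadrightarrow B_r^*$ and Proposition~\ref{intcompare}, which identifies $H(\X_r/R_r)$ with the Lie-algebra sequence of the canonical extension of the N\'eron model), so you identified the right pressure point even though you left it unresolved.

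There is, however, a concrete gap in the transition-map step. You assert that the finite-level comparison intertwines the de Rham trace $\pr_*$ with the map $\D(\o{\G}_r)\to\D(\o{\G}_{r+1})$ induced by $\o{\rho}_{r+1,r}:\o{\G}_r\to\o{\G}_{r+1}$, and then pass directly to the limit to obtain $\D_\infty\tens_\Lambda\Lambda_{R_\infty}$. That compatibility claim is false as stated. The isomorphism $\D(\G_{r,0}^\star)_{R_r}\simeq\D(\o{\G}_r^\star)\tens_{\Z_p}R_r$ you are implicitly invoking is not a base-change identity: it is built, per Lemma~\ref{HodgeFilOrdProps} and~(\ref{TwistyDieuIsoms}), from the $r$-fold relative Frobenius $F^r:\G_{r,0}^\et\to(\G_{r,0}^\et)^{(p^r)}$ on the \'etale factor and $V^r$ on the multiplicative factor. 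Because the exponent $r$ changes with the level, under the comparison the trace $\pr_*$ corresponds to $\D(\o{\rho}_{r+1,r})\circ(V^{-1}\times F)$, not to $\D(\o{\rho}_{r+1,r})$ alone; this is stated explicitly in the change-in-$r$ clause of Corollary~\ref{RelationToHodgeCor}. Consequently the limit you obtain is $\varprojlim_{\o{\rho}\circ(V^{-1}\times F)}(\D(\o{\G}_r)\tens R_\infty)$, which is not \emph{a priori} the projective limit $\varprojlim_{\o{\rho}}(\D(\o{\G}_r)\tens R_\infty)\simeq\D_\infty\tens_\Lambda\Lambda_{R_\infty}$. The paper repairs this by inserting, at each level $r$, the automorphism $V^{-r}\times F^r$ of $\o{\G}_r=\o{\G}_r^{\mult}\times\o{\G}_r^\et$, which gives a compatible system of isomorphisms between the twisted and untwisted projective systems (and visibly respects the splittings). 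Without this compensating step your ``pass to the limit'' does not land on the target module, so the proof as written does not close.
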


	We remark that under the identification (\ref{dRcriscomparison}),  
	the Hodge filtration (\ref{orddRseq}) and slope filtration (\ref{Dieudonneseq})
	correspond, but in the opposite directions.  As a consequence of Theorem 
	\ref{dRtoDieudonne}, we deduce (see Corollary \ref{MFIgusaDieudonne} and
	Remark \ref{MFIgusaCrystal}):

\begin{corollary}
\label{OhtaCor}
	There is a canonical isomorphism of finite free $\Lambda$ $($respectively $\Lambda_{R_0'}$$)$-modules
	\begin{equation*}
		{e}'S(N,\Lambda) \simeq \D_{\infty}^{\mult}
		\qquad\text{respectively}\qquad
		e'\H\tens_{\Lambda} \Lambda_{R_0'} \simeq \D_{\infty}^{\et}(\langle a\rangle_N)\tens_{\Lambda}{\Lambda_{R_0'}}
	\end{equation*}
	that  intertwines $T\in \H:=\varprojlim \H_r$ with $T^*\in \H^*$, where we let
	$U_p^*$ act as $\langle p\rangle_N V$ on $\D_{\infty}^{\mult}$ and as $F$ on $\D_{\infty}^{\et}$. 
	The second of these isomorphisms is in addition $\Gal(K_0'/K_0)$-equivariant.
\end{corollary}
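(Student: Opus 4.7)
The plan is to deduce both isomorphisms by combining Theorem \ref{dRtoDieudonne} with the identifications of ${e^*}'H^0(\omega)$ and ${e^*}'H^1(\O)$ in terms of Hida's $\Lambda$-adic cusp forms and Hecke algebra, and then applying faithfully flat Galois-equivariant descent along $\Lambda \hookrightarrow \Lambda_{R_\infty}$.

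For the first isomorphism, I would invoke the left column of $(\ref{dRcriscomparison})$ to obtain a $\Gamma$- and $\H^*$-equivariant $\Lambda_{R_\infty}$-module isomorphism ${e^*}'H^0(\omega) \simeq \D_\infty^{\mult} \otimes_\Lambda \Lambda_{R_\infty}$. Identifying ${e^*}'H^0(\omega)$ with $e'S(N, \Lambda_{R_\infty}) = e'S(N,\Lambda) \otimes_\Lambda \Lambda_{R_\infty}$ via Corollary \ref{LambdaFormsRelation} displays both sides as $\Lambda_{R_\infty}$-scalar extensions of finite free $\Lambda$-modules carrying compatible $\Gamma$-descent data (on the Dieudonn\'e side coming from the geometric inertia action described in Theorem \ref{DieudonneMainThm}). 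Galois descent along the cyclotomic faithfully flat extension $\Lambda \hookrightarrow \Lambda_{R_\infty}$ then identifies the two descents and yields the canonical $\Lambda$-isomorphism $e'S(N,\Lambda) \simeq \D_\infty^{\mult}$. The Hecke compatibility $T \leftrightarrow T^*$ follows from the corresponding compatibility in $(\ref{dRcriscomparison})$, since $H^0(\omega)$ naturally carries the adjoint Hecke action by construction.

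For the second isomorphism, I would start with the right column of $(\ref{dRcriscomparison})$, namely ${e^*}'H^1(\O) \simeq \D_\infty^{\et} \otimes_\Lambda \Lambda_{R_\infty}$, and combine it with the standard identification (via Serre--Grothendieck duality on the tower of modular curves) of ${e^*}'H^1(\O)$ in terms of the Hecke algebra: the perfect $\Lambda$-bilinear pairing $e\H \times eS(N,\Lambda) \to \Lambda$ given by $(T,f) \mapsto a_1(Tf)$ realizes a twist of $(e'\H) \otimes_\Lambda \Lambda_{R_\infty'}$ as the $\Lambda_{R_\infty'}$-linear dual of $e'S(N,\Lambda_{R_\infty'})$, exactly as prescribed by Theorem \ref{dRdualityThm}. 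Matching this against the Dieudonn\'e-module duality of Theorem \ref{DDuality} produces a Hecke- and $\Gamma \times \Gal(K_0'/K_0)$-equivariant isomorphism $e'\H \otimes_\Lambda \Lambda_{R_\infty'} \simeq \D_\infty^{\et}(\langle a\rangle_N) \otimes_\Lambda \Lambda_{R_\infty'}$; the triviality of the $\Gamma$-action on $\D_\infty^{\et}$ (Theorem \ref{DieudonneMainThm}) then permits descent to $\Lambda_{R_0'}$ by taking $\Gamma$-invariants on both sides.

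The principal obstacle is the careful tracking of the two twisting characters $\langle\chi\rangle$ and $\langle a\rangle_N$ through the comparison and duality morphisms, since the geometric $\Gamma$-action on the Dieudonn\'e side is twisted by $\langle\chi\rangle^{-1}$ while the cusp-form side appears untwisted, and the cup-product pairing introduces an additional $\langle a\rangle_N$ factor that is trivialized only after base change to $\Lambda_{R_0'}$. A secondary obstacle is verifying the Hecke compatibility $T \leftrightarrow T^*$ under duality, which requires carefully comparing the cup-product pairing of $(\ref{finiteleveldRseq})$ with the arithmetic pairing $(T,f) \mapsto a_1(Tf)$ at every finite level of the tower before passage to the limit.
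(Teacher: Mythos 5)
Your proposal is essentially correct and mirrors the paper's own proof. For the first isomorphism, the paper's Corollary \ref{MFIgusaDieudonne} composes exactly the chain you describe — Theorem \ref{dRtoDieudonneInfty}, Corollary \ref{LambdaFormsRelation}, and Ohta's freeness/specialization results for $\Lambda$-adic cuspforms — and then resolves the $\langle\chi\rangle$-twist you flag as the ``principal obstacle'' precisely by twisting the composite $\Gamma$-equivariant isomorphism by $\langle\chi\rangle$ so that $\Gamma$ acts as $1\otimes\gamma$ on both sides, whereupon $\Gamma$-invariants recover the underlying $\Lambda$-modules (note that the raw $\Gamma$-action on $\D_\infty^{\mult}\otimes_\Lambda\Lambda_{R_\infty}$ is $\langle\chi\rangle^{-1}\otimes\gamma$, so taking invariants without twisting first would not give $\D_\infty^{\mult}$; your phrase ``Galois descent'' compresses this twist-then-invariants step, which is worth writing out). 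For the second isomorphism your route differs in detail from the paper's: you pass through ${e^*}'H^1(\O)$ and the de Rham duality of Theorem \ref{dRdualityThm}, whereas the paper's Remark \ref{MFIgusaCrystal} instead dualizes the already-proved isomorphism $e'S(N,\Lambda)\simeq\D_\infty^{\mult}$ directly, using the Dieudonn\'e-side duality of Proposition \ref{DieudonneDuality} on one side and Ohta's perfect $\Lambda$-adic pairing between $e\H$ and $eS(N;\Lambda)$ on the other, then drops the $\langle\chi\rangle$ factor because $\Gamma$-equivariance is not part of the statement. The two routes use the same ingredients and are interchangeable via Theorem \ref{dRtoDieudonne}; the paper's path is shorter since it does not re-cross the de Rham comparison. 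Your ``secondary obstacle'' — matching the cup-product pairing with the arithmetic pairing $(T,f)\mapsto a_1(Tf)$ — is dispatched in the paper by citing Ohta's duality theorem as a black box, so your concern is warranted but answered by exactly the reference you would need.
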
	
	
We are also able to recover the semisimplification of ${e^*}'H^1_{\et}$ from $\D_{\infty}$.
Writing $\I\subseteq \scrG_{\Q_p}$ 
for the inertia subgroup at $p$, for any $\Z_p[\scrG_{\Q_p}]$-module $M$, we denote by $M^{\I}$ (respectively $M_{\I}:=M/M^{\I}$)
the sub (respectively quotient) module of invariants (respectively covariants) under $\I$. 

\begin{theorem}\label{FiltrationRecover}
	There are canonical isomorphisms of $\Lambda_{W(\o{\F}_p)}$-modules
	with linear $\H^*$-action and semilinear actions of $F$, $V$, and $\scrG_{\Q_p}$
	\begin{subequations}
		\begin{equation}
			\D_{\infty}^{\et} \tens_{\Lambda} \Lambda_{W(\o{\F}_p)} 
			\simeq ({e^*}'H^1_{\et})^{\I}\tens_{\Lambda} \Lambda_{W(\o{\F}_p)} 
			\label{inertialinvariants}
		\end{equation}
		and
		\begin{equation}
			\D_{\infty}^{\mult}(-1) \tens_{\Lambda} \Lambda_{W(\o{\F}_p)} 
			\simeq ({e^*}'H^1_{\et})_{\I}\tens_{\Lambda} \Lambda_{W(\o{\F}_p)}.
			\label{inertialcovariants}
		\end{equation}
	\end{subequations}
	Writing $\sigma$ for the 
	Frobenius automorphism of $W(\o{\F}_p)$,
	the isomorphism $(\ref{inertialinvariants})$ intertwines $F\otimes \sigma$ with $\id\otimes\sigma$
	and $\id\otimes g$ with $g\otimes g$ for $g\in \scrG_{\Q_p}$, whereas $(\ref{inertialcovariants})$ 
	intertwines $V\otimes \sigma^{-1}$ with $\id\otimes\sigma^{-1}$ and $g\otimes g$ with $g\otimes g$,
	where $g\in\scrG_{\Q_p}$ acts on the Tate twist 
	$\D_{\infty}^{\mult}(-1):=\D_{\infty}\otimes_{\Z_p}\Z_p(-1)$ as
	$\langle \chi(g)^{-1}\rangle \otimes \chi(g)^{-1}$.
\end{theorem}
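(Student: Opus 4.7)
The argument begins at finite level $r$, where the $p$-divisible group $\G_r$ over $R_r$ sits in its connected-\'etale exact sequence
\[
0 \to \G_r^0 \to \G_r \to \G_r^{\et} \to 0.
\]
Since ${e^*}'J_r[p^\infty]$ is ordinary, the connected part $\G_r^0$ has multiplicative special fibre, so under the canonical slope decomposition over $\F_p$ we have $\o{\G}_r^0\otimes \F_p = \o{\G}_r^{\mult}$ and $\o{\G}_r^{\et}\otimes \F_p = \o{\G}_r^{\et}$. The compatibility of the Jacobian with its $p$-adic Tate module, together with the Weil pairing on $J_r$, identifies ${e^*}'H^1_{\et}(X_r,\Z_p)$ with the $\Z_p$-linear dual of $T_p(\G_r)$ up to an appropriate Tate twist. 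Since the connected-\'etale sequence of $\G_r$ splits after extending scalars to the maximal unramified extension, passage to $\I$-invariants on ${e^*}'H^1_{\et}(X_r)$ corresponds, on the $p$-divisible group side, to the \'etale Tate module $T_p(\G_r^{\et})$, once one extends scalars to $W(\o{\F}_p)$.

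The second ingredient is the classical crystalline--\'etale comparison over $\F_p$: for any \'etale $p$-divisible group $H$ over $\F_p$, base-change along $\Z_p \to W(\o{\F}_p)$ furnishes a canonical and functorial isomorphism
\[
\D(H)\tens_{\Z_p} W(\o{\F}_p) \;\simeq\; T_p(H_{\o{\F}_p})^{\vee}\tens_{\Z_p} W(\o{\F}_p)
\]
intertwining $F\otimes \sigma$ with $\id\otimes \sigma$ and respecting the natural action of $\Gal(\o{\F}_p/\F_p)$. Applying this with $H = \o{\G}_r^{\et}$ and combining with the first step identifies $\D(\o{\G}_r^{\et})\tens_{\Z_p} W(\o{\F}_p)$ with the inertial-invariant part of ${e^*}'H^1_{\et}(X_r)\tens_{\Z_p} W(\o{\F}_p)$ at each finite level. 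Passing to the projective limit along the transition maps---which are, by construction, compatible with the connected-\'etale sequences and with the definition of $\D_\infty^{\et}$ in Theorem \ref{DieudonneMainThm}---and completing scalars to $\Lambda_{W(\o{\F}_p)}$ yields the isomorphism (\ref{inertialinvariants}), together with the claimed equivariances for $F$, $\H^*$, $\Gamma$, and $\scrG_{\Q_p}$.

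The multiplicative statement (\ref{inertialcovariants}) is then deduced from (\ref{inertialinvariants}) by dualising. Combining Theorem \ref{DDuality} (which provides a $\Lambda_{R_0'}$-linear auto-duality exchanging $\D_\infty^{\et}$ and $\D_\infty^{\mult}$ up to the character $\langle\chi\rangle\langle a\rangle_N$) with Poincar\'e duality for ${e^*}'H^1_{\et}$ (which carries its own Tate twist coming from the cup-product on the curve) and the canonical identity $(M_{\I})^{\vee}\simeq (M^{\vee})^{\I}$, the \'etale statement transports to its multiplicative counterpart; the Tate twist $(-1)$ on $\D_\infty^{\mult}(-1)$ records precisely the shift introduced by the Weil and Poincar\'e pairings, and this makes the asserted Galois intertwinings fall out. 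The principal obstacle is the careful bookkeeping of every semilinearity, twist, and equivariance through the chain of identifications: in particular, verifying that the ``non-cyclotomic'' part of $\I$ acts trivially on $T_p(\G_r^{\et})$ (as it must, since $\G_r^{\et}$ extends over $R_r$ as an \'etale $p$-divisible group whose Tate module is accordingly unramified), and ensuring that inverse limits, $W(\o{\F}_p)$-base-change, and passage to inertial invariants or coinvariants all commute on the nose after the completed tensor with $\Lambda_{W(\o{\F}_p)}$.
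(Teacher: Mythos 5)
The first half of your argument---identifying $\D_\infty^{\et}\tens_\Lambda\Lambda_{W(\o{\F}_p)}$ with $({e^*}'H^1_{\et})^{\I}\tens_\Lambda\Lambda_{W(\o{\F}_p)}$ via the classical crystalline--\'etale comparison for the \'etale $p$-divisible group $\o{\G}_r^{\et}$ over $\F_p$, followed by passage to the limit---is the same route taken in the paper, and it is correct modulo the caveats below. Two small points: no Weil pairing or Tate twist enters the identification $H^1_{\et}({X_r}_{\Qbar_p},\Z_p)\simeq \Hom_{\Z_p}(T_pJ_r,\Z_p)$; this is the canonical identification of cohomology of a curve with that of its Jacobian and then with the dual Tate module.

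The genuine gap is your parenthetical justification for why the full inertia group $\I\subseteq\scrG_{\Q_p}$ acts trivially on $T_p\G_r^{\et}$. You argue that ``$\G_r^{\et}$ extends over $R_r$ as an \'etale $p$-divisible group whose Tate module is accordingly unramified,'' but \'etaleness over $R_r$ only shows that $\I\cap\scrG_{K_r}$ acts trivially. Since $K_r/\Q_p$ is \emph{totally ramified}, the image of $\I$ in $\Gal(K_r/\Q_p)\simeq \Gamma/\Gamma_r$ is everything, and the action of this quotient on $T_p\G_r^{\et}$---the ``geometric inertia'' action of $\Gamma$---is a priori nontrivial. (It is nontrivial on $\G_r^{\mult}$, where it acts by $\langle\chi\rangle^{-1}$.) The triviality on $\G_r^{\et}$ is a structural fact coming from the geometry of the modular tower, established in the paper via Proposition \ref{SpecialFiberOrdinary} (and ultimately Proposition \ref{AtkinInertiaCharp}); it does not follow from \'etaleness over $R_r$ alone.

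Your route to (\ref{inertialcovariants}) is genuinely different from the paper's. Where you propose to dualise (\ref{inertialinvariants}), combining Theorem \ref{DDuality} with $\Lambda$-adic Poincar\'e duality, the paper simply reruns the same argument directly, using the functorial isomorphism $\D(H)\tens_{W(k)}W(\o{k})\simeq T_p\Dual{H}\tens_{\Z_p}W(\o{k})$ for multiplicative $p$-divisible groups over a perfect field; the Tate twist $(-1)$ in (\ref{inertialcovariants}) is then simply Cartier duality between $\G_r^{\mult}$ and its \'etale dual. Two concerns with your detour. First, within this paper's internal logic there is a circularity risk: the $\Lambda$-adic duality of Corollary \ref{OhtaDuality} is proved here using Corollary \ref{HidasThm}, whose proof invokes the isomorphism (\ref{limitetaleseq}), which in turn relies on the very finite-level Lemma giving (\ref{inertialcovariants}). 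You would need to supply the $\Lambda$-adic Poincar\'e duality by an independent route (e.g.\ Ohta's original Hodge--Tate argument, or Hida's original control theorem plus finite-level Poincar\'e duality). Second, the twist bookkeeping you defer is nontrivial: to reconcile the character $\langle\chi\rangle\langle a\rangle_N$ from Theorem \ref{DDuality} and $\nu=\chi\langle\chi\rangle\lambda(\langle p\rangle_N)$ from Corollary \ref{OhtaDuality} you need the identity $\langle a\rangle_N=\lambda(\langle p\rangle_N)$ as $\H^*$-valued characters of $\scrG_{\Q_p}$ (both are the unramified character sending $\Frob_p$ to $\langle p\rangle_N$), after which the product collapses to $\chi^{-1}$; this should be made explicit rather than attributed vaguely to ``the Weil and Poincar\'e pairings.''
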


Theorem \ref{FiltrationRecover} gives the following ``explicit" description
of the semisimplification of ${e^*}'H^1_{\et}$:

\begin{corollary}
	For any $T\in (\H^{*\ord})^{\times}$, let 
	$\lambda(T):\scrG_{\Q_p}\rightarrow \H^{*\ord}$ be the 
	unique continuous $($for the $p$-adic topology on $\H^{*\ord}$$)$
	unramified character whose value on $($any lift of$)$
	$\mathrm{Frob}_p$ is $T$.
	Then $\scrG_{\Q_p}$ acts on $({e^*}'H^1_{\et})^{\I}$ through the
	character $\lambda({U_p^*}^{-1})$ and on $({e^*}'H^1_{\et})_{\I}$ 
	through $\chi^{-1} \cdot \langle \chi^{-1}\rangle \lambda(\langle p\rangle_N^{-1}U_p^*)$.
\end{corollary}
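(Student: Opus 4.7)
The plan is to derive both Galois actions directly from Theorem \ref{FiltrationRecover} by isolating what $\I$ and a chosen Frobenius lift do on each side, then reading off the resulting Hecke formulas via Theorem \ref{DieudonneMainThm}. Since $\lambda(T)$ is by definition the unramified character with $\lambda(T)(\mathrm{Frob}_p) = T$, in each part it suffices to identify the action of $\I$ and of one lift $g_0 \in \scrG_{\Q_p}$ of arithmetic Frobenius; such a $g_0$ acts on $W(\o{\F}_p)$ as $\sigma$.

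For the inertial invariants, the action of $\I$ is trivial by construction, so only $g_0$ needs to be pinned down. Letting $\phi$ denote the isomorphism \eqref{inertialinvariants}, I would combine the two intertwinings $F \otimes \sigma \leftrightarrow \id \otimes \sigma$ and $\id \otimes g_0 \leftrightarrow g_0 \otimes g_0$ of Theorem \ref{FiltrationRecover} as follows: writing the diagonal on the target as $(g_0 \otimes g_0) = (g_0 \otimes 1)(\id \otimes \sigma)$ and substituting $\id \otimes \sigma = \phi \circ (F \otimes \sigma) \circ \phi^{-1}$ yields the identity of $\Lambda_{W(\o{\F}_p)}$-linear operators $g_0 \otimes 1 = \phi \circ (F^{-1} \otimes 1) \circ \phi^{-1}$. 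Because $\Lambda_{W(\o{\F}_p)}$ is faithfully flat over $\Lambda$, this descends to $g_0|_{({e^*}'H^1_{\et})^{\I}} = F^{-1}|_{\D_{\infty}^{\et}}$, and by Theorem \ref{DieudonneMainThm} the latter equals $(U_p^*)^{-1}$, which identifies the representation with $\lambda((U_p^*)^{-1})$.

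The covariants case is the analogous argument applied to \eqref{inertialcovariants}, now keeping careful track of the Tate twist. Set $a(g) := \chi(g)^{-1} \langle\chi(g)^{-1}\rangle$; Theorem \ref{DieudonneMainThm} gives $g \cdot x = a(g)\, x$ on $\D_{\infty}^{\mult}(-1)$ and $V = \langle p \rangle_N^{-1} U_p^*$ on $\D_{\infty}^{\mult}$. For $g \in \I$, the action on $W(\o{\F}_p)$ is trivial, so the $g \otimes g \leftrightarrow g \otimes g$ intertwining reduces to pure scalar multiplication by $a(g)$, forcing the inertia action on $({e^*}'H^1_{\et})_{\I}$ to agree with $\chi^{-1} \langle\chi^{-1}\rangle$. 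For the Frobenius lift I would compute on the source that $(V \otimes \sigma^{-1}) \circ (g_0 \otimes g_0) = a(g_0) V \otimes 1$ (using $g_0|_{\D_{\infty}^{\mult}(-1)} = a(g_0)$ and $g_0|_{\Lambda_{W(\o{\F}_p)}} = \sigma$), while the intertwinings $V \otimes \sigma^{-1} \leftrightarrow \id \otimes \sigma^{-1}$ and $g \otimes g \leftrightarrow g \otimes g$ identify the same composition on the target with $g_0 \otimes 1$. Transporting through $\phi$ and applying $V = \langle p \rangle_N^{-1} U_p^*$ then yields $g_0|_{({e^*}'H^1_{\et})_{\I}} = a(g_0) \cdot \langle p \rangle_N^{-1} U_p^*$, which is exactly the value of $\chi^{-1} \langle\chi^{-1}\rangle \lambda(\langle p \rangle_N^{-1} U_p^*)$ at $g_0$.

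The main obstacle is the semilinear bookkeeping in the covariants case: the Verschiebung semilinearity, the Tate-twist contribution to the Galois action on $\D_{\infty}^{\mult}(-1)$, and the action of $g_0$ on $W(\o{\F}_p)$ each contribute compensating factors of $\chi$, $a$, or $\sigma^{\pm 1}$, and the claimed formula only emerges after all three intertwinings of Theorem \ref{FiltrationRecover}(b) are combined consistently. By comparison, the invariants case is painless once faithfully flat descent is invoked, since the resulting representation is unramified and only a single $\Lambda$-linear Frobenius action must be matched.
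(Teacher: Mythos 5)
Your deduction is correct and is exactly what the paper leaves implicit here: the corollary is stated without proof as an immediate consequence of Theorem~\ref{FiltrationRecover}, and the proposal spells out the intended computation, combining the three intertwinings with the Hecke identifications $F=U_p^*$ on $\D_\infty^{\et}$ and $\langle p\rangle_N V=U_p^*$ on $\D_\infty^{\mult}$ from Theorem~\ref{DieudonneMainThm}, then descending along the faithfully flat $\Lambda\to\Lambda_{W(\o{\F}_p)}$. One phrasing point: the step ``$g_0|_{({e^*}'H^1_{\et})^{\I}} = F^{-1}|_{\D_{\infty}^{\et}}$'' equates operators on different modules; what is actually used is that $\phi$ is $\H^*$-equivariant, so $\phi\circ(F^{-1}\otimes 1)\circ\phi^{-1} = ({U_p^*}^{-1}\otimes 1)$ as operators on the target, and \emph{then} faithful flatness gives $g_0={U_p^*}^{-1}$ on $({e^*}'H^1_{\et})^{\I}$ (and likewise in the covariants case); you should make this intermediate rewriting explicit rather than passing directly from the conjugation identity to the descended statement.
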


We remark that Corollary \ref{OhtaCor} and Theorem \ref{FiltrationRecover}
combined give a refinement of the main result of \cite{OhtaEichler}.
We are furthermore able to recover the main theorem of \cite{MW-Hida}
(that the ordinary filtration of ${e^*}'H^1_{\et}$ interpolates
$p$-adic analytically):

\begin{corollary}\label{MWmainThmCor}
	Let $d'$ be the integer of Theorem $\ref{DieudonneMainThm}$.  Then each of
	$({e^*}'H^1_{\et})^{\I}$ and $({e^*}'H^1_{\et})_{\I}$ is a free
	$\Lambda$-module of rank $d'$, and for each $r\ge 1$ there are canonical
	$\H^*$ and $\scrG_{\Q_p}$-equivariant isomorphisms of $\Z_p[\Delta/\Delta_r]$-modules
	\begin{subequations}
	\begin{equation}	
		({e^*}'H^1_{\et})^{\I} \tens_{\Lambda} \Z_p[\Delta/\Delta_r] \simeq 
		{e^*}'H^1_{\et}({X_r}_{\Qbar_p},\Z_p)^{\I}\label{HidaResultSub}
	\end{equation}
	\begin{equation}
		({e^*}'H^1_{\et})_{\I} \tens_{\Lambda} \Z_p[\Delta/\Delta_r] \simeq 
		{e^*}'H^1_{\et}({X_r}_{\Qbar_p},\Z_p)_{\I}
		\label{HidaResultQuo}
	\end{equation}
	\end{subequations}
\end{corollary}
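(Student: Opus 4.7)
The plan is to deduce both assertions from Theorem \ref{FiltrationRecover} together with Theorem \ref{DieudonneMainThm} via faithfully flat descent along the base change $\Lambda\hookrightarrow \Lambda_{W(\o{\F}_p)}$, which is faithfully flat since $W(\o{\F}_p)/\Z_p$ is.

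For the freeness statement, I would first note that by Theorem \ref{FiltrationRecover}, after $\tens_{\Lambda}\Lambda_{W(\o{\F}_p)}$ the modules $({e^*}'H^1_{\et})^{\I}$ and $({e^*}'H^1_{\et})_{\I}$ identify with $\D_{\infty}^{\et}\tens_{\Lambda}\Lambda_{W(\o{\F}_p)}$ and $\D_{\infty}^{\mult}(-1)\tens_{\Lambda}\Lambda_{W(\o{\F}_p)}$ respectively, each free of rank $d'$ by Theorem \ref{DieudonneMainThm}. Since ${e^*}'H^1_{\et}$ is finitely generated over $\Lambda$ (by the geometric finiteness theorem established earlier in the paper), both the submodule $({e^*}'H^1_{\et})^{\I}$ and the quotient $({e^*}'H^1_{\et})_{\I}$ are finitely generated by Noetherianness of $\Lambda$. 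Projectivity descends along faithfully flat extensions and finitely generated projectives over the local Noetherian ring $\Lambda$ are free, so the desired freeness of rank $d'$ over $\Lambda$ follows.

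For the control isomorphisms, I would exploit the fact that because $\I$ acts $\Lambda$-linearly and $\Z_p[\Delta/\Delta_r]$ is flat over $\Z_p$, formation of $\I$-invariants and $\I$-coinvariants (the latter trivially as a right-exact functor, the former by a standard continuity argument) commutes with $\cdot\tens_{\Lambda}\Z_p[\Delta/\Delta_r]$. Combined with the $\Lambda$-adic control isomorphism ${e^*}'H^1_{\et}\tens_{\Lambda}\Z_p[\Delta/\Delta_r]\simeq {e^*}'H^1_{\et}({X_r}_{\Qbar_p},\Z_p)$ proved geometrically elsewhere in the paper, this yields natural $\H^*$ and $\scrG_{\Q_p}$-equivariant candidate maps realizing $(\ref{HidaResultSub})$ and $(\ref{HidaResultQuo})$. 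To check these are isomorphisms it suffices, by faithful flatness of $\Z_p[\Delta/\Delta_r]\hookrightarrow W(\o{\F}_p)[\Delta/\Delta_r]$, to verify this after $\tens_{\Z_p}W(\o{\F}_p)$. After such base change, Theorem \ref{FiltrationRecover} combined with the specialization assertion of Theorem \ref{DieudonneMainThm} identifies both sides with $\D(\o{\G}_r^{\et})\tens_{\Z_p}W(\o{\F}_p)$ (respectively $\D(\o{\G}_r^{\mult})(-1)\tens_{\Z_p}W(\o{\F}_p)$), matched via the classical Raynaud--Tate identification of $T_p({\G_r}_{\Qbar_p})^{\I}$ with the Tate module of the \'etale quotient of $\G_r$, together with the Dieudonn\'e comparison for ordinary $p$-divisible groups over $R_r$ after extension of scalars to $W(\o{\F}_p)$.

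The main obstacle is aligning canonicalities: the $\Lambda$-linear candidate specialization maps must, after $\tens_{\Z_p}W(\o{\F}_p)$, coincide with the identifications produced by Theorem \ref{FiltrationRecover} composed with the finite-level specialization of Theorem \ref{DieudonneMainThm}. Concretely, this amounts to a compatibility check between the $\Lambda$-adic \'etale--Dieudonn\'e comparison of Theorem \ref{FiltrationRecover} and the classical finite-level Raynaud--Tate comparison for the ordinary $p$-divisible group $\G_r/R_r$; provided Theorem \ref{FiltrationRecover} is proved using the tower $\{\G_r\}$ with this finite-level compatibility built in (as one expects from its formulation), the verification reduces to unwinding the definitions.
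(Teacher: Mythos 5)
Your overall strategy --- read off freeness and control from Theorem \ref{FiltrationRecover} via faithfully flat descent along $\Lambda\hookrightarrow\Lambda_{W(\o{\F}_p)}$ --- is exactly what the paper means when it says the corollary is ``an easy consequence of Theorem \ref{FiltrationRecover} and Lemma \ref{fflatfreedescent}.'' But two of the inputs you invoke are themselves only established \emph{later} in the paper, which makes the argument as written circular. First, you appeal to finite generation of ${e^*}'H^1_{\et}$ over $\Lambda$ via ``the geometric finiteness theorem established earlier in the paper,'' and then later to ``the $\Lambda$-adic control isomorphism ${e^*}'H^1_{\et}\tens_{\Lambda}\Z_p[\Delta/\Delta_r]\simeq {e^*}'H^1_{\et}({X_r}_{\Qbar_p},\Z_p)$ proved geometrically elsewhere.'' Both of these \emph{are} Hida's theorem (Corollary \ref{HidasThm}), which is proved in \S\ref{OrdSigmaSection}, strictly \emph{after} the present corollary; at this point in the paper they are not available. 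The finiteness detour is in any case unnecessary: Lemma \ref{fflatfreedescent} applies to an \emph{arbitrary} $A$-module, so $({e^*}'H^1_{\et})^{\I}\otimes_\Lambda\Lambda_{W(\o{\F}_p)}$ being finite free of rank $d'$ (which is what \ref{FiltrationRecover} plus \ref{DieudonneMainThm} give) already forces $({e^*}'H^1_{\et})^{\I}$ to be finite free of rank $d'$ over $\Lambda$, with no input on ${e^*}'H^1_{\et}$ itself.

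The second gap is in your reduction to finite level. You assert that forming $\I$-invariants commutes with $\cdot\otimes_\Lambda\Z_p[\Delta/\Delta_r]$, justified by ``a standard continuity argument.'' But $\Lambda\twoheadrightarrow\Z_p[\Delta/\Delta_r]$ is a quotient, not a flat extension, and left-exact functors such as $(\cdot)^{\I}$ do not in general commute with non-flat base change; no continuity argument fixes this a priori. What the paper actually does is sidestep the full module ${e^*}'H^1_{\et}$ entirely: in the proof of Theorem \ref{FiltrationRecover} one applies $H^1_{\et}(\cdot)=\Hom_{\Z_p}(T_p(\cdot),\Z_p)$ to the connected--\'etale sequence of $G_r={e^*}'J_r[p^\infty]$ at each finite level, identifies $({e^*}'H^1_{\et,r})^{\I}$ and $({e^*}'H^1_{\et,r})_{\I}$ with $H^1_{\et}(G_r^{\et})$ and $H^1_{\et}(G_r^{\mult})$, shows these are finite free of rank $d'$ over $\Z_p[\Delta/\Delta_r]$ by the comparison isomorphisms (\ref{etalecase})--(\ref{multcase}) together with Lemma \ref{fflatfreedescent}, verifies surjectivity of the trace maps by the same faithful flatness, and then invokes Lemma \ref{Technical} for the tower $\{H^1_{\et}(G_r^\star)\}_r$. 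That lemma simultaneously delivers $\Lambda$-freeness, the specialization isomorphisms of your (\ref{HidaResultSub}) and (\ref{HidaResultQuo}), and compatibility with the $W(\o{\F}_p)$-base change --- without presupposing anything about $\Lambda$-structure of ${e^*}'H^1_{\et}$. If you rework your argument to run Lemma \ref{Technical} on the tower $\{H^1_{\et}(G_r^{\star})\}$ rather than on the full \'etale cohomology, both gaps disappear.
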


To recover the full $\Lambda$-adic local Galois representation ${e^*}'H^1_{\et}$,
rather than just its semisimplification,
it is necessary to work with the full Dieudonn\'e {\em crystal} of $\G_r$ over $R_r$.
Following Faltings \cite{Faltings} and Breuil (e.g. \cite{Breuil}), this is equivalent
to studying the evaluation of the Dieudonn\'e crystal of $\G_r\times_{R_r} R_r/pR_r$
on the ``universal" divided power thickening $S_r\twoheadrightarrow R_r/pR_r$,
where $S_r$ is the $p$-adically completed PD-hull
of the surjection $\Z_p[\![u_r]\!]\twoheadrightarrow R_r$
sending $u_r$ to $\varepsilon^{(r)}-1$.  As the rings $S_r$ are too unwieldly
to directly construct a good crystalline analogue of Hida's
ordinary \'etale cohomology, we must functorially descend  
the ``filtered $S_r$-module" attached to $\G_r$ to the much simpler ring $\s_r:=\Z_p[\![u_r]\!]$.  
While such a descent is provided
(in rather different ways) by the work of Breuil--Kisin and Berger--Wach, neither of these
frameworks is suitable for our application: it is essential for us
that the formation of this descent to $\s_r$ commute with base change as one moves up
the cyclotomic tower, and it is not at all clear that this holds for Breuil--Kisin modules 
or for the Wach modules of Berger.  Instead, we use the theory of \cite{CaisLau}, which works with frames and 
windows \`a la Lau and Zink to provide the desired functorial descent to a ``$(\varphi,\Gamma)$-module" $\m_r(\G_r)$
over $\s_r$.  
We view $\s_r$
as a $\Z_p$-subalgebra of $\s_{r+1}$ via the map sending $u_r$ to $\varphi(u_{r+1}):=(1+u_{r+1})^p -1$,
and we write $\s_{\infty}:=\varinjlim \s_r$ for 
the rising union\footnote{As explained in Remark \ref{Slimits}, 
the $p$-adic completion of $\s_{\infty}$ is actually a very nice ring:
it is canonically and Frobenius equivariantly isomorphic to $W(\F_p[\![u_0]\!]^{\perf})$,
for $\F_p[\![u_0]\!]^{\perf}$ the perfection of the $\F_p$-algebra $\F_p[\![u_0]\!]$.
}
of the $\s_r$, equiped with its Frobenius {\em automorphism} $\varphi$ and commuting action of 
$\Gamma$ determined by $\gamma u_r:=(1+u_r)^{\chi(\gamma)} - 1$.
We then form the projective limits
\begin{equation*}
	\m_{\infty}^{\star}:=\varprojlim (\m_r(\G_r^{\star})\tens_{\s_r} \s_{\infty})\quad\text{for}\quad \star\in\{\et,\mult,\Null\}
\end{equation*}
taken along the mappings induced by $\G_{r}\times_{R_r} R_{r+1}\rightarrow \G_{r+1}$
via the functoriality of $\m_r(\cdot)$
and its compatibility with base change.
These are $\Lambda_{\s_{\infty}}$-modules equipped with a semilinear action of $\Gamma$,
a linear and commuting action of $\H^*$, and a commuting $\varphi$ (respectively $\varphi^{-1}$) semilinear endomorphism $F$ (respectively $V$)
satisfying $FV=VF = \omega$, for $\omega:=\varphi(u_1)/u_1 = u_0/\varphi^{-1}(u_0)\in \s_{\infty}$,
and they provide
our crystalline analogue of Hida's ordinary \'etale cohomology (see Theorem \ref{MainThmCrystal}):

\begin{theorem}
	There is a canonical short exact sequence of finite free $\Lambda_{\s_{\infty}}$-modules
	with linear $\H^*$-action, semilinear $\Gamma$-action, and commuting semilinear
	endomorphisms $F,$ $V$ satisfying $FV=VF=\omega$
	\begin{equation}
		\xymatrix{
			0 \ar[r] & {\m_{\infty}^{\et}} \ar[r] & {\m_{\infty}} \ar[r] & {\m_{\infty}^{\mult}} \ar[r] & 0
		}.\label{CrystallineAnalogue}
	\end{equation}
	Each of $\m_{\infty}^{\star}$ for $\star\in \{\et,\mult\}$ is free of rank $d'$ over 
	$\Lambda_{\s_{\infty}}$, while $\m_{\infty}$ is free of rank $2d'$, where
	$d'$ is as in Theorem $\ref{DieudonneMainThm}$.  Extending scalars on $(\ref{CrystallineAnalogue})$
	along the canonical surjection 
	$\Lambda_{\s_{\infty}}\twoheadrightarrow \s_{\infty}[\Delta/\Delta_r]$ yields the short exact 
	sequence
	\begin{equation*}
		\xymatrix{
			0 \ar[r] & {\m_r(\G_r^{\et})\tens_{\s_r} \s_{\infty}} \ar[r] &
			{\m_r(\G_r)\tens_{\s_r} \s_{\infty}} \ar[r] &
			{\m_r(\G_r^{\mult})\tens_{\s_r} \s_{\infty}} \ar[r] & 
			0
		}
	\end{equation*}
	compatibly with $\H^*$, $\Gamma$, $F$ and $V$.
\end{theorem}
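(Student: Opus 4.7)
The plan is to apply the window-theoretic functor $\m_r$ of \cite{CaisLau} to the slope filtration of $\G_r$ at each level, extend scalars to $\s_{\infty}$, and pass to the projective limit along the cyclotomic tower. The output of Theorem \ref{DieudonneMainThm} will then serve as the ``residual'' input from which freeness over $\Lambda_{\s_{\infty}}$ and the finite-level control statement are bootstrapped.

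\emph{Finite level.} Since $U_p^*$ acts invertibly on $\D(\o{\G}_r)$ by Theorem \ref{DieudonneMainThm}, the special fiber $\o{\G}_r$ is ordinary, so $\G_r$ over $R_r$ is an ordinary $p$-divisible group and admits a canonical slope/connected-\'etale filtration
\begin{equation*}
	0 \to \G_r^{\mult} \to \G_r \to \G_r^{\et} \to 0.
\end{equation*}
Applying the contravariant exact functor $\m_r$ yields a short exact sequence
\begin{equation*}
	0 \to \m_r(\G_r^{\et}) \to \m_r(\G_r) \to \m_r(\G_r^{\mult}) \to 0
\end{equation*}
of finite free $\s_r$-modules (of ranks $d'$, $2d'$, $d'$ respectively, by comparison of heights with the $\Z_p$-ranks in Theorem \ref{DieudonneMainThm}), equipped with semilinear $F$ and $V$ satisfying $FV=VF=\omega$ and with commuting linear actions of $\H^*$ and semilinear $\Gamma$ induced by functoriality from the corresponding actions on $\G_r$ coming from the Hecke correspondences and the descent of the generic fiber of $\G_r$ to $\Q_p$. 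Extending scalars along $\s_r \hookrightarrow \s_{\infty}$ preserves exactness and freeness of the same ranks.

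\emph{Passage to the limit.} The morphism $\G_r \times_{R_r} R_{r+1} \to \G_{r+1}$ induces, via the functoriality of $\m_\bullet$ and its base-change compatibility (which gives a canonical isomorphism $\m_{r+1}(\G_r \times_{R_r} R_{r+1}) \simeq \m_r(\G_r) \otimes_{\s_r} \s_{r+1}$), a transition map
\begin{equation*}
	\m_r(\G_r^{\star}) \otimes_{\s_r} \s_{\infty} \longrightarrow \m_{r+1}(\G_{r+1}^{\star}) \otimes_{\s_{r+1}} \s_{\infty}
\end{equation*}
for each $\star \in \{\et,\mult,\Null\}$. The projective limit $\m_{\infty}^{\star}$ inherits semilinear $F$, $V$ (still satisfying $FV = VF = \omega$), linear $\H^*$, and semilinear $\Gamma$; the resulting $\Lambda_{\s_\infty}$-module structure comes from the diamond operator action on the tower, exactly as in the \'etale and de~Rham cases built earlier in the paper.

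\emph{Freeness and control.} The main obstacle is to show that $\m_{\infty}^{\star}$ is finite free over $\Lambda_{\s_{\infty}}$ of the predicted rank and that the displayed base-change isomorphism holds. The strategy is to reduce to the already-known case of $\D_{\infty}$. Specifically, the Cais--Lau construction is compatible with the reduction $\s_r \to \s_r/u_r \s_r = \Z_p$ in the sense that it recovers the contravariant Dieudonn\'e module of the special fiber: $\m_r(\G_r) \otimes_{\s_r} \Z_p \simeq \D(\o{\G}_r)$, compatibly with all structures and with the transition maps in the tower. Reducing $\m_{\infty}^{\star}$ along the induced map $\Lambda_{\s_{\infty}} \twoheadrightarrow \Lambda$ therefore yields $\D_{\infty}^{\star}$, which is finite free over $\Lambda$ of the predicted rank by Theorem \ref{DieudonneMainThm}. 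A topological Nakayama argument---using that $\Lambda_{\s_\infty}$ is $(p,u_0)$-adically complete and that the transition maps in the tower are surjective at each finite level (which in turn follows from the equality of ranks and a Mittag--Leffler argument)---then lifts freeness from $\D_{\infty}^{\star}$ back to $\m_{\infty}^{\star}$. The control isomorphism at level $r$ is obtained from the same Nakayama argument, now applied modulo the kernel of $\Lambda_{\s_{\infty}} \twoheadrightarrow \s_{\infty}[\Delta/\Delta_r]$: freeness plus the surjectivity of the structural map $\m_{\infty}^{\star}\otimes_{\Lambda_{\s_\infty}}\s_{\infty}[\Delta/\Delta_r] \to \m_r(\G_r^{\star})\otimes_{\s_r}\s_{\infty}$ forces it to be an isomorphism by a rank count.

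\emph{Extra structures.} The compatibility of the control isomorphism with $F$, $V$, $\H^*$ and $\Gamma$ is automatic from the naturality of $\m_\bullet$ and the constructions above, and the displayed short exactness at each level follows from the exactness of $\m_r$ on connected-\'etale sequences of ordinary $p$-divisible groups together with the flatness of $\s_{\infty}$ over $\s_r$.
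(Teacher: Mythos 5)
Your plan follows the paper's proof in all essential respects: finite-level exactness of $\m_r$ applied to the connected--\'etale sequence of the ordinary $\G_r$, scalar extension along $\s_r\hookrightarrow\s_\infty$, passage to the projective limit, and a bootstrap from the known freeness of $\D_\infty^{\star}$ over $\Lambda$ via the specialization $u_r\mapsto 0$ recovering $\D(\o{\G}_r)$. In the paper this bootstrap is exactly Lemma \ref{Technical}, applied (after a harmless pullback by the Frobenius of $\s_\infty$) with $A_r=\s_r$, $I_r=(u_r)$, $B=\s_\infty$, and $M_r$ ranging over the terms of (\ref{BTindLimPB}); hypotheses (\ref{freehyp}) and (\ref{surjhyp}) are verified via (\ref{MrToDieudonneMap}) together with Theorem \ref{MainDieudonne}.

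Two slips in your sketch need correcting. First, the $\s_r$-ranks of $\m_r(\G_r^{\et})$, $\m_r(\G_r)$, $\m_r(\G_r^{\mult})$ are not $d'$, $2d'$, $d'$: they equal the heights of the $\G_r^{\star}$, which by Theorem \ref{DieudonneMainThm} are $d'\cdot p^{r-1}$, $2d'\cdot p^{r-1}$, $d'\cdot p^{r-1}$. What you actually need (and what Lemma \ref{Technical} (\ref{red2pfree}) produces) is freeness over $\s_r[\Delta/\Delta_r]$ of ranks $d'$, $2d'$, $d'$. Second, your derivation of the surjectivity of the finite-level transition maps ``from the equality of ranks and a Mittag--Leffler argument'' has the logic the wrong way round: Mittag--Leffler consumes surjectivity as an input, and an equality of ranks alone does not produce it. The correct chain, as in Lemma \ref{Technical}, is: the transition maps reduce modulo $u_r$ to the transition maps among the $\D(\o{\G}_r^{\star})$ (by functoriality of (\ref{MrToDieudonneMap})), whose surjectivity was established in the proof of Theorem \ref{MainDieudonne}; Nakayama then lifts this to surjectivity at the $\s_r[\Delta/\Delta_r]$-level; and only then does Mittag--Leffler give exactness of the projective limit, with the control isomorphism following by a rank count.
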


Again, in the spirit of Theorems \ref{dRdualityThm} and \ref{DDuality}, 
there is a corresponding ``autoduality" result for $\m_{\infty}$
(see Theorem \ref{CrystalDuality}).  To state it, we must work over the ring 
$\s_{\infty}':=\varinjlim_r \Z_p[\mu_N][\![u_r]\!]$, with the inductive limit taken along the 
$\Z_p$-algebra maps sending $u_r$ to $\varphi(u_{r+1})$.

\begin{theorem}
		Let $\mu:\Gamma\rightarrow \Lambda_{\s_{\infty}}^{\times}$ be the crossed homomorphism 
		given by $\mu(\gamma):=\frac{u_1}{\gamma u_1}\chi(\gamma) \langle \chi(\gamma)\rangle$.
		There is a canonical $\H^*$ and $\Gal(K_{\infty}'/K_0)$-compatible isomorphism of 
		exact sequences
		\begin{equation*}
		\begin{gathered}
			\xymatrix{
			0\ar[r] & {\m_{\infty}^{\et}(\mu \langle a\rangle_N)_{\Lambda_{\s_{\infty}'}}} \ar[r]\ar[d]_-{\simeq} & 
			{\m_{\infty}(\mu \langle a\rangle_N)_{\Lambda_{\s_{\infty}'}}} \ar[r]\ar[d]_-{\simeq} & 
				{\m_{\infty}^{\mult}(\mu \langle a\rangle_N)_{\Lambda_{\s_{\infty}'}}} \ar[r]\ar[d]_-{\simeq} & 0\\
	0\ar[r] & {(\m_{\infty}^{\mult})_{\Lambda_{\s_{\infty}'}}^{\vee}} \ar[r] & 
				{(\m_{\infty})_{\Lambda_{\s_{\infty}'}}^{\vee}} \ar[r] & 
				{(\m_{\infty}^{\et})_{\Lambda_{\s_{\infty}'}}^{\vee}} \ar[r] & 0 
		}
		\end{gathered}
		\end{equation*}
		that intertwines $F$ 
		$($respectively $V$$)$ on the top row with  
		$V^{\vee}$ $($respectively $F^{\vee}$$)$  on the bottom.
\end{theorem}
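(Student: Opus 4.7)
The plan is to derive this theorem from a finite-level autoduality on the windows $\m_r(\G_r)$ and then pass to the projective limit. The key finite-level input is the canonical autoduality of $\G_r$ coming from the $\lambda$-polarization of $J_r$ together with the Weil pairing, which identifies $\G_r$ with the Cartier dual of a $\langle p\rangle_N$-twist of itself and interchanges the multiplicative and \'etale parts. Since the frame-and-window functor $\m_r(\cdot)$ of \cite{CaisLau} is compatible with Cartier duality for $p$-divisible groups over $R_r$, this induces at each finite level an $\H_r^*$-equivariant isomorphism of short exact sequences of $\s_r$-modules between an appropriate twist of $\m_r(\G_r)$ and its $\s_r$-linear dual, interchanging $F$ with $V^{\vee}$ and swapping multiplicative with \'etale.

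Next I would verify compatibility of these finite-level dualities with the tower structure and the $\Gamma$-action. The transition maps $\G_r\times_{R_r} R_{r+1}\to \G_{r+1}$ are dual, up to a controlled diamond-operator twist, to the trace maps used to form the projective limits of Theorem \ref{DieudonneMainThm}, and this compatibility transfers to the windows by functoriality of $\m_r(\cdot)$ together with the base-change property that was essential for defining $\m_{\infty}$ in the first place. Passing to the projective limit over $r$ and then extending scalars along $\Lambda_{\s_\infty}\to \Lambda_{\s_\infty'}$ assembles the finite-level dualities into the required isomorphism of exact sequences. The $\langle a\rangle_N$-twist and $\Gal(K_0'/K_0)$-equivariance after scalar extension to $\s_\infty'$ arise exactly as in Theorems \ref{dRdualityThm} and \ref{DDuality}, from the Weil pairing on $N$-torsion.

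The subtle point, and the main obstacle, is pinning down the precise form of $\mu(\gamma) = \frac{u_1}{\gamma u_1}\chi(\gamma)\langle \chi(\gamma)\rangle$. The factor $\chi(\gamma)$ is the standard Tate twist from Cartier duality for $p$-divisible groups; the factor $\langle \chi(\gamma)\rangle$ reflects the $\Gamma$-dependence of the $\langle p\rangle_N$-twist in the polarization as one moves up the tower, parallel to the $\langle \chi\rangle$ twist appearing in Theorems \ref{dRdualityThm} and \ref{DDuality}; and the correction $u_1/\gamma u_1$ is genuinely new to this setting, arising because the $\Gamma$-action on $\s_\infty$ is semilinear rather than linear. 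Specifically, comparing the induced $\Gamma$-action on $\Hom_{\s_\infty}(\m_\infty,\s_\infty)$ with the natural action on $\m_\infty$ produces a discrepancy equal to the Jacobian of the change of variable $u_1\mapsto \gamma u_1 = (1+u_1)^{\chi(\gamma)}-1$, which up to a unit is $\gamma u_1/u_1$.

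Confirming this precise form will require a careful computation at the base level $r=1$, where the action of $\Gamma$ on $\s_1 = \Z_p[\![u_1]\!]$ can be made explicit and the Jacobian correction can be read off from how the Cartier pairing on $\m_1(\G_1)$ transforms under $\gamma$; the compatibility of dualities with the tower transition maps then propagates the formula for $\mu$ to every level. The relation $FV = VF = \omega$ provides a useful consistency check on the resulting twist, since any candidate $\mu$ must be consistent with the $F\leftrightarrow V^{\vee}$ swap on both sides of the exact sequence. Once $\mu$ is verified at $r=1$ and compatibility in the tower is in hand, the passage to $\m_\infty$ and then to $\Lambda_{\s_\infty'}$ is routine, completing the proof.
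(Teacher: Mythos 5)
Your proposal follows essentially the same route as the paper's proof: use the principal polarization of $J_r$ together with the Atkin--Lehner involution to identify $\G_r'=\G_r^{\vee}$ with the twist $\G_r(\langle\chi\rangle\langle a\rangle_N)$ (this is Proposition \ref{GdualTwist}), push this through the exactness and duality compatibility of the functor $\m_r$ (Theorem \ref{CaisLauMain}), verify compatibility with the tower transition maps via the relation $\Pic^0(\pr)\circ\Alb(\ps)=U_p^*\sum_{\delta}\langle\delta^{-1}\rangle$ (which is exactly (\ref{PicAlbRelation}), coming from Lemma \ref{MFtraceLem}), and then invoke Lemma \ref{LambdaDuality} and Theorem \ref{MainThmCrystal} to pass to the limit.

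One correction to your heuristic: the factor $u_1/\gamma u_1$ in $\mu$ is \emph{not} a Jacobian of the change of variable $u_1\mapsto\gamma u_1$ (note that $\gamma u_1/u_1$ is not the derivative of $(1+u_1)^{\chi(\gamma)}-1$). Rather, this factor is already built into the definition of duality in the category $\BT_{\s_r}^{\varphi,\Gamma}$: by Definition \ref{DualBTDef}, the dual $\m^t$ carries the $\Gamma$-action $(\gamma f)(m)=\chi(\gamma)^{-1}\varphi^{r-1}(\gamma u_r/u_r)\cdot\gamma(f(\gamma^{-1}m))$, and under the standard identifications $\varphi^{r-1}(\gamma u_r/u_r)=\gamma u_1/u_1$. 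This twist is the ``cyclotomic character'' of the $\BT$-module category, as Example \ref{GmQpZpExamples} makes explicit: $\m_r(\mu_{p^\infty})=\m_r(\Q_p/\Z_p)^t$ carries precisely this $\Gamma$-action. So no separate computation at level $r=1$ is required; the formula $\mu(\gamma)=\frac{u_1}{\gamma u_1}\chi(\gamma)\langle\chi(\gamma)\rangle$ falls out immediately by combining Proposition \ref{GdualTwist} with Definition \ref{DualBTDef} and then translating the $\BT$-dual $\m_r(\G_r)^t$ into the ordinary $\Lambda_{\s_{\infty}'}$-linear dual appearing in the theorem statement (which removes the $\chi(\gamma)^{-1}\gamma u_1/u_1$ factor, inverting it into $\chi(\gamma)\,u_1/\gamma u_1$).
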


The $\Lambda_{\s_{\infty}}$-modules $\m_{\infty}^{\et}$ and $\m_{\infty}^{\mult}$
have a particularly simple structure (see Theorem \ref{etmultdescent}):

\begin{theorem}
	There are canonical $\H^*$, $\Gamma$, $F$ and $V$-equivariant isomorphisms
	of $\Lambda_{\s_{\infty}}$-modules
	\begin{subequations}
	\begin{equation}
		\m_{\infty}^{\et} \simeq \D_{\infty}^{\et}\tens_{\Lambda} \Lambda_{\s_{\infty}},
	\end{equation}	
	intertwining $F$ $($respetcively $V$$)$ with 
	$F\otimes \varphi$ $($respectively $F^{-1}\otimes \omega\cdot \varphi^{-1}$$)$ and $\gamma\in \Gamma$
	with $\gamma\otimes\gamma$, and
	\begin{equation}
		\m_{\infty}^{\mult}\simeq \D_{\infty}^{\mult}\tens_{\Lambda} \Lambda_{\s_{\infty}},	
	\end{equation}
	intertwing $F$ $($respectively $V$$)$ with $V^{-1} \otimes \omega \cdot\varphi$
	$($respectively $V\otimes\varphi^{-1}$$)$
	and $\gamma$ with $\gamma\otimes \chi(\gamma)^{-1} \gamma u_1/u_1)$.
	In particular, $F$ $($respectively $V$) 
	acts invertibly on $\m_{\infty}^{\et}$ $($respectively $\m_{\infty}^{\mult}$$)$.
\end{subequations}
\end{theorem}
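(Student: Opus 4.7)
The plan is to establish the claimed isomorphisms at each finite level $r$, then pass to the limit using the compatibility of the window functor $\m_r(\cdot)$ of \cite{CaisLau} with base change along the Frobenius-compatible inclusion $\s_r\hookrightarrow \s_{r+1}$. The guiding principle is that for an étale (respectively multiplicative) $p$-divisible group over $R_r$, the window should be pulled back from the classical Dieudonn\'e module of the special fiber along the unique Frobenius-compatible structure map $\Z_p \to \s_r$; the content of the theorem is that this pullback respects $F$, $V$, Hecke, and $\Gamma$, and is compatible in the cyclotomic tower.

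\emph{Étale case.} Because $R_r$ is a complete local ring with residue field $\F_p$, the special-fiber functor is an equivalence between étale $p$-divisible groups over $R_r$ and over $\F_p$. For such a $\G^{\et}$, the construction of $\m_r(\cdot)$ in \cite{CaisLau} forces a canonical isomorphism $\m_r(\G^{\et}) \simeq \D(\o{\G}^{\et})\tens_{\Z_p}\s_r$ with $F$ acting as $F_{\D}\otimes \varphi$ (an automorphism, since $F_{\D}$ is invertible on $\D(\o{\G}^{\et})$ and $\varphi$ is an automorphism of $\s_r$), and with $V$ then determined by the relation $FV=\omega$. Applied to $\G_r^{\et}$ and combined with the functoriality of $\m_r(\cdot)$ along the transition maps $\G_r^{\et}\times_{R_r} R_{r+1}\to \G_{r+1}^{\et}$, these identifications are compatible as $r$ varies. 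Tensoring over $\s_r$ with $\s_{\infty}$ and passing to the projective limit yields $\m_{\infty}^{\et}\simeq \D_{\infty}^{\et}\tens_{\Lambda}\Lambda_{\s_{\infty}}$ with the stated intertwining of $F$ and $V$. The $\gamma\otimes\gamma$ action of $\Gamma$ is immediate: $\Gamma$ acts trivially on $\D_{\infty}^{\et}$ by Theorem \ref{DieudonneMainThm}, and semilinearly on $\s_{\infty}$ in the natural way.

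\emph{Multiplicative case and main obstacle.} I would handle this via Cartier--Serre duality: $\G_r^{\mult}$ is the Cartier dual of an étale $p$-divisible group, and the window functor is compatible with duality, swapping the roles of $F$ and $V$ up to the $\omega$-twist encoded in $FV=\omega$. This produces $\m_{\infty}^{\mult}\simeq \D_{\infty}^{\mult}\tens_{\Lambda}\Lambda_{\s_{\infty}}$ with $V$ corresponding to $V\otimes\varphi^{-1}$, whence $F = \omega V^{-1}$ is given by $V^{-1}\otimes\omega\varphi$, matching the claim. The principal technical obstacle is tracking the $\Gamma$-action through this duality. On $\D_{\infty}^{\mult}$, $\Gamma$ acts by $\langle\chi(\cdot)\rangle^{-1}$, but the additional scalar twist $\chi(\gamma)^{-1}\gamma u_1/u_1$ arises because Cartier duality involves $\mu_{p^{\infty}}$, whose canonical trivialization over $\s_{\infty}$ is encoded in $u_1 = \varepsilon^{(1)} - 1$. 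The unit $\gamma u_1/u_1 = ((1+u_1)^{\chi(\gamma)}-1)/u_1 \in \s_{\infty}^{\times}$ measures precisely the discrepancy between the Galois action on this generator and the naive cyclotomic character, and producing the stated formula requires carefully chasing this factor through the duality conventions of \cite{CaisLau}. This bookkeeping is where I expect the bulk of the work to lie; once the finite-level identification is pinned down with the correct twist, the passage to the limit and the compatibility with Hecke and with $F$, $V$ are then formal consequences of functoriality.
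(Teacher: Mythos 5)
Your outline is in the right neighborhood, but there is a genuine gap in the \'etale case that also infects the multiplicative case, and your choice of route for the multiplicative case introduces an avoidable complication.

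The gap: you assert that the finite-level identification $\m_r(\G_r^{\et})\simeq\D(\o{\G}_r^{\et})\otimes_{\Z_p}\s_r$ (the one the paper records in Proposition \ref{EtaleMultDescription}) is ``compatible as $r$ varies'' once you invoke functoriality of $\m_r(\cdot)$ along the transition maps. It is not. The canonical isomorphism of Proposition \ref{EtaleMultDescription} is induced by the $r$-fold relative Frobenius $F^r: G_0\to G_0^{(p^r)}$, and the base-change diagram (\ref{EtMultSpecialIsomsBC}) shows explicitly that the level-$(r+1)$ identification differs from the base change of the level-$r$ identification by a nontrivial factor of $F\otimes\id$ (resp. $V^{-1}\otimes\id$). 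Without correcting for this, the tower of isomorphisms does not commute with the transition maps $\m_r(\rho_{r,s})$, so the limit you write down does not exist as an identification of projective systems. The fix, which is the actual content of the paper's proof, is to post-compose the level-$r$ identification with $F^r$ (resp. $V^{-r}$); the exponent is chosen precisely so that the discrepancies at adjacent levels cancel. After this twist, the system is compatible with change in $r$, and Lemma \ref{Technical} and the freeness/control results of Theorem \ref{MainDieudonne} give the identification in the limit. Your proof needs to supply this twist and verify (as (\ref{EtMultSpecialIsomsBC}) does) that it works.

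On the multiplicative case: you propose Cartier duality, which is a genuinely different route from the paper, but it carries an extra subtlety you have not addressed. The Cartier dual of $\G_r^{\mult}$ is not $\G_r^{\et}$; by Proposition \ref{GdualTwist}, one has $\G_r^{\vee}\simeq \G_r(\langle\chi\rangle\langle a\rangle_N)$, so $(\G_r^{\mult})^{\vee}$ is a twist of $\G_r^{\et}$ by a character involving both $\langle\chi\rangle$ and the tame piece $\langle a\rangle_N$; in particular the latter forces an extension of scalars to $R_r'$ before the dual can even be compared with $\G_r^{\et}$. The paper instead treats the multiplicative case symmetrically with the \'etale case, directly via the $V^r$-induced isomorphism of Proposition \ref{EtaleMultDescription} followed by the $V^{-r}$ twist, which sidesteps the duality twist entirely and produces the formula $\gamma\otimes\chi(\gamma)^{-1}\gamma u_1/u_1$ mechanically from (\ref{MrMultDef}). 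Your instinct about the source of the $\gamma u_1/u_1$ factor is correct (it does ultimately come from the $\mu_{p^{\infty}}$ Example \ref{GmQpZpExamples}), but deriving it through duality is strictly harder than extracting it directly, and you would still face the same level-compatibility issue (with $V^{-r}$ replacing $F^r$) that is missing from the \'etale case.
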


From $\m_{\infty}$, we can recover $\D_{\infty}$ and ${e^*}'H^1_{\dR}$,
with their additional structures (see Theorem \ref{SRecovery}):
\begin{theorem}\label{MinftySpecialize}
	Viewing $\Lambda$ as a $\Lambda_{\s_{\infty}}$-algebra via the map induced by $u_r\mapsto 0$,
	there is a canonical isomorphism of short exact sequences of finite free $\Lambda$-modules
	\begin{equation*}
		\xymatrix{
			0 \ar[r] & {\m_{\infty}^{\et}\tens_{\Lambda_{\s_{\infty}}} \Lambda}\ar[d]_-{\simeq} \ar[r] &
			{\m_{\infty}\tens_{\Lambda_{\s_{\infty}}} \Lambda}\ar[r] \ar[d]_-{\simeq}&
			{\m_{\infty}^{\mult}\tens_{\Lambda_{\s_{\infty}}} \Lambda} \ar[r]\ar[d]_-{\simeq} & 0\\
			0 \ar[r] & {\D_{\infty}^{\et}} \ar[r] & {\D_{\infty}} \ar[r] &
			{\D_{\infty}^{\mult}} \ar[r] & 0
		}
	\end{equation*}
	which is $\Gamma$ and $\H^*$-equivariant and carries $F\otimes 1$ to $F$
	and $V\otimes 1$ to $V$.
	Viewing $\Lambda_{R_{\infty}}$ as a $\Lambda_{\s_{\infty}}$-algebra via the map 
	$u_r\mapsto (\varepsilon^{(r)})^p - 1$, there is a canonical
	isomorphism of short exact sequences of 
	$\Lambda_{R_{\infty}}$-modules
	\begin{equation*}
		\xymatrix{
				0 \ar[r] & {\m_{\infty}^{\et}\tens_{\Lambda_{\s_{\infty}}} \Lambda_{R_{\infty}}}
				\ar[d]_-{\simeq} \ar[r] &
			{\m_{\infty}\tens_{\Lambda_{\s_{\infty}}} \Lambda_{R_{\infty}}}\ar[r] \ar[d]_-{\simeq}&
			{\m_{\infty}^{\mult}\tens_{\Lambda_{\s_{\infty}}} \Lambda_{R_{\infty}}} \ar[r]\ar[d]_-{\simeq} & 0\\
		0 \ar[r] & {{e^*}'H^1(\O)} \ar[r]_{i} & 
		{{e^*}'H^1_{\dR}} \ar[r]_-{j} & {{e^*}'H^0(\omega)} \ar[r] & 0 
		}
	\end{equation*}
	with $i$ and $j$ the canonical sections 
	given by the splitting in Theorem $\ref{dRtoDieudonne}$.
\end{theorem}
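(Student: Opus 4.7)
The plan is to deduce both parts from two specialization identities for the functor $\m_r(\cdot)$ of \cite{CaisLau} applied to the tower $\{\G_r\}$, combined with Theorem \ref{etmultdescent} (which controls the outer terms of each short exact sequence) and Theorem \ref{dRtoDieudonne} (which identifies the middle term in Part 2). Indeed, Theorem \ref{etmultdescent} immediately yields $\m_\infty^{\star}\tens_{\Lambda_{\s_\infty}} A \simeq \D_\infty^{\star}\tens_{\Lambda} A$ for any $A\in\{\Lambda,\Lambda_{R_\infty}\}$ and $\star\in\{\et,\mult\}$, so the real task is to produce the middle isomorphism in each diagram and verify its compatibility with the inclusions and projections from/to the flanking terms. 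Once this is done, the five-lemma ensures that all the indicated diagrams commute.

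For Part 1, the surjection $\s_r\twoheadrightarrow \s_r/u_r\s_r\simeq \Z_p$ is $\varphi$-equivariant (Frobenius on $\s_r$ reduces to the identity modulo $u_r$), and it is precisely the quotient along which the frame-and-window formalism of \cite{CaisLau} is built to recover classical Dieudonn\'e theory over $\Z_p$. One therefore expects a canonical $F$, $V$, $\H^*$ and $\Gamma$-compatible identification
\begin{equation*}
	\m_r(\G_r)\tens_{\s_r} \Z_p \simeq \D(\o{\G}_r),
\end{equation*}
functorial in $\G_r$. Because $u_r = \varphi(u_{r+1})\equiv 0 \pmod{u_{r+1}}$, these identifications commute with the transition maps in the inverse systems defining $\m_\infty$ and $\D_\infty$. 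Since $\m_\infty^{\et}$ and $\m_\infty^{\mult}$ (and hence $\m_\infty$) are free over $\Lambda_{\s_\infty}$, the base change of $0 \to \m_\infty^{\et}\to \m_\infty\to \m_\infty^{\mult}\to 0$ along $\Lambda_{\s_\infty}\twoheadrightarrow \Lambda$ remains exact, and passing to the inverse limit recovers the first diagram.

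For Part 2, factor the map $u_r\mapsto (\varepsilon^{(r)})^p-1$ as the Frobenius endomorphism $\varphi:u_r\mapsto (1+u_r)^p-1$ on $\s_r$ followed by the canonical surjection $\s_r\twoheadrightarrow R_r$ with $u_r\mapsto \varepsilon^{(r)}-1$, whose kernel carries the PD-structure defining $S_r$. Tracing through \cite{CaisLau} should produce a canonical identification
\begin{equation*}
	\m_r(\G_r)\tens_{\s_r,\,u_r\mapsto (\varepsilon^{(r)})^p-1} R_r \simeq H^1_{\dR}(\G_r/R_r),
\end{equation*}
which via the canonical isomorphism ${e^*}'H^1_{\dR}(\X_r/R_r)\simeq H^1_{\dR}(\G_r/R_r)$ (arising from the definition of $\G_r$ as ${e^*}'J_r[p^\infty]$) and inverse limits yields $\m_\infty\tens_{\Lambda_{\s_\infty}} \Lambda_{R_\infty}\simeq {e^*}'H^1_{\dR}$. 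To identify the induced three-term exact sequence with $0\to {e^*}'H^1(\O)\to {e^*}'H^1_{\dR}\to {e^*}'H^0(\omega)\to 0$, invoke Theorem \ref{dRtoDieudonne}: under the canonical splitting furnished there, the slope filtration of $\D_\infty$ corresponds to the \emph{reverse} of the Hodge filtration, so $i$ and $j$ are precisely the canonical sections attached to that splitting, which accounts for the ordering in the statement.

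The principal obstacle is the careful verification of the base-change identities of \cite{CaisLau} at the two specific quotients $u_r\mapsto 0$ and $u_r\mapsto (\varepsilon^{(r)})^p-1$, together with their compatibility with the transition maps in the inverse systems defining $\m_\infty$, $\D_\infty$, and ${e^*}'H^1_{\dR}$, and with the semilinear actions of $\H^*$, $\Gamma$, $F$, and $V$. The Frobenius twist built into the second specialization (as opposed to the ``naive'' $u_r\mapsto \varepsilon^{(r)}-1$) is essential: it is what reflects the $\varphi$-semilinearity of the crystalline-de Rham comparison and ensures that the slope filtration of $\D_\infty$ matches the anti-Hodge filtration of ${e^*}'H^1_{\dR}$ in a manner consistent with Theorem \ref{dRtoDieudonne}.
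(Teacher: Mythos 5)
Your strategy is correct in outline and matches the paper's, which also reduces both assertions to the specialization properties of $\m_r(\cdot)$ at $u_r\mapsto 0$ and at $u_r\mapsto (\varepsilon^{(r)})^p-1$ (the latter being the map $\theta\circ\varphi$, exactly the Frobenius twist you flag as essential). The finite-level identifications you invoke are precisely Proposition \ref{MrToHodge} (isomorphisms (\ref{MrToDieudonneMap}) and (\ref{MrToHodgeMap})), which in turn rest on Theorem \ref{CaisLauMain}(3), together with Proposition \ref{KeyComparison} to pass from $\D(\G_{r,0})_{R_r}$ to ${e^*}'H^1_{\dR,r}$.

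Two points need tightening, however. First, the step ``passing to the inverse limit recovers the first diagram'' is exactly where the real work lies, and freeness of $\m_\infty^{\star}$ over $\Lambda_{\s_\infty}$ alone does not give it. What must be shown is that base change along $\Lambda_{\s_\infty}\twoheadrightarrow\Lambda$ commutes with the projective limit defining $\m_\infty$, i.e.\ that $(\varprojlim_r M_r)\otimes_{\Lambda_{\s_\infty}}\Lambda\simeq\varprojlim_r(M_r\otimes_{\s_\infty}\Z_p)$; tensor product does not in general commute with inverse limits. This is precisely the content of Lemma \ref{Technical}, part (\ref{CompletedBaseChange}), and to apply it one needs not only freeness but the level-$r$ control isomorphism $\m_\infty\otimes_{\Lambda_{\s_\infty}}\s_\infty[\Delta/\Delta_r]\simeq\m_r(\G_r)\otimes_{\s_r}\s_\infty$ (from Theorem \ref{MainThmCrystal}) together with surjectivity of the transition maps modulo $I_r$. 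The paper verifies the hypotheses of Lemma \ref{Technical} with $(A_r,I_r)=(\s_r,(u_r))$ for Part~1 and $(A_r,I_r)=(\s_r,(E_r))$ for Part~2, and this is what licenses the commutation; you should do the same rather than rest on freeness alone. Second, the five-lemma cannot ``ensure that all the indicated diagrams commute'' --- commutativity is a hypothesis of the five-lemma, not a conclusion. In fact once Lemma \ref{Technical} is applied term-by-term the commutativity and compatibility with $F$, $V$, $\H^*$, $\Gamma$ come for free from the functoriality of (\ref{MrToDieudonneMap}) and (\ref{MrToHodgeMap}), so the five-lemma is not needed at all.
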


To recover Hida's ordinary \'etale cohomology from $\m_{\infty}$,
we introduce the ``period" ring of Fontaine\footnote{Though we use the notation introduced by Berger and Colmez.} 
$\wt{\e}^+:=\varprojlim \O_{\C_p}/(p)$, with the projective limit
taken along the $p$-power mapping; this is a perfect valuation ring of characteristic $p$
equipped with a canonical action of $\scrG_{\Q_p}$ via ``coordinates".  We write $\wt{\e}$
for the fraction field of $\wt{\e}^+$ and
$\wt{\a}:=W(\wt{\e})$ for its ring of Witt vectors, equipped 
with its canonical Frobenius automorphism $\varphi$ and $\scrG_{\Q_p}$-action induced by Witt functoriality.
Our fixed choice of $p$-power compatible sequence $\{\varepsilon^{(r)}\}$
determines an element $\u{\varepsilon}:=(\varepsilon^{(r)}\bmod p)_{r\ge 0}$
of $\wt{\e}^+$, and we $\Z_p$-linearly embed $\s_{\infty}$ in $\wt{\a}$ via 
$u_r\mapsto \varphi^{-r}([\u{\varepsilon}]-1)$ where $[\cdot]$ is the Teichm\"uller
section.  This embedding is $\varphi$ and $\scrG_{\Q_p}$-compatible, with $\scrG_{\Q_p}$
acting on $\s_{\infty}$ through the quotient $\scrG_{\Q_p}\twoheadrightarrow \Gamma$.

\begin{theorem}
\label{RecoverEtale}
	Twisting the structure map $\s_{\infty}\rightarrow \wt{\a}$ by the Frobenius automorphism $\varphi$,
	there is a canonical isomorphism of short exact sequences of $\Lambda_{\wt{\a}}$-modules
	with $\H^*$-action
	\begin{equation}
	\begin{gathered}
		\xymatrix{
				0 \ar[r] & {\m_{\infty}^{\et}\tens_{\Lambda_{\s_{\infty}},\varphi} \Lambda_{\wt{\a}}}
				\ar[d]_-{\simeq} \ar[r] &
			{\m_{\infty}\tens_{\Lambda_{\s_{\infty}},\varphi} \Lambda_{\wt{\a}}}\ar[r] \ar[d]_-{\simeq}&
			{\m_{\infty}^{\mult}\tens_{\Lambda_{\s_{\infty}},\varphi} \Lambda_{\wt{\a}}} \ar[r]\ar[d]_-{\simeq} & 0\\
			0 \ar[r] & {({e^*}'H^1_{\et})^{\I}\tens_{\Lambda} \Lambda_{\wt{\a}}} \ar[r] & 
			{{e^*}'H^1_{\et}\tens_{\Lambda} \Lambda_{\wt{\a}}} \ar[r] &
			({e^*}'H^1_{\et})_{\I}\tens_{\Lambda} \Lambda_{\wt{\a}}\ar[r] & 0
		}\label{FinalComparisonIsom}
	\end{gathered}
	\end{equation}
	that is $\scrG_{\Q_p}$-equivariant for the ``diagonal" action of $\scrG_{\Q_p}$
	$($with $\scrG_{\Q_p}$ acting on $\m_{\infty}$ through $\Gamma$$)$
	and intertwines
	$F\otimes \varphi$ with $\id\otimes\varphi$ and $V\otimes\varphi^{-1}$ with $\id\otimes \varphi^{-1}$.
	In particular, there is a canonical isomorphism of $\Lambda$-modules, compatible
	with the actions of $\H^*$ and $\scrG_{\Q_p}$,
	\begin{equation}
		{e^*}'H^1_{\et} \simeq \left( \m_{\infty}\tens_{\Lambda_{\s_{\infty}},\varphi} 
		\Lambda_{\wt{\a}}\right )^{F\otimes\varphi = 1}.\label{RecoverEtaleIsom}
	\end{equation}
\end{theorem}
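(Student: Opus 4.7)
The plan is to deduce the comparison isomorphism (\ref{FinalComparisonIsom}) by passing to the inverse limit over $r$ of a finite-level crystalline--\'etale comparison for the ordinary $p$-divisible groups $\G_r$ over $R_r$, and then to extract the recovery formula (\ref{RecoverEtaleIsom}) by taking Frobenius-fixed vectors.

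At finite level, I will invoke the $(\varphi,\Gamma)$-module formalism of \cite{CaisLau} to produce, for each $r$, a natural isomorphism
\[
\m_r(\G_r)\tens_{\s_r,\varphi} \wt{\a} \;\simeq\; T_p(\G_r)\tens_{\Z_p} \wt{\a}
\]
of $\wt{\a}$-modules with semilinear $\scrG_{\Q_p}$-action, linear Hecke action, and commuting $\varphi$-semilinear Frobenius, intertwining the slope filtration on the source with the inertial filtration on the target. Combined with the canonical identification $T_p(\G_r) = T_p({e^*}'J_r[p^\infty])$ and Weil-type duality between $T_p(J_r)$ and ${e^*}'H^1_{\et}(X_{r,\Qbar_p},\Z_p)$, this rewrites as a comparison between $\m_r(\G_r)\tens_{\s_r,\varphi}\wt{\a}$ and ${e^*}'H^1_{\et}(X_{r,\Qbar_p},\Z_p)\tens_{\Z_p}\wt{\a}$, again compatibly with slope and inertial filtrations.

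Next I will verify that these finite-level isomorphisms are compatible with the transition morphisms in the two projective systems --- on the $\m$-side, the embedding $\s_r\hookrightarrow \s_{r+1}$ defined by $u_r\mapsto \varphi(u_{r+1})$ together with the canonical map $\G_r\times_{R_r} R_{r+1}\to \G_{r+1}$; on the \'etale side, the trace maps induced by the degeneracies $X_{r+1}\to X_r$. This compatibility reduces to naturality of the $\m_r(\cdot)$-construction under base change and morphisms of $p$-divisible groups, which is built into \cite{CaisLau}. Taking the inverse limit, and invoking both the control theorem for $\m_{\infty}$ (Theorem \ref{MinftySpecialize}) and the Hida--Mazur--Wiles control theorem (Corollary \ref{MWmainThmCor}), yields the middle vertical isomorphism of (\ref{FinalComparisonIsom}). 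The flanking isomorphisms then follow formally by combining the \'etale/multiplicative descent (Theorem \ref{etmultdescent}) with Theorem \ref{FiltrationRecover}, both of which identify the relevant outer pieces with $\D_{\infty}^{\et}$ and $\D_{\infty}^{\mult}$ after base change to $\Lambda_{\wt{\a}}$; compatibility with the middle arrow is checked by tracking slope and inertial filtrations through the identification.

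Finally, (\ref{RecoverEtaleIsom}) drops out by taking $F\otimes\varphi=1$-fixed vectors of (\ref{FinalComparisonIsom}): the Frobenius on ${e^*}'H^1_{\et}\tens_{\Lambda}\Lambda_{\wt{\a}}$ acts as $\id\otimes\varphi$, so its fixed part is ${e^*}'H^1_{\et}\tens_{\Lambda}\Lambda_{\wt{\a}^{\varphi=1}} = {e^*}'H^1_{\et}\tens_{\Lambda}\Lambda = {e^*}'H^1_{\et}$ via the classical identity $\wt{\a}^{\varphi=1}=\Z_p$. The main technical obstacle will be the compatibility verification above: the $p$-power twist in the embedding $\s_r\hookrightarrow \s_{r+1}$ must align exactly with the Frobenius appearing in the comparison map, and one must carefully track the asymmetric behaviour of the \'etale and multiplicative parts of $\G_r$ under the transition maps of the tower $\{\G_r\}$, where the na\"ive pullback of $\G_r$ to $R_{r+1}$ is not $\G_{r+1}$.
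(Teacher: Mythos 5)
Your overall architecture is right — comparison at finite level via the $(\varphi,\Gamma)$-module theory of \cite{CaisLau}, pass to the limit, then take $F\otimes\varphi=1$-invariants using $\wt{\a}^{\varphi=1}=\Z_p$ — but the route diverges from the paper's at two points, one of which is a genuine issue.

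First, you cite Theorem \ref{MinftySpecialize} for ``the control theorem for $\m_{\infty}$''; the theorem you actually need is Theorem \ref{MainThmCrystal}, which gives $\m_{\infty}\otimes_{\Lambda_{\s_{\infty}}}\s_{\infty}[\Delta/\Delta_r]\simeq\m_r(\G_r)\otimes_{\s_r}\s_{\infty}$. Theorem \ref{MinftySpecialize} is about specializing along $u_r\mapsto 0$ and $u_r\mapsto(\varepsilon^{(r)})^p-1$, not about comparing the limit with finite level.

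Second, and more substantively: the paper does \emph{not} invoke Corollary \ref{MWmainThmCor} (nor any preexisting control result for the \'etale side). Instead, it reads off the finite-level comparison (Theorem \ref{comparison} applied to the connected-\'etale sequence of $\G_r$, over $\a_r$), notes that the source terms are free of the right rank over $\a_r[\Delta/\Delta_r]$ by Theorem \ref{MainThmCrystal}, and then descends freeness along the faithfully flat map $\Z_p\to\a_r$ via Lemma \ref{fflatfreedescent} to get freeness and control of $H^1_{\et}(\G_r^{\star})$ over $\Z_p[\Delta/\Delta_r]$. This is precisely what makes Corollary \ref{HidasThm} (proved in the same breath) a \emph{new} proof of Hida's control theorem — the paper's stated goal. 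Your route of quoting Corollary \ref{MWmainThmCor} is not circular (it sits earlier in the logical chain, via Theorem \ref{FiltrationRecover}), but it only supplies control for the inertial sub and quotient $({e^*}'H^1_{\et})^{\I}$, $({e^*}'H^1_{\et})_{\I}$, so you would still owe an argument that the middle term ${e^*}'H^1_{\et}$ is $\Lambda$-free and satisfies control (e.g.\ an extension-of-flats plus snake lemma argument). Relatedly, your final step asserting that the $\id\otimes\varphi$-invariants of ${e^*}'H^1_{\et}\otimes_{\Lambda}\Lambda_{\wt{\a}}$ are ${e^*}'H^1_{\et}$ silently uses $\Lambda$-freeness of ${e^*}'H^1_{\et}$; you must be careful that you have established this \emph{before} invoking it, without circling back through Corollary \ref{HidasThm}.

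Third, your plan to build the flanking vertical isomorphisms separately — via Theorem \ref{etmultdescent} on the crystalline side and Theorem \ref{FiltrationRecover} on the \'etale side, both factoring through $\D_{\infty}^{\star}\otimes_{\Lambda}\Lambda_{\wt{\a}}$ — would require a genuine compatibility verification with the middle arrow, which you flag but do not carry out. The paper sidesteps this entirely: Theorem \ref{comparison} is functorial, so applying it to the whole connected-\'etale sequence of $\G_r$ produces the full diagram (\ref{etalecompdiag}) at once, with compatibility for free. You would save yourself real work by doing the same.
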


Theorem \ref{RecoverEtale} allows us to give a new proof of Hida's finiteness and control theorems
for ${e^*}'H^1_{\et}$:

\begin{corollary}[Hida]\label{HidasThm}	
	Let $d'$ be as in Theorem $\ref{DieudonneMainThm}$.  Then
	${e^*}'H^1_{\et}$ is free $\Lambda$-module of rank $2d'$. 
	For each $r\ge 1$ there is a canonical isomorphism of $\Z_p[\Delta/\Delta_r]$-modules
	with linear $\H^*$ and $\scrG_{\Q_p}$-actions
	\begin{equation*}
		{e^*}'H^1_{\et} \tens_{\Lambda} \Z_p[\Delta/\Delta_r] \simeq {e^*}'H^1_{\et}({X_r}_{\Qbar_p},\Z_p)
	\end{equation*}
	which is moreover compatible with the isomorphisms $(\ref{HidaResultSub})$ and $(\ref{HidaResultQuo})$ 
	in the evident manner.
\end{corollary}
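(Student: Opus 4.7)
I would derive both assertions from the $\Lambda_{\wt\a}$-adic comparison (\ref{FinalComparisonIsom}) of Theorem \ref{RecoverEtale}, combined with the finite-level specialization of $\m_\infty$ recorded in the theorem containing (\ref{CrystallineAnalogue}) and the Fontaine comparison for the ordinary $p$-divisible groups $\G_r$ at each level $r$.

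For the freeness: the cleanest route is to invoke Corollary \ref{MWmainThmCor}, which gives that both $({e^*}'H^1_{\et})^{\I}$ and $({e^*}'H^1_{\et})_{\I}$ are free $\Lambda$-modules of rank $d'$. The tautological short exact sequence
$$
0 \to ({e^*}'H^1_{\et})^{\I} \to {e^*}'H^1_{\et} \to ({e^*}'H^1_{\et})_{\I} \to 0
$$
then splits as $\Lambda$-modules since its quotient is projective, giving that ${e^*}'H^1_{\et}$ is a free $\Lambda$-module of rank $2d'$. A more self-contained alternative is a direct Dieudonn\'e--Manin style descent applied to (\ref{RecoverEtaleIsom}), exploiting $\wt{\a}^{\varphi=1}=\Z_p$ and the fact that $F$ acts invertibly on $\m_\infty^{\et}$ and $V$ on $\m_\infty^{\mult}$ to descend freeness from $\Lambda_{\wt\a}$ down to $\Lambda$.

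For the control isomorphism: apply $\otimes_{\Lambda_{\wt\a}}\wt\a[\Delta/\Delta_r]$ to (\ref{FinalComparisonIsom}). The LHS becomes $({e^*}'H^1_{\et}\otimes_\Lambda\Z_p[\Delta/\Delta_r])\otimes_{\Z_p}\wt\a$. For the RHS, the finite-level statement $\m_\infty \otimes_{\Lambda_{\s_\infty}}\s_\infty[\Delta/\Delta_r] \simeq \m_r(\G_r)\otimes_{\s_r}\s_\infty$ (respecting the natural $\Delta/\Delta_r$-action) combined with the $\varphi$-twisted base change yields $\m_r(\G_r)\otimes_{\s_r,\varphi}\wt\a$. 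Extracting $F\otimes\varphi$-fixed points on both sides (where $F$ acts as $\id\otimes\varphi$ on the $\wt\a$-factor of the LHS, and using $\wt\a^{\varphi=1}=\Z_p$) produces an $\H^*$- and $\scrG_{\Q_p}$-equivariant isomorphism of $\Z_p[\Delta/\Delta_r]$-modules
$$
{e^*}'H^1_{\et}\otimes_\Lambda\Z_p[\Delta/\Delta_r] \simeq \bigl(\m_r(\G_r)\otimes_{\s_r,\varphi}\wt\a\bigr)^{F\otimes\varphi=1},
$$
and the right side is canonically $T_p\G_r \simeq {e^*}'H^1_{\et}(X_{r,\Qbar_p},\Z_p)$ by the Fontaine comparison for the $p$-divisible group $\G_r$ over $R_r$ (applied to its window-module descent $\m_r(\G_r)$ from \cite{CaisLau}). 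Compatibility with (\ref{HidaResultSub}) and (\ref{HidaResultQuo}) is then automatic by functoriality, since the filtration $0 \to \m_\infty^{\et}\to\m_\infty\to\m_\infty^{\mult}\to 0$ matches the inertial filtration on ${e^*}'H^1_{\et}$ under (\ref{FinalComparisonIsom}).

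\textbf{Main obstacle.} The most delicate step is the finite-level identification of $(\m_r(\G_r)\otimes_{\s_r,\varphi}\wt\a)^{F\otimes\varphi=1}$ with $T_p\G_r$: although this is morally the Fontaine--Breuil--Kisin comparison for $\G_r$, one must verify it holds with the window-module descent of \cite{CaisLau} and is compatible with the tower maps as $r$ varies --- precisely where the frame-and-window formalism used to construct $\m_\infty$ is essential, since the Breuil--Kisin and Berger--Wach descents do not obviously commute with base change up the cyclotomic tower.
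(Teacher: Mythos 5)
Your proposal is correct but organized quite differently from the paper; the control half in particular inverts the paper's logical dependency.

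\textbf{Freeness.} Your route through Corollary \ref{MWmainThmCor} (freeness of the two graded pieces plus the projectivity splitting of the ordinary filtration) is valid and is a genuine shortcut; note, though, that Corollary \ref{MWmainThmCor} is itself proved in the paper by the same descend-from-faithfully-flat mechanism (Lemmas \ref{fflatfreedescent} and \ref{Technical}) applied to $\D(\o{\G}_r^\star)\otimes_{\Z_p} W(\o{\F}_p) \simeq H^1_{\et}(G_r^\star)\otimes_{\Z_p} W(\o{\F}_p)$, whereas the paper's proof of Corollary \ref{HidasThm} runs the same mechanism directly on the whole tower through the finite-level window comparison (\ref{etalecompdiag}) (i.e.\ Theorem \ref{comparison} applied to $\G_r$), reading off the finite-level freeness of $H^1_{\et}(G_r)$ over $\Z_p[\Delta/\Delta_r]$, the surjectivity of the trace maps, and hence both the $\Lambda$-freeness and the control, from a single invocation of Lemma \ref{Technical}.

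\textbf{Control.} Your computation is sound, but you derive the control by specializing (\ref{FinalComparisonIsom}), and the paper establishes the control theorem first and only then produces (\ref{FinalComparisonIsom}): it is precisely the finite-level freeness of $H^1_{\et}(G_r^\star)$ over $\Z_p[\Delta/\Delta_r]$ and the surjectivity of the trace maps that make the limit identifications (\ref{etaleswitcheroo}) valid, and those are needed to assemble (\ref{FinalComparisonIsom}). Quoting Theorem \ref{RecoverEtale} to prove Corollary \ref{HidasThm} is therefore formally permissible (the Theorem is stated first) but does not give an independent proof. If you want to exploit Corollary \ref{MWmainThmCor} all the way, a cleaner independent derivation is available without touching (\ref{FinalComparisonIsom}): freeness of $({e^*}'H^1_{\et})_{\I}$ over $\Lambda$ makes $\otimes_\Lambda\Z_p[\Delta/\Delta_r]$ exact on the ordinary filtration of ${e^*}'H^1_{\et}$; the specialization map to the ordinary filtration of ${e^*}'H^1_{\et}({X_r}_{\Qbar_p},\Z_p)$ is then an isomorphism on the flanking terms by (\ref{HidaResultSub}) and (\ref{HidaResultQuo}), hence an isomorphism in the middle by the five lemma, and compatibility is automatic.

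\textbf{Small slip.} The $F\otimes\varphi$-fixed subspace of $\m_r(\G_r)\otimes_{\s_r,\varphi}\wt{\a}$ is $H^1_{\et}(G_r)\simeq (T_pG_r)^{\vee}$, not $T_pG_r$ itself; the Tate module is instead $\Hom_{\s_r,\varphi}(\m_r(\G_r),\a_r^+)$ (Corollary \ref{GaloisComparison}). Your ``main obstacle'' paragraph does correctly identify the heart of the matter: Theorem \ref{comparison} at finite level and its compatibility with the cyclotomic tower is exactly what the frame-window formalism of \cite{CaisLau} delivers, and it is what the paper's proof leans on as well.
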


We also deduce a new proof of the following duality result
\cite[Theorem 4.3.1]{OhtaEichler} ({\em cf.} \cite[\S6]{MW-Hida}):

\begin{corollary}[Ohta]\label{OhtaDuality}
	Let $\nu:\scrG_{\Q_p}\rightarrow \H^*$ be the 
	character $\nu:=\chi\langle \chi\rangle \lambda(\langle p\rangle_N)$.
	There is a canonical $\H^*$ and $\scrG_{\Q_p}$-equivariant isomorphism
	of short exact sequences of $\Lambda$-modules 
	\begin{equation*}
		\xymatrix{
			0 \ar[r] & {({e^*}'H^1_{\et})^{\I}(\nu)}
			\ar[d]^-{\simeq} \ar[r] & 
			{{e^*}'H^1_{\et}(\nu)}\ar[d]^-{\simeq} \ar[r] &
			{({e^*}'H^1_{\et})_{\I}(\nu)}
			\ar[d]^-{\simeq}\ar[r] & 0 \\
			0 \ar[r] & {\Hom_{\Lambda}(({e^*}'H^1_{\et})_{\I},\Lambda)} \ar[r] & 
			{\Hom_{\Lambda}({e^*}'H^1_{\et},\Lambda)} \ar[r] &
			{\Hom_{\Lambda}(({e^*}'H^1_{\et})^{\I},\Lambda)}\ar[r] & 0
		}
	\end{equation*}
\end{corollary}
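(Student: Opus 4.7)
The plan is to derive Corollary~\ref{OhtaDuality} by transporting the autoduality of $\m_\infty$ (the theorem stated just before Theorem~\ref{MinftySpecialize}) through the comparison isomorphism (\ref{FinalComparisonIsom}) of Theorem~\ref{RecoverEtale}, and then descending the resulting duality from $\Lambda_{\wt{\a}}$ back to $\Lambda$ via Frobenius-fixed points. First, I would base change the $\Lambda_{\s_\infty'}$-linear autoduality of $\m_\infty$ along the $\varphi$-twisted embedding $\s_\infty \hookrightarrow \wt{\a}$ (extended by adjoining $\mu_N$), obtaining an $\H^*$- and $\scrG_{\Q_p}$-equivariant duality of short exact sequences over $\Lambda_{\wt{\a}'}$. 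Applying (\ref{FinalComparisonIsom}) to both sides then converts this into a $\Lambda_{\wt{\a}'}$-linear duality between ${e^*}'H^1_{\et}(\text{twist}) \otimes_\Lambda \Lambda_{\wt{\a}'}$ and the $\Lambda_{\wt{\a}'}$-dual of ${e^*}'H^1_{\et} \otimes_\Lambda \Lambda_{\wt{\a}'}$, compatibly with both the inertial invariants/covariants filtration and the semilinear actions.

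Second, I would descend this isomorphism to $\Lambda$ using the recovery formula (\ref{RecoverEtaleIsom}), which identifies ${e^*}'H^1_{\et}$ as the $F\otimes\varphi = 1$ eigenspace of $\m_\infty \otimes_{\Lambda_{\s_\infty},\varphi} \Lambda_{\wt{\a}}$. Under the autoduality, $F \otimes \varphi$ on the left corresponds to $V^\vee \otimes \varphi$ on the right; using the relation $FV=\omega$ together with the ``Fontaine-style'' identity $\wt{\a}^{\varphi=1} = \Z_p$ (so that $\Lambda_{\wt{\a}}^{F\otimes\varphi=1}=\Lambda$), this swaps the eigenspace condition into one that picks out exactly $\Hom_\Lambda({e^*}'H^1_{\et},\Lambda)$ on the dual side. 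Third, I would identify the resulting twist character with $\nu = \chi\langle\chi\rangle\lambda(\langle p\rangle_N)$: the factor $\chi\langle\chi\rangle$ comes from the $\chi(\gamma)\langle\chi(\gamma)\rangle$ component of $\mu(\gamma)$ in the $\m_\infty$-autoduality (the cocycle $u_1/\gamma u_1$ trivializing over $\wt{\a}$ since $[\underline{\varepsilon}]-1$ becomes Galois-semi-invariant), while $\lambda(\langle p\rangle_N)$ arises from the identification $U_p^* = \langle p\rangle_N V$ on the multiplicative part (Theorem~\ref{DieudonneMainThm}) combined with the $F \leftrightarrow V$ interchange under duality, consistent with the unramified characters in Theorem~\ref{FiltrationRecover}. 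The factor $\langle a\rangle_N$ in the crystalline duality absorbs into the tame part of the nebentypus upon passage to $\Gal(K_\infty'/K_\infty)$-invariants.

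The main obstacle is the character/twist bookkeeping in the descent step: one must carefully track how the twist $\mu\langle a\rangle_N$ from the $\m_\infty$-autoduality interacts with the $\varphi$-twist in (\ref{FinalComparisonIsom}) and with the $F \leftrightarrow V$ swap under duality, so as to match precisely the character $\nu$ (rather than some cohomologous variant). In particular, one must verify compatibility with the description in Theorem~\ref{FiltrationRecover} of the $\scrG_{\Q_p}$-action on $({e^*}'H^1_{\et})^\I$ and $({e^*}'H^1_{\et})_\I$ via $\lambda({U_p^*}^{-1})$ and $\chi^{-1}\langle\chi^{-1}\rangle\lambda(\langle p\rangle_N^{-1}U_p^*)$ respectively; once this is checked on the inertial invariants and covariants (which is automatic from the $\D_\infty$-duality of Theorem~\ref{DDuality} and the control theorem~\ref{MWmainThmCor}), the duality on the extension in the middle follows from the functoriality of the comparison.
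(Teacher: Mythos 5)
Your proposal follows the \emph{alternative} route that the paper itself flags at the end of the proof of Theorem~\ref{OhtaDualityText}: push the $\m_\infty$-autoduality of Theorem~\ref{CrystalDuality} through the comparison isomorphism (\ref{FinalComparisonIsom}) and descend to $\Lambda$ via $\varphi$-fixed points. The paper's \emph{primary} proof is shorter and more elementary, mirroring Proposition~\ref{dRDuality}: one compiles the Atkin--Lehner-twisted cup-product pairings $(x,w_r{U_p^*}^r y)_r$ at finite level through Lemma~\ref{LambdaDuality}, using Corollary~\ref{HidasThm} to verify the freeness/control hypotheses. Your route buys a conceptual explanation (the \'etale duality is the image under the ``Fontaine functor'' of the crystalline one) at the cost of heavier bookkeeping, and it is precisely in that bookkeeping that your outline goes astray.

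Your account of the unramified factor $\lambda(\langle p\rangle_N)$ is wrong. You assert that the $\langle a\rangle_N$ in the twist $\mu\langle a\rangle_N$ of Theorem~\ref{CrystalDuality} ``absorbs into the tame part of the nebentypus upon passage to $\Gal(K_\infty'/K_\infty)$-invariants,'' and that $\lambda(\langle p\rangle_N)$ instead arises from the Hecke relation $U_p^*=\langle p\rangle_N V$ combined with the $F\leftrightarrow V$ interchange. Neither is correct. After trivializing the coboundary $\gamma\mapsto u_1/\gamma u_1$ over $\wt{\a}$ (which does work: $u_1\mapsto\varphi^{-1}([\u{\varepsilon}]-1)$ is a unit of $\wt{\a}$), the residual twist from $\mu\langle a\rangle_N$ is exactly $\chi\langle\chi\rangle\langle a\rangle_N$. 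The point you are missing is that $\langle a\rangle_N$, viewed as a character of $\scrG_{\Q_p}$ via the quotient $\scrG_{\Q_p}\twoheadrightarrow\Gal(K_0'/K_0)$, \emph{is} $\lambda(\langle p\rangle_N)$: it is continuous, unramified (since $K_0'/\Q_p$ is unramified), and sends any Frobenius lift to $\langle a(\mathrm{Frob}_p)\rangle_N=\langle p\rangle_N$ because $\mathrm{Frob}_p$ acts on $\mu_N$ by $\zeta\mapsto\zeta^p$. So $\langle a\rangle_N$ is not absorbed away, nothing further is needed from the Hecke-theoretic side, and the identity $\langle a\rangle_N|_{\scrG_{\Q_p}}=\lambda(\langle p\rangle_N)$ already yields $\nu=\chi\langle\chi\rangle\lambda(\langle p\rangle_N)$. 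The computation you invoke via Theorem~\ref{FiltrationRecover} and $U_p^*=\langle p\rangle_N V$ is a valid \emph{consistency check} on the graded pieces of the ordinary filtration (and indeed $\lambda({U_p^*}^{-1})\cdot\nu$ equals the inverse of $\chi^{-1}\langle\chi^{-1}\rangle\lambda(\langle p\rangle_N^{-1}U_p^*)$, as it must), but it is not where $\lambda(\langle p\rangle_N)$ enters the duality.
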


The $\Lambda$-adic splitting 
of the ordinary filtration of $e^*H^1_{\et}$ was considered by 
Ghate and Vatsal \cite{GhateVatsal}, who prove (under certain
technical hypotheses of ``deformation-theoretic nature")
that if the $\Lambda$-adic family $\scrF$ associated to 
a cuspidal eigenform $f$
is primitive and $p$-distinguished, then the associated
$\Lambda$-adic local Galois representation $\rho_{\scrF,p}$
is split split if and only if 
some arithmetic specialization of $\scrF$
has CM \cite[Theorem 13]{GhateVatsal}.
We interpret the $\Lambda$-adic splitting of the ordinary filtration
as follows:

\begin{theorem}\label{SplittingCriterion}
	The short exact sequence $(\ref{CrystallineAnalogue})$ admits
	a $\Lambda_{\s_{\infty}}$-linear splitting which is compatible with $F$, $V$,
	and $\Gamma$ if and only if the ordinary filtration of ${e^*}'H^1_{\et}$ 
	admits a $\Lambda$-linear spitting which is compatible with the action of $\scrG_{\Q_p}$.
\end{theorem}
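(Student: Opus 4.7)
The plan is to use the comparison isomorphism of Theorem \ref{RecoverEtale} as a dictionary between splittings on the two sides: it should identify $(F,V,\Gamma)$-equivariant, $\Lambda_{\s_\infty}$-linear splittings of $(\ref{CrystallineAnalogue})$ with $\scrG_{\Q_p}$-equivariant, $\Lambda$-linear splittings of the ordinary filtration of ${e^*}'H^1_\et$, by base-changing to $\Lambda_{\wt{\a}}$ and then restricting to $F\otimes\varphi=1$ invariants as in $(\ref{RecoverEtaleIsom})$.

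For the forward direction, given a splitting $s:\m_\infty^{\mult}\rightarrow\m_\infty$ of $(\ref{CrystallineAnalogue})$ that is $\Lambda_{\s_\infty}$-linear and compatible with $F$, $V$, and $\Gamma$, I would base-change $s$ along $\varphi:\s_\infty\hookrightarrow\wt{\a}$ to a $\Lambda_{\wt{\a}}$-linear splitting $s\otimes 1$ of the top row of $(\ref{FinalComparisonIsom})$. This splitting is $F\otimes\varphi$-equivariant and $\scrG_{\Q_p}$-equivariant for the diagonal action, since $\scrG_{\Q_p}$ acts on $\s_\infty\subset\wt{\a}$ through the quotient $\Gamma$ and the embedding is $\scrG_{\Q_p}$-compatible. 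Transporting $s\otimes 1$ across $(\ref{FinalComparisonIsom})$ and then passing to $\id\otimes\varphi=1$ invariants on the bottom row yields the sought splitting: $\wt{\a}^{\varphi=1}=\Z_p$ gives $\Lambda_{\wt{\a}}^{\id\otimes\varphi=1}=\Lambda$, and each term of the bottom row of $(\ref{FinalComparisonIsom})$ has $\id\otimes\varphi=1$ invariants equal to the corresponding $\Lambda$-module by $(\ref{RecoverEtaleIsom})$ and its analogues for the flanking subquotients.

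For the converse, starting from a $\Lambda$-linear, $\scrG_{\Q_p}$-equivariant splitting $t:({e^*}'H^1_\et)_\I\rightarrow{e^*}'H^1_\et$, I would base-change $t$ to $\Lambda_{\wt{\a}}$ and transport across $(\ref{FinalComparisonIsom})$ to obtain a $\Lambda_{\wt{\a}}$-linear, $\scrG_{\Q_p}$-equivariant, and $F\otimes\varphi$-equivariant splitting $\wt s$ of the top row. The main obstacle is to descend $\wt s$ to a $\Lambda_{\s_\infty}$-linear splitting $s$. Here I would invoke the $\Lambda$-adic analogue of the fully faithfulness of Fontaine's \'etale $(\varphi,\Gamma)$-module functor underlying Dee's construction \cite{Dee}: base change $N\mapsto N\tens_{\Lambda_{\s_\infty},\varphi}\Lambda_{\wt{\a}}$ is fully faithful on \'etale $\Lambda$-adic $(\varphi,\Gamma)$-modules, so $\Lambda_{\wt{\a}}$-linear, $F\otimes\varphi$- and $\scrH$-equivariant maps between such base changes descend uniquely to $\Lambda_{\s_\infty}$-linear, $F$-equivariant maps, with $\Gamma$-equivariance inherited from the residual $\scrG_{\Q_p}/\scrH$-action. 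The resulting $s$ is automatically $V$-equivariant: from $F$-equivariance of $s$ and the relations $FV=VF=\omega$, one checks that $V\circ s - s\circ V$ is killed by $F$ on $\m_\infty$, but $F$ is injective on $\m_\infty$ since $\omega$ is a non-zero-divisor on $\m_\infty^{\mult}$ and $F$ is invertible on $\m_\infty^{\et}$ by Theorem \ref{etmultdescent}. The heart of the argument is thus this descent step, which requires articulating the needed fully faithfulness precisely for $\Lambda$-adic coefficients; this should follow either directly from Dee's framework or from a light adaptation using the explicit embedding $\s_\infty\hookrightarrow\wt{\a}$ recorded above.
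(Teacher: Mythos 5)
Your forward direction matches the paper's: extend scalars along $\s_{\infty}\xrightarrow{\varphi}\wt{\a}$ via Theorem \ref{RecoverEtale} and take $F\otimes\varphi=1$ invariants, using $\wt{\a}^{\varphi=1}=\Z_p$. Your observation that $V$-equivariance of the descended splitting is forced by $F$-equivariance together with $FV=VF=\omega$ and the injectivity of $F$ on $\m_{\infty}$ is correct and is a nice simplification.

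The converse is where there is a genuine gap. The paper does not descend from $\Lambda_{\wt{\a}}$; instead it specializes along $\Lambda\twoheadrightarrow\Z_p[\Delta/\Delta_r]$ at each finite level, uses Tate's theorem (with $\Gamma$-descent) to split the connected--\'etale sequence of $\G_r$ in $\pdiv_{R_r}^{\Gamma}$ compatibly with $r$, then applies the equivalence $\m_r$ of Theorem \ref{CaisLauMain} to obtain a compatible system of splittings in $\BT_{\s_r}^{\varphi,\Gamma}$ and passes to the limit. Your route replaces this with a direct descent from $\Lambda_{\wt{\a}}$-coefficients to $\Lambda_{\s_{\infty}}$-coefficients, and the full-faithfulness you invoke for that step is not available as stated. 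Dee's framework gives full-faithfulness over a $\Lambda$-adic version of the Cohen ring $\O_{\E}$ (equivalently $\a_K$), where $u_0$ has been inverted and the $(\varphi,\Gamma)$-modules are genuinely \'etale; the modules $\m_{\infty}^{\star}$ live over the much smaller $\Lambda_{\s_{\infty}}$, and they are \emph{not} \'etale in that sense (the linearization of $F$ has cokernel killed by $\omega$, not trivial). Even granting the Fontaine--Dee type descent from $\Lambda_{\wt{\a}}$ to $\Lambda_{\O_{\E_{\infty}}}$, the remaining descent from $\O_{\E_{\infty}}$-type coefficients to $\s_{\infty}$-coefficients is exactly the Wach/Breuil--Kisin-type descent problem, and in this paper's framework the operative input for it is the equivalence $\m_r$ of Theorem \ref{CaisLauMain} together with Tate's theorem --- precisely the ingredients the paper's proof deploys at each finite level. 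So the converse as you have sketched it is not a light adaptation of Dee: it implicitly requires Theorem \ref{CaisLauMain} and Corollary \ref{GaloisComparison}, at which point the level-by-level argument in $\pdiv_{R_r}^{\Gamma}$ is the more economical way to organize it.
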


\subsection{Overview of the article}\label{Overview}

Section \ref{Prelim} is preliminary: we review the integral $p$-adic cohomology theories
of \cite{CaisDualizing} and \cite{CaisNeron}, and summarize the relavant facts concerning 
integral models of modular curves
from \cite{KM} that we will need.  Of particular importance is a description of the 
$U_p$-correspondence in characteristic $p$, due to Ulmer \cite{Ulmer}, and recorded in Proposition \ref{UlmerProp}.

In \S\ref{DiffCharp}, we study the de Rham and crystalline cohomolgy of the Igusa tower, and prove
the key ``freeness and control" theorems that form the technical characteristic $p$ backbone of this paper.  
Via an almost combinatorial argument using the description of $U_p$ in characteristic $p$,
we then relate the cohomology of the Igusa tower to the mod $p$ reduction of the ordinary part
of the (integral $p$-adic) cohomology of the modular tower. 

Section \ref{PhiGammaCrystals} is a summary of the theory developed in \cite{CaisLau},
which uses Dieudonn\'e crystals of $p$-divisible groups to
provide a ``cohomological" construction of the $(\varphi,\Gamma)$-modules attached 
to potentially Barsotti--Tate representations.  It is precisely this theory
which allows us to construct our crystalline analogue of Hida's ordinary $\Lambda$-adic
\'etale cohomology.  

Section \ref{results} constitutes the main body of this paper, and the reader who 
is content to refer back to \S\ref{Prelim}--\ref{PhiGammaCrystals} as needed should skip directly there.
In \S\ref{TowerFormalism}, we develop a 
commutative algebra formalism for working with projective limits of ``towers" of cohomology
that we use frequently in the sequel.  Using the canonical lattices in de Rham 
cohomology studied in \cite{CaisDualizing} (and reviewed in \S\ref{GD}), we construct 
our $\Lambda$-adic de Rham analogue of Hida's ordinary $\Lambda$-adic \'etale cohomology in \S\ref{ordfamdR},
and we show that the expected freeness and control results follow
by reduction to characteristic $p$ from the structure
theorems for the de Rham cohomology of the Igusa tower established in \S\ref{DiffCharp}.  
Using work of Ohta \cite{OhtaEichler}, in \S\ref{ordforms} we relate the Hodge filtration of our $\Lambda$-adic de Rham
cohomology to the module of $\Lambda$-adic cuspforms.
In section \ref{BTfamily}, we study the tower of $p$-divisible groups whose cohomology 
allows us to construct our $\Lambda$-adic Dieudonn\'e and crystalline analogues of
Hida's \'etale cohomlogy in \S\ref{OrdDieuSection} and \S\ref{OrdSigmaSection}, respectively. 
We establish $\Lambda$-adic comparison isomorphisms between each of these cohomologies
using the integral comparison isomorphisms of \cite{CaisNeron} and \cite{CaisLau}, recalled in \S\ref{Universal}
and \S\ref{pDivPhiGamma}, respectively.  This enables us to give a new proof of Hida's freeness and 
control theorems and of Ohta's duality theorem in \S\ref{OrdSigmaSection}.

As remarked in \S\ref{resultsintro}, and following \cite{OhtaEichler} and \cite{MW-Hida}, our construction
of the $\Lambda$-adic Dieudonn\'e and crystalline counterparts to Hida's \'etale cohomology
excludes the trivial eigenspace for the action of $\mu_{p-1}\subseteq \Z_p^{\times}$
so as to avoid technical complications with logarithmic $p$-divisible groups.  
In \cite{Ohta2}, Ohta uses the ``fixed part" (in the sense of Grothendieck \cite[2.2.3]{GroModeles})
of N\'eron models with semiabelian reduction to extend his results on 
$\Lambda$-adic Hodge cohomology to allow trivial tame nebentype character.  
We are confident that by using Kato's logarithmic Dieudonn\'e theory \cite{KatoDegen} 
one can appropriately generalize our results in \S\ref{OrdDieuSection} and \S\ref{OrdSigmaSection} to include
the missing eigenspace for the action of $\mu_{p-1}$.

\subsection{Notation}\label{Notation}

If $\varphi:A\rightarrow B$ is any map of rings, we will often write $M_B:=M\otimes_{A} B$ 
for the $B$-module induced from an $A$-module $M$ by extension of scalars.
When we wish to specify $\varphi$, we will write $M\otimes_{A,\varphi} B$.
Likewise, if $\varphi:T'\rightarrow T$ is any morphism of schemes, for any $T$-scheme $X$
we denote by $X_{T'}$ the base change of $X$ along $\varphi$.
If $f:X\rightarrow Y$ is any morphism of $T$-schemes,
we will write $f_{T'}: X_{T'}\rightarrow Y_{T'}$
for the morphism of $T'$-schemes obtained from $f$ by base change along $\varphi$.
When $T=\Spec(R)$ and $T'=\Spec(R')$ are affine, we abuse notation and write
$X_{R'}$ or $X\times_{R} R'$ for $X_{T'}$.

We will frequently work with schemes over a discrete valuation ring $R$.
We will often write $\X,\Y,\ldots$ for schemes over $\Spec(R)$,
and will generally use $X,Y,\ldots$ (respectively $\o{\X},\o{\Y},\ldots$) 
for their generic (respectively special) fibers.

\subsection{Acknowledgements}

It is a pleasure to thank Laurent Berger, Brian Conrad, Adrian Iovita, Joseph Lipman, Tong Liu, and Romyar Sharifi 
for enlightening conversations and correspondence.  I am especially grateful to Haruzo Hida, and Jacques 
Tilouine for their willingness to answer many questions concerning their work.  
This paper owes a great deal to the work of Masami Ohta, and I heartily thank him
for graciously hosting me during a visit to Tokai University in August, 2009.

\tableofcontents

\section{Preliminaries}\label{Prelim}

This somewhat long section is devoted to recalling the geometric 
background we will need in our constructions.  Much (though not all)
of this material is contained in \cite{CaisDualizing}, \cite{CaisNeron}
and \cite{KM}.

\subsection{Dualizing sheaves and de {R}ham cohomology}\label{GD}

We begin by describing a certain modification of the usual de Rham complex for non-smooth curves.  
The hypercohomology of this (two-term) complex is in general much better behaved than algebraic de Rham 
cohomology and will enable us to construct our $\Lambda$-adic de Rham cohomology.
We largely refer to \cite{CaisDualizing}, but remark that our
treatment here is different in some places and better suited to our purposes.

\begin{definition}\label{curvedef}
	A {\em curve} over a scheme $S$ is a morphism $f:X\rightarrow S$ 
	of finite presentation which is a flat local complete 
	intersection\footnote{That is, a {\em syntomic morphism} in the sense of
	Mazur \cite[II, 1.1]{FontaineMessing}.  Here, we use the definition of l.c.i. given 
	in \cite[Exp. \Rmnum{8}, 1.1]{SGA6}.}
	of pure relative dimension 1 with geometrically reduced fibers.
	We will often say that $X$ is a curve over $S$ or that $X$ is 
	a relative $S$-curve when $f$ is clear from context.
\end{definition}

\begin{proposition}\label{curveproperties}
	Let $f:X\rightarrow S$ be a flat morphism of finite type.  The following are equivalent:
	\begin{enumerate}
		\item The morphism $f:X\rightarrow S$ is a curve.\label{fiscrv}
		\item For every $s\in S$, the fiber $f_s:X_s\rightarrow \Spec k(s)$ is a curve.\label{fiberscrv}
		\item For every $x\in X$ with $s=f(x)$, the local ring 
		$\O_{X_s,x}$ is a complete intersection\footnote{That is, the quotient of a regular local ring by 
		a regular sequence.} and $f$ has geometrically reduced fibers of pure dimension 1.\label{localringcrv}
	\end{enumerate}
	Moreover, any base change of a curve is again a curve.
\end{proposition}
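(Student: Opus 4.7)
The plan is to reduce each defining property of a ``curve'' in Definition \ref{curvedef}---flatness, local complete intersection, pure relative dimension $1$, and geometrically reduced fibers---to a fiberwise condition, whereupon the equivalences will follow almost immediately from the standard behavior of these properties, and base-change stability will drop out for free.

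First I would settle (\ref{fiscrv}) $\iff$ (\ref{fiberscrv}). Flatness and finite type are assumed throughout, and finite presentation is stable under passage to fibers in the obvious way (and in the relative case follows from the Noetherian approximation reductions one makes in l.c.i. arguments). Pure relative dimension $1$ and ``geometrically reduced fibers'' are literally fiberwise by their definitions. The only nontrivial point is that for a flat and locally finitely presented morphism $f:X\to S$, being a local complete intersection is a fiberwise condition: this is the content of \cite[Exp.~VIII, Prop.~1.3]{SGA6} (cf.\ also the Stacks Project tag \texttt{069G}), which asserts that $f$ is l.c.i.\ if and only if every geometric fiber is l.c.i.\ as an absolute scheme. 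Stringing these together yields (\ref{fiscrv}) $\iff$ (\ref{fiberscrv}).

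Next I would handle (\ref{fiberscrv}) $\iff$ (\ref{localringcrv}). Over a field $k$, a locally finite type $k$-scheme $X_s$ is l.c.i.\ (as a morphism to $\Spec k(s)$) if and only if each local ring $\O_{X_s,x}$ is a complete intersection ring in the usual commutative-algebra sense; this is the local characterization of absolute l.c.i.\ schemes over a field. Combined with the fact that pure relative dimension $1$ and geometrical reducedness are already fiberwise, this gives the equivalence with (\ref{localringcrv}).

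Finally I would address base change. All the ingredients are well known to be preserved under arbitrary base change $S'\to S$: flatness and finite presentation are classical; pure relative dimension of the fibers is preserved since fibers over $S'$ are obtained from fibers over $S$ by a field extension, which neither changes dimension nor destroys geometric reducedness (this last is precisely why ``geometrically'' is in the definition); and the l.c.i.\ property is preserved under base change by the fiberwise criterion used in the first step, since the geometric fibers of $X_{S'}\to S'$ coincide with geometric fibers of $X\to S$. I expect the main obstacle to be the careful invocation of the l.c.i.\ theory---specifically, verifying that the formulation of ``local complete intersection morphism'' adopted here (from \cite[Exp.~VIII, 1.1]{SGA6}) really does admit the fiberwise characterization in the flat case, and does propagate under base change---but once that is cited, the remainder of the argument is essentially bookkeeping.
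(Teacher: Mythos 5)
Your proof is correct and follows essentially the same route as the paper: you reduce everything to the fiberwise characterization of local complete intersection morphisms for flat, locally finitely presented maps, noting the remaining conditions (pure relative dimension one, geometrically reduced fibers) are fiberwise by definition, and deduce base-change stability from that same criterion. The paper phrases this slightly differently—it invokes \cite[Exp.~VIII, 1.4]{SGA6} to identify its l.c.i.\ notion with the fiberwise definition of \cite[$\mathrm{IV}_4$, 19.3.6]{EGA} and then cites \cite[$\mathrm{IV}_4$, 19.3.9]{EGA} for base change—but the underlying mathematics is the same as your appeal to the fiberwise l.c.i.\ criterion.
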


\begin{proof}
	Since $f$ is flat and of finite presentation, the definition of local complete 
	intersection that we are using ({\em i.e.} \cite[Exp. \Rmnum{8}, 1.1]{SGA6}) is equivalent
	to the definition given in \cite[$\mathrm{\Rmnum{4}}_4$, 19.3.6]{EGA} by \cite[Exp. \Rmnum{8}, 1.4]{SGA6};
	the equivalence of (\ref{fiscrv})--(\ref{localringcrv}) follows immediately.  
	The final statement of the proposition is an easy consequence of \cite[$\mathrm{\Rmnum{4}}_4$, 19.3.9]{EGA}.    
\end{proof}

\begin{corollary}\label{curvecorollary}
	Let $f:X\rightarrow S$ be a finite type morphism of pure relative dimension $1$.
	\begin{enumerate}
		\item If $f$ is smooth, then it is a curve.\label{smoothcrv}
		\item If $X$ and $S$ are regular and $f$ has geometrically reduced fibers
		then $f$ is a curve.\label{regcrv}
		\item If $f$ is a curve then it is Gorenstein and hence also Cohen Macaulay.\label{crvCM}
	\end{enumerate}
\end{corollary}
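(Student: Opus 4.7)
The plan is to verify, in each of the three parts, the defining properties of Definition \ref{curvedef}, reducing where possible to standard results from EGA, SGA 6, and commutative algebra.

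For (\ref{smoothcrv}), a smooth morphism of pure relative dimension $1$ is automatically flat, of finite presentation, and has smooth (hence regular, hence reduced) geometric fibers, so only the l.c.i.\ condition requires verification. But this is immediate: the trivial factorization of $f$ as $\id:X\to X$ followed by $f:X\to S$ itself exhibits $f$ locally as a regular closed immersion into a smooth $S$-scheme, which is precisely the definition of l.c.i.\ given in \cite[Exp.~\Rmnum{8}, 1.1]{SGA6}.

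For (\ref{regcrv}), the key steps are establishing flatness and the l.c.i.\ property. Flatness follows from ``miracle flatness'' (\cite[$\mathrm{\Rmnum{4}}_3$, 15.4.2]{EGA}): since $X$ is Cohen--Macaulay (being regular), $S$ is regular, and the fibers of $f$ are equidimensional of the expected dimension $\dim X - \dim S = 1$, the morphism $f$ is flat. For the l.c.i.\ property, I would argue Zariski-locally on $X$ by factoring $f$ as a closed immersion $X\hookrightarrow \mathbf{A}^n_S$ followed by the smooth structure map $\mathbf{A}^n_S\to S$. Since $S$ is regular, so is $\mathbf{A}^n_S$; and any closed immersion between regular Noetherian schemes is a regular immersion, because locally the defining ideal $I\subset B$ (with both $B$ and $B/I$ regular local) is generated by the appropriate portion of a regular system of parameters of $B$, hence by a regular sequence.

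For (\ref{crvCM}), suppose $f:X\to S$ is a curve. Zariski-locally on $X$, a factorization $X\hookrightarrow Y\to S$ with the first map a regular closed immersion and the second smooth base-changes to $X_s \hookrightarrow Y_s \to \Spec k(s)$, where $X_s\hookrightarrow Y_s$ remains a regular closed immersion into a smooth (hence regular) $k(s)$-scheme. Thus each fiber $X_s$ is a complete intersection inside a regular scheme, hence Gorenstein by standard commutative algebra. Since $f$ is flat with Gorenstein fibers, $f$ itself is Gorenstein, and a fortiori Cohen--Macaulay. The main obstacle is part (\ref{regcrv}): combining the miracle flatness theorem with the fact that closed immersions between regular schemes are regular immersions. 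Neither ingredient is deep, but each requires careful verification of hypotheses at the level of regular local rings.
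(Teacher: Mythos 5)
Your argument is correct in outline and follows essentially the same strategy as the paper's proof. Parts (\ref{smoothcrv}) and (\ref{crvCM}) match the paper almost exactly: for (\ref{smoothcrv}) the paper simply calls it ``obvious'' and you supply the (correct) justification that a smooth morphism is flat, finitely presented, has smooth hence geometrically reduced fibers, and is trivially l.c.i.; for (\ref{crvCM}) the paper goes through Proposition \ref{curveproperties}(\ref{localringcrv}) to see that the local rings of the fibers are complete intersections, then cites Matsumura for ``complete intersection $\Rightarrow$ Gorenstein $\Rightarrow$ CM,'' which is precisely your argument via base-changing the local regular-immersion factorization to the fibers. For the l.c.i.\ claim in (\ref{regcrv}), your reduction to ``a closed immersion between regular noetherian schemes is a regular immersion'' is the same key fact the paper cites (as \cite[6.3.18]{LiuBook}).

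The one place your proposal goes beyond the paper's proof is that you explicitly address \emph{flatness} in part (\ref{regcrv}), which the paper's proof does not mention at all (it only establishes the l.c.i.\ property). This is a real point: flatness is part of Definition \ref{curvedef}, and it does not follow from the l.c.i.\ factorization alone (a regular immersion is l.c.i.\ but generally not flat). However, your invocation of miracle flatness is not quite complete as stated. Miracle flatness requires the pointwise dimension identity $\dim\O_{X,x} = \dim\O_{S,f(x)} + \dim_x X_{f(x)}$, and this does \emph{not} follow from ``$f$ is of pure relative dimension~$1$'' together with regularity of $X$ and $S$. For instance, with $S=\mathbf A^2_k=\Spec k[s,t]$ and $X=\mathbf A^2_k=\Spec k[s,x]$ mapped by $s\mapsto s$, $t\mapsto 0$, both schemes are regular, every nonempty fiber is $\mathbf A^1$ (so geometrically reduced of pure dimension~$1$), but the morphism is not flat and the dimension identity fails at every closed point of $X$: one has $2 < 2 + 1$. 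What rescues the argument in the intended applications is the additional (and in the paper implicit) hypothesis that every irreducible component of $X$ dominates an irreducible component of $S$ with generic fiber of dimension~$1$; under catenarity this does force the required dimension identity. You should state this extra hypothesis, or observe that the corollary's statement inherits it from context, before appealing to miracle flatness.
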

			
\begin{proof}
	The assertion (\ref{smoothcrv}) is obvious, and (\ref{regcrv}) follows from the 
	fact that a closed subscheme of a regular scheme
	is regular if and only if it is defined (locally) by a regular sequence; {\em cf.} \cite[6.3.18]{LiuBook}.
	Finally, (\ref{crvCM})
	follows from Proposition \ref{curveproperties} (\ref{localringcrv}) and 
	the fact that every local ring
	that is a complete intersection is Gorenstein and hence Cohen Macaulay  
	(see, e.g., Theorems 18.1 and 21.3 of \cite{matsumura}).  
\end{proof}

Fix a relative curve $f:X\rightarrow S$.    
We wish to apply Grothendieck duality theory to $f$, so we
henceforth assume that $S$ is a noetherian scheme of finite Krull dimension\footnote{Nagata gives
an example \cite[A1, Example 1]{nagata} of an affine and regular noetherian scheme of infinite Krull dimension,
so this hypotheses is not redundant.} that is Gorenstein and excellent, so that
that $\O_S$ is a dualizing complex for $S$ \cite[V,\S10]{RD}.
Since $f$ is CM by Corollary \ref{curvecorollary} (\ref{crvCM}), 
by \cite[Theorem 3.5.1]{GDBC}) the relative dualizing complex $f^!\O_S$ has a 
unique nonzero cohomology sheaf, which is in degree $-1$,
and we define the {\em relative dualizing sheaf} for $X$ over $S$ (or for $f$) to be:

\begin{equation*}
	\omega_f=\omega_{X/S} := H^{-1}(f^!\O_S).
\end{equation*}
Since the fibers of $f$ are Gorenstein, $\omega_{X/S}$ is an invertible $\O_X$-module by \cite[V, Proposition 9.3, Theorem 9.1]{RD}. The formation of $\omega_{X/S}$ is compatible
with arbitrary base change on $S$ and \'etale localization on $X$ \cite[Theorem 3.6.1]{GDBC}.  

\begin{remark}\label{abstractdualizing}
	Since $S$ is Gorenstein and of finite Krull dimension and $f^!$ carries dualizing complexes for $S$ to dualizing complexes for $X$ 
	(see \cite[\Rmnum{5}, \S8]{RD}), 
	the sheaf $\omega_{X/S}$ (thought of as a complex concentrated in some degree) is a dualizing complex for the abstract scheme $X$.   
\end{remark}

\begin{proposition}\label{canmap}
	Let $X\rightarrow S$ be a relative curve. There is a canonical 
	map of $\O_X$-modules
	\begin{equation}
		\xymatrix{
			{c_{X/S}: \Omega^1_{X/S}} \ar[r] & {\omega_{X/S}}
			}\label{cmap}
	\end{equation}
	whose formation commutes with any base change $S'\rightarrow S$, where
	$S'$ is noetherian of finite Krull dimension, Gorenstein, and excellent.
 	 Moreover, the restriction of $c_{X/S}$ to any $S$-smooth subscheme
	of $X$ is an isomorphism. 
\end{proposition}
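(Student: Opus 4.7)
The plan is to construct $c_{X/S}$ locally by means of the fundamental local isomorphism, then show that the local construction is independent of choices so that it globalizes, and finally verify base change and the smooth-locus isomorphy.

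First I would construct the map locally. Since $f: X \to S$ is a flat local complete intersection of pure relative dimension $1$, Zariski-locally on $X$ one may factor $f = p \circ i$ where $i: X \hookrightarrow P$ is a regular immersion of codimension $c = n-1$ into a smooth $S$-scheme $p: P \to S$ of pure relative dimension $n$. Grothendieck duality then gives a canonical isomorphism
\begin{equation*}
\omega_{X/S} \simeq i^{*}\Omega^{n}_{P/S} \otimes_{\O_X} \wedge^{n-1}(I/I^{2})^{\vee},
\end{equation*}
where $I$ is the ideal sheaf of $i$. The conormal sequence $I/I^{2} \to i^{*}\Omega^{1}_{P/S} \to \Omega^{1}_{X/S} \to 0$, upon taking top exterior powers, yields a canonical $\O_X$-linear map $i^{*}\Omega^{n}_{P/S} \to \wedge^{n-1}(I/I^{2}) \otimes \Omega^{1}_{X/S}$, and rearranging via the invertibility of $\wedge^{n-1}(I/I^{2})^{\vee}$ produces the desired arrow $c_{X/S}: \Omega^{1}_{X/S} \to \omega_{X/S}$.

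The main obstacle is showing that this local construction is independent of the choice of embedding and therefore glues to a global map. I would handle this by a standard two-step reduction: given two local factorizations, form their fiber product over $S$ to produce a third factorization dominating both, so that it suffices to compare the constructions arising from $i$ and from $j \circ i'$ where $i': X \hookrightarrow P'$ and $j: P' \to P$ is smooth. In that setting, the comparison reduces to the compatibility of the fundamental local isomorphism with the standard short exact sequence $0 \to j^{*}\Omega^{1}_{P/S} \to \Omega^{1}_{P'/S} \to \Omega^{1}_{P'/P} \to 0$ on smooth morphisms, combined with the compatibility of the top-exterior-power construction in conormal sequences under composition of regular immersions. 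This is the part that must be checked carefully; it is essentially a diagram chase in Grothendieck duality, and once established it gives well-definedness and simultaneously \'etale-local canonicity.

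Base change compatibility is then almost formal: both $\Omega^{1}_{X/S}$ and $\omega_{X/S}$ commute with any base change $S' \to S$ of the type allowed (the former always, the latter as recorded in the text via \cite[Theorem 3.6.1]{GDBC}), and since the fundamental local isomorphism and the conormal sequence are themselves compatible with base change, so is the map built from them. Formally: choose a local factorization through smooth $P$, base-change it to $S'$ (smoothness and regular immersion are preserved), and observe that each constituent of the construction of $c_{X/S}$ transforms to the corresponding constituent of $c_{X_{S'}/S'}$.

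Finally, over the $S$-smooth locus $U \subseteq X$ the conormal sequence is short exact and locally split, with $\Omega^{1}_{X/S}|_{U}$ locally free of rank $1$ and $I/I^{2}|_{U}$ locally free of rank $n-1$. In this situation the top-exterior-power map $\wedge^{n}(i^{*}\Omega^{1}_{P/S})|_{U} \to \wedge^{n-1}(I/I^{2})|_{U} \otimes \Omega^{1}_{X/S}|_{U}$ is an isomorphism of invertible sheaves, so $c_{X/S}|_{U}$ is an isomorphism. In particular, this recovers the identification $\omega_{X/S} \simeq \Omega^{1}_{X/S}$ on any $S$-smooth subscheme of $X$, as claimed.
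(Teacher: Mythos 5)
The paper's own ``proof'' is a bare citation to El Zein (Th\'eor\`eme III.1 of \cite{elzeinapp}) and Liu \cite[6.4.13]{LiuBook}, and your construction is precisely the content of those references: a local factorization through a regular immersion into a smooth $S$-scheme, the fundamental local isomorphism for $\omega_{X/S}$, the second exterior power of the conormal sequence to produce the comparison, a fiber-product argument to globalize independently of the embedding, and then the routine base-change and smooth-locus checks. So you are giving the argument the paper outsources, along the same lines.

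One small slip: the arrow you extract from the conormal sequence goes the wrong way. From $I/I^{2}\to i^{*}\Omega^{1}_{P/S}\to\Omega^{1}_{X/S}\to 0$ with $I/I^{2}$ of rank $n-1$ and $i^{*}\Omega^{1}_{P/S}$ of rank $n$, the canonical morphism is
\[
\textstyle\bigwedge^{n-1}(I/I^{2})\otimes\Omega^{1}_{X/S}\longrightarrow i^{*}\Omega^{n}_{P/S},
\qquad a_{1}\wedge\cdots\wedge a_{n-1}\otimes c\longmapsto \alpha(a_{1})\wedge\cdots\wedge\alpha(a_{n-1})\wedge\tilde c,
\]
well-defined because $\bigwedge^{n}(I/I^{2})=0$, and not a map $i^{*}\Omega^{n}_{P/S}\to\bigwedge^{n-1}(I/I^{2})\otimes\Omega^{1}_{X/S}$ as you wrote. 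Tensoring with $(\bigwedge^{n-1}(I/I^{2}))^{\vee}$ then gives $\Omega^{1}_{X/S}\to\omega_{X/S}$ directly, so the rest of your argument and the final answer are unaffected; but you should get the direction of the auxiliary arrow right before invoking invertibility.
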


\begin{proof}
	See \cite{elzeinapp}, especially Th\'eor\`eme \Rmnum{3}.1, and {\em cf.} \cite[6.4.13]{LiuBook}.
\end{proof}

\begin{definition}\label{complexregdiff}
	We define the two-term $\O_S$-linear complex (of $\O_S$-flat coherent $\O_X$-modules) concentrated in degrees 0 and 1
	\begin{equation}
		\xymatrix{
			{\omega_f^{\bullet}=\omega_{X/S}^{\bullet}:=\O_X} \ar[r]^-{d_S} & {\omega_{X/S}}
			}
	\end{equation}
	where $d_S$ is the composite of the map (\ref{cmap}) and the universal 
	$\O_S$-derivation $\O_X\rightarrow \Omega^1_{X/S}$.  We view $\omega_{X/S}^{\bullet}$
	as a filtered complex via ``{\em la filtration b\^ete}" \cite{DeligneHodge2},
	which provides an exact triangle
\begin{equation}
	\xymatrix{
		{\omega_{X/S}[-1]} \ar[r] & {\omega^{\bullet}_{X/S}} \ar[r] & {\O_X}
		}\label{HodgeFilComplex}
\end{equation}
	in the derived category that we call the {\em Hodge Filtration} of $\omega^{\bullet}_{X/S}$.
\end{definition}

Since $c_{X/S}$ is an isomorphism over the $S$-smooth locus $X^{\sm}$ of $f$ in $X$, the complex $\omega^{\bullet}_{X/S}$ coincides with
the usual de Rham complex over $X^{\sm}$.  Moreover, it follows immediately from 
Proposition \ref{canmap} that the formation of $\omega_{X/S}^{\bullet}$ is compatible with 
any base change $S'\rightarrow S$ to a noetherian scheme $S'$ of finite Krull dimension that is
Gorenstein and excellent.

\begin{definition}
	Let $f:X\rightarrow S$ relative curve over $S$.  For each nonnegative integer $i$, we define
	\begin{equation*}
		\mathscr{H}^i(X/S):=\R^i f_*\omega_{X/S}^{\bullet}.
	\end{equation*}
	When $S=\Spec R$ is affine, we will write $H^i(X/R)$ for the global sections of the $\O_S$-module 
	$\mathscr{H}^i(X/S)$.
\end{definition}

The complex $\omega_{X/S}^{\bullet}$ and its filtration (\ref{HodgeFilComplex})
behave extremely well with respect to duality:

\begin{proposition}\label{GDuality}
	Let $f:X\rightarrow S$ be a proper curve over $S$.  There is a canonical
	quasi-isomorphism 
	\begin{equation}
		\omega_{X/S}^{\bullet} \simeq \R\scrHom_X^{\bullet}(\omega_{X/S}^{\bullet},\omega_{X/S}[-1])
		\label{DualityIsom}
	\end{equation}
	which is compatible with the filtrations on both sides induced by $(\ref{HodgeFilComplex})$.
	In particular:
	\begin{enumerate}
		\item There is a natural quasi-isomorphism
		\begin{equation*}
			\R f_*\omega^{\bullet}_{X/S}\simeq \R\scrHom_X^{\bullet}(\R f_*\omega^{\bullet}_{X/S},\O_S)[-2]			
		\end{equation*}
		which is compatible with the filtrations induced by $(\ref{HodgeFilComplex})$.\label{DualityOnS}
		\item If $\rho:Y\rightarrow X$ is any finite morphism of proper curves over $S$, 
		then there is a canonical quasi-isomorphism
	\begin{equation*}
		\R\rho_*\omega_{Y/S}^{\bullet} \simeq \R\scrHom_X^{\bullet}
		(\R\rho_*\omega_{Y/S}^{\bullet},\omega_{X/S}[-1]).
	\end{equation*}
		that is compatible with filtrations.\label{DualityRho}
		
		\end{enumerate}
\end{proposition}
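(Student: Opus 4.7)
The plan is to establish the duality isomorphism (\ref{DualityIsom}) by identifying both sides as quasi-isomorphic two-term complexes carrying matching filtrations, and then to deduce (\ref{DualityOnS}) and (\ref{DualityRho}) by applying $\R f_*$ and $\R\rho_*$ together with standard Grothendieck duality.

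First I would construct the duality map. Since $\omega_{X/S}$ is an invertible $\O_X$-module, there are canonical identifications $\R\scrHom_X(\O_X,\omega_{X/S}) = \omega_{X/S}$ and $\R\scrHom_X(\omega_{X/S},\omega_{X/S}) = \O_X$. Viewing $\omega_{X/S}^{\bullet} = [\O_X \xrightarrow{d_S} \omega_{X/S}]$ in degrees $0$ and $1$, a direct calculation shows that $\R\scrHom_X^{\bullet}(\omega_{X/S}^{\bullet},\omega_{X/S}[-1])$ is again a two-term complex with $\O_X$ in degree $0$ and $\omega_{X/S}$ in degree $1$. The natural multiplication pairing $\omega_{X/S}^{\bullet}\otimes^{\mathbf{L}}_{\O_X}\omega_{X/S}^{\bullet}\to\omega_{X/S}[-1]$, given in total degree $1$ by the pair of maps $\O_X\otimes\omega_{X/S}\to\omega_{X/S}$ and $\omega_{X/S}\otimes\O_X\to\omega_{X/S}$ and vanishing elsewhere, then induces (\ref{DualityIsom}). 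To check it is a quasi-isomorphism, I would apply $\R\scrHom_X(-,\omega_{X/S}[-1])$ to the Hodge filtration triangle (\ref{HodgeFilComplex}): the outer terms of the resulting exact triangle are canonically $\omega_{X/S}[-1]$ and $\O_X$ via the invertibility identifications above, and (\ref{DualityIsom}) fits into a morphism of triangles from (\ref{HodgeFilComplex}) to this one that restricts to the identity on both outer terms. The five-lemma then yields a quasi-isomorphism compatible with the b\^ete filtrations on both sides.

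For (\ref{DualityOnS}), I apply $\R f_*$ to (\ref{DualityIsom}). Since $f$ is proper CM of relative dimension $1$, $f^!\O_S\simeq \omega_{X/S}[1]$ and hence $\omega_{X/S}[-1]\simeq f^!\O_S[-2]$; Grothendieck duality then gives
\begin{equation*}
	\R f_*\R\scrHom_X^{\bullet}(\omega_{X/S}^{\bullet},\omega_{X/S}[-1]) \simeq \R\scrHom_S^{\bullet}(\R f_*\omega_{X/S}^{\bullet},\O_S)[-2],
\end{equation*}
yielding the stated self-duality. For (\ref{DualityRho}), I apply $\R\rho_*$ to the analogue of (\ref{DualityIsom}) for $Y/S$. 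Transitivity of relative dualizing complexes along $Y\xrightarrow{\rho} X\to S$ (together with $\rho$ being finite, so that $\rho^!$ is concentrated in degree $0$) gives $\rho^!(\omega_{X/S}[-1])\simeq \omega_{Y/S}[-1]$, and the adjunction $\R\rho_*\R\scrHom_Y(-,\rho^!(-)) \simeq \R\scrHom_X(\R\rho_*(-),-)$ for the finite morphism $\rho$ converts (\ref{DualityIsom}) for $Y/S$ into the desired quasi-isomorphism over $X$. In both cases, compatibility with the filtrations propagates from the filtered compatibility of (\ref{DualityIsom}), since $\R f_*$, $\R\rho_*$, and the duality functors all preserve filtered triangles.

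The main obstacle is the bookkeeping needed to verify filtration compatibility: while (\ref{DualityIsom}) is essentially an identification of explicit two-term complexes, passing through Grothendieck duality interchanges subcomplex and quotient, so one must check that the subcomplex $\omega_{X/S}[-1]\subset\omega_{X/S}^{\bullet}$ on one side really matches its dual on the other. Making the identifications $\R\scrHom(\O_X,\omega_{X/S}[-1])\simeq\omega_{X/S}[-1]$ and $\R\scrHom(\omega_{X/S}[-1],\omega_{X/S}[-1])\simeq \O_X$ consistent with the multiplicative pairing, with correct signs, is what ensures the graded pieces match on the nose and not merely up to an automorphism.
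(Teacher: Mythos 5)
Your proof is correct and follows essentially the same route as the paper: the paper cites \cite{CaisDualizing} for the self-duality of $\omega^{\bullet}_{X/S}$ and then applies $\R f_*$ (respectively $\R\rho_*$) together with Grothendieck duality to deduce the two corollaries, which is exactly what you do. You supply in full the computation of $\R\scrHom^{\bullet}_X(\omega^{\bullet}_{X/S},\omega_{X/S}[-1])$ and the filtration-compatibility check that the paper outsources to the cited lemmas, and your uses of $f^!\O_S\simeq\omega_{X/S}[1]$, $\rho^!\omega_{X/S}\simeq\omega_{Y/S}$, and the duality adjunction are all as intended.
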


\begin{proof}
	For the first claim, see the proofs of Lemmas 4.3 and 5.4 in \cite{CaisDualizing}, noting that although
	$S$ is assumed to be the spectrum of a discrete valuation ring and the definition of curve in that paper 	
	differs somewhat from the definition here, the arguments themselves apply {\em verbatim} in our 	
	context.  The assertion (\ref{DualityOnS}) (respectvely (\ref{DualityRho})) follows from this by 
	applying $\R f_*$ (respectively $\R\rho_*$) to both sides of (\ref{DualityIsom}) and appealing 
	to Grothendieck duality
	\cite[Theorem 3.4.4]{GDBC} for the proper map $f$ (respectively $\rho$); see the proofs
	of Lemma 5.4 and Proposition 5.8 in \cite{CaisDualizing} for details.
\end{proof}

In our applications, we need to understand the cohomology $H^i(X/S)$ for a proper
curve $X\rightarrow S$ when $S$ is either the spectrum of a discrete valuation
ring $R$ of mixed characteristic $(0,p)$ or the spectrum of a perfect field.
We now examine each of these situations in more detail.

First suppose that $S:=\Spec(R)$ is the spectrum of a discrete valuation ring $R$ having field of fractions $K$ of characteristic zero and perfect residue field $k$ of characteristic $p>0$, and fix a normal curve $f:X\rightarrow S$ that is proper over $S$ with smooth and geometrically connected
generic fiber $X_K$.   This situation is studied extensively
in \cite{CaisDualizing}, and we content ourselves with a summary 
of the results we will need.
To begin, we recall the following ``concrete" description of the relative dualizing sheaf:

\begin{lemma}\label{ConcreteDualizingDescription}
	Let $i:U\hookrightarrow X$ be any Zariski open subscheme of $X$ 
	whose complement consists of finitely many points of codimension $2$
	$($necessarily in the closed fiber of $X$$)$.  Then the canonical map
	\begin{equation*}
		\xymatrix{
			{\omega_{X/S}} \ar[r] & {i_*i^*\omega_{X/S} \simeq i_*\omega_{U/S}}
			}
	\end{equation*}
	is an isomorphism.  In particular, $\omega_{X/S}\simeq i_*\Omega^1_{U/S}$
	for any Zariski open subscheme $i:U\hookrightarrow X^{\sm}$ whose
	complement consists of finitely many points of codimension two.	
\end{lemma}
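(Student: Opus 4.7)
The plan is to deduce the lemma from the standard $S_2$-extension principle: a locally free sheaf on a normal excellent surface is canonically identified with the pushforward of its restriction to the complement of any closed subset of codimension $\ge 2$. First I would verify that $X$ is a $2$-dimensional Cohen--Macaulay integral excellent scheme. Integrality follows from the generic fiber being smooth and geometrically connected; the Krull dimension is $2$ because $f$ is a relative curve over the $1$-dimensional DVR $\Spec R$; excellence is inherited from $R$. Normality in dimension $2$ then gives Cohen--Macaulay by Serre's criterion. Since $f$ is a curve in the sense of Definition \ref{curvedef} it is (flat and) locally a complete intersection over the Gorenstein base $S$, so $\omega_{X/S}$ is an invertible $\O_X$-module, as already noted after the definition of $\omega_{X/S}$.

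Next, the complement $Z := X \setminus U$ consists of finitely many closed points of the special fiber: any codimension-$2$ point of $X$ cannot lie in the $1$-dimensional generic fiber, justifying the parenthetical in the statement. The canonical map $\omega_{X/S} \to i_* i^* \omega_{X/S}$ sits in an exact sequence of local cohomology
\[
 0 \longrightarrow \scrH^0_Z(\omega_{X/S}) \longrightarrow \omega_{X/S} \longrightarrow i_* i^* \omega_{X/S} \longrightarrow \scrH^1_Z(\omega_{X/S}),
\]
so the first claim reduces to the vanishing $\scrH^j_Z(\omega_{X/S}) = 0$ for $j = 0,1$. This vanishing follows from the depth bound $\depth_{\O_{X,z}}(\omega_{X/S})_z \ge 2$ at every $z \in Z$, which in turn holds because $\omega_{X/S}$ is locally free and the local ring of $X$ at any codimension-$2$ point is Cohen--Macaulay of dimension $2$ and therefore of depth $2$.

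For the final assertion, when $U \subseteq X^{\sm}$ the map $c_{X/S}$ of Proposition \ref{canmap} is an isomorphism over $U$, hence $i_*\omega_{U/S} \simeq i_*\Omega^1_{U/S}$, and combining with the first part of the lemma yields $\omega_{X/S} \simeq i_*\Omega^1_{U/S}$. There is no serious obstacle in this argument; the only steps requiring any care are the identification of $X$ as a Cohen--Macaulay surface and of $\omega_{X/S}$ as a line bundle, after which the conclusion is a direct application of the depth/local-cohomology criterion.
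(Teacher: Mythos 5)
Your argument is correct, and it is essentially a self-contained version of what the paper leaves to a citation: the paper's proof of the first assertion is simply a pointer to \cite[Lemma 3.2]{CaisNeron}, whose content is precisely the depth / local-cohomology criterion you invoke. You correctly set up the exact sequence
\[
0 \to \scrH^0_Z(\omega_{X/S}) \to \omega_{X/S} \to i_*i^*\omega_{X/S} \to \scrH^1_Z(\omega_{X/S}),
\]
and reduce to $\depth_{\O_{X,z}}(\omega_{X/S})_z \ge 2$ at the finitely many codimension-$2$ points $z\in Z$; since $\omega_{X/S}$ is invertible this is the depth of $\O_{X,z}$, which equals $2$ because $X$ is normal (hence $S_2$) of dimension $2$ at $z$. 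Note that the Cohen--Macaulay property is already available more directly from the paper's Corollary \ref{curvecorollary} (\ref{crvCM}) (curves are Gorenstein), so the detour through Serre's criterion is not strictly needed, though it is correct. For the second assertion you correctly combine the first part with the fact that $c_{X/S}$ restricts to an isomorphism on $X^{\sm}$ (Proposition \ref{canmap}); the only thing left implicit is that the complement of $X^{\sm}$ in $X$ is itself finite and of codimension $2$ (because the generic fiber is smooth and the closed fiber is geometrically reduced, hence generically smooth), which is exactly the remark the paper makes. In short: the proof is sound and fills in the ``black box'' that the paper delegates to the reference.
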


\begin{proof}
	The first assertion is \cite[Lemma 3.2]{CaisNeron}. The second
	follows from this, since $X^{\sm}$ contains the generic fiber
	and the generic points of the closed fiber by our definition of curve.
\end{proof}

\begin{proposition}\label{ComplexFunctoriality}
	Let $\rho:Y\rightarrow X$ be a finite morphism of normal and proper $S$-curves.
	\begin{enumerate}
		\item Attached to $\rho$ are natural
		pullback and trace morphisms of complexes 
		\begin{equation*}
			\xymatrix{
				{\rho^*: \omega^{\bullet}_{X/S}} \ar[r] & {\rho_*\omega^{\bullet}_{Y/S}}
			}
			\quad\text{and}\quad
						\xymatrix{
				{\rho_*: \rho_*\omega^{\bullet}_{Y/S}} \ar[r] & {\omega^{\bullet}_{X/S}}
			}
		\end{equation*}
		which are of formation compatible with \'etale localization on $X$ and 
		flat base change on $S$ and
		are dual via the duality of Proposition $\ref{GDuality}$ $(\ref{DualityRho})$.
		\label{FunctorialityProps1}
		
			\item For any $S$-smooth point $y\in {Y}^{\sm}$ with image $x:=\rho(y)$
			that lies in $X^{\sm}$, the induced mappings of complexes of $\O_{X,x}$-modules
			$\omega^{\bullet}_{X/S,x}\rightarrow \omega^{\bullet}_{Y/S,y}$ 
			and  
			$\omega^{\bullet}_{Y/S,y}\rightarrow \omega^{\bullet}_{X/S,x}$ 
			coincide with the usual pullback and trace mappings on de Rham complexes
			attached to the finite flat morphism of smooth schemes 
			$\Spec(\O_{Y,y})\rightarrow \Spec(\O_{X,x})$.\label{FunctorialityProps2}
			
	\end{enumerate}
\end{proposition}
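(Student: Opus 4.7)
The plan is to construct $\rho^*$ and $\rho_*$ by restricting to a common smooth locus where they coincide with the classical pullback and trace on the ordinary de Rham complex, and then extending by direct image using Lemma \ref{ConcreteDualizingDescription} and the normality of $X$ and $Y$.

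First I would choose a dense open $i: U \hookrightarrow X$ contained in $X^{\sm}$ whose complement in $X$ has codimension at least $2$ and whose preimage $V := \rho^{-1}(U)$ is contained in $Y^{\sm}$. This is possible because $\rho$ sends generic points of $Y$ to generic points of $X$ (all smooth), so $\rho^{-1}(X^{\sm}) \cap Y^{\sm}$ is a dense open of $Y$, and $U$ can be shrunk further to ensure $V \subseteq Y^{\sm}$ while keeping codimension-$\ge 2$ complements in both $X$ and $Y$. Write $j: V \hookrightarrow Y$ for the resulting open immersion. On $U$ and $V$ the complex $\omega^{\bullet}$ coincides with the ordinary de Rham complex via the isomorphism of Proposition \ref{canmap}. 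Moreover, since $U$ and $V$ are both $S$-smooth of the same relative dimension and $\rho|_V$ is finite, miracle flatness forces $\rho|_V: V \to U$ to be finite flat, so we have the classical pullback and trace on $\Omega^{\bullet}_{U/S}$ and $\Omega^{\bullet}_{V/S}$. Now push forward under $i$. By Lemma \ref{ConcreteDualizingDescription} we have $i_*\omega_{U/S} = \omega_{X/S}$ and $j_*\omega_{V/S} = \omega_{Y/S}$, while by normality of $X$ and $Y$ (algebraic Hartogs) we have $i_*\O_U = \O_X$ and $j_*\O_V = \O_Y$. Applying $i_*$ to the classical pullback and trace on $V\to U$ then yields the desired chain maps $\rho^*: \omega_{X/S}^{\bullet} \to \rho_*\omega_{Y/S}^{\bullet}$ and $\rho_*: \rho_*\omega_{Y/S}^{\bullet} \to \omega_{X/S}^{\bullet}$, with compatibility with $d_S$ inherited from the smooth case.

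Compatibility with \'etale localization on $X$ follows from the \'etale local nature of $X^{\sm}$, of the direct image, and of the dualizing sheaf; compatibility with flat base change $S' \to S$ follows from the observation that such base change preserves smooth loci, codimensions of complements, the formation of $\omega^{\bullet}$, and pullback/trace on the smooth locus. For the duality assertion, the key input is that the Grothendieck duality of Proposition \ref{GDuality}(\ref{DualityRho}) arises from the adjunction $\rho_*\rho^! \to \id$ applied to $\omega_{X/S}[-1]$ — using the identification $\rho^!\omega_{X/S} \simeq \omega_{Y/S}$ for the finite morphism $\rho$ between CM schemes of the same relative dimension — and that on the finite flat morphism $\rho|_V: V \to U$ of smooth schemes this adjunction recovers the classical Serre/de Rham adjunction between $(\rho|_V)^*$ and $(\rho|_V)_*$. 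Pushing forward under $i$ then transports this duality to $\rho^*$ and $\rho_*$, giving (\ref{FunctorialityProps1}). Part (\ref{FunctorialityProps2}) is immediate from the construction: for $y \in Y^{\sm}$ with $x = \rho(y) \in X^{\sm}$ one can choose $U$ and $V$ to contain $x$ and $y$, so the stalks of $\rho^*$ and $\rho_*$ at such points are literally the classical pullback and trace of de Rham complexes for the finite flat morphism $\Spec(\O_{Y,y}) \to \Spec(\O_{X,x})$ of smooth schemes.

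The main obstacle is verifying the duality assertion precisely: one must check that the classical de Rham adjunction $(\rho|_V)^* \dashv (\rho|_V)_*$ between $\Omega^{\bullet}_{U/S}$ and $\Omega^{\bullet}_{V/S}$ extends, via $i_*$, to the Grothendieck duality pairing of Proposition \ref{GDuality}(\ref{DualityRho}). This ultimately reduces to identifying the $\rho^!$-functoriality on relative dualizing sheaves for finite morphisms with the Serre duality pairing on the smooth locus, which is an exercise in unwinding the Grothendieck duality formalism; the subtlety lies in the fact that although $\rho$ itself may fail to be flat along codimension-$2$ points in the special fiber, the identification of interest is controlled entirely by its restriction to the finite flat locus, and that restriction determines everything by density.
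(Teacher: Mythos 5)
Your strategy — restrict to a common smooth open with codimension-$\ge 2$ complement, where $\omega^\bullet$ is the ordinary de Rham complex and $\rho$ becomes finite flat (miracle flatness), take the classical pullback/trace there, and extend by pushforward using Lemma \ref{ConcreteDualizingDescription} together with normality of $X$ and $Y$ — is exactly the right idea, and it matches the structure of the proofs in \cite[Prop.\ 4.5, 5.5]{CaisDualizing} that the paper cites. The shrinking step is handled correctly: $\rho(Y\setminus Y^{\sm})$ is a finite set of codimension-$2$ points, so one can remove it from $X^{\sm}$ without affecting codimension, and finiteness of $\rho$ ensures $\rho^{-1}(U)$ still has codimension-$\ge 2$ complement in $Y$. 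The Hartogs identifications $i_*\O_U=\O_X$, $i_*\Omega^1_{U/S}=\omega_{X/S}$ and $\rho_*j_* = i_*(\rho|_V)_*$ then give well-defined chain maps between the right objects, and commutation with $d_S$ is inherited from the classical smooth finite-flat case.

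The one place where the argument is genuinely loose is the duality assertion. The compatibility being claimed is commutativity of a square of morphisms in $D^b(X)$ relating $\rho^*$, $\rho_*$, and the two quasi-isomorphisms of Proposition~\ref{GDuality}; two morphisms in a derived category that agree after restriction to a dense open need \emph{not} coincide, so ``determines everything by density'' does not directly apply to derived morphisms. What rescues the argument is that the maps in question are represented by actual chain maps between two-term complexes whose entries are $S_2$ (indeed invertible, or pushforwards of invertible sheaves under a finite morphism), so agreement over $U$ \emph{does} force equality — but this needs to be said, because the Hartogs principle is being applied at the level of sheaf morphisms, not derived morphisms. A cleaner way to organize the same content (and essentially what \cite{CaisDualizing} does) is to define $\rho_*$ on the $\omega$-term as the Grothendieck trace $\rho_*\rho^!\omega_{X/S}\to\omega_{X/S}$ and on the $\O$-term as the ring trace of the finite morphism, so that adjointness to $\rho^*$ is true by construction, and then prove the nontrivial fact that the resulting pair is a chain map by restriction to the smooth locus — the same density argument you invoke, but applied only to the single sheaf-level identity $d_S\circ\mathrm{tr}_{\O}=\mathrm{tr}_{\omega}\circ d_S$, where it is unambiguous. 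Either organization is fine; just be careful not to let the density argument silently cross from chain maps into the derived category. One further small caveat: for the ``flat base change on $S$'' compatibility, your reasoning tacitly uses that $X_{S'}$ remains normal so that the Hartogs extension still applies; this is not automatic for arbitrary flat $S'\to S$ but does hold for the base changes the paper actually uses (cf.\ Lemma~\ref{normalcrit}).
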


\begin{proof}
	The assertions of (\ref{FunctorialityProps1}) follow from the proofs of Propositions 4.5 and 5.5 
	of \cite{CaisDualizing}, while (\ref{FunctorialityProps2})
	is a straightforward consequence of the very construction of $\rho_*$ and $\rho^*$
	as given in \cite[\S4]{CaisDualizing}.
\end{proof}

Since the generic fiber of $X$ is a smooth and proper curve over $K$, 
the Hodge to de Rham spectral sequence degenerates \cite{DeligneIllusie}, and there
is a functorial short exact sequence of $K$-vector spaces
\begin{equation}
	\xymatrix{
		0\ar[r] & {H^0(X_K,\Omega^1_{X_K/K})} \ar[r] & {H^1_{\dR}(X_K/K)} \ar[r] & {H^1(X_K,\O_{X_K})} \ar[r] & 0
	}\label{HodgeFilCrv}
\end{equation}
which we call the {\em Hodge filtration} of $H^1_{\dR}(X_K/K)$.

\begin{proposition}\label{HodgeIntEx}
	Let $f:X\rightarrow S$ be a normal curve that is proper over $S=\Spec(R)$.  
	\begin{enumerate}
		\item There are natural isomorphisms of free $R$-modules of rank $1$
		\begin{equation*}
			H^0(X/R)\simeq H^0(X,\O_X)\quad\text{and}\quad H^2(X/R)\simeq H^1(X,\omega_{X/S}),
		\end{equation*}	
		which are canonically $R$-linearly dual to each other.

	\item There is a canonical short exact sequence of finite free $R$-modules, 
	which we denote $H(X/R)$,
		\begin{equation*}
			\xymatrix{
					0\ar[r] & {H^0(X,\omega_{X/S})} \ar[r] & {H^1(X/R)} \ar[r] & {H^1(X,\O_X)} \ar[r] & 0
			}
		\end{equation*}
	that recovers the Hodge filtration $(\ref{HodgeFilCrv})$ of $H^1_{\dR}(X_K/K)$ after
	extending scalars to $K$.  \label{CohomologyIntegral}
	
	\item Via the canonical cup-product auto-duality of $(\ref{HodgeFilCrv})$, 
	the exact sequence $H(X/R)$ is naturally isomorphic to
	its $R$-linear dual.\label{CohomologyDuality}
	
	\item The exact sequence $H(X/R)$ is contravariantly
	$($respectively covariantly$)$ functorial in finite morphisms $\rho:Y\rightarrow X$
	of normal and proper $S$-curves via pullback $\rho^*$ $($respectively trace $\rho_*$$)$;
	these morphisms recover the usual pullback and trace mappings on Hodge filtrations after extending scalars 
	to $K$ and are adjoint with respect to the canonical cup-product autoduality of $H(X/R)$
	in $(\ref{CohomologyDuality})$. \label{CohomologyFunctoriality} 
	
	\end{enumerate}
\end{proposition}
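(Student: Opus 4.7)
The plan is to exploit the Hodge filtration exact triangle $\omega_{X/S}[-1] \to \omega^\bullet_{X/S} \to \O_X$ of Definition~\ref{complexregdiff}: applying $\R f_*$ and passing to cohomology produces a long exact sequence relating $H^i(X/R)$ to $H^\bullet(X,\O_X)$ and $H^\bullet(X,\omega_{X/S})$, with connecting maps $d^{(i)}: H^i(X,\O_X) \to H^i(X,\omega_{X/S})$ induced by the derivation $d_S$. Since $d_S$ vanishes on $\O_S$-constants and $f_*\O_X = \O_S$ (from normality of $X$, properness of $f$, and geometric connectedness of the smooth generic fiber), the map $d^{(0)}$ is zero and $H^0(X/R) \simeq H^0(X,\O_X) = R$; granting also that $d^{(1)} = 0$, the remainder of the long exact sequence yields the short exact sequence of (\ref{CohomologyIntegral}) and the identification $H^2(X/R) \simeq H^1(X,\omega_{X/S})$.

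The crux is establishing $R$-freeness of the flanking terms and the vanishing of $d^{(1)}$. I would apply cohomology and base change to the flat proper map $f$ for the $R$-flat coherent sheaves $\O_X$ and $\omega_{X/S}$. On each fiber $X_s$, Gorenstein-ness makes $\omega_{X_s/s}$ the dualizing sheaf (using compatibility of $\omega_{X/S}$ with base change), so Serre duality gives $h^{1-i}(X_s,\O_{X_s}) = h^i(X_s,\omega_{X_s/s})$. Base change for $R^0f_*$ combined with $f_*\O_X = \O_S$ forces $h^0(X_s,\O_{X_s}) = 1$ on every fiber, and constancy of $\chi$ in the flat family then promotes all four relevant Hodge numbers to constant functions of $s$. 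By Grauert's theorem, $R^if_*\O_X$ and $R^if_*\omega_{X/S}$ are therefore finite free $R$-modules whose formation commutes with base change. Now $d^{(1)}$ is a map of free $R$-modules that vanishes after scalar extension to $K$, by the classical degeneration of Hodge-to-de Rham on the smooth proper curve $X_K$; hence $d^{(1)}$ itself vanishes. That scalar extension to $K$ recovers~(\ref{HodgeFilCrv}) is immediate from the identification of $\omega^\bullet_{X_K/K}$ with the usual de Rham complex on the smooth generic fiber.

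For the autoduality~(\ref{CohomologyDuality}), I would invoke Proposition~\ref{GDuality}(\ref{DualityOnS}): the filtered quasi-isomorphism $\R f_*\omega^\bullet_{X/S} \simeq \R\scrHom(\R f_*\omega^\bullet_{X/S},\O_S)[-2]$ induces on $H^1$ a map of filtered exact sequences $H(X/R) \to H(X/R)^\vee$, and this map is an isomorphism since both sides consist of finite free $R$-modules of the same ranks. That the generic fiber matches the usual cup-product autoduality on $H^1_{\dR}(X_K/K)$ follows from base-change compatibility of Grothendieck duality combined with its restriction to classical Serre duality on the smooth curve $X_K$. For the functoriality~(\ref{CohomologyFunctoriality}), the pullback and trace morphisms $\rho^*$ and $\rho_*$ of Proposition~\ref{ComplexFunctoriality}(\ref{FunctorialityProps1}) are morphisms of filtered complexes, so applying $\R f_*$ produces morphisms of the exact sequence $H(X/R)$; compatibility with the usual pullback/trace on $H^1_{\dR}(X_K/K)$ follows from Proposition~\ref{ComplexFunctoriality}(\ref{FunctorialityProps2}), and adjointness under the cup-product pairing is the $\R f_*$ shadow of the duality between $\rho^*$ and $\rho_*$ in Proposition~\ref{GDuality}(\ref{DualityRho}).

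The principal obstacle is the freeness argument: it is essential to combine Gorenstein fiberwise Serre duality with constancy of $h^0(X_s,\O_{X_s})$ to upgrade semicontinuity to local freeness for all four coherent sheaves simultaneously, and only after this is done does vanishing of $d^{(1)}$ over $R$ descend from its vanishing over $K$. Once these ingredients are secured, the remaining claims reduce to formal manipulation with the Hodge filtration triangle and the duality results of Propositions~\ref{GDuality} and~\ref{ComplexFunctoriality}.
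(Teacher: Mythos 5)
The paper's own proof is essentially a reference: it invokes Raynaud's \emph{crit\`ere de platitude cohomologique} (using that curves have geometrically reduced fibers) to get cohomological flatness, and then cites Propositions 5.7--5.8 of \cite{CaisDualizing}. Your proposal reconstructs the argument from scratch via fiberwise Serre duality and Grauert, which is a legitimate and more self-contained route. However, it contains a gap at exactly the step the paper is careful about.

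The problematic step is: ``Base change for $R^0f_*$ combined with $f_*\O_X = \O_S$ forces $h^0(X_s,\O_{X_s}) = 1$ on every fiber.'' This is not correct as you have stated it. The identity $f_*\O_X = \O_S$ over a DVR (which you correctly derive from normality of $X$ and geometric connectedness of $X_K$) only gives an injection $k(s) \hookrightarrow H^0(X_s,\O_{X_s})$, hence $h^0(X_s) \ge 1$. The missing equality $h^0(X_s) = 1$ is precisely \emph{cohomological flatness} in degree $0$, which is not automatic and is the central technical input. If the special fiber is non-reduced (e.g.\ contains multiplicity structure), $h^0$ jumps and the whole Grauert/constancy argument collapses: you would no longer be able to conclude constancy of $h^1(\O)$ from constancy of $\chi$, freeness of $R^1f_*\O_X$ and $R^1f_*\omega_{X/S}$ fails, and you cannot conclude $d^{(1)} = 0$ over $R$ from its vanishing over $K$. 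The hypothesis that saves you---geometric reducedness of the fibers, which is built into the paper's Definition~\ref{curvedef} of ``curve'' and which (via Raynaud's criterion, \cite[Th\'eor\`eme 7.2.1]{Raynaud}) yields cohomological flatness---appears nowhere in your proof. Once you invoke it, either directly (a geometrically reduced and geometrically connected proper curve over a field has $h^0(\O) = 1$) or via Raynaud, the rest of your argument (fiberwise Gorenstein duality, constancy of Euler characteristic, Grauert, vanishing of $d^{(1)}$ by freeness, and the filtered Grothendieck duality and functoriality from Propositions~\ref{GDuality} and~\ref{ComplexFunctoriality}) goes through and is a correct, more explicit alternative to the paper's citation-based proof.
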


\begin{proof}
	By Raynaud's ``{\em crit\`ere de platitude cohomologique}" \cite[Th\'eor\`me 7.2.1]{Raynaud}
	(see also \cite[Proposition 2.7]{CaisDualizing}), our requirement that curves have geometrically 
	reduced fibers implies that $f:X\rightarrow S$ is cohomologically flat.\footnote{In other words, the $\O_S$-module $f_*\O_X$ commutes with 
	arbitrary base change.}
	The proposition now follows from Propositions 5.7--5.8 of \cite{CaisDualizing}.
\end{proof}

We now turn to the case that $S=\Spec(k)$ for a perfect field $k$ and
$f:X\rightarrow S$ is a proper and geometrically connected curve over $k$.
Recall that $X$ is required to be geometrically reduced, so that the
$k$-smooth locus $U:=X^{\sm}$ is the complement of finitely many closed
points in $X$.  

\begin{proposition}\label{HodgeFilCrvk} 
	Let $X$ be a proper and geometrically connected curve over $k$.  
	\begin{enumerate}
		\item There are natural isomorphisms of 1-dimensional $k$-vector spaces 
		\begin{equation*}
			H^0(X/k)\simeq H^0(X,\O_X)\quad\text{and}\quad H^2(X/k)\simeq H^1(X,\omega_{X/k}),
		\end{equation*}	
		which are canonically $k$-linearly dual to each other.\label{H0H2overk}
	
		\item There is a natural short exact sequence, which we denote $H(X/k)$
		\begin{equation*}
			\xymatrix{
				0 \ar[r] & {H^0(X,\omega_{X/k})} \ar[r] & {H^1(X/k)} \ar[r] & 
				{H^1(X,\O_{X})} \ar[r] & 0
			}\label{HodgeDegenerationField}
		\end{equation*}
		which is canonically isomorphic to its own $k$-linear dual.\label{HodgeExSeqk}
	\end{enumerate}
\end{proposition}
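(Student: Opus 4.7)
The plan is to mirror the argument used for Proposition \ref{HodgeIntEx}, but over a field. The Hodge filtration exact triangle (\ref{HodgeFilComplex}) produces a hypercohomology long exact sequence, and the autoduality of Proposition \ref{GDuality} then forces the relevant connecting maps to vanish.

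First, I apply $\R f_*$ to the Hodge filtration triangle $\omega_{X/k}[-1]\to \omega^{\bullet}_{X/k}\to \O_X$. Since $X$ is one-dimensional, $H^i(X,\O_X)=H^i(X,\omega_{X/k})=0$ for $i\ge 2$, so the resulting long exact sequence truncates to
\begin{equation*}
0\to H^0(X/k)\to H^0(X,\O_X)\xrightarrow{\partial_0} H^0(X,\omega_{X/k})\to H^1(X/k)\to H^1(X,\O_X)\xrightarrow{\partial_1} H^1(X,\omega_{X/k})\to H^2(X/k)\to 0.
\end{equation*}
The connecting map $\partial_0$ is induced on global sections by the $k$-derivation $d_S$ of Definition \ref{complexregdiff}. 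Since $X$ is proper, geometrically connected, and geometrically reduced, $H^0(X,\O_X)=k$, and $d_S$ kills constants, so $\partial_0=0$. This immediately yields the first isomorphism of (\ref{H0H2overk}).

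Next, I invoke Proposition \ref{GDuality}(\ref{DualityOnS}) with $S=\Spec(k)$ to obtain a canonical quasi-isomorphism
\begin{equation*}
\R f_*\omega^{\bullet}_{X/k}\simeq \R\Hom_k^{\bullet}(\R f_*\omega^{\bullet}_{X/k},k)[-2]
\end{equation*}
that swaps the two steps of the Hodge filtration. Passing to the long exact sequence of hypercohomology, this identifies (up to dualization and relabeling) $\partial_1$ with the $k$-linear dual of $\partial_0$; since $\partial_0=0$, we conclude $\partial_1=0$ as well. This gives the short exact sequence claimed in (\ref{HodgeExSeqk}), and simultaneously identifies $H^2(X/k)$ with $H^1(X,\omega_{X/k})$. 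The same duality isomorphism, applied in degrees $0$ and $2$, provides the canonical $k$-linear duality between $H^0(X/k)$ and $H^2(X/k)$ asserted in (\ref{H0H2overk}), which under our identifications becomes Grothendieck--Serre duality for the dualizing sheaf. The compatibility with filtrations in Proposition \ref{GDuality} translates directly into the claim that the short exact sequence $H(X/k)$ of (\ref{HodgeExSeqk}) is canonically self-dual.

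The main obstacle is the vanishing of the second connecting map $\partial_1$, equivalently degeneration of the Hodge-to-de Rham spectral sequence for the modified complex $\omega^{\bullet}_{X/k}$ at $E_1$. For smooth proper curves this is classical, but here $X$ is only required to be geometrically reduced (not smooth), so one cannot appeal to the usual characteristic-zero or Deligne--Illusie arguments directly; instead, the key input is the autoduality of $\omega^{\bullet}_{X/k}$ furnished by Proposition \ref{GDuality}, which reduces the degeneration of the spectral sequence to the trivial vanishing $\partial_0=0$.
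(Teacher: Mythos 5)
Your proof is correct, and it uses the same basic ingredients as the paper (the hypercohomology long exact sequence of the Hodge filtration triangle plus the filtered self-duality of Proposition~\ref{GDuality}), but the mechanism by which you kill the second connecting map $\partial_1\colon H^1(X,\O_X)\to H^1(X,\omega_{X/k})$ is genuinely different. You invoke the filtered quasi-isomorphism of Proposition~\ref{GDuality}(\ref{DualityOnS}) at the level of long exact sequences, so that the duality identifies $\partial_1$ with $\partial_0^{\vee}$ and the vanishing of $\partial_1$ falls out of the vanishing of $\partial_0$ for free. The paper instead uses the duality only at the level of cohomology groups: it runs the $\Ext$-spectral sequence for $\R\Hom_k^{\bullet}(\R\Gamma(X,\omega_{X/k}^{\bullet}),k)$ (which collapses because $k$ is a field) to conclude $H^2(X/k)\simeq H^0(X/k)^{\vee}$ is $1$-dimensional, then notes that $H^1(X,\omega_{X/k})$ is $1$-dimensional via Grothendieck's trace isomorphism, and finally observes that the surjection $H^1(X,\omega_{X/k})\twoheadrightarrow H^2(X/k)$ (surjective because $H^2(X,\O_X)=0$) is therefore an isomorphism, forcing $\partial_1=0$. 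Your route is arguably more uniform --- one filtered-duality argument drives both the degeneration and the self-duality of $H(X/k)$ --- whereas the paper's dimension count is more elementary and avoids unpacking exactly what ``compatible with filtrations'' buys at the level of connecting maps, deferring the full strength of the filtered statement to the very last sentence (the self-duality of the short exact sequence). Both approaches are valid and ultimately rely on the same nontrivial input, Proposition~\ref{GDuality}.
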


\begin{proof}
	Consider the long exact cohomology sequence arising from the exact
	triangle (\ref{HodgeFilComplex}).
	Since $X$ is proper over $k$, geometrically connected and reduced,  
	the canonical map $k\rightarrow H^0(X,\O_X)$ is an isomorphism, and it follows that
	the map $d:H^0(X,\O_X)\rightarrow H^0(X,\omega_{X/k})$ is zero,  
	whence the map $H^0(X/k)\rightarrow H^0(X,\O_X)$ is an isomorphism.
	Thanks to Proposition \ref{GDuality} (\ref{DualityOnS}), we have a canonical
	quasi-isomorphism 
	\begin{equation}
		\R\Gamma(X,\omega_{X/k}^{\bullet})\simeq 
		\R\Hom_k^{\bullet}(\R\Gamma(X,\omega_{X/k}^{\bullet}),k)[-2]\label{GDFieldExplicit}
	\end{equation}
	that is compatible with the filtrations induced by (\ref{HodgeFilComplex}).
	Using the spectral sequence
	\begin{equation*}
		E_2^{m,n}:=\Ext_k(\mathbf{H}^{-n}(X,\omega_{X/k}^{\bullet}))
		\implies H^{m+n}(\R\Hom_k^{\bullet}(\R\Gamma(X,\omega_{X/k}^{\bullet}),k))
	\end{equation*}
	and the vanishing of $\Ext_k^m(\cdot,k)$ for $m>0$,
	we deduce that $H^2(X/k)\simeq H^0(X/k)^{\vee}$ is 1-dimensional over $k$.
	Since Grothendieck's trace map $H^1(X,\omega_{X/k})\rightarrow k$ is an isomorphism,
	we conclude that the {\em surjective} map of 1-dimensional $k$-vector spaces 
	$H^1(X,\omega_{X/k})\rightarrow H^2(X/k)$ must be an isomorphism.  It follows that
	the map $d:H^1(X,\O_X)\rightarrow H^1(X,\omega_{X/k})$ is zero as well, as desired.
	The fact that that the resulting short exact sequence in (\ref{HodgeDegenerationField}) is
	canonically isomorphic to its $k$-linear dual, and the fact that 
	the isomorphisms in (\ref{H0H2overk}) are $k$-linearly dual are
	now easy consequences of the isomorphism (\ref{GDFieldExplicit}).	
\end{proof}

We now suppose that $k$ is algebraically closed, and
following \cite[\S5.2]{GDBC}, we recall Rosenlicht's 
explicit description \cite{Rosenlicht} of the relative dualizing sheaf $\omega_{X/k}$
and of Grothendieck duality.

Denote by $k(X)$ the ``function field" of $X$, {\em i.e.} $k(X):=\prod_i k(\xi_i)$
is the product of the residue fields at the finitely many generic points of
$X$, and write $j:\Spec(k(X))\rightarrow X$ for the canonical map. 
By definition, the {\em sheaf of meromorphic differentials on $X$}
is the pushforward $\u{\Omega}^1_{k(X)/k}:=j_*\Omega^1_{k(X)/k}$.
Our hypothesis that $X$ is reduced implies that it is smooth
at its generic points, so $j$ factors through the open immersion 
$i:U:=X^{\sm}\hookrightarrow X$. By \cite[Lemma 5.2.1]{GDBC}, the canonical map
of $\O_X$-modules
\begin{equation}
	\xymatrix{
		{\omega_{X/k}} \ar[r] & {i_*i^*\omega_{X/k}\simeq i_*\Omega^1_{U/k}}
		}\label{OmegaSubSheaf}
\end{equation}  
is injective, and it follows that $\omega_{X/k}$ is a subsheaf of
$\u{\Omega}^1_{k(X)/k}$.  Rosenlicht's theory gives a concrete description of this subsheaf,
as we now explain.

Let $\pi:\nor{X}\rightarrow X$ be the normalization of $X$. 
We have a natural identification
of ``function fields" $k(\nor{X})=k(X)$ and hence a canonical isomorphism 
$\pi_* \u{\Omega}^1_{k(\nor{X})/k}\simeq \u{\Omega}^1_{k(X)/k}$
of sheaves on $X$.

\begin{definition}\label{OmegaReg}
	Let $\omega_{X/k}^{\reg}$ be the sheaf of $\O_{X}$-modules
	whose sections over any open $V\subseteq X$ are those
	meromorphic differentials $\eta$ on $\pi^{-1}(V)\subseteq \nor{X}$
	which satisfy
	\begin{equation}
		\sum_{y\in \pi^{-1}(x)} \res_y(s\eta)=0
	\end{equation}
	for all $x\in V(k)$ and all $s\in \O_{X,x}$, where $\res_{y}$
	is the classical residue map on meromorphic differentials on
	the smooth (possibly disconnected) curve $\nor{X}$ over the algebraically closed field $k$.	
\end{definition}

\begin{remark}\label{OmegaRegMero}
	Let $\Irr(X)$ be the set of irreducible components of $X$.
	Since $\pi$ is an isomorphism over $U$ and $X$ is smooth at its generic
	points, $\nor{X}$ is the disjoint union of the smooth, proper, and irreducible
	$k$-curves $\nor{I}$ for $I\in \Irr(X)$.
	Therefore, a meromorphic differential $\eta$ on $\nor{X}$
	may be viewed as a tuple
	$\eta = \left(\eta_{\nor{I}}\right)_{I\in \Irr(X)}$,	
	with $\eta_{\nor{I}}$ a meromorphic differential on the smooth and irreducible 
	curve $\nor{I}$.
	The condition for a meromorphic differential $\eta$ on $\pi^{-1}(V)$
	to be a section of $\omega_{X/k}^{\reg}$ over $V$ is then
	\begin{equation*}
		\sum_{y\in \pi^{-1}(x)} \res_y(s_y\eta_{\nor{I}_y}) = 0
	\end{equation*}
	for all $x\in V(k)$ and all $s\in \O_{X,x}$, where $\nor{I}_y$ is the 
	unique connected component of $\nor{X}$ on which $y$ lies and $s_y$
	is the image of $s$ under the canonical map $\O_{X,x}\rightarrow \O_{\nor{I}_y,y}$.
\end{remark}

As any holomorphic differential on $\nor{X}$ has zero residue at every closed point,
the pushforward $\pi_*\Omega^1_{\nor{X}/k}$ is naturally a subsheaf of $\omega_{X/k}^{\reg}$,
and this inclusion is an equality at every $x\in U(k)$ since
$\pi$ is an isomorphism over $U$. It likewise follows from the definition
that any section of $\omega^{\reg}_{X/k}$
must be holomorphic at every smooth point of $X$, so there is a natural inclusion
\begin{equation}
	\xymatrix{
		{\omega^{\reg}_{X/k}} \ar@{^{(}->}[r] & {i_*\Omega^1_{U/k}}
	}\label{OmegaRegIncl}
\end{equation}
which is an isomorphism over $U$.  Moreover, by \cite[Lemma 5.2.2]{GDBC},
any section of $\omega^{\reg}_{X/k}$ has poles at the
finitely many non-smooth points of $X$ with order bounded by a constant depending
only on $X$, and it follows that $\omega^{\reg}_{X/k}$ is a coherent sheaf on $X$.
   
Since (\ref{OmegaRegIncl}) is an isomorphism at the generic points of $X$, we have
a quasi-coherent flasque resolution
\begin{equation*}
	\xymatrix{
		0\ar[r] & {\omega_{X/k}^{\reg}} \ar[r] & {\u{\Omega}^1_{k(X)/k}} \ar[r] &
	{\displaystyle\bigoplus_{x\in X^0} {i_x}_*\left(\u{\Omega}^1_{k(X)/k,x}/\omega_{X/k,x}^{\reg}\right)}
		\ar[r] & 0
	},
\end{equation*}
where $X^0$ is the set of closed points of $X$ and $i_x:\Spec(\O_{X,x})\rightarrow X$
is the canonical map.  The associated long exact cohomology 
sequence yields an exact sequence of $k$-vector spaces
\begin{equation}
	\xymatrix{
		{\Omega^1_{k(X)/k}} \ar[r] &  
		{\displaystyle\bigoplus_{x\in X^0} \left(\u{\Omega}^1_{k(X)/k,x}/\omega_{X/k,x}^{\reg}\right)}
		\ar[r] & {H^1(X,\omega^{\reg}_{X/k})} \ar[r] & 0
	}.\label{TrRegCon}
\end{equation}
For $x\in X^0$, the $k$-linear ``residue" map
\begin{equation*}
	\xymatrix{
		{\res_x:\Omega^1_{k(X)/k,x}} \ar[r] & k
		}
	\quad\text{defined by}\quad
		\res_x(\eta):=\sum_{y\in \pi^{-1}(x)} \res_y(\eta)
\end{equation*}
kills $\omega_{X/k,x}^{\reg}$, and the induced composite map
\begin{equation*}
	\xymatrix{
		{\Omega^1_{k(X)/k}} \ar[r] & 
		{\displaystyle\bigoplus_{x\in X^0} 
		\left(\u{\Omega}^1_{k(X)/k,x}/\omega_{X/k,x}^{\reg}\right)} 
		\ar[r]^-{\sum \res_x} & k
	}
\end{equation*}
is zero by the residue theorem on the (smooth) connected components of 
$\nor{X}$.  Thus, from (\ref{TrRegCon}) we obtain a $k$-linear ``trace map"
\begin{equation}
	\xymatrix{
		{\res_X:H^1(X,\omega^{\reg}_{X/k})} \ar[r] & k
		}\label{ResidueMap}
\end{equation}
which coincides with the usual residue map when $X$ is smooth.
Rosenlicht's explicit description of the relative dualizing sheaf and of Grothendieck
duality for $X/k$ is:

\begin{proposition}[Rosenlicht]\label{Rosenlicht}
	Let $X$ be a proper and geometrically connected curve over $k$ with $k$-smooth
	locus $U$. Viewing $\omega_{X/k}$ and $\omega^{\reg}_{X/k}$
	as subsheaves of $i_*\Omega^1_{U/k}$ via $(\ref{OmegaSubSheaf})$ and 
	$(\ref{OmegaRegIncl})$, respectively, we have an equality
	\begin{equation*}
	  \omega_{X/k}=\omega_{X/k}^{\reg}\quad\text{inside}\quad i_*\Omega^1_{U/k}.
	\end{equation*}
	Under this identification, 
	Grothendieck's trace map $H^1(X,\omega_X)\rightarrow k$
	coincides with $-\res_X$.
\end{proposition}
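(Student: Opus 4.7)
The plan is to verify that the pair $(\omega_{X/k}^{\reg}, -\res_X)$ enjoys the defining universal property of a dualizing sheaf equipped with trace on the proper $k$-scheme $X$, so that uniqueness of the dualizing sheaf together with its trace morphism (see Remark \ref{abstractdualizing} and the discussion in \cite[V, \S2]{RD}) forces an identification with $(\omega_{X/k}, \tr_{X/k})$ compatible with the given embeddings into $i_*\Omega^1_{U/k}$.

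First I would establish the inclusion $\omega_{X/k}\subseteq \omega_{X/k}^{\reg}$ inside $i_*\Omega^1_{U/k}$. Both sheaves agree with $\Omega^1_{U/k}$ over the smooth locus $U$ (by Proposition \ref{canmap} for $\omega_{X/k}$, and by construction for $\omega_{X/k}^{\reg}$), and $\omega_{X/k}$ is locally free of rank one, hence torsion-free, so it embeds into $i_*i^*\omega_{X/k}=i_*\Omega^1_{U/k}$ by Lemma \ref{ConcreteDualizingDescription}. For a local section $\eta$ of $\omega_{X/k}$ near a non-smooth closed point $x$ and an element $s\in\O_{X,x}$, the residue-sum identity $\sum_{y\in\pi^{-1}(x)}\res_y(s\eta)=0$ needs to be checked; this reduces to compatibility between Grothendieck's trace on $X$ and the classical residue maps on the smooth branches of $\nor{X}$, which holds because pullback along $\pi:\nor{X}\rightarrow X$ identifies Grothendieck traces with their classical counterparts away from finitely many points and the residue theorem supplies the remaining cancellation.

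Next I would verify that cup-product with $-\res_X$ induces, for every coherent $\O_X$-module $\scrF$, a perfect pairing
\begin{equation*}
\Hom_{\O_X}(\scrF,\omega_{X/k}^{\reg})\times H^1(X,\scrF)\longrightarrow H^1(X,\omega_{X/k}^{\reg})\xrightarrow{-\res_X} k.
\end{equation*}
Standard dévissage reduces first to line bundles (via finite free resolutions on an affine cover together with right-exactness of $H^1$), and then to the structure sheaf $\O_X$ by tensoring with a sufficiently ample line bundle and using the vanishing of higher cohomology; for $\scrF=\O_X$ the required perfectness follows from the exact sequence (\ref{TrRegCon}), from the residue theorem on each smooth component of $\nor{X}$, and from the short exact sequence $0\to\O_X\to\pi_*\O_{\nor{X}}\to\pi_*\O_{\nor{X}}/\O_X\to 0$ together with classical Serre duality on the smooth curve $\nor{X}$.

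Once the perfect pairing is in hand, the pair $(\omega_{X/k}^{\reg},-\res_X)$ represents the functor $\scrF\mapsto H^1(X,\scrF)^{\vee}$, which characterizes the dualizing sheaf and its trace up to unique isomorphism. The inclusion established in the second paragraph must therefore be an equality inside $i_*\Omega^1_{U/k}$, and Grothendieck's trace necessarily coincides with $-\res_X$. The main obstacle is the perfect-pairing verification: the dévissage itself is routine, but the base case requires careful local analysis at each singular point $x$, comparing the $\O_{X,x}$-modules of meromorphic differentials modulo regular ones with the $k$-linear dual of $\pi_*\O_{\nor{X}}/\O_X$ at $x$ under the local residue pairing, which is the geometric content of Rosenlicht's characterization.
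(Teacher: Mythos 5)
The paper's own proof is a citation to \cite[Theorem 5.2.3]{GDBC}, so there is no argument in the paper to compare against; your overall strategy — verify that $(\omega^{\reg}_{X/k}, -\res_X)$ represents $\scrF\mapsto H^1(X,\scrF)^{\vee}$ and invoke uniqueness of a representing object — is exactly the standard route taken in that reference, and your closing observation (an inclusion of abstractly isomorphic coherent subsheaves of $i_*\Omega^1_{U/k}$ with skyscraper cokernel is forced to be an equality by the Euler characteristic) is correct and takes care of matching the two embeddings once everything else is in place.

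The genuine gap is in the first paragraph, and it undermines the chain of reasoning because the final step hinges on the inclusion $\omega_{X/k}\subseteq\omega^{\reg}_{X/k}$ established there. To check the residue-sum vanishing $\sum_{y\in\pi^{-1}(x)}\res_y(s\eta)=0$ for a section $\eta$ of $\omega_{X/k}$ near a singular $x$, you appeal to ``compatibility between Grothendieck's trace on $X$ and the classical residue maps on the smooth branches of $\nor{X}$'' together with the residue theorem. But the compatibility of the trace for $X\to\Spec(k)$ with residues \emph{at the singular points} is precisely (half of) what the proposition asserts, so as written this is circular; and the residue theorem on the smooth $\nor{X}$ only gives a \emph{global} sum of residues equal to zero, which does not by itself isolate the local condition at a single singular point. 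A non-circular version of the inclusion needs a genuinely local argument — e.g., via the trace for the finite map $\pi:\nor{X}\to X$, which identifies $\pi_*\Omega^1_{\nor{X}/k}$ with $\scrHom_{\O_X}(\pi_*\O_{\nor{X}},\omega_{X/k})$, and a computation of the resulting local pairing against $\pi_*\O_{\nor{X}}/\O_X$ — but you have not supplied it. (A cleaner alternative, which is closer to what \cite{GDBC} actually does, is to skip the inclusion entirely: establish the perfect pairing, deduce an abstract isomorphism $\psi:\omega_{X/k}\xrightarrow{\sim}\omega^{\reg}_{X/k}$ compatible with traces from uniqueness, and then argue that $\psi|_U$ is the identity automorphism of $\Omega^1_{U/k}$ by pinning it down against the known smooth-case trace; the inclusion then follows rather than preceding.) The dévissage in the second paragraph is plausible, but since you yourself defer the local base case at the singular points to an unproved assertion, the sketch as a whole does not yet constitute a proof.
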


\begin{proof}
	See \cite[Theorem 5.2.3]{GDBC}.
\end{proof}

We now return to the situation that $S=\Spec(R)$ for a discrete valuation ring $R$ with fraction field $K$
of characteristic zero and perfect residue field $k$ of characteristic $p>0$.

\begin{lemma}\label{ReductionCompatibilities}
	Let $X$ be a normal and proper curve over $S=\Spec(R)$ with smooth and geometrically
	connected generic fiber, and denote by $\o{X}:=X_k$
	the special fiber of $X$; it is a proper and geometrically connected curve over $k$ 
	by Proposition $\ref{curveproperties}$ $(\ref{fiberscrv})$.
	\begin{enumerate}
		\item The canonical base change map
		\begin{equation*}
			\xymatrix{
				0 \ar[r] & {H^0(X,\omega_{X/S})\tens_R k} \ar[r]\ar[d]^-{\simeq} & 
				{H^1(X/R) \tens_R k} \ar[r]\ar[d]^-{\simeq} & 
				{H^1(X,\O_X)\tens_R k} \ar[r]\ar[d]^-{\simeq} & 0\\			
				0\ar[r] & {H^0(\o{X},\omega_{\o{X}/k})} \ar[r] & {H^1(\o{X}/k)} \ar[r] & 
				{H^1(\o{X},\O_{\o{X}})} \ar[r] & 0
			}
		\end{equation*}
		is an isomorphism. \label{BaseChngDiagram}  
		
		\item Let $\rho:Y\rightarrow X$ be a finite morphism of
		normal and proper curves over $S$ with smooth and geometrically connected 
		generic fibers.		
		The canonical diagrams $($one for $\rho^*$ and one for $\rho_*$$)$
		\begin{equation*}	
			\xymatrix{
				{H^0(Y,\omega_{Y/S})\tens_R k} \ar@<0.5ex>[r]^-{\rho_*\otimes 1}\ar[d]_-{\simeq} & 
				\ar@<0.5ex>[l]^-{\rho^*\otimes 1} {H^0(X,\omega_{X/S})\tens_R k} \ar[d]^-{\simeq}\\
				{H^0(\o{Y},\omega_{\o{Y}/k})}\ar@{^{(}->}[d]_-{(\ref{OmegaSubSheaf})} & 
				{H^0(\o{Y},\omega_{\o{Y}/k})}\ar@{^{(}->}[d]^-{(\ref{OmegaSubSheaf})}\\
				{H^0(\nor{\o{Y}}, \Omega^1_{k(\nor{\o{Y}})/k})} \ar@<0.5ex>[r]^-{\nor{\o{\rho}}_*} & 
				\ar@<0.5ex>[l]^-{{\nor{\o{\rho}}}^*} {H^0(\nor{\o{X}}, \Omega^1_{k(\nor{\o{X}})/k})} 
			}
		\end{equation*}
		commute, where ${\nor{\o{\rho}}}^*$ and $\nor{\o{\rho}}_*$ are the usual pullback
		and trace morphisms on meromorphic differential forms associated to the finite
		flat map $\nor{\o{\rho}}:\nor{\o{Y}}\rightarrow \nor{\o{X}}$
		of smooth curves over $k$.\label{PTBCCompat}	
	\end{enumerate}	
\end{lemma}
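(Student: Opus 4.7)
The plan is to prove (\ref{BaseChngDiagram}) by base-change of the relative dualizing complex combined with cohomological flatness, and to prove (\ref{PTBCCompat}) by restricting to a Zariski-dense open of $\o{Y}$ where the constructed morphisms coincide with the classical ones via Proposition \ref{ComplexFunctoriality}(\ref{FunctorialityProps2}).

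For (\ref{BaseChngDiagram}), the map $f:X\rightarrow S$ is cohomologically flat (by Raynaud's criterion, applied as in the proof of Proposition \ref{HodgeIntEx}, since geometrically reduced fibers force the $k(s)$-dimension of $H^0(X_s,\O_{X_s})$ to be constant), and the formation of the filtered complex $\omega^{\bullet}_{X/S}$ commutes with the base change $S\rightarrow\Spec(k)$ by Proposition \ref{canmap} together with the base-change compatibility of $\omega_{X/S}$. Since the terms of $\omega^{\bullet}_{X/S}$ are $R$-flat, proper base change applied termwise together with Grothendieck duality (for $R f_*\omega_{X/S}$) yields a filtered quasi-isomorphism
\begin{equation*}
\R f_*\omega^{\bullet}_{X/S}\otimes^{\mathbf{L}}_R k \xrightarrow{\ \simeq\ } \R\o{f}_*\omega^{\bullet}_{\o{X}/k}.
\end{equation*}
By Proposition \ref{HodgeIntEx}(\ref{CohomologyIntegral}), $H(X/R)$ is a short exact sequence of finite free $R$-modules, so $-\otimes_R k$ is exact on it; and the resulting three-term sequence over $k$ is, by Proposition \ref{HodgeFilCrvk}(\ref{HodgeExSeqk}), the sequence $H(\o{X}/k)$. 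This gives the asserted isomorphism of exact sequences.

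For (\ref{PTBCCompat}), set $U:=\o{Y}^{\sm}\cap\o{\rho}^{-1}(\o{X}^{\sm})$. Since $\o{\rho}$ is finite (being the reduction of the finite $\rho$) and both $\o{X}$ and $\o{Y}$ are smooth at their generic points (being geometrically reduced), $\o{\rho}$ sends each generic point of $\o{Y}$ into $\o{X}^{\sm}$; hence $U$ is a Zariski-dense open of $\o{Y}$ containing every generic point, over which the normalization $\pi_Y:\nor{\o{Y}}\rightarrow\o{Y}$ is an isomorphism and the canonical map of Proposition \ref{canmap} restricts to an isomorphism $\Omega^1_{U/k}\xrightarrow{\sim}\omega_{\o{Y}/k}|_U$. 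Combined with the injection (\ref{OmegaSubSheaf}) and the fact that a holomorphic differential on the smooth proper curve $\nor{\o{Y}}$ that vanishes on a Zariski-dense open is zero, the composite vertical arrow
\begin{equation*}
H^0(\o{Y},\omega_{\o{Y}/k}) \hookrightarrow H^0(\nor{\o{Y}},\Omega^1_{k(\nor{\o{Y}})/k}) \hookrightarrow H^0(U,\Omega^1_{U/k})
\end{equation*}
is injective, and similarly for $\o{X}$. It thus suffices to check each of the two diagrams after restriction to $U$. But on $U$, Proposition \ref{ComplexFunctoriality}(\ref{FunctorialityProps2}) identifies the reductions $\rho^*\otimes 1$ and $\rho_*\otimes 1$ with the classical pullback and trace on K\"ahler differentials attached to the finite flat map of local rings $\O_{\o{X},\o{\rho}(y)}\rightarrow \O_{\o{Y},y}$ at each $y\in U$; and, since $\pi_Y$ (respectively $\pi_X$) is an isomorphism over $U$ (respectively $\o{\rho}(U)$), these are precisely the restrictions to $U$ of the classical pullback and trace attached to $\nor{\o{\rho}}:\nor{\o{Y}}\rightarrow\nor{\o{X}}$. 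This yields the required commutativity.

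The main obstacle is the second part: the constructed $\rho^*$ and $\rho_*$ on the dualizing complex are defined through Grothendieck duality and are far from manifestly local, whereas the classical pullback and trace on meromorphic differentials of smooth curves are defined in an elementary stalkwise fashion. Bridging the two is precisely the content of Proposition \ref{ComplexFunctoriality}(\ref{FunctorialityProps2}); after that, the proof reduces to the essentially formal step of choosing a Zariski-dense smooth open on which both horizontal compositions can be compared, and invoking injectivity of restriction to this open on the ambient space of meromorphic differentials.
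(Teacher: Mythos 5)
Your proposal is correct and takes essentially the same approach as the paper: for (\ref{BaseChngDiagram}) the essential input is the freeness of $H(X/R)$ from Proposition \ref{HodgeIntEx} together with cohomology-and-base-change for a relative curve, and for (\ref{PTBCCompat}) the paper likewise reduces to the smooth locus and invokes Proposition \ref{ComplexFunctoriality}(\ref{FunctorialityProps2}) together with the fact that $\o{\rho}$ carries generic points to generic points. The only cosmetic difference is in part (1), where the paper argues more directly that the two \emph{outer} vertical maps are isomorphisms (by base change for $H^1$ of a coherent sheaf on a relative curve plus freeness of the terms), hence so is the middle one, rather than routing through a filtered quasi-isomorphism in the derived category as you do.
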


\begin{proof}
	Since $X$ is of relative dimension 1 over $S$, the cohomologies $H^1(X,\O_X)$ and $H^1(X,\omega_{X/S})$
	both commute with base change, and they are both free over $R$ by Proposition \ref{HodgeIntEx}.
	We conclude that $H^i(X,\O_X)$ and $H^i(X,\omega_{X/S})$ commute with base change for all $i$ and
	hence that the left and right vertical maps in the base change diagram (\ref{BaseChngDiagram}) (whose
	rows are exact by Propositions \ref{HodgeIntEx} and \ref{HodgeFilCrvk}) 
	are isomorphisms.  It follows that the middle vertical map in 
	(\ref{BaseChngDiagram}) is an isomorphism as well.	
	The compatibility of pullback and trace under base change to the special fibers, 
	as asserted by the diagram in (\ref{PTBCCompat}),
	is a straightforward consequence of Proposition \ref{ComplexFunctoriality} 
	(\ref{FunctorialityProps2}), using the facts that $X$ and $Y$ are 
	smooth at generic points of closed fibers and that $\o{\rho}:\o{Y}\rightarrow\o{X}$
	takes generic points to generic points as noted in the proof of Lemma \ref{ConcreteDualizingDescription}.
\end{proof}

\subsection{Universal vectorial extensions and Dieudonn\'e crystals}\label{Universal}
 
There is an alternate description of the short exact sequence $H(X/R)$ of Proposition
\ref{HodgeIntEx} (\ref{CohomologyIntegral}) in terms of 
Lie algebras and N\'eron models of Jacobians that will allow us to relate
this cohomology to Dieudonn\'e modules.  To explain this description and its connection
with crystals, we first recall some facts from \cite{MM} and \cite{CaisNeron}.

Fix a base scheme $T$, and let $G$ be an fppf sheaf of abelian groups over $T$.
A {\em vectorial extension} of $G$ is a short exact sequence (of fppf sheaves of abelian
groups)
\begin{equation}
	\xymatrix{
		0 \ar[r] & {V} \ar[r] & {E} \ar[r] & {G} \ar[r] & 0.
		}\label{extension}
\end{equation}
with $V$ a vector group (i.e. an fppf abelian sheaf which is locally represented by a product of $\Ga$'s).  
Assuming that $\Hom(G,V)=0$ for all vector groups $V$, we say that a vectorial extension 
(\ref{extension}) is {\em universal} if, for any vector group $V'$ over $T$, 
the pushout map $\Hom_T(V,V')\rightarrow \Ext^1_T(G,V')$
is an isomorphism.  When a universal vectorial extension of $G$ exists, it is
unique up to canonical isomorphism and covariantly functorial in morphisms $G'\rightarrow G$
with $G'$ admitting a universal extension.  

\begin{theorem}\label{UniExtCompat}
	Let $T$ be an arbitrary base scheme.
	\begin{enumerate}
		\item If $A$ is an abelian scheme over $T$, then a universal vectorial
		extension $\E(A)$ of $A$ exists, with $V=\omega_{\Dual{A}}$,
		and is compatible with arbitrary base change on $T$.
		\label{UniExtCompat1}
		
		\item If $p$ is locally nilpotent on $T$ and $G$ is a $p$-divisible group over
		$T$, then a universal vectorial extension $\E(G)$ of $G$ extsis, with $V=\omega_{\Dual{G}}$,
		and is compatible with arbitrary base change on $T$.	
	\label{UniExtCompat2}
		
		\item If $p$ is locally nilpotent on $T$ and $A$ is an abelian scheme over $T$ with
		associated $p$-divisible group $G:=A[p^{\infty}]$, then the canonical map of fppf sheaves
		$G\rightarrow A$ extends to a natural map
		\begin{equation*}
			\xymatrix{
				0 \ar[r] & {\omega_{\Dual{G}}} \ar[r]\ar[d] & {\E(G)} \ar[r]\ar[d] & {G}\ar[d] \ar[r] & 0\\
				0 \ar[r] & {\omega_{\Dual{A}}} \ar[r] & {\E(A)} \ar[r] & {A} \ar[r] & 0
			}
		\end{equation*}
		which induces an isomorphism of the corresponding short exact sequences of Lie algebras.
		\label{UniExtCompat3}
	\end{enumerate} 
\end{theorem}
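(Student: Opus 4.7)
The plan is to invoke the construction of Mazur--Messing \cite{MM} for parts (\ref{UniExtCompat1}) and (\ref{UniExtCompat2}), verifying the hypotheses that underlie their theorem, and then to extract part (\ref{UniExtCompat3}) from the universal property together with a Lie-algebra comparison.

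For (\ref{UniExtCompat1}), the key inputs are: (a) $\Hom_T(A, V) = 0$ for every vector group $V$, because $A$ is proper, smooth, and geometrically connected over $T$ with $V$ affine; and (b) a canonical identification $\Ext^1_T(A, \Ga) \simeq H^1(A, \O_A) \simeq \omega_{\Dual{A}}$, the second isomorphism coming from the representability of $\Pic^0_{A/T}$ by the dual abelian scheme $\Dual{A}$. The universal extension is then obtained by taking the extension in $\Ext^1_T(A, \omega_{\Dual{A}}) \simeq \Hom_T(\omega_{\Dual{A}}, \omega_{\Dual{A}})$ corresponding to the identity. Compatibility with arbitrary base change follows because both $H^1(A, \O_A)$ and $\Pic^0_{A/T}$ commute with base change (using the cohomological flatness of abelian schemes and the fact that $\omega_{\Dual{A}}$ is a vector bundle on $T$ whose formation commutes with base change).

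For (\ref{UniExtCompat2}), the argument is formally similar: when $p$ is locally nilpotent on $T$, one has $\Hom_T(G, V) = 0$ for any vector group $V$ (a nonzero map would propagate through the connected-\'etale sequence to produce either an invariant differential on $\Ga$ pulled back from $G^{\et}$---impossible by the torsion/nilpotence dichotomy---or a nonzero homomorphism from the formal group $G^0$ into $\Ga$, which vanishes because $G^0$ is $p$-power torsion while $p$-power multiplication on $\Ga$ is topologically unipotent). One likewise checks that $\Ext^1_T(G, \Ga) \simeq \omega_{\Dual{G}}$ via Cartier--Serre duality for $p$-divisible groups, and the construction of $\E(G)$ then proceeds as above, with base change compatibility flowing from that of $\omega_{\Dual{G}}$ and $\Ext^1_T(G, \Ga)$.

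Part (\ref{UniExtCompat3}) is then a formal consequence of universality: the canonical morphism $G = A[p^{\infty}] \hookrightarrow A$ of fppf sheaves, together with the universality of $\E(G)$ applied to the pullback of $\E(A)$ along $G \to A$ (a vectorial extension of $G$ by $\omega_{\Dual{A}}$), produces a unique morphism of extensions making the diagram commute; the induced map $\omega_{\Dual{G}} \to \omega_{\Dual{A}}$ is the dual of the natural map $\Dual{A} \to \Dual{G}$ obtained by taking Cartier duals. To see that $\Lie$ of the resulting diagram is an isomorphism, apply $\Lie$ to both rows and observe that the two flanking vertical maps $\omega_{\Dual{G}} \to \omega_{\Dual{A}}$ and $\Lie G \to \Lie A$ are isomorphisms: indeed $\Lie G = \Lie G^0 = \Lie \wh{A} = \Lie A$ since the formal completion of $A$ along its identity section agrees with the connected part of $G$ when $p$ is locally nilpotent on $T$, and dually $\omega_{\Dual{G}} = \omega_{\Dual{A}}$ by the same reasoning applied to $\Dual{A}$. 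The five-lemma then forces $\Lie \E(G) \to \Lie \E(A)$ to be an isomorphism. The main subtlety here is the verification that the comparison map on vector-group parts is precisely the one induced by duality; this is a matter of chasing the Ext-identifications used in (\ref{UniExtCompat1}) and (\ref{UniExtCompat2}) through the map $G \to A$, and is the step that requires the most care.
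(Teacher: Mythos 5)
Your overall strategy matches the paper's: cite Mazur--Messing for parts (1) and (2), and for part (3) pull back $\E(A)$ along $G\to A$ and use the universal property of $\E(G)$ to produce the unique $\psi:\omega_{\Dual{G}}\to\omega_{\Dual{A}}$ with the required pushout property. The extra detail you supply for (1) and (2) --- the vanishing of $\Hom_T(A,V)$ and the identification $\Ext^1_T(A,\Ga)\simeq\omega_{\Dual{A}}$ --- is consistent with what underlies the cited construction.

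There is, however, a genuine gap in your treatment of the Lie-algebra step. Your five-lemma reduction requires that \emph{both} flanking vertical maps be isomorphisms. The map $\Lie G\to\Lie A$ is an isomorphism by the formal-completion argument you give, but for $\psi:\omega_{\Dual{G}}\to\omega_{\Dual{A}}$ the universal property only furnishes existence and uniqueness of the map; it does not tell you that $\psi$ agrees with the canonical identification of invariant differentials, nor even that $\psi$ has full rank. You flag this verification yourself as "the step that requires the most care," but you do not carry it out, and it is precisely this verification that Mazur--Messing II \S 13 supplies --- which is what the paper cites. So the five-lemma framing does not eliminate the appeal to MM; it just reformulates the needed input. (A small correction in passing: there is no natural map $\Dual{A}\to\Dual{G}$; the natural map is the inclusion $\Dual{G}=\Dual{A}[p^\infty]\hookrightarrow\Dual{A}$, inducing $\omega_{\Dual{A}}\xrightarrow{\ \sim\ }\omega_{\Dual{G}}$ on cotangent spaces, and the claim you need is that $\psi$ is the inverse of this isomorphism.)
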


\begin{proof}
	For the proofs of (\ref{UniExtCompat1}) and (\ref{UniExtCompat2}), see
	 \cite[\Rmnum{1}, \S1.8 and \S1.9]{MM}.  To prove (\ref{UniExtCompat3}), note that
	 pulling back the universal vectorial extension of $A$ along $G\rightarrow A$
	 gives a vectorial extension $\E'$ of $G$ by $\omega_{\Dual{A}}$.  By universality, there then exists
	 a unique map $\psi:\omega_{\Dual{G}}\rightarrow \omega_{\Dual{A}}$ with the property
	 that the pushout of $\E(G)$ along $\psi$ is $\E'$, and this gives the map on universal extensions.  
	 That the induced map on Lie algebras is an isomorphism follows from \cite[\Rmnum{2}, \S 13]{MM}.
\end{proof}

For our applications, we will need a generalization of the universal extension 
of an abelian scheme to the setting of N\'eron models; in order to describe this
generalization, we first recall the explicit description of the universal
extension of an abelian scheme in terms of rigidified extensions.

For any commutative $T$-group scheme $F$, 
a {\em rigidified extension of $F$ by $\Gm$ over $T$} is a pair $(E,\sigma)$
consisting of an extension (of fppf abelian sheaves)
\begin{equation}
	\xymatrix{
		0 \ar[r] & {\Gm} \ar[r] & {E} \ar[r] & {F} \ar[r] & 0
		}\label{ExtRigDef}
\end{equation}
and a splitting $\sigma: \Inf^1(F)\rightarrow E$ of the pullback of (\ref{ExtRigDef})
along the canonical closed immersion $\Inf^1(F)\rightarrow F$.  Two rigidified 
extensions $(E,\sigma)$ and $(E',\sigma')$ are equivalent if there 
is a group homomorphism $E\rightarrow E'$ carrying $\sigma$ to $\sigma'$
and inducing the identity on $\Gm$ and on $F$.
The set $\Extrig_T(F,\Gm)$ of equivalence classes of rigidified extensions over $T$ is naturally a group
via Baer sum of rigidified extensions\cite[\Rmnum{1}, \S2.1]{MM}, so the functor on $T$-schemes
$T'\rightsquigarrow \Extrig_{T'}(F_{T'},\Gm)$ is naturally a group functor that is
contravariant in $F$ via pullback (fibered product).
We write $\scrExtrig_T(F,\Gm)$ for the fppf sheaf of abelian groups associated to this functor.

\begin{proposition}[Mazur-Messing]\label{MMrep}
	Let $A$ be an abelian scheme over an arbitrary base scheme $T$. 
	The fppf sheaf $\scrExtrig_T(A,\Gm)$ is represented by a smooth and separated $T$-group scheme, 
	and there is a canonical short exact sequence of smooth group schemes over $T$
	\begin{equation}
		\xymatrix{
			0\ar[r] & {\omega_A} \ar[r] & {\scrExtrig_T(A,\Gm)} \ar[r] & {\Dual{A}} \ar[r] & 0
		}.\label{univextabelian}
	\end{equation}
	Furthermore, $(\ref{univextabelian})$ is naturally isomorphic to the universal extension of $\Dual{A}$ by a vector
	group.
\end{proposition}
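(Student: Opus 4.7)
The plan is to construct a ``forget the rigidification'' morphism producing the desired short exact sequence, deduce representability from the flanking smooth group schemes, and finally verify the universal property by reduction to the case $V'=\Ga$.

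First, the forgetful morphism $\pi:\scrExtrig_T(A,\Gm)\to\scrExt^1_T(A,\Gm)\simeq\Dual{A}$, where the identification uses Barsotti--Weil, is surjective as fppf sheaves because on a given extension $E$ the set of rigidifications is a torsor under $\omega_A$, hence fppf-locally nonempty. To compute $\ker(\pi)$, note that a rigidification of the trivial extension $\Gm\times A$, taken modulo the canonical one, is a group homomorphism $\sigma:\Inf^1(A)\to\Gm$ that is trivial on $\Inf^1(T)$; the standard ``logarithm'' identification $\Hom_{\mathrm{gp}}(\Inf^1(A),\Gm)\simeq\Hom_{\O_T}(\Lie(A),\O_T)=\omega_A$ supplies the canonical isomorphism $\ker(\pi)\simeq\omega_A$. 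Combining these yields the exact sequence in the statement.

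Second, the middle term is representable because the above sequence exhibits $\scrExtrig_T(A,\Gm)$ as an $\omega_A$-torsor over $\Dual{A}$ for the fppf topology, and torsors under a vector group are Zariski-locally trivial (since $H^1_{\mathrm{fppf}}(\Dual{A},\omega_A)=H^1_{\mathrm{Zar}}(\Dual{A},\omega_A)$ for the quasi-coherent sheaf $\omega_A$). Baer sum of rigidified extensions furnishes the commutative group-scheme structure, and smoothness and separatedness are inherited from the flanking terms $\omega_A$ and $\Dual{A}$.

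Finally, for universality I would show that the pushout map $\Hom_T(\omega_A,V')\to\Ext^1_T(\Dual{A},V')$ is an isomorphism for every vector group $V'$. By $\O_T$-linearity in $V'$, this reduces to the case $V'=\Ga$, where both sides canonically identify with $\Lie(A)$: the left via $\omega_A^{\vee}=\Lie(A)$, and the right through the chain $\Ext^1_T(\Dual{A},\Ga)\simeq H^1(\Dual{A},\O_{\Dual{A}})\simeq \Lie(A)$, the last being the standard identification of $H^1$ of the structure sheaf of an abelian scheme with the Lie algebra of its dual. The main obstacle is to unwind the definitions enough to see that the comparison map, traced through these identifications, is the identity on $\Lie(A)$; this requires carefully tracking how a rigidification simultaneously produces an element of $\omega_A$ (on the source side) and a $\Ga$-extension, equivalently a line bundle on $\Dual{A}$ with appropriate infinitesimal data (on the target side), with the compatibility being essentially built into the definition of rigidification.
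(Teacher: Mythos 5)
Since the paper's own ``proof'' of this proposition is literally a one-line citation to Mazur--Messing (\cite{MM}, I, \S2.6, Prop.~2.6.7), you are essentially reconstituting the cited argument rather than comparing against a worked proof in the paper. Your broad strategy --- forget-the-rigidification morphism, kernel via the logarithm on $\Inf^1(A)$, representability via Zariski-local triviality of $\omega_A$-torsors, then universality reduced to $V'=\Ga$ --- does track the Mazur--Messing line of reasoning, and steps one and two are essentially sound. Two small imprecisions in step one: a rigidification is a \emph{pointed} scheme-theoretic section, not a priori a group homomorphism (although here the square-zero structure on $\Inf^1(A)$ forces pointed morphisms to $\Gm$ to be additive, so the kernel computation is unaffected), and ``trivial on $\Inf^1(T)$'' should read ``trivial on the unit section.'' Also, your surjectivity argument is slightly circular as written: rigidifications of a given $E$ form an $\omega_A$-torsor \emph{once one knows they exist locally}; the local existence is a separate deformation-theoretic input (a $\Gm$-torsor over an affine square-zero thickening of an affine scheme, trivial on the closed fiber, is trivial), and that needs to be stated rather than absorbed into ``torsor.''

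The genuine gap is exactly the one you flag at the end: you reduce universality to showing that the pushout $\Hom_T(\omega_A,\Ga)\to\Ext^1_T(\Dual{A},\Ga)$ is an isomorphism, identify both sides with $\Lie(A)$, and then stop. Two problems. First, what you actually need is that the composite is an \emph{isomorphism}, not ``the identity'' --- there is no canonical reason for the two identifications with $\Lie(A)$ to be aligned, and asserting that the map is the identity is a stronger (and sign-dependent) claim than necessary. Second, and more importantly, the verification itself is the technical heart of the proposition, and it is not a formality: one must trace an explicit rigidification of the Poincar\'e extension through both identifications and show the pairing is perfect (or reduce to geometric fibers via base change and flatness). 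The identification $\scrExt^1_T(\Dual{A},\Ga)\simeq\Lie(A)$ over a general base is itself a nontrivial input (not just Serre's theorem over a field), and your reduction ``by $\O_T$-linearity in $V'$'' tacitly assumes both functors of $V'$ commute with Zariski localization and finite direct sums --- fine, but worth saying. As it stands, the universal property --- the only part of the proposition that is not soft --- remains unproved, so the argument is incomplete.
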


\begin{proof}
	See \cite{MM}, $\Rmnum{1}, \S2.6$ and Proposition 2.6.7.
\end{proof}

In the case that $T=\Spec R$ for $R$ a discrete valuation ring of mixed
characteristic $(0,p)$ with fraction field $K$,
we have the following genaralization of Proposition \ref{MMrep}:

\begin{proposition}	
	Let $A$ be an abelian variety over $K$, with dual abelian variety $\Dual{A}$, and
	write $\A$ and $\Dual{\A}$ for the N\'eron models of $A$ and $\Dual{A}$ over $T=\Spec(R)$.
	Then the fppf abelian sheaf $\scrExtrig_T(\A,\Gm)$ on the category of smooth $T$-schemes
	is represented by a smooth and separated $T$-group scheme.  Moreover, there
	is a canonical short exact sequence of smooth group schemes over $T$
	\begin{equation}
		\xymatrix{
			0\ar[r] & {\omega_{\A}} \ar[r] & {\scrExtrig_T(\A,\Gm)} \ar[r] & {\Dual{\A}^0} \ar[r] & 0
		}\label{NeronCanExt}
	\end{equation}
	which is contravariantly functorial in $A$ via homomorphisms of abelian varieties over $K$.
	The formation of $(\ref{NeronCanExt})$ is compatible with smooth base change on $T$; in particular,
	the generic fiber of $(\ref{NeronCanExt})$ is the universal extension of $\Dual{A}$ by a vector group.
\end{proposition}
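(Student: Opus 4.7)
The plan is to construct the claimed sequence by descending the Mazur--Messing construction (Proposition \ref{MMrep}) from the generic fiber to $T$ via the N\'eron mapping property. First, apply Proposition \ref{MMrep} over $K$ to obtain the universal extension
$$0 \to \omega_A \to \scrExtrig_K(A,\Gm) \to \Dual{A} \to 0$$
of smooth separated $K$-group schemes; its middle term $\mathscr{X}_K := \scrExtrig_K(A,\Gm)$ will provide the generic fiber of the object to be built.

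Next, take $\mathscr{X}$ to be the N\'eron lft-model of $\mathscr{X}_K$ over $T$ (existence via Bosch--L\"utkebohmert--Raynaud), which is automatically smooth and separated. The N\'eron mapping property applied to the smooth vector group $\omega_{\A}$ (the canonical extension of $\omega_A$ via invariant differentials on $\A$) and to the map $\omega_A \hookrightarrow \mathscr{X}_K$ yields a canonical homomorphism $\omega_{\A} \to \mathscr{X}$, and similarly the map $\mathscr{X}_K \to \Dual{A}$ extends uniquely to $\mathscr{X} \to \Dual{\A}$. Because the generic fiber of $\mathscr{X}$ is connected (being an extension of an abelian variety by a vector group), this latter map factors through $\Dual{\A}^0$. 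One then checks on stalks (or after strictly henselian localization) that the resulting sequence
$$0 \to \omega_{\A} \to \mathscr{X} \to \Dual{\A}^0 \to 0$$
is short exact on the fppf site; exactness at the right follows from smoothness and a dimension count, and the left is injective because the generic fiber is.

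The heart of the argument is then to identify $\mathscr{X}$ with $\scrExtrig_T(\A,\Gm)$ as sheaves on the smooth site. For any smooth $T$-scheme $T'$, the N\'eron mapping property gives $\mathscr{X}(T') = \mathscr{X}_K(T'_K) = \Extrig_{T'_K}(A_{T'_K},\Gm)$. One must establish the corresponding bijection
$$\Extrig_{T'}(\A_{T'},\Gm) \;\simeq\; \Extrig_{T'_K}(A_{T'_K},\Gm),$$
so that restriction to the generic fiber is a bijection on rigidified extensions. Any rigidified extension $(E,\sigma)$ of $\A_{T'}$ by $\Gm$ has smooth separated middle term $E$, which is thus the N\'eron model of its generic fiber, and the rigidification along $\Inf^1(\A_{T'})$ lifts uniquely from the generic fiber because the identity section lies in the smooth locus and formation of first-order infinitesimal neighborhoods there commutes with restriction to $T'_K$. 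Finally, contravariant functoriality in $A$ is inherited from the universal property on the generic fiber and functoriality of N\'eron models under homomorphisms, while compatibility with smooth base change follows from the analogous compatibility of N\'eron models and from the base change compatibility in Proposition \ref{MMrep}.

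The main obstacle is the bijection in the previous paragraph: one needs uniqueness of the integral rigidified extension reconstructing a given generic fiber datum, which in turn requires that the middle term of the extension behave as a N\'eron model and that the rigidification extend uniquely. Once this is in hand, representability, smoothness, separatedness, and the functorial properties all follow formally from the generic fiber assertion of Proposition \ref{MMrep}.
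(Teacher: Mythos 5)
Your second step is the fatal one. You take $\mathscr{X}$ to be the N\'eron lft-model of $\mathscr{X}_K := \scrExtrig_K(A,\Gm)$ and assert its existence ``via Bosch--L\"utkebohmert--Raynaud,'' but the cited theory gives exactly the opposite conclusion. By construction $\mathscr{X}_K$ is an extension of $\Dual{A}$ by the vector group $\omega_A \simeq \Ga^g$ (Proposition~\ref{MMrep}), so $\mathscr{X}_K$ contains a $K$-subgroup isomorphic to $\Ga$. By BLR, \emph{N\'eron Models}, Theorem~10.2/2, a smooth connected commutative $K$-group of finite type over an excellent discrete valuation ring admits a N\'eron lft-model if and only if it contains \emph{no} subgroup isomorphic to $\Ga$ over $\wh{K}^{\sh}$; since $R$ here has mixed characteristic $(0,p)$, this hypothesis fails, and $\mathscr{X}_K$ simply has no N\'eron lft-model. (The obstruction is real: the component group of the putative model would have to swallow the unbounded group $\Ga(K^{\sh})/\Ga(R^{\sh})$.) Everything downstream of this step---the maps $\omega_{\A}\to\mathscr{X}\to\Dual{\A}$, the factorization through $\Dual{\A}^0$, and the functor-of-points identification---is constructed out of an object that does not exist. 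Note also that even if $\mathscr{X}$ did exist as an lft model, it would generally fail to be of finite type, whereas the proposition requires a finite-type group scheme (it sits between $\omega_{\A}$ and $\Dual{\A}^0$, both of finite type).

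There is a second, independent gap in the step where you try to match rigidified extensions over $T'$ with those over $T'_K$. You claim that the middle term $E$ of a rigidified extension of $\A_{T'}$ by $\Gm$ ``has smooth separated middle term $E$, which is thus the N\'eron model of its generic fiber.'' Smoothness and separatedness do not confer the N\'eron mapping property: $\Gm_R$ itself is smooth and separated but is \emph{not} the N\'eron lft-model of $\Gm_K$ (the latter has component group $\Z$), and the identity component $\A^0$ of a N\'eron model with nontrivial component group is smooth and separated but is not a N\'eron model. So the uniqueness/reconstruction claim that this bijection rests on is not established.

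For comparison: the paper itself proves nothing here, it simply cites \cite{CaisNeron}. The argument there does not go through N\'eron lft-models at all. The correct strategy is to work directly with the fppf sheaf $\scrExtrig_T(\A,\Gm)$ on the smooth site: one constructs the two maps of the short exact sequence of sheaves $0 \to \omega_{\A} \to \scrExtrig_T(\A,\Gm) \to \Dual{\A}^0 \to 0$ by sheaf-theoretic arguments (the left arrow sends a translation-invariant $1$-form to the corresponding trivial rigidified extension, and the right arrow is ``forget the rigidification''), and the surjectivity onto $\Dual{\A}^0$ (rather than all of $\Dual{\A}$) is exactly the subtle point that your connectedness argument tries but fails to capture. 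Representability is then a formal consequence: an extension of a representable group sheaf by an affine representable group sheaf is representable by a smooth separated group scheme. In short, the N\'eron mapping property is deployed on the \emph{flanking terms} $\omega_{\A}$ and $\Dual{\A}^0$, not on the middle one.
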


\begin{proof}
	Since $R$ is of mixed characteristic $(0,p)$ with perfect residue field,
    this follows from Proposition 2.6 and the discussion following Remark 2.9 in \cite{CaisNeron}.
\end{proof}

In the particular case that $A$ is the Jacobian of a smooth, proper and geometrically
connected curve $X$ over $K$ which is the generic fiber of a normal proper curve $\X$
over $R$, we can relate the exact sequence of Lie algebras attached to (\ref{NeronCanExt})
to the exact sequence $H(X/R)$ or Proposition \ref{HodgeIntEx} (\ref{CohomologyIntegral}):

\begin{proposition}	\label{intcompare}
	Let $\X$ be a proper relative curve over $T=\Spec(R)$ with smooth generic fiber $X$ over $K$.
	Write $J:=\Pic^0_{X/K}$ for the Jacobian of $X$ and $\Dual{J}$ for its dual, 
	and let $\J$, $\Dual{\J}$ be the corresponding N\'eron models over $R$.
		There is a canonical homomorphism of exact sequences of finite free $R$-modules
			\begin{equation}
			\begin{gathered}
				\xymatrix{
					0 \ar[r] & {\Lie\omega_{\J}} \ar[r]\ar[d] & {\Lie\scrExtrig_T(\J,\Gm)} \ar[r]\ar[d]
					& {\Lie \Dual{\J}^0}  \ar[r]\ar[d] & 0\\
					0 \ar[r] & {H^0(\X,\omega_{\X/T})} \ar[r] & {H^1(\X/R)} \ar[r] & {H^1(\X,\O_{\X})} 
					\ar[r] & 0
			}
			\end{gathered}\label{IntegralComparisonMap}
			\end{equation} 
		that is an isomorphism when $\X$ has rational singularities.\footnote{Recall that $\X$ is 
		said to have {\em rational singularities} if it admits a resolution of singularities 
		$\rho:\X'\rightarrow \X$ with the natural map $R^1\rho_*\O_{{\X'}}=0$.  Trivially, any 
		regular $\X$ has rational singularities.}	
		For any finite morphism $\rho:\Y \rightarrow \X$ of $S$-curves satisfying the above hypotheses,
		the map $(\ref{IntegralComparisonMap})$ intertwines $\rho_*$ 
		$($respectively $\rho^*$$)$ on the bottom row with $\Pic(\rho)^*$ 
		$($respectively $\Alb(\rho)^*$$)$ on the top.
\end{proposition}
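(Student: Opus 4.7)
The plan is to construct the comparison on generic fibers using Mazur--Messing, upgrade it to an integral comparison of lattices via a direct construction on the middle term, and then use the rational singularities hypothesis to check both flanking maps are isomorphisms and conclude by the five lemma. On the generic fiber, Proposition~\ref{MMrep} identifies $\scrExtrig_K(J,\Gm)$ with the universal vectorial extension of $\Dual{J}$; using the autoduality $\Dual{J}\simeq J$ of the Jacobian, Mazur--Messing's description of the Lie algebra of this universal extension gives a canonical $K$-linear isomorphism $\Lie\scrExtrig_K(J,\Gm)\simeq H^1_{\dR}(X/K)$ carrying the filtration by $\omega_{\Dual{J}}$ to the Hodge filtration, with the flanking identifications being the standard $\omega_J\simeq H^0(X,\Omega^1_{X/K})$ and $\Lie J\simeq H^1(X,\O_X)$.

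To promote this to a map of integral $R$-lattices, note that both rows of (\ref{IntegralComparisonMap}) are finite free over $R$: the bottom row by Proposition~\ref{HodgeIntEx}~(\ref{CohomologyIntegral}), and the top row because $\scrExtrig_T(\J,\Gm)$ is a smooth separated $T$-group scheme whose generic fiber is the universal extension. The integral map on middle terms is then constructed directly, by interpreting a rigidified extension of $\J$ by $\Gm$ as a hypercohomology class on $\X$ with values in the complex $\omega^{\bullet}_{\X/R}$: the underlying $\Gm$-torsor contributes a class in $H^1(\X,\O_{\X})$, which combined with the infinitesimal splitting datum promotes to an integral lift, lying in $H^1(\X/R)$, of the Mazur--Messing class on the generic fiber. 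This construction is functorial in $\X$, yielding the middle vertical arrow in (\ref{IntegralComparisonMap}), whose flanking components must then be compared with the standard identifications.

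Under the rational singularities hypothesis, Raynaud's theorem on N\'eron models of Jacobians identifies $\Pic^0_{\X/R}$ with $\Dual{\J}^0$, whence $\Lie\Dual{\J}^0\simeq\Lie\Pic^0_{\X/R}\simeq H^1(\X,\O_\X)$; dually, using the autoduality $\Dual{\J}^0\simeq \J^0$ together with Proposition~\ref{GDuality}~(\ref{DualityOnS}), we obtain $\omega_\J\simeq H^0(\X,\omega_{\X/R})$. The five lemma then forces the middle map to be an isomorphism as well. For functoriality, a finite map $\rho:\Y\to\X$ induces $\Pic(\rho)$ and $\Alb(\rho)$ on generic Jacobians which extend uniquely to $\Dual{\J}^0$ by the N\'eron mapping property; these induce the starred operations on the top row by contravariant functoriality of $\scrExtrig(-,\Gm)$ from Proposition~\ref{MMrep}, and the intertwining with $\rho_*$, $\rho^*$ from Proposition~\ref{ComplexFunctoriality} is classical on generic fibers and extends integrally by $R$-flatness.

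The main obstacle is the explicit integral construction of the middle map in the second paragraph: one must verify that the $R$-lattice in $H^1_{\dR}(X/K)$ cut out by $\Lie\scrExtrig_T(\J,\Gm)$ is precisely $H^1(\X/R)$, rather than some other $R$-lattice containing or contained in it. This is a delicate local computation at the non-smooth points of $\X$, and it is here that the rational singularities hypothesis is essential: via Raynaud's identification it pins down both flanking $R$-lattices, and the snake lemma then forces the middle lattices to coincide.
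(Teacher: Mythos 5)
The paper's proof simply cites Theorem~1.2 and Corollary~5.6 of \cite{CaisNeron}, so there is no in-text argument to compare against; I am assessing your sketch on its own terms.

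The flanking identifications and the five-lemma argument in your third paragraph are sound, granting the existence of a map of exact sequences, and you are right that Raynaud's theorem together with Grothendieck--Serre duality pins down the two outer lattices under the rational-singularities hypothesis. But the second paragraph, where you purport to construct the middle arrow, has a real gap. You write that ``the underlying $\Gm$-torsor contributes a class in $H^1(\X,\O_{\X})$'' --- yet the underlying $\Gm$-torsor of a rigidified extension of $\J$ by $\Gm$ lives on the group scheme $\J$, not on the curve $\X$; these are entirely different spaces and there is no a priori way to transport a class from one to the other. Passing from the N\'eron side to the curve side requires producing a morphism along which to pull back --- an integral Abel--Jacobi map $\X^{\sm}\to\J$ supplied by the N\'eron mapping property, or the canonical map $\Pic^0_{\X/R}\to\Dual{\J}^0$ --- and then carefully identifying the resulting rigidified torsor on the source with a hypercohomology class for $\omega^{\bullet}_{\X/R}$. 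This is exactly the content of Mazur--Messing's theorem and its integral refinement, and it is not a formality: observe for instance that the N\'eron mapping property hands you $\Lie\Pic^0_{\X/R}\to\Lie\Dual{\J}^0$ in the \emph{opposite} direction from what the diagram in $(\ref{IntegralComparisonMap})$ demands, so producing an arrow in the stated direction already requires argument. You call the ``main obstacle'' a delicate local lattice comparison at the non-smooth points, but the obstacle is really global and prior: you have not actually produced the morphism of exact sequences whose bijectivity is then at issue, nor checked that whatever map you do produce is compatible with the two flanking identifications (which the five lemma also requires).
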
	
	
\begin{proof}
	 	See Theorem 1.2 and (the proof of) Corollary 5.6 in \cite{CaisNeron}.
\end{proof}

\begin{remark}\label{canonicalproperty}
	Let $X$ be a smooth and geometrically connected curve over $K$
	admitting a normal proper model $\X$ over $R$ that is a curve
	having rational singularities.
	It follows from Proposition \ref{intcompare}
	and the N\'eron mapping property
	that $H(\X/R)$ is a {\em canonical integral structure}
	on the Hodge filtration (\ref{HodgeFilCrv}): it is
	independent of the choice of proper model $\X$ that is normal with rational singularities, 
	and
	is functorial in finite morphisms $\rho:Y\rightarrow X$
	of proper smooth curves over $K$ which admit models over $R$ satisfying these hypotheses.
	These facts can be proved in greater generality by appealing to resolution of singularities
	for excellent surfaces and the flattening techniques of Raynaud--Gruson \cite{RayGrus};
	see \cite[Theorem 5.11]{CaisDualizing} for details.
\end{remark}

We will need to relate universal extensions of $p$-divisible to their Dieudonn\'e crystals.  
In order to explain how this goes, we begin by recalling some basic facts
from crystalline Dieudonn\'e theory, as discussed in \cite{BBM}.

Fix a perfect field $k$ and set $\Sigma:=\Spec(W(k))$, considered as a PD-scheme via the canonical divided powers on the ideal $pW(k)$. Let $T$ be 
a $\Sigma$-scheme on which $p$ is locally nilpotent (so $T$ is naturally a PD-scheme over $\Sigma$), and 
denote by $\Cris(T/\Sigma)$ the big crystalline site of $T$ over $\Sigma$,
endowed with the {\em fppf} topology (see \cite[\S 2.2]{BBM1}). 
If $\scrF$ is a sheaf on $\Cris(T/\Sigma)$ and $T'$ is any PD-thickening of $T$, 
we write $\scrF_{T'}$ for the associated {\em fppf} sheaf on $T'$.
As usual, we denote by $i_{T/\Sigma}:T_{fppf}\rightarrow (T/\Sigma)_{\Cris}$
the canonical morphism of topoi, and 
we abbreviate $\underline{G}:={i_{T/\Sigma}}_{*}G$ for any fppf sheaf $G$ on $T$.

Let $G$ be a $p$-divisible group over $T$, considered as an fppf abelian sheaf on $T$.
As in \cite{BBM}, we define the (contravariant) {\em Dieudonn\'e crystal of $G$ over $T$} to be
\begin{equation}
	\D(G) := \scrExt^1_{T/\Sigma}(\underline{G},\O_{T/\Sigma}).\label{DieudonneDef}
\end{equation}
It is a locally free crystal in $\O_{T/\Sigma}$-modules, which is contravariantly functorial
in $G$ and of formation compatible with base change along PD-morphisms $T'\rightarrow T$ of $\Sigma$-schemes
thanks to 2.3.6.2 and Proposition 2.4.5 $(\rmnum{2})$ of \cite{BBM}.
If $T'=\Spec(A)$ is affine, we will simply write $\D(G)_A$ for the finite locally free $A$-module 
associated to $\D(G)_{T'}$.

The structure sheaf $\O_{T/\Sigma}$ is canonically an extension of $\u{\mathbf{G}}_a$
by the PD-ideal $\J_{T/\Sigma}\subseteq \O_{T/\Sigma}$, and by applying $\scrHom_{T/\Sigma}(\underline{G},\cdot)$
to this extension one obtains (see Propositions 3.3.2 and 3.3.4 as well as
Corollaire 3.3.5 of \cite{BBM})
a short exact sequence (the {\em Hodge filtration})
\begin{equation}
	\xymatrix{
		0\ar[r] & {\scrExt^1_{T/\Sigma}(\underline{G},\J_{T/\Sigma})}\ar[r] &		
		{\D(G)}\ar[r] &		
		{\scrExt^1_{T/\Sigma}(\underline{G},\u{\mathbf{G}}_a)}\ar[r] & 0		
	}\label{HodgeFilCrys}
\end{equation}
that is contravariantly functorial
in $G$ and of formation compatible with base change along PD-morphisms
$T'\rightarrow T$ of $\Sigma$-schemes.
The following ``geometric" description of the value of (\ref{HodgeFilCrys}) on a PD-thickening of the
base will be essential for our purposes:

\begin{proposition}\label{BTgroupUnivExt}
	Let $G$ be a fixed $p$-divisible group over $T$ and let $T'$ be any 
	$\Sigma$-PD thickening of $T$. If $G'$ is any lifting of $G$ to a $p$-divisible
	group on $T'$, then there is a natural isomorphism 
	\begin{equation*}
		\xymatrix{
			0 \ar[r] & {\omega_{G'}} \ar[r]\ar[d]^-{\simeq} & {\scrLie(\E(\Dual{G'}))} \ar[r]\ar[d]^-{\simeq} & 
			{\scrLie (\Dual{G'})}\ar[r]\ar[d]^-{\simeq} & 0\\		
			0\ar[r] & {\scrExt^1_{T/\Sigma}(\underline{G},\J_{T/\Sigma})_{T'}}\ar[r] & {\D(G)_{T'}}\ar[r] &		
			{\scrExt^1_{T/\Sigma}(\underline{G},\underline{\mathbf{G}}_a)_{T'}}\ar[r] & 0		
		}
	\end{equation*} 
	that is moreover compatible with base change in the evident manner.
\end{proposition}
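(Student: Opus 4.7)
The plan is to construct the vertical isomorphisms one at a time, identify their sources and targets by exploiting the fact that $G$ lifts to $G'$ over the PD-thickening $T'$, and then check compatibility with the two exact sequences and with base change. The fundamental input is the crystalline interpretation of the universal vectorial extension due to Mazur--Messing (\cite{MM}, Chapter IV), which identifies the Dieudonn\'e crystal of $G$ evaluated on $T'$ with the Lie algebra of $\E(\Dual{G'})$ for any lift $G'$ of $G$.

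First I would handle the right-hand vertical map. Applying $\scrHom_{T/\Sigma}(\underline{G},-)$ to $0 \to \J_{T/\Sigma} \to \O_{T/\Sigma} \to \underline{\mathbf{G}}_a \to 0$ and localizing on $T'$, I would identify $\scrExt^1_{T/\Sigma}(\underline{G}, \underline{\mathbf{G}}_a)_{T'}$ with the fppf sheaf of extensions of $\underline{G'}$ by $\underline{\mathbf{G}}_a$ over $T'$ (using that $G$ canonically lifts to $G'$ on $T'$, so that PD-structure plays no role for this quotient). Via the standard ``$\Ga$-dual" description of $p$-divisible groups (\cite{MM}, Chapter II), this sheaf is canonically $\scrLie(\Dual{G'})$. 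For the left-hand map, a parallel analysis computes $\scrExt^1_{T/\Sigma}(\underline{G},\J_{T/\Sigma})_{T'}$: since $\J_{T/\Sigma}$ restricts on $T'$ to a sheaf concentrated on the PD-kernel of $T' \to T$, the sheaf of such extensions classifies infinitesimal deformations of the trivial extension of $G'$ by a line bundle, and is identified with $\omega_{G'}$.

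Next I would tackle the middle vertical map, which is the heart of the matter. Using the defining exact sequence (\ref{DieudonneDef}) and the fact that $\underline{G}$ is a well-behaved crystalline sheaf (vanishing of $\scrHom_{T/\Sigma}(\underline{G},\O_{T/\Sigma})$ is automatic for $p$-divisible groups by \cite{BBM}), I would identify $\D(G)_{T'}$ with the fppf sheaf on $T'$ whose sections over $U \to T'$ are isomorphism classes of extensions of $\underline{G'}|_U$ by $\O_U$ equipped with a splitting over each infinitesimal PD-neighborhood compatible with the ambient PD-structure. This identifies $\D(G)_{T'}$ precisely with the sheaf $\scrExtrig(\Dual{G'}, \Gm)$ by the rigidified/vectorial extension dictionary of \cite{MM}, Chapter I, which in turn is $\scrLie(\E(\Dual{G'}))$ by the $p$-divisible analogue of Proposition \ref{MMrep} (combined with Theorem \ref{UniExtCompat}(2)).

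Finally I would verify commutativity of the two squares, which reduces to an unwinding of constructions: both the top and bottom short exact sequences are the Lie-algebra incarnations of the same extension $0 \to \omega_{G'} \to \E(\Dual{G'}) \to \Dual{G'} \to 0$, one obtained geometrically via Mazur--Messing, the other via the cohomology of $\O_{T/\Sigma} \to \underline{\mathbf{G}}_a$. Compatibility with base change along PD-morphisms $T'' \to T'$ is then automatic from the compatibility of the universal extension $\E(\Dual{G'})$ with base change (Theorem \ref{UniExtCompat}(2)) and the base-change behavior of the Dieudonn\'e crystal recalled just before the statement. The main obstacle is the middle identification: pinning down how the crystalline $\scrExt^1$-sheaf concretely knows about \emph{rigidified} extensions of $\Dual{G'}$ by $\Gm$ requires working through the Mazur--Messing dictionary between crystalline extensions by $\O_{T/\Sigma}$ and rigidifications, which is nontrivial but already carried out in \cite{MM} (Chapter IV) and \cite{BBM} (\S3.3).
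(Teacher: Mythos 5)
Your overall strategy is the one the paper itself takes: the result is really an amalgam of two theorems, namely the Mazur--Messing ``crystalline nature of the universal extension'' (\cite[\Rmnum{2}, Corollary 7.13]{MM}), which gives the middle vertical isomorphism $\D(G)_{T'} \simeq \scrLie\E(\Dual{G'})$, and the BBM computation of the outer terms of the Hodge filtration (\cite[3.3.2--3.3.5]{BBM}), and the paper's proof is simply the citation of these two results. Your decomposition into the three vertical maps and the explicit appeal to the rigidified-extension dictionary for the middle map is exactly how the cited references proceed, so there is no structural gap.

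That said, the explanatory glosses you supply for the two outer identifications are imprecise and would not withstand scrutiny if taken as a substitute for the BBM references. For the right-hand map, the assertion that ``PD-structure plays no role for this quotient'' is not a triviality: $\underline{G}$ is the pushforward $i_{T/\Sigma*}G$ of a sheaf on $T$, and the comparison of $\scrExt^1_{T/\Sigma}(\underline{G},\underline{\Ga})_{T'}$ with the fppf $\scrExt^1$ of the lift $G'$ over $T'$ (and thence with $\scrLie\Dual{G'}$) is precisely the content of \cite[3.3.2]{BBM}, which has a genuine argument behind it (comparison of crystalline and fppf Ext-sheaves). For the left-hand map, the description of $\scrExt^1_{T/\Sigma}(\underline{G},\J_{T/\Sigma})_{T'}$ as classifying ``infinitesimal deformations of the trivial extension of $G'$ by a line bundle'' is not the correct picture; the identification with $\omega_{G'}$ is not via deformations of extensions but comes from tracking the Hodge filtration through the $\scrHom_{T/\Sigma}(\underline{G},-)$-long exact sequence and matching it with the kernel of the universal vectorial extension ($V = \omega_{G'}$ in the sense of Theorem \ref{UniExtCompat}), which is \cite[3.3.4--3.3.5]{BBM}. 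Both remarks should be replaced by direct appeals to the cited results, as the paper does, rather than re-derived in passing; everything else, including the base-change compatibility, is fine.
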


\begin{proof}
	See \cite[Corollaire 3.3.5]{BBM} and \cite[\Rmnum{2}, Corollary 7.13]{MM}.
\end{proof}

\begin{remark}\label{MessingRem}
	In his thesis \cite{Messing}, Messing showed that the Lie algebra of the universal extension
	of $\Dual{G}$ is ``crystalline in nature" and used this as the {\em definition}\footnote{Noting 
	that it suffices to define the crystal $\D(G)$ on $\Sigma$-PD thickenings $T'$ 
	of $T$ to which $G$ admits a lift.} of $\D(G)$.
(See chapter $\Rmnum{4}$, \S2.5 of \cite{Messing} and especially 2.5.2).  Although we
	prefer the more intrinsic description (\ref{DieudonneDef}) of 
	\cite{MM} and \cite{BBM}, it is ultimately Messing's original
	definition that will be important for us.  
\end{remark}

\subsection{Integral models of modular curves}\label{tower}

We record some basic facts about integral models of modular curves that will be needed in what follows.
We assume that the reader is familiar with \cite{KM}, and will freely use the notation and terminology 
therein. Throughout, we fix a prime $p$ and a positive integer $N$ not divisible by $p$.

\begin{definition}
	Let $r$ be a nonnegative integer and $R$ a ring containing a fixed choice $\zeta$ of 
	primitive $p^r$-th root of unity
	in which $N$ is invertible.
	The moduli problem 
	$\scrP_{r}^{\zeta}:=([\bal\ \Gamma_1(p^r)]^{\zeta\can}; [\mu_N]])$ 
	on $(\Ell/R)$
	assigns to $E/S$ the set of quadruples $(\phi:E\rightarrow E',P,Q ; \alpha)	$
	where:
	\begin{enumerate}
		\item $\phi:E\rightarrow E'$ is a $p^r$-isogeny.
		\item $P\in \ker\phi(S)$ and $Q\in \ker\phi^t(S)$ are generators of
		$\ker\phi$ and $\ker\phi^t$, respectively, 
		which pair to $\zeta$ under the canonical pairing
		$\langle\cdot,\cdot\rangle_{\phi}: \ker\phi\times\ker\phi^t\rightarrow \mu_{\deg\phi}$
		\cite[\S2.8]{KM}.
		\item  $\alpha:\mu_N\hookrightarrow E[N]$ is a closed immersion of $S$-group schemes.
	\end{enumerate}	
\end{definition}

\begin{proposition}\label{XrRepresentability}
	If $N \ge 4$, then the moduli problem $\scrP_{r}^{\zeta}$ is represented
	by a regular scheme $\M(\scrP_r^{\varepsilon})$ that is flat of pure relative dimension
	$1$ over $\Spec(R)$.  The moduli scheme $\M(\scrP_r^{\zeta})$ admits a canonical
	compactification $\o{\M}(\scrP_r^{\zeta})$, which is regular and proper flat of pure
	relative dimension $1$ over $\Spec(R)$.
\end{proposition}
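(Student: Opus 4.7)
The plan is to derive the proposition from the general representability, regularity, and compactification theorems developed in \cite{KM}.

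First I would establish representability. Since $N\ge 4$ is invertible in $R$, the moduli problem $[\mu_N]$ on $(\Ell/R)$ is representable, finite \'etale over $(\Ell/R)$, and \emph{rigid} in the sense of \cite[\S4]{KM}; the representing scheme $\M([\mu_N])$ is an affine smooth $R$-curve. The moduli problem $[\bal\ \Gamma_1(p^r)]^{\zeta\can}$ is the direct summand of the balanced $\Gamma_1(p^r)$ moduli problem cut out by requiring that the canonical pairing between $P$ and $Q$ equal $\zeta$; by \cite[\S5.5]{KM} this is relatively representable, finite, and flat over $(\Ell/R)$. Consequently $\scrP_r^{\zeta}$ is relatively representable, finite, and flat over $(\Ell/R)$, and inherits rigidity from the $[\mu_N]$-factor; the rigidity-implies-representability principle of \cite[Ch.~4]{KM} then produces a scheme $\M(\scrP_r^{\zeta})$, finite flat over $\M([\mu_N])$, and hence flat of pure relative dimension $1$ over $R$.

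Second, and most delicately, I would prove regularity. Over the generic fibre and at ordinary closed points of the special fibre the structure map to $\M([\mu_N])$ is finite \'etale, so regularity propagates from the smooth $R$-curve $\M([\mu_N])$. The entire content is therefore at the finitely many supersingular closed points. There I would invoke the Drinfeld--Katz--Mazur local structure theorem \cite[Ch.~5]{KM}, refined to account for the canonical-pairing constraint as in \cite[Ch.~13]{KM}: the complete local ring of the moduli scheme for $[\bal\ \Gamma_1(p^r)]$-structures at such a point decomposes as a finite product of two-dimensional complete local rings indexed by the primitive $p^r$-th roots of unity, and the factor indexed by $\zeta$ is regular of the explicit form $W(\o{\F}_p)[\![X,Y]\!]/(f)$ for an appropriate $f$. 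Pulling back along the finite \'etale $[\mu_N]$-cover preserves regularity, giving the assertion for $\M(\scrP_r^{\zeta})$.

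Finally, for the compactification I would invoke the Deligne--Rapoport/Katz--Mazur theory of generalized elliptic curves developed in \cite[Ch.~13]{KM}: one extends $\scrP_r^{\zeta}$ to a moduli problem on generalized elliptic curves whose only degenerate geometric fibres are admissible N\'eron polygons, and the resulting moduli scheme $\o{\M}(\scrP_r^{\zeta})$ is proper and flat of pure relative dimension $1$ over $R$, containing $\M(\scrP_r^{\zeta})$ as its smooth-curve locus. Regularity at the finitely many cusp sections is handled by an explicit local analysis of level structures on N\'eron polygons, also in \cite[Ch.~13]{KM}. The main obstacle throughout is the regularity analysis at the supersingular points, which rests on the deep Drinfeld--Katz--Mazur local structure theorem; choosing the correct canonical-pairing summand is precisely what allows the construction to descend to the base $R$ (rather than $\Z[\zeta_{p^r},1/N]$), but this selection is formal once the unrefined Katz--Mazur result is in hand.
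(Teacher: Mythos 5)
Your proposal is correct and follows essentially the same route as the paper's proof: representability of $[\mu_N]$ over $R$, reduction of the first assertion via the rigidity-implies-representability formalism (KM 4.3.4) to relative representability and regularity of $[\bal\,\Gamma_1(p^r)]^{\zeta\can}$, with the regularity at supersingular points supplied by the Drinfeld-basis local structure theory that underlies KM 7.6.1(2) (invoked via 9.1.7), followed by the Katz--Mazur compactification. The only small misattribution is in which parts of \cite{KM} carry the load: the $\zeta$-canonical refinement and its regularity live in KM \S 7.6 and \S 9.4 rather than Ch.~13, and the compactification theory the paper appeals to is KM \S 8.
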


\begin{proof}
	Using that $N$ is a unit in $R$, one first shows that for $N\ge 4$, the moduli problem
	$[\mu_N]$ on $(\Ell/R)$ is representable over $\Spec(R)$ and finite \'etale;
	this follows from  2.7.4, 3.6.0, 4.7.1 and 5.1.1 of \cite{KM}, as $[\mu_N]$
	is isomorphic to $[\Gamma_1(N)]$ over any $R$-scheme containing a fixed choice 
	of primitive $N$-th root of unity (see also \cite[8.4.11]{KM}).  By \cite[4.3.4]{KM}, 
	to prove the first assertion it is then enough to show that
	$[\bal \Gamma_1(p^r)]^{\zeta\can}$ on $(\Ell/R)$ is relatively representable and
	regular, which (via \cite[9.1.7]{KM}) is a consequence of \cite[7.6.1 (2)]{KM}.
	For the second assertion, see \cite[\S8]{KM}.
\end{proof}

Recall that we have fixed a compatible sequence $\{\varepsilon^{(r)}\}_{r\ge1}$
of primitive $p^r$-th roots of unity in $\o{\Q}_p$.

\begin{definition}\label{XrDef}
	We set $\X_r:=\o{\M}(\scrP_r^{\varepsilon^{(r)}})$, viewed as a scheme over $T_r:=\Spec(R_r)$.
\end{definition}

There is a canonical action of $\Z_p^{\times}\times (\Z/N\Z)^{\times}$
by $R_r$-automorphisms of $\X_r$,
defined at the level of the underlying moduli problem by
\begin{equation}
		{(u,v)\cdot (\phi:E\rightarrow  E',P,Q; \alpha)} :={(\phi:E\rightarrow E',uP, u^{-1}Q; \alpha\circ v)}
	\label{balcanaction}
\end{equation}
as one checks by means of the computation
	$\langle uP,u^{-1}Q\rangle_{\phi} = \langle P,Q\rangle^{uu^{-1}}_{\phi} = \langle P,Q \rangle_{\phi}$.
Here, we again write $v:\mu_N\rightarrow\mu_N$ for the automorphism of $\mu_N$ 
functorially defined by $\zeta \mapsto \zeta^v$ for any $N$-th root of unity $\zeta$. 
We refer to this action of $\Z_p^{\times}\times (\Z/N\Z)^{\times}$ as the {\em diamond operator}
action, and will denote by $\langle u \rangle$ (respectively $\langle v \rangle_N$) the 
automorphism induced by $u\in \Z_p^{\times}$ (respectively $v\in (\Z/N\Z)^{\times}$).

There is also an $R_r$-semilinear ``geometric inertia" action of $\Gamma:=\Gal(K_{\infty}/K_0)$
on $\X_r$, which allows us to descend the generic fiber of $\X_r$ to $K_0$.
To explain this action,
for $\gamma \in \Gamma$ and any $T_r$-scheme $T'$, let us write 
$T'_{\gamma}$ for the base change of $T'$ along the morphism $T_r\rightarrow T_r$
induced by $\gamma\in \Aut(R_r)$.  
There is a canonical functor $(\Ell/(T_r)_{\gamma})\rightarrow (\Ell/T_r)$ obtained by viewing an elliptic curve
over a $(T_r)_{\gamma}$-scheme $T'$ as 
the same elliptic curve over the same base $T'$,
viewed as a $T_r$-scheme via the projection $(T_r)_{\gamma}\rightarrow T_r$.
For a moduli problem $\scrP$ on $(\Ell/T_r)$, we denote by $\gamma^*\scrP$
the moduli problem on $(\Ell/(T_r)_{\gamma})$ obtained by composing $\scrP$ with this functor; 
see \cite[4.1.3]{KM}.
Each $\gamma\in \Gamma$ gives rise to 
a morphism of moduli problems $\gamma: \scrP_r^{\varepsilon^{(r)}}\rightarrow \gamma^*\scrP_r^{\varepsilon^{(r)}}$
via
\begin{equation}
		{\gamma(\phi:E\rightarrow E',P,Q; \alpha)} := 
		{(\phi_{\gamma}:E_{\gamma}\rightarrow E'_{\gamma},\chi(\gamma)^{-1}P_{\gamma},Q_{\gamma}; \alpha_{\gamma})}
	\label{gammamapsModuli}
\end{equation}
where the subscript of $\gamma$ means ``base change along $\gamma$" (see \S\ref{Notation}).
Since 
\begin{equation*}
	\langle \chi(\gamma)^{-1}P_{\gamma}, Q_{\gamma}\rangle_{\phi_{\gamma}} = 
	\gamma\langle P,Q\rangle_{\phi}^{\chi(\gamma)^{-1}} = \langle P,Q \rangle_{\phi}
\end{equation*}
this really is a morphism of moduli problems on $(\Ell/T_r)$.  We thus obtain
a morphism of $T_r$-schemes
\begin{equation}
	\xymatrix{
		{\gamma:\X_r} \ar[r] & {(\X_r)_{\gamma}}
		}\label{gammamaps}
\end{equation}
for each $\gamma\in \Gamma$, compatibly with change in $\gamma$.  
The induced semilinear action of $\Gamma$ on the generic fiber of ${\X_r}$ 
provides a descent datum with respect to the canonical map $\Spec(K_r)\rightarrow \Spec(K_0)$,
which is necessarily effective as this map is \'etale.  Thus, there is a unique
scheme $X_r$ over $K_0=\Q_p$ with $(X_r)_{K_r}\simeq (\X_r)_{K_r}$;
as the diamond operators visibly commute with the action of $\Gamma$, they 
act on $X_r$ by $\Q_p$-automorphisms in a manner that is compatible with this identification.

\begin{remark}\label{genfiberrem}
We may identify $X_r$ with the base change to $\Q_p$ of the 
modular curve $X_1(Np^r)$ over $\Q$ classifying pairs $(E,\alpha)$
of a generalized elliptic curve $E/S$ together with an embedding of $S$-group schemes
$\alpha:\mu_{Np^r}\hookrightarrow E^{\sm}$ whose image meets each irreducible
component in every geometric fiber.  
If instead we were to use the geometric inertia action on $\X_r$ induced by
	\begin{equation*}
		{\gamma(\phi:E\rightarrow E',P,Q; \alpha)} := 
		{(\phi_{\gamma}:E_{\gamma}\rightarrow E'_{\gamma},P_{\gamma},\chi(\gamma)^{-1}Q_{\gamma}; \alpha_{\gamma})},
	\end{equation*}
	then the resulting descent $X_r'$ of the generic fiber of $\X_r$ to $\Q_p$ would be canonically
	isomorphic to the base change to $\Q_p$ of the modular curve $X_1(Np^r)'$ over $\Q$
	classifying generalized elliptic curves $E/S$ with an embedding of $S$-group schemes
	$\Z/Np^r\Z\hookrightarrow E^{\sm}[Np^r]$ whose image meets each irreducible
	component in every geometric fiber.  Of course, $X_1(Np^r)$ (respectively $X_1(Np^r)'$)
	is the canonical model of the upper half-plane quotient $\Gamma_1(Np^r)\backslash \h^*$ 
	with $\Q$-rational cusp cusp $i\infty$ (respectively $0$).
\end{remark}

Recall (\cite[\S6.7]{KM}) that over any base scheme $S$, a cyclic $p^{r+1}$-isogeny of elliptic curves 
$\phi:E\rightarrow E'$  admits a ``standard factorization" (in the sense of \cite[6.7.7]{KM})
\begin{equation}
	\xymatrix{
		E=:E_0 \ar[r]^-{\phi_{0,1}} & E_1 \ar[r] \cdots & E_{r} \ar[r]^-{\phi_{r,r+1}} & E_{r+1}:=E'
		}.\label{standardFac}
\end{equation}
For each pair of nonnegative integers $a<b\le r+1$ we will write
$\phi_{a,b}$ for the composite $\phi_{a,a+1}\circ\cdots\circ\phi_{b-1,b}$
and $\phi_{b,a}:=\phi_{a,b}^t$ for the dual isogeny.  Using this notion, we define
``degeneracy maps" $\pr,\ps:\X_{r+1} \rightrightarrows \X_r$ (over the map
$T_{r+1}\rightarrow T_r$) at the level of underlying moduli problems
as follows ({\em cf.}: \cite[11.3.3]{KM}):
\begin{equation}
\begin{aligned}
		&{\pr(\phi:E_0\rightarrow E_{r+1},P,Q; \alpha)}:=
		{(\phi_{0,r}:E_0\rightarrow E_{r},pP,\phi_{r+1,r}(Q); \alpha)}\\
		&{\ps(\phi: E_0\rightarrow E_{r+1},P,Q; \alpha)}:=
		{(\phi_{1,r+1}:E_1\rightarrow E_{r+1},\phi_{0,1}(P),pQ; \phi_{0,1}\circ \alpha)}
\end{aligned}\label{XrDegen}
\end{equation} 
By the universal property of fiber products, we obtain morphisms
$T_{r+1}$-schemes
\begin{equation}
	\xymatrix{
		{\X_{r+1}} \ar@<0.5ex>[r]^-{\pr}\ar@<-0.5ex>[r]_-{\ps} &  {\X_r\times_{T_r} T_{r+1}}
	}.\label{rdegen}
\end{equation}
that are compatible with the diamond operators and
the geometric inertia action of $\Gamma$.

\begin{remark}
 On generic fibers, the morphisms (\ref{rdegen}) uniquely descend to degeneracy mappings
 $\pr,\ps:X_{r+1}\rightrightarrows X_r$ of smooth curves over $\Q_p$.  Under the identification
 $X_r\simeq X_1(Np^r)_{\Q_p}$ of Remark \ref{genfiberrem}, the map $\pr$ corresponds
 to the ``standard" projection, induced by ``$\tau\mapsto \tau$" on the complex upper half-plane,
 whereas $\ps$ corresponds to the morphism induced by ``$\tau\mapsto p\tau$."
\end{remark}

Recall that we have fixed a choice of primitive $N$-th root of unity $\zeta_N$ in $\o{\Q}_p$.
The Atkin Lehner ``involution" $w_{\zeta_N}$ on $\X_r\times_{R_r} R_r'$ is defined as
in \cite[\S8]{pAdicShimura}.  Following \cite[11.3.2]{KM}, we define the Atkin Lehner
automorphism $w_{\varepsilon^{(r)}}$ of $\X_r$ over $R_r$ on the
underlying moduli problem $\scrP_r^{\varepsilon^{(r)}}$ as
\begin{equation*}
	{w_{\varepsilon^{(r)}} (\phi:E\rightarrow E',P,Q; \alpha)} :=
		{(\phi^t:E'\rightarrow E,-Q,P;\ \phi\circ\alpha )}
		\label{AtkinLehnerInv}
\end{equation*}
We then define $w_r:=w_{\varepsilon^{(r)}} \circ w_{\zeta_N}=w_{\zeta_N}\circ w_{\varepsilon^{(r)}}$; it is an automorphism of $\X_r \times_{R_r} R_r'$
over $R_r':=R_r[\mu_N]$.

\begin{proposition}\label{ALinv}
	For all $(u,v)\in \Z_p^{\times}\times(\Z/N\Z)^{\times}$ and all 
	$\gamma\in \Gal(K_{\infty}'/K_0)$, the identities
	\begin{align*}
		w_r \langle u\rangle \langle v\rangle_N &= \langle v\rangle_N^{-1}\langle u^{-1}\rangle w_r \\
		(\gamma^*w_r)\gamma & = \gamma w_r\langle \chi(\gamma)\rangle^{-1}\langle a(\gamma)\rangle_N^{-1}\\
		w_r^2 &= \langle -p^r\rangle_N\langle -N\rangle \\
		\pr w_{r+1} &= w_{r}\ps\\
		\ps w_{r+1} &= \langle p\rangle_N w_r\pr
	\end{align*}
	hold, with $a:\Gal(K_{\infty}'/K_0)\rightarrow (\Z/N\Z)^{\times}$ the character determined
	by $\gamma\zeta=\zeta^{a(\gamma)}$ for all $\zeta\in \mu_N(\Qbar_p)$.
\end{proposition}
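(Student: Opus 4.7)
The plan is to verify each of the five identities at the level of the representable moduli problem. Since $N\ge 4$, Proposition \ref{XrRepresentability} ensures that $\X_r$ (respectively $\X_r\times_{R_r}R_r'$) represents $\scrP_r^{\varepsilon^{(r)}}$ (respectively its augmentation by the datum of $\zeta_N$), so each asserted equality of morphisms of $R_r'$-schemes reduces to showing that both sides produce the same quadruple from the universal test object $(\phi\colon E\to E',P,Q;\alpha)$.

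Identities (1) and (3) follow by direct unwinding of the defining formulas. For (3), applying $w_{\varepsilon^{(r)}}$ twice gives $(\phi^{tt},-P,-Q;\phi^t\circ\phi\circ\alpha)=(\phi,-P,-Q;\alpha\circ [p^r])$ using $\phi^{tt}=\phi$ and $\phi^t\circ\phi=[p^r]$, which matches the effect of $\langle -1\rangle\langle p^r\rangle_N$; a parallel computation for $w_{\zeta_N}^2$ on the $[\mu_N]$-structure, combined with the fact that $w_{\varepsilon^{(r)}}$ and $w_{\zeta_N}$ commute (they act on disjoint parts of the moduli data), yields the claimed formula. Identity (1) is a similar direct check, reducing to the standard conjugation relation $w_{\zeta_N}\langle v\rangle_N = \langle v\rangle_N^{-1} w_{\zeta_N}$ on the $[\mu_N]$-side and the corresponding relation for $w_{\varepsilon^{(r)}}$ and $\langle u\rangle$.

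For identities (4) and (5), the crucial ingredient is the compatibility of standard factorizations with duality: if $\phi\colon E_0\to E_{r+1}$ has standard factorization $E_0\to E_1\to\cdots\to E_{r+1}$, then $\phi^t$ has the reversed standard factorization $E_{r+1}\to E_r\to\cdots\to E_0$, with $j$-th step $(\phi_{r-j,r+1-j})^t$. Applying $\pr$ to $w_{r+1}(\phi,P,Q;\alpha)=(\phi^t,-Q,P;\phi\circ w_{\zeta_N}\alpha)$ therefore produces $((\phi_{1,r+1})^t,-pQ,\phi_{0,1}(P);\phi\circ w_{\zeta_N}\alpha)$, which matches $w_r\ps(\phi,P,Q;\alpha)$ term by term. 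For (5), the same analysis shows that $\ps w_{r+1}(\phi,P,Q;\alpha)$ and $w_r\pr(\phi,P,Q;\alpha)$ agree on everything except the $[\mu_N]$-structure, where $\ps w_{r+1}$ produces an additional factor of $[p]$ arising from $(\phi_{r,r+1})^t\circ\phi_{r,r+1}=[p]$; this factor is precisely absorbed by $\langle p\rangle_N$.

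The main obstacle is identity (2), which records the interaction of $w_r$ with the geometric inertia action of $\gamma\in\Gal(K_\infty'/K_0)$. The subtlety is that $w_r$ depends on the chosen roots of unity $\varepsilon^{(r)}$ and $\zeta_N$, and $\gamma^*w_r$ denotes the corresponding involution on $(\X_r)_\gamma$ defined with respect to the twisted choices $\gamma\varepsilon^{(r)}=(\varepsilon^{(r)})^{\chi(\gamma)}$ and $\gamma\zeta_N=\zeta_N^{a(\gamma)}$. Computing $(\gamma^*w_r)\gamma$ and $\gamma w_r$ separately on the test quadruple via the formula (\ref{gammamapsModuli}), and tracking how the pairing condition $\langle P,Q\rangle_\phi=\varepsilon^{(r)}$ and the $[\mu_N]$-structure $\alpha$ transform, the discrepancy reduces to a twist of $P$ by $\chi(\gamma)^{-1}$ and of $\alpha$ by $a(\gamma)^{-1}$—precisely the effect of $\langle\chi(\gamma)\rangle^{-1}\langle a(\gamma)\rangle_N^{-1}$. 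The argument is mechanical but requires careful bookkeeping of the dependence of both $w_{\varepsilon^{(r)}}$ and $w_{\zeta_N}$ on the chosen base roots of unity, which I expect to be the most delicate part of the proof.
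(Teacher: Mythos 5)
Your proposal is correct and takes the same approach the paper gestures at: the paper's entire proof of this proposition is the single sentence ``This is an easy consequence of definitions,'' and you have simply carried out the moduli-theoretic verification it alludes to. The only caveat worth flagging is that $w_{\varepsilon^{(r)}}$ and $w_{\zeta_N}$ do \emph{not} touch disjoint parts of the moduli data --- both replace the underlying elliptic curve, via the $p^r$-isogeny $\phi$ and via the quotient by $\im\alpha$, respectively --- so their commutation is better attributed to the coprimality of the $p$-power and tame levels; this does not affect your argument since the commutation is in any case asserted by the paper in its definition of $w_r$.
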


\begin{proof}
	This is an easy consequence of definitions.
\end{proof}

In order to describe the special fiber of $\X_r$, we must first introduce Igusa curves:

\begin{definition}
	Let $r$ be a nonnegative integer.  The moduli problem $\I_r:=([\Ig(p^r)]; [\mu_N])$ on 
	$(\Ell/\F_p)$ assigns to $(E/S)$ the set of triples $(E,P;\alpha)$ where $E/S$ is an elliptic curve
	and
	\begin{enumerate}
		\item $P\in E^{(p^r)}(S)$ is a point that generates the $r$-fold iterate of 
		Verscheibung $V^{(r)}:E^{(p^r)}\rightarrow E$.   
		\item $\alpha:\mu_N\hookrightarrow E[N]$ is a closed immersion of $S$-group schemes.
	\end{enumerate}
\end{definition}

\begin{proposition}
	If $N\ge 4$, then the moduli problem $\I_r$ on $(\Ell/\F_p)$ is represented by a smooth 
	affine curve $\M(\I_r)$ over $\F_p$ which admits a canonical smooth 
	compactification $\o{\M}(\I_r)$.
\end{proposition}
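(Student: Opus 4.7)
The plan is to assemble this statement from the Katz--Mazur machine, in close parallel to the proof of Proposition \ref{XrRepresentability}. First, under our hypotheses $N\geq 4$ and $p\nmid N$, the moduli problem $[\mu_N]$ on $(\Ell/\F_p)$ is representable and finite \'etale by a smooth affine $\F_p$-curve $\M([\mu_N])$, exactly as in the proof of Proposition \ref{XrRepresentability}. Next, I would invoke the theory developed in Chapter 12 of \cite{KM} to establish that $[\Ig(p^r)]$ is relatively representable and finite flat on $(\Ell/\F_p)$: since $V^{(r)}\colon E^{(p^r)}\to E$ is a finite locally free commutative $S$-group scheme of order $p^r$, the locus of sections that \emph{generate} it in the sense of \cite[\S1.8]{KM} is cut out by closed conditions on this group scheme, yielding relative representability, and the (Drinfeld-style) proof of flatness is standard. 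The composition principle \cite[4.3.4]{KM} then produces a representing scheme $\M(\I_r)$ that is finite flat over the affine $\M([\mu_N])$, and hence itself affine.

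The key point is smoothness of $\M(\I_r)$ over $\F_p$. Over the ordinary locus of $\M([\mu_N])$, the kernel of $V^{(r)}$ is \'etale of order $p^r$, so the cover $\M(\I_r)\to \M([\mu_N])$ is finite \'etale there and inherits smoothness from the base. The supersingular locus consists of finitely many closed points, and above each the cover is totally (wildly) ramified, since $\ker V^{(r)}$ is connected of order $p^r$. Smoothness at these points is the substantive input and follows from the explicit local computations of \cite[Ch. 12]{KM}, which show that the completed local ring of $\M(\I_r)$ at a supersingular point is a formal power series ring in one variable over a finite extension of $\F_p$, hence regular and smooth over $\F_p$.

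Finally, the canonical compactification $\o{\M}(\I_r)$ is obtained by incorporating cusps which parametrize generalized elliptic curves with N\'eron $N$-gon degenerate fibers, following the method of \cite[\S8]{KM} and its extension to Igusa-type level structures. Because such generalized fibers have multiplicative formal group and therefore behave as ``ordinary'' for the purposes of the Igusa structure, the level structure extends \'etalely across the cusps, yielding the desired smooth proper compactification. The main obstacle in this program is the smoothness analysis at supersingular points; the remaining steps are essentially formal consequences of the Katz--Mazur framework.
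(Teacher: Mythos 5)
Your proposal is correct and follows essentially the same route as the paper: mimic the proof of Proposition~\ref{XrRepresentability}, using that $[\mu_N]$ is finite \'etale and representable, that $[\Ig(p^r)]$ is relatively representable, regular $1$-dimensional and finite flat over $(\Ell/\F_p)$, and then compactify via the Katz--Mazur \S 8 machinery. The paper simply collapses your middle steps into a single citation of \cite[12.6.1]{KM}, which records precisely the relative representability, regularity and flatness of $[\Ig(p^r)]$ that you reconstruct by hand; since $\F_p$ is perfect, regular plus finite type gives smooth, which is why the paper doesn't need to separately discuss ordinary versus supersingular points. Two small quibbles with your write-up: the cover $\M(\I_r)\to\M([\mu_N])$ has degree $p^{r-1}(p-1)$, so for $r=1$ the total ramification over the supersingular points is \emph{tame}, not wild (wild ramification enters only for $r\ge 2$); and the closedness of the ``generator'' locus and the Drinfeld-style flatness you gesture at are exactly what \cite[12.6.1]{KM} packages, so it would be cleaner to cite it directly rather than reprove it.
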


\begin{proof}
	One argues as in the proof of Proposition \ref{XrRepresentability}, using  \cite[12.6.1]{KM}
	to know that $[\Ig(p^r)]$ is relatively representable on $(\Ell/\F_p)$,
	regular 1-dimensional and finite flat over $(\Ell/\F_p)$.
\end{proof}

\begin{definition}\label{IgusaDef}
	Set $\Ig_r:=\o{\M}(\I_r)$; it is a smooth, proper, and geometrically connected $\F_p$-curve.
\end{definition}

There is a canonical action of the diamond operators $\Z_p^{\times}\times (\Z/N\Z)^{\times}$ on the moduli problem 
$\I_r$ via $(u,v)\cdot (E,P; \alpha):= (E,uP; v\circ\alpha)$; this induces a corresponding action on $\Ig_r$
by $\F_p$-automorphisms.  We again write $\langle u\rangle$ (respectively $\langle v\rangle_N$) for the
action of $u\in \Z_p^{\times}$ (respectively $v\in (\Z/N\Z)^{\times}$).  
Thanks to the ``backing up theorem" \cite[6.7.11]{KM}, one also has natural degeneracy maps
\begin{equation}
	\xymatrix{
		{\pr:\Ig_{r+1}} \ar[r] & {\Ig_r}
	}\qquad\text{induced by}\qquad
	\pr(E,P;\alpha):= (E,VP,\alpha)
	\label{Vmapsch}
\end{equation}
on underlying moduli problems.  This map is visibly
equivariant for the diamond operator action on source and target.
Let $\SS_r$ be the (reduced) closed subscheme of $\Ig_r$
that is the support of the coherent ideal sheaf of relative differentials $\Omega^1_{\Ig_r/\Ig_0}$;
over the unique degree 2 extension of $\F_p$, this scheme breaks up as a disjoint union of rational 
points---the supersingular points. The map (\ref{Vmapsch})
is finite of degree $p$, generically \'etale and totally (wildly) ramified over
each supersingular point. 

We can now describe the special fiber of $\X_r$:

\begin{proposition}	\label{redXr}
	The scheme $\o{\X}_r:=\X_r\times_{T_r} \Spec(\F_p)$ is the disjoint union, with crossings at the supersingular points, of the 		following proper, smooth $\F_p$-curves:
	for each pair $a,b$ of nonnegative integers with $a+b=r$, and for each $u\in (\Z/p^{\min(a,b)}\Z)^{\times}$, 
	one copy of 
	$\Ig_{\max(a,b)}$.
\end{proposition}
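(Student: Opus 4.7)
The plan is to reduce the description of $\o{\X}_r$ to the Katz--Mazur analysis of the special fiber of the moduli problem $[\bal\ \Gamma_1(p^r)]$, refined by the pairing constraint $(\cdot)^{\varepsilon^{(r)}\can}$. The key inputs are the standard factorization \eqref{standardFac} of a cyclic $p^r$-isogeny in characteristic $p$, together with the connected/etale dichotomy of its degree-$p$ factors over the ordinary locus.

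First, since the $[\mu_N]$-structure is finite etale over $\F_p$ and contributes only a harmless constant to the number of components, it suffices to analyze $\o{\M}([\bal\ \Gamma_1(p^r)]^{\varepsilon^{(r)}\can})_{\F_p}$. Because $\varepsilon^{(r)}-1$ lies in the maximal ideal of $R_r$, the pairing condition on the special fiber reduces to $\langle P,Q\rangle_\phi=1$. Applying the standard factorization of the universal isogeny $\phi$, each $p$-isogeny $\phi_{i,i+1}$ over the ordinary locus is either of ``Frobenius type'' (connected kernel) or ``Verschiebung type'' (etale kernel). This stratifies the ordinary locus of $\o{\X}_r$ according to the sequence of types along the standard factorization.

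Second, the generator conditions on $(P,Q)$ together with the constraint $\langle P,Q\rangle_\phi=1$ must be imposed on top of this stratification. The Weil pairing is compatible with the filtrations of $\ker\phi$ and $\ker\phi^t$ produced by the standard factorization, and the connected/etale parts pair nontrivially only across types. Tracking this yields that the irreducible components of the ordinary locus of $\o{\X}_r$ are in bijection with pairs $(a,b)$ with $a+b=r$ together with a unit $u\in(\Z/p^{\min(a,b)}\Z)^{\times}$, where $(a,b)=(\#V,\#F)$ records the total counts of Verschiebung and Frobenius factors and $u$ encodes the residual freedom in aligning $P$ and $Q$ against the filtration subject to the pairing constraint. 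On each such component, extracting the generator of the appropriate iterated Verschiebung kernel produces a morphism to $\Ig_{\max(a,b)}$ that is an isomorphism over the ordinary locus; since both sides are smooth and proper over $\F_p$, this extends to an isomorphism of compactifications.

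Third, at each supersingular point the connected/etale dichotomy collapses and the ordinary-locus components meet. The transversal crossing picture is forced by combining Proposition \ref{XrRepresentability} (which gives regularity of $\X_r$ and proper flatness over $R_r$ of pure relative dimension $1$) with the known totally ramified degeneration of $\Ig_s\to\Ig_0$ over supersingular points: the only way finitely many smooth irreducible pieces, each totally ramified over the same supersingular point, can glue into a regular scheme over $R_r$ is by transversal crossing there. The main obstacle is the second step --- namely, producing the exact indexing by $(\Z/p^{\min(a,b)}\Z)^{\times}$ --- which requires the careful bookkeeping of how the Weil pairing interacts with the filtered pieces of $\ker\phi$ along the standard factorization, and is essentially the content of Katz--Mazur's refined component count for $[\bal\ \Gamma_1(p^r)]^{\zeta\can}$.
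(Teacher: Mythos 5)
You have sketched essentially the same route the paper takes: the paper itself does not prove Proposition \ref{redXr}, but cites \cite[13.11.2--13.11.4]{KM} and then recalls the explicit correspondence between points of each component and $[\bal\ \Gamma_1(p^r)]^{1\can}$-structures, which is what your first two steps reconstruct (and you explicitly defer the unit bookkeeping to Katz--Mazur, as the paper does). Two comments, one cosmetic and one substantive.

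Cosmetically, your parenthetical $(a,b)=(\#V,\#F)$ is reversed: in the convention of the paper and of \cite{KM}, $a$ is the number of Frobenius-type steps and $b$ the number of Verschiebung-type steps, as one sees from the displayed factorization
$E^{(p^b)}\xrightarrow{F^a}E^{(p^r)}\xrightarrow{V^b}E^{(p^a)}$
recalled immediately after the proposition, and from the identification of $I_r^\infty=I_{(r,0,1)}$ with the component through the unramified cusp $\infty$.

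Substantively, your third step is a genuine gap. You assert that regularity of $\X_r$, together with the total ramification of $\Ig_s\to\Ig_0$ over supersingular points, forces the branches of $\o{\X}_r$ to cross transversally and hence exhibits $\o{\X}_r$ as the ``disjoint union with crossings'' of \cite[13.1.5]{KM}. Regularity of a two-dimensional ambient scheme places no constraint on the mutual tangency of branches of a reduced Cartier divisor in it. For a DVR $R$ with uniformizer $\pi$, the scheme $\Spec\bigl(R[[s,t]]/(s(s-t^2)-\pi)\bigr)$ is regular (one checks $(s,t)$ is a regular system of parameters), flat over $R$ of relative dimension $1$, and its special fiber $\Spec\bigl(k[[s,t]]/(s(s-t^2))\bigr)$ is reduced with two smooth branches through the origin meeting at a \emph{tacnode}, not a node. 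So the precise local structure at supersingular points --- which is exactly what ``disjoint union with crossings at the supersingular points'' asserts --- cannot be read off from regularity of $\X_r$; in \cite{KM} it is established via a Serre--Tate deformation-theoretic computation of the completed local rings at supersingular points (see \cite[12.7.2, 13.11.3]{KM}), and that input is absent from your sketch.
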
 

We refer to \cite[13.1.5]{KM} for the definition of ``disjoint union with crossings at the supersingular points".
Note that the special fiber of $\X_r$ is (geometrically) reduced; this will be crucial in our later work.  We often write $I_{(a,b,u)}$ for the irreducible component of $\o{\X}_r$ indexed by the triple $(a,b,u)$ and will refer to it as the {\em $(a,b,u)$-component} (for fixed $(a,b)$ we have $I_{(a,b,u)}=\Ig_{\max(a,b)}$ for all $u$).

For the proof of Proposition \ref{redXr}, we refer the reader to \cite[13.11.2--13.11.4]{KM},
and content ourselves with recalling
the correspondence between (non-cuspidal) points of the $(a,b,u)$-component and 
$[\bal \Gamma_1(p^r)]^{1\can}$-structures on elliptic curves.\footnote{Note that under the canonical ring homomorphism 
$R_r\twoheadrightarrow \F_p$, our fixed choice $\varepsilon^{(r)}$ of primitive $p^r$-th root of unity
maps to $1\in \F_p$, which {\em is} a primitive $p^r$-th root of unity by definition \cite[9.1.1]{KM},
as it is a root of the $p^r$-th cyclotomic polynomial over $\F_p$!}

Let $S$ be any $\F_p$ scheme, fix an ordinary elliptic curve $E_0$ over $S$, and
let $(\phi:E_0\rightarrow E_r,P,Q; \alpha)$ be an element of $\scrP_{r}^1(E_0/S)$.
By \cite[13.11.2]{KM}, there exist unique nonnegative integers $a,b$
with the property that the cyclic $p^r$-isogeny $\phi$ factors as a purely inseparable cyclic $p^a$-isogeny
followed by an \'etale $p^b$-isogeny (this {\em is} the standard factorization of $\phi$).  
Furthermore, there exists a unique elliptic curve $E$ over $S$ and $S$-isomorphisms
$E_0\simeq E^{(p^b)}$ and $E_r\simeq E^{(p^a)}$ such that the cyclic $p^r$ isogeny $\phi$ is: 
\begin{equation*}
	\xymatrix{
		{E_0\simeq E^{(p^b)}} \ar[r]^-{F^a} & {E^{(p^r)}} \ar[r]^-{V^b} & {E^{(p^a)} \simeq E_r}
		}
\end{equation*}
and $P\in E^{(p^b)}(S)$ (respectively $Q\in E^{(p^a)}$) is an Igusa structure of level $p^b$ (respectively $p^a$) on $E$ over $S$.
When $a\ge b$ there is a unique unit $u\in (\Z/p^{b}\Z)^{\times}$ such that $V^{a-b}(Q)=uP$ in $E^{(p^b)}(S)$ and when
$b\ge a$ there is a unique unit $u\in (\Z/p^a\Z)^{\times}$ such that $uV^{b-a}(P)=Q$ in $E^{(p^a)}(S)$.  
Thus, for $a\ge b$ (respectively $b\ge a$) and fixed $u$, the data $(E,Q;p^{-b}V^b\circ\alpha)$ 
(respectively $(E,P;p^{-b}V^b\circ\alpha)$) 
gives an $S$-point of the $(a,b,u)$-component
$\Ig_{\max(a,b)}$.  
Conversely, suppose given $(a,b,u)$ and an $S$-valued point of $\Ig_{\max(a,b)}$ which is neither a cusp nor a supersingular point (in the sense that it corresponds to an ordinary elliptic curve with extra structure).
If $a\ge b$ and $(E,Q;\alpha)$ is the given $S$-point of $\Ig_{a}$ then we set
$P:=u^{-1}V^{a-b}(Q)$, while if $b\ge a$ and $(E,P;\alpha)$ is the given $S$-point of $\Ig_b$ then we set $Q:=uV^{b-a}P$.  Due to \cite[13.11.3]{KM}, the data
\begin{equation*}
	(\xymatrix{
		{E^{(p^b)}} \ar[r]^-{F^a} & {E^{(p^r)}} \ar[r]^-{V^b} & {E^{(p^a)}, P,Q; F^b\circ\alpha} 
	})
\end{equation*}
gives an $S$-point of $\M(\scrP_r^{1})$.  These constructions are visibly inverse to each other.

\begin{remark}\label{rEvenChoice}
	When $r$ is even and $a=b=r/2$, there is a choice to be made as to how one identifies the 
	$(r/2,r/2,u)$-component of $\o{\X}_r$ with $\Ig_{r/2}$: if
	$(\phi:E_0\rightarrow E_r,P,Q;\alpha)$ is an element of  $\scrP_r^1(E_0/S)$
	which corresponds to a point on the $(r/2,r/2,u)$-component, then for $E$ with 
	$E_0\simeq E^{(p^{r/2})}\simeq E_r$,
	{\em both} $(E,P; p^{-r/2}V^{r/2}\circ\alpha)$ and $(E,Q;p^{-r/2}V^{r/2}\circ\alpha)$ are 
	$S$-points of $\Ig_{p^{r/2}}$. 
	Since $uP=Q$,
	the corresponding closed immersions
	$\Ig_{r/2}\hookrightarrow \o{\X}_r$ 	
	are twists of each other by the automorphism $\langle u\rangle$ of the source.
	We will {\em consistently choose} $(E,Q;p^{-r/2}V^{r/2}\circ\alpha)$ to identify the $(r/2,r/2,u)$-component
	of $\o{\X}_r$ with $\Ig_{r/2}$.  
\end{remark} 
  
\begin{remark}\label{MWGood}
	 As in \cite[pg.~236]{MW-Hida}, we will refer to $I_r^{\infty}:=I_{(r,0,1)}$ and $I_r^0:=I_{(0,r,1)}$
  	as the two ``good" components of $\o{\X}_r$.  
	The $\Q_p$-rational cusp $\infty$ of $X_r$ induces a section
	of $\X_r\rightarrow T_r$ which meets $I_r^{\infty}$, while
	the section
	induced by the $K_r'$-rational cusp $0$ meets $I_r^0$.
	It is precisely these irreducible components of 
	$\o{\X}_r$ which contribute to the ``ordinary" part of cohomology. 
	We note that $I_r^{\infty}$ corresponds to the image of $\Ig_r$ under the map $i_1$
	of \cite[pg. 236]{MW-Hida}, and corresponds to the component of $\o{\X}_r$ denoted by $C_{\infty}$
	in \cite[pg. 343]{Tilouine}, by $C_r^{\infty}$ in \cite[pg. 231]{Saby}
	and, for $r=1$, by $I$ in \cite[\S 7]{tameness}.  
\end{remark}

By base change, the degeneracy mappings (\ref{rdegen}) induces morphisms 
$\o{\pr},\o{\ps}:\o{\X}_{r+1}\rightrightarrows \o{\X}_r$
of curves over $\F_p$ which admit the following descriptions on irreducible components:

\begin{proposition}\label{pr1desc}
	Let $a,b$ be nonnegative integers with $a+b=r+1$ and $u\in (\Z/p^{\min(a,b)}\Z)^{\times}$.
	The restriction of the map $\o{\ps}: \o{\X}_{r+1}\rightarrow \o{\X}_r$ to the $(a,b,u)$-component
	of $\o{\X}_{r+1}$ is:
	\begin{equation*}
		\begin{cases}
			\xymatrix{
				{\Ig_{a}=I_{(a,b,u)}} \ar[r]^-{ F\circ \pr} & {I_{(a-1,b,u)}=\Ig_{a-1}}
				}
				&:\quad b < a \le r+1 \\
				\xymatrix{
				{\Ig_{b}=I_{(a,b,u)}} \ar[r]^-{\langle u\rangle^{-1}F} & 
				{I_{(a-1,b,u\bmod p^{a-1})}=\Ig_{b}}
				}
				&:\quad a = b = r/2 \\
				\xymatrix{
				{\Ig_{b}=I_{(a,b,u)}} \ar[r]^-{F} & {I_{(a-1,b,u\bmod p^{a-1})}=\Ig_{b}}
				}
				&:\quad a < b < r+1 \\
				\xymatrix{
				{\Ig_{r+1}=I_{(0,r+1,1)}} \ar[r]^-{\langle p\rangle_N\pr} & {I_{(0,r,1)}=\Ig_{r}}
				}
				&:\quad (a,b,u)=(0,r+1,1)
		\end{cases}
	\end{equation*}
	and the restriction of the map $\overline{\pr}: \o{\X}_{r+1}\rightarrow \o{\X}_r$ to the $(a,b,u)$-component
	of $\o{\X}_{r+1}$ is: 
		\begin{equation*}
		\begin{cases}
			\xymatrix{
				{\Ig_{r+1}=I_{(r+1,0,1)}} \ar[r]^-{\pr} & {I_{(r,0,1)}=\Ig_{r}}
				}
				&:\quad (a,b,u)=(r+1,0,1)\\
				\xymatrix{
				{\Ig_{a}=I_{(a,b,u)}} \ar[r]^-{F} & {I_{(a,b-1,u\bmod p^{b-1})}=\Ig_{a}}
				}
				&:\quad b < a+1 \le r+1 \\
					\xymatrix{
				{\Ig_{b}=I_{(a,b,u)}} \ar[r]^-{\langle u\rangle F\circ \pr} & {I_{(a,b-1,u)}=\Ig_{b-1}}
				}
				&:\quad a+1 = b = r/2+1\\
				\xymatrix{
				{\Ig_{b}=I_{(a,b,u)}} \ar[r]^-{F\circ \pr} & {I_{(a,b-1,u)}=\Ig_{b-1}}
				}
				&:\quad a+1 < b \le r+1  \\
		\end{cases}
	\end{equation*}
	Here, for any $\F_p$-scheme $I$, the map $F:I\rightarrow I$ is the absolute Frobenius morphism.
\end{proposition}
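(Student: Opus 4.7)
The plan is to verify each of the listed cases by direct moduli-theoretic computation on the ordinary non-cuspidal locus of each $(a, b, u)$-component of $\o{\X}_{r+1}$. Because every component is geometrically reduced and integral and the target is separated, a morphism is determined by its restriction to any dense open, so the formulas extend automatically to the cusps and the finitely many supersingular points where components cross.

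First I would fix a non-cuspidal $S$-point of the $(a, b, u)$-component, parametrized by $(E, \star; \alpha_0)$ on $\Ig_{\max(a, b)}$ as in the discussion preceding Remark \ref{rEvenChoice}, where $\star = Q$ when $a \ge b$ (settling the case $a = b$ via the convention of Remark \ref{rEvenChoice}) and $\star = P$ when $b > a$. The corresponding $\scrP_{r+1}^{1}$-point is $(\phi, P, Q; \alpha)$, where $\phi$ is the standard factorization $E^{(p^b)} \xrightarrow{F^a} E^{(p^{r+1})} \xrightarrow{V^b} E^{(p^a)}$ and $\alpha = F^b \circ \alpha_0$. Applying (\ref{XrDegen}), the map $\ps$ strips off the first step $\phi_{0,1}$ of the standard factorization and replaces $\alpha$ by $\phi_{0,1} \circ \alpha$; dually, $\pr$ strips off the last step $\phi_{r, r+1}$ and multiplies the $P$-coordinate by $p$. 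When $a \ge 1$ (respectively $b \ge 1$), the stripped step is Frobenius (respectively Verschiebung), the image lies on a component whose base curve is the Frobenius twist $E^{(p)}$ (respectively stays $E$), and the new Igusa data may be recovered from the old via the identity $p = FV = VF$ on ordinary elliptic curves. For instance, on the $\ps$ side in the case $b < a \le r+1$, the new $Q$-coordinate is $pQ = F(VQ)$, which corresponds under base-change by absolute Frobenius to the image of $(E, Q; \alpha_0)$ under $F \circ \pr$; a parallel computation gives $F$ alone in the case $a < b < r+1$, where the new Igusa generator is $F(P)$ rather than $F(VP)$. Analogous dualized computations handle the four cases for $\pr$; the cases $a = 0$ and $b = 0$ are treated by the same strategy, but Verschiebung-stripping preserves the base curve.

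The main obstacle is the middle case $a = b$ (or $a + 1 = b$ on the $\pr$ side), where the identification of the relevant component with $\Ig_{r/2}$ is not canonical and must be consistently fixed via the convention of Remark \ref{rEvenChoice}. The unit $u$ then shows up as a diamond operator twist $\langle u \rangle^{\pm 1}$ in the final formula, reflecting the relation $uP = Q$ in the moduli and the need to switch between the $Q$-coordinate (on the source) and the $P$-coordinate (on the target) Igusa parametrizations; keeping track of the sign and whether the twist is applied before or after $F \circ \pr$ requires care. An auxiliary bookkeeping task is the $\mu_N$-structure: the identity $F V^c \alpha = p V^{c-1} \alpha$ on $N$-torsion, a direct consequence of $FV = [p]$, is needed to verify that each new Igusa-level $\alpha$ agrees with the absolute Frobenius image of the old one, and in particular is responsible for the $\langle p\rangle_N$ twist in the $(0, r+1, 1)$ case of $\ps$, since $\alpha \mapsto V \circ \alpha$ when rewritten in the Igusa parametrization $\alpha_0 = p^{-r} V^r \circ \alpha$ introduces a factor of $p$ on $\mu_N$. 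Once these identifications are established, verification of each of the eight listed subcases reduces to a direct check using the standard factorization of cyclic $p$-isogenies in characteristic $p$ together with the relations $FV = VF = [p]$.
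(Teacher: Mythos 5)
Your overall strategy — a direct moduli-theoretic computation working on the non-cuspidal ordinary locus, translating $\scrP_{r+1}^{1}$-data to Igusa parametrizations via the discussion preceding Remark \ref{rEvenChoice}, applying the definitions (\ref{XrDegen}) of $\pr$ and $\ps$, and tracking the $u$- and $\mu_N$-bookkeeping — is exactly the ``pleasant exercise'' the paper has in mind, and your treatment of the $a=b$ middle case (the $\langle u\rangle^{\pm1}$ twist coming from switching between $P$- and $Q$-parametrizations under the convention of Remark \ref{rEvenChoice}) and of the $\langle p\rangle_N$ twist in the $(0,r+1,1)$ case both reflect a correct understanding of where the nontrivial twists originate.

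However, there is a genuine error in your claim that stripping a Verschiebung ``preserves the base curve,'' i.e.\ that for $\pr$ with $b\ge 1$ the image component has the \emph{same} base curve $E$. It does not. The base curve $E$ is pinned down by requiring $E_0\simeq E^{(p^b)}$ (equivalently $E_{r+1}\simeq E^{(p^a)}$). Since $\pr$ fixes $E_0$ and drops the last Verschiebung step, the new exponent becomes $b'=b-1$, so $(E')^{(p^{b-1})}\simeq E^{(p^b)}$ forces $E'=E^{(p)}$: stripping the last $V$ \emph{does} twist the base curve by Frobenius, exactly as stripping the first $F$ does for $\ps$. This is precisely why the absolute Frobenius $F$ appears in all four $\pr$-formulas in the Proposition except the $(r+1,0,1)$ case (where the stripped step is $F$ and $E_0$ is $E$ itself). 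If you carried out the ``analogous dualized computation'' for $\pr$ under the belief that the base curve stays $E$, you would fail to produce the $F$ that the statement requires. The correct rule is: the base curve is preserved only when $\ps$ strips a $V$ (i.e.\ $a=0$) or $\pr$ strips an $F$ (i.e.\ $b=0$); in all other cases it twists to $E^{(p)}$. Your parenthetical remark that ``Verschiebung-stripping preserves the base curve'' in the edge cases $a=0,\,b=0$ is also misphrased — for $\pr$ with $b=0$ the stripped step is $F$, not $V$ — though the conclusion there happens to be correct.
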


\begin{proof}
	Using the moduli-theoretic definitions (\ref{XrDegen}) of the degeneracy maps,
	the proof is a pleasant exercise in tracing through 
	the functorial correspondence between the points of $\o{\X}_r$
	and points of $\Ig_{(a,b,u)}$.  We leave the details to the reader.
\end{proof}

We likewise have a description of the automorphism 
of $\o{\X}_r$ induced via base change by
the geometric inertia action\footnote{
Since $\Gamma$ acts trivially on $\F_p$,
for each $\gamma\in \Gamma$ the base change of the $R_r$-morphism $\gamma: \X_r\rightarrow (\X_r)_{\gamma}$
along the map induced by the canonical surjection $R_r\twoheadrightarrow \F_p$
is an $\F_p$-morphism $\o{\gamma}:\o{\X}_r\rightarrow (\o{\X}_r)_{\gamma}\simeq \o{\X}_r$.
} 
(\ref{gammamapsModuli}) of $\Gamma$:

\begin{proposition}\label{AtkinInertiaCharp}
	Let $a,b$ be nonnegative integers with $a+b=r$ and $u\in (\Z/p^{\min(a,b)}\Z)^{\times}$.
		For $\gamma\in \Gamma$, the restriction of $\o{\gamma}:\o{\X}_{r}\rightarrow \o{\X}_r$ to 
		the $(a,b,u)$-component of $\o{\X}_{r}$ is:
		\begin{equation*}
			\begin{cases}
				\xymatrix{
				{\Ig_{a}=I_{(a,b,u)}} \ar[r]^-{\id} & 
				{I_{(a,b,\chi(\gamma)u)}=\Ig_{a}}
				}
				&:\quad b\le a \le r\\
				\xymatrix{
				{\Ig_{b}=I_{(a,b,u)}} \ar[r]^-{\langle \chi(\gamma)\rangle^{-1}} & {I_{(a,b,\chi(\gamma)u)}=\Ig_{b}}
				}
				&:\quad a < b \le r \\
		\end{cases}
		\end{equation*}
		\label{InertiaCharp}
\end{proposition}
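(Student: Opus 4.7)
The proof plan is to trace through the moduli-theoretic definitions exactly as in the proof of Proposition \ref{pr1desc}. Since $\gamma \in \Gamma$ acts trivially on the base $\F_p$, the base change to the special fiber $\o{\X}_r$ of the $R_r$-morphism $\gamma:\X_r\rightarrow (\X_r)_\gamma$ given by (\ref{gammamapsModuli}) is the $\F_p$-morphism $\o{\gamma}:\o{\X}_r\rightarrow\o{\X}_r$ which, on the underlying moduli problem $\scrP_r^1$ on $(\Ell/\F_p)$, is
\begin{equation*}
\o{\gamma}(\phi:E_0\rightarrow E_r,P,Q;\alpha) = (\phi,\chi(\gamma)^{-1}P, Q;\alpha),
\end{equation*}
as all ``base change along $\gamma$" subscripts are trivial over $\F_p$.

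Since the isogeny $\phi$ is untouched, its standard factorization is unchanged and hence the pair $(a,b)$ of nonnegative integers determined by $\phi$ is preserved by $\o{\gamma}$. First I would verify that $\o{\gamma}$ carries the $(a,b,u)$-component to the $(a,b,\chi(\gamma)u)$-component: in the case $a\ge b$, the defining relation $V^{a-b}(Q) = uP$ in $E^{(p^b)}(S)$ transforms under $\o{\gamma}$ to $V^{a-b}(Q) = u\chi(\gamma)\cdot(\chi(\gamma)^{-1}P)$, so the new unit is $u' = \chi(\gamma)u$; the case $b \ge a$ is analogous, using $Q = uV^{b-a}(P)$.

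The heart of the calculation is to identify the induced map on $\Ig_{\max(a,b)}$ using the correspondence recalled after Proposition \ref{redXr}. For $b < a \le r$, the $(a,b,u)$-component point corresponds to the Igusa structure $(E,Q;\, p^{-b}V^b\circ\alpha) \in \Ig_a(S)$; under $\o{\gamma}$, neither $E$, $Q$, nor $\alpha$ changes, so the induced map $\Ig_a \to \Ig_a$ is the identity. For $a < b \le r$, the corresponding point is $(E,P;\,p^{-b}V^b\circ\alpha) \in \Ig_b(S)$, which under $\o{\gamma}$ is carried to $(E,\chi(\gamma)^{-1}P;\,p^{-b}V^b\circ\alpha)$; by the moduli-theoretic definition of the diamond operators on $\Ig_b$, this is precisely the image of the original point under $\langle\chi(\gamma)\rangle^{-1}$. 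In the boundary case $a=b=r/2$ the convention of Remark \ref{rEvenChoice} selects $(E,Q;\,p^{-r/2}V^{r/2}\circ\alpha)$ as the parametrization, so the map is again the identity, matching the table's ``$b\le a$'' branch.

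There is no substantive obstacle: the argument is a pure book-keeping exercise in the dictionary between $\scrP_r^1$-structures and Igusa structures set up in the proof of Proposition \ref{redXr}. The only minor subtlety worth flagging is the asymmetric convention on the diagonal $a=b=r/2$ component—one must use Remark \ref{rEvenChoice} consistently to obtain the identity map rather than $\langle\chi(\gamma)\rangle^{-1}$, in agreement with the statement.
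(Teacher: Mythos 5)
Your argument is correct and is exactly the intended one; the paper states Proposition \ref{AtkinInertiaCharp} without proof (leaving it as the same kind of book-keeping exercise as Proposition \ref{pr1desc}), and your computation fills that in faithfully. You correctly observe that $\o\gamma$ acts on the underlying $\scrP_r^1$-structure by $(\phi,P,Q;\alpha)\mapsto(\phi,\chi(\gamma)^{-1}P,Q;\alpha)$, that the unit $u$ transforms to $\chi(\gamma)u$ in both cases (via $V^{a-b}(Q)=uP$ for $a\ge b$ and $uV^{b-a}(P)=Q$ for $b\ge a$), and that under the dictionary after Proposition \ref{redXr} the induced map on $\Ig_{\max(a,b)}$ is the identity when the parametrizing datum is $Q$ (i.e.\ $b\le a$, using Remark \ref{rEvenChoice} on the diagonal) and is $\langle\chi(\gamma)\rangle^{-1}$ when the parametrizing datum is $P$ (i.e.\ $a<b$), since the diamond action on $\I_r$ is $(u,v)\cdot(E,P;\alpha)=(E,uP;v\circ\alpha)$.
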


Following \cite[\S7--8]{Ulmer}, we now define a correspondence $\pi_1,\pi_2:\Y_r\rightrightarrows\X_r$ on $\X_r$ over $R_r$ which naturally 
extends the correspondence on $X_r$ giving the Hecke operator $U_p$ (see below for a brief discussion 
of correspondences).

\begin{definition}
	Let $r$ be a nonnegative integer and $R$ a ring containing a fixed choice $\zeta$ of 
	primitive $p^r$-th root of unity
	in which $N$ is invertible.
	The moduli problem 
	$\scrQ_{r}^{\zeta}:=([\Gamma_0(p^{r+1}); r,r]^{\zeta\can}; [\mu_N])$ on $(\Ell/R)$ 
	assigns to $E/S$ the set of quadruples $(\phi:E\rightarrow E',P,Q;\alpha)$
	where:
	\begin{enumerate}
		\item $\phi$ is a cyclic $p^{r+1}$-isogeny
		with standard factorization 
		\begin{equation*}
		\xymatrix{
			E=:E_0 \ar[r]^-{\phi_{0,1}} & E_1 \ar[r] \cdots & E_{r} \ar[r]^-{\phi_{r,r+1}} & E_{r+1}:=E'
		}
		\end{equation*}
		
		\item $P\in E_1(S)$ and $Q\in E_r(S)$ are generators of 
		$\ker\phi_{1,r+1}$ and $\ker \phi_{r,0}$, respectively, and satisfy
		\begin{equation*}
			\langle P, \phi_{r,r+1}(Q) \rangle_{\phi_{1,r+1}}=\langle \phi_{1,0}(P),Q \rangle_{\phi_{0,r}}=\zeta.
		\end{equation*}
		
		\item  $\alpha:\mu_N\hookrightarrow E[N]$ is a closed immersion of $S$-group schemes.
	\end{enumerate}	
\end{definition}

\begin{proposition}\label{YrRepresentability}
	If $N \ge 4$, then the moduli problem $\scrQ_{r}^{\zeta}$ is represented
	by a regular scheme $\M(\scrQ_r^{\zeta})$ that is flat of pure relative dimension
	$1$ over $\Spec(R)$.  This scheme admits a canonical
	compactification $\o{\M}(\scrP_r^{\zeta})$, which is regular and proper flat of pure
	relative dimension $1$ over $\Spec(R)$.
\end{proposition}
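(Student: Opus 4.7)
The plan is to follow the proof of Proposition \ref{XrRepresentability} essentially \emph{mutatis mutandis}. As before, our hypothesis $N\ge 4$ together with the invertibility of $N$ in $R$ ensures via \cite[2.7.4, 3.6.0, 4.7.1, 5.1.1]{KM} that the moduli problem $[\mu_N]$ on $(\Ell/R)$ is representable over $\Spec(R)$ by an affine scheme that is finite \'etale over $(\Ell/R)$. By \cite[4.3.4]{KM}, it then suffices to verify that $[\Gamma_0(p^{r+1});r,r]^{\zeta\can}$ on $(\Ell/R)$ is relatively representable by a regular scheme of pure relative dimension $1$ over $\Spec(R)$.

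The cleanest way to do this is to factor the problem through a forgetful morphism to the moduli problem $[\bal\ \Gamma_1(p^r)]^{\zeta\can}$ already handled in Proposition \ref{XrRepresentability}. Concretely, given data $(\phi:E_0\to E_{r+1},P,Q;\alpha)$ with standard factorization $E_0 \to E_1 \to \cdots \to E_{r+1}$, one extracts the $[\bal\ \Gamma_1(p^r)]^{\zeta\can}$-structure $(\phi_{0,r}: E_0 \to E_r, \phi_{1,0}(P), Q)$; note that $\phi_{1,0}(P)$ generates $\ker \phi_{0,r}$ because $\phi_{1,0}$ carries the cyclic group $\ker\phi_{1,r+1}$ of order $p^r$ isomorphically onto $\ker \phi_{0,r}$, and the pairing constraint in the definition of $\scrQ_r^{\zeta}$ descends precisely to the $\zeta\can$-condition on $[\bal\ \Gamma_1(p^r)]^{\zeta\can}$. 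The fiber of this forgetful morphism parametrizes the additional datum of the ``first step'' cyclic $p$-isogeny $\phi_{0,1}: E_0 \to E_1$ completing the standard factorization, which is equivalent to a $[\Gamma_0(p)]$-structure on $E_0$ (in fact forced to be the unique one compatible with $\phi$ away from the supersingular locus). By \cite[5.1.1, 6.6.1]{KM} this auxiliary structure is relatively representable and finite flat on $(\Ell/R)$, and composing with the known representability and regularity of $[\bal\ \Gamma_1(p^r)]^{\zeta\can}$ from \cite[7.6.1(2), 9.1.7]{KM} yields the desired conclusion.

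The main obstacle is verifying regularity at points of characteristic $p$ lying over the supersingular locus, where both the $[\bal\ \Gamma_1(p^r)]^{\zeta\can}$-factor and the additional $[\Gamma_0(p)]$-factor can degenerate simultaneously. Here one must either perform an explicit computation of complete local rings in the style of \cite[Chapter 13]{KM}, or invoke the general composition principle that stacking a relatively representable and regular moduli problem on top of another yields a regular total moduli problem, which for $\Gamma_0$-type structures on Barsotti--Tate groups is essentially \cite[Theorem 5.1.1]{KM} combined with \cite[9.1.7]{KM}. Either way, the argument is parallel to Ulmer's treatment of the closely related $U_p$-correspondence in \cite[\S7--8]{Ulmer}, from which one may harvest the needed regularity directly.

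Finally, for the compactification we apply the general cusp-filling machinery of \cite[\S8]{KM} to $\scrQ_r^{\zeta}$: working with generalized elliptic curves in the sense of Deligne--Rapoport, the defining data of $\scrQ_r^{\zeta}$ extends naturally, and KM's formalism produces the canonical proper flat model $\o{\M}(\scrQ_r^{\zeta})$ whose non-cuspidal locus recovers $\M(\scrQ_r^{\zeta})$. Regularity at the cusps is a standard local computation using Tate curves, precisely analogous to (and no harder than) the corresponding check for $\o{\M}(\scrP_r^{\zeta})$.
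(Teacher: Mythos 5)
Your overall reduction — representing $[\mu_N]$ finite \'etale, invoking \cite[4.3.4]{KM} to reduce to relative representability and regularity of $[\Gamma_0(p^{r+1});r,r]^{\zeta\can}$, and then the cusp-filling argument of \cite[\S8]{KM} — matches the paper. But the paper then simply cites \cite[7.6.1, \S7.9]{KM}, which treat the moduli problem $[\Gamma_0(p^{r+1});r,r]^{\zeta\can}$ directly; your attempt to reconstruct this by ``stacking'' a $\Gamma_0(p)$-structure on top of $[\bal\,\Gamma_1(p^r)]^{\zeta\can}$ has real problems.

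First, the forgetful map is misdescribed. You propose extracting $(\phi_{0,r}:E_0\to E_r,\,\phi_{1,0}(P),\,Q)$ and claim the fiber parametrizes ``the first step cyclic $p$-isogeny $\phi_{0,1}$.'' But $\phi_{0,1}$ is precisely the first step of the standard factorization of $\phi_{0,r}$, hence already determined by $\phi_{0,r}$: it carries no new information. What is actually lost in passing from $\phi$ to $\phi_{0,r}$ is the \emph{last} step $\phi_{r,r+1}:E_r\to E_{r+1}$; moreover, one must additionally lift the point $\phi_{1,0}(P)\in E_0$ back to a generator $P\in E_1$ of $\ker\phi_{1,r+1}$, and this lift is not unique ($\phi_{1,0}$ has degree $p$). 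So the fiber of $\pi_2$ is not a $[\Gamma_0(p)]$-structure on $E_0$, and $\Y_r\to\X_r$ is not a relative moduli problem of the simple kind your argument needs. Second, even with the correct description, the ``general composition principle that stacking a relatively representable and regular moduli problem on top of another yields a regular total moduli problem'' is not a theorem of \cite{KM}. Regularity is not preserved under finite flat covers in general, and the regularity theorems of \cite{KM} for $\Gamma_0$-type problems rest on the explicit local analysis of \cite[\S6.6, \S7.4--7.6]{KM} and the crossings theorem, not on a composition formalism; \cite[5.1.1]{KM} and \cite[9.1.7]{KM} do not combine to give what you assert. The paper's citation of \cite[7.6.1]{KM} is the precise theorem delivering relative representability, finite flatness, and regularity for $[\Gamma_0(p^{r+1});r,r]^{\zeta\can}$, and there is no shortcut around it via the degeneracy map $\pi_2$.
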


\begin{proof}
	As in the proof of Proposition \ref{XrRepresentability}, it suffices to prove that 
	$[\Gamma_0(p^{r+1}); r,r]^{\zeta\can}$
	is relatively representable and regular, which follows from \cite[7.6.1]{KM};
	see also \S7.9 of {\em op.~cit.}
\end{proof}

\begin{definition}\label{YrDef}
	We set $\Y_r:=\o{\M}(\scrQ_r^{\varepsilon^{(r)}})$, viewed as a scheme over $T_r=\Spec(R_r)$.
\end{definition}

The scheme $\Y_r$ is equipped with an action of the diamond operators $\Z_p^{\times}\times (\Z/N\Z)^{\times}$,
as well as a ``geometric inertia" action of $\Gamma$ given moduli-theoretically
exactly as in (\ref{balcanaction}) and  (\ref{gammamapsModuli}).
The ``semilinear'' action of $\Gamma$ is equivalent to a descent datum---necessarily effective---on the 
generic fiber of $\Y_r$, and we denote by $Y_r$ the resulting unique $\Q_p$-descent
of $(\Y_r)_{K_r}$.  

\begin{remark}\label{Yrgeniden}
 	We may identify $Y_r$ with the base change to $\Q_p$ of the modular curve $X_1(Np^r;Np^{r-1})$
	over $\Q$ classifying triples $(E_1,\alpha,C)$ where $E_1$ is a generalized elliptic curve,
	$\alpha:\mu_{Np^r}\hookrightarrow E^{\sm}_1[Np^r]$ is an embedding of group schemes
	whose image meets each irreducible component in every geometric fiber,
	and $C$ is a locally free subgroup scheme of rank $p$ in $E^{\sm}_1[p]$ with the property that
	$C\cap\im\alpha = 0$.  
	Note that $X_1(Np^r; Np^{r-1})$ is the canonical model
	over $\Q$ with rational cusp $i\infty$ of the modular curve
	$\Gamma_{r+1}^r\backslash \h^*$, for
	$\Gamma_{r+1}^r:=\Gamma_1(p^r)\cap \Gamma_0(p^{r+1})$.	
\end{remark}


There is a canonical morphism of curves $\pi:\X_{r+1}\rightarrow \Y_r$ over $T_{r+1}\rightarrow T_r$
induced by the morphism 
\begin{equation}
\begin{gathered}
			\scrP_{r+1}^{\varepsilon^{(r)}}\rightarrow \scrQ_r^{\varepsilon^{(r)}}\quad\text{given by}
		\quad{\pi(\phi:E\rightarrow E',P,Q;\alpha)} := {(\phi:E\rightarrow E',\phi_{0,1}(P),\phi_{r+1,r}(Q); \alpha)}.
\end{gathered}\label{XtoY}	
\end{equation}
One checks that $\pi$ is equivariant with respect to the action of the diamond operators
and of $\Gamma$, and so descends to a map $\pi:Y_r\rightarrow X_r$ of smooth curves over $\Q_p$.
It is likewise straightforward to check that the two projection maps $\ps,\pr: \X_{r+1}\rightrightarrows \X_r$ 
of (\ref{XrDegen}) factor through $\pi$ via unique maps of $T_r$-schemes 
$\pi_1,\pi_2: \Y_{r}\rightrightarrows \X_r$, given as morphisms of underlying
moduli problems on $(\Ell/R_r)$
\begin{equation}
\begin{aligned}
		&{\pi_1(\phi:E_0\rightarrow E_{r+1},P,Q;\alpha)}:=
		{(E_1\xrightarrow{\phi_{1,r+1}} E_{r+1},P,\phi_{r,r+1}(Q);\ \phi_{0,1}\circ\alpha)}\\
		&{\pi_2(\phi:E_0\rightarrow E_{r+1},P,Q;\alpha)}:=
		{(E_0\xrightarrow{\phi_{0,r}} E_{r},\phi_{1,0}(P),Q; \alpha)}
\end{aligned}\label{Upcorr}
\end{equation}
That these morphisms are well defined and
that one has $\pr=\pi\circ \pi_2$ and $\ps=\pi\circ \pi_1$
is easily verified (see \cite[\S7]{Ulmer} and compare to
\cite[\S11.3.3]{KM}).  They are moreover 
finite of generic degree $p$, equivariant for the diamond operators,
and $\Gamma$-compatible; in particular, $\pi_1,\pi_2$ descend to finite 
maps $\pi_1,\pi_2:Y_r\rightrightarrows X_r$
over $\Q_p$.  
Via our identifications in Remarks \ref{genfiberrem} and \ref{Yrgeniden}, the map $\pi_1$
corresponds to the usual ``forget $C$" map, while $\pi_2$ corresponds to ``quotient by $C$".
We stress that the ``standard" degeneracy map $\rho:X_{r+1}\rightarrow X_r$ factors 
through $\pi_2$ (and not $\pi_1$).

\begin{proposition}\label{redYr}
	The scheme $\o{\Y}_r:=\Y_r\times_{T_r} \Spec(\F_p)$ is the disjoint union, with crossings 
	at the supersingular points, of the following proper, smooth $\F_p$-curves:
	for each pair of nonnegative integers $a,b$ with $a+b=r+1$ and for each $u\in (\Z/p^{\min(a,b)\Z})^{\times}$,
	one copy of
	\begin{equation*}
		\begin{cases}
			\Ig_{\max(a,b)} &\text{if}\ ab\neq 0\\
			\Ig_{r} &\text{if}\ (a,b)=(r+1,0)\ \text{or}\ (0,r+1)
		\end{cases}
	\end{equation*}
\end{proposition}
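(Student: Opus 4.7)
The plan is to mimic the analysis of $\o{\X}_r$ carried out in \cite[13.11.2--13.11.4]{KM}, now applied to the moduli problem $\scrQ_r^{\varepsilon^{(r)}}$. Since the chosen $p^r$-th root of unity $\varepsilon^{(r)}\in R_r$ reduces to $1\in \F_p$, the moduli problem representing $\o{\Y}_r$ in characteristic $p$ is $([\Gamma_0(p^{r+1}); r,r]^{1\can}; [\mu_N])$ on $(\Ell/\F_p)$.  By Proposition \ref{YrRepresentability}, $\Y_r$ is regular and proper flat over $T_r$ of pure relative dimension $1$, hence $\o{\Y}_r$ is reduced of pure dimension one; it therefore suffices to classify the generic points of $\o{\Y}_r$ (which lie above the ordinary locus) and understand how the resulting irreducible components meet at the finitely many supersingular points.

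First I would analyze an $S$-valued point $(\phi, P, Q; \alpha)$ of $\o{\Y}_r$ with $E_0/S$ ordinary.  By \cite[13.11.2]{KM}, any cyclic $p^{r+1}$-isogeny $\phi:E_0\rightarrow E_{r+1}$ in characteristic $p$ with ordinary source admits a unique standard factorization $\phi = V^b\circ F^a$ with $a+b=r+1$ and $a,b\ge 0$, and there exists a unique ordinary elliptic curve $E/S$ together with isomorphisms $E_0\simeq E^{(p^b)}$, $E_{r+1}\simeq E^{(p^a)}$ identifying $\phi$ with this composite.  The intermediate curves are then $E_i = E^{(p^{b+i})}$ for $0\le i\le a$ and $E_i = E^{(p^{a+(r+1-i)})}$ for $a\le i\le r+1$; in particular $E_1 = E^{(p^{b+1})}$ when $a\ge 1$ and $E_1 = E^{(p^r)}$ when $a=0$, with an analogous statement for $E_r$.

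Next, for each signature $(a,b)$ with $a+b=r+1$ I would classify the data $(P,Q)$ compatible with the constraints of $\scrQ_r^{\varepsilon^{(r)}}$. In the extreme case $(a,b)=(r+1,0)$, the isogeny $\phi_{1, r+1}= F^r: E^{(p)}\to E^{(p^{r+1})}$ is purely inseparable, so $\ker\phi_{1,r+1}$ is connected of order $p^r$; a Drinfeld generator $P$ of such a group scheme is rigid (adds no continuous moduli), while $Q\in E^{(p^r)}(S)$ generates the \'etale kernel $\ker(V^r: E^{(p^r)}\to E)$, and hence is an Igusa $p^r$-structure on $E$ in the sense of Definition \ref{IgusaDef}, yielding one irreducible component isomorphic to $\Ig_r$; the case $(a,b)=(0,r+1)$ is handled by the dual argument with the roles of $P$ and $Q$ reversed.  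In the mixed case $1\le a,b\le r$, both $\ker\phi_{1,r+1}$ and $\ker\phi_{r,0}$ decompose on the ordinary locus as extensions of an \'etale part by a connected part (of \'etale order $p^b$ and connected order $p^{a-1}$ for the former, of orders $p^a$ and $p^{b-1}$ for the latter).  Following the theory of Drinfeld structures for $p$-divisible groups in \cite[Chapters 1 and 5]{KM}, $P$ is determined by its image in the \'etale quotient, which corresponds to an Igusa $p^b$-structure on $E^{(p)}$, and similarly $Q$ corresponds to an Igusa $p^a$-structure on $E^{(p)}$.  The pairing condition in the definition of $\scrQ_r^{\varepsilon^{(r)}}$ reduces modulo $p$ to a compatibility between the \'etale parts of $P$ and $Q$, forcing these two Igusa structures to differ under iterated Verschiebung by a unit $u\in (\Z/p^{\min(a,b)}\Z)^\times$; each choice of $u$ yields one irreducible component isomorphic to $\Ig_{\max(a,b)}$.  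Finally, I would treat the cuspidal boundary by the Tate-curve computations of \cite[Chapter 10]{KM} and verify that distinct components meet transversally only at the supersingular points, exactly as for $\o{\X}_r$.

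The main obstacle is the Drinfeld-structure analysis in the mixed case: carefully tracking how a generator of a cyclic group scheme whose connected-\'etale sequence is nontrivial decomposes on the ordinary locus, and then verifying that the $\zeta$-pairing condition built into $\scrQ_r^{\varepsilon^{(r)}}$ reduces modulo $p$ to precisely the Verschiebung-compatibility relation governed by the unit $u\in (\Z/p^{\min(a,b)}\Z)^\times$.  Once this is settled, a dimension count and comparison with the generic degrees of $\pi:\X_{r+1}\to\Y_r$ and $\pi_1,\pi_2:\Y_r\to\X_r$ provides a consistency check that the enumeration of irreducible components is complete.
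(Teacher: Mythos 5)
Your overall plan---base change the moduli problem to $\scrQ_r^{1\can}$ over $\F_p$, decompose the ordinary locus by signature $(a,b)$ via the standard factorization $\phi = V^b\circ F^a$, and identify the resulting strata with Igusa curves via the \'etale parts of the Drinfeld generators $P$ and $Q$---is the same route the paper takes (which is itself essentially a pointer to \cite[13.11.2--13.11.4]{KM} together with the explicit correspondence spelled out immediately after the proposition statement, and to \cite[Proposition 8.2]{Ulmer}). Two points in your write-up, however, are off and would need repair before the sketch becomes a proof.

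First, the inference ``$\Y_r$ is regular and proper flat of pure relative dimension one, \emph{hence} $\o{\Y}_r$ is reduced'' is not valid. A Cartier divisor in a regular $2$-dimensional scheme need not be reduced (consider $\Spec\Z_p[x,y]/(xy^2-p)$, which is regular with non-reduced special fiber). Reducedness of $\o{\Y}_r$ is part of the content of the Katz--Mazur crossings theorem and has to be extracted from the same Drinfeld-structure analysis you invoke later, not deduced from regularity alone. Regularity does give you $S_1$ (no embedded points), but you still must check that $\pi_r\notin\mathfrak{m}_\eta^2$ at each generic point $\eta$ of the special fiber, which is exactly where the multiplicity-one statement of \cite[13.11]{KM} enters.

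Second, your reading of the role of the $\zeta$-pairing condition is backwards. The unit $u\in(\Z/p^{\min(a,b)}\Z)^\times$ is not ``forced'' by the pairing condition; it is a structural invariant of the ordinary data. On the ordinary locus with fixed signature $(a,b)$, both $P$ and $Q$ produce Igusa structures on $E^{(p)}$ (of levels $p^b$ and $p^a$), and Igusa structures of a fixed level on a fixed ordinary curve over a connected reduced base form a torsor under the relevant $(\Z/p^m\Z)^\times$. Applying $V^{|a-b|}$ to the higher-level one and comparing with the other therefore \emph{automatically} produces a locally constant unit $u$; this $u$ is the discrete label that separates connected components, and each choice gives one copy of $\Ig_{\max(a,b)}$. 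The pairing condition $\langle P, \phi_{r,r+1}(Q)\rangle_{\phi_{1,r+1}}=1$ is, over any \emph{reduced} $\F_p$-scheme $S$, vacuous, since $\mu_{p^r}(S)=\{1\}$; it constrains only the infinitesimal (non-reduced) structure and hence plays no role in enumerating the irreducible components. So the ``main obstacle'' you identify at the end is not actually an obstacle: one should not try to derive the Verschiebung-compatibility from the pairing, and doing so would lead you astray.

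With those two corrections the remainder of your sketch (the extremal cases $(r+1,0)$ and $(0,r+1)$, the reduction of the mixed case to Igusa structures on $E^{(p)}$, and the handling of cusps and supersingular crossings via \cite[Chapters 10 and 13]{KM}) matches the paper's intended argument, and the degree-count against $\pi,\pi_1,\pi_2$ is a reasonable consistency check.
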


We will write $J_{(a,b,u)}$ for the irreducible component of $\o{\Y}_r$
indexed by $(a,b,u)$, and refer to it as the $(a,b,u)$-component; again, 
$J_{(a,b,u)}$ is independent of $u$.
The proof of Proposition \ref{redYr} is a straightforward adaptation of the arguments of 
\cite[13.11.2--13.11.4]{KM} (see also \cite[Proposition 8.2]{Ulmer}).  
We recall the correspondence between non-cuspidal points of the $(a,b,u)$-component and 
$[\Gamma_0(p^{r+1}); r,r]^{1\can}$-structures on elliptic curves.

Fix an ordinary elliptic curve  $E_0$ over an $\F_p$-scheme $S$, 
and let $(\phi:E_0\rightarrow E_{r+1},P,Q; \alpha)$ be an element of 
$\scrQ_r^{1}(E_0/S)$.  As before, there exist unique 
nonnegative integers $a,b$ with $a+b=r+1$ 
and a unique elliptic curve $E/S$
with the property that the cyclic $p^{r+1}$-isogeny $\phi$ factors as
\begin{equation*}
	\xymatrix{
		{E_0\simeq E^{(p^b)}} \ar[r]^-{F^a} & {E^{(p^{r+1})}} \ar[r]^-{V^b} & {E^{(p^a)} \simeq E_{r+1}}
		}.
\end{equation*}
First suppose that $ab\neq 0$. Then the point $P\in E^{(p^{b+1})}(S)$ (respectively $Q\in E^{(p^{a+1})}(S)$)
is an $[\Ig(p^b)]$ (respectively $[\Ig(p^a)]$) structure on $E^{(p)}$ over $S$.  
If $a\ge b$, there is a unit $u\in (\Z/p^b\Z)^{\times}$ such that $V^{a-b}(Q)=uP$ in $E^{(p^{b+1})}(S)$,
while if $a\le b$ then there is a unique $u\in (\Z/p^a\Z)^{\times}$ with $uV^{b-a}(P)=Q$ in $E^{(p^{a+1})}(S)$.
For $a\ge b$ (respectively $a < b$), and fixed $u$, the data $(E^{(p)}, Q ; p^{1-b}V^{b-1}\circ\alpha)$
(respectively $(E^{(p)}, P; p^{1-b}V^{b-1}\circ \alpha)$) gives an $S$-point of the $(a,b,u)$-component
$\Ig_{\max(a,b)}$.  If $b=0$ (respectively $a=0$), then $Q\in E^{(p^r)}(S)$ (respectively $P\in E^{(p^r)}(S)$)
is an $[\Ig(p^r)]$-structure on $E=E_0$ (respectively $E = E_{r+1}$).
In these extremal cases, the data $(E,Q;\alpha)$
(respectively $(E,P; p^{-r-1}V^{r+1}\circ\alpha)$) 
gives an $S$-point of the $(r+1,0,1)$-component (respectively $(0,r+1,1)$-component)
$Ig_r$.

Conversely, suppose given $(a,b,u)$ and an $S$-point of $\Ig_{\max(a,b)}$ which is neither
cuspidal nor supersingular.  If $r+1 > a\ge b$ and $(E,Q;\alpha)$ is the given point of $\Ig_a$,
then we set $P:=u^{-1}V^{a-b}(Q)\in E^{(p^b)}(S)$, while if $r+1 > b\ge a$ and $(E,P;\alpha)$ is the given point of
$\Ig_b$, we set $Q:=uV^{b-a}P\in E^{(p^{a})}(S)$.  Then 
\begin{equation*}
	(\xymatrix{
		{E^{(p^{b-1})}} \ar[r]^-{F} & {E^{(p^b)}} \ar[r]^-{F^{a-1}} & {E^{(p^r)}} \ar[r]^-{V^{b-1}} & 
		{E^{(p^a)}} \ar[r]^-{V} & {E^{(p^{a-1})}}, P,Q; F^{b-1}\circ\alpha 
	})
\end{equation*}
is an $S$-point of $\M(\scrQ_r^{1})$.
If $b=0$ and $(E,Q,\alpha)$ is an $S$-point of $\Ig_r$, then we let $P\in E^{(p)}(S)$ be the identity 
section and we obtain an $S$-point $(F^{r+1}:E\rightarrow E^{(p^{r+1})},P,Q;\alpha)$
of $\M(\scrQ_r^1)$.
If $a=0$ and $(E,P,\alpha)$ is an $S$-point of $\Ig_r$, then we let $Q\in E^{(p)}(S)$
be the identity section and we obtain an $S$-point $(V^{r+1}:E^{(p^{r+1})}\rightarrow E,P,Q;F^{r+1}\circ\alpha)$
of $\M(\scrQ_r^1)$.

Using the descriptions of $\o{\X}_r$ and $\o{\Y}_r$ furnished by
Propositions \ref{redXr} and \ref{redYr}, we can calculate the restrictions of
the degenercy maps $\o{\pi}_1,\o{\pi}_2:\o{\Y}_r\rightrightarrows \o{\X}_r$ 
to each irreducible component
of the special fiber of $\Y_r$.  The following is due to Ulmer\footnote{We warn the reader, however,
that Ulmer omits the effect of the degeneracy maps on $[\mu_N]$-structures, so his formulae
are slightly different from ours.}
\cite[Proposition 8.3]{Ulmer}: 

\begin{proposition}\label{UlmerProp}
	Let $a,b$ be nonnegative integers with $a+b=r+1$ and $u\in (\Z/p^{\min(a,b)}\Z)^{\times}$.
	The restriction of the map $\o{\pi}_1: \o{\Y}_r\rightarrow \o{\X}_r$ to the $(a,b,u)$-component
	of $\o{\Y}_r$ is:
	\begin{equation*}
		\begin{cases}
			\xymatrix{
				{\Ig_{r}=J_{(r+1,0,1)}} \ar[r]^-{F} & {I_{(r,0,1)}=\Ig_{r}}
				}
				&:\quad (a,b,u)=(r+1,0,1)\\
			\xymatrix{
				{\Ig_{a}=J_{(a,b,u)}} \ar[r]^-{\pr} & {I_{(a-1,b,u)}=\Ig_{a-1}}
				}
				&:\quad b < a < r+1 \\
			\xymatrix{
				{\Ig_{b}=J_{(a,b,u)}} \ar[r]^-{\langle u^{-1}\rangle} & {I_{(a-1,b,u\bmod p^{a-1})}=\Ig_{b}}
				}
				&:\quad a=b=(r+1)/2\\	
			\xymatrix{
				{\Ig_{b}=J_{(a,b,u)}} \ar[r]^-{\id} & {I_{(a-1,b,u\bmod p^{a-1})}=\Ig_{b}}
				}
				&:\quad a < b < r+1 \\
			\xymatrix{
				{\Ig_{r}=J_{(0,r+1,1)}} \ar[r]^-{\langle p\rangle_N} & {I_{(0,r,1)}=\Ig_{r}}
				}
				&:\quad (a,b,u)=(0,r+1,1)
		\end{cases}
	\end{equation*}
	and the restriction of the map $\overline{\pi}_2: \o{\Y}_r\rightarrow \o{\X}_r$ to the $(a,b,u)$-component
	of $\o{\Y}_r$ is: 
		\begin{equation*}
		\begin{cases}
			\xymatrix{
				{\Ig_{r}=J_{(r+1,0,1)}} \ar[r]^-{\id} & {I_{(r,0,1)}=\Ig_{r}}
				}
				&:\quad (a,b,u)=(r+1,0,1)\\
				\xymatrix{
				{\Ig_{a}=J_{(a,b,u)}} \ar[r]^-{\id} & {I_{(a,b-1,u\bmod p^{b-1})}=\Ig_{a}}
				}
				&:\quad b < a+1 \le r+1 \\
				\xymatrix{
				{\Ig_{b}=J_{(a,b,u)}} \ar[r]^-{\langle u \rangle \pr} & {I_{(a,b-1,u)}=\Ig_{b-1}}
				}
				&:\quad a +1 = b =r/2 + 1  \\
				\xymatrix{
				{\Ig_{b}=J_{(a,b,u)}} \ar[r]^-{\pr} & {I_{(a,b-1,u)}=\Ig_{b-1}}
				}
				&:\quad a+1 < b < r+1  \\
				\xymatrix{
				{\Ig_{r}=J_{(0,r+1,1)}} \ar[r]^-{F} & {I_{(0,r,1)}=\Ig_{r}}
				}
				&:\quad (a,b,u)=(0,r+1,1)
		\end{cases}
	\end{equation*}
\end{proposition}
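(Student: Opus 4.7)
The plan is to mimic the proof of Proposition \ref{pr1desc}: work on underlying moduli problems and exploit the functorial correspondence, described in detail just before the statement, between non-cuspidal $S$-points of the $(a,b,u)$-component of $\o{\Y}_r$ and $[\Ig(p^{\max(a,b)})]$-structures on ordinary elliptic curves. Concretely, given a non-cuspidal $S$-point $(\phi:E_0\to E_{r+1},P,Q;\alpha)$ of the $(a,b,u)$-component, one factors $\phi$ as $V^b\circ F^a$ with $E_0 = E^{(p^b)}$ and $E_{r+1}=E^{(p^a)}$, applies the formulae \eqref{Upcorr} for $\pi_1$ and $\pi_2$ to extract the truncated cyclic $p^r$-isogeny together with the modified pair $(P',Q')$ and level structure $\alpha'$, and reinterprets the result as an $S$-point of the appropriate $(a',b',u')$-component of $\o{\X}_r$ via the analogous correspondence preceding Proposition \ref{redXr}.

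The proof then divides into a case-by-case check, organized by the five cases in the statement for each of $\pi_1$ and $\pi_2$. In the ``interior'' cases $b<a<r+1$ and $a<b<r+1$, the truncation of $\phi$ drops either a trailing Verschiebung or a leading Frobenius; the former identifies the induced map with the degeneracy $\pr$ of \eqref{Vmapsch} (when $\max(a,b)$ decreases), while the latter shifts the underlying elliptic curve by Frobenius and so yields the absolute Frobenius endomorphism of $\Ig_{\max(a,b)}$. In the ``balanced'' cases $a=b$, both points $P$ and $Q$ are available as $[\Ig(p^a)]$-structures on the same curve and differ by the unit $u$; since Remark \ref{rEvenChoice} imposes a specific convention on the identification of the $(r/2,r/2,u)$-component with $\Ig_{r/2}$, the mismatch between the natural choices on source and target produces precisely the factor $\langle u\rangle$ or $\langle u^{-1}\rangle$ appearing in the statement.

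The most delicate situation is the extremal cases $(a,b)=(r+1,0)$ and $(0,r+1)$, where $\phi$ is respectively purely inseparable or purely \'etale and an additional $\langle p\rangle_N$-twist appears. Here $\pi_i$ shifts the underlying elliptic curve by one step of Frobenius or Verschiebung, and the new level structure involves a composite of the form $V\circ F^{r+1}\circ\alpha$ or $F\circ V^{r+1}\circ\alpha$. Using $VF=[p]=FV$, this composite becomes $[p]$ applied after $F^r$ or $V^r$, and since $p$ acts invertibly on $\mu_N$ by our hypothesis $\gcd(N,p)=1$, the factor $[p]$ is exactly the action of the diamond operator $\langle p\rangle_N$, accounting for the twist. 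I expect the main obstacle to be purely combinatorial bookkeeping: one must track the $[\mu_N]$-structure through each composition of Frobenius and Verschiebung and verify that the sign convention of Remark \ref{rEvenChoice} is propagated consistently under both $\pi_1$ and $\pi_2$, which is tedious but routine once the correspondences preceding Propositions \ref{redXr} and \ref{redYr} are applied uniformly.
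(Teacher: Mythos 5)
Your overall plan — work on underlying moduli problems, use the explicit correspondence between components of $\o{\Y}_r$, $\o{\X}_r$ and Igusa structures on ordinary elliptic curves, apply the moduli-theoretic formulae for $\pi_1,\pi_2$, and proceed case-by-case — is exactly the proof the paper has in mind (the paper itself says only that the argument ``is similar to the proof of Proposition \ref{pr1desc}'' and leaves the details to the reader). Your accounts of the balanced cases and of the $\langle p\rangle_N$-twist in the extremal cases are essentially right: the $u$-twist is indeed the bookkeeping cost of the asymmetric $P$-vs-$Q$ conventions on source and target, and the $\langle p\rangle_N$ comes from the factor $[p]=V\circ F$ that gets absorbed into the $[\mu_N]$-structure.

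There is, however, a genuine error in your description of the interior cases $b<a<r+1$ and $a<b<r+1$. You claim that the truncation ``drops either a trailing Verschiebung or a leading Frobenius; the former identifies the induced map with $\pr$ ..., while the latter ... yields the absolute Frobenius endomorphism of $\Ig_{\max(a,b)}$.'' This gets the dichotomy wrong. For $\pi_1$ the truncation \emph{always} drops the leading step $\phi_{0,1}$, which in every interior case is a Frobenius (since $a>0$); yet the answer is never absolute Frobenius — it is $\pr$ when $b<a$ (because the relevant Igusa structure is $Q$ and $\pi_1$ sends $Q\mapsto \phi_{r,r+1}(Q)=VQ$, dropping its level from $p^a$ to $p^{a-1}$) and $\id$ when $a<b$ (because the relevant Igusa structure is $P$, which $\pi_1$ leaves fixed, and the intrinsic curve $E^{(p)}$ and component index $\max(a,b)=b$ are also unchanged). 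Similarly $\pi_2$ always drops a trailing Verschiebung in the interior, and gives $\id$ or $\pr$ — never $F$. Absolute Frobenius appears \emph{only} in the extremal cases $(r+1,0,1)$ for $\pi_1$ and $(0,r+1,1)$ for $\pi_2$, because there $b=0$ (resp.\ $a=0$) so the Igusa structure lives on $E=E_0$ itself, and truncating $\phi_{0,1}=F$ (resp.\ $\phi_{r,r+1}=F$) actually replaces $E$ by $E^{(p)}$. What controls the interior dichotomy is not which kind of arrow is removed but whether the Igusa structure the convention tracks ($Q$ when $\max=a$, $P$ when $\max=b$) is the one that $\pi_i$ modifies; equivalently, whether $\max(a,b)$ drops. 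You have likely imported intuition from Proposition \ref{pr1desc}, where the interior cases \emph{do} produce absolute Frobenius — but that $F$ comes from precomposing with the extra map $\pi$ in the factorizations $\ps=\pi\circ\pi_1$, $\pr=\pi\circ\pi_2$, and so is absent here.
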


\begin{proof}
	The proof is similar to the proof of Proposition \ref{pr1desc}, using the correspondence between
	irreducible components of $\Y_r$, $\X_r$ and Igusa curves that we have explained, together with the moduli-theoretic 
	definitions (\ref{Upcorr}) of the degeneracy mappings.  We leave the details to the reader.
\end{proof}

We end this section with a brief discussion
of correspondences on curves and their induced action on cohomology and Jacobians,
which we then apply to the specific case of modular curves.
Fix a ring $R$ and a proper normal curve $X$ over $S=\Spec R$.  Throughtout this discussion, we assume either that 
$R$ is a discrete valuation ring of mixed characteristic $(0,p)$ with perfect residue field, 
or that $R$ is a perfect field (and hence the normal $X$ is smooth). 

\begin{definition}
	A {\em correspondence $T:=(\pi_1,\pi_2)$ on $X$} is an ordered pair $\pi_1,\pi_2:Y\rightrightarrows X$
	of finite $S$-morphisms of normal and $S$-proper curves.  The {\em transpose} 
	of a correspondence $T:=(\pi_1,\pi_2)$ on $X$ is the correspondence
	on $X$ given by the ordered pair $T^*:=(\pi_2,\pi_1)$. 
\end{definition}

Thanks to Proposition \ref{HodgeIntEx} (\ref{CohomologyFunctoriality}), any correspondence
$T=(\pi_1,\pi_2)$ on $X$ induces an $R$-linear endomorphism of the short exact sequence
$H(X/R)$ via ${\pi_1}_*\pi_2^*$.  By a slight abuse of notation, we denote this endomorphism
by $T$;  as endomorphisms of $H(X/R)$ we then have
\begin{equation}
	T= {\pi_1}_*\pi_2^*\qquad\text{and}\qquad T^* = {\pi_2}_* \pi_1^*.
	\label{HeckeDef}
\end{equation}
Given a finite map $\pi:X\rightarrow X$, we will consistently view $\pi$ as a correspondence on $X$
via the association $\pi\rightsquigarrow (\id,\pi)$.  
In this way, we may think of correspondences
on $X$ as ``generalized endomorphisms."  This perspective can be made more compelling as follows.

First suppose that $R$ is a field, and fix a correspondence $T$ given by an ordered pair
$\pi_1,\pi_2:Y\rightrightarrows X$ of finite morphisms of smooth and proper curves.
Then $T$ and its transpose $T^*$ induce endomorphisms of the Jacobian $J_X:=\Pic^0_{X/R}$ of $X$, which we again
denote by the same symbols, via
\begin{equation} 
	T:=\Alb(\pi_2)\circ \Pic^0(\pi_1)\qquad\text{and}\qquad T^*:= \Alb(\pi_1) \circ \Pic^0(\pi_2)
	\label{JacHecke}
\end{equation}
Note that when $T=(\id,\pi)$ for a morphism $\pi:X\rightarrow X$, the induced endomorphisms
(\ref{JacHecke}) of $J_X$ are given by $T=\Alb(\pi)$ and $T^*:=\Pic^0(\pi)$.\footnote{Because of this fact, 
for a general correspondence $T=(\pi_1,\pi_2)$
the literature often refers to the induced endomorphism $T$ (respectively $T^*$) of $J_X$
as the {\em Albanese} (respectively {\em Picard}) or 
{\em covariant} (respectively {\em contravariant}) action of the correspondence
$(\pi_1,\pi_2)$.  Since the definitions (\ref{JacHecke}) of $T$ and $T^*$ {\em both}
literally involve Albanese and Picard functoriality, we find this old terminology
confusing, and eschew it in favor of the consistent notation we have introduced. 
}
Abusing notation, we will simply write $\pi$ for the endomorphism $\Alb(\pi)$ of $J_X$
induced by the correspondence $(1,\pi)$, and $\pi^*$ for the endomorphism $\Pic^0(\pi)$
induced by $(\pi,1) = (1,\pi)^*$.  When $\pi:X\rightarrow X$ is an automorphism, an easy argument 
shows that $\pi^* = \pi^{-1}$ as automorphisms of $J_X$.

With these definitions, the canonical filtration compatible isomorphism 
$H^1_{\dR}(X/R) \simeq H^1_{\dR}(J_X/R)$ is $T$ (respectively $T^*$)-equivariant with respect to 
the action (\ref{HeckeDef}) on $H^1_{\dR}(X/R)$ and the action on $H^1_{\dR}(J_X/R)$
induced by pullback along the endomorphisms (\ref{JacHecke}); see \cite[Proposition 5.4]{CaisNeron}.

Now suppose that $R$ is a discrete valuation ring with fraction field $K$
and fix a correspondence $T$ on $X$ given by a pair of finite morphisms
of normal curves $\pi_1,\pi_2:Y\rightrightarrows X$. Let us write $T_K$
for the induced correspondence on the (smooth) generic fiber $X_K$ of $X$.
Via (\ref{JacHecke}) and the N\'eron mapping property, $T_K$ and $T_K^*$
induces endomorphisms of the N\'eron model $J_X$ of the Jacobian of $X_K$,
which we simply denote by $T$ and $T^*$, respectively.
Thanks to Proposition \ref{intcompare}, the filtration compatible morphism
 (\ref{IntegralComparisonMap})  is $T$- and $T^*$-equivariant for the given action (\ref{HeckeDef}) on $H^1(X/R)$
and the action on $\Lie\scrExtrig_R(J_X,\Gm)$ induced by (\ref{JacHecke}) and the
(contravariant) functoriality of $\scrExtrig_R(\cdot,\Gm)$.

\begin{remark} 
	As in Remark \ref{canonicalproperty}, if $X$ is a normal proper curve over $R$
	with rational singularities, then any correspondence on $X_K$
	induces a filtration compatible endomorphism of $H^1(X/R)$
	via its action on $J_{X_K}$, the N\'eron mapping property, and
	the isomorphism (\ref{IntegralComparisonMap}) of Proposition \ref{intcompare}.
\end{remark}

We now specialize this discussion to the case of the modular curve $X_1(Np^r)$ over $\Q$.
For any prime $\ell$, one defines the Hecke correspondences 
$T_{\ell}$ for $\ell\nmid Np$ and $U_{\ell}$ for $\ell | Np$  on $X_1(Np^r)$
as in \cite[\S8]{pAdicShimura} 
({\em cf.} also \cite[\S3]{tameness} and \cite[Chapter 2, \S5.1--5.8]{MW-Iwasawa},
though be aware that the latter works instead with the modular curves $X_1(Np^r)'$
of Remark \ref{genfiberrem}).
If $\ell\neq p$, we have similarly defined correspondences
$T_{\ell}$ and $U_{\ell}$ on $\Ig_r$ over $\F_p$ (see \cite[Chapter 2, \S5.4--5.5]{MW-Iwasawa}).
For $\ell\neq p$, the Hecke correspondences extend to correspondences on $\X_r$ over $R_r$,
essentially by the same definition, while for $\ell=p$ the correspondence $U_p:=(\pi_1,\pi_2)$
on $\X_r$ is defined using the maps (\ref{Upcorr}).  We use the same symbols to 
denote the induced endomorphisms (\ref{JacHecke}) of the Jacobian $J_1(Np^r)$.

\begin{definition}
	We write $\H_r(\Z)$ (respectively $\H_r^*(\Z))$
	for the $\Z$-subalgebra of $\End_{\Q}(J_1(Np^r))$ generated by the
	Hecke operators $T_{\ell}$  (respectively $T_{\ell}^*$)  
	for $\ell\nmid Np$ and $U_{\ell}$ (respectively $U_{\ell}^*$) for $\ell | Np$,
	and the diamond operators $\langle u\rangle$ (respectively $\langle u\rangle^*$)
	for $u\in \Z_p^{\times}$ and 
	$\langle v\rangle_N$ (respectively $\langle v\rangle_N^*$) for $v\in (\Z/NZ)^{\times}$.
	For any commutative ring $A$, we set $\H_r(A):=\H_r(\Z)\otimes_{\Z} A$
	and $\H_r^*(A):=\H_r^*(\Z)\otimes_{\Z} A$, and for ease of notation
	we set $\H_r:=\H_r(\Z_p)$ and $\H_r^*:=\H_r^*(\Z_p)$.   	
\end{definition}

The relation between the Hecke algebras $\H_r$ and $\H_r^*$ is explained by the following:

\begin{proposition}\label{AtkinInterchange}
	Denote by $w_r$ the automorphism of $({J_r})_{K_r'}$ induced by the correspondence $(1,w_r)$
	on $({X_r})_{K_r'}$ over $K_r'$.  Viewing $\H_r$ and $\H_r^*$ as $\Z_p$-subalgebras 
	of $\End_{K_r'}(({J_r})_{K_r'})\otimes_{\Z}\Z_p$, conjugation by $w_r$ carries
	$\H_r$ isomorphically onto $\H_r^*$: that is, $w_rT=T^*w_r$ for all Hecke operators $T$.
\end{proposition}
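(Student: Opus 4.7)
The plan is to verify the identity $w_r T = T^* w_r$ on a generating set of $\H_r$, namely the diamond operators $\langle u \rangle$, $\langle v\rangle_N$ and the Hecke correspondences $T_\ell$ ($\ell\nmid Np$), $U_\ell$ ($\ell \mid Np$).

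For the diamond operators, the identity is a direct consequence of the first relation in Proposition~\ref{ALinv}, which reads $w_r \langle u\rangle\langle v\rangle_N = \langle u^{-1}\rangle\langle v^{-1}\rangle_N w_r$, combined with the identities $\langle u\rangle^* = \langle u^{-1}\rangle$ and $\langle v\rangle_N^* = \langle v^{-1}\rangle_N$ recorded in the footnote preceding Definition~\ref{XrDef}.

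For the Hecke operator $U_p$, given by the correspondence $(\pi_1,\pi_2)\colon Y_r \rightrightarrows X_r$ of (\ref{Upcorr}), I would construct an Atkin-Lehner involution $w_{Y_r}$ on $(Y_r)_{R_r'}$ satisfying commutativity relations of the form $w_r\pi_1 = \pi_2 w_{Y_r}$ and $w_r\pi_2 = \pi_1 w_{Y_r}$ (possibly up to a diamond twist). Moduli-theoretically, $w_{Y_r}$ may be defined on $\scrQ_r^{\varepsilon^{(r)}}$ by $(\phi,P,Q;\alpha)\mapsto (\phi^t, Q, -P; \phi\circ\alpha)$ composed with $w_{\zeta_N}$, in direct analogy with the construction of $w_r$ itself; the commutativity relations then follow from a direct moduli-theoretic computation using (\ref{Upcorr}) and (\ref{AtkinLehnerInv}). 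Alternatively, since $\pr = \pi\circ\pi_2$ and $\ps = \pi\circ\pi_1$ where $\pi\colon X_{r+1}\to Y_r$ is the canonical map (\ref{XtoY}), the last two identities of Proposition~\ref{ALinv}, namely $\pr w_{r+1} = w_r\ps$ and $\ps w_{r+1} = \langle p\rangle_N w_r\pr$, yield the desired relations after identifying $w_{r+1}$ with a lift of $w_{Y_r}$ through $\pi$.

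Granting the commutativity $w_r\pi_1 = \pi_2 w_{Y_r}$ and $w_r\pi_2 = \pi_1 w_{Y_r}$, the identity $w_r U_p = U_p^* w_r$ on $J_r$ follows by functorial manipulation of (\ref{JacHecke}): one computes
\[
w_r U_p = \Alb(w_r\pi_2)\Pic^0(\pi_1) = \Alb(\pi_1 w_{Y_r})\Pic^0(\pi_1) = \Alb(\pi_1)\Alb(w_{Y_r})\Pic^0(\pi_1),
\]
and substituting $\pi_2 = w_r^{-1}\pi_1 w_{Y_r}$ into $U_p^* w_r = \Alb(\pi_1)\Pic^0(\pi_2)\Alb(w_r)$, together with the contravariance of $\Pic^0$, the identity $\Pic^0(f) = \Alb(f)^{-1}$ for automorphisms $f$, and $\Pic^0(w_r)\Alb(w_r) = \id$, yields the same expression. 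The case of $T_\ell$ and $U_\ell$ for $\ell\nmid Np$ is handled by exactly the same argument applied to the $[\Gamma_0(\ell)]$-intermediate curve, for which a fully analogous Atkin-Lehner involution exists by the classical moduli-theoretic construction (compare \cite{pAdicShimura}). The main obstacle is the explicit construction of $w_{Y_r}$ and the verification of its compatibility with $\pi_1, \pi_2$, and $w_r$, which requires careful bookkeeping with sign conventions, the Weil-pairing normalization, and any diamond-operator twists that arise; these twists, however, are absorbed into the $\H_r^*$-part of the final identity and hence do not affect the truth of the statement.
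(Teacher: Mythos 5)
The paper's own proof is a one‑line citation to Tilouine, Ohta, and Mazur--Wiles, so any complete argument is ``a genuinely different route'' in the sense that you are reconstructing what the references actually do. Your overall strategy --- reduce to a generating set and use an intermediate Atkin--Lehner involution on $Y_r$ to handle $U_p$, with similar ``$\Gamma_0(\ell)$-level'' involutions for the other Hecke operators --- is indeed the standard approach and is what those references carry out. The diamond-operator case is handled correctly via the first identity of Proposition~\ref{ALinv}.

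However, your treatment of $U_p$ has a genuine gap, and it is located exactly where you wave your hands. You assert the pair of relations $w_r\pi_1 = \pi_2 w_{Y_r}$ and $w_r\pi_2 = \pi_1 w_{Y_r}$ ``possibly up to a diamond twist,'' and then claim that any such twist is ``absorbed into the $\H_r^*$-part of the final identity.'' That claim is not justified and is in fact not obviously true: the proposition asserts the \emph{exact} identity $w_r U_p = U_p^* w_r$, not merely that conjugation by $w_r$ maps $\H_r$ into $\H_r^*$. A nontrivial diamond twist in either of the two commutation relations would produce a spurious diamond factor in the computed $w_r U_p w_r^{-1}$, and whether or not those factors cancel is precisely the content one must check. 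Indeed, comparing with the paper's own Proposition~\ref{ALinv}, where $\pr w_{r+1} = w_r \ps$ but $\ps w_{r+1} = \langle p\rangle_N w_r \pr$, and using $\pr = \pi\circ\pi_2$, $\ps = \pi\circ\pi_1$, one sees that the two relations you want cannot both hold with the \emph{same} involution $w_{Y_r}$ and no twist --- there is a $\langle p\rangle_N$ asymmetry built into the degeneracy structure. Your formal manipulation at the end (substituting $\pi_2 = w_r^{-1}\pi_1 w_{Y_r}$ and invoking $\Pic^0(w_r)\Alb(w_r)=\id$) also quietly drops the factor $\Pic^0(w_r^{-1})\Alb(w_r) = \Alb(w_r)^2$ that arises from your substitution, and does not account for how that factor interacts with the diamond twist. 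To close the gap you would need to (i) pin down the correct moduli-theoretic $w_{Y_r}$ including all sign and $\langle p\rangle_N$ normalizations, (ii) verify the two commutation relations with their exact twists, and (iii) show explicitly that the twists cancel in the composite $\Alb(w_r)\Alb(\pi_2)\Pic^0(\pi_1) = \Alb(\pi_1)\Pic^0(\pi_2)\Alb(w_r)$, rather than asserting that they don't matter.
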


\begin{proof}
	This is standard; see, e.g., \cite[pg. 336]{Tilouine}, \cite[2.1.8]{OhtaEichler},
	or \cite[Chapter 2, \S5.6 (c)]{MW-Iwasawa}.
\end{proof}

\section{Differentials on modular curves in characteristic \texorpdfstring{$p$}{p}}\label{DiffCharp}

We now analyze the ``modified de Rham cohomology"
(\S\ref{GD}) of the special fibers of the modular
curves $\X_r/R_r$, and we relate its ordinary part to the de Rham cohomology of the ``Igusa Tower."

\subsection{The Cartier operator}

Fix a perfect field $k$ of characteristic $p > 0$ and write $\varphi:k\rightarrow k$ for the $p$-power Frobenius map. 
In this section, we recall the basic theory of the Cartier operator 
for a smooth and proper curve over $k$.  As we will only need the theory
in this limited setting, we will content ourselves with a somewhat {\em ad hoc} formulation of it.
Our exposition follows \cite[\S10]{SerreTopology}, but 
the reader may consult \cite[\S5.5]{Oda} or \cite{CartierNouvelle} for a more general treatment.

Let $X$ be a smooth and proper curve over $k$ and write $F:X\rightarrow X$ 
for the absolute Frobenius map; 
it is finite and flat and is a morphism over the endomorphism of $\Spec(k)$ induced by $\varphi$.
Let $D$ be an effective Cartier (=Weil) divisor on $X$ over $k$, and write $\O_X(-D)$ for 
the coherent (invertible) ideal sheaf determined by $D$. 
The pullback map $F^*:\O_{X}\rightarrow {F}_*\O_{X}$ carries the ideal sheaf
 $\O_X(-nD) \subseteq \O_{X}$ into ${F}_*\O_X(-npD)$, so we obtain a canonical $\varphi$-semilinear pullback map on cohomology
 \begin{equation}
 	\xymatrix{
		{F^*:H^1(X,\O_X(-nD))} \ar[r] & H^1(X,\O_X(-npD)).
		}\label{Fpullback}
 \end{equation}
By Grothendieck--Serre duality, (\ref{Fpullback}) gives a $\varphi^{-1}$-semilinear ``trace" 
map\footnote{This map coincides with Grothendieck's trace morphism on dualizing sheaves
attached to the finite map $F$.} of $k$-vector spaces
\begin{equation}
	\xymatrix{
		{V:={F}_*:H^0(X,\Omega^1_{X/k}(npD))} \ar[r] & {H^0(X,\Omega^1_{X/k}(nD))}
		}\label{cartier}
\end{equation}

\begin{proposition}\label{CartierOp}
	Let $X/k$ be a smooth and proper curve, $D$ an effective Cartier divisor on $X$,
	and $n$ a nonnegative integer.
	\begin{enumerate}
		\item There is a unique $\varphi^{-1}$-linear endomorphism $V:={F}_*$
		of $H^0(X,\Omega^1_{X/k}(nD))$ which is dual, via Grothendieck-Serre
		duality, to pullback by absolute Frobenius on $H^1(X,\O_X(-nD))$.
		\label{CartierExists}	
	
		\item The map $V$ ``improves poles" in the sense that it factors through the canonical inclusion
	  	\begin{equation*}
	  		\xymatrix{
				{H^0(X,\Omega^1_{X/k}(\lceil\frac{n}{p}\rceil D)} \ar@{^{(}->}[r] & 
				{H^0(X,\Omega^1_{X/k}(nD))}
			}.
	  	\end{equation*}
		\label{CartierImproves}
		
		\item If $\rho:Y\rightarrow X$ is any finite morphism of smooth proper curves over $k$, 
		and $\rho^*D$ is the pullback of $D$ to $Y$, then
		the induced pullback and trace maps
		\begin{equation*}
			\xymatrix{
				H^0(Y,\Omega^1_{Y/k}(n\rho^*D)) \ar@<0.5ex>[r]^-{\rho^*} & 
				\ar@<0.5ex>[l]^-{\rho_*} H^0(X,\Omega^1_{X/k}(nD))
			}
		\end{equation*}
		commute with $V$.
		\label{CartierCommutes}
		
		\item Assume that $k$ is algebraically closed.  Then
		for any meromorphic differential $\eta$ on $X$ and any closed point $x$ of $X$, the formula
		\begin{equation*}
			\res_x(V\eta)^p = \res_x(\eta)
		\end{equation*}
		holds, where $\res_x$ is the canonical ``residue at $x$ map" on meromorphic 
		differentials.
		\label{CartierResidue}
	\end{enumerate}
\end{proposition}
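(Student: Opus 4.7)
The plan is to prove (1) by invoking Grothendieck--Serre duality, to prove (2) and (4) by a local coordinate calculation at closed points, and to prove (3) by functoriality of the trace for $\rho_*$ together with a further local check for $\rho^*$. For (1), since $X/k$ is smooth the relative dualizing sheaf coincides with $\Omega^1_{X/k}$, and Grothendieck--Serre duality for the smooth proper $k$-curve $X$ furnishes a canonical perfect $k$-bilinear pairing
\[
H^1(X,\O_X(-nD))\times H^0(X,\Omega^1_{X/k}(nD))\longrightarrow k.
\]
The $\varphi$-semilinear map $F^*$ of $(\ref{Fpullback})$ then has a unique $\varphi^{-1}$-semilinear dual, which by definition is the map $V={F}_*$ of $(\ref{cartier})$.

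For (2) and (4) I would work locally at a closed point $x\in X$ with local parameter $t$, expanding $\omega\in H^0(X,\Omega^1_{X/k}(nD))$ as $\omega=\sum_{i\ge -m}a_i\,t^i\,dt$ in $\widehat{\O}_{X,x}=k(x)[\![t]\!]$ (where $m=n\cdot\mathrm{mult}_x(D)$) and invoking the standard properties that $V$ is $\varphi^{-1}$-semilinear and satisfies $V(f^p\omega)=f\,V(\omega)$, $V(df)=0$, and $V(dt/t)=dt/t$. These properties follow from the characterization of $V$ via duality together with its identification as the inverse of Cartier's isomorphism $C^{-1}$ on closed forms. A direct expansion then yields
\[
V(\omega)=\sum_{\substack{i\ge -m\\ i\equiv -1\,(\mathrm{mod}\,p)}}a_i^{1/p}\,t^{(i+1)/p-1}\,dt,
\]
which shows on the one hand that the pole order of $V(\omega)$ at $x$ is at most $\lceil m/p\rceil$, establishing (2), and on the other hand that the $dt/t$-coefficient of $V(\omega)$ equals $a_{-1}^{1/p}$; when $k$ is algebraically closed this coefficient is exactly $\res_x(V\omega)$, and the identity $\res_x(V\omega)^p=a_{-1}=\res_x(\omega)$ gives (4).

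For (3), compatibility of $V$ with $\rho_*$ follows formally from functoriality of Grothendieck's trace under composition and the fact that $F_X\circ\rho=\rho\circ F_Y$ (the absolute Frobenius is a natural transformation $\mathrm{id}\Rightarrow\mathrm{id}$), so that $\rho_*\circ V=(F_X\circ\rho)_*=(\rho\circ F_Y)_*=V\circ\rho_*$. Compatibility with $\rho^*$ does \emph{not} follow as cheaply, since pullback and trace need not commute along a merely commutative (non-Cartesian) square. The plan is to reduce to a local check at each $y\in Y$: with $s$ a parameter at $y$ and $t$ a parameter at $x=\rho(y)$ (so that $\rho^*t=u\cdot s^e$ for a unit $u$ and $e$ the ramification index at $y$), a direct calculation using the expansion above verifies $V_Y(\rho^*\omega)=\rho^*V_X(\omega)$ at $y$. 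The main obstacle is precisely this last step: the computation is routine when $\rho$ is generically \'etale at $y$, but requires additional care in the inseparable and wildly ramified cases, both of which occur among the degeneracy and Atkin--Lehner morphisms of \S\ref{tower}.
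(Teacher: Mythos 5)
Your treatment of (1) coincides with the paper's, and your local-coordinate calculations for (2) and (4) are correct and self-contained, differing from the paper's route: the paper deduces (2) immediately from the cohomological set-up (the map $(\ref{Fpullback})$ already carries $\O_X(-nD)$ into $F_*\O_X(-npD)$, so duality factors $V$ accordingly), and it proves (4) via Rosenlicht's identification of Grothendieck's trace with $-\mathrm{res}_X$ together with compatibility of the trace with composition. Your approach is more computational but yields the same conclusions, and the explicit expansion $V(\sum a_i t^i\,dt)=\sum_{i\equiv -1\,(p)}a_i^{1/p}t^{(i+1)/p-1}\,dt$ is a clean way to see both (2) and (4) at once.

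The genuine gap is the one you yourself flag at the end of (3). Your argument for $\rho_*V=V\rho_*$ is fine (Frobenius naturality $F_X\circ\rho=\rho\circ F_Y$ plus compatibility of trace under composition). But for $\rho^*V_X=V_Y\rho^*$ you retreat to a local computation which you acknowledge is not completed in the inseparable and wildly ramified cases — and those cases genuinely occur here, since the maps in Propositions \ref{pr1desc} and \ref{UlmerProp} include compositions with the absolute Frobenius $F$. The uniform fix, which is what the paper's parenthetical ``via duality'' is pointing at, is to dualize the claim rather than compute it locally. Under the perfect pairing $H^0(X,\Omega^1_X(nD))\times H^1(X,\O_X(-nD))\to k$, the relation $\rho^*V_X=V_Y\rho^*$ on $H^0(\Omega^1)$ is equivalent to $F_X^*\circ\rho_*=\rho_*\circ F_Y^*$ on $H^1(\O)$, where $\rho_*$ on $H^1(\O)$ is induced by the Grothendieck trace $\rho_*\O_Y\to\O_X$ of the finite flat map $\rho$. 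That sheaf-level identity $\mathrm{tr}(b^p)=\mathrm{tr}(b)^p$ is a purely algebraic fact about the trace of a finite flat algebra in characteristic $p$ (visible, for instance, from the eigenvalue description, or from compatibility of the trace with Frobenius base change), and it holds with no separability or tameness hypothesis on $\rho$. This collapses the case distinctions you were worried about and completes (3). It is worth noting that the paper's one-line justification — ``the $p$-power map commutes with any ring homomorphism'' — literally covers only the $\rho_*$-commutation (whose dual is $F^*\rho^*=\rho^*F^*$, a statement about ring homomorphisms); the $\rho^*$-commutation requires the additional observation that the trace, though not a ring map, still intertwines $p$-th powers, and you should make that explicit if you adopt this route.
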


\begin{proof}
	Both (\ref{CartierExists}) and (\ref{CartierImproves}) follow from our discussion, while
	(\ref{CartierCommutes}) follows (via duality) from the fact that the $p$-power
	map commutes with any ring homomorphism.  Finally, (\ref{CartierResidue}) follows from the fact
	that the canonical isomorphism $H^1(X,\Omega^1_{X/k})\rightarrow k$ induced by the residue map coincides
	with the negative of Grothendieck's trace isomorphism ({\em cf.}
	Proposition \ref{Rosenlicht}), together with the fact that Grothendieck's
	trace morphism is compatible with compositions; see Appendix B and Corollary 3.6.6 of \cite{GDBC}.
\end{proof}

\begin{remark}\label{poletrace}
	Quite generally, if $\rho:Y\rightarrow X$ is any finite morphism of 
	smooth curves over $k$
	and $y$ is any $k$-point of $Y$ with $x=\rho(y)\in X(k)$, 
	then for any meromorphic differential $\eta$ on $Y$ we have
	\begin{equation}
		\ord_x(\rho_*\eta) \le \left\lceil \frac{\ord_y(\eta)}{e}\right\rceil\label{poleimprovement}
	\end{equation}
	where $e$ is the ramification index of the extension of discrete valuation rings 
	$\O_{X,x}\rightarrow \O_{Y,y}$.
	Indeed, if $\I_x$ and $\I_y$ denote the ideal sheaves of the reduced closed subschemes $x$ and $y$, 
	then the pullback map $\O_X\rightarrow \rho_*\O_Y$ carries $\I_x^n$ into $\rho_*\I_y^{ne}$.  
	Passing to the map on $H^1$'s and using Grothendieck duality,
	we deduce that $\rho_*$ carries $H^0(Y,\Omega^1_{Y/k}\otimes\I_y^{-ne})$ into $
	H^0(X,\Omega^1_X\otimes\I_x^{-n})$, whence the estimate (\ref{poleimprovement}).
	If moreover $k$ is algebraically closed, then we have ({\em cf.} \cite[Theorem 4]{TateResidues})
	\begin{equation}
		\res_x(\rho_*\eta) = \res_y(\eta).
	\end{equation}\label{TateFormula}
\end{remark}

We recall the following (generalization of a) well-known lemma of Fitting:

\begin{lemma}\label{HW}
		Let $A$ be a commutative ring, $\varphi$ an automorphism of $A$,
		and $M$ be an $A$-module equipped with a $\varphi$-semilinear
		endomorphism $F:M\rightarrow M.$
		Assume that one of the following holds:
		\begin{enumerate}
			\item $M$ is a finite length $A$-module.\label{finlen}
			\item $A$ is a complete noetherian adic ring,\label{top}
			with ideal of definition $I\subsetneq A$, and $M$ is a finite $A$-module.
		\end{enumerate}
		Then there is a unique direct sum decomposition 
		\begin{equation}
			M = M^{F_{\ord}} \oplus M^{F_{\nil}},\label{FittingDecomp}
		\end{equation}
		where $M^{\varphi_{\ord}}$ 
		is the maximal $\varphi$-stable
		submodule of $M$ on which $F$ is bijective, and $M^{F_{\nil}}$
		is the maximal $F$-stable submodule of $M$ on which $F$
		is $($topologically$)$ nilpotent.  The assignment $M\rightsquigarrow M^{F_{\star}}$
		for $\star=\ord, \nil$ is an exact functor on the category of $($left$)$
		$A[F]$-modules verifying $(\ref{finlen})$ or $(\ref{top})$.
\end{lemma}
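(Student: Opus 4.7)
The proof is a $\varphi$-semilinear generalization of Fitting's classical decomposition lemma. My plan is to treat case (\ref{finlen}) directly by the usual Fitting argument, then reduce case (\ref{top}) to case (\ref{finlen}) via passage to finite-length quotients and $I$-adic completeness. Uniqueness of the decomposition, and exactness of the functors $M \rightsquigarrow M^{F_\star}$, will then follow formally from the characterizing universal properties of the two summands. The crucial structural observation underlying everything is that because $\varphi$ is an \emph{automorphism} of $A$, each iterate $F^n$ is $\varphi^n$-semilinear with $\varphi^n$ still an automorphism, so that both $\ker(F^n)$ (automatic for any semilinear map) and $F^n(M)$ (using surjectivity of $\varphi^n$) are honest $A$-submodules of $M$.

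In case (\ref{finlen}), finite length of $M$ forces the ascending chain $\{\ker(F^n)\}_n$ and descending chain $\{F^n(M)\}_n$ to stabilize at a common index $N \gg 0$, and the plan is to set $M^{F_{\ord}} := F^N(M)$ and $M^{F_{\nil}} := \ker(F^N)$. Bijectivity of $F$ on $M^{F_{\ord}}$ is then the usual Fitting calculation: surjectivity follows from stabilization $F(F^N M) = F^{N+1}M = F^N M$, and injectivity because any $m = F^N m' \in \ker(F)$ gives $m' \in \ker(F^{N+1}) = \ker(F^N)$, whence $m = 0$. Nilpotence on $M^{F_{\nil}}$ is immediate since $F^N$ annihilates it, and direct-sum-ness comes from the following: for any $m \in M$, stabilization yields $F^N m = F^{2N} m''$ for some $m'' \in M$, giving $m = F^N m'' + (m - F^N m'')$ with summands in $M^{F_{\ord}}$ and $M^{F_{\nil}}$ respectively, while $M^{F_{\ord}} \cap M^{F_{\nil}} = 0$ by iterating the injectivity argument.

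For case (\ref{top}), the plan is to apply case (\ref{finlen}) to the finite-length quotients $M_n := M/I^n M$---finite length holds in the settings of interest, namely whenever $A$ is complete local with $I$ its maximal ideal, as in every invocation in this paper---so that $A/I^n$ is Artinian and $M_n$ has finite length over $A$. Because $\varphi$ preserves $I$ in our applications, $F$ descends to a semilinear endomorphism of each $M_n$; case (\ref{finlen}) then furnishes canonical decompositions $M_n = M_n^{F_{\ord}} \oplus M_n^{F_{\nil}}$, which are compatible under the transition maps by uniqueness. Passing to the inverse limit---valid because $M \simeq \varprojlim M_n$ is $I$-adically complete, being finitely generated over the complete noetherian ring $A$---yields the desired decomposition of $M$, with topological nilpotence on $M^{F_{\nil}}$ meaning precisely that $F^k m \to 0$ in the $I$-adic topology for each $m \in M^{F_{\nil}}$, which one verifies using that $F$ preserves the $I$-adic filtration $\{I^n M\}_n$. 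The main minor obstacle is ensuring that the two characterizations---chain stabilization at each finite level and the $I$-adic limit---glue consistently, which is guaranteed by the uniqueness in case (\ref{finlen}); once this is in place, exactness and functoriality in $A[F]$-morphisms follow formally from the universal properties of the two summands.
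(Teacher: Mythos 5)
Your proof is correct and takes essentially the same route as the paper's: for case (\ref{finlen}) the paper sets $M^{F_{\ord}} := \bigcap_{n\ge 0} \im F^n$ and $M^{F_{\nil}} := \bigcup_{n\ge 0} \ker F^n$ and cites Lazard for the Fitting decomposition, which is precisely what your chain-stabilization argument establishes; for case (\ref{top}) both arguments pass to the quotients $M/I^nM$ and invoke $I$-adic completeness of $M$. Worth flagging a point where you are actually more careful than the paper: in case (\ref{top}) you note that $F$ descends to $M/I^nM$ only because $\varphi(I)\subseteq I$ (an implicit continuity hypothesis that the lemma never states), and you restrict the finite-length claim for $M/I^nM$ to the situation where $A/I$ is Artinian. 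The paper's bald assertion that $M/I^nM$ is Artinian does not follow from the stated hypotheses alone (take $A=\Z_p[\![T]\!]$ with $I=(T)$: then $M/IM=\Z_p$ is not Artinian), although it does hold in every place the lemma is actually invoked. The one loose thread in your write-up is that you defer both maximality of the two summands and exactness of $M\rightsquigarrow M^{F_\star}$ to unstated ``universal properties''; a sentence observing that maximality is immediate from the intersection and union descriptions (any $F$-stable $N$ with $F$ bijective satisfies $N=F^nN\subseteq F^nM$ for all $n$, and dually for the nilpotent part), and that exactness follows because $M^{F_\star}$ is a \emph{natural} $A[F]$-direct summand of $M$ — which is the paper's own observation — would close that gap cleanly.
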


\begin{proof}
	For the proof in case (\ref{finlen}), we refer to \cite[\Rmnum{6}, 5.7]{LazardGroups},
	and just note that one has:
	\begin{equation*}
		M^{F_{\ord}}:=\bigcap_{n\ge 0} \im(F^n)\quad\text{and}\quad
		M^{F_{\nil}}:=\bigcup_{n\ge 0} \ker(F^n),
	\end{equation*}
	where one uses that $\varphi$ is an automorphism to know that the image and kernel 
	of $F^n$ are $A$-submodules of $M$.
	It follows immediately from this that the association $M\rightsquigarrow M^{F_{\star}}$
	is a functor from the category of left $A[F]$-modules of finite $A$-length
	to itself.  It is an exact functor because the canonical inclusion $M^{F_{\star}}\rightarrow M$
	is an $A[F]$-direct summand.  
	In case (\ref{top}), our hypotheses ensure that $M/I^nM$ is a noetherian and Artinian
	$A$-module, and hence of finite length, for all $n$.  Our assertions in this situation
	then follow immediately from (\ref{finlen}), via the uniqueness of (\ref{FittingDecomp}),
	together with fact that $M$ is finite as an $A$-module, and hence $I$-adically complete (as $A$ is).
\end{proof}

We apply \ref{HW} to the $k$-vector space $M:=H^0(X,\Omega^1_{X/k})$
equipped with the $\varphi^{-1}$ semilinear map $V$:

\begin{definition}\label{ordnil}
	The $k[V]$-module $H^0(X,\Omega^1_{X/k})^{V_{\ord}}$ is called the
	{\em $V$-ordinary subspace} of holomorphic differentials on $X$.  It is
	the maximal $k$-subspace of $H^0(X,\Omega^1_{X/k})$ on which $V$ is bijective.
	The nonnegative integer $\gamma_X:=\dim_k H^0(X,\Omega^1_{X/k})^{V_{\ord}}$
	is called the {\em Hasse-Witt invariant} of $X$.
\end{definition}

\begin{remark}\label{DualityOfFVOrd}
	Let $D$ be any effective Cartier divisor.
	Since $V:={F}_*$ and $F:=F^*$ are adjoint under the canonical perfect 
	$k$-pairing between $H^0(X,\Omega^1_{X/k}(D))$ and $H^1(X,\O_X(-D))$, this pairing
	restricts to a perfect duality pairing 
	\begin{equation}
		\xymatrix{
			{H^0(X,\Omega^1_{X/k}(D))^{V_{\ord}} \times H^1(X,\O_X(-D))^{F_{\ord}}} \ar[r] & {k}
		}.\label{DualityOfFVOrdMap}
	\end{equation}
	In particular (taking $D=0$) we also have $\gamma_X=\dim_k H^1(X,\O_X)^{F_{\ord}}$.
\end{remark}

The following ``control lemma" is a manifestation of the fact that the Cartier
operator improves poles (Proposition \ref{CartierOp}, (\ref{CartierImproves})): 

\begin{lemma}\label{sspoles}
	Let $X$ be a smooth and proper curve over $k$ and $D$ an effective Cartier divisor 
	on $X$. Considering $D$ as a closed subscheme of $X$, we write $D_{\red}$ for associated 
	reduced closed subscheme.  
	\begin{enumerate}
		\item For all positive integers $n$, the canonical morphism
			\begin{equation*}
				H^0(X,\Omega^1_{X/k}(D_{\red})) \rightarrow H^0(X,\Omega^1_{X/k}(nD))
			\end{equation*}
			induces a natural isomorphism on $V$-ordinary subspaces.\label{VControl}
			
		\item For each positive integer $n$, the canonical map
			\begin{equation*}
				H^1(X, \O_X(-nD)) \rightarrow H^1(X,\O_X(-D_{\red}))
			\end{equation*}
			induces a natural isomorphism on $F$-ordinary subspaces.\label{FControl} 
		
		\item The identifications in $(\ref{VControl})$ and $(\ref{FControl})$ are canonically 
		$k$-linearly dual, via Remark $\ref{DualityOfFVOrd}$. 	
		
	\end{enumerate}
\end{lemma}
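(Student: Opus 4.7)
The argument reduces to iterating the pole-improvement statement of Proposition \ref{CartierOp}(\ref{CartierImproves}), applied with $D_{\red}$ rather than $D$ as the base divisor. Write $D = \sum_i n_i P_i$ with $n_i \ge 1$ and set $m := n\max_i n_i$, so that $nD \le m D_{\red}$ as divisors and consequently $H^0(X, \Omega^1_{X/k}(nD)) \subseteq H^0(X, \Omega^1_{X/k}(m D_{\red}))$. I will prove (1) directly and deduce (2) and (3) by Grothendieck--Serre duality.

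For (1), the canonical inclusion $H^0(X, \Omega^1_{X/k}(D_{\red})) \hookrightarrow H^0(X, \Omega^1_{X/k}(nD))$ is $V$-equivariant, hence restricts to an injection on $V$-ordinary subspaces. For surjectivity, fix $\omega$ in the $V$-ordinary part of $H^0(X, \Omega^1_{X/k}(nD))$. Since $V$ is bijective on this subspace by Lemma \ref{HW} (the space is finite-dimensional over $k$), for every $j \ge 0$ we may write $\omega = V^j \omega_j$ with $\omega_j$ in the same $V$-ordinary subspace; {\em a fortiori} $\omega_j \in H^0(X, \Omega^1_{X/k}(mD_{\red}))$. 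Iterating Proposition \ref{CartierOp}(\ref{CartierImproves}) with the reduced divisor $D_{\red}$ in the role of $D$, the element $V^j \omega_j$ lies in $H^0(X, \Omega^1_{X/k}(\lceil m/p^j\rceil D_{\red}))$. For $j$ large enough that $p^j \ge m$, this coincides with $H^0(X, \Omega^1_{X/k}(D_{\red}))$, so $\omega$ itself belongs to $H^0(X, \Omega^1_{X/k}(D_{\red}))$, which gives surjectivity.

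Part (2) follows by duality. Since $nD \ge D_{\red}$, we have an inclusion of ideal sheaves $\O_X(-nD) \hookrightarrow \O_X(-D_{\red})$, and the induced map on $H^1$ is precisely the map in (2). Under Grothendieck--Serre duality this map is $k$-linearly dual to the inclusion $H^0(X, \Omega^1_{X/k}(D_{\red})) \hookrightarrow H^0(X, \Omega^1_{X/k}(nD))$ of (1), with $F = F^*$ the transpose of $V = F_*$. By Remark \ref{DualityOfFVOrd}, the $V_{\ord}$- and $F_{\ord}$-subspaces pair perfectly under the Serre duality pairing, so dualizing the isomorphism from (1) yields the isomorphism in (2); the assertion of (3) that these identifications are $k$-linearly dual is then built into the construction.

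The main obstacle is mild: it is essentially bookkeeping to pass from the divisor $D$ in the hypothesis to $D_{\red}$ (the statement of Proposition \ref{CartierOp}(\ref{CartierImproves}) gives iteration down to $D$, not $D_{\red}$, unless one re-applies it with $D_{\red}$ as the base), together with the verification that the Serre duality pairing restricts to a perfect pairing on the ordinary subspaces --- which is precisely the content of Remark \ref{DualityOfFVOrd}.
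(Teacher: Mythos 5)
Your proof is correct and expands exactly the argument the paper compresses into one line (``This follows immediately from Proposition \ref{CartierOp}, (\ref{CartierImproves}) and Remark \ref{DualityOfFVOrd}''), including the worthwhile observation that one must re-apply the pole-improvement with $D_{\red}$ rather than $D$ as the base divisor. One small completion: having shown that every $\omega$ in $H^0(X,\Omega^1_{X/k}(nD))^{V_{\ord}}$ lies in $H^0(X,\Omega^1_{X/k}(D_{\red}))$, you should finish by noting that this exhibits the whole $V$-ordinary subspace of the larger space as a $V$-stable subspace of $H^0(X,\Omega^1_{X/k}(D_{\red}))$ on which $V$ is bijective, hence it is contained in $H^0(X,\Omega^1_{X/k}(D_{\red}))^{V_{\ord}}$ by the maximality in Lemma \ref{HW}, which is the surjectivity you want.
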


\begin{proof}
	This follows immediately from Proposition \ref{CartierOp}, (\ref{CartierImproves})
	and Remark \ref{DualityOfFVOrd}.
\end{proof}

Now let $\pi:Y\rightarrow X$ be a finite branched covering of smooth, proper and geometrically connected curves over $k$ with group $G$ that is a $p$-group.  
Let $D_X$ be any effective Cartier divisor on $X$ over $k$ with support containing the ramification locus of $\pi$,
and put $D_Y=\pi^*D_X$.   As in Lemma \ref{sspoles},  denote by $D_{X,\red}$ and $D_{Y,\red}$ the underlying reduced closed subschemes;
as $D_{Y,\red}$ is $G$-stable, the $k$-vector spaces 
$H^0(Y,\Omega^1_{Y/k}(nD_{Y,\red}))$ and $H^1(Y,\O_Y(-nD_{Y,\red})$
are canonically $k[G]$-modules for any $n\ge 1$.  The following theorem of Nakajima 
is the key to the proofs of our structure theorems for $\Lambda$-modules:

\begin{proposition}[Nakajima]\label{Nakajima}
	Assume that $\pi$ is ramified, 
	let $\gamma_X$ be the Hasse-Witt invariant of $X$ and set $d:=\gamma_X-1+\deg (D_{X,\red})$.
	Then for each positive integer $n$:
	\begin{enumerate}
		\item The $k[G]$-module 
		 $H^0(Y,\Omega^1_{Y/k}(nD_{Y,\red}))^{V_{\ord}}$ is free of rank $d$ and independent
		 of $n$.\label{NakajimaOne}
		
		\item The $k[G]$-module $H^1(Y,\O_Y(-nD_{Y,\red}))^{F_{\ord}}$ is naturally
		isomorphic to the contragredient of $H^0(Y,\Omega^1_{Y/k}(nD_{Y,\red}))^{V_{\ord}}$;
		as such, it is $k[G]$-free of rank $d$ and independent of $n$.
		\label{NakajimaTwo}
	\end{enumerate}
\end{proposition}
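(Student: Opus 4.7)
The plan is to reduce to $n=1$ via the control Lemma \ref{sspoles}, then deduce part (1) from a theorem of Nakajima on the $k[G]$-structure of Hodge cohomology of $p$-covers, and finally derive part (2) from part (1) by Grothendieck--Serre duality.

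For the reduction, I would apply Lemma \ref{sspoles} on $Y$ to the (already reduced) effective Cartier divisor $D_{Y,\red}$. This yields, for each $n\ge 1$, a canonical $G$-equivariant isomorphism
\begin{equation*}
H^0(Y,\Omega^1_{Y/k}(D_{Y,\red}))^{V_{\ord}} \xrightarrow{\sim} H^0(Y,\Omega^1_{Y/k}(nD_{Y,\red}))^{V_{\ord}}
\end{equation*}
and dually for the $F$-ordinary quotients of $H^1(Y,\O_Y(-nD_{Y,\red}))$. It therefore suffices to treat $n=1$; the $n$-independence asserted in (1) and (2) will follow automatically.

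For part (1) with $n=1$, the hypotheses align exactly with those of Nakajima's theorem on equivariant differentials: $\pi$ is a Galois $p$-group cover of smooth proper curves whose branch locus lies inside the reduced divisor $D_{X,\red}$. Nakajima's theorem then gives $k[G]$-freeness of $H^0(Y,\Omega^1_{Y/k}(D_{Y,\red}))^{V_{\ord}}$ and identifies its rank. To confirm the rank is $d = \gamma_X - 1 + \deg D_{X,\red}$, I would compute $\dim_k H^0(X,\Omega^1_{X/k}(D_{X,\red}))^{V_{\ord}} = d$ by applying the exact $V_{\ord}$-functor to the long exact cohomology sequence attached to
\begin{equation*}
0 \longrightarrow \Omega^1_{X/k} \longrightarrow \Omega^1_{X/k}(D_{X,\red}) \longrightarrow \bigoplus_{x \in D_{X,\red}} i_{x,*}k(x) \longrightarrow 0,
\end{equation*}
using that $V$ acts bijectively on each residue stalk by Proposition \ref{CartierOp} (\ref{CartierResidue}) together with perfectness of $k$, and that $V$ acts as an isomorphism on $H^1(X,\Omega^1_{X/k}) \simeq k$ by Serre duality to the Frobenius on $H^0(X,\O_X)$. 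This dimension matches $\dim_k M^G$ for a $k[G]$-free module $M$ of rank $d$, corroborating the rank assertion after one identifies $M^G$ with $H^0(X,\Omega^1_{X/k}(D_{X,\red}))^{V_{\ord}}$ via the trace/pushforward $\pi_\ast$ away from the branch locus.

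Part (2) then follows from part (1) by duality. The perfect pairing of Remark \ref{DualityOfFVOrd} between $V$-ordinary differentials and $F$-ordinary $H^1$ is $G$-equivariant in the sense that the $g$-action on one factor is adjoint to the $g^{-1}$-action on the other; this identifies $H^1(Y,\O_Y(-D_{Y,\red}))^{F_{\ord}}$ with the $k[G]$-contragredient of $H^0(Y,\Omega^1_{Y/k}(D_{Y,\red}))^{V_{\ord}}$. Since $k[G]$ is a Frobenius algebra, its regular representation is self-dual, and hence the contragredient of a free $k[G]$-module is free of the same rank. The main obstacle in this plan is isolated to the invocation of Nakajima's theorem, whose proof relies on an equivariant Riemann--Roch analysis together with a Deuring--Shafarevich-type genus formula for wildly ramified $p$-covers, and which is genuinely delicate precisely because $k[G]$ is non-semisimple when $p \mid |G|$.
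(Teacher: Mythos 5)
Your proposal is correct and follows essentially the same route as the paper's proof: reduce to $n=1$ via Lemma \ref{sspoles}, cite Nakajima's Theorem 1 for part (1), and deduce part (2) from the $G$-equivariant duality of Remark \ref{DualityOfFVOrd} after observing that $g^*$ and $(g^{-1})^*$ are adjoint under the pairing. The rank computation via the residue exact sequence and the Frobenius-algebra observation about self-duality of $k[G]$ are harmless corroborating details the paper leaves implicit.
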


\begin{proof}
	The independence of $n$ is simply Lemma \ref{sspoles};
	using this, the first assertion is then equivalent to Theorem 1 of \cite{Nakajima}.
	The second assertion is immediate from Remark \ref{DualityOfFVOrd},
	once one notes that for $g\in G$ one has the identity $g_*=(g^{-1})^*$
	on cohomology (since $g_*g^*=\deg g = \id$), so $g^*$ and $(g^{-1})^*$
	are adjoint under the duality pairing (\ref{DualityOfFVOrdMap}).
\end{proof}

We end this section with
a brief explanation of the relation between the
de Rham cohomology of $X$ over $k$ and the Dieudonn\'e module
of the $p$-divisible group of the Jacobian of $X$.  This will allow
us to give an alternate description of the $V$-ordinary (respectively $F$-ordinary)
subspace of $H^0(X,\Omega^1_{X/k})$ (respectively $H^1(X,\O_X)$) which
will be instrumental in our applications.

Pullback by the absolute Frobenius gives a semilinear endomorphism
of the Hodge filtration $H(X/k)$ of $H^1_{\dR}(X/k)$
which we again denote by $F=F^*$.
Under the canonical autoduality of $H(X/k)$ provided by Proposition \ref{HodgeFilCrvk} 
(\ref{HodgeDegenerationField}) ,
we obtain $\varphi^{-1}$-semilinear endomorphism
\begin{equation}
	\xymatrix{
		{V:={F}_*: H^1_{\dR}(X/k)} \ar[r] & {H^1_{\dR}(X/k)}
		}\label{CartierOndR}
\end{equation}
whose restriction to $H^0(X,\Omega^1_{X/k})$ coincides with (\ref{cartier}).
Let $A$ be the ``Dieudonn\'e ring", {\em i.e.}~the (noncommutative if $k\neq \F_p$)
ring $A:=W(k)[F,V]$, where  
 $F$, $V$ satisfy $FV=VF=p$, 
$F\alpha=\varphi(\alpha)F$, and $V\alpha=\varphi^{-1}(\alpha)V$ for all $\alpha\in W(k)$.
We view $H^1_{\dR}(X/k)$ as a left $A$-module 
in the obvious way.

\begin{proposition}[Oda]\label{OdaDieudonne}
	Let $J:=\Pic^0_{X/k}$ be the Jacobian of $X$ over $k$ and $G:=J[p^{\infty}]$
	its $p$-divisible group.  Denote by $\D(G)$ the contravariant
	Dieudonn\'e crystal of $G$, so the Dieudonn\'e module $\D(G)_W$ is naturally a left $A$-module, finite and
	free over $W:=W(k)$.  
	\begin{enumerate}
		\item There are canonical isomorphisms of left $A$-modules
		\begin{equation*}
			H^1_{\dR}(X/k)\simeq \D(J)_{k}\simeq \D(G)_k.
		\end{equation*}\label{OdaIsom}
		
		\item For any finite morphism $\rho:Y\rightarrow X$ of smooth and proper curves
		over $k$, the identification of $(\ref{OdaIsom})$ intertwines $\rho_*$
		with $\D(\Pic^0(\rho))$ and $\rho^*$ with $\D(\Alb(\rho))$.\label{OdaIsomFunctoriality}
		
		\item Let $G=G^{\et}\times G^{\mult}\times G^{\loc}$
		be the canonical direct product decomposition of $G$ into its maximal \'etale,
		multiplicative, and local-local subgroups. 
		Via the identification of $(\ref{OdaIsom})$, the canonical mappings
		in the exact sequence $H(X/k)$ induce natural isomorphisms of left $A$-modules
		\begin{equation*}
			H^0(X,\Omega^1_{X/k})^{V_{\ord}} \simeq \D(G^{\mult})_k
			\quad\text{and}\quad
			H^1(X,\O_X)^{F_{\ord}} \simeq \D(G^{\et})_k
		\end{equation*}\label{GetaleGmult}
		
		\item The isomorphisms of $(\ref{GetaleGmult})$ are dual to each other, using the duality pairing 
		of Remark $\ref{DualityOfFVOrd}$ 
		together with the canonical isomorphism $\D(G)_k^t\simeq \D(\Dual{G})_k$
		and the autoduality of $G$ resulting from the autoduality of $J$.\label{BBMDuality}		
	\end{enumerate}
\end{proposition}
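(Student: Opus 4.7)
The plan is to handle parts (1) and (2) together via classical comparison results for abelian varieties and their Jacobians, then derive (3) by carefully matching the Hodge filtrations through the slope decomposition of $G$, and finally deduce (4) from three compatible autodualities.

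For (1), I would first invoke the Mazur--Messing identification recalled in Proposition \ref{BTgroupUnivExt}: for the abelian variety $J$ with $p$-divisible group $G$, evaluating $\D(G)$ on the trivial PD-thickening of $k$ yields a canonical Hodge-filtered isomorphism
\[
\D(G)_k \;\simeq\; \Lie\E(\Dual{G}) \;\simeq\; \Lie\E(\Dual{J}),
\]
the last equality being Theorem \ref{UniExtCompat} (\ref{UniExtCompat3}). On the other side, the universal extension of $\Dual{J}$ computes $H^1_{\dR}(J/k)$ with its Hodge filtration (\cite{MM}, Chapter II), and this identification is $F$- and $V$-equivariant because the $F$, $V$ operators on both sides arise from the absolute Frobenius of $J$ via the functoriality of the universal extension / $\scrExt^1$-construction. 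To transfer from $J$ to $X$, I would use the Albanese map $X \to J$ (depending on a choice of base point, but the resulting map on $H^1$ is base-point independent) and the classical fact that $H^1_{\dR}(J/k) \to H^1_{\dR}(X/k)$ is a filtration-compatible isomorphism. Part (2) is then a formal consequence: Picard functoriality on $J$ is dual to Albanese, so $\rho^*$ on $H^1_{\dR}(X/k)$ matches $\Alb(\rho)^*$ on $H^1_{\dR}(J/k)$, which matches $\D(\Alb(\rho))$ on $\D(G)_k$, and similarly for $\rho_*$.

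For (3), I would transport the Hodge filtration of $H^1_{\dR}(X/k)$ to $\D(G)_k$, yielding
\[
0 \longrightarrow \omega_G \longrightarrow \D(G)_k \longrightarrow \Lie\Dual{G} \longrightarrow 0,
\]
and then split $G = G^{\et}\times G^{\mult}\times G^{\loc}$ and analyze each slope piece. The key observations are: $\omega_{G^{\et}} = 0$ and $\Lie\Dual{G^{\mult}} = 0$ (dual of multiplicative is étale and étale groups have trivial Lie algebra), so the Hodge maps restrict to canonical isomorphisms $\omega_{G^{\mult}} \xrightarrow{\sim} \D(G^{\mult})_k$ and $\D(G^{\et})_k \xrightarrow{\sim} \Lie\Dual{G^{\et}}$. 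Standard Dieudonné theory shows $V$ is bijective on $\D(G^{\mult})_k$ and topologically nilpotent on the other two summands, and dually for $F$. Applying the exact functor $(\cdot)^{V_{\ord}}$ from Lemma \ref{HW} to the Hodge filtration shows that $\omega_G^{V_{\ord}} = \omega_{G^{\mult}}$ and $(\Lie\Dual{G})^{V_{\ord}} = 0$, giving the identification $H^0(X,\Omega^1_{X/k})^{V_{\ord}} \simeq \D(G^{\mult})_k$. The $F$-ordinary identification is entirely analogous.

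For (4), I would combine the canonical principal polarization of $J$ (providing $G \simeq \Dual{G}$ and interchanging the étale and multiplicative slope pieces) with the Cartier duality isomorphism $\D(G)_k^\vee \simeq \D(\Dual{G})_k$ and the autoduality of $H(X/k)$ from Proposition \ref{HodgeFilCrvk} (\ref{HodgeExSeqk}). The main obstacle will be verifying that all three dualities are compatible with one another: specifically, that the autoduality of $H(X/k)$ coming from Grothendieck--Serre duality on $X$ matches the pairing on $\D(G)_k$ induced by Cartier duality of $G$ and the polarization. This is a question of matching Grothendieck's trace with the trace coming from the $\scrExt^1$-construction of $\D$ and from the pairing on the universal extension; I would reduce it to the analogous compatibility for the abelian variety $J$, which is essentially the content of \cite{MM}, Chapter II, \S\S14--15 combined with the standard fact that the cup-product on $H^1_{\dR}(J/k)$ agrees with the symplectic form induced by the theta polarization.
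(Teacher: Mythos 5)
Your proposal is essentially correct, and for parts (1)--(2) you take a route the paper acknowledges only in a footnote: the paper's argument goes through Oda's intrinsically defined Cartier operator, first checking that the $V$ of equation (\ref{CartierOndR}) coincides with Oda's operator and then citing \cite[Corollary 5.11]{Oda} together with \cite[Theorem 4.2.14]{BBM} for the comparison with the Dieudonn\'e module; you instead invoke the Mazur--Messing identification $\D(G)_k \simeq \Lie\E(\Dual{G})$ directly, which is exactly the alternative the paper alludes to in the footnote citing \cite{MM} Chapter I and II. Both routes are valid. The point you compress most is the $V$-equivariance in (1): the paper's $V := F_*$ on $H^1_{\dR}(X/k)$ is defined as the duality-adjoint of $F^*$ (equivalently, the Grothendieck--Serre trace for the finite flat absolute Frobenius of $X$), and matching it with $\D(V_G)$ requires knowing (a) that the identification $H^1_{\dR}(X/k)\simeq H^1_{\dR}(J/k)$ carries $F_*$ to $\Pic^0(F_X)^*$, (b) the classical identity $\Pic^0(F_X) = V_J$, and (c) the functoriality of the crystal/universal-extension isomorphism in homomorphisms of abelian varieties --- the last of which is the step the paper flags as ``straightforward albeit tedious.'' Your phrase ``both arise from the absolute Frobenius of $J$'' blurs the distinction between Frobenius and Verschiebung, but the chain of compatibilities behind it is sound. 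Part (3) is essentially the paper's own argument (that $F^*$ kills $H^0(\Omega^1)$ and $F_*$ kills $H^1(\O)$, combined with the slope decomposition of Dieudonn\'e theory), just repackaged through the $\omega_G / \Lie\Dual{G}$ description of the Hodge filtration. For (4) you correctly single out the substantive verification (matching the Serre-duality autoduality of $H(X/k)$ with the Cartier-duality pairing on $\D(G)_k$ via the principal polarization); the paper pins this down with \cite[5.3.13, 5.1.8, 2.5.8]{BBM} and \cite{colemanduality}, while your appeal to \cite{MM} Ch.~II \S\S14--15 and the compatibility of cup-product with the theta polarization is a reasonable sketch of the same content, though ``cup-product on $H^1_{\dR}(J/k)$'' should more precisely be ``the pairing on $H^1_{\dR}(J/k)$ induced by the natural duality $H^1_{\dR}(J/k)\otimes H^1_{\dR}(\Dual{J}/k)\to k$ and the polarization.''
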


\begin{proof}
	Using the characterizing properties of the Cartier operator
	defined by Oda \cite[Definition 5.5]{Oda} and the explicit
	description of the autoduality of $H^1_{\dR}(X/k)$ in terms
	of cup-product and residues, one checks that the endomorphism
	of $H^1_{\dR}(X/k)$ in \cite[Definition 5.6]{Oda} is adjoint
	to $F^*$, and therefore coincides with the endomorphism
	$V:={F}_*$ in (\ref{CartierOndR}); {\em cf.}
	the proof of \cite[Proposition 9]{SerreTopology}.

	We recall that one has a canonical isomorphism 
	\begin{equation}
		H^1_{\dR}(X/k)\simeq H^1_{\dR}(J/k)\label{dRIdenJac}
	\end{equation}
	which is compatible with Hodge filtrations and duality (using the canonical
	principal polarization to identify $J$ with its dual) and which, for	
	any finite morphism of smooth curves $\rho:Y\rightarrow X$ over $k$,
	intertwines $\rho_*$ with $\Pic^0(\rho)^*$ and $\rho^*$ with $\Alb(\rho)^*$; see  
	\cite[Proposition 5.4]{CaisNeron}, noting that the proof given there works over any field $k$,
	and {\em cf.}~Proposition \ref{intcompare}.  It follows from these compatibilities
	and the fact that the Cartier operator as defined in \cite[Definition 5.5]{Oda} is functorial
	that the identification (\ref{dRIdenJac}) is moreover an isomorphism of left $A$-modules,
	with the $A$-structure on $H^1_{\dR}(J/k)$ defined as in \cite[Definition 5.8]{Oda}.
	
	Now by \cite[Corollary 5.11]{Oda} and \cite[Theorem 4.2.14]{BBM}, 
	for any abelian variety $B$ over $k$, 
	there is a canonical isomorphism of left $A$-modules
	\begin{equation}
		H^1_{\dR}(B/k)\simeq \D(B)_k\label{AbVarDieuMod}
	\end{equation} 
	Using the definition of this isomorphism in Proposition 4.2 and Theorem 5.10 of \cite{Oda}, 
	it is straightforward (albeit tedious\footnote{Alternately, 
	one could appeal to 
	\cite{MM}, specifically to Chapter I, 4.1.7, 4.2.1, 3.2.3, 2.6.7
	and to Chapter II, \S13 and \S15 (see especially Chapter II, 13.4 and 1.6).  
	See also \S2.5 and \S4 of \cite{BBM}.
	}) 
	to check that for any homomorphism $h:B'\rightarrow B$
	of abelian varieties over $k$, the identification (\ref{AbVarDieuMod}) intertwines $h^*$ and $\D(h)$.  
	Combining (\ref{dRIdenJac}) and (\ref{AbVarDieuMod})
	yields (\ref{OdaIsom}) and (\ref{OdaIsomFunctoriality}).

	Now since $V={F}_*$ (respectively $F=F^*$) is the zero endomorphism of $H^1(X,\O_X)$ 
	(respectively $H^0(X,\O_X)$), the canonical mapping
	\begin{equation*}
		\xymatrix{
			{H^0(X,\Omega^1_{X/k})} \ar@{^{(}->}[r] & {H^1_{\dR}(X/k)\simeq \D(G)_k}
		}
		\quad\text{respectively}\quad
		\xymatrix{
			{\D(G)_k\simeq H^1_{\dR}(X/k)} \ar@{->>}[r] & {H^1(X,\O_X)}
		}
	\end{equation*}
	induces an isomorphism on $V$-ordinary (respectively $F$-ordinary) subspaces.  
	On the other hand, by Dieudonn\'e theory one knows that
	for {\em any} $p$-divisible group $H$, the semilinear endomorphism $V$ 
	(respectively $F$) of $\D(H)_W$
	is bijective if and only if $H$ is of multiplicative type (respectively \'etale).
	The (functorial) decomposition $G=G^{\et}\times G^{\mult}\times G^{\loc}$
	yields a natural isomorphism of left $A$-modules
	\begin{equation*}
		\D(G)_W\simeq \D(G^{\et})_W\oplus \D(G^{\mult})_W\oplus \D(G^{\loc})_W,
	\end{equation*}
	and it follows that the natural maps $\D(G^{\mult})_W\rightarrow \D(G)_W$,
	$\D(G)_W\rightarrow \D(G^{\et})_W$ induce isomorphisms 
	\begin{equation}
		\D(G^{\mult})_W \simeq \D(G)^{V_{\ord}}_W\quad\text{and}\quad
		\D(G)^{F_{\ord}}_W\simeq \D(G^{\et})_W,\label{VordMultFordEt}
	\end{equation}
	respectively,  which gives (\ref{GetaleGmult}).  Finally, (\ref{BBMDuality})
	follows from Proposition 5.3.13 and the proof of Theorem 5.1.8 in \cite{BBM},
	using Proposition 2.5.8 of {\em op.~cit.}~and the compatibility of the isomorphism
	(\ref{dRIdenJac}) with duality (for which see 
	\cite[Theorem 5.1]{colemanduality} and {\em cf.} \cite[Lemma 5.5]{CaisNeron}). 	
\end{proof}

\subsection{The Igusa tower}\label{IgusaTower}

We apply Proposition \ref{Nakajima} to the Igusa tower
(Definition \ref{IgusaDef}).
The canonical degeneracy map 
$\pr:I_{r}\rightarrow I_1$ defined by (\ref{Vmapsch}) is
finite \'etale outside\footnote{We will frequently 
write simply $\SS$ for the divisor $\SS_r$ on $I_r$
when $r$ is clear from context.
}
$\SS:=\SS_r$ and totally (wildly) ramified
over $\SS_1$, and so makes $I_r$ in to a branched cover
of $I_1$ with group $\Delta/\Delta_r$.
The cohomology groups $H^0(I_r,\Omega^1_{I_r/\F_p}(\SS))$
and $H^1(I_r,\O_{I_r}(-\SS))$ are therefore naturally
$\F_p[\Delta/\Delta_r]$-modules.

\begin{proposition}\label{IgusaStructure}
	Let $r$ be a positive integer, write $\gamma$ for the $p$-rank of
	$J_1(N)_{\F_p}$, and set $\delta:=\deg\SS$.
\begin{enumerate}	
	\item The $\F_p[\Delta/\Delta_r]$-modules $H^0(I_r,\Omega^1_{I_r/\F_p}(\SS))^{V_{\ord}}$
	and $H^1(I_r,\O_{I_r}(-\SS))^{F_{\ord}}$ are both free of rank $d:=\gamma+\delta-1$.
	Each is canonically isomorphic to the contragredient of the other.
	\label{IgusaFreeness}
	 
	\item For any positive integer $s\le r$, the canonical trace mapping
	associated to $\pr:I_r\rightarrow I_s$ induces natural isomorphisms of $\F_p[\Delta/\Delta_s]$-modules 
	\begin{subequations}
		\begin{equation*}
			\xymatrix{
				{\pr_*:H^0(I_r, \Omega^1_{I_r/\F_p}(\SS))^{V_{\ord}}
				\tens_{\F_p[\Delta/\Delta_r]} \F_p[\Delta/\Delta_s]} \ar[r]^-{\simeq} &
				{H^0(I_s, \Omega^1_{I_s/\F_p}(\SS))^{V_{\ord}}}
			}
		\end{equation*}
		\begin{equation*}
			\xymatrix{
				{\pr_*:H^1(I_r, \O_{I_r}(-\SS))^{F_{\ord}}
				\tens_{\F_p[\Delta/\Delta_r]} \F_p[\Delta/\Delta_s]} \ar[r]^-{\simeq} &
				{H^1(I_s, \O_{I_s}(-\SS))^{F_{\ord}}}
			}
		\end{equation*}
	\end{subequations}	
	\label{IgusaControl}
\end{enumerate}
\end{proposition}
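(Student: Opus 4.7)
The plan is to derive (1) directly from Nakajima's theorem (Proposition \ref{Nakajima}) applied to the tower of Igusa curves over $I_1$, and then to deduce the control statement (2) from (1) via Nakayama's lemma over the local ring $\F_p[\Delta/\Delta_s]$ together with a duality argument.

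For (1), the map $\pr: I_r\to I_1$ is a branched Galois cover with group $\Delta/\Delta_r\simeq (1+p\Z_p)/(1+p^r\Z_p)$, a $p$-group of order $p^{r-1}$. It is finite \'etale off the supersingular divisor $\SS_1\subset I_1$ and totally (wildly) ramified over each supersingular point, so $\pr^*\SS_1 = p^{r-1}\SS_r$ has reduced support $\SS_r$. Applying Proposition \ref{Nakajima} with $D_X=\SS_1$ yields the $\F_p[\Delta/\Delta_r]$-freeness of $H^0(I_r,\Omega^1(n\SS_r))^{V_{\ord}}$ and $H^1(I_r,\O(-n\SS_r))^{F_{\ord}}$ of common rank $\gamma_{I_1} + \deg\SS_1 - 1$, independently of $n\ge 1$, together with the contragredient identification refining Remark \ref{DualityOfFVOrd}. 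That $\deg\SS_1 = \deg\SS_r = \delta$ is immediate from the bijectivity of $\pr$ on supersingular loci, while the equality $\gamma_{I_1}=\gamma$ (the $p$-rank of $J_1(N)_{\F_p}$) follows from analyzing the intermediate tame cover $I_1\to I_0\simeq X_1(N)_{\F_p}$, whose Galois group $(\Z/p\Z)^\times$ has order prime to $p$: the character decomposition of $J(I_1)$ combined with Proposition \ref{OdaDieudonne} (identifying $V$-ordinary differentials with the multiplicative Dieudonn\'e module) shows that only the trivial-character isotypic component contributes, namely $J_1(N)_{\F_p}^{\mult}[p^{\infty}]$.

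For (2), write $M_r := H^0(I_r,\Omega^1(\SS_r))^{V_{\ord}}$, $N_r := H^1(I_r,\O(-\SS_r))^{F_{\ord}}$, $\Lambda_r := \F_p[\Delta/\Delta_r]$, and $H := \Delta_s/\Delta_r$. By (1), $N_r$ is free of rank $d$ over $\Lambda_r$ and $N_s$ is free of rank $d$ over $\Lambda_s$; moreover, $N_r^H$ is $\Lambda_s$-free of rank $d$ via $\Lambda_r^H \simeq \Lambda_s$ given by the norm element $\sum_{h\in H} h$. The pullback $\pr^*: N_s \to N_r$ (well-defined on $F$-ordinary parts by Proposition \ref{CartierOp}(3)) is $\Lambda_s$-linear and lands in $N_r^H$. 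The claim is that $\pr^*: N_s \to N_r^H$ is an isomorphism: via Lemma \ref{sspoles} and Proposition \ref{OdaDieudonne}(2)--(3), this map corresponds to $\D(\Alb(\pr))$ restricted to the \'etale Dieudonn\'e modules, and since $\Alb(\pr)$ is surjective on Jacobians (hence on \'etale $p$-divisible groups) as the Albanese of a finite cover of smooth projective curves, $\pr^*$ is injective. Comparing lengths ($d\cdot p^{s-1}$ on both sides) over the local Artinian ring $\Lambda_s$ then upgrades injectivity to an isomorphism. Dualizing via the $H$-equivariant perfect pairing of Remark \ref{DualityOfFVOrd} yields the desired isomorphism $\pr_*: M_r\otimes_{\Lambda_r}\Lambda_s \simeq M_s$, and the analogous $H^1$ control statement is the same duality read in the other direction.

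The main anticipated obstacle is the identification $\gamma_{I_1}=\gamma$, which lies slightly outside the direct scope of Nakajima's theorem and requires analyzing the behavior of the multiplicative $p$-divisible group under the tame cover $I_1 \to I_0$. If the character decomposition argument above does not yield the equality cleanly on the nose, the corresponding computation can be extracted from Mazur--Wiles \cite{MW-Hida}.
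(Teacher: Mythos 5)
Your approach to part (1) coincides with the paper's: both apply Nakajima (Proposition \ref{Nakajima}) to the $\Delta/\Delta_r$-cover $I_r \to I_1$, which yields $\F_p[\Delta/\Delta_r]$-freeness, the contragredient duality, and rank $\gamma_{I_1} + \delta - 1$, where $\gamma_{I_1}$ is the Hasse--Witt invariant of the \emph{base} $I_1 = \Ig(p)$. You correctly single out the identity $\gamma_{I_1} = \gamma$ (with $\gamma$ the Hasse--Witt invariant of $X_1(N)_{\F_p}$) as the point Nakajima alone does not address, but the justification you propose---that the $\mu_{p-1}$-character decomposition shows only the trivial isotypic piece contributes---is wrong. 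By Proposition \ref{MFmodp} together with Lemma \ref{CharacterSpaces}, for $1\le j\le p-2$ one has $H^0(\Ig(p),\Omega^1)(j)^{V_{\ord}}\simeq S_{j+2}(N;\F_p)^{\ord}$, so in fact $\gamma_{I_1} = \gamma + \sum_{k=3}^p d_k = \gamma + d'$. Nakajima therefore gives rank $\gamma_{I_1}+\delta-1=\sum_{k=3}^{p+1}d_k$, the quantity appearing in Theorem \ref{main}, which exceeds the stated $\gamma+\delta-1$ by $d'$. The paper's own one-line proof of (1) is equally silent on this discrepancy, but your attempted reconciliation does not hold.

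For part (2) you take a genuinely different route. The paper proves injectivity of $\pr^*$ on the \emph{full} groups $H^1(I_s,\O(-\SS_s))\to H^1(I_r,\O(-\SS))$ via the residue long exact sequence and Lemma \ref{MW}, dualizes to obtain surjectivity of the trace, passes to $V$- and $F$-ordinary parts via Lemma \ref{sspoles}, and closes with the free-rank argument. You instead try to get injectivity directly on $F$-ordinary parts by identifying $N_r$ with an \'etale Dieudonn\'e module and invoking surjectivity of Albanese on $p$-divisible groups. That identification is off by a constant piece: $H^1(I_r,\O(-\SS))^{F_{\ord}}$ is \emph{not} $\D(J(I_r)[p^\infty]^{\et})$; the latter is $H^1(I_r,\O)^{F_{\ord}}$ by Proposition \ref{OdaDieudonne}(3), and the two differ by $\ker(\F_p^\delta\to\F_p)$ in the long exact sequence attached to $0\to\O_{I_r}(-\SS)\to\O_{I_r}\to\O_{\SS}\to 0$ (Lemma \ref{sspoles}(2) only compares pole orders $n\ge 1$ with $n=1$, not $n=1$ with $n=0$). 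Your Dieudonn\'e argument establishes injectivity only on the quotient $H^1(\O)^{F_{\ord}}$, so the length count does not yet follow. This is easily repaired---the residue term is carried isomorphically by $\pr^*$ since $\pr$ is totally ramified over $\SS$---but as written the step is incomplete. Lemma \ref{MW} is precisely what gives the paper clean injectivity on the full $H^1(\O(-\SS))$ from total ramification, avoiding this issue.
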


\begin{remark}
	Using the moduli interpretation of $I_r$
	and calculations on formal groups of universal elliptic curves,
	one can show \cite[Lemma 12.9.3]{KM} that pullback induces a canonical identification 
	\begin{equation*}
		\pr^*\Omega^1_{I_s/k}=\Omega^1_{I_r/k}(-p^{r-1}(p^r-p^s)\cdot\SS).
	\end{equation*}
	If $n$ is any positive integer, it follows easily from this that $\pr^*$ identifies
	$H^0(I_s,\Omega^1_{I_s/k}(n\cdot\SS))$ with the $\Delta_s/\Delta_r$-invariant subspace of 
	$H^0(I_r, \Omega^1_{I_r/k}(-N_{r,s}(n)\cdot\SS))$, for
	$N_{r,s}(n)=p^{r-1}(p^r-p^s)-p^{r-s}n$.
	In particular, via pullback, $H^0(I_1,\Omega^1_{I_1/k}(p^r-p))$
	is canonically identified with the $\Delta/\Delta_r$-invariant subspace
	of $H^0(I_r,\Omega^1_{I_r/k})$, so the $k$-dimension of this subspace 
	grows {\em exponentially} with $r$.  In this light, it is 
	remarkable that the $V$-ordinary subspace has controlled growth.
	We will not use these facts in what follows, though see Remark \ref{FullIgusaStructure}.  
\end{remark}

In order to prove Proposition \ref{IgusaStructure}, we require the following Lemma
({\em cf.} \cite[p. 511]{MW-Analogies}):

\begin{lemma}\label{MW}
	Let $\pi: Y\rightarrow X$ be a finite flat and generically \'etale morphism of
	smooth and geometrically irreducible curves over a field $k$.
	If there is a geometric point of $X$ over which $\pi$ is totally ramified then the induced map
	of $k$-group schemes $\Pic(\pi):\Pic_{X/k}\rightarrow \Pic_{Y/k}$ has trivial scheme-theoretic kernel.
\end{lemma}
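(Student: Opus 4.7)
The plan is to first reduce to the case $k = \kbar$ by faithfully flat base change, since the formation of $\Pic_{X/k}$, its kernel, and the property of being the trivial scheme over $k$ all descend along such base changes.  After this reduction, the totally ramified geometric point becomes a $k$-rational point $x_0 \in X(k)$ whose set-theoretic fiber under $\pi$ is a single point $y_0 \in Y(k)$.

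Set $K := \ker \Pic(\pi)$ and $e := \deg \pi$.  Composing with the degree map $\Pic_{Y/k} \to \Z$ shows $K \subseteq \Pic^0_{X/k}$, and the relation $\mathrm{Nm}_\pi \circ \pi^* = [e]$ on Jacobians puts $K$ inside the finite $k$-group scheme $\Jac(X)[e]$.  Thus $K$ is itself finite over $k$, so it will suffice to verify separately that $K(k) = 0$ and that $\Lie K = 0$.

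For $K(k) = 0$: given $L = \O_X(D) \in \Pic^0(X)$ with $\pi^* L \cong \O_Y$, write $\pi^* D = \div(g)$ for some $g \in k(Y)^\times$.  Total ramification at $y_0$ forces $v_{y_0}$ restricted to $\pi^* k(X)^\times$ to equal $e \cdot v_{x_0}$, so $v_{y_0}(g) \in e\Z$; combining this with the norm relation $\div(\mathrm{Nm}\, g) = eD$ and a local $e$th-root analysis at $x_0$ should exhibit $D$ as a principal divisor on $X$.  For $\Lie K = 0$: since $H^0(X,\O_X) \cong H^0(Y,\O_Y) = k$, the long exact sequence associated to $0 \to \O_X \to \pi_* \O_Y \to Q \to 0$ identifies $\Lie K$ with $H^0(X,Q)$.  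I will show $H^0(X,Q) = 0$ by analyzing the local structure of $Q$ at $x_0$, where $\wh{Q}_{x_0}$ is a free $\wh{\O}_{X,x_0}$-module on strictly positive powers $z, z^2, \ldots, z^{e-1}$ of a uniformizer of $\wh{\O}_{Y,y_0}$, and checking that such a forced local expansion near $x_0$ cannot glue to a non-zero global section given the degree constraints from Riemann--Hurwitz (which force $\deg Q < 0$).

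The main obstacle will be executing both vanishing arguments: translating the precise local rigidity at the totally ramified point (where $\pi$ looks like an Eisenstein $e$th-power map in formal coordinates) into the required global vanishing, both for $K(k)$ and for $\Lie K$.  Both arguments rest on the same basic principle — that the unique totally ramified fiber rigidifies the inclusion $\O_X \hookrightarrow \pi_*\O_Y$ in a way that no other fiber can — but passing from this local rigidity to the two global vanishing statements requires careful bookkeeping at the other ramification and \'etale points of $\pi$.
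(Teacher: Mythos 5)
Your overall strategy is a genuinely different route from the paper's, and it is structurally sound: after reducing to $k = \kbar$, you observe that $K := \ker\Pic(\pi)$ sits inside $\Jac(X)[e]$ via $\pi_*\circ\pi^* = [e]$, hence is finite; and a finite group scheme over an algebraically closed field is trivial once one knows $K(k)=0$ and $\Lie K = 0$. The paper instead shows directly that $\Pic(X_R)\to\Pic(Y_R)$ is injective for \emph{every} Artin local $k$-algebra $R$, handling the $k$-points, the tangent space, and all higher infinitesimal information in a single uniform descent argument. Its key input is a fact about $Z := Y\times_X Y$: that $Z$ is connected (the total ramification at $x_0$ forces every finite-flat-onto-$X$ component of $Z$ through the unique point over $x_0$) and generically reduced (from generic \'etaleness) and Cohen–Macaulay, so $H^0(Z,\O_Z) = k$. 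The descent obstruction for a trivialization of $\pi^*\L$, once rigidified at the fiber over $x_0$, then lies in $R^\times$ and is forced to be $1$. This fact about $Z$ appears nowhere in your sketch, and it is, in the end, what powers the lemma.

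The genuine gap is in your argument for $\Lie K = 0$. You correctly identify $\Lie K$ with $H^0(X,Q)$ for $Q = \pi_*\O_Y/\O_X$, a locally free sheaf of rank $e-1$, and you correctly compute $\deg Q = -\tfrac{1}{2}\deg\mathcal R < 0$. But for a bundle of rank $\ge 2$, negative degree does \emph{not} force the vanishing of $H^0$: e.g.\ $\O(1)\oplus\O(-3)$ on $\P^1$ has negative degree and a one-dimensional space of global sections, and nothing in your sketch rules out an analogous splitting of $Q$. The local description of $\wh{Q}_{x_0}$ as spanned by $z,z^2,\dots,z^{e-1}$ is correct but unexploited: a global section of $Q$ is only constrained \emph{modulo} $\wh{\O}_{X,x_0}$ near $x_0$, so there is no evident reason the local expansion ``cannot glue.'' You flag this as the main obstacle, but as written there is no path to closing it; making this work would require a new idea — and the natural one (show that the associated $\check{\text{C}}$ech $1$-cocycle for $\O_X$ must be a coboundary by looking at the pulled-back trivialization on $Y$ and its two pullbacks to $Y\times_X Y$, normalized at $(y_0,y_0)$) is precisely the paper's $Y\times_X Y$ argument, specialized to $R = k[\epsilon]$. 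The $K(k)=0$ step has a similar, smaller gap: ``$\div(\mathrm{Nm}\,g) = eD$ plus a local $e$th-root analysis at $x_0$'' is plausible but not an argument, and the most natural way to make it precise is again via the two pullbacks of a normalized trivialization to $Y\times_X Y$ (now with $R=k$). In short, both of your unfinished verifications, once pushed through, converge on the device the paper uses, and the paper's formulation has the advantage of treating them (and all higher-order $R$) at one stroke.
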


\begin{proof}
	The hypotheses and the conclusion are preserved under extension of $k$, so we may
	assume that $k$ is algebraically closed.  We fix a $k$-point $x\in X(k)$ over which
	$\pi$ is totally ramified, and let $y\in Y(k)$ be the unique $k$-point of $Y$ over $x$.
	To prove that $\Pic_{X/k}\rightarrow \Pic_{Y/k}$
	has trivial kernel, it suffices to prove that the map of groups  
	$\pi_R^*:\Pic(X_R)\rightarrow \Pic(Y_R)$
	is injective for every Artin local $k$-algebra $R$.  
	We fix such a $k$-algebra, and denote by $x_R\in X_R(R)$ and $y_R\in Y_R(R)$
	the points obtained from $x$ and $y$ by base change.  Let $\L$ be a line bundle
	on $X_R$ whose pullback to $Y_R$ is trivial; our claim is that we may choose
	a trivialization $\pi^*\L\xrightarrow{\simeq} \O_{Y_R}$ of $\pi^*\L$
	over $Y_R$ which descends to $X_R$.  In other words, by descent theory,
	we assert that we may choose a trivialization of $\pi^*\L$ with the property that the
	two pullback trivializations under the canonical projection maps
	\begin{equation}
		\xymatrix{
			{Y_R \times_{X_R} Y_R} \ar@<-0.5ex>[r]_-{\pr_2}\ar@<0.5ex>[r]^-{\pr_1} & {Y_R}
			}\label{TwoPullback}
	\end{equation}
	coincide.
	
	We first claim that the $k$-scheme $Z:=Y\times_X Y$ is connected and generically reduced.
	Since $\pi$ is totally ramified over $x$, 
	there is a unique geometric point $(y,y)$ of $Z$ mapping to $x$ under the canonical map 
	$Z\rightarrow X$.  Since this map is moreover finite flat (because $\pi:Y\rightarrow X$
	is finite flat due to smoothness of $X$ and $Y$), every connected component of $Z$ is finite flat  	
	onto $X$ and so passes through $(y,y)$.  Thus, $Z$ is connected.
	On the other hand, $\pi:Y\rightarrow X$ is generically
	\'etale by hypothesis, so there exists a dense open subscheme $U\subseteq X$
	over which $\pi$ is \'etale.  Then $Z\times_X U$ is \'etale---and hence smooth---over $U$
	and the open immersion $Z\times_X U\rightarrow Z$ is schematically dense 
	as $U\rightarrow X$ is schematically dense and $\pi$ is finite
	and flat.  As $Z$ thus contains a $k$-smooth and dense subscheme,
	it is generically reduced.
	
	Fix a choice $e$ of $R$-basis of the fiber $\L(x_R)$ of $\L$ at $x_R$.
	As any two trivializations of $\pi^*\L$ over $Y_R$ differ by an element of 
	$R^{\times}$, we may uniquely choose a trivialization which on $x_R$-fibers	
	\begin{equation}
		\xymatrix{ 
			{\L(x_R)\simeq \pi^*\L(y_R)}\ar[r]^-{\simeq} & {\O_{Y_R}(y_R)\simeq R}
			}\label{TrivOnYunit}
	\end{equation}
	carries $e$ to $1$.  The obstruction to the two pullback trivializations
	under (\ref{TwoPullback}) being equal is a global unit on $Y_R\times_{X_R} Y_R$.
	But since $Y_R\times_{X_R} Y_R = (Y\times_X Y)_R$, we have by flat base change
	\begin{equation*}
		H^0(Y_R\times_{X_R} Y_R, \O_{Y_R\times_{X_R} Y_R}) = H^0(Y\times_X Y,\O_{Y\times_X Y})\otimes_k R=R
	\end{equation*}
	where the last equality rests on the fact that $Y\times_X Y$ is connected, generically reduced, 
	and proper over $k$.
	Thus, the obstruction to the two pullback trivializations being equal is an element of $R^{\times}$,
	whose value may be calculated at any point of $Y_R\times_{X_R} Y_R$.  By our choice (\ref{TrivOnYunit})
	of trivialization of $\pi^*\L$, the value of this obstruction at the point $(y_R,y_R)$ is 1,
	and hence the two pullback trivializations coincide as desired.
\end{proof}

\begin{proof}[Proof of Proposition $\ref{IgusaStructure}$]
	Since $\pr:I_r\rightarrow I_s$ is a finite branched cover with group $\Delta_s/\Delta_r$
	and totally wildly ramified over $\SS_s$,
	we may apply Proposition \ref{Nakajima}, which gives (\ref{IgusaFreeness}).

	To prove (\ref{IgusaControl}), we work over $k:=\o{\F}_p$ and
	argue as follows.  Since $\pr:I_r\rightarrow I_{s}$
	is of degree $p^{r-s}$ and totally ramified over $\SS_{s}$, we have $\pr^*\SS_{s}=p^{r-s}\cdot\SS$;
	it follows that pullback induces a map
	\begin{equation}
		\xymatrix{
				{H^1(I_{s},\O_{I_{s}}(-\SS_{s}))}	\ar[r]^-{\pr^*} & 	{H^1(I_r,\O_{I_r}(-\SS))}
			}\label{pbH1poles}
	\end{equation}
	which we claim is {\em injective}.
	To see this,
	we observe that the long exact cohomology sequence attched to
	the short exact sequence of sheaves on $I_r$ 
	\begin{equation*}
		\xymatrix{
			0\ar[r] & {\O_{I_r}(-\SS)} \ar[r] & {\O_{I_r}} \ar[r] & {\O_{\SS}} \ar[r] & 0
		}
	\end{equation*}
	(with $\O_{\SS}$ a skyscraper sheaf supported on $\SS$) 
	yields a commutative diagram with exact rows
	\begin{equation}
	\begin{gathered}
		\xymatrix@C=15pt{
			0\ar[r] & {H^0(I_{s},\O_{I_{s}})} \ar[r]\ar[d] & 
			{H^0(I_{s},\O_{\SS_{s}})} \ar[r]\ar[d] & 
			{H^1(I_{s},\O_{I_{s}}(-\SS_{s}))} \ar[r]\ar[d] & 
			{H^1(I_{s},\O_{I_{s}})} \ar[r]\ar[d] & 0\\
			0\ar[r] & {H^0(I_r,\O_{I_r})} \ar[r] & 
			{H^0(I_r,\O_{\SS})} \ar[r] & {H^1(I_r,\O_{I_r}(-\SS))} \ar[r] & 
			{H^1(I_r,\O_{I_r})} \ar[r] & 0
		}
	\end{gathered}	
		\label{pbH1inj}
	\end{equation}
	The left-most vertical arrow are is an isomorphism because $I_r$ is geometrically connected for all $r$.
	Since $\SS$ is reduced, we have
		$H^0(I_r,\O_{\SS})=k^{\deg\SS}$
	for all $r$, so since $\pr:I_r\rightarrow I_{s}$ totally ramifies over every 
	supersingular point, the second left-most vertical arrow in 
	(\ref{pbH1inj}) is also an isomorphism.  
	Now the rightmost vertical map in (\ref{pbH1inj}) is	
	identified with the map on Lie algebras
	${\Lie \Pic^0_{I_{s}/k}} \rightarrow {\Lie\Pic^0_{I_r/k}}$
	induced by $\Pic^0(\pr)$, 
	which is injective thanks to Lemma \ref{MW} and the left-exacness of the functor $\Lie$.
	An easy diagram chase 
	using (\ref{pbH1inj}) then shows that (\ref{pbH1poles}) is injective, as claimed.

	Using again the equality $\pr^*(\SS_s)=p^{r-s}\cdot\SS_r$, 
	pullback of meromorphic differentials yields a mapping	
	\begin{equation}
		\xymatrix{
			{H^0(I_s,\Omega^1_{I_s/k}(\SS))} \ar[r] & {H^0(I_r, \Omega^1_{I_r/k}(p^{r-s}\cdot\SS))}
		}\label{diffPBinj}
	\end{equation}
	which is injective since $\pr:I_r\rightarrow I_s$ is separable.
	
	Dualizing the injective maps (\ref{pbH1poles}) and (\ref{diffPBinj}), we see that the canonical
	trace mappings
	\begin{subequations}
		\begin{equation}
			\xymatrix{
				{H^0(I_r,\Omega^1_{I_r/k}(\SS))} \ar[r]^-{\pr_*} & 	
				{H^0(I_{s},\Omega^1_{I_{s}/k}(\SS))}
			}
		\end{equation}
		\begin{equation}
			\xymatrix{
				{H^1(I_r,\O_{I_r}(-p^{r-s}\cdot \SS))} \ar[r]^-{\pr_*} & {H^1(I_{s},\O_{I_{s}}(-\SS))}
			}
		\end{equation}
	\end{subequations}	
	are surjective for all $r\ge s\ge 1$.
	Passing to $V$- (respectively $F$-) ordinary parts and using
	Lemma \ref{sspoles} (\ref{VControl}), we conclude that the canonical trace 
	mappings attached to $I_r\rightarrow I_s$ induce {\em surjective} maps
	as in Proposition \ref{IgusaStructure} (\ref{IgusaControl}).  By (\ref{IgusaFreeness}), 
	these mappings are then surjective
	mappings of free $\F_p[\Delta/\Delta_s]$-modules of the same rank, and are hence
	isomorphisms.  
\end{proof}

\begin{remark}\label{FullIgusaStructure}
	If $G$ is any cyclic group of $p$-power order, then the
	representation theory of $G$ is rather easy, even over a field $k$ of characteristic $p$.  
	Denoting by $\gamma$
	any fixed generator of $G$, for each integer $d$ with $1\le d\le \#G$, there is a unique 
	indecomposable representation of $G$ of dimension $d$, given explicitly by the 
	$k[G]$-module $V_d:=k[G]/(\gamma-1)^d$.
	By using Artin-Schreier theory for a $G$-cover of proper smooth curves $Y\rightarrow X$ over $k$, 
	for any $G$-stable Cartier divisor $D$ on $Y$ it is possible to determine
	the multiplicity of $V_d$ in the $k[G]$-module $H^0(Y,\Omega^1_{Y/k}(D))$ purely in terms
	of the ramification data of $Y\rightarrow X$.  This is carried out for $D=\emptyset$
	in \cite{ValentiniMadan}.  For the $G:=\Delta/\Delta_r$-cover $I_{r}\rightarrow I_1$, one finds
	\begin{equation*}
		H^0(I_r,\Omega^1_{I_r/k})\simeq k[G]^{g(I_1)} \oplus \left(k[G]/(\gamma-1)^{p^{r-1}-1}\right)^{p(\deg\SS_1)-1}
		\oplus\bigoplus_{d=1}^{p^{r-1}-2} \left(k[G]/(\gamma-1)^{d}\right)^{p(\deg\SS_1)}
	\end{equation*} 
	as $k[G]$-modules, where $g(I_1)$ is the genus of $I_1$.
\end{remark}

The space of meromorphic differentials $H^0(I_1,\Omega^1_{I_1/\F_p}(\SS))$
has a natural action of $\F_p^{\times}$ via the diamond operators $\langle \cdot\rangle$,
and the eigenspaces for this action are intimitely connected with mod $p$ cusp forms:

\begin{proposition}\label{MFmodp}
	Let $S_k(N;\F_p)$ be the space of weight $k$ cuspforms for $\Gamma_1(N)$ over $\F_p$,
	and denote by $H^0(I_r,\Omega^1_{I_1/\F_p}(\SS))(k-2)$ the subspace
	of $H^0(I_r,\Omega^1_{I_1/\F_p}(\SS))$ on which $\F_p^{\times}$ acts through the
	character $\langle u \rangle\mapsto u^{k-2}$. 
	For each $k$ with $ 2 < k < p+1$, there are canonical isomorphisms
	of $\F_p$-vector spaces
	\begin{equation}
	\addtocounter{equation}{1}
		S_k(N;\F_p) \simeq H^0(I_1,\Omega^1_{I_1/\F_p})(k-2) \simeq  H^0(I_1,\Omega^1_{I_1/\F_p}(\SS))(k-2) 
		\tag{$\arabic{section}.\arabic{subsection}.\arabic{equation}_k$}
		\label{WtkIsom}
	\end{equation}	
	which are equivariant for the Hecke operators, with $U_p$ acting as usual on modular forms
	and as the Cartier operator $V$ 
	on differential forms.  For $k=2,$ $p+1$, we have the following commutative diagram:
	\begin{equation*}
		\xymatrix{
			{S_2(N;\F_p)} \ar[r]^-{\simeq} \ar@{^{(}->}[d]_-{\cdot A} & 
			{H^0(I_1,\Omega^1_{I_1/\F_p})(0)} \ar@{^{(}->}[d] \\
			{S_{p+1}(N;\F_p)} \ar[r]^-{\simeq} & {H^0(I_1,\Omega^1_{I_1/\F_p}(\SS))(0)}  \\
			}
	\end{equation*}
	where $A$ is the Hasse invariant.
\end{proposition}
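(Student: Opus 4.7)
\smallskip

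\textbf{Proof proposal for Proposition \ref{MFmodp}.}

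The plan is to build an explicit isomorphism via the Kodaira--Spencer isomorphism on the modular curve together with a canonical trivialization of the Hodge bundle on $I_1$ furnished by the Igusa level structure. First, I will identify $S_k(N;\F_p)$ with $H^0(X_1(N)_{\F_p},\omega^{\otimes k}(-\mathrm{cusps}))$ and then, using the Kodaira--Spencer isomorphism $\omega^{\otimes 2}\simeq \Omega^1_{X_1(N)_{\F_p}}(\mathrm{cusps})$, rewrite this as $H^0(X_1(N)_{\F_p},\omega^{\otimes(k-2)}\otimes \Omega^1_{X_1(N)_{\F_p}})$. On $I_1=\Ig_1$ the Igusa structure $P$ (a generator of $\ker(V:E^{(p)}\to E)$) determines, via Cartier duality between $\ker V$ and $\ker F\simeq E[p]^0$, a canonical section $a\in H^0(I_1,\pi^*\omega)$ with $a^{p-1}=\pi^* A$ for $A$ the Hasse invariant; hence $\mathrm{div}(a)=\SS$ and $\pi^*\omega\simeq \O_{I_1}(\SS)$.

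Next I will define the map $f\mapsto \pi^* f/a^{k-2}$. Since $\pi:I_1\to X_1(N)_{\F_p}$ is a cyclic $\F_p^\times$-Galois cover, tamely ramified of degree $p-1$ over each supersingular point, one has $\pi^*\Omega^1_{X_1(N)_{\F_p}}=\Omega^1_{I_1/\F_p}(-(p-2)\SS)$, so the image of a weight-$k$ form lies in
\[
	H^0(I_1,\pi^*\omega^{\otimes(k-2)}\otimes \pi^*\Omega^1_{X_1(N)_{\F_p}})=H^0(I_1,\Omega^1_{I_1/\F_p}((k-p)\SS)).
\]
For $2\leq k<p+1$ this sits inside $H^0(I_1,\Omega^1_{I_1/\F_p})$ (vanishing to order $p-k$ at $\SS$), while for $k=p+1$ it sits in $H^0(I_1,\Omega^1_{I_1/\F_p}(\SS))$. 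The eigenspace assertion follows from the action of $\langle u\rangle$ on Igusa structures: $P\mapsto uP$ multiplies the induced trivialization $a$ by the appropriate power of $u$, so since $\pi^* f$ is $\F_p^\times$-invariant, $\pi^* f/a^{k-2}$ lies in the character-$(k-2)$ eigenspace. Surjectivity onto the eigenspace reduces, via the projection formula and the decomposition $\pi_*\O_{I_1}\simeq\bigoplus_{i=0}^{p-2}\omega^{-i}$ (coming from $t^i\leftrightarrow a^i$ in the tame Kummer-type presentation $I_1=\mathrm{Spec}_{X_1(N)_{\F_p}}\O[t]/(t^{p-1}-A/\eta^{p-1})$), to identifying the $(k-2)$-eigenspace of $\pi_*(\Omega^1_{I_1}((k-p)\SS))$ with $\omega^{\otimes(k-2)}\otimes\Omega^1_{X_1(N)_{\F_p}}$, which falls out of a local computation.

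The cases $k=2,\ p+1$ and the commutativity of the final diagram follow by tracking the construction: for $k=2$, $\pi^*f/a^0=\pi^*f$ is genuinely holomorphic (in fact vanishing to order $p-2$ at $\SS$), and multiplication by $A\in H^0(\omega^{\otimes(p-1)})$ sends $f\mapsto Af$, which under the map corresponds to $\pi^* f \mapsto \pi^*(Af)/a^{p-1}=\pi^* f\cdot(a^{p-1}/a^{p-1})=\pi^* f$ as differentials on $I_1$, i.e.\ the inclusion $H^0(I_1,\Omega^1)(0)\hookrightarrow H^0(I_1,\Omega^1(\SS))(0)$. For general $\ell\nmid Np$, the map is Hecke-equivariant because the Hecke correspondences on $X_1(N)_{\F_p}$ lift canonically to $I_1$ (they preserve the Igusa structure), so compatibility with $T_\ell$ and the diamond operators is automatic.

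The main obstacle I expect is the identification of $U_p$ with the Cartier operator $V$. The Cartier operator and $U_p$ both arise as the Grothendieck trace attached to absolute Frobenius, but transported to different cohomological settings, so the equivariance amounts to identifying two incarnations of the same underlying operation. Concretely, I would verify this by computing both sides on $q$-expansions at the cusp $\infty$: the standard formula $U_p(\sum a_n q^n)=\sum a_{np}q^n$ on mod-$p$ cuspforms matches the effect of the Cartier operator on the associated differential in a formal neighborhood of the unramified cusp $\infty\in I_1$, using that $V$ is characterized by $V(g^p\,d\log q)=g\,d\log q$ and $V(g\,d\log q)=0$ when $g$ has no $p$-th power in its expansion. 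Alternatively, one can invoke the classical identification (due to Serre, Katz) of $U_p$ and the Cartier operator in this setting, which gives the Hecke-equivariance essentially for free.
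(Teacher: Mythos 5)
Your proposal is essentially a self-contained reconstruction of the argument the paper delegates to \cite{tameness} (Propositions 5.7--5.10 there), together with the eigenspace identification of Lemma \ref{CharacterSpaces}; the paper's proof is literally just that citation. So in spirit you are taking the same route, with the Kodaira--Spencer isomorphism, the Igusa trivialization $a$ of $\pi^*\omega$ with $a^{p-1}=\pi^*A$, and the eigensheaf decomposition of $\pi_*\O_{I_1}$ all appearing in the cited source.

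One step you should spell out more explicitly to close the argument: your map $f \mapsto \pi^*f/a^{k-2}$ lands, as you compute, in $H^0(I_1,\Omega^1_{I_1}((k-p)\SS))(k-2)$, which for $3\le k\le p$ is \emph{a priori} a proper subspace of both $H^0(I_1,\Omega^1_{I_1})(k-2)$ and $H^0(I_1,\Omega^1_{I_1}(\SS))(k-2)$ that the proposition asserts are isomorphic to $S_k(N;\F_p)$. The needed input is that these three spaces actually coincide, which is where the paper invokes Lemma \ref{CharacterSpaces}. Concretely, since $\pi$ is totally and tamely ramified of degree $p-1$ at each supersingular point $y$, the stabilizer $\F_p^\times$ acts on a uniformizer $t$ at $y$ through a faithful character, so a local section $g(t)\,dt$ of $\Omega^1_{I_1}(\SS)$ lying in the $\tau^{k-2}$-eigenspace must have $g(t) = \sum_{m\equiv 1-k\ (p-1)} c_m t^m$; for $3\le k\le p$ the least such $m\ge -1$ is $m=p-k$, forcing vanishing to exactly the order you observed. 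This monodromy computation (or, equivalently, the residue argument in Lemma \ref{CharacterSpaces}) is what guarantees that your image $H^0(\Omega^1((k-p)\SS))(k-2)$ exhausts $H^0(\Omega^1(\SS))(k-2)$; without it you have only an injection into the claimed target, not the asserted isomorphisms. The rest of your sketch (the Kummer presentation, the $k=2,p+1$ diagram, Hecke equivariance away from $p$ by lifting correspondences, and $U_p\leftrightarrow V$ via $q$-expansions at $\infty$) is correct and matches the structure of Gross's proof.
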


\begin{proof}
	This follows from Propositions 5.7--5.10 of \cite{tameness}, using Lemma \ref{CharacterSpaces};
	we note that our forward reference to Lemma \ref{CharacterSpaces} does not result in circular reasoning.
\end{proof}

\begin{remark}\label{dMFmeaning}
	For each $k$ with $2\le k\le p+1$, let us write $d_k:=\dim_{\F_p} S_k(N;\F_p)^{\ord}$
	for the $\F_p$-dimension of the subspace of weight $k$ level $N$ cuspforms over $\F_p$
	on which $U_p$ acts invertibly.
	As in Proposition \ref{IgusaStructure} (\ref{IgusaFreeness}), 
	let $\gamma$ be the $p$-rank of the Jacobian of
	$X_1(N)_{\F_p}$ and $\delta:=\deg\SS$.  It follows immediately from Proposition
	\ref{MFmodp} that we have the equality
	\begin{equation}
		d :=\gamma+\delta-1 = \sum_{k=3}^{p+1} d_k.
	\end{equation}
\end{remark}

\subsection{Structure of the ordinary part of \texorpdfstring{$H^0(\o{\X}_r,\omega_{\o{\X}_r/\F_p})$}{
H0(Xr,omega)}
}\label{OrdStruct}

Keep the notation of \S\ref{IgusaTower} and let $\X_r/R_r$ be as in Definition \ref{XrDef}. 
As before, we denote by $\o{\X}_r:=\X_r\times_{R_r} \F_p$ the special fiber of $\X_r$; it is a 
curve over $\F_p$ in the sense of Definition \ref{curvedef}. 
In this section, using Rosenlicht's theory of the dualizing sheaf as explained in \S\ref{GD}
and the explicit description of $\o{\X}_r$ given by Proposition \ref{redXr},
we will compute the {\em ordinary part} of the cohomology $H(\o{\X}_r/\F_p)$
in terms of the de Rham cohomology of the Igusa tower.

For notational ease, as in Remark \ref{MWGood} we write $I_r^{\infty}:=I_{(r,0,1)}$ and 
$I_r^0:=I_{(0,r,1)}$ for the two ``good" components of $\o{\X}_r$.
Each of these components is abstractly isomorphic to the Igusa curve $\Ig(p^r)$ of level $p^r$ over 
$X_1(N)_{\F_p}$, and we will henceforth make this identification; for $s\le r$, we will 
write simply $\pr:I_r^{\star}\rightarrow I_s^{\star}$ for the 
the canonical degeneracy map induced by (\ref{Vmapsch}).
Using Proposition \ref{UlmerProp}, one checks that
the $\H_r$-correspondences on $\X_r$ restrict to the $\H_r$-correspondences on $I_r^{\infty}$,
(the point is that the degeneracy maps defining $U_p$ on $\X_r$ restrict to a correspondence
on $I_r^{\infty}$), while the $\H_r^*$-correspondences
on $\X_r$ restrict to the $\H_r^*$-correspondences on $I_r^{0}$.
In particular, $U_p=(F,\langle p\rangle_N)$ on $I_r^{\infty}$ and $U_p^*=(F,\id)$
on $I_r^0$.
For $\star=0,\infty$, we denote by $i_r^{\star}: I_r^{\infty}\hookrightarrow \o{\X}_r$ the canonical closed immersion.

\begin{proposition}\label{charpord}
	For each positive integer $r$, pullback of differentials along		
	$i_r^{0}$ $($respectively $i_r^{\infty}$$)$ induces a natural and $\H_r^*$ $($resp.
	$\H_r$$)$-equivariant isomorphism of $\F_p[\Delta/\Delta_r]$-modules
		\begin{equation}
				\xymatrix{
					{e_r^*H^0(\o{\X}_r,\omega_{\o{\X}_r})} \ar[r]^-{\simeq}_-{(i_r^{0})^*} &
					{H^0(I_r^{0},\Omega^1_{I_r^{0}}(\SS))^{V_{\ord}}}
					},
		\ \text{resp.}\ 
				\xymatrix{
					{e_rH^0(\o{\X}_r,\omega_{\o{\X}_r})} \ar[r]^-{\simeq}_-{(i_r^{\infty})^*} &
					{H^0(I_r^{\infty},\Omega^1_{I_r^{\infty}}(\SS))^{V_{\ord}}}
					}.
					\label{I'compIsom}			
		\end{equation}	
	which is $\Gamma$-equivariant for the ``geometric inertia action" $(\ref{gammamaps})$	
	on $\o{\X}_r$ and the action $\gamma\mapsto \langle \chi(\gamma)\rangle^{-1}$ on $I_r^0$
	$($respectively the trivial action on $I_r^{\infty}$$)$.
	The isomorphisms $(\ref{I'compIsom})$ induce identifications that are
	compatible with change in $r$:
	the four diagrams formed
	by taking 
	the interior or the exterior arrows
	\begin{equation}
		\begin{gathered}
			\xymatrix@R=30pt{
				{e_r^*H^0(\o{\X}_r,\omega_{\o{\X}_r})} 
				\ar@<-0.5ex>[r]_-{\langle p\rangle_N^{r} (i_r^0)^*}
				\ar@<0.5ex>[r]^-{F_*^r (i_r^0)^*} \ar@<-0.5ex>[d]_-{\pr_*} &
				{H^0(I_r^{0},\Omega^1_{I_r^{0}}(\SS))^{V_{\ord}}} \ar@<0.5ex>[d]^-{\pr_*}  \\
				{e_{s}^*H^0(\o{\X}_{s},\omega_{\o{\X}_{s}})} 
				\ar@<-0.5ex>[r]_-{F_*^s (i_s^0)^*}
				\ar@<0.5ex>[r]^-{\langle p\rangle_N^{s} (i_s^0)^*}
				\ar@<-0.5ex>[u]_-{\ps^*} &
				{H^0(I_s^{0},\Omega^1_{I_s^{0}}(\SS))^{V_{\ord}}} \ar@<0.5ex>[u]^-{\pr^*} 
			}
		\quad\raisebox{-24pt}{and}\quad
			\xymatrix@R=30pt{
				{e_rH^0(\o{\X}_r,\omega_{\o{\X}_r})} \ar@<-0.5ex>[r]_-{(i_r^{\infty})^*}
				\ar@<0.5ex>[r]^-{F_*^r (i_r^{\infty})^*}
				\ar@<-0.5ex>[d]_-{\ps_*} &
				{H^0(I_r^{\infty},\Omega^1_{I_r^{\infty}}(\SS))^{V_{\ord}}} \ar@<0.5ex>[d]^-{\pr_*}  \\
				{e_{s}H^0(\o{\X}_{s},\omega_{\o{\X}_{s}})} \ar@<-0.5ex>[r]_-{F_*^s (i_s^{\infty})^*}
				\ar@<0.5ex>[r]^-{(i_s^{\infty})^*}
				\ar@<-0.5ex>[u]_-{\pr^*} &
				{H^0(I_s^{\infty},\Omega^1_{I_s^{\infty}}(\SS))^{V_{\ord}}} \ar@<0.5ex>[u]^-{\pr^*} 
			}			
		\end{gathered}\label{rCompatDiagrams}
		\end{equation}
	are all commutative for $s\le r$.
	Via the automorphism $\o{w}_{r}$ of $\o{\X}_r$ and the identification
	$I_{r}^0\simeq \Ig(p^r)\simeq I_{r}^{\infty}$, the first diagram of $(\ref{rCompatDiagrams})$
	is carried isomorphically and compatibly on to the second.
	The same assertions hold true if we replace $\o{\X}_r$ with $\nor{\o{\X}}_r$ and $\Omega^1_{I_r^{\star}}(\SS)$
	with $\Omega^1_{I_r^{\star}}$ throughout.
		\end{proposition}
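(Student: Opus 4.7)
The plan is to use Rosenlicht's explicit description of the dualizing sheaf (Proposition \ref{Rosenlicht}) together with the component-by-component analysis of the $U_p$-correspondence in characteristic $p$ (Proposition \ref{UlmerProp}) to reduce the computation of $e_r^* H^0(\o{\X}_r, \omega_{\o{\X}_r})$ to a block-matrix calculation on a direct sum of Igusa-curve cohomologies. By Rosenlicht, $H^0(\o{\X}_r, \omega_{\o{\X}_r})$ embeds as a subspace of $\bigoplus_{(a,b,u)} H^0(\nor{I}_{(a,b,u)}, \Omega^1(n\SS))$ (for a suitable pole bound $n$) cut out by a sum-of-residues-vanishing condition at each supersingular point of $\o{\X}_r$, and the map $(i_r^0)^*$ is simply projection to the $(0,r,1)$-summand.

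The critical input, obtained by tracing through Proposition \ref{UlmerProp}, is the following block structure of $U_p^* = (\pi_2)_* \pi_1^*$ in the ``$a$-grading''. First, on the $(0,r,1)$-summand $V_0 := H^0(I_r^0, \Omega^1(\SS))$, the operator $U_p^*$ acts as $F_* \circ \langle p\rangle_N^* = \langle p\rangle_N^{-1} V$, because the only component of $\o{\Y}_r$ whose $\pi_2$-image lands in $I_r^0$ is $J_{(0,r+1,1)}$ with $\pi_1 = \langle p\rangle_N$ and $\pi_2 = F$; this operator is invertible precisely on the $V$-ordinary subspace. Second, applied to any $(a,b,u)$-summand with $a, b \ge 1$, $U_p^*$ lands in the sum of $(a+1, b-1, ?)$-summands via $\pr_*$, $\pr^*$, $\id$, or diamond operators, never involving the absolute Frobenius. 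Third, the only component of $\o{\Y}_r$ mapping to $I_r^\infty = I_{(r,0,1)}$ under $\pi_1$ is $J_{(r+1,0,1)}$ via $\pi_1 = F$, and since $F^* \equiv 0$ on sheaves of differentials in characteristic $p$, this contribution vanishes. Hence $U_p^*$ is strictly subdiagonal in the $a$-grading off the $(0,0)$-block, nilpotent of order $\le r$ on $\bigoplus_{a \ge 1}$, with $(0,0)$-block equal to $\langle p\rangle_N^{-1} V$. Applying Lemma \ref{HW}, the idempotent $e_r^*$ picks out the subspace where $U_p^*$ acts invertibly, and the subdiagonal structure forces this subspace to project isomorphically under $(i_r^0)^*$ onto $H^0(I_r^0, \Omega^1(\SS))^{V_{\ord}}$; both injectivity and surjectivity follow formally from the Fitting decomposition applied to this matrix shape.

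Naturality, $\H_r^*$-equivariance, and $\F_p[\Delta/\Delta_r]$-linearity are immediate. $\Gamma$-equivariance comes from Proposition \ref{AtkinInertiaCharp}, which shows that $\gamma \in \Gamma$ restricted to $I_{(0,r,1)}$ (the case $a < b$) acts as $\langle \chi(\gamma)\rangle^{-1}$, while on $I_{(r,0,1)}$ (the case $b \le a$) it acts trivially. The compatibility diagrams $(\ref{rCompatDiagrams})$ follow from Proposition \ref{pr1desc}: restricted to the good components, $\o{\pr}|_{I_{r+1}^\infty} = \pr$ and $\o{\ps}|_{I_{r+1}^0} = \langle p\rangle_N \pr$, and a direct chase combined with the fact that $\langle p\rangle_N^r$ and $F_*^r = V^r$ are automorphisms of $H^0(I_r^0, \Omega^1(\SS))^{V_{\ord}}$ establishes all four commutativities; in particular, the interchangeability of the horizontal arrows $\langle p\rangle_N^r (i_r^0)^*$ and $F_*^r (i_r^0)^*$ reflects that both differ from $(i_r^0)^*$ by units in $\End(V_0^{V_{\ord}})$. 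The Atkin--Lehner intertwining between the $I_r^0$ and $I_r^\infty$ versions follows from Proposition \ref{ALinv}, which shows $w_r$ exchanges $U_p$ with $U_p^*$ and swaps the two good components of $\o{\X}_r$. The parallel statements for $\nor{\o{\X}}_r$ (replacing $\omega$ by $\Omega^1$) use exactly the same block-matrix argument, since normalization simply separates the irreducible components and removes the residue-gluing condition.

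The main obstacle will be rigorously verifying that the block-matrix structure transfers faithfully from the ambient direct sum $\bigoplus_{(a,b,u)} H^0(\nor{I}_{(a,b,u)}, \Omega^1(n\SS))$ to the residue-constrained subspace $H^0(\o{\X}_r, \omega_{\o{\X}_r})$: one must confirm both that $U_p^*$ preserves the residue-vanishing conditions (expected from the fact that the Cartier operator scales residues by Frobenius, see Proposition \ref{CartierOp}(\ref{CartierResidue})) and that the image of $V_0^{V_{\ord}}$ under the inverse $a$-shift filtration produces preimages genuinely lying in $H^0(\o{\X}_r, \omega)$, so that surjectivity is compatible with the residue constraints at supersingular points.
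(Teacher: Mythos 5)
Your overall strategy coincides with the paper's: Rosenlicht's explicit dualizing sheaf, Ulmer's component formulas for $U_p^*$, and a Fitting decomposition. The block structure you describe is correct, and injectivity of $(i_r^0)^*$ on the ordinary part really is formal from it: iterating $U_p^*$ at least $r$ times makes every component depend only on the $(0,r,1)$-component, so an ordinary class vanishing there is zero.

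The genuine gap is the assertion that ``surjectivity follows formally from the Fitting decomposition applied to this matrix shape.'' It does not. The Fitting decomposition identifies the ordinary subspace of $H^0(\o{\X}_r,\omega_{\o{\X}_r})$, but says nothing a priori about whether it surjects onto $H^0(I_r^0,\Omega^1(\SS))^{V_{\ord}}$. You flag this at the end, but that verification is the substance of the proof. One must construct an explicit inverse $\gamma_r^0$ by propagating $\eta\in H^0(I_r^0,\Omega^1(\SS))^{V_{\ord}}$ to every other component via $F_*^{-a}$, $\pr_*$, $\pr^*$ and diamond twists (legitimate because $V=F_*$ is invertible on the $V$-ordinary subspace), and then verify by a residue computation at each supersingular point that $\gamma_r^0(\eta)$ actually lies in $H^0(\o{\X}_r,\omega_{\o{\X}_r})$. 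That computation uses the fact that $V$-ordinary meromorphic differentials have at worst simple poles along $\SS$ (Lemma \ref{sspoles}), so residues commute with multiplication by local regular functions; the Cartier operator's compatibility with residues (Proposition \ref{CartierOp}(\ref{CartierResidue})); and the residue theorem on each irreducible component of $\nor{\o{\X}}_r$. There is a second point your sketch does not address: that for $\xi\in e_r^*H^0(\o{\X}_r,\omega_{\o{\X}_r})$ the pullback $(i_r^0)^*\xi$ lands in $H^0(I_r^0,\Omega^1(\SS))$ --- simple poles only --- rather than in $\Omega^1(n\SS)$ for the a priori pole bound $n$; this needs the bootstrap $\xi=(U_p^*)^n\xi^{(n)}$ for $n$ large, together with the pole-improving property of $F_*$ (Proposition \ref{CartierOp}(\ref{CartierImproves})).
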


\begin{proof}
	We may and do work over $k:=\o{\F}_p$, and we abuse notation slightly by 
	writing $\o{\X}_r$ for the {\em geometric} special fiber of $\X_r$.
	If $X$ is an $\F_p$-scheme, we likewiseagain write $X$ it's base change to $k$,
	and we write $F:X\rightarrow X$ for the base change of the absolute Frobenius of
	$X$ over $\F_p$ to $k$.
	Let $\nor{\o{\X}}_r\rightarrow \o{\X}_r$ be the
	normalization map; by Proposition \ref{redXr}, we know that 
	$\nor{\o{\X}}_r$ is the disjoint union of proper smooth and irreducible
	Igusa curves $I_{(a,b,u)}$ indexed by triples $(a,b,u)$ with 
	with $a,b$ nonnegative integers satisfying $a+b=r$ and 
	$u\in(\Z/p^{\min(a,b)}\Z)^{\times}$.  Via Proposition \ref{Rosenlicht}, 
	we identify $\omega_{\o{\X}_r/k}$ with Rosenlicht's sheaf $\omega_{\o{\X}_r/k}^{\reg}$ of
	regular differentials, and we simply write $\omega_{\o{\X}_r}$ for this sheaf.
	By Definition \ref{OmegaReg} and Remark \ref{OmegaRegMero}, we have a functorial
	injection of $k$-vector spaces
	\begin{equation}
		 \xymatrix{
		 	{H^0(\o{\X}_r,\omega_{\o{\X}_r})}\ar@{^{(}->}[r] & 
			{H^0(\nor{\o{\X}}_r,\underline{\Omega}^1_{k(\nor{\o{\X}}_r)})\simeq 
			\prod\limits_{(a,b,u)}\Omega^1_{k(I_{(a,b,u)})}}
			}\label{dualizing2prod}
	\end{equation}
	with image precisely those elements $(\eta_{(a,b,u)})$ of the product that satisfy
	$\sum \res_{x_{(a,b,u)}}(s\eta_{(a,b,u)}) =0$
	for each supersingular point $x\in \o{\X}_r(k)$ and all $s\in \O_{\o{\X}_r,x}$, where
	$x_{(a,b,u)}$ is the unique point of $I_{(a,b,u)}$ lying
	over $x$ and the sum is over all triples $(a,b,u)$ as above.
	We henceforth identify $\eta\in H^0(\o{\X}_r,\omega_{\o{\X}_r})$ with its image 
	under (\ref{dualizing2prod}), and we denote by
	$\eta_{(a,b,u)}$ the $(a,b,u)$-component of $\eta$.

	Recall from (\ref{HeckeDef})
	that the correspondence $U_p:=(\pi_1,\pi_2)$ 
	on $\X_r$ given by the 
	degeneracy maps $\pi_1,\pi_2:\Y_r\rightrightarrows \X_r$ of (\ref{Upcorr}) 
	yields endomorphisms $U_p:=(\pi_1)_*\circ\pi_2^*$ and $U_p^*:=(\pi_2)_*\circ\pi_1^*$
	of $H^0(\X_r,\omega_{\X_r/R_r})$;
	we will again denote by $U_p$ and $U_p^*$ the induced endomorphisms $U_p\otimes 1$ and $U_p^*\otimes 1$ of
	\begin{equation*}
		H^0(\o{\X}_r,\omega_{\o{\X}_r}) \simeq H^0(\X_r,\omega_{\X_r/R_r})\otimes_{R_r} k,
	\end{equation*}
	where the isomorphism is
	the canonical one of Lemma \ref{ReductionCompatibilities} (\ref{BaseChngDiagram}).	
	By the functoriality of normalization,
	we have an induced correspondence $U_p:=(\nor{\o{\pi}}_1,\nor{\o{\pi}}_2)$
	on $\nor{\o{\X}}_r$,
	and we write $U_p$ and $U_p^*$ for the resulting endomorphisms (\ref{HeckeDef})
	of $H^0(\nor{\o{\X}}_r,\underline{\Omega}^1_{k(\nor{\o{\X}}_r)})$.
	By Lemma \ref{ReductionCompatibilities} (\ref{PTBCCompat}), 
	the map (\ref{dualizing2prod}) is then $U_p$ and $U_p^*$-equivariant.  
	The  Hecke correspondences away from $p$
	and the diamond operators act on the source of (\ref{dualizing2prod})
	via ``reduction modulo $p$" and on the target via the induced
	correspondences in the usual way (\ref{HeckeDef}), and 
	the map (\ref{dualizing2prod}) compatible with these
	actions 
	thanks to Lemma 
	\ref{ReductionCompatibilities} (\ref{PTBCCompat}).  Similarly, 
	the semilinear ``geometric inertia" action of $\Gamma:=\Gal(K_{\infty}/K_0)$
	on $\X_r$ induces a linear action on $\nor{\o{\X}}_r$ as in Proposition
	\ref{AtkinInertiaCharp} (\ref{InertiaCharp}), and the map (\ref{dualizing2prod}) is equivariant
	with respect to these actions. 		
	
	We claim that for {\em any} meromorphic differential $\eta = (\eta_{(a,b,u)})$ on $\nor{\o{\X}}_r$, we 
	have
	\begin{subequations}
	\begin{equation}
			\left({U_p}\eta \right)_{(a,b,u)} = \begin{cases}
				 F_*\eta_{(r,0,1)} &:\quad (a,b,u)=(r,0,1)\\
				\pr_*\eta_{(a+1,b-1,u)} &:\quad 0 < b \le a \\
				\sum\limits_{\substack{u'\in (\Z/p^{a+1}\Z)^{\times} \\ u'\equiv u\bmod p^{a}}} 
				\langle u'\rangle \eta_{(a+1,b-1,u)}
				 &:\quad r\ \text{odd},\ a=b-1\\
				 \sum\limits_{\substack{u'\in (\Z/p^{a+1}\Z)^{\times} \\ u'\equiv u\bmod p^{a}}} 
				\rho^*\langle u'\rangle \eta_{(a+1,b-1,u')}
				 &:\quad r\ \text{even},\ a=b-2\\				 
				\sum\limits_{\substack{u'\in (\Z/p^{a+1}\Z)^{\times}\\ u'\equiv u\bmod p^a}} 
				\pr^*\eta_{(a+1,b-1,u')} &:\quad 0 \le a < b-2\\
			\end{cases}\label{Up1}
	\end{equation}
	The proof of this claim is an easy exercise using the definition of $U_p$, the explicit description of 
	the maps $\nor{\o{\pi}}_1$ and $\nor{\o{\pi}}_2$
	given in Proposition \ref{UlmerProp}, and the fact that $F^*$ kills any global 
	meromorphic differential form on a scheme of characteristic $p$.	
	In a similar manner, one derives the explicit description 
	\begin{equation}
			\left(U_p^*\eta \right)_{(a,b,u)} = \begin{cases}
				\langle p\rangle_N^{-1}F_*\eta_{(0,r,1)} &:\quad (a,b,u)=(0,r,1)\\
				\pr_*\eta_{(a-1,b+1,u)} &:\quad 0 < a < b \\
				\langle u\rangle^{-1}\pr_*\eta_{(a-1,b+1,u)} &:\quad r\ \text{even},\ b=a\\				 
				 \sum\limits_{\substack{u'\in (\Z/p^{b+1}\Z)^{\times} \\ u'\equiv u\bmod p^{b}}} 
				\langle u'\rangle^{-1} \eta_{(a-1,b+1,u')}
				 &:\quad r\ \text{odd},\ b=a-1\\
				\sum\limits_{\substack{u'\in (\Z/p^{b+1}\Z)^{\times}\\ u'\equiv u\bmod p^b}} 
				\pr^*\eta_{(a-1,b+1,u')} &:\quad 0 \le b < a-1\\
			\end{cases}\label{Up2}
	\end{equation}
	\end{subequations}
	The crucial observation for our purposes is that for $0 < b \le r$, the $(a,b,u)$-component of 
	${U_p}\eta$ depends only on the $(a+1,b-1,u')$-components of $\eta$ for varying $u'$, and similarly
	for $0<a\le r$ the $(a,b,u)$-component of $U_p^*\eta$ depends only on the $(a-1,b+1,u')$-components 
	of $\eta$.
	By induction, we deduce
	\begin{subequations}
	\begin{equation}
		\left(U_p^n\eta \right)_{(a,b,u)} = \begin{cases}
				\pr_*^b F_*^{n-b}\eta_{(r,0,1)} &:\quad  b \le a \\
				\sum\limits_{\substack{u'\in (\Z/p^{b}\Z)^{\times}\\ u'\equiv u\bmod p^a}} 
				\langle u'\rangle \pr_*^a F_*^{n-b}\eta_{(r,0,1)} &:\quad a < b 			
				\end{cases}\label{Upn1}
	\end{equation}
	and
	\begin{equation}
		\left({U_p^*}^n\eta \right)_{(a,b,u)} = \begin{cases}
				\pr_*^a\langle p\rangle_N^{a-n}F_*^{n-a}\eta_{(0,r,1)} &:\quad  a < b \\
				\sum\limits_{\substack{u'\in (\Z/p^{a}\Z)^{\times}\\ u'\equiv u\bmod p^b}} 
				\langle u'\rangle^{-1} \pr_*^b\langle p\rangle_N^{a-n} F_*^{n-a}\eta_{(0,r,1)} &:\quad b \le a				
				\end{cases}\label{Upn2}
	\end{equation}
	\end{subequations}
	for any $n\ge r\ge 1$.
	
	For any $r>0$ and for $\star=\infty, 0$ we define maps
	\begin{align*}
		\xymatrix{
			{\gamma_r^{\star}: H^0(I_r^{\star},\Omega^1_{I_r^{\star}}(\SS))^{V_{\ord}}} \ar[r] & 
			{H^0(\nor{\o{\X}}_r,\underline{\Omega}^1_{k(\nor{\o{\X}}_r)})}
			}
	\end{align*}
	by
	\begin{subequations}
		\begin{equation}
			(\gamma_{r}^{\infty}(\eta))_{(a,b,u)}: = \begin{cases}
				\pr_*^bF_*^{-b}\eta &:\quad  b\le a\\
		\sum\limits_{\substack{u'\in (\Z/p^{b}\Z)^{\times}\\ u'\equiv u\bmod p^a}} 
				\langle u'\rangle \pr_*^a F_*^{-b}\eta &:\quad a < b \\
				\end{cases}\label{betadef}
		\end{equation}
		and
		\begin{equation}
		(\gamma_{r}^{0}(\eta))_{(a,b,u)}: = \begin{cases}
				\pr_*^a\langle p\rangle_N^{a}F_*^{-a}\eta &:\quad  a < b\\
		\sum\limits_{\substack{u'\in (\Z/p^{a}\Z)^{\times}\\ u'\equiv u\bmod p^b}} 
				\langle u'\rangle^{-1} \pr_*^b \langle p\rangle_N^{a} F_*^{-a}\eta_{(0,r,1)} &:\quad b \le a				

				\end{cases}\label{gammadef}
		\end{equation}
	\end{subequations}
	These maps are well-defined because $F_*=V$ is invertible on the $V$-ordinary subspace,
	and they are immediately seen to be injective by looking at $(r,0,1)$-components.
	Note moreover that the $(a,b,u)$-component of $\gamma_r^{\star}(\eta)$ is independent of $u$.
	
	We claim that 
	the maps $\gamma_r^{\star}$ have image in
	$H^0(\nor{\o{\X}}_r,\omega_{\nor{\o{\X}}_r})$ (i.e. that they factor through (\ref{dualizing2prod})).
	To see this, we proceed as follows. 
	Suppose that $x$ is any supersingular point on $\o{\X}_r$ and $s\in \O_{\o{\X}_r,x}$ is arbitrary. 
	By Proposition \ref{Rosenlicht} and Definition \ref{OmegaReg},
	we must check that the sum of the residues of $s\gamma^{\infty}(\eta)$
	at all $k$-points of $\nor{\o{\X}}_r$ lying over $x$ is zero.  
	Using (\ref{betadef}), we calculate that this sum is equal to
	\begin{align}
		\sum_{b \le a} 
		\sum_{u\in (\Z/p^b\Z)^{\times}}\res_{x_{(a,b,u)}}(s\pr_*^bF_*^{-b}\eta)
		+\sum_{a < b} \sum_{u\in (\Z/p^b\Z)^{\times}}
		\res_{x_{(a,b,u)}}(s\langle u\rangle \pr_*^aF_*^{-b}\eta)
	\label{residuecalc1}
	\end{align}
	where $x_{(a,b,u)}$ denotes the unique point of the $(a,b,u)$-component of $\nor{\o{\X}}_r$ over $x$,
	and the outer sums range over all nonnegative integers $a,b$ with $a+b=r$.
	We claim that for any meromorphic differential $\omega$ on $I_{(a,b,u)}$ and any 
	supersingular point $y$ of $I_{(a,b,u)}$ over $x$, we have
	\begin{subequations}
	\begin{equation}
		 \res_y(\omega) = \res_y(\langle u\rangle\omega)\label{DiamondFix}
	\end{equation}
	for all $u\in \Z_p^{\times}$, and, if in addition $\omega$ is $V$-ordinary,
	\begin{equation}
		\res_y(s\omega) = s(x)\res_y(\omega)\label{FunctionPullOut}
	\end{equation}
	\end{subequations}
	Indeed, (\ref{DiamondFix}) is a consequence of (\ref{TateFormula}),
	using the fact that the automorphism
	$\langle u\rangle$ of $I_{(a,b,u)}$ fixes every supersingular point, 
	while (\ref{FunctionPullOut}) is deduced by thinking about formal expansions of differentials
	at $y$ and using the fact that a $V$-ordinary meromorphic differential has at worst simple poles 
	thanks to Lemma \ref{sspoles}.  Via (\ref{DiamondFix})--(\ref{FunctionPullOut}), we reduce the
	sum (\ref{residuecalc1}) to
	\begin{align}
		\sum_{a + b = r} 
		\sum_{u\in (\Z/p^b\Z)^{\times}} s(x)\res_{x_{(a,b,u)}}(\pr_*^{\min(a,b)}F_*^{-b}\eta)
		&= \sum_{a + b = r} 
		\varphi(p^b)s(x)\res_{x_{(a,b,1)}}(\pr_*^{\min(a,b)}F_*^{-b}\eta)\nonumber\\
		&=s(x)\res_{x_{(r,0,1)}}(\eta) - s(x)\res_{x_{(r,0,1)}}(F_*^{-1}\eta)
		\label{twoterm}
	\end{align}
	where the first equality above follows from the fact
	that for {\em fixed} $a,b$, the points $x_{(a,b,u)}$ for varying $u\in (\Z/p^{\min(a,b)}\Z)^{\times}$
	are all identified with the {\em same} point on $\Ig(p^{\max(a,b)})$, and the second 
	equality is a consequence of (\ref{TateFormula}), since $\rho(x_{(r,0,1)})=x_{(r-1,1,1)}$.
	As $\eta$ is $V$-ordinary, there exists a $V$-ordinary meromorphic differential $\xi$
	on $I_r^0$ with $\eta=F_*\xi$; substituting this expression for $\eta$ in to (\ref{twoterm})
	and applying (\ref{TateFormula}) once more, we conclude that (\ref{twoterm}) is zero,
	as desired.
	That $\gamma_r^{0}$ has image in $H^0(\o{\X}_r,\omega_{\o{\X}_r/k})$ 
	follows from a nearly identical calculation, and we omit the details.

		It follows immediately from our calculations (\ref{Up1})--(\ref{Up2}) and the definitions 
		(\ref{betadef})--(\ref{gammadef}) that
		the relations  $U_p\circ \gamma_r^{\infty}=\gamma_r^{\infty}\circ F_*$
		and ${U_p^*}\circ\gamma^0_r=\gamma_r^0\circ \langle p\rangle_N^{-1}F_*$ hold.  
		Since $F_*$ is invertible on the source of $\gamma_r^{\star}$,
		it follows immediately 
		that $\gamma_r^0$  has image
		contained in $e_r^*H^0(\o{\X}_r,\omega_{\o{\X}_r})$ and that
		$\gamma_r^{\infty}$ has image contained in
		$e_rH^0(\o{\X}_r,\omega_{\o{\X}_r})$.

		To see that these containments an equalities, we proceed as follows.
		Suppose that $\xi\in e_rH^0(\o{\X}_r,\omega_{\o{\X}_r})$ is arbitrary.  We claim that 
		the meromorphic differential $\xi_{(r,0,1)}$ on $I_{r}^{\infty}$ has at worst {\em simple}
		poles along $\SS$ (and is holomorphic outside $\SS$).  Indeed, for each $n>0$ we may find
		$\xi^{(n)}\in {e_r}H^0(\o{\X}_r,\omega_{\o{\X}_r})$ with $\xi=U_p^n\xi^{(n)}$.  
		As discussed in \S\ref{GD}, when viewed as a meromorphic differential on $\nor{\o{\X}}_r$
		any section of $\omega_{\o{\X}_r}$
		has poles of order bounded
		by a constant depending only on $r$ (see \cite[Lemma 5.2.2]{GDBC}).   		
		Since $F:I_r^{\infty}\rightarrow I_r^{\infty}$ is inseparable of degree $p$ (so
		totally ramified over every
		supersingular point), it follows from Remark \ref{poletrace} that there 
		exists $n > r$ such that 
		the meromorphic differential
		$F_*^{n}\xi^{(n)}_{(r,0,1)}$ has at worst simple poles along $\SS$;
		by the formula (\ref{Upn1}) for $U_p^n$, we conclude that the same is true of 
		$$\xi_{(r,0,1)} = (U_p^n\xi^{(n)})_{(r,0,1)}=F_*^n\xi^{(n)}_{(r,0,1)}.$$  
		Applying this with $\xi^{(r)}$ in the role of $\xi$, and using
		(\ref{Upn1}) and (\ref{betadef}) we calculate
		\begin{equation}
			\xi=U_p^{r}\xi^{(r)} = \gamma_r^{\infty}(F_*^r\xi^{(r)}_{(r,0,1)}),
		\end{equation}
		so $\gamma_r^{\infty}$ surjects onto $e_rH^0(\o{\X}_r,\omega_{\o{\X}_r})$ and is
		hence an isomorphism onto this image.  A nearly identical argument shows that 
		$\gamma_r^{0}$ is an isomorphism onto $e_r^*H^0(\o{\X}_r,\omega_{\o{\X}_r})$.
		
		Since pullback of meromorphic differentials along 
		$i_r^{\infty}:I_r^{\infty}\hookrightarrow \nor{\o{\X}}_r$ is given by projection
		\begin{equation}
			\xymatrix@C=35pt{
			{H^0(\nor{\o{\X}}_r,\underline{\Omega}^1_{k(\nor{\o{\X}}_r)})\simeq 
			\prod\limits_{(a,b,u)} H^0(I_{(a,b,u)},\u{\Omega}^1_{k(I_{(a,b,u)})})} \ar[r]^-{\proj_{(r,0,1)}} & 
			{H^0(I_r^{\infty},\u{\Omega}^1_{k(I_r^{\infty})})}
			}\label{PBisProj}
		\end{equation}
		onto the $(r,0,1)$-component,
		the composition of $\gamma_r^{\infty}$ and (the restriction of) 
		$(i_r^{\infty})^*$ in either
		order is the identity map.  Since $i_r^{\infty}$ is compatible with the $\H_r$-correspondences, 
		the resulting isomorphism (\ref{I'compIsom})
		is $\H_r$-equivariant (with $U_p$ acting on the target via $F_*$).  Similarly, 
		since the ``geometric inertia" action (\ref{gammamaps}) of 
		$\Gamma$ on $\X_r$ is compatible via $i_r^{\infty}$ with the trivial action
		on $I_r^{\infty}$ by Proposition \ref{AtkinInertiaCharp},		
		the isomorphism (\ref{I'compIsom}) is equivariant for these actions of $\Gamma$.
		A nearly identical analysis shows that $(i_r^{0})^*$ is $\H_r^*$-compatible
		(with $U_p^*$ acting on the target as $\langle p\rangle_N^{-1} F_*$) and $\Gamma$-equivariant for the
		action of $\Gamma$ on $I_r^{0}$ via $\langle \chi(\cdot)\rangle^{-1}$
		The commutativity of
		the four diagrams in (\ref{rCompatDiagrams}) is an immediate consequence
		of the descriptions of the degeneracy mappings $\o{\pr},\o{\ps}$ on $\nor{\o{\X}}_r$ furnished by Proposition
		\ref{pr1desc} and the explication (\ref{PBisProj}) of pullback by $i_r^{\star}$ in terms
		of projection.  That $\o{w}_r$ interchanges the two diagrams 
		in (\ref{rCompatDiagrams}) is an immediate consequence of Proposition \ref{ALinv}.		
		
		Finally, that the assertions of Proposition \ref{charpord} all hold if $\o{\X}_r$ and
		 $\Omega^1_{I_r^{\star}}(\SS)$
		are replaced by $\nor{\o{\X}}_r$ and $\Omega^1_{I_r^{\star}}$, respectively, follows from a
		a similar---but much simpler---argument.  The point is that the maps $\gamma_r^{\star}$ 
		of (\ref{betadef})--(\ref{gammadef})
		visibly carry $H^0(I_r^{\star},\Omega^1_{I_r^{\star}})^{V_{\ord}}$ into 
		$H^0(\nor{\o{\X}}_r,\Omega_{\nor{\o{\X}}_r}^1)$, from which it follows 
		via our argument that they induce the claimed isomorphisms.
\end{proof}		
		
Since $\o{\X}_r$ is a proper and geometrically connected curve over $\F_p$, Proposition \ref{HodgeFilCrvk} 
(\ref{HodgeDegenerationField}) provides short exact sequences of 
$\F_p[\Delta/\Delta_r]$-modules with linear $\Gamma$ and $\H_r^*$ (respectively $\H_r$)-action
\begin{subequations}
\begin{equation}
	\xymatrix{
		0\ar[r] & {e_r^*H^0(\o{\X}_r,\omega_{\o{\X}_r/\F_p})} \ar[r] & {e_r^*H^1(\o{\X}_r/\F_p)} \ar[r] &
		{e_r^*H^1(\o{\X}_r,\O_{\o{\X}_r})} \ar[r] & 0
	}\label{sesincharp1}
\end{equation}
respectively
\begin{equation}
	\xymatrix{
		0\ar[r] & {e_rH^0(\o{\X}_r,\omega_{\o{\X}_r/\F_p})} \ar[r] & {e_rH^1(\o{\X}_r/\F_p)} \ar[r] &
		{e_rH^1(\o{\X}_r,\O_{\o{\X}_r})} \ar[r] & 0
	}\label{sesincharp2}
\end{equation}
\end{subequations}
which are canonically $\F_p$-linearly dual to each other.  We likewise have such exact sequences
in the case of $\nor{\o{\X}}_r$; note that since $\nor{\o{\X}}_r$ is smooth,
the short exact sequence $H(\nor{\o{\X}}_r/\F_p)$ is simply the Hodge filtration
of $H^1_{\dR}(\nor{\o{\X}}_r/\F_p)$.

\begin{corollary}\label{SplitIgusa}
		The absolute Frobenius morphism of $\o{\X}_r$ over $\F_p$ induces a natural
		$\F_p[\Delta/\Delta_r]$-linear, $\Gamma$-compatible, and $\H_r^*$
		$($respectively $\H_r$$)$ equivariant splitting of $(\ref{sesincharp1})$
		$($respectively $(\ref{sesincharp2})$$)$.
		Furthermore, for each $r$ we have natural isomorphisms of split short exact sequences 
		\begin{subequations}
		\begin{equation}
			\xymatrix@C=15pt{
				0\ar[r] & {e_r^*H^0(\o{\X}_r,\omega_{\o{\X}_r/\F_p})} \ar[r]\ar[d]_-{F_*^r (i_r^0)^*}^-{\simeq} & 
				{e_r^*H^1(\o{\X}_r/\F_p)} \ar[r]\ar[d]^-{\simeq} &
				{e_r^*H^1(\o{\X}_r,\O_{\o{\X}_r})} \ar[r] & 0\\
				0 \ar[r] & {H^0(I_r^{0},\Omega^1(\SS))^{V_{\ord}}} \ar[r] &
				{H^0(I_r^{0},\Omega^1(\SS))^{V_{\ord}}\oplus
				H^1(I_r^{\infty},\O(-\SS))^{F_{\ord}}} \ar[r] & 
				{H^1(I_r^{\infty},\O(-\SS))^{F_{\ord}}} \ar[r]\ar[u]_-{\VDual{(i_r^{\infty})^*}}^-{\simeq} & 0
			}\label{LowerIsom1}
		\end{equation}
		\begin{equation}
			\xymatrix@C=15pt{
				0\ar[r] & {e_rH^0(\o{\X}_r,\omega_{\o{\X}_r/\F_p})} 
				\ar[r]\ar[d]_-{F_*^r (i_r^{\infty})^*}^-{\simeq} & 
				{e_rH^1(\o{\X}_r/\F_p)} \ar[r]\ar[d]^-{\simeq} &
				{e_rH^1(\o{\X}_r,\O_{\o{\X}_r})} \ar[r] & 0\\
				0 \ar[r] & {H^0(I_r^{\infty},\Omega^1(\SS))^{V_{\ord}}} \ar[r] &
				{H^0(I_r^{\infty},\Omega^1(\SS))^{V_{\ord}}\oplus
				H^1(I_r^{0},\O(-\SS))^{F_{\ord}}} \ar[r] & 
				{H^1(I_r^{0},\O(-\SS))^{F_{\ord}}} \ar[r]
				\ar[u]_-{\VDual{(i_r^{0})^*}\langle p\rangle_N^{-r}}^-{\simeq} & 0
			}\label{UpperIsom1}
		\end{equation}
		\end{subequations}
		which are compatible with the extra structures.
		  The identification
		$(\ref{LowerIsom1})$  $($respectively $(\ref{UpperIsom1})$$)$ is moreover
		compatible with change in $r$ using the trace mappings attached
		to $\pr: I_r^{\star}\rightarrow I_{r-1}^{\star}$ and to 
		$\o{\pr}:\o{\X}_r\rightarrow \o{\X}_{r-1}$ $($respectively 
		$\o{\ps}:\o{\X}_r\rightarrow \o{\X}_{r-1}$$)$.
		The same statements hold true if we replace $\o{\X}_r$, $\Omega^1_{I_r^{\star}}(\SS)$,
		and $\O_{I_r^{\star}}(-\SS)$ with $\nor{\o{\X}}_r$, $\Omega^1_{I_r^{\star}}$, 
		and $\O_{I_r^{\star}}$, respectively.
\end{corollary}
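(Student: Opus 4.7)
The strategy is to decompose (\ref{sesincharp1}) via Fitting's Lemma \ref{HW} applied to a canonical Frobenius endomorphism, and then transport the resulting direct summands to Igusa-curve cohomology using Proposition \ref{charpord} and duality. The absolute Frobenius of $\o{\X}_r$ over $\F_p$ induces a natural endomorphism $F^*$ of the two-term complex $\omega^{\bullet}_{\o{\X}_r/\F_p} = [\O\xrightarrow{d}\omega]$ given by $f\mapsto f^p$ on $\O$ and by zero on $\omega$; the required compatibility with $d$ holds because $d(f^p) = 0$ in characteristic $p$. Passing to hypercohomology, $F^*$ preserves the Hodge filtration (\ref{sesincharp1}), vanishes on $e_r^*H^0(\omega)$, and acts on $e_r^*H^1(\O_{\o{\X}_r})$ as the usual Frobenius $f\mapsto f^p$. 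Since $F^*$ commutes with the Hecke, $\Gamma$-, and $\F_p[\Delta/\Delta_r]$-actions (all of which are defined over $\F_p$), Lemma \ref{HW} produces a natural decomposition
\[ e_r^*H^1(\o{\X}_r/\F_p) = (e_r^*H^1)^{F_{\ord}}\oplus (e_r^*H^1)^{F_{\nil}} \]
respecting all of this extra structure.

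Since the functor $(\cdot)^{F_{\star}}$ is exact (Lemma \ref{HW}) and $F^*$ vanishes on $H^0(\omega)$, the task of showing that this decomposition realizes a splitting of (\ref{sesincharp1}) reduces to verifying that $F^*$ acts invertibly on $e_r^*H^1(\O_{\o{\X}_r})$: granted this, $(e_r^*H^1)^{F_{\nil}} = e_r^*H^0(\omega)$ and $(e_r^*H^1)^{F_{\ord}} \xrightarrow{\sim} e_r^*H^1(\O)$. To establish this key invertibility claim, I invoke the cup-product autoduality of Proposition \ref{HodgeFilCrvk}(\ref{HodgeExSeqk}): the idempotents $e_r$ and $e_r^*$ are mutually adjoint under cup product, yielding a perfect pairing $e_r^*H^1(\O_{\o{\X}_r}) \times e_rH^0(\omega_{\o{\X}_r}) \to \F_p$ under which $F^*$ on the former is adjoint to the Cartier/Verschiebung operator $V = F_*$ on the latter. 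By Proposition \ref{charpord}, $e_rH^0(\omega_{\o{\X}_r}) \cong H^0(I_r^{\infty},\Omega^1(\SS))^{V_{\ord}}$ on which $V$ is bijective by definition, so $F^*$ is bijective on the dual $e_r^*H^1(\O_{\o{\X}_r})$.

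Combining the resulting splitting with the identification $F_*^r(i_r^0)^*\colon e_r^*H^0(\omega)\xrightarrow{\sim} H^0(I_r^0,\Omega^1(\SS))^{V_{\ord}}$ of Proposition \ref{charpord}, together with the identification $e_r^*H^1(\O) \cong H^1(I_r^{\infty},\O(-\SS))^{F_{\ord}}$ obtained by dualizing the $e_r$-version of Proposition \ref{charpord} via Remark \ref{DualityOfFVOrd}, yields diagram (\ref{LowerIsom1}). The parallel diagram (\ref{UpperIsom1}) is obtained either by running the symmetric argument for the $e_r$-part or by transport via the Atkin-Lehner automorphism $\o{w}_r$ using Proposition \ref{ALinv}; the $\langle p\rangle_N^{-r}$ twist on the right vertical map reflects the asymmetry $U_p = F$ on $I_r^{\infty}$ versus $U_p^* = \langle p\rangle_N^{-1}F$ on $I_r^0$ recorded in Proposition \ref{charpord}.

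Compatibility with change in $r$ is inherited from the diagrams (\ref{rCompatDiagrams}) of Proposition \ref{charpord} and the functoriality of absolute Frobenius under the degeneracy trace maps. The corresponding statements for $\nor{\o{\X}}_r$ are proved by the same argument, starting from the final assertion of Proposition \ref{charpord}; since $\nor{\o{\X}}_r$ is smooth, one may alternatively invoke Proposition \ref{OdaDieudonne} directly, realizing the splitting as the slope decomposition of the Dieudonn\'e module on the ordinary part (where the local-local component is killed by the Hecke idempotent). The main technical hurdle in this plan is the invertibility of $F^*$ on the Hecke-ordinary part of $H^1(\O_{\o{\X}_r})$ for the singular curve $\o{\X}_r$ --- once this is secured via the duality argument above, the remainder of the proof is essentially a bookkeeping exercise.
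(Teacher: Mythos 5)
Your argument is correct and follows essentially the same route as the paper's proof, hinging on the two facts that $F^*$ kills the $\omega$-part and is invertible on the ordinary part of $H^1(\O_{\o{\X}_r})$. The only packaging difference is that you obtain the splitting by applying Fitting's Lemma \ref{HW} and then identifying the $F$-ordinary and $F$-nilpotent summands with the two terms of the filtration, whereas the paper constructs the section directly: since $F^*$ kills $e_r^*H^0(\omega)$, it factors through a map $e_r^*H^1(\O)\to e_r^*H^1$, which one pre-composes with $(F^*)^{-1}$. Your argument for invertibility of $F^*$ on $e_r^*H^1(\O)$ — adjointness of $F^*$ and $V=F_*$ under the cup-product duality pairing $e_r^*H^1(\O)\times e_rH^0(\omega)\to\F_p$ combined with the $V$-bijectivity on $e_rH^0(\omega)\cong H^0(I_r^\infty,\Omega^1(\SS))^{V_{\ord}}$ furnished by Proposition \ref{charpord} — is the same argument the paper runs, just with duality applied before the Igusa-curve identification rather than after (the paper transports $F$-bijectivity from $H^1(I_r^\infty,\O(-\SS))^{F_{\ord}}$ along the dual map $\VDual{(i_r^\infty)^*}$). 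Either route is fine; the Fitting-Lemma formulation perhaps makes the exactness and functoriality of the decomposition slightly more transparent, while the paper's direct section is more economical once invertibility is in hand.
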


\begin{proof}
		Pullback by the absolute Frobenius endomorphism of $\o{\X}_r$
		induces an endomorphism of (\ref{sesincharp1}) which 
		kills $H^0(\o{\X}_r,\omega_{\o{\X}_r/\F_p})$
		and so yields a morphism of $\F_p[\Delta/\Delta_r]$-modules
		\begin{equation}
			\xymatrix{
				{e_r^*H^1(\o{\X}_r,\O_{\o{\X}_r})}\ar[r] & {{e_r^*}H^1(\o{\X}_r/\F_p)} 
			}\label{H1Splitting}
		\end{equation}
		that 
		is $\Gamma$ and $\H_r^*$-compatible 
		and projects to the endomorphism $F^*$ of $e_r^*H^1(\o{\X}_r,\O_{\o{\X}_r})$.
		On the other hand, Proposition \ref{charpord} gives a natural $\Gamma$ and $\H_r^*$-equivariant 
		isomorphism of $\F_p[\Delta/\Delta_r]$-modules
		\begin{equation}
			\xymatrix{
				{H^1(I_r^{\infty},\O_{I_r^{\infty}}(-\SS))^{F_{\ord}}}\ar[r]^-{\VDual{(i_r^{\infty})^*}} &  
				{e_r^*H^1(\o{\X}_r,\O_{\o{\X}_r})}
			}.\label{IsomOnH1}
		\end{equation}
		As this isomorphism intertwines $F^*$ on source and target, 
		we deduce that $F^*$ acts invertibly on 
		${e_r^*H^1(\o{\X}_r,\O_{\o{\X}_r})}$. We may therefore pre-compose (\ref{H1Splitting}) with $(F^*)^{-1}$
		to obtain a canonical splitting of (\ref{sesincharp1}), which by construction is
		$\F_p[\Delta/\Delta_r]$-linear and compatible with $\Gamma$ and $\H_r^*$.  
		The existence of (\ref{LowerIsom1}) as well
		as its compatibility with $\Gamma$, $\H_r^*$ and with change in $r$ now follows
		immediately from Proposition \ref{charpord} and duality (see Remark \ref{DualityOfFVOrd}).
		The corresponding assertions for the exact sequence (\ref{sesincharp2}) and the 
		diagram (\ref{UpperIsom1}) are proved similarly, and we leave the details to the reader.
		A nearly identical argument shows that the same assertions hold true when $\o{\X}_r$, 
		$\Omega^1_{I_r^{\star}}(\SS)$,
		and $\O_{I_r^{\star}}(-\SS)$ are replaced by $\nor{\o{\X}}_r$,
		 $\Omega^1_{I_r^{\star}}$, and $\O_{I_r^{\star}}$, respectively.
\end{proof}

\begin{corollary}\label{FreenessInCharp}
	The exact sequences
	$(\ref{sesincharp1})$ and $(\ref{sesincharp2})$ are split short exact sequences of 
	free $\F_p[\Delta/\Delta_r]$-modules 
	whose terms have $\F_p[\Delta/\Delta_r]$-ranks
	$d$, $2d$, and $d$, respectively, for $d$ as in 
	Remark $\ref{dMFmeaning}$.
	For $s\le r$, the degeneracy maps $\pr,\ps:\X_r\rightrightarrows \X_s$ 
	induce natural isomorphisms of
	exact sequences
	\begin{align*}
		&\xymatrix{
			{\pr_*:e_r^*H(\o{\X}_r/\F_p) \tens_{\F_p[\Delta/\Delta_s]} \F_p[\Delta/\Delta_r]} \ar[r]^-{\simeq} & 
			{e_s^*H(\o{\X}_s/\F_p)}
			}\\
		&\xymatrix{
			{\ps_*:e_rH(\o{\X}_r/\F_p) \tens_{\F_p[\Delta/\Delta_s]} \F_p[\Delta/\Delta_r]} \ar[r]^-{\simeq} & 
			{e_sH(\o{\X}_s/\F_p)}
			}	
	\end{align*}
	that are $\Gamma$ and $\H_r^*$ $($respectively $\H_r$$)$ equivariant.
\end{corollary}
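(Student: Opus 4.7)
The plan is to reduce everything to the statements already established for the Igusa tower in Proposition \ref{IgusaStructure}, using the identifications furnished by Corollary \ref{SplitIgusa}.

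First, I would observe that via the diagrams (\ref{LowerIsom1}) and (\ref{UpperIsom1}) of Corollary \ref{SplitIgusa}, the split short exact sequences $e_r^*H(\o{\X}_r/\F_p)$ and $e_rH(\o{\X}_r/\F_p)$ are identified, as sequences of $\F_p[\Delta/\Delta_r]$-modules with linear $\Gamma$ and Hecke action, with the split exact sequences
\begin{equation*}
\xymatrix{
0 \ar[r] & H^0(I_r^{\star},\Omega^1(\SS))^{V_{\ord}} \ar[r] & H^0(I_r^{\star},\Omega^1(\SS))^{V_{\ord}}\oplus H^1(I_r^{\star'},\O(-\SS))^{F_{\ord}} \ar[r] & H^1(I_r^{\star'},\O(-\SS))^{F_{\ord}} \ar[r] & 0
}
\end{equation*}
(with appropriate choices of $(\star,\star') \in \{(0,\infty),(\infty,0)\}$). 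By Proposition \ref{IgusaStructure} (\ref{IgusaFreeness}) together with its dual statement, each flanking term is a free $\F_p[\Delta/\Delta_r]$-module of rank $d = \gamma + \delta - 1$, and by Remark \ref{dMFmeaning} this $d$ equals $\sum_{k=3}^{p+1} d_k$. This immediately gives the freeness assertion and the rank computation; the middle term is free of rank $2d$ since the sequence is split.

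Next, for the control statements, I would translate the trace maps $\pr_*$ and $\ps_*$ on the modular tower cohomology over to trace maps on Igusa cohomology via the commutative diagrams (\ref{rCompatDiagrams}) of Proposition \ref{charpord}, which describe exactly how the identifications of Corollary \ref{SplitIgusa} intertwine $\pr_*$, $\ps_*$ with the trace map $\pr_*: I_r^{\star} \to I_s^{\star}$ (up to powers of $F_*$ and $\langle p\rangle_N$, both of which act as automorphisms on the $V$- and $F$-ordinary subspaces). Granting this translation, the control isomorphism for $e_r^*H(\o{\X}_r/\F_p)$ along $\pr_*$ reduces to the control statement for $H^0(I_r^0,\Omega^1(\SS))^{V_{\ord}}$ along $\pr_*$, which is precisely Proposition \ref{IgusaStructure} (\ref{IgusaControl}); the corresponding statement for the $H^1$-part follows by duality (Remark \ref{DualityOfFVOrd}), and the statement for $e_rH(\o{\X}_r/\F_p)$ along $\ps_*$ is handled symmetrically via the second diagram of (\ref{rCompatDiagrams}).

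The equivariance for $\Gamma$ and for the Hecke algebras is inherited directly from the equivariance assertions in Proposition \ref{charpord} and Corollary \ref{SplitIgusa}. The main ``obstacle'' is really bookkeeping: one must chase that the factors of $F_*^r$ and $\langle p\rangle_N^{r}$ appearing in the identifications (\ref{LowerIsom1})--(\ref{UpperIsom1}) at different levels $r$ and $s$ interact correctly with the degeneracy trace maps described in Proposition \ref{pr1desc}, so that the resulting square at the level of Igusa-cohomology is genuinely the scalar-extension of the Igusa trace map. Once this is verified, the corollary follows from Proposition \ref{IgusaStructure} by pure transport of structure along the isomorphisms of Corollary \ref{SplitIgusa}.
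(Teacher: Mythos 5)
Your proposal is correct and takes essentially the same route as the paper, which proves Corollary \ref{FreenessInCharp} by directly combining Proposition \ref{IgusaStructure} with Corollary \ref{SplitIgusa}. The "bookkeeping" you flag at the end is in fact already discharged by the compatibility-with-change-in-$r$ clause in the statement of Corollary \ref{SplitIgusa}, so no further chasing of $F_*^r$ or $\langle p\rangle_N^r$ factors is required.
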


\begin{proof}
	This follows immediately from Proposition \ref{IgusaStructure} and Corollary \ref{SplitIgusa}.
\end{proof}

\begin{remark}
We warn the reader that the na\"ive analogue of Corollary \ref{FreenessInCharp}
in the case of $\nor{\o{\X}}_r$ is false: while
$H^0(I_r,\Omega^1(\SS))^{V_{\ord}}$ is a free $\F_p[\Delta/\Delta_r]$-module, the submodule
of holomorphic differentials need {\em not} be. 
Over $k=\o{\F}_p$, the residue map gives a short exact sequence of $k[\Delta/\Delta_r]$-modules
\begin{equation*}
	\xymatrix{
		0\ar[r] & {H^0(I_r,\Omega^1_{I_r/k})^{V_{\ord}}} \ar[r] & {H^0(I_r,\Omega^1_{I_r/k}(\SS))^{V_{\ord}}}
		\ar[r] & {\ker\left(k^{\delta} \xrightarrow{\sum}  k\right)} \ar[r] & 0
		}
\end{equation*}
with middle term that is free over $k[\Delta/\Delta_r]$;
see Theorem 2 of \cite{Nakajima}.  The splitting of this exact sequence is then equivalent
to the projectivity---hence freeness---of $H^0(I_r,\Omega^1_{I_r/k})^{V_{\ord}}$
over $k[\Delta/\Delta_r]$.  
\end{remark}

In order to formulate the correct analogue of Corollary \ref{FreenessInCharp}
in the case of $\nor{\o{\X}}_r$, we proceed as follows.
Denote by $\tau:\F_p^{\times} \rightarrow \Z_p^{\times}$ the Teichm\"uller character,
and for any $\Z_p$-module $M$ with a linear action of $\F_p^{\times}$ and any 
$j\in \Z/(p-1)\Z$, let
\begin{equation*}
	M(j):=\{m\in M\ :\ d\cdot m = \tau(d)^jm\ \text{for all}\ d\in \F_p^{\times}\}
\end{equation*}
be the subspace of $M$ on which $\F_p^{\times}$ acts via $\tau^j$. 
As $\#\F_p^{\times}=p-1$ is a unit in $\Z_p^{\times}$, the submodule $M(j)$
is a direct summand of $M$.
Explicitly, the idenitity of $\Z_p[\F_p^{\times}]$ admits the  decomposition
\begin{equation}
	1 = \sum_{j\in \Z/(p-1)\Z} f_j\quad\text{with}\quad 
	f_j:=\frac{1}{p-1}\sum_{g\in \F_p^{\times}} \tau^{-j}(g)\cdot g
	\label{GpRngIdem}
\end{equation}
into mutually orthogonal idempotents $f_j$, and we have $M(j)=f_jM$.
In applications, we will consistently need to remove the trivial eigenspace
$M(0)$ from $M$, as this eigenspace in the $p$-adic Galois representations 
we consider is not potentially crystalline at $p$.  We will write
\begin{equation}
	f':=\sum_{\substack{j\in \Z/(p-1)\Z \\ j\neq 0}} f_j\label{TeichmullerIdempotent}
\end{equation}
for the idempotent of $\Z_p[\F_p^{\times}]$ corresponding to projection away
from the 0-eigenspace for $\F_p^{\times}$.  

Applying these considerations
to the identifications of split exact sequences in Corollary \ref{SplitIgusa}, which
are compatible with the canonical diamond operator action of $\Z_p^{\times}\simeq\F_p^{\times}\times \Delta$ 
on both rows, we obtain a corresponding identifiction of split exact sequences
of $\tau^j$-eigenspaces, for each $j\bmod p-1$.
The following is a generalization of \cite[Proposition 8.10 (2)]{tameness}:

\begin{lemma}\label{CharacterSpaces}
	Let $j$ be an integer with $j\not\equiv 0\bmod p-1$.  For each $r$, there are canonical isomorphisms
	\begin{equation}
		\xymatrix{
			{H^0(I_r,\Omega^1_{I_r})(j)}\ar[r]^-{\simeq} & {H^0(I_r,\Omega^1_{I_r}(\SS))(j)}
		}\qquad\text{and}\qquad
		\xymatrix{
			{H^1(I_r,\O(-\SS))(j)}\ar[r]^-{\simeq} & {H^1(I_r,\O)(j)}
		}\label{IgusaEigen}
	\end{equation}
	The normalization map 
	$\nu:\nor{\o{\X}}_r\rightarrow \o{\X}_r$ induces a natural isomorphism of split
	exact sequences
	\begin{equation}
	\begin{gathered}
		\xymatrix{
				0\ar[r] & {{e_r^*}H^0(\o{\X}_r,\Omega^1_{\nor{\o{\X}}_r})(j)} \ar[r]\ar[d]_-{\nu_*}^-{\simeq} & 
				{{e_r^*}H^1_{\dR}(\nor{\o{\X}}_r/\F_p)(j)} \ar[r]\ar[d]^-{\simeq} &
				{{e_r^*}H^1(\nor{\o{\X}}_r,\O_{\nor{\o{\X}}_r})(j)} \ar[r] & 0		\\
				0\ar[r] & {e_r^*H^0(\o{\X}_r,\omega_{\o{\X}_r/\F_p})(j)} \ar[r] & 
				{e_r^*H^1(\o{\X}_r/\F_p)(j)} \ar[r] &
				{e_r^*H^1(\o{\X}_r,\O_{\o{\X}_r})(j)} \ar[r]\ar[u]_-{\nu^*}^-{\simeq} & 0			
		}\label{splitholo}
	\end{gathered}
	\end{equation}
	where the central vertical arrow is deduced from the outer two vertical arrows via the
	splitting of both rows by the Frobenius endomorphism.
	The same assertions hold if we replace $e_r^*$ with $e_r$.	
\end{lemma}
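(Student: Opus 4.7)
The plan is to reduce everything to the componentwise statement (part 1) on a single Igusa curve, via the identifications supplied by Proposition \ref{charpord} and Corollary \ref{SplitIgusa}.

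For part (1), the two isomorphisms in (\ref{IgusaEigen}): I would use the standard short exact sequences of sheaves
\begin{equation*}
0 \to \Omega^1_{I_r} \to \Omega^1_{I_r}(\SS) \to \textstyle\bigoplus_{x\in\SS} k_x \to 0
\quad\text{and}\quad
0 \to \O_{I_r}(-\SS) \to \O_{I_r} \to \O_\SS \to 0
\end{equation*}
and examine the resulting long exact cohomology sequences after taking $(j)$-eigenspaces. The key observation is that $\F_p^\times$ acts trivially on both skyscraper sheaves: by the moduli description, the supersingular points of $\Ig_r$ correspond to $(E,0;\alpha)$ with $E$ supersingular, and $\langle u\rangle$ sends $0\mapsto u\cdot 0=0$, so each supersingular point is fixed; and for any automorphism $g$ of a smooth curve fixing a closed point, one has $\res(g^*\eta) = \res(\eta)$ exactly as in the identities (\ref{DiamondFix})--(\ref{FunctionPullOut}) used in the proof of Proposition \ref{charpord}. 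Therefore the relevant cokernel (respectively kernel) lies entirely in the trivial $(0)$-eigenspace, and vanishes in any $(j)$-component with $j\not\equiv 0\bmod p-1$.

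For the left and right vertical isomorphisms in part (2), I would apply Proposition \ref{charpord} to $\o{\X}_r$ and its variant for $\nor{\o{\X}}_r$, which give $\H_r^*$-equivariant isomorphisms
\begin{equation*}
e_r^* H^0(\o{\X}_r,\omega_{\o{\X}_r}) \xrightarrow{(i_r^0)^*} H^0(I_r^0,\Omega^1(\SS))^{V_{\ord}},
\qquad
e_r^* H^0(\nor{\o{\X}}_r,\Omega^1) \xrightarrow{(i_r^0)^*} H^0(I_r^0,\Omega^1)^{V_{\ord}},
\end{equation*}
compatibly with $\nu_*$ (since $i_r^0$ factors through $\nu$), so $\nu_*$ on $e_r^*$-parts becomes the natural inclusion on the right-hand sides. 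The crucial equivariance check is that these identifications intertwine the $\o{\X}_r$-diamond action with the intrinsic $\Ig_r$-diamond action: for $I_r^0 = I_{(0,r,1)}$ one has $\min(0,r) = 0$, so $\F_p^\times$-diamonds do not permute components, and under the parametrization $(E,P;p^{-r}V^r\alpha)$ of the $(0,r,1)$-component, the operator $\langle u\rangle_{\o{\X}_r}$ acts as $P\mapsto uP$, which is precisely $\langle u\rangle_{\Ig_r}$. Taking $(j)$-eigenspaces therefore reduces the left vertical iso to part (1), since $V$-ordinarity is preserved. The right vertical arrow $\nu^*\colon H^1(\o{\X}_r,\O)\to H^1(\nor{\o{\X}}_r,\O)$ is handled by the dual identifications from Corollary \ref{SplitIgusa}, reducing to the natural surjection $H^1(I_r^0,\O(-\SS))^{F_{\ord}} \twoheadrightarrow H^1(I_r^0,\O)^{F_{\ord}}$, which is an iso on $(j)$-eigenspaces for $j\neq 0$ by part (1).

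For the middle de Rham iso, both rows of (\ref{splitholo}) are canonically split by the Frobenius endomorphism (Corollary \ref{SplitIgusa}), so the central vertical iso is defined as $\nu_* \oplus (\nu^*)^{-1}$ under the induced direct sum decompositions and is automatically an iso once the outer vertical arrows are; the commutativity with the split exact sequences is tautological. The statement with $e_r$ in place of $e_r^*$ follows by the completely parallel argument using the other ``good'' component $I_r^\infty = I_{(r,0,1)}$, with the only change that now the $\o{\X}_r$-diamond $\langle u\rangle$ on $I_r^\infty$ corresponds under the parametrization $(E,Q;\alpha)$ to $\langle u^{-1}\rangle_{\Ig_r}$ (since $Q\mapsto u^{-1}Q$), which still carries the $(j)$-eigenspace for $j\neq 0$ to a nontrivial eigenspace and reduces to part (1). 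Alternatively, one may transport the $e_r^*$-case via the Atkin--Lehner involution $w_r$ of Proposition \ref{ALinv}, which exchanges $\H_r$ and $\H_r^*$. The main obstacle is the careful bookkeeping of the two $\F_p^\times$-actions involved (the one on $\o{\X}_r$ via diamond operators, which generally permutes irreducible components, versus the intrinsic Igusa action on each component), and in particular verifying that no twist by the involution $u\mapsto u^{-1}$ intervenes on the ``good'' components in a way that would invalidate the reduction to part (1).
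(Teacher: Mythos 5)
Your argument is correct and follows essentially the same route as the paper: part (1) rests on the invariance of residues under automorphisms fixing a point (the paper phrases this directly, you package it as the long exact sequence of the skyscraper quotient, but the content is identical), and part (2) reduces to part (1) via the $(i_r^0)^*$-isomorphisms of Proposition \ref{charpord}, the Frobenius splittings of Corollary \ref{SplitIgusa}, and duality. One small slip: on the $H^1$ side for the $e_r^*$-case, Corollary \ref{SplitIgusa} identifies $e_r^*H^1(\o{\X}_r,\O)$ with $H^1(I_r^{\infty},\O(-\SS))^{F_{\ord}}$ (not $I_r^0$); this is harmless since part (1) applies to either good component, but the label should be $I_r^{\infty}$.
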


\begin{proof}
	The first map in (\ref{IgusaEigen}) is injective, as it is simply the canonical inclusion.
	To see that it is an isomorphism, 
	we may work over $k:=\o{\F}_p$.  If $\eta$ is {\em any} meromorphic differential on 
	$I_r$ on which $\F_p^{\times}$
	acts via the character $\tau^j$, 
	then since the diamond operators fix every supersingular point on $I_r$ we have
	\begin{equation*}
		\res_x(\eta) = \res_x(\langle u\rangle\eta) = \tau^j(u)\res_x(\eta)
	\end{equation*}
	for any $x\in \SS(k)$ and all $u\in \F_p^{\times}$.  As
	$j\not\equiv 0\bmod p-1$, so $\tau^j$ is nontrivial, we must therefore have $\res_x(\eta)=0$ for all 
	supersingular points $x$.  If in addition $\eta$ is holomorphic
	outside $\SS$ with at worst simple poles along $\SS$, then $\eta$ must be holomorphic everywhere, 
	so the first map in (\ref{IgusaEigen}) is
	surjective, as desired. The second mapping in (\ref{IgusaEigen}) is dual to the first, and 
	hence an isomorphism as well.

	Now for each $j\not\equiv 0\bmod p-1$,
	we have a commutative diagram
	\begin{equation}
	\begin{gathered}
		\xymatrix{
			{e_r^*H^0(\nor{\o{\X}}_r,\Omega^1_{\nor{\o{\X}}_r})(j)} 
			\ar@{^{(}->}[r]^-{\nu_*}\ar[d]_-{(i_r^0)^*}^-{\simeq} &
			{e_r^*H^0(\o{\X}_r,\omega_{\o{\X}_r})(j)} \ar[d]^-{(i_r^0)^*}_-{\simeq} \\
			{H^0(I_r^0,\Omega^1_{I_r^0})(j)^{V_{\ord}}} \ar@{^{(}->}[r]^-{\simeq} & 
			{H^0(I_r^0,\Omega^1_{I_r^0}(\SS))(j)^{V_{\ord}}}
		}
		\label{linkingDiagram}
	\end{gathered}
	\end{equation}
	of $\F_p[\Delta/\Delta_r]$-modules with $\Gamma$ and $\H_r^*$-action in which
	the two vertical arrows are isomorphisms by Proposition \ref{charpord} and the bottom horizontal mapping is 
	an isomorphism as we have just seen.  We conclude that the top horizontal
	arrow of (\ref{linkingDiagram}) is an isomorphism as well.    
	Thus, the left vertical map in (\ref{splitholo}) is an isomorphism, so the same is true
	of the right vertical map by duality.  The diagram (\ref{splitholo}) then follows at once
	from the fact the both rows are canonically split by the Frobenius endomorphism, thanks to 
	Corollary \ref{SplitIgusa}. 	
	A nearly identical argument shows that the same assertions hold if we replace $e_r^*$
	with $e_r$ throughout.
	\end{proof}

If $A$ is any $\Z_p[\F_p^{\times}]$-algebra and $a\in A$, we will 
write $a':=f'a$ for the product of $a$ with the idempotent $f'$
of (\ref{TeichmullerIdempotent}), or equivalently the 
projection of $a$ to the complement 
of the trivial eigenspace for $\F_p^{\times}$.  We will apply this
to $A=\H_r,\,\H_r^*$, viewed as $\Z_p[\F_p^{\times}]$-algebras
in the usual manner, via the diamond operators and the Teichm\"uller
section $\tau:\F_p^{\times}\hookrightarrow \Z_p^{\times}$.

\begin{proposition}\label{NormalizationCoh}
	For each $r$ there are natural isomorphisms of split short exact sequences
	\begin{subequations}
		\begin{equation}
		\begin{gathered}
			\xymatrix@C=15pt{
				0\ar[r] & {{e_r^*}'H^0(\o{\X}_r,\Omega^1_{\nor{\o{\X}}_r})} 
				\ar[r]\ar[d]_-{F_*^r (i_r^0)^*}^-{\simeq} & 
				{{e_r^*}'H^1_{\dR}(\nor{\o{\X}}_r/\F_p)} \ar[r]\ar[d]^-{\simeq} &
				{{e_r^*}'H^1(\nor{\o{\X}}_r,\O_{\nor{\o{\X}}_r})} \ar[r] & 0\\
				0 \ar[r] & {f'H^0(I_r^{0},\Omega^1)^{V_{\ord}}} \ar[r] &
				{f'H^0(I_r^{0},\Omega^1)^{V_{\ord}}\oplus
				f'H^1(I_r^{\infty},\O)^{F_{\ord}}} \ar[r] & 
				{f'H^1(I_r^{\infty},\O)^{F_{\ord}}} \ar[r]\ar[u]_-{\VDual{(i_r^{\infty})^*}}^-{\simeq} & 0
			}\label{LowerIsom2}
		\end{gathered}
		\end{equation}
		\begin{equation}
		\begin{gathered}
			\xymatrix@C=15pt{
				0\ar[r] & {e_r'H^0(\nor{\o{\X}}_r,\Omega^1_{\nor{\o{\X}}_r})} 
				\ar[r]\ar[d]_-{F_*^r (i_r^{\infty})^*}^-{\simeq} & 
				{e_r'H^1_{\dR}(\nor{\o{\X}}_r/\F_p)} \ar[r]\ar[d]^-{\simeq} &
				{e_r'H^1(\nor{\o{\X}}_r,\O_{\nor{\o{\X}}_r})} \ar[r] & 0\\
				0 \ar[r] & {f'H^0(I_r^{\infty},\Omega^1)^{V_{\ord}}} \ar[r] &
				{f'H^0(I_r^{\infty},\Omega^1)^{V_{\ord}}\oplus
				f'H^1(I_r^{0},\O)^{F_{\ord}}} \ar[r] & 
				{f'H^1(I_r^{0},\O)^{F_{\ord}}} \ar[r]
				\ar[u]_-{\VDual{(i_r^{0})^*}\langle p\rangle_N^{-r}}^-{\simeq} & 0
			}\label{UpperIsom2}
		\end{gathered}
		\end{equation}
		\end{subequations}
		Setting $d':=\sum_{k=3}^p d_k$ where $d_k:=\dim_{\F_p} S_k(N;\F_p)^{\ord}$
		as in Remark $\ref{dMFmeaning}$,
		the terms in the top rows of $(\ref{LowerIsom2})$ and $(\ref{UpperIsom2})$
		are free $\F_p[\Delta/\Delta_r]$-modules of ranks $d'$, $2d'$, and $d'$.		
		The identification
		$(\ref{LowerIsom2})$  $($respectively $(\ref{UpperIsom2})$$)$ is
		$\Gamma$ and $\H_r^*$ $($respectively $\H_r$$)$-equivariant, and
		compatible with change in $r$ using the trace mappings attached
		to $\pr: I_r^{\star}\rightarrow I_s^{\star}$ and to 
		$\o{\pr}:\o{\X}_r\rightarrow \o{\X}_{s}$ $($respectively 
		$\o{\ps}:\o{\X}_r\rightarrow \o{\X}_{s}$$)$.
\end{proposition}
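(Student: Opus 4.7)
The strategy is to derive this proposition from the $\nor{\o{\X}}_r$ version of Corollary \ref{SplitIgusa} (its final sentence) by applying the idempotent $f'$ of $(\ref{TeichmullerIdempotent})$ to everything. The diagonal maps in $(\ref{LowerIsom2})$ and $(\ref{UpperIsom2})$ arise by applying $f'$ to the analogous diagrams of Corollary \ref{SplitIgusa}, using that $f'$ commutes with pullback along $i_r^\star$, with $F_*$, with the Frobenius splitting, with all operators in $\H_r^*$ (respectively $\H_r$), and with the geometric inertia action of $\Gamma$ (since $f'\in \Z_p[\F_p^\times]$ lies in the center). To replace differentials with poles along $\SS$ by holomorphic differentials in the lower rows, I will invoke Lemma \ref{CharacterSpaces}: the inclusion $H^0(I_r^\star,\Omega^1_{I_r^\star}) \hookrightarrow H^0(I_r^\star,\Omega^1_{I_r^\star}(\SS))$ induces an isomorphism on each $\tau^j$-eigenspace with $j\not\equiv 0 \bmod p-1$, and dually for the quotient $H^1(I_r^\star,\O(-\SS)) \twoheadrightarrow H^1(I_r^\star,\O)$ on $F$-ordinary parts.

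Compatibility with $\Gamma$, $\H_r^*$ (respectively $\H_r$), and with the trace mappings along $\pr$ (respectively $\ps$) as $r$ varies then follows immediately from the corresponding statements in Corollary \ref{SplitIgusa}, as all of these structures commute with $f'$.

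The remaining content is the freeness and rank computation. By Proposition \ref{IgusaStructure} (\ref{IgusaFreeness}), the modules $H^0(I_r^\star,\Omega^1(\SS))^{V_{\ord}}$ and $H^1(I_r^\star,\O(-\SS))^{F_{\ord}}$ are free of rank $d$ over the local Artinian ring $\F_p[\Delta/\Delta_r]$ (whose residue field is $\F_p$, as $\Delta/\Delta_r$ is a finite $p$-group). Since $f'$ acts $\F_p[\Delta/\Delta_r]$-linearly, the $f'$-part is a direct summand, hence projective, hence free. To compute the rank, Nakayama's lemma reduces the problem to computing the $\F_p$-dimension after base change to $\F_p[\Delta/\Delta_1]=\F_p$; by Proposition \ref{IgusaStructure} (\ref{IgusaControl}), this is
\begin{equation*}
	\dim_{\F_p} f'H^0(I_1^\star,\Omega^1_{I_1^\star}(\SS))^{V_{\ord}} = \sum_{\substack{j\in \Z/(p-1)\Z \\ j\neq 0}} \dim_{\F_p} H^0(I_1^\star,\Omega^1(\SS))(j)^{V_{\ord}}.
\end{equation*}
Applying Proposition \ref{MFmodp}, the nontrivial eigenspaces correspond to cuspforms of weights $k$ with $3\le k\le p+1$ and $k\not\equiv 2\bmod p-1$, i.e., $3\le k\le p$; taking ordinary parts gives $\sum_{k=3}^{p}d_k = d'$. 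The rank of the middle term is $2d'$ from the split exact sequence. This completes the rank computation, and with it the proof.
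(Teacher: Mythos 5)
Your proposal is correct and follows essentially the same route as the paper's own (very terse) proof, which simply cites Corollaries \ref{SplitIgusa}--\ref{FreenessInCharp} and Lemma \ref{CharacterSpaces} together with the locality of $\F_p[\Delta/\Delta_r]$; you have filled in the deduction — applying the central idempotent $f'$ to the $\nor{\o{\X}}_r$-version of Corollary \ref{SplitIgusa}, using Lemma \ref{CharacterSpaces} to identify the $f'$-parts of the meromorphic and holomorphic cohomologies, noting that projective modules over the local ring $\F_p[\Delta/\Delta_r]$ are free, and computing the rank via Nakayama, the control isomorphism of Proposition \ref{IgusaStructure} (\ref{IgusaControl}), and the eigenspace/modular-form dictionary of Proposition \ref{MFmodp}.
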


\begin{proof}
	This follows immediately from Corollaries \ref{SplitIgusa}--\ref{FreenessInCharp}
	and Lemma \ref{CharacterSpaces}, using the fact that the group ring $\F_p[\Delta/\Delta_r]$
	is local, so any projective $\F_p[\Delta/\Delta_r]$-module 
	is free.
\end{proof}

As usual, we write $\Pic^0_{\nor{\o{\X}}_r/\F_p}[p^{\infty}]$ for the $p$-divisible group
of the Jacobian of 
$\nor{\o{\X}}_r$ over $\F_p$; it is equipped
with canonical actions of 
$\H_r$ and $\H_r^*$, as well as a ``geometric inertia"
action of $\Gamma$ over $\F_p$.
\begin{definition}\label{pDivGpSpecial}
	We define $\Sigma_r:={e_r^*}'\Pic^0_{\nor{\o{\X}}_r/\F_p}[p^{\infty}]$,
	equipped with the induced actions of $\H_r^*$ and $\Gamma$. 
\end{definition}	
We will employ Proposition \ref{NormalizationCoh} and Oda's description (Proposition \ref{OdaDieudonne}) of 
Dieudonn\'e modules in terms of de Rham cohomology to analyze the structure of 
$\Sigma_r$.

\begin{proposition}\label{GisOrdinary}
	For each $r$, 
	there is a natural isomorphism of $A:=\Z_p[F,V]$-modules
	\begin{equation}
		\D(\Sigma_r)_{\F_p} \simeq {e_r^*}'H^1_{\dR}(\nor{\o{\X}}_r/\F_p)\simeq 
		f'H^0(I_r^{\infty},\Omega^1)^{V_{\ord}}\oplus f'H^1(I_r^0,\O)^{F_{\ord}}.\label{DieudonneDesc}
	\end{equation}
	which is compatible with $\H_r^*$, $\Gamma$, and change in $r$ and which
	carries $\D(\Sigma_r^{\mult})_{\F_p}$ $($respectively $\D(\Sigma_r^{\et})_{\F_p}$$)$ isomorphically 
	onto $f'H^0(I_r^{0},\Omega^1)^{V_{\ord}}$ $($respectively
	$f'H^1(I_r^{\infty},\O)^{F_{\ord}}$$)$.  In particular, 
	$\Sigma_r$ is ordinary.
\end{proposition}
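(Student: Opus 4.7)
The plan is to combine Oda's comparison between de Rham cohomology and Dieudonné modules (Proposition \ref{OdaDieudonne}) with the explicit description of the ordinary part of the de Rham cohomology of $\nor{\o{\X}}_r$ furnished by Proposition \ref{NormalizationCoh}. More precisely, by Proposition \ref{OdaDieudonne} (\ref{OdaIsom}) applied to the smooth proper curve $\nor{\o{\X}}_r/\F_p$, there is a canonical $A$-module isomorphism
\begin{equation*}
	\D(\Pic^0_{\nor{\o{\X}}_r/\F_p}[p^{\infty}])_{\F_p}\simeq H^1_{\dR}(\nor{\o{\X}}_r/\F_p),
\end{equation*}
and by Proposition \ref{OdaDieudonne} (\ref{OdaIsomFunctoriality}), for any finite morphism $\rho:\nor{\o{\X}}_r\rightarrow \nor{\o{\X}}_r$ this identification intertwines $\D(\Pic^0(\rho))$ on the left with $\rho_*$ on the right. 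Since the Hecke algebra $\H_r^*$ acts on $H^1_{\dR}(\nor{\o{\X}}_r/\F_p)$ precisely via $\rho_*$ (see (\ref{HeckeDef})) and on $\Pic^0_{\nor{\o{\X}}_r/\F_p}$ via $\Pic^0$-functoriality of the Hecke correspondences, the above identification is $\H_r^*$-equivariant; the same compatibility for the geometric inertia action of $\Gamma$ is immediate from functoriality.

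The idempotent ${e_r^*}'\in \H_r^*\otimes_{\Z_p}\F_p$ therefore acts compatibly on both sides, and applying it yields an $A$-module isomorphism
\begin{equation*}
	\D(\Sigma_r)_{\F_p}\simeq {e_r^*}'H^1_{\dR}(\nor{\o{\X}}_r/\F_p).
\end{equation*}
(Here one uses that $\D$ commutes with the direct sum decomposition induced by the idempotent.) Combining this with the isomorphism (\ref{LowerIsom2}) of Proposition \ref{NormalizationCoh} then produces the desired identification with $f'H^0(I_r^{0},\Omega^1)^{V_{\ord}}\oplus f'H^1(I_r^{\infty},\O)^{F_{\ord}}$, compatibly with $\H_r^*$, $\Gamma$, $F$ and $V$, and (by functoriality of both ingredient isomorphisms together with Corollary \ref{SplitIgusa}/Proposition \ref{IgusaStructure}) compatibly with change in $r$ along the trace mappings attached to $\o{\pr}$ on $\o{\X}_r$ and $\pr$ on the Igusa tower.

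It remains to identify the multiplicative and étale summands of $\D(\Sigma_r)$ and to deduce ordinarity. By Proposition \ref{OdaDieudonne} (\ref{GetaleGmult}), $\D(\Sigma_r^{\mult})_{\F_p}$ is canonically identified with the image of the $V$-ordinary subspace of $H^0(\nor{\o{\X}}_r,\Omega^1_{\nor{\o{\X}}_r})$ under the Hodge inclusion, while $\D(\Sigma_r^{\et})_{\F_p}$ is canonically identified with the $F$-ordinary quotient of $H^1(\nor{\o{\X}}_r,\O)$; extracting the ${e_r^*}'$-part and invoking the isomorphisms in (\ref{LowerIsom2}) yields the asserted identifications of $\D(\Sigma_r^{\mult})_{\F_p}$ with $f'H^0(I_r^{0},\Omega^1)^{V_{\ord}}$ and of $\D(\Sigma_r^{\et})_{\F_p}$ with $f'H^1(I_r^{\infty},\O)^{F_{\ord}}$. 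For ordinarity, observe that on the first summand $V$ acts invertibly whereas on the second $F$ acts invertibly; since $FV=VF=p=0$ on $\D(\Sigma_r)_{\F_p}$, the complementary operator must vanish on each summand, leaving no room for a local-local contribution. Equivalently, the local-local part of $\Sigma_r$ would contribute a subspace of $\D(\Sigma_r)_{\F_p}$ on which both $F$ and $V$ are nilpotent, and our decomposition exhibits this subspace as zero.

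The main obstacle is bookkeeping: one must verify carefully that the contravariant Dieudonné functor, together with Oda's identification, converts the $\Pic^0$-functorial $\H_r^*$-action on $\Sigma_r$ into exactly the $\rho_*$-action on $H^1_{\dR}$ used to define ${e_r^*}'$ on the cohomology side (and similarly for $\Gamma$). Once this matching of actions is in hand, the construction of the isomorphism and the ordinarity statement are both essentially formal consequences of Propositions \ref{OdaDieudonne} and \ref{NormalizationCoh}.
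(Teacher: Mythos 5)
Your proof is correct and takes essentially the same route as the paper: both arguments apply Oda's Dieudonné–de Rham comparison (Proposition \ref{OdaDieudonne}) to $\nor{\o{\X}}_r$, invoke Proposition \ref{NormalizationCoh} to decompose the ${e_r^*}'$-part of de Rham cohomology, and deduce ordinarity by observing that the resulting two summands exhaust the Dieudonné module, leaving no local--local contribution. Two small points of imprecision worth tightening (neither a genuine gap): the $\H_r^*$-action on $H^1_{\dR}$ and on the Jacobian is \emph{not} purely via $\Pic^0$-functoriality and $\rho_*$ — it mixes $\rho_*$ with $\rho^*$ and correspondingly $\Pic^0$ with $\Alb$ (see \eqref{HeckeDef} and \eqref{JacHecke}) — so you should appeal to \emph{both} halves of Proposition \ref{OdaDieudonne} \eqref{OdaIsomFunctoriality}; and you should record \emph{why} the identification in Proposition \ref{NormalizationCoh} respects $F$ and $V$, namely that the isomorphisms \eqref{LowerIsom2} and \eqref{UpperIsom2} are induced by the closed immersions $i_r^{\star}$, so they commute with $F^*$ and ${F}_*$.
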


\begin{proof}
	First note that since the identifications (\ref{LowerIsom2}) and (\ref{UpperIsom2})
	are induced by the canonical closed immersions $i_r^{\star}:I_r^{\star}\hookrightarrow \nor{\o{\X}}_r$,
	they are compatible with the natural actions of Frobenius and the Cartier operator.
	The isomorphism (\ref{DieudonneDesc}) is therefore an immediate consequence of
	Propositions \ref{OdaDieudonne} and \ref{NormalizationCoh}.  Since this isomorphism is
	compaible with $F$ and $V$, we have 
	\begin{subequations}
	\begin{equation}  
		\D(\Sigma_r^{\mult})_{\F_p}   
		\simeq \D(\Sigma_r)_{\F_p}^{V_{\ord}} 
		\simeq f'H^0(I_r^{0},\Omega^1)^{V_{\ord}}
	\end{equation}
	and
	\begin{equation}
		\D(\Sigma_r^{\et})\otimes_{\Z_p}\F_p 
		\simeq \D(\Sigma_r)_{\F_p}^{F_{\ord}} 
		\simeq f'H^1(I_r^{\infty},\O)^{F_{\ord}}
	\end{equation}
	\end{subequations}
	and we conclude that the canonical inclusion 
	$\D(\Sigma_r^{\mult})_{\Z_p}\oplus\D(\Sigma_r^{\et})_{\Z_p}\hookrightarrow \D(\Sigma_r)_{\Z_p}$
	is surjective, whence $\Sigma_r$ is ordinary by Dieudonn\'e theory.
\end{proof}

We now analyze the ordinary $p$-divisible group $\Sigma_r$ in more detail.
Since $\nor{\o{\X}}_r$
is the disjoint union of proper smooth and irreducible Igusa curves $I_{(a,b,u)}$
(see Proposition \ref{redXr}) with $I_r^0:=I_{(0,r,1)}$ and $I_r^{\infty}=I_{(r,0,1)}$, 
we have
a canonical identification
\begin{equation}
	\Pic^0_{\nor{\o{\X}}_r/\F_p} = \prod_{(a,b,u)} \Pic^0_{I_{(a,b,u)}/\F_p}.
	\label{Pic0Iden}
\end{equation}
For $\star=0,\infty$ let us write $j_r^{\star}:=\Pic^0_{I_r^{\star}/\F_p}$
for the Jacobian of $I_r^{\star}$ over $\F_p$.
The canonical closed immersions 
$i_r^{\star}:I_r^{\star}\hookrightarrow \nor{\o{\X}}_r$ yield (by Picard and Albanese functoriality) 
homomorphisms of abelian varieties over $\F_p$
\begin{equation}
	\xymatrix{
		{\Alb(i_r^{\star}):j_r^{\star}} \ar[r] & {\Pic^0_{\nor{\o{\X}}_r/\F_p}}
		}
		\quad\text{and}\quad
		\xymatrix{
		{\Pic^0(i_r^{\star}):\Pic^0_{\nor{\o{\X}}_r/\F_p}} \ar[r] & {j_r^{\star}}
		}.\label{AlbPicIncl}
\end{equation}
Via the identification (\ref{Pic0Iden}), we know that $j_r^{\star}$ is a direct factor
of $\Pic^0_{\nor{\o{\X}}_r/\F_p}$; in these terms $\Alb(i_r^{\star})$ is the unique mapping
which projects to the identity on $j_r^{\star}$ and to the zero map on all other factors,
while $\Pic^0(i_r^{\star})$ is simply projection onto the factor $j_r^{\star}$.
As $\Sigma_r$ is a direct factor of ${f' \Pic^0_{\nor{\o{\X}}_r/\F_p}[p^{\infty}]}$,
these mappings induce homomorphisms of $p$-divisible groups over $\F_p$
\begin{subequations}
\begin{equation}
	\xymatrix@C=35pt{
		{f'j_r^{0}[p^{\infty}]^{\mult}} \ar[r]^-{\Alb(i_r^{0})} & 
		{f' \Pic^0_{\nor{\o{\X}}_r/\F_p}[p^{\infty}]^{\mult}} \ar[r]^-{\proj} & 
		{\Sigma_r^{\mult}}
		}\label{Alb0}
\end{equation}		
\begin{equation}
	\xymatrix@C=35pt{
		{\Sigma_r^{\et}} \ar[r]^-{\incl} &
		{f' \Pic^0_{\nor{\o{\X}}_r/\F_p}[p^{\infty}]^{\et}} \ar[r]^-{\Pic^0(i_r^{\infty})} & 
		{f'j_r^{\infty}[p^{\infty}]^{\et}}
		}\label{Picinfty}	
\end{equation}
\end{subequations}
which we (somewhat abusively) again denote by $\Alb(i_r^{0})$ and $\Pic^0(i_r^{\infty})$, respectively.
The following is a sharpening of \cite[Chapter 3, \S3, Proposition 3]{MW-Iwasawa} 
(see also \cite[Proposition 3.2]{Tilouine}):

\begin{proposition}\label{MWSharpening}
	The mappings $(\ref{Alb0})$ and $(\ref{Picinfty})$ are isomorphisms.  They induce 
	a canonical split short exact sequences of $p$-divisible groups over $\F_p$
	\begin{equation}
		\xymatrix@C=45pt{
			0 \ar[r] & {f'j^0_r[p^{\infty}]^{\mult}} \ar[r]^-{\Alb(i_r^0)\circ V^r} &
			{\Sigma_r} \ar[r]^-{\Pic^0(i_r^{\infty})} & {f'j^{\infty}_r[p^{\infty}]^{\et}} \ar[r] & 0
		}\label{pDivUpic}
	\end{equation}
	which is:
	\begin{enumerate}
		\item $\Gamma$-equivariant for the geometric inertia action on $\Sigma_r$, the trivial
		action on $f'j_r^{\infty}[p^{\infty}]^{\et}$, and the action via $\langle \chi(\cdot) \rangle^{-1}$
		on $f'j_r^{0}[p^{\infty}]^{\mult}$.  \label{GammaCompatProp}
		
		\item $\H_r^*$-equivariant with $U_p^*$ acting on $f'j_r^{\infty}[p^{\infty}]^{\et}$
		as $F$ and on $f'j_r^{0}[p^{\infty}]^{\mult}$ as $\langle p\rangle_N V$.
		
		\item	Compatible with change in $r$ via the mappings $\Pic^0(\pr)$ on $j_r^{\star}$ and $\Sigma_r$.
		\label{ChangerProp}
\end{enumerate}	
\end{proposition}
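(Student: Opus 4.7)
My plan is to reduce all the claims to statements about contravariant Dieudonn\'e modules, where they become consequences of the analogous de Rham cohomology statements already established.

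First, since $\Sigma_r$ is ordinary by Proposition \ref{GisOrdinary}, the connected-\'etale sequence of $\Sigma_r$ splits canonically over the perfect field $\F_p$, giving a direct sum decomposition $\Sigma_r = \Sigma_r^{\mult} \oplus \Sigma_r^{\et}$. Thus, once I prove that $\Alb(i_r^0)\circ V^r$ is an isomorphism onto $\Sigma_r^{\mult}$ and $\Pic^0(i_r^{\infty})$ is an isomorphism from $\Sigma_r^{\et}$, the split short exact sequence $(\ref{pDivUpic})$ is immediate, with splitting given by the canonical projection onto $\Sigma_r^{\et}$ inside $\Sigma_r$.

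To prove these two isomorphism claims, I use the equivalence of categories between $p$-divisible groups over $\F_p$ and their contravariant Dieudonn\'e modules: it suffices to check the claims after applying $\D(\cdot)_{\F_p}$. By Proposition \ref{OdaDieudonne} (\ref{OdaIsomFunctoriality}), applied to the closed immersions $i_r^{\star}:I_r^{\star}\hookrightarrow \nor{\o{\X}}_r$, the induced map $\D(\Alb(i_r^0))$ corresponds under Oda's identification to pullback $(i_r^0)^*$ on de Rham cohomology, while $\D(\Pic^0(i_r^{\infty}))$ corresponds to the trace $(i_r^{\infty})_*$. Using the identification $\D(\Sigma_r)_{\F_p}\simeq {e_r^*}'H^1_{\dR}(\nor{\o{\X}}_r/\F_p)$ of Proposition \ref{GisOrdinary}, the multiplicative part $\D(\Sigma_r^{\mult})_{\F_p}$ corresponds to $f'H^0(I_r^0,\Omega^1)^{V_{\ord}}$ and the \'etale part $\D(\Sigma_r^{\et})_{\F_p}$ corresponds to $f'H^1(I_r^{\infty},\O)^{F_{\ord}}$ (via the subspaces picked out by invertibility of $V$ and $F$, respectively). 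Under this dictionary, the iterate $V^r$ on the $p$-divisible group $\Sigma_r^{\mult}$ corresponds to $F_*^r$ on differentials, so the map $\D(\Alb(i_r^0)\circ V^r)$ translates to $F_*^r\circ (i_r^0)^*$, which is exactly the left vertical isomorphism in the diagram $(\ref{LowerIsom2})$. Similarly, $\D(\Pic^0(i_r^{\infty}))$ translates to the dual of $(i_r^{\infty})^*$, matching the right vertical isomorphism in $(\ref{LowerIsom2})$. Hence both maps are isomorphisms on Dieudonn\'e modules, and therefore on $p$-divisible groups.

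The three listed compatibilities all transfer cleanly through Dieudonn\'e theory. The $\Gamma$-equivariance in (\ref{GammaCompatProp}) is immediate from the fact that the identifications in Proposition \ref{NormalizationCoh} are $\Gamma$-equivariant, where the actions on $I_r^{0}$ and $I_r^{\infty}$ are as described in Proposition \ref{AtkinInertiaCharp}: trivial on $I_r^{\infty}$ and via $\langle\chi(\cdot)\rangle^{-1}$ on $I_r^{0}$. The $\H_r^*$-equivariance reflects the fact that, by Proposition \ref{UlmerProp} and the analysis in \S\ref{OrdStruct}, the restriction of the $U_p^*$-correspondence on $\o{\X}_r$ to $I_r^{\infty}$ is $(F,\id)$ (so $U_p^*$ acts as $F$), while its restriction to $I_r^0$ is twisted by $\langle p\rangle_N$ and corresponds to Verschiebung (so $U_p^*$ acts as $\langle p\rangle_N V$). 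Compatibility with change in $r$ in (\ref{ChangerProp}) similarly follows by translating Proposition \ref{NormalizationCoh}'s compatibility with the trace maps $\o{\pr}_*, \o{\ps}_*$ through the Picard/Albanese functorialities of Oda's identification.

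The main obstacle is bookkeeping: ensuring that the Frobenius-twist conventions line up correctly so that the $V^r$ in $(\ref{pDivUpic})$ precisely matches the $F_*^r$ factor in $(\ref{LowerIsom2})$, and so that the natural equivalence between the contravariant Dieudonn\'e functor and de Rham cohomology of the Jacobian respects the direct-sum decomposition $\D(G)=\D(G^{\mult})\oplus \D(G^{\et})$ coming from invertibility of $V$ and $F$. Once these conventions are pinned down (essentially by invoking Proposition \ref{OdaDieudonne} (\ref{GetaleGmult})--(\ref{BBMDuality})), the verification becomes formal.
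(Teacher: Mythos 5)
Your proposal follows essentially the same route the paper takes: reduce via the (exact, anti-)equivalence of Dieudonn\'e theory over $\F_p$ to a statement about free $\Z_p$-modules, check it modulo $p$ using Propositions \ref{GisOrdinary} and \ref{NormalizationCoh}, and read off the compatibilities from Propositions \ref{AtkinInertiaCharp}, \ref{UlmerProp}, and \ref{pr1desc}. Your phrasing in terms of ``the maps become the vertical arrows in $(\ref{LowerIsom2})$'' and the paper's phrasing ``exactness of the resulting sequence of finite free $\Z_p$-modules may be checked mod $p$'' are two ways of saying the same thing.

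One small correction: in explaining the $\H_r^*$-equivariance you write that ``the restriction of the $U_p^*$-correspondence on $\o{\X}_r$ to $I_r^{\infty}$ is $(F,\id)$.'' This has the labels reversed. As recorded at the start of \S\ref{OrdStruct}, the $\H_r^*$-correspondences restrict to correspondences on $I_r^{0}$ (with $U_p^*=(F,\id)$ there), while it is the $\H_r$-correspondences that restrict to $I_r^{\infty}$ (with $U_p=(F,\langle p\rangle_N)$). The action of $U_p^*$ on $f'j_r^{\infty}[p^{\infty}]^{\et}$ is not obtained by a naive restriction of the correspondence to $I_r^{\infty}$; rather, it is the action transported through the isomorphism $\Pic^0(i_r^{\infty}):\Sigma_r^{\et}\xrightarrow{\simeq}f'j_r^{\infty}[p^{\infty}]^{\et}$, which one then identifies as $F$ by combining Proposition \ref{UlmerProp} with the duality pairing between $H^0(I_r^{\infty},\Omega^1(\SS))^{V_{\ord}}$ and $H^1(I_r^{\infty},\O(-\SS))^{F_{\ord}}$ (as in Remark \ref{DualityOfFVOrd} and the formulae $(\ref{Upn1})$--$(\ref{Upn2})$ in the proof of Proposition \ref{charpord}). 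Your stated conclusion --- $U_p^*$ acting as $F$ on the \'etale part and as $\langle p\rangle_N V$ on the multiplicative part --- is correct; only the intermediate rationale is off.
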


\begin{proof}
	It is clearly enough to prove that the sequence (\ref{pDivUpic}) induced by $(\ref{Alb0})$ and $(\ref{Picinfty})$
	is exact.
	Since the contravariant Dieudonn\'e module functor from the category of $p$-divisible groups
	over $\F_p$ to the category of $A$-modules which are $\Z_p$ finite and free is an exact 
	anti-equivalence, it suffices to prove such exactness
	after applying $\D(\cdot)_{\Z_p}$.  
	As the resulting sequence consist of finite free $\Z_p$-modules, 
	exactness may be checked modulo $p$ where it follows immediately from 
	Propositions \ref{NormalizationCoh} and \ref{GisOrdinary}.
	The claimed compatibility with $\Gamma$, $\H_r^*$, and change in $r$ is deduced from 
	Propositions \ref{AtkinInertiaCharp},
	\ref{UlmerProp}, and \ref{pr1desc}, respectively.
\end{proof}

\begin{remark}
	It is possible to give a short proof of Proposition \ref{MWSharpening} 
	along the lines of \cite{MW-Iwasawa} or \cite{Tilouine} by using Proposition \ref{UlmerProp} directly.
	We stress, however, that our approach via Dieudonn\'e modules gives more refined information,
	most notably that the Dieudonn\'e module of $\Sigma_r[p]$ is free as an $\F_p[\Delta/\Delta_r]$-module.
	This fact will be crucial in our later arguments. 
\end{remark}

\section{Dieudonn\'e crystals and \texorpdfstring{$(\varphi,\Gamma)$}{(Phi,Gamma)}-modules}\label{PhiGammaCrystals}

In this section, we summarize the main results of \cite{CaisLau},
which provides a classification of $p$-divisible groups 
over $R_r$ by certain semi-linear algebra structures.
These structures---which arise naturally via the Dieudonn\'e crystal functor---
are cyclotomic analogues of Breuil and Kisin modules, and are closely
related to Wach modules.\footnote{See \cite{CaisLau} for the precise relationship.}

\subsection{\texorpdfstring{$(\varphi,\Gamma)$}{(Phi,Gamma)}-modules attached to \texorpdfstring{$p$}{p}-divisible groups}\label{pDivPhiGamma}

Fix a perfect field $k$ of characteristic $p$.
Write $W:=W(k)$ for the Witt vectors of $k$ and $K$ for its fraction field,
and denote by $\varphi$ the unique automorphism of $W(k)$ lifting the $p$-power map
on $k$.  Fix an algebraic closure $\overline{K}$ of $K$, as well
as a compatible sequence $\{\varepsilon^{(r)}\}_{r\ge 1}$ of primitive $p$-power roots of unity in $\o{K}$,
and set $\scrG_K:=\Gal(\o{K}/K)$.
For $r\ge 0$, we put $K_r:=K(\mu_{p^r})$ and $R_r:=W[\mu_{p^r}]$,
and we set $\Gamma_r:=\Gal(K_{\infty}/K_r)$, and $\Gamma:=\Gamma_0$.

Let $\s_r:=W[\![u_r]\!]$ be the power series ring in one variable $u_r$ over $W$,
viewed as a topological ring via the $(p,u_r)$-adic topology. 
We equip $\s_r$ with the unique continuous action of $\Gamma$ and extension of $\varphi$ 
determined by
\begin{align}
	&\gamma u_r := (1+u_r)^{\chi(\gamma)} -1\quad \text{for $\gamma\in \Gamma$} && \text{and} &&
	\varphi(u_r) := (1+u_r)^p -1.\label{gamphiact}
\end{align}
We denote by $\O_{\E_r}:=\widehat{\s_r[\frac{1}{u_r}]}$ the $p$-adic completion of the localization
${\s_r}_{(p)}$, which is a complete discrete valuation ring with uniformizer $p$ and 
residue field $k(\!(u_r)\!)$.  One checks that the actions of $\varphi$ and $\Gamma$
on $\s_r$ uniquely extend to $\O_{\E_r}$.

For $r>0$, we write $\theta: \s_r\twoheadrightarrow R_r$ for the continuous and $\Gamma$-equivariant $W$-algebra 
surjection
sending $u_r$ to $\varepsilon^{(r)}-1$, whose kernel is the principal ideal generated by
the Eisenstein polynomial $E_r:=\varphi^r(u_r)/\varphi^{r-1}(u_r)$,
and we denote by $\tau:\s_r\twoheadrightarrow W$ the continuous and $\varphi$-equivariant surjection of $W$-algebras 
determined by $\tau(u_r)=0$.
We lift the canonical inclusion $R_r\hookrightarrow R_{r+1}$
to a $\Gamma$- and $\varphi$-equivariant $W$-algebra injection
${\s_r} \hookrightarrow {\s_{r+1}}$
determined by $u_r\mapsto \varphi(u_{r+1})$;
this map uniquely extends to a continuous injection
$\O_{\E_r}\hookrightarrow \O_{\E_{r+1}}$, compatibly with $\varphi$ and $\Gamma$.
We will frequently identify $\s_r$ (respectively $\O_{\E_r}$) with its image in $\s_{r+1}$ 
(respectively $\O_{\E_{r+1}}$),
which coincides with the image of $\varphi$ on $\s_{r+1}$ (respectively $\O_{\E_{r+1}})$.
Under this convention, we have 
$E_{r}(u_r) = E_1(u_1) = u_0/u_1$ for all $r>0$, so we will simply write $\omega:=E_r(u_r)$
for this common element of $\s_r$ for $r>0$.

\begin{definition}	
	We write $\BT_{\s_r}^{\varphi}$ for the category of {\em Barsotti-Tate modules over $\s_r$},
	{\em i.e.} the category whose objects are pairs $(\m,\varphi_{\m})$ where
	\begin{itemize}
		\item $\m$ is a free $\s_r$-module of finite rank.
		\item $\varphi_{\m}:\m\rightarrow \m$ is a $\varphi$-semilinear
		map whose linearization has cokernel killed by $\omega$,
	\end{itemize}
	and whose morphisms are $\varphi$-equivariant $\s_r$-module homomorphisms. 
	We write $\BT_{\s_r}^{\varphi,\Gamma}$ for the subcategory of $\BT_{\s_r}^{\varphi}$
	consisting of objects $(\m,\varphi_{\m})$ which admit a semilinear $\Gamma$-action 
	(in the category $\BT_{\s_r}^{\varphi}$) with the property that $\Gamma_r$ acts trivially on
	$\m/u_r\m$.  Morphisms in $\BT_{\s_r}^{\varphi,\Gamma}$ are $\varphi$ and $\Gamma$-equivariant
	morphisms of $\s_r$-modules.
	We often abuse notation by writing $\m$ for the pair 
	$(\m,\varphi_{\m})$ and $\varphi$ for $\varphi_{\m}$. 
\end{definition}

If $(\m,\varphi_{\m})$ is any object of $\BT_{\s_r}^{\varphi,\Gamma}$, then 
$1\otimes\varphi_{\m}:\varphi^*\m\rightarrow \m$
is injective with cokernel killed by $\omega$, so there is a unique
$\s_r$-linear homomorphism $\psi_{\m}:\m\rightarrow \varphi^*\m$
with the property that the composition of $1\otimes\varphi_{\m}$ and $\psi_{\m}$ (in either order)
is multiplication by $\omega$.  Clearly, $\varphi_{\m}$ and $\psi_{\m}$ determine eachother.

\begin{definition}\label{DualBTDef}
	Let $\m$ be an object of $\BT_{\s_r}^{\varphi,\Gamma}$.  The {\em dual
	of $\m$} is the object $(\m^{t},\varphi_{\m^{t}})$ of $\BT_{\s_r}^{\varphi,\Gamma}$
	whose underlying $\s_r$-module is $\m^{t}:=\Hom_{\s_r}(\m,\s_r)$, equipped with
	the $\varphi$-semilinear endomorphism
	\begin{equation*}
		\xymatrix@C=32pt{
			{\varphi_{\m^{t}}: \m^{t}} \ar[r]^-{1\otimes \id_{\m^{t}}} & {\varphi^*\m^{t} \simeq (\varphi^*\m)^{t}}
			\ar[r]^-{\psi_{\m}^{t}} & {\m^{t}}
		}
	\end{equation*}
	and the commuting action of $\Gamma$ given for $\gamma\in \Gamma$ by
	\begin{equation*}
		(\gamma f)(m) := \chi(\gamma)^{-1}\varphi^{r-1}(\gamma u_r/u_r)\cdot\gamma (f(\gamma^{-1} m )).
	\end{equation*}
\end{definition}

There is a natural notion of base change for Barsotti--Tate modules.
Let $k'/k$ be an algebraic extension (so $k'$ is automatically perfect), and write $W':=W(k')$,
$R_r':=W'[\mu_{p^r}]$, $\s_r':=W'[\![u_r]\!]$, and so on.  
The canonical
inclusion $W\hookrightarrow W'$ extends to a $\varphi$ and $\Gamma$-compatible
$W$-algebra injection $\iota_r:\s_r\hookrightarrow \s_{r+1}'$, and extension
of scalars along $\iota_r$ yields a canonical 
canonical base change functor ${\iota_r}_*: \BT_{\s_r}^{\varphi,\Gamma}\rightarrow \BT_{\s_{r+1}}^{\varphi,\Gamma}$
which one checks is compatible with duality.

Let us write $\pdiv_{R_r}^{\Gamma}$ for the subcategory of $p$-divisible groups over $R_r$
consisting of those objects and morphisms which descend (necessarily uniquely) to $K=K_0$ on generic fibers.
By Tate's Theorem, this is of course equivalent to the full subcategory of $p$-divisible
groups over $K_0$ which have good reduction over $K_r$.   Note that for any algebraic extension $k'/k$, 
base change along the inclusion $\iota_r:R_r\hookrightarrow R_{r+1}'$ gives a covariant functor
${\iota_r}_*:\pdiv_{R_r}^{\Gamma}\rightarrow \pdiv_{R_{r+1}'}^{\Gamma}$.

The main result of \cite{CaisLau} is the following:

\begin{theorem}\label{CaisLauMain}
	For each $r>0$, there is a contravariant functor 
	$\m_r:\pdiv_{R_r}^{\Gamma}\rightarrow \BT_{\s_r}^{\varphi,\Gamma}$ such that:
	\begin{enumerate}
		\item The functor $\m_r$ is an exact equivalence of categories, compatible with duality.
		\label{exequiv}
		\item The functor $\m_r$ is of formation compatible with base change: 
		for any algebraic extension $k'/k$, there is a natural isomorphism
		of composite functors ${\iota_r}_*\circ \m_r \simeq \m_{r+1}\circ {\iota_{r}}_*$ on $\pdiv_{R_r}^{\Gamma}$.
		\label{BaseChangeIsom}
		\item For $G\in \pdiv_{R_r}^{\Gamma}$, put $\o{G}:=G\times_{R_r} k$ and $G_0:=G\times_{R_r} R_r/pR_r$.
		\begin{enumerate}		
			\item There is a functorial and $\Gamma$-equivariant isomorphism of $W$-modules
			\begin{equation*}
				\m_r(G)\tens_{\s_r,\varphi\circ \tau} W \simeq \D(\o{G})_W,
			\end{equation*}		 
			carrying 
			$\varphi_{\m}\otimes \varphi$ to $F:\D(\o{G})_W\rightarrow \D(\o{G})_W$ 
			and $\psi_{\m}\otimes 1$ to $V\otimes 1: \D(\o{G})_W \rightarrow  \varphi^*\D(\o{G})_W$.
			\label{EvaluationONW}
		\item There is a functorial and $\Gamma$-equivariant isomorphism of $R_r$-modules
			\begin{equation*}
				\m_r(G)\tens_{\s_r,\theta\circ\varphi} R_{r} \simeq \D(G_0)_{R_r}.
			\end{equation*}\label{EvaluationONR}
		\end{enumerate}
	\end{enumerate}
\end{theorem}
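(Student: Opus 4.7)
The plan is to construct the functor $\m_r$ via the theory of frames and windows in the spirit of Zink and Lau, suitably adapted to the cyclotomic situation, and then verify the listed properties by comparing the resulting classification with crystalline Dieudonn\'e theory over the large PD-thickening $S_r \twoheadrightarrow R_r/pR_r$ studied by Breuil.

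First, I would verify that the data $(\s_r, (\omega), \varphi, \varphi_1)$, where $\varphi_1 := \varphi/p$ acts on the principal ideal $(\omega)$, forms a frame in Lau's sense; this is a formal check, using that $\varphi(\omega) = p\cdot(\text{unit})$ modulo appropriate ideals. The category $\BT_{\s_r}^{\varphi}$ is then (essentially by definition) equivalent to the category of windows over this frame. The key step is to relate this ``cyclotomic frame'' to the crystalline frame on the $p$-adic PD-envelope $S_r$ of $\theta:\s_r\twoheadrightarrow R_r$: there is a canonical $\varphi$-equivariant frame map $\s_r \hookrightarrow S_r$, and one must prove that base change of windows along this map is fully faithful (indeed an equivalence when restricted to the ``Barsotti--Tate'' objects). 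This descent from $S_r$ to $\s_r$, which is the cyclotomic analogue of Kisin's descent from $S$ to $\mathfrak{S}=W[\![u]\!]$ in the unramified case, is the main obstacle.

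Once the equivalence with windows over the crystalline frame is in place, Theorem \ref{CaisLauMain} (\ref{exequiv}) follows by composing with the known equivalence between $p$-divisible groups over $R_r$ and windows over the crystalline frame (supplied by Zink's theory of displays together with Breuil's classification). The evaluation isomorphisms of (\ref{EvaluationONW}) and (\ref{EvaluationONR}) then correspond, via Proposition \ref{BTgroupUnivExt} and the construction of $\D(G)$ in \S\ref{Universal}, to the values of the Dieudonn\'e crystal $\D(G_0)$ on the PD-thickenings $W \twoheadrightarrow k$ and $R_r \twoheadrightarrow R_r/pR_r$: the twist by $\varphi$ in the tensor products accounts for the fact that $\m_r$ is most naturally defined ``before applying Frobenius'', while the crystal is evaluated after one Frobenius twist. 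Duality in (\ref{exequiv}) follows from the autoduality of $\D$ (Proposition \ref{GDuality} and the discussion in \S\ref{Universal}) combined with the explicit formula for $\m^t$ given in Definition \ref{DualBTDef}; the character twist $\chi(\gamma)^{-1}\varphi^{r-1}(\gamma u_r/u_r)$ precisely encodes the discrepancy between the Cartier dual and the naive $\s_r$-linear dual, and verifying that it produces a well-defined $\Gamma$-action is a direct, if tedious, calculation.

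For the base change compatibility (\ref{BaseChangeIsom}), the key point is that the map $\iota_r:\s_r\hookrightarrow \s_{r+1}'$ was chosen (via $u_r\mapsto \varphi(u_{r+1})$) precisely so that it intertwines the frame structures and is compatible with the $R_r\hookrightarrow R_{r+1}'$ inclusion through $\theta$ and $\varphi$; the claimed isomorphism of functors then follows from the functoriality of $\D$ under base change along PD-morphisms, together with the uniqueness of the descent to the cyclotomic frame established in the first step. Since these base change and duality properties are essentially formal consequences of the construction, the real work is concentrated in establishing the descent and the equivalence with windows over the crystalline frame; all other assertions follow by tracking through identifications and invoking the corresponding facts for the Dieudonn\'e crystal.
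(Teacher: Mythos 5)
Your overall strategy --- descend from the crystalline PD-thickening $S_r$ to $\s_r$ via Lau's theory of frames and windows, and compose with crystalline Dieudonn\'e theory for $p$-divisible groups --- is essentially the approach summarized in \S\ref{pDivPhiGamma} of the paper, whose proof proper is delegated to \cite{CaisLau} with the key descent step isolated as Theorem \ref{Lau}.

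However, the first step of your argument contains a genuine error. The element $\varphi(\omega)$ is \emph{not} divisible by $p$ in $\s_r$: writing $\omega=\varphi(u_1)/u_1 = u_0/u_1$ as in \S\ref{pDivPhiGamma}, one computes
\begin{equation*}
\varphi(\omega) \;=\; \frac{(1+u_0)^p - 1}{u_0} \;=\; p + \binom{p}{2}u_0 + \cdots + \binom{p}{p-1}u_0^{p-2} + u_0^{p-1},
\end{equation*}
which has unit leading coefficient and therefore does not lie in $p\s_r$. Consequently $\varphi_1:=\varphi/p$ is not defined as a map $(\omega)\rightarrow\s_r$, and $(\s_r,(\omega),\varphi,\varphi/p)$ is not a frame; the ``formal check'' you describe fails. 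This is precisely the reason the paper defines $\BT_{\s_r}^{\varphi}$ in terms of pairs $(\m,\varphi_\m)$ with $\coker(1\otimes\varphi_\m)$ killed by $\omega$ --- a Breuil--Kisin-type formulation --- rather than as triples with a filtration and a $\varphi_1$, which is the Breuil-type formulation appropriate only for $S_r$, where $\varphi(\Fil^1S_r)\subseteq pS_r$ does hold thanks to the divided power structure. The nondivisibility of $\varphi(\omega)$ by $p$ in $\s_r$ is the root of the difficulty: the map $\s_r\rightarrow S_r$ is very far from an isomorphism of (pre-)frames, and the nontrivial content of \cite{CaisLau} --- and of Theorem \ref{Lau} --- is precisely the proof that the induced base change on windows is nonetheless an equivalence. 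You correctly identify the descent as ``the main obstacle,'' but the ansatz $\varphi_1=\varphi/p$ on $\s_r$ indicates that the two frame structures to be compared have not yet been formulated correctly; starting from that ansatz the descent would appear to be an isomorphism of frames and hence trivial, which it is not.
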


We wish to explain how to functorially recover the $\scrG_K$-representation afforded 
by the $p$-adic Tate module $T_pG_K$ from $\m_r(G)$.  In order to do so, we must first recall 
the necessary period rings; for a more detailed synopsis of these rings and their properties,
we refer the reader to \cite[\S6--\S8]{Colmez}.

As usual, we put\footnote{Here we use the notation introduced by Berger and Colmez; in Fontaine's
original notation, this ring is denoted $\R$.} 
$$\wt{\e}^+:=\varprojlim_{x\mapsto x^p} \O_{\c_K}/(p),$$
equipped with its canonical $\scrG_K$-action via ``coordinates"
and $p$-power Frobenius map $\varphi$.
This is a perfect ({\em i.e.} $\varphi$ is an automorphism) valuation ring  
of charteristic $p$ with residue field $\overline{k}$
and fraction field $\wt{\e}:=\Frac(\wt{\e}^+)$ that is algebraically closed.
We view $\wt{\e}$ as a topological field via its valuation topology, with respect to which 
it is complete.
Our fixed choice of $p$-power compatible
sequence $\{\varepsilon^{(r)}\}_{r\ge 0}$ 
induces an element $\u{\varepsilon}:=(\varepsilon^{(r)}\bmod p)_{r\ge 0}$ of $\wt{\e}^+$
and we set $\e_{K}:=k(\!(\u{\varepsilon} - 1)\!)$, viewed as a 
topological\footnote{The valuation $v_{\e}$ on $\wt{\e}$ induces the usual discrete valuation 
on $\e_{K,r}$, with the unusual 
normalization $1/p^{r-1}(p-1)$.} subring of $\wt{\e}$; note that this is 
a $\varphi$- and $\scrG_K$-stable subfield of $\wt{\e}$ that is independent
of our choice of $\u{\varepsilon}$.  We write $\e:=\e_K^{\sep}$
for the separable closure of $\e_K$ in the algebraically closed field $\wt{\e}$.
The natural $\scrG_K$-action on $\wt{\e}$ induces a canonical identification
$\Gal(\e/\e_{K}) = \scrH:=\ker(\chi)\subseteq \scrG_K$, so
$\e^{\scrH}=\e_{K}$.  
If $E$ is any subring of $\wt{\e}$, we write $E^+:=E\cap \wt{\e}^+$
for the intersection (taken inside $\wt{\e}$).

We now construct Cohen rings for each of the above subrings of $\wt{\e}$.
To begin with, we put
\begin{equation*}
 \wt{\a}^+:=W(\wt{\e}^+),\qquad\text{and}\qquad \wt{\a}:=W(\wt{\e});
\end{equation*} 
each of these rings is equipped with a canonical Frobenius automorphism $\varphi$
and action of $\scrG_K$ via Witt functoriality.  
Set-theoretically identifying $W(\wt{\e})$ with $\prod_{m=0}^{\infty} \wt{\e}$ in the usual way, we 
endow each factor with its valuation topology and give $\wt{\a}$ the product topology.\footnote{This 
is what is called the {\em weak topology} on $\wt{\a}$.
If each factor of $\wt{\e}$ is instead given the discrete topology, then the
product topology on $\wt{\a}=W(\wt{\e})$ is the familiar $p$-adic 
topology, called the {\em strong} topology.}
The $\scrG_K$ action on $\wt{\a}$ is then continuous
and the canonical $\scrG_K$-equivariant $W$-algebra surjection 
$\theta:\wt{\a}^+\rightarrow \O_{\c_K}$ is continuous when
$\O_{\c_K}$ is given its usual $p$-adic topology.
For each $r\ge 0$, there is a unique continuous $W$-algebra map $j_r:\O_{\E_r}\hookrightarrow \wt{\a}$ 
determined by $j_r(u_r):=\varphi^{-r}([\u{\varepsilon}] - 1)$.  These maps are 
moreover $\varphi$ and $\scrG_K$-equivariant, with $\scrG_K$ acting on $\O_{\E_r}$ through the quotient
$\scrG_K\twoheadrightarrow \Gamma$, and compatible with change in $r$.
We define $\a_{K,r}:=\im(j_r:\O_{\E_r}\rightarrow \wt{\a}),$
which is
naturally a $\varphi$ and $\scrG_K$-stable subring of $\wt{\a}$ that is independent of our choice
of $\u{\varepsilon}$.  
We again omit the subscript when $r=0$. 
Note that $\a_{K,r}=\varphi^{-r}(\a_K)$ inside $\wt{\a}$, and that $\a_{K,r}$
is a discrete valuation ring with uniformizer $p$ and residue field $\varphi^{-r}(\e_K)$
that is purely inseparable over $\e_K$.
We define $\a_{K,\infty}:=\bigcup_{r\ge 0} \a_{K,r}$
and write $\wt{\a}_K$ (respectively $\wh{\a}_K$) 
for the closure of $\a_{K,\infty}$ in $\wt{\a}$ with respect to the weak
(respectively strong) topology.  

Let $\a_{K,r}^{\sh}$ be the strict Henselization of $\a_{K,r}$
with respect to the separable closure of its residue field inside $\wt{\e}$.
Since $\wt{\a}$ is strictly Henselian, there is a unique local morphism 
$\a_{K,r}^{\sh}\rightarrow \wt{\a}$ recovering the given inclusion on residue fields,
and we henceforth view $\a_{K,r}^{\sh}$ as a subring of $\wt{\a}$.
We denote by $\a_r$ the topological closure
of $\a_{K,r}^{\sh}$ inside $\wt{\a}$ with respect to the strong topology,
which
is a $\varphi$ and $\scrG_K$-stable subring of $\wt{\a}$,
and we note that $\a_r = \varphi^{-r}(\a)$ and $\a_r^{\scrH}= \a_{K,r}$
inside $\wt{\a}$. 
We note also that the canonical map $\Z_p\hookrightarrow \wt{\a}^{\varphi=1}$
is an isomorphism, from which it immediately follows that the same is true
if we replace $\wt{\a}$ by any of its subrings constructed above.
If $A$ is any subring of $\wt{\a}$, we define $A^+:=A\cap \wt{\a}^+$,
with the intersection taken inside $\wt{\a}$.

\begin{remark}\label{Slimits}
	We will identify $\s_r$ and $\O_{\E_r}$ with their respective images
	$\a_{K,r}^+$ and $\a_{K,r}$ in $\wt{\a}$ under $j_r$.
	Writing $\s_{\infty}:=\varinjlim \s_r$
	and $\O_{\E_{\infty}}:=\varinjlim \s_r$, we likewise 
	identify $\s_{\infty}$ with $\a_{K,\infty}^+$ and $\O_{\E_{\infty}}$
	with $\a_{K,\infty}$.  
	Denoting by $\wh{\s}_{\infty}$ (respectively $\wt{\s}_{\infty}$) the $p$-adic (respectively 
	$(p,u_0)$-adic) completions, one has
	\begin{equation*}
		\wh{\s}_{\infty} = \wh{\a}_K^+ = W(\e_K^{\rad,+})\quad\text{and}\quad
		\wt{\s}_{\infty} = \wt{\a}_K^+ = W(\wt{\e}_K^{+}),
	\end{equation*}
	for $\e_K^{\rad}:=\cup_{r\ge 0} \varphi^{-r}(\e_K)$ the radiciel ($=$perfect) closure of 
	$\e_K$ in $\wt{\e}$ and $\wt{\e}_K$ its topological completion.
	Via these identifications, $\omega :=u_0/u_1\in \a_{K,1}^+$ is
	a principal generator
	of $\ker(\theta:\wt{\a}^+\twoheadrightarrow \O_{\C_K})$.
\end{remark}

We can now explain the functorial relation between $\m_r(G)$ and $T_pG_K$:

\begin{theorem}\label{comparison}
	Let $G\in \pdiv_{R_r}^{\Gamma}$, and write $H^1_{\et}(G_K):=(T_pG_K)^{\vee}$
	for the $\Z_p$-linear dual of $T_pG_K$.
	There is a canonical mapping
	of finite free $\a_r^+$-modules with semilinear Frobenius and $\scrG_K$-actions
	\begin{equation}
		\xymatrix{
			{\m_r(G)\tens_{\s_r,\varphi} \a_r^+} \ar[r] & {H^1_{\et}(G_K)\otimes_{\Z_p} \a_r^+}
		}
	\end{equation}
	that is injective with cokernel killed by $u_1$.  
	Here, $\varphi$ acts as $\varphi_{\m_r(G)}\otimes \varphi$ on source
	and as $1\otimes\varphi$ on target, while $\scrG_K$ acts diagonally 
	on source and target through the quotient $\scrG_K\twoheadrightarrow \Gamma$
	on $\m_r(G)$.
	  In particular, there is a natural $\varphi$ and $\scrG_K$-equivariant
	isomorphism
	\begin{equation}
				{\m_r(G)\tens_{\s_r,\varphi} \a_r} \simeq  {H^1_{\et}(G_K)\otimes_{\Z_p} \a_r}.
				\label{comparisonb}
	\end{equation}
	These mappings are compatible with duality and with change in $r$ in the obvious manner.
\end{theorem}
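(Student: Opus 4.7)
The strategy is to realize both sides of the comparison as evaluations of the Dieudonn\'e crystal of $G$, and to use the integral crystalline--\'etale comparison for $p$-divisible groups together with the explicit Barsotti--Tate structure on $\m_r(G)$ to pin down the precise failure of isomorphism.

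First, by the base change compatibility of $\m_r$ in Theorem \ref{CaisLauMain}(\ref{BaseChangeIsom}), I would reduce to the case where the residue field $k$ is algebraically closed, so that the $\scrG_K$-action on $T_pG_K$ factors through $\Gamma$ (using that $G_K$ has good reduction over $K_r$). In this situation the composite $\s_r \xrightarrow{\varphi} \s_r \xrightarrow{j_r} \wt{\a}^+$ makes $\wt{\a}^+$ into an $\s_r$-algebra for which the PD-surjection $\theta: \wt{\a}^+ \twoheadrightarrow \O_{\C_K}$ is compatible with the quotient map $\theta \circ \varphi: \s_r \twoheadrightarrow R_r$ appearing in Theorem \ref{CaisLauMain}(\ref{EvaluationONR}). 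The crystal property of $\D(G_0)$, for $G_0 := G \times_{R_r} R_r/pR_r$, then provides a canonical $\varphi$ and $\Gamma$-equivariant identification
\begin{equation*}
\m_r(G) \otimes_{\s_r, \varphi} \a_r^+ \simeq \D(G_0)_{\a_r^+}
\end{equation*}
obtained by restricting the corresponding identification over $\wt{\a}^+$ along $\a_r^+ \hookrightarrow \wt{\a}^+$.

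Next, I would invoke the classical Fontaine--Messing integral crystalline--\'etale comparison for $p$-divisible groups: there is a canonical $\varphi$ and $\scrG_K$-equivariant injection $\D(G_0)_{\wt{\a}^+} \hookrightarrow H^1_{\et}(G_K) \otimes_{\Z_p} \wt{\a}^+$ whose cokernel is annihilated by a power of $\omega = u_0/u_1$. Base-changing to $\a_r^+$ and composing with the identification above yields the comparison map of the theorem. The main technical obstacle is to refine the cokernel bound from $\omega$ (or a power thereof) down to $u_1$; I would deduce this sharpening from the defining property of $\psi_{\m}$ (see the discussion preceding Definition \ref{DualBTDef}), whose composition with $1 \otimes \varphi_{\m}$ is multiplication by $\omega$, combined with the fact that the $\varphi$-twist in the tensor product effectively converts divisibility by $\omega = \varphi(u_1)/u_1 \cdot (\text{unit})$ into divisibility by $u_1$ after tracking through the Frobenius pullback.

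Injectivity follows from the cokernel bound together with the fact that both sides are free $\a_r^+$-modules of the same rank (the rank of $\m_r(G)$ over $\s_r$ equals the rank of $H^1_{\et}(G_K)$ over $\Z_p$ since $\m_r$ is an equivalence preserving ranks via Theorem \ref{CaisLauMain}(\ref{EvaluationONW})). The isomorphism (\ref{comparisonb}) after inverting to $\a_r$ follows because $u_1$ is a unit in $\a_r$: indeed, $u_1 \in \a_1^+ \subset \a_r^+$ reduces modulo the uniformizer $p$ to the nonzero element $\varphi^{-1}(\u{\varepsilon} - 1)$ of the residue field of the complete DVR $\a_r$. Compatibility with duality reduces to Theorem \ref{CaisLauMain}(\ref{exequiv}) together with the compatibility of the Dieudonn\'e crystal functor with Cartier duality for $p$-divisible groups; compatibility with change in $r$ follows from Theorem \ref{CaisLauMain}(\ref{BaseChangeIsom}) combined with the functoriality of the Dieudonn\'e crystal under base change.
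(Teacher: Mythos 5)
The paper does not actually prove Theorem~\ref{comparison}: immediately after stating it the authors write ``For details, including the proofs of Theorems~\ref{CaisLauMain}--\ref{comparison} and Corollary~\ref{GaloisComparison}, we refer the reader to \cite{CaisLau},'' and the remainder of \S\ref{pDivPhiGamma} only recalls the \emph{construction} of $\m_r$ without establishing the comparison map.  So there is no internal argument to compare yours against; what you have written is an independent proposal, and it must be judged on its own terms.

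Your overall strategy---realize $\m_r(G)\otimes_{\s_r,\varphi}\a_r^+$ as a crystalline evaluation of $\D(G_0)$, feed it into the integral crystalline--\'etale comparison over $\wt{\a}^+$, and then track the precise cokernel---is indeed the natural route, and the use of rank counting plus the domain $\a_r^+$ to get injectivity from the cokernel bound is sound.  But two points need repair.  First, the parenthetical claim in your opening reduction---that once $k$ is algebraically closed ``the $\scrG_K$-action on $T_pG_K$ factors through $\Gamma$''---is simply false.  Even with $k$ algebraically closed, the inertia subgroup of $\scrG_K$ acts on $T_pG_K$ through more than just $\Gamma$: take $G$ the $p$-divisible group of an ordinary elliptic curve with good reduction over $K$; the extension class of $T_pG$ lives in $H^1(\scrG_K,\Z_p(1))$ and need not become trivial on $\scrH=\ker\chi$.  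What \emph{is} true (and what the theorem is actually asserting) is that $\scrG_K$ acts on $\m_r(G)$ through $\Gamma$, while the diagonal action on the tensor product is a genuine $\scrG_K$-action coming from the semilinear $\scrG_K$-action on $\a_r^+$.  Fortunately you do not seem to use your false statement, but it should be removed.

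Second and more seriously, the sharpening of the cokernel bound from a power of $\omega$ to the single element $u_1$ is the entire content of the theorem, and your treatment of it is a gesture rather than an argument.  ``Tracking through the Frobenius pullback'' will have to be made precise: you need to say exactly which crystalline evaluation map you are using, exactly which lattice sits inside $H^1_{\et}(G_K)\otimes_{\Z_p}\wt{\a}^+$, and verify the precise divisibility element on explicit generators.  The relation $\varphi_{\m}\psi_{\m}=\psi_{\m}\varphi_{\m}=\omega$ with $\omega=\varphi(u_1)/u_1$ is suggestive, but $\omega$, $u_1$, and $u_0=\varphi(u_1)=u_1\omega$ are three pairwise non-associate elements of $\a_r^+$, and ``killed by $u_1$'' is not interchangeable with ``killed by $\omega$'' or ``killed by $u_0$.''  Running your recipe on the basic example $G=\mu_{p^\infty}$ from Example~\ref{GmQpZpExamples} is a good consistency check before attempting the general case: the comparison map there is multiplication by an explicit rank-one period, and whether the cokernel comes out annihilated by $u_1$ (and not merely by $u_0$ or $\omega$) is exactly the kind of bookkeeping that must be done carefully and that your sketch currently elides.
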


\begin{corollary}\label{GaloisComparison}
	For $G\in \pdiv_{R_r}^{\Gamma}$, there are functorial isomorphisms of $\Z_p[\scrG_K]$-modules
	\begin{subequations}
		\begin{align}
			T_pG_K &\simeq \Hom_{\s_r,\varphi}(\m_r(G),\a_r^+)\\
			H^1_{\et}(G_K) &\simeq (\m_r(G) \tens_{\s_r,\varphi} \a_r)^{\varphi_{\m_r(G)}\otimes \varphi=1}.
			\label{FontaineModule}
		\end{align}
	\end{subequations}
	which are compatible with duality and change in $r$.  In the first isomorphism,
	we view $\a_r^+$ as a $\s_r$-algebra via the composite of the usual structure map with 
	$\varphi$. 
\end{corollary}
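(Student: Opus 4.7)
The plan is to deduce both isomorphisms directly from Theorem~\ref{comparison}, treating the second (for $H^1_{\et}(G_K)$) first as it is the cleaner of the two.

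For the isomorphism (\ref{FontaineModule}), I would simply take $\varphi$-invariants of both sides of the comparison isomorphism (\ref{comparisonb}). By construction, the operator $\varphi_{\m_r(G)}\otimes\varphi$ on the left is carried to $1\otimes\varphi$ on the right. On the right, since $H^1_{\et}(G_K)$ is a finite free $\Z_p$-module, tensor with $\a_r$ commutes with $\varphi$-invariants, giving $H^1_{\et}(G_K)\otimes_{\Z_p}\a_r^{\varphi=1} = H^1_{\et}(G_K)$, using the basic fact $\a_r^{\varphi=1}=\Z_p$ recorded in the paragraph preceding Remark~\ref{Slimits}. The left-hand side becomes the displayed module $(\m_r(G)\tens_{\s_r,\varphi}\a_r)^{\varphi_{\m_r(G)}\otimes\varphi=1}$ by definition, and the $\scrG_K$-equivariance is inherited from (\ref{comparisonb}).

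For the Tate module isomorphism, I would apply $\Hom_{\a_r}(-,\a_r)$ to (\ref{comparisonb}), then take $\varphi$-invariants. Standard adjunction gives $\Hom_{\a_r}(\m_r(G)\tens_{\s_r,\varphi}\a_r,\a_r)\simeq \Hom_{\s_r}(\m_r(G),\a_r)$ where the target $\a_r$ is viewed as an $\s_r$-algebra via the composite of the structure map with $\varphi$; and on the other side one gets $\Hom_{\Z_p}(H^1_{\et}(G_K),\a_r)$. Imposing $\varphi$-equivariance on both sides cuts out $\Hom_{\s_r,\varphi}(\m_r(G),\a_r)$ on the left and $\Hom_{\Z_p}(H^1_{\et}(G_K),\a_r^{\varphi=1})=\Hom_{\Z_p}(H^1_{\et}(G_K),\Z_p)=T_pG_K$ on the right, again by $\a_r^{\varphi=1}=\Z_p$ and $\Z_p$-freeness of $H^1_{\et}(G_K)$. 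The $\scrG_K$-equivariance comes for free from that of (\ref{comparisonb}).

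It remains to upgrade $\Hom_{\s_r,\varphi}(\m_r(G),\a_r)$ to $\Hom_{\s_r,\varphi}(\m_r(G),\a_r^+)$, i.e., to show that every $\varphi$-compatible homomorphism coming from a $\Z_p$-linear functional on $H^1_{\et}(G_K)$ automatically takes values in $\a_r^+$. For this I would invoke the \emph{integral} statement in Theorem~\ref{comparison}: the injection $\m_r(G)\tens_{\s_r,\varphi}\a_r^+\hookrightarrow H^1_{\et}(G_K)\otimes_{\Z_p}\a_r^+$ with cokernel killed by $u_1$. A $\Z_p$-linear functional on $H^1_{\et}(G_K)$ extends $\a_r^+$-linearly to the right-hand side and restricts to an $\a_r^+$-valued map on the left; since $\a_r^+$ is $u_1$-torsion free inside $\a_r$, the resulting homomorphism $\m_r(G)\to\a_r^+$ is uniquely determined and lies in the integral $\Hom$. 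Conversely, any integral $\varphi$-compatible map extends to $\a_r$ and recovers the functional by the argument of the previous paragraph, so the two constructions are mutually inverse.

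The main obstacle will be bookkeeping the interaction between the two distinct $\varphi$-actions (the intrinsic $\varphi_{\m_r(G)}$ and the one on $\a_r$) against the Frobenius-twisted $\s_r$-algebra structure on $\a_r^+\subseteq\a_r$, and ensuring that the integrality step genuinely produces the set $\Hom_{\s_r,\varphi}(\m_r(G),\a_r^+)$ rather than a larger or smaller subgroup; compatibility with duality and with change in $r$ follows automatically from the corresponding compatibilities in Theorems~\ref{CaisLauMain} and~\ref{comparison}.
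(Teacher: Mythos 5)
The paper itself does not prove this corollary; it defers to \cite{CaisLau} (``for details, including the proofs of Theorems~\ref{CaisLauMain}--\ref{comparison} and Corollary~\ref{GaloisComparison}, we refer the reader to \cite{CaisLau}''). So there is no internal argument to compare against, but your derivation from Theorem~\ref{comparison} is the intended and natural one, and it is essentially correct.

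For (\ref{FontaineModule}) your reasoning is right: (\ref{comparisonb}) intertwines $\varphi_{\m_r(G)}\otimes\varphi$ with $1\otimes\varphi$, and since $H^1_{\et}(G_K)$ is $\Z_p$-free the functor $(-)\otimes_{\Z_p}\a_r$ commutes with taking $\varphi$-invariants, which together with $\a_r^{\varphi=1}=\Z_p$ (noted just above Remark~\ref{Slimits}) gives the claim. For the Tate-module isomorphism, the two-way construction works: restricting $f\otimes 1$ along the integral injection of Theorem~\ref{comparison} and using adjunction produces an element of $\Hom_{\s_r,\varphi}(\m_r(G),\a_r^+)$, and conversely extending $g$ to $\a_r$-scalars, applying (\ref{comparisonb}), and invoking $\varphi$-invariance forces the induced functional on $H^1_{\et}(G_K)$ to be $\Z_p$-valued; these are mutually inverse because (\ref{comparisonb}) is precisely the $\a_r$-scalar extension of the integral injection and $u_1$ is a unit in $\a_r$.

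One small imprecision: your appeal to ``$\a_r^+$ is $u_1$-torsion-free'' as the reason the forward map $\m_r(G)\to\a_r^+$ is uniquely determined is misplaced --- that map is simply the adjoint of a restriction and is unique for formal reasons, with no torsion considerations needed. Torsion-freeness of $\a_r^+$ would more appropriately justify the injectivity of the dual map $\Hom_{\a_r^+}(H^1_{\et}(G_K)\otimes\a_r^+,\a_r^+)\to\Hom_{\a_r^+}(\m_r(G)\otimes_{\s_r,\varphi}\a_r^+,\a_r^+)$ (since the cokernel of the integral injection is $u_1$-torsion), but your argument as written bypasses that dual-cokernel issue and does not actually need it. Your closing remark that compatibility with duality and change in $r$ follows formally from the corresponding statements in Theorems~\ref{CaisLauMain} and~\ref{comparison} is correct.
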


\begin{remark}
	By definition, the map $\varphi^r$ on $\O_{\E_r}$ is injective with image $\O_{\E}:=\O_{\E_0}$,
	and so induces a $\varphi$-semilinear isomorphism of $W$-algebras 
	$\xymatrix@C=15pt{{\varphi^{r}:\O_{\E_r}} \ar[r]^-{\simeq}&{\O_{\E}} }$.
	It follows from (\ref{FontaineModule}) of Corollary \ref{GaloisComparison} and Fontaine's theory of 
	$(\varphi,\Gamma)$-modules
	over $\O_{\E}$ that $\m_r(G)\otimes_{\s_r,\varphi^r} \O_{\E}$ {\em is} the $(\varphi,\Gamma)$-module
	functorially associated to the $\Z_p[\scrG_K]$-module $H^1_{\et}(G_K)$.
\end{remark}

For the remainder of this section, we recall the construction of the functor $\m_r$, both because we shall need
to reference it in what follows, and because we feel it is enlightening.  
For details, including the proofs of Theorems \ref{CaisLauMain}--\ref{comparison}
and Corollary \ref{GaloisComparison}, we refer the reader to \cite{CaisLau}.

Fix $G\in \pdiv_{R_r}^{\Gamma}$ and set $G_0:=G\times_{R_r}{R_r/pR_r}.$  
The $\s_r$-module $\m_r(G)$ is a functorial descent of the evaluation of
the Dieudonn\'e crystal $\D(G_0)$ on a certain ``universal" PD-thickening of $R_r/pR_r$, 
which we now describe.
Let $S_r$ be the $p$-adic completion of the PD-envelope of $\s_r$ with respect to the 
ideal $\ker\theta$, viewed as a (separated and complete) topological ring via the $p$-adic topology.
We give $S_r$ its PD-filtration: for $q\in \Z$ the ideal $\Fil^q S_r$ is the 
topological closure of the ideal generated by $\{\alpha^{[n]}\,:\,\alpha\in \ker\theta,\,n\ge q\}$.
By construction, the map $\theta:\s_r\twoheadrightarrow R_r$
uniquely extends to a continuous surjection of $\s_r$-algebras $S_r\twoheadrightarrow R_r$ 
(which we again denote by $\theta$) whose kernel $\Fil^1 S_r$ is equipped with topologically 
PD-nilpotent\footnote{Here we use our assumption that $p>2$.} divided powers.
One shows that there is a unique continuous endomorphism $\varphi$ of $S_r$ extending $\varphi$ on $\s_r$,
and that $\varphi(\Fil^1 S_r)\subseteq pS_r$; in particular, we may define
$\varphi_1: \Fil^1 S_r\rightarrow S_r$ by $\varphi_1:=\varphi/p$,
which is a $\varphi$-semilinear homomorphism of $S_r$-modules. Note that
$\varphi_1(E_r)$ is a unit of $S_r$, so the image of $\varphi_1$ generates
$S_r$ as an $S_r$-module.

Since the action of $\Gamma$ on $\s_r$ preserves $\ker\theta$, it follows from the universal mapping property of
divided power envelopes and $p$-adic continuity considerations that this action uniquely extends to
a continuous and $\varphi$-equivariant action of $\Gamma$ on $S_r$ which
is compatible with the PD-structure and the filtration.
Similarly, the transition map $\s_r\hookrightarrow \s_{r+1}$ uniquely extends to a 
continuous $\s_r$-algebra homomorphism $:S_r\rightarrow S_{r+1}$ which is moreover compatible with filtrations
(because $E_r(u_r)=E_{r+1}(u_{r+1})$ under our identifications),
and for nonnegative integers $s < r$ we view $S_r$ as an $S_s$-algebra  
via these maps. 

\begin{definition}
	Let $\BT_{S_r}^{\varphi}$ be the category of triples $(\scrM,\Fil^1\scrM, \varphi_{\scrM,1})$ where
	\begin{itemize}
		\item $\scrM$ is a finite free $S_r$-module and $\Fil^1\scrM\subseteq \scrM$ is an $S_r$-submodule.
		\item $\Fil^1\scrM$ contains $(\Fil^1 S_r)\scrM$ and the quotient $\scrM/\Fil^1\scrM$ is a free 
		$S_r/\Fil^1S_r=R_r$-module.
		\item $\varphi_{\scrM,1}:\Fil^1\scrM_r\rightarrow \scrM$ is a $\varphi$-semilinear map whose image
		generates $\scrM$ as an $S_r$-module.
	\end{itemize}
	Morphisms in $\BT_{S_r}^{\varphi}$ are $S_r$-module homomorphisms which are compatible with the
	extra structures.  As per our convention, we will often write $\scrM$ for a triple 
	$(\scrM,\Fil^1\scrM,\varphi_{\scrM,1})$, and $\varphi_1$ for $\varphi_{\scrM,1}$ when it can cause
	no confusion.  We denote by $\BT_{S_r}^{\varphi,\Gamma}$ the subcategory of $\BT_{S_r}^{\varphi}$
	consisting of objects $\scrM$ that are equipped
	with a semilinear action of $\Gamma$ which preserves $\Fil^1\scrM$, commutes with $\varphi_{\scrM,1}$,
	and whose restriction to $\Gamma_r$ is trivial on $\scrM/u_r\scrM$; morphisms in $\BT_{S_r}^{\varphi,\Gamma}$
	are $\Gamma$-equivariant morphisms in $\BT_{S_r}^{\varphi}$.
\end{definition}

The kernel of the surjection $S_r/p^nS_r\twoheadrightarrow R_r/pR_r$ is the image of the ideal 
$\Fil^1 S_r + pS_r$, which by construction is equipped topologically PD-nilpotent divided powers.
We may therefore define
\begin{equation}
	\scrM_r(G)=\D(G_0)_S:=\varprojlim_n \D(G_0)_{S/p^nS},
\end{equation}
which is a finite free $S_r$-module that depends contravariantly functorially on $G_0$.
We promote $\scrM_r(G)$ to an object of $\BT_{S_r}^{\varphi,\Gamma}$ as follows.
As the quotient map $S_r\twoheadrightarrow R_r$ induces a PD-morphism of PD-theckenings
of $R_r/pR_r$, there is a natural isomorphism of free $R_r$-modules
\begin{equation}
	\scrM_r(G)\otimes_{S_r} R_r \simeq \D(G_0)_{R_r}.\label{surjR}
\end{equation}
By Proposition \ref{BTgroupUnivExt}, there is a canonical ``Hodge" filtration $\omega_G \subseteq \D(G_0)_{R_r}$,
which reflects the fact that $G$ is a $p$-divisible group over $R_r$ lifting $G_0$,
and we define $\Fil^1\scrM_r(G)$ to be the preimage of $\omega_G$ under the 
composite of the isomorphism (\ref{surjR}) with the natural surjection 
$\scrM_r(G)\twoheadrightarrow \scrM_r(G)\otimes_{S_r} R_r$; note that this depends on $G$ and not just on 
$G_0$.  The Dieudonn\'e crystal is compatible with arbitrary base change, so the 
relative Frobenius $F_{G_0}:G_0\rightarrow G_0^{(p)}$ induces an canonical morphism of $S_r$-modules
\begin{equation*}
	\xymatrix{
		{\varphi^*\D(G_0)_{S_r} \simeq \D(G_0^{(p)})_{S_r}} \ar[r]^-{\D(F_{G_0})} & {\D(G_0)_{S_r}}
	},
\end{equation*}
which we may view as a $\varphi$-semilinear map $\varphi_{\scrM_r(G)}:\scrM_r(G)\rightarrow \scrM_r(G)$.
As the relative Frobenius map $\omega_{G_0^{(p)}}\rightarrow \omega_{G_0}$ is zero, 
it follows that the restriction of $\varphi_{\scrM_r(G)}$ to $\Fil^1 \scrM_r(G)$ has image contained in
$p\scrM_r(G)$, so we may define $\varphi_{\scrM_r(G),1}:=\varphi_{\scrM_r(G)}/p$, and one proves as in 
\cite[Lemma A.2]{KisinFCrystal}
that the image of $\varphi_{\scrM_r(G),1}$ generates $\scrM_r(G)$ as an $S_r$-module.

It remains to equip $\scrM_r(G)$ with a canonical semilinear action of $\Gamma$.
Let us write $G_{K_r}$ for the generic fiber of $G$ and $G_{K}$ for its unique
descent to $K=K_0$.  The existence of this descent is reflected by the 
existence of a commutative diagram with cartesian square
\begin{equation}
\begin{gathered}
	\xymatrix{
{G_{K}\fiber_K K_r} \ar@/^/[rrd]^-{1\times \gamma} \ar@/_/[ddr] \ar@{.>}[dr]|-{\gamma} 	&         &					\\
	&{\big(G_{K}\fiber_K K_r\big)_{\gamma}} \ar[r]_-{\pr_1} \ar[d]^-{\pr_2}\ar@{} [dr] |{\square} & 
	{G_{K}\fiber_K K_r} \ar[d]\\
	&{\Spec(K_r)} \ar[r]_-{\gamma} &{\Spec(K_r)}
	}
\end{gathered}
\label{GammaAction}
\end{equation}
for each $\gamma\in \Gamma$, compatibly with change in $\gamma$; here, the subscript of $\gamma$ denotes base change
along the map of schemes induced by $\gamma$.
Since $G$ has generic fiber $G_{K_r}=G_K\times_K K_r$, Tate's Theorem ensures that the
dotted arrow above uniquely extends to an isomorphism
of $p$-divisible groups over $R_r$
\begin{equation}
	\xymatrix{
		{G}\ar[r]^-{\gamma} & {G_{\gamma}} 
	},\label{TateExt}
\end{equation}
compatibly with change in $\gamma$.
By assumption, the action of $\Gamma$ on $S_r$ commutes with the divided powers
on $\Fil^1 S_r$ and induces the given action on the quotient $S_r\twoheadrightarrow R_r$;
in other words, $\Gamma$ acts by automorphisms on the object
$(\Spec(R_r/pR_r)\hookrightarrow \Spec(S_r/p^nS_r))$ of $\Cris((R_r/pR_r)/W)$.
Since $\D(G_0)$ is a crystal, each $\gamma\in \Gamma$ therefore gives an $S_r$-linear map
\begin{equation*}
	\xymatrix{
		{\gamma^*\D(G_0)_{S_r} \simeq \D((G_0)_{\gamma})_{S_r}} \ar[r] & {D(G_0)_{S_r}}
	}
\end{equation*}
and hence an $S_r$-semilinear (over $\gamma$) endomorphism $\gamma$ of $\scrM_r(G)$.
One easily checks that the resulting action of $\Gamma$ on $\scrM_r(G)$
commutes with $\varphi_{\scrM,1}$ and preserves $\Fil^1\scrM_r(G)$.
By the compatibility of $\D(G_0)$ with base change and the obvious fact that
the $W$-algebra surjection $S_r\twoheadrightarrow W$ sending $u_r$ to $0$
is a PD-morphism over the canonical surjection $R_r/pR_r\twoheadrightarrow k$,
there is a natural isomorphism
\begin{equation}
	\scrM_r(G)\otimes_{S_r} W \simeq \D(\o{G})_W. 
\end{equation}
It follows easily from this and the diagram (\ref{GammaAction})
that the action of $\Gamma_r$ on $\scrM_r(G)/u_r\scrM_r(G)$
is trivial.

To define $\m_r(G)$, we functorially descend the $S_r$-module $\scrM_r(G)$
along the structure morphism $\alpha_r:\s_r\rightarrow S_r$.  More precisely, 
for $\m\in \BT_{\s_r}^{\varphi,\Gamma}$, we define 
${\alpha_r}_*(\m):=(M,\Fil^1M,\Phi_1)\in \BT_{S_r}^{\varphi,\Gamma}$ via:

\begin{equation}
	\begin{gathered}
		M:=\m\tens_{\s_r,\alpha_r\circ\varphi} S_r\qquad\text{with diagonal $\Gamma$-action}\\
		\Fil^1 M :=\left\{ m\in M\ :\ \varphi_{\m}\otimes\id (m) \in  \m\otimes_{\s_r} \Fil^1 S_r
		\subseteq \m\otimes_{\s_r} S_r \right\}  \\
		\xymatrix{
			{\Phi_1: \Fil^1 M} \ar[r]^-{\varphi_{\m}\otimes\id} & { \m\tens_{\s_r} \Fil^1 S_r}
			\ar[r]^-{\id\otimes\varphi_1} & {\m\tens_{\s_r,\varphi} S_r = M} 
		}.
\end{gathered}
\label{BreuilSrDef}	
\end{equation}
The following is the key technical point of \cite{CaisLau}, and is proved using 
the theory of windows:
\begin{theorem}\label{Lau}
	For each $r$, the functor ${\alpha_r}_*:\BT_{\s_r}^{\varphi,\Gamma}\rightarrow \BT_{S_r}^{\varphi,\Gamma}$
	is an equivalence of categories, compatible with change in $r$.
\end{theorem}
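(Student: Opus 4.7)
The plan is to recast both source and target as categories of $\Gamma$-equivariant windows over suitably chosen frames in the sense of Zink and Lau, and then to deduce the equivalence from a general descent theorem for an appropriately ``crystalline'' frame morphism between them. Compatibility with change in $r$ will drop out of the functoriality of the construction, so I will focus on fixed $r$.

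First I would identify $\BT_{S_r}^{\varphi,\Gamma}$ with the category of $\Gamma$-equivariant windows over the Breuil-type frame
\begin{equation*}
\mathscr{F}_{S_r} := (S_r,\ \Fil^1 S_r,\ R_r,\ \varphi,\ \varphi_1 := \varphi/p);
\end{equation*}
this is essentially built into the definition of $\BT_{S_r}^{\varphi,\Gamma}$. The more substantive translation is the analogous reinterpretation of $\BT_{\s_r}^{\varphi,\Gamma}$ as the category of $\Gamma$-equivariant windows over a Kisin-type frame $\mathscr{F}_{\s_r}$ with underlying ring $\s_r$, distinguished principal ideal $(\omega)$, Frobenius lift $\varphi$, and $\varphi_1$ given by $\varphi/\varphi(\omega)\cdot(\text{unit factor})$ encoded via the inverse map $\psi_{\m}$. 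Here the key move is to use the map $\psi_{\m}:\m\to\varphi^*\m$ (which exists because $\operatorname{coker}(1\otimes\varphi_{\m})$ is killed by $\omega$) to functorially produce the required submodule $\m_1\subseteq\m$ and semilinear map $\varphi_{\m,1}$, and to check that the axioms of a window are precisely the conditions on $(\varphi_{\m},\psi_{\m})$ imposed by membership in $\BT_{\s_r}^{\varphi,\Gamma}$.

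Next I would verify that the $\varphi$-twisted structure map $\s_r\xrightarrow{\alpha_r\circ\varphi} S_r$, together with the induced maps on the distinguished ideals and the compatible lifts of Frobenius, constitutes a morphism of frames $\mathscr{F}_{\s_r}\to\mathscr{F}_{S_r}$; the Frobenius twist on the source is exactly what is needed to convert $(\omega)\subseteq\s_r$ into a submodule of $\Fil^1 S_r$ in a manner compatible with $\varphi_1$, and this is the conceptual reason for the appearance of $\varphi$ in the definition of ${\alpha_r}_*$ in $(\ref{BreuilSrDef})$. The resulting base change functor on window categories is exactly ${\alpha_r}_*$, after the identifications of the previous step, and its $\Gamma$-equivariance is automatic because $\alpha_r$ and $\varphi$ are both $\Gamma$-equivariant and $\Gamma$ preserves $(\omega)\subseteq\s_r$ and $\Fil^1 S_r\subseteq S_r$.

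Finally I would invoke Lau's general equivalence theorem for frame morphisms of ``crystalline'' type: a frame morphism whose underlying ring map realizes the target as the $p$-adically completed PD-envelope of the source along the distinguished principal ideal (together with matching behavior of the Frobenius and of $\sigma_1$) induces an equivalence between the associated window categories. The main obstacle, and where I expect most of the work to lie, is verifying the hypotheses of this descent theorem in the precise cyclotomic situation at hand: one must check that $S_r$ is exactly the $p$-adically completed divided-power envelope of $\s_r$ along $(\omega)$ (equivalently along $\ker\theta$), that $\varphi$ on $S_r$ is the unique extension of $\varphi$ on $\s_r$, that the filtration $\Fil^1 S_r$ is topologically PD-nilpotent with the right flatness properties over $R_r$, and that the $\Gamma$-action extends uniquely and compatibly. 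Given this verification, the theorem produces a quasi-inverse functor on windows, and the $\Gamma$-equivariance of the quasi-inverse follows from the uniqueness in the descent, giving the desired equivalence. Compatibility with change in $r$ is then immediate from the compatibility of the frame morphisms $\mathscr{F}_{\s_r}\to\mathscr{F}_{S_r}$ with the transition maps $\s_r\hookrightarrow\s_{r+1}$ and $S_r\to S_{r+1}$.
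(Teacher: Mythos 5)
Your outline matches the intended argument: the paper gives no internal proof, deferring to \cite{CaisLau} with the remark that the result is ``proved using the theory of windows,'' which is exactly the route you describe --- reinterpreting $\BT_{\s_r}^{\varphi,\Gamma}$ and $\BT_{S_r}^{\varphi,\Gamma}$ as $\Gamma$-equivariant windows over Kisin- and Breuil-type frames and invoking Lau's crystalline-descent theorem for the $\varphi$-twisted frame morphism $\s_r\to S_r$. The verifications you flag at the end (that $S_r$ is the $p$-adically completed PD-envelope of $(\omega)\subseteq\s_r$, that the Frobenius extends uniquely, that the correct ``$u$-homomorphism'' normalization holds for the $\varphi$-twist, and that the $\Gamma$-action descends compatibly) are precisely the technical content that \cite{CaisLau} supplies.
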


\begin{definition}
	For $G\in \pdiv_{R_r}^{\Gamma}$, we write $\m_r(G)$
	for the functorial descent of $\scrM_r(G)$ to an object of $\BT_{\s_r}^{\varphi,\Gamma}$
	as guaranteed by Theorem \ref{Lau}.
	By construction, we have a natural isomorphism
of functors ${\alpha_r}_*\circ \m_r\simeq \scrM_r$ on $\pdiv_{R_r}^{\Gamma}$.
\end{definition}

\begin{example}\label{GmQpZpExamples}
Using Messing's description of the Dieudonn\'e crystal of a $p$-divisible group
in terms of the Lie algebra of its universal extension (cf. remark \ref{MessingRem}),
one calculates that for $r\ge 1$
	\begin{subequations}
	\begin{equation}
		\m_r(\Q_p/\Z_p)  = \s_r,\qquad \varphi_{\m_r(\Q_p/\Z_p)}:= \varphi,\qquad \gamma:=\gamma
		\label{MrQpZp}
	\end{equation}
	\begin{equation}
		\m_r(\mu_{p^{\infty}})  = \s_r,\qquad \varphi_{\m_r(\mu_{p^{\infty}})}:= \omega\cdot\varphi,
		\qquad \gamma:=\chi(\gamma)^{-1}\varphi^{r-1}(\gamma u_r/u_r)\cdot \gamma 
	\label{MrMu}
	\end{equation}
\end{subequations}
with $\gamma\in \Gamma$ acting as indicated.  
Note that both $\m_r(\Q_p/\Z_p)$ and $\m_r(\Gm[p^{\infty}])$
arise by base change from their incarnations when $r=1$,
as follows from the fact that $\omega = \varphi(u_1)/u_1$ and
$\varphi^{r-1}(\gamma u_r/u_r)=\gamma u_1/u_1$ via our identifications.
\end{example}

\subsection{The case of ordinary \texorpdfstring{$p$}{p}-divisible groups}

When $G\in \pdiv_{R_r}^{\Gamma}$ is ordinary,
one can say significantly more about the structure of the $\s_r$-module $\m_r(G)$.
To begin with, we observe that for arbitrary $G\in \pdiv_{R_r}^{\Gamma}$,
the formation of the maximal \'etale quotient of $G$ and of the maximal 
connected and multiplicative-type sub $p$-divisible groups of $G$ are functorial in $G$, 
so each of $G^{\et}$, $G^0$, and $G^{\mult}$ is naturally an object of $\pdiv_{R_r}^{\Gamma}$
as well.   We thus (functorially) obtain objects 
$\m_r(G^{\star})$ of $\BT_{\s_r}^{\varphi, \Gamma}$ which admit particularly simple 
descriptions when $\star=\et$ or $\mult$, as we now explain.

As usual, we write $\o{G}^{\star}$ for the special fiber of $G^{\star}$ and $\D(\o{G}^{\star})_W$
for its Dieudonn\'e module.
Twisting the $W$-algebra structure on $\s_r$ by the automorphism $\varphi^{r-1}$
of $W$, 
we define objects of $\BT_{\s_r}^{\varphi,\Gamma}$
\begin{subequations}
	\begin{equation}
		\m_r^{\et}(G) : = \D(\o{G}^{\et})_W\tens_{W,\varphi^{r-1}} \s_r,
		\qquad \varphi_{\m_r^{\et}}:= F\otimes \varphi,
		\qquad \gamma:=\gamma \otimes \gamma 
		\label{MrEtDef}
	\end{equation}
	\begin{equation}
		\m_r^{\mult}(G) : = \D(\o{G}^{\mult})_W\tens_{W,\varphi^{r-1}} \s_r,
		\qquad \varphi_{\m_r^{\mult}}:= V^{-1}\otimes E_r\cdot\varphi,
		\qquad \gamma:=\gamma \otimes \chi(\gamma)^{-1}\varphi^{r-1}(\gamma u_r/u_r)\cdot \gamma 
	\label{MrMultDef}
	\end{equation}
\end{subequations}
with $\gamma\in \Gamma$ acting as indicated.
Note that these formulae make sense and do indeed give objects of $\BT_{\s_r}^{\varphi,\Gamma}$
as $V$ is 
invertible\footnote{A $\varphi^{-1}$-semilinear map of $W$-modules $V:D\rightarrow D$ 
is {\em invertible} if there exists a $\varphi$-semilinear endomorphism $V^{-1}$ whose composition
with $V$ in either order is the identity.  This is easily seen to be equivalent to 
the invertibility of the linear map $V\otimes 1: D\rightarrow \varphi^* D$, with $V^{-1}$
the composite of  $(V\otimes 1)^{-1}$ and the $\varphi$-semilinear map $\id\otimes 1:D\rightarrow \varphi^*D$.
}
 on $\D(\o{G}^{\mult})_W$ and $\gamma u_r/u_r \in \s_r^{\times}$.
It follows easily from these definitions that $\varphi_{\m_r^{\star}}$
linearizes to an isomorphism when $\star=\et$ and has image
contained in $\omega\cdot \m_r^{\mult}(G)$ when $\star=\mult$ 
Of course, $\m_r^{\star}(G)$ 
is contravariantly functorial in---and depends only on---the closed fiber $\o{G}^{\star}$ of $G^{\star}$.

\begin{proposition}\label{EtaleMultDescription}
	Let $G$ be an object of $\pdiv_{R_r}^{\Gamma}$ and let $\m_r^{\et}(G)$ and $\m_r^{\mult}(G)$
	be as above.  The map $F^r:G_0 \rightarrow G_0^{(p^r)}$ 
	$($respectively $V^r:G_0^{(p^r)}\rightarrow G_0$$)$ induces a natural isomorphism
	in $\BT_{\s_r}^{\Gamma}$
	\begin{equation}
			\m_r(G^{\et}) \simeq \m_r^{\et}(G)\qquad\text{respectively}\qquad
			\m_r(G^{\mult}) \simeq \m_r^{\mult}(G).\label{EtMultSpecialIsoms}
	\end{equation}
	These identifications are compatible with change in $r$
	in the sense that for $\star=\et$ $($respectively $\star=\mult$$)$ there is a canonical
	commutative diagram in $\BT_{\s_{r+1}}^{\Gamma}$
	\begin{equation}
	\begin{gathered}
		\xymatrix{
			{\m_{r+1}(G^{\star}\times_{R_r} R_{r+1})} 
			\ar[r]_-{\simeq}^-{(\ref{EtMultSpecialIsoms})}\ar[d]_-{\simeq} & 
			{\m_{r+1}^{\star}(G\times_{R_r} R_{r+1})} \ar@{=}[r] &
			 {\D(\o{G}^{\star})_W\tens_{W,\varphi^r} \s_{r+1}}  
			 \ar[d]^-{F\otimes\id\ (\text{respectively}\ V^{-1}\otimes\id)}_-{\simeq} \\
			{\m_r(G^{\star})\tens_{\s_r} \s_{r+1}} \ar[r]^-{\simeq}_-{(\ref{EtMultSpecialIsoms})} & 
			{\m_r^{\star}(G)\tens_{\s_r} \s_{r+1}} \ar@{=}[r] & 
			 {\D(\o{G}^{\star})_W\tens_{W,\varphi^{r-1}} \s_{r+1}}
		}
	\end{gathered}
	\label{EtMultSpecialIsomsBC}
	\end{equation}
	where the left vertical isomorphism is deduced from Theorem $\ref{CaisLauMain}$ $(\ref{BaseChangeIsom}).$
\end{proposition}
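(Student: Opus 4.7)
The proof will proceed in three steps: (i) verifying that $\m_r^{\et}(G)$ and $\m_r^{\mult}(G)$ are bona fide objects of $\BT_{\s_r}^{\varphi,\Gamma}$; (ii) constructing the comparison isomorphisms \eqref{EtMultSpecialIsoms}, with the étale case as the substantive content and the multiplicative case following by duality; and (iii) verifying the base-change compatibility \eqref{EtMultSpecialIsomsBC}. Step (i) is a direct calculation: invertibility of $F$ (respectively $V$) on the Dieudonné module of an étale (respectively multiplicative) $p$-divisible group over the perfect field $k$ ensures that the linearization of $\varphi_{\m_r^{\et}} = F\otimes\varphi$ is an isomorphism and that the linearization of $\varphi_{\m_r^{\mult}} = V^{-1}\otimes\omega\cdot\varphi$ has image $\omega\cdot\m_r^{\mult}(G)$; the commutation of $\varphi$ and $\Gamma$ is a direct identity using $\varphi(\gamma u_r/u_r) = \gamma\varphi(u_r)/\varphi(u_r)$; and triviality of the residual $\Gamma_r$-action on $\m_r^{\star}(G)/u_r$ reduces to the congruence $\varphi^{r-1}(\gamma u_r/u_r)\equiv\chi(\gamma)\pmod{u_r}$ combined with the triviality of $\Gamma$ on $k$ (and hence on $\D(\o{G}^{\star})_W$).

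The heart of the proof is the étale case of step (ii). The key input is a \emph{rigidity property} of Dieudonné crystals of étale $p$-divisible groups: for an étale $p$-divisible group $H$ over $R_r/pR_r$, the evaluation of $\D(H)$ on the PD-thickening $S_r\twoheadrightarrow R_r/pR_r$ is canonically isomorphic to $\D(H\times_{R_r/pR_r} k)_W\otimes_W S_r$ as an object of $\BT_{S_r}^{\varphi,\Gamma}$, with Frobenius $F\otimes\varphi_{S_r}$ and $\Gamma$ acting solely through its action on $S_r$. This follows from the unique étale lifting of $p$-divisible groups along nilpotent PD-thickenings together with the base-change compatibility of $\D(\cdot)$; crucially, the $\Gamma$-action reduces to that on $S_r$ because, by Tate's theorem, any étale object of $\pdiv_{R_r}^{\Gamma}$ base-changes from a unique étale $p$-divisible group over $W$, trivializing the descent datum \eqref{TateExt}. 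Applying this with $H=G^{\et}_0$ yields $\scrM_r(G^{\et})\simeq\D(\o{G}^{\et})_W\otimes_W S_r$. Unwinding the construction \eqref{BreuilSrDef} shows ${\alpha_r}_*\m_r^{\et}(G)\simeq\D(\o{G}^{\et})_W\otimes_{W,\varphi^r} S_r$ with Frobenius $F\otimes\varphi_{S_r}$, and the assignment $d\otimes s\mapsto F^r(d)\otimes s$---well-defined and bijective by invertibility of $F^r$ on the étale Dieudonné module---matches these two descriptions as objects of $\BT_{S_r}^{\varphi,\Gamma}$. By Theorem \ref{Lau}, this $S_r$-level isomorphism descends uniquely to the desired $\m_r(G^{\et})\simeq\m_r^{\et}(G)$, which is visibly induced by $F^r:G^{\et}_0\to G^{\et,(p^r)}_0$.

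For the multiplicative case, since $\Dual{G^{\mult}}$ is étale and $\m_r$ is compatible with duality (Theorem \ref{CaisLauMain}\eqref{exequiv}), we obtain $\m_r(G^{\mult}) \simeq \m_r(\Dual{G^{\mult}})^t \simeq \m_r^{\et}(\Dual{G^{\mult}})^t$; a direct unwinding of Definition \ref{DualBTDef} together with the Cartier-dual identification $\D(\o{G}^{\mult})_W\simeq\D(\o{\Dual{G^{\mult}}})_W^t$ (which exchanges $F$ and $V$) exhibits this dual as $\m_r^{\mult}(G)$, with the $\Gamma$-twist $\chi(\gamma)^{-1}\varphi^{r-1}(\gamma u_r/u_r)$ arising precisely as the crossed homomorphism in Definition \ref{DualBTDef}. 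Step (iii) then follows from naturality of all constructions: both vertical arrows in \eqref{EtMultSpecialIsomsBC} are canonical, the left via the base-change isomorphism of Theorem \ref{CaisLauMain}\eqref{BaseChangeIsom} and the right via $F$ (respectively $V^{-1}$) on the Dieudonné module, and commutativity reduces to the factorization $F^{r+1}=F\circ F^r$ on $\D(\o{G}^{\et})_W$ (respectively its Cartier dual). The main obstacle will be the careful formulation and proof of the rigidity claim in step (ii) within the $\BT_{S_r}^{\varphi,\Gamma}$-category: although the underlying $S_r$-module identification is standard, one must verify that the $\Gamma$-action---arising via the Tate extension \eqref{TateExt} of the generic descent datum, passing through the identification of étale $p$-divisible groups over $R_r$ with those over $W$, and transported through the crystal base-change isomorphisms---really does act solely through the $S_r$-factor as asserted.
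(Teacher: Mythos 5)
Your overall strategy is sound, and it differs from the paper's in an interesting way for the multiplicative case: the paper computes $\scrM_r(G^{\mult})$ directly by applying $V^r$ (in parallel with $F^r$ for the \'etale case), and the essential technical device is the element $\lambda := \log(1+u_0)/u_0 \in S_r^\times$, which satisfies $\lambda/\varphi(\lambda) = \varphi(E_r)/p$ and $\lambda/\gamma\lambda = \chi(\gamma)^{-1}\varphi^r(\gamma u_r/u_r)$; multiplication by $\lambda$ transports the twisted Breuil module ${\alpha_r}_*(\m_r^{\mult})$ onto one with untwisted diagonal $\Gamma$-action, after which the comparison is routine. You instead deduce the multiplicative case from the \'etale one via the identification $\m_r(G^{\mult}) \simeq \m_r^{\et}(\Dual{G^{\mult}})^t$ and an unwinding of Definition~\ref{DualBTDef}; since the $\Gamma$-twist $\chi(\gamma)^{-1}\varphi^{r-1}(\gamma u_r/u_r)$ appearing in that definition is exactly the twist in~\eqref{MrMultDef}, and the $\omega$-factor in $\varphi_{\m^t}$ is inherited from the relation $\psi_{\m}\circ(1\otimes\varphi_{\m})=\omega$, this should indeed reproduce $\m_r^{\mult}(G)$ without any recourse to $\lambda$. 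That is a genuine simplification, at the cost of leaving the verification to the reader as ``a direct unwinding.''

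There is, however, a concrete error in your treatment of the \'etale case. You claim that for the rigidity isomorphism the $\Gamma$-action ``reduces to that on $S_r$,'' on the grounds that any \'etale object of $\pdiv_{R_r}^{\Gamma}$ base-changes from an \'etale $p$-divisible group over $W$, ``trivializing the descent datum \eqref{TateExt}.'' This is false in general: being an \'etale $p$-divisible group over $R_r$ only constrains the Tate module as a $\scrG_{K_r}$-representation, whereas the $\Q_p$-descent encoded in~\eqref{TateExt} may give a ramified $\scrG_{\Q_p}$-representation, with inertia acting nontrivially through $\Gal(K_r/\Q_p)$. Such a $G^{\et}$ does not extend over $W = \Z_p$, and the induced $\Gamma$-action on the closed fiber $\o{G}^{\et}$---hence on $\D(\o{G}^{\et})_W$---is nontrivial. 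This is precisely why the definition~\eqref{MrEtDef} prescribes the action $\gamma := \gamma\otimes\gamma$ rather than $\id\otimes\gamma$; the paper's proof produces the isomorphism $\D(G_0^{\et})_{S_r} \simeq \D(\o{G}^{\et})_W\otimes_{W,\varphi^r} S_r$ with the \emph{diagonal} $\Gamma$-action. Your argument can be repaired by replacing ``solely through $S_r$'' with ``diagonally on $\D(\o{G}^{\et})_W\otimes_W S_r$,'' after which the comparison $d\otimes s\mapsto F^r(d)\otimes s$ matches two objects carrying the same diagonal action; but as written, the justification you give for the $\Gamma$-compatibility of the rigidity isomorphism does not hold.
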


\begin{proof}
	For ease of notation, we will write $\m_r^{\star}$ and
	and $\D^{\star}$ for  $\m_r^{\star}(G)$ and $\D(\o{G}^{\star})_W$, respectively. 
	Using (\ref{BreuilSrDef}), one finds that $\scrM_r^{\et}:={\alpha_r}_*(\m_r^{\et})\in \BT_{S_r}^{\varphi,\Gamma}$
	is given by the triple
	\begin{subequations}
	\begin{equation}
		\scrM_r^{\et}:=(\D^{\et}\otimes_{W,\varphi^r} S_r,\ \D^{\et}\otimes_{W,\varphi^r} \Fil^1 S_r,\  
		F\otimes \varphi_1)
	\end{equation}	
	with $\Gamma$ acting diagonally on the tensor product.  Similarly,
	${\alpha_r}_*(\m_r^{\mult})$ is given by the triple
	\begin{equation}
		(\D^{\mult}\otimes_{W,\varphi^r} S_r,\ \D^{\mult}\otimes_{W,\varphi^r} S_r,\ 
		V^{-1} \otimes v_r\cdot\varphi)\label{WindowMultCase}
	\end{equation}
	\end{subequations}
	where $v_r=\varphi(E_r)/p$ and $\gamma\in \Gamma$ acts on $\D^{\mult}\otimes_{W,\varphi^r} S_r$
	as $\gamma \otimes \chi(\gamma)^{-1} \varphi^r(\gamma u_r/u_r)\cdot \gamma$.	
	Put $\lambda := \log(1+u_0)/{u_0},$
	where $\log(1+X):\Fil^1 S_r\rightarrow S_r$ is the usual (convergent for the $p$-adic topology) power series
	and $u_0$ is viewed as an element of $S_r$ via the structure map $S_0\rightarrow S_r$
	(concretely, $u_0= \varphi^r(u_r)$).
	For each $r\ge 0$, one checks that $\lambda$ admits the convergent product expansion 
	$\lambda=\prod_{i\ge 0} \varphi^i(v_r)$, so $\lambda\in S_r^{\times}$ and
	\begin{equation}
		\frac{\lambda}{\varphi(\lambda)}  = \varphi(E_r)/p= v_r\qquad\text{and}\qquad
		\frac{\lambda}{\gamma\lambda} = \chi(\gamma)^{-1}\varphi^r(\gamma u_r/u_r) \quad\text{for}\ 
		\gamma\in 		\Gamma.\label{lambdaTransformation}
	\end{equation}
	It follows from (\ref{lambdaTransformation})
	that the $S_r$-module automorphism of $\D^{\mult}\otimes_{W,\varphi^r} S_r$
	given by multiplication by $\lambda$
	carries (\ref{WindowMultCase}) isomorphically onto the object of $\BT_{S_r}^{\varphi,\Gamma}$
	given by the triple
	\begin{equation}
		\scrM_r^{\mult}:=(\D^{\mult}\otimes_{W,\varphi^r} S_r,\ \D^{\mult}\otimes_{W,\varphi^r} S_r,\  
		V^{-1}\otimes\varphi)
	\end{equation}
	with $\Gamma$ acting {\em diagonally} on the tensor product.

	On the other hand, since $G_0^{\et}$ (respectively $G_0^{\mult}$) is \'etale 
	(respectively of multiplicative type) over $R_r/pR_r$, the relative Frobenius 
	(respectively Verscheibung) morphism of $G_0$ induces isomorphisms
	\begin{subequations}
	\begin{equation}
		\xymatrix{
			{G_0^{\et}} \ar[r]_-{\simeq}^-{F^r} &  {(G_0^{\et})^{(p^r)} \simeq 
			{\varphi^r}^*\o{G}^{\et} \times_k R_r/pR_r} 
		}\label{Ftrick}
	\end{equation}
	respectively
	\begin{equation}
		\xymatrix{
			{G_0^{\mult}} & \ar[l]^-{\simeq}_-{V^r}   {(G_0^{\mult})^{(p^r)} 
			\simeq {\varphi^r}^*\o{G}^{\mult} \times_k R_r/pR_r} 
		}\label{Vtrick}
	\end{equation} 
	\end{subequations}
	of $p$-divisible groups over $R_r/pR_r$, where we have used the fact that the map $x\mapsto x^{p^r}$
	of $R_r/pR_r$ factors as $R_r/pR_r \twoheadrightarrow k \hookrightarrow R_r/pR_r$
	in the final isomorphisms of both lines above.  Since the Dieudonn\'e crystal is compatible
	with base change and the canonical map $W\rightarrow S_r$ extends to a PD-morphism 
	$(W,p)\rightarrow (S_r, pS_r+\Fil^1 S_r)$ over $k\rightarrow R_r/pR_r$, 
	applying $\D(\cdot)_{S_r}$ to (\ref{Ftrick})--(\ref{Vtrick}) yields natural isomorphisms
	$\D(G_0^{\star})_{S_r} \simeq \D^{\star}\otimes_{W,\varphi^r} S_r$ for $\star=\et,\mult$
	which carry $F$ to $F\otimes \varphi$.  It is a straightforward exercise using the construction
	of $\scrM_r(G^{\star})$ given in \S\ref{pDivPhiGamma} to check
	that these isomorphisms extend to give isomorphisms $\scrM_r(G^{\et}) \simeq \scrM_r^{\et}$
		and $\scrM_r(G^{\mult}) \simeq \scrM_r^{\mult}$ in $\BT_{S_r}^{\varphi,\Gamma}$.
	By Theorem \ref{Lau},
	we conclude that we have natural isomorphisms in $\BT_{\s_r}^{\varphi,\Gamma}$
	as in (\ref{EtMultSpecialIsoms}).  The commutativity of (\ref{EtMultSpecialIsomsBC}) 
	is straightforward, using the definitions of the base change isomorphisms.
\end{proof}

Now suppose that $G$ is ordinary.
As $\m_r$ is exact by Theorem \ref{CaisLauMain} (\ref{exequiv}),
applying $\m_r$ to the connected-\'etale sequence of $G$ gives 
a short exact sequence in $\BT_{\s_r}^{\varphi,\Gamma}$
\begin{equation}
	\xymatrix{
		0\ar[r] & {\m_r(G^{\et})} \ar[r] & {\m_r(G)} \ar[r] & {\m_r(G^{\mult})} \ar[r] & 0
	}\label{ConEtOrdinary}
\end{equation}
which is contravariantly functorial and exact in $G$.
Since $\varphi_{\m_r}$ linearizes to an isomorphism on $\m_r(G^{\et})$
and is topologically nilpotent on $\m_r(G^{\mult})$, we think of
(\ref{ConEtOrdinary}) as the ``slope flitration" for Frobenius acting on $\m_r(G)$.
On the other hand, Proposition \ref{BTgroupUnivExt} and Theorem \ref{CaisLauMain} (\ref{EvaluationONR})
provide 
a canonical ``Hodge filtration" of $\m_r(G)\tens_{\s_r,\varphi} R_r\simeq \D(G_0)_{R_r}$:
\begin{equation}
	\xymatrix{
		0\ar[r] & {\omega_{G}} \ar[r] & {\D(G_0)_{R_r}} \ar[r] & {\Lie(G^t)} \ar[r] & 0
	}\label{HodgeFilOrd}
\end{equation}
which is contravariant and exact in $G$.  
Our assumption that $G$ is ordinary yields 
({\em cf.} \cite{KatzSerreTate}):

\begin{lemma}\label{HodgeFilOrdProps}
	With notation as above, there are natural and $\Gamma$-equivariant 
	isomorphisms 
\begin{equation}
	 \Lie(G^t)\simeq \D(G_0^{\et})_{R_r}  \qquad\text{and} \qquad \D(G_0^{\mult})_{R_r}\simeq \omega_G.
	 \label{FlankingIdens}
\end{equation}
	Composing these isomorphisms with the canonical maps obtained by applying $\D(\cdot)_{R_r}$ 
	to the connected-\'etale sequence of $G_0$
	yield functorial $R_r$-linear splittings of the Hodge filtration $(\ref{HodgeFilOrd})$.
	Furthermore, there is a canonical and $\Gamma$-equivariant isomorphism of split exact
	sequences of $R_r$-modules	
	\begin{equation}
	\begin{gathered}
			\xymatrix{
			0\ar[r] & {\omega_{G}} \ar[r]\ar[d]^-{\simeq} & {\D(G_0)_{R_r}} \ar[r]\ar[d]^-{\simeq} & 
			{\Lie(G^t)} \ar[r]\ar[d]^-{\simeq} & 0\\
				0 \ar[r] & {\D(\o{G}^{\mult})_W\tens_{W,\varphi^r} R_r} \ar[r]_-{i} & 
			{\D(\o{G})_W\tens_{W,\varphi^r} R_r} \ar[r]_-{j} & 
			{\D(\o{G}^{\et})_W\tens_{W,\varphi^r} R_r}\ar[r] & 0
				}
	\end{gathered}
	\label{DescentToWIsom}
	\end{equation}
	with $i,j$ the inclusion and projection mappings
	obtained from the canonical direct sum decomposition 
	$\D(\o{G})_W\simeq \D(\o{G}^{\mult})_W\oplus \D(\o{G}^{\et})_W$.
\end{lemma}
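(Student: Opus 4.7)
The strategy is to leverage the ordinariness hypothesis, which forces the connected-\'etale sequence of $G$ to take the form $0 \to G^{\mult} \to G \to G^{\et} \to 0$, together with the vanishings $\omega_{G^{\et}}=0$ and $\Lie((G^{\mult})^t)=0$ (the latter because $(G^{\mult})^t$ is \'etale). Each of these vanishings will collapse a flanking term of the Hodge filtration for $G^{\et}$ or $G^{\mult}$, so that Proposition \ref{BTgroupUnivExt} applied to $G^{\et}$ and $G^{\mult}$ yields isomorphisms between the Dieudonn\'e modules of the flanking pieces and the Hodge pieces of $G$ itself. The splittings of the Hodge filtration will then drop out of naturality, and the final identification (\ref{DescentToWIsom}) will be obtained by transporting the isomorphism $\m_r(G^{\star})\simeq \D(\o{G}^{\star})_W\tens_{W,\varphi^{r-1}}\s_r$ of Proposition \ref{EtaleMultDescription} along $\s_r\to R_r$ via $\theta\circ\varphi$.

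First I would establish (\ref{FlankingIdens}) as follows. Applying the exact functor $\omega_{(\cdot)}$ to the connected-\'etale sequence yields $0\to \omega_{G^{\et}}\to \omega_G\to \omega_{G^{\mult}}\to 0$; since $G^{\et}$ is \'etale, $\omega_{G^{\et}}=0$ and the map $\omega_G\to \omega_{G^{\mult}}$ is an isomorphism. Dually, applying $\Lie$ to the connected-\'etale sequence of $G^t$ (which is $0\to (G^{\et})^t\to G^t\to (G^{\mult})^t\to 0$) and noting that $(G^{\mult})^t$ is \'etale, we obtain $\Lie((G^{\et})^t)\simeq\Lie(G^t)$. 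Proposition \ref{BTgroupUnivExt} applied to $G^{\mult}$ (where the quotient $\Lie((G^{\mult})^t)$ vanishes) and to $G^{\et}$ (where the subspace $\omega_{G^{\et}}$ vanishes) then gives canonical identifications $\omega_{G^{\mult}}\simeq \D(G_0^{\mult})_{R_r}$ and $\D(G_0^{\et})_{R_r}\simeq \Lie((G^{\et})^t)$, which compose with the above to yield (\ref{FlankingIdens}). All maps are $\Gamma$-equivariant because $\Gamma$ acts via automorphisms of $G$ that preserve the connected-\'etale filtration, and the isomorphisms of Proposition \ref{BTgroupUnivExt} are functorial. To see the splittings, I would assemble the three Hodge filtration exact sequences for $G^{\et}$, $G$, $G^{\mult}$ and the morphisms between them induced by $G^{\mult}\hookrightarrow G\twoheadrightarrow G^{\et}$; since the leftmost vertical $\omega_{G^{\et}}\to\omega_G$ is zero and the rightmost vertical $\Lie((G^{\et})^t)\to\Lie(G^t)$ is an iso, the composition $\D(G_0^{\et})_{R_r}\to \D(G_0)_{R_r}\twoheadrightarrow \Lie(G^t)$ is precisely the identification of (\ref{FlankingIdens}), proving that $\D(G_0^{\et})_{R_r}\hookrightarrow \D(G_0)_{R_r}$ is a section of the Hodge surjection (and dually for the multiplicative piece).

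For the isomorphism (\ref{DescentToWIsom}), I would first use Proposition \ref{EtaleMultDescription} combined with Theorem \ref{CaisLauMain}(\ref{EvaluationONR}) to get
\[
\D(G_0^{\star})_{R_r}\simeq \m_r(G^{\star})\tens_{\s_r,\theta\circ\varphi} R_r \simeq \D(\o{G}^{\star})_W\tens_{W,\varphi^r} R_r\quad\text{for }\star=\et,\mult,
\]
where the $\varphi^r$ arises as $\theta\circ\varphi\circ\varphi^{r-1}$ restricted to $W$. Over $W$, the ordinary $p$-divisible group $\o{G}$ over the perfect field $k$ admits the canonical direct sum decomposition $\D(\o{G})_W\simeq \D(\o{G}^{\et})_W\oplus \D(\o{G}^{\mult})_W$ coming from the Dieudonn\'e module splitting of an ordinary group (equivalently, the decomposition $\o{G}\simeq \o{G}^{\et}\times\o{G}^{\mult}$ over the perfect field $k$). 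Base changing along $\varphi^r:W\to R_r$ yields the bottom row of (\ref{DescentToWIsom}). Combining the flanking identifications above with the splitting of the Hodge filtration established in the previous step, one obtains the middle vertical iso as the direct sum of the two flanking ones, and the commutativity of (\ref{DescentToWIsom}) follows from the naturality of all maps involved together with the explicit compatibility (\ref{EtMultSpecialIsomsBC}).

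The main obstacle I anticipate is purely bookkeeping: tracking the $\Gamma$-equivariance of the flanking identifications through several layers of functoriality (connected-\'etale sequence, universal extensions, Dieudonn\'e crystal evaluation, and descent along $\alpha_r$), and verifying that the splitting of the Hodge filtration provided by step two coincides with the splitting obtained by base changing the direct sum decomposition over $W$. This compatibility is not formal, but should follow from the fact that both the connected-\'etale sequence over $R_r$ and the Frobenius-based identifications of Proposition \ref{EtaleMultDescription} ultimately arise from the unique decomposition $\o{G}\simeq \o{G}^{\et}\times \o{G}^{\mult}$ over $k$, together with the base-change compatibility of the Dieudonn\'e crystal along PD-morphisms.
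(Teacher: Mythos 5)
Your proposal is correct and follows essentially the same route as the paper: the flanking identifications (\ref{FlankingIdens}) come from the vanishings $\omega_{G^{\et}}=0$ and $\Lie((G^{\mult})^t)=0$ together with Proposition~\ref{BTgroupUnivExt}, the splitting from the exact sequence obtained by applying $\D(\cdot)_{R_r}$ to the connected-\'etale sequence of $G_0$, and the descent to $W$ from the Frobenius/Verschiebung twist. The only cosmetic difference is that you obtain $\D(G_0^{\star})_{R_r}\simeq \D(\o{G}^{\star})_W\otimes_{W,\varphi^r} R_r$ by routing through $\m_r(G^{\star})$ via Proposition~\ref{EtaleMultDescription} and Theorem~\ref{CaisLauMain}(\ref{EvaluationONR}), whereas the paper applies $\D(\cdot)_{R_r}$ directly to (\ref{Ftrick})--(\ref{Vtrick}) --- which is precisely the mechanism used inside the proof of Proposition~\ref{EtaleMultDescription}, so your detour is valid and non-circular, just slightly more indirect.
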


\begin{proof}
Applying $\D(\cdot)_{R_r}$ to the connected-\'etale sequence
of $G_0$ and using Proposition \ref{BTgroupUnivExt} yields a commutative diagram
with exact columns and rows
\begin{equation}
\begin{gathered}
	\xymatrix{
						&     & 0\ar[d] & 0 \ar[d] & \\
		 & 0 \ar[r]\ar[d] & {\omega_{G}}\ar[r]\ar[d] & 
		{\omega_{G^{\mult}}} \ar[r]\ar[d] & 0\\
		0\ar[r] & {\D(G_0^{\et})_{R_r}} \ar[r]\ar[d] & {\D(G_0)_{R_r}}\ar[r]\ar[d] & 
		{\D(G_0^{\mult})_{R_r}} \ar[r]\ar[d] & 0\\
		0 \ar[r] & {\Lie({G^{\et}}^t)} \ar[r]\ar[d] & {\Lie(G^t)}\ar[r]\ar[d] & 
		0 & \\
		& 0 & 0 &  &
		}
\end{gathered}
\label{OrdinaryDiagram}
\end{equation}
where we have used the fact that that the invariant differentials
and Lie algebra of an \'etale $p$-divisible group
(such as $G^{\et}$ and ${G^{\mult}}^t\simeq {G^t}^{\et}$)
are both zero.  The isomorphisms (\ref{FlankingIdens})
follow at once.  We likewise immediately see that the short exact sequence 
in the center column of (\ref{OrdinaryDiagram}) is functorially and $R_r$-linearly
split.  Thus, to prove the claimed identification in (\ref{DescentToWIsom}),
it suffices to exhibit natural isomorphisms of free $R_r$-modules with $\Gamma$-action
\begin{equation}
	\D(G_0^{\et})_{R_r} \simeq \D(\o{G}^{\et})_W\tens_{W,\varphi^r} R_r
	\qquad\text{and}\qquad
	\D(G_0^{\mult})_{R_r} \simeq \D(\o{G}^{\mult})_W\tens_{W,\varphi^r} R_r,
	\label{TwistyDieuIsoms}
\end{equation} 
both of which follow easily by applying $\D(\cdot)_{R_r}$ to
(\ref{Ftrick}) and (\ref{Vtrick}) and using the compatibility 
of the Dieudonn\'e crystal with base change as in the proof of Proposition (\ref{EtaleMultDescription}).
\end{proof}

From the slope filtration (\ref{ConEtOrdinary}) of $\m_r(G)$
we can recover both the (split) slope filtration of $\D(\o{G})_W$
and the (split) Hodge filtration (\ref{HodgeFilOrd}) of $\D(G_0)_{R_r}$:

\begin{proposition}\label{MrToHodge}
	There are canonical and $\Gamma$-equivariant isomorphisms of short exact sequences
	\begin{subequations}
	\begin{equation}
	\begin{gathered}
		\xymatrix{
			0\ar[r] & {\m_r(G^{\et})\tens_{\s_r,\varphi\circ\tau} W} \ar[r]\ar[d]^-{\simeq} & 
			{\m_r(G)\tens_{\s_r,\varphi\circ\tau} W} \ar[r]\ar[d]^-{\simeq} & 
			{\m_r(G^{\mult})\tens_{\s_r,\varphi\circ\tau} W} \ar[r]\ar[d]^-{\simeq} & 0 \\
			0 \ar[r] & {\D(\o{G}^{\et})_W} \ar[r] & {\D(\o{G})_W} \ar[r] & {\D(\o{G}^{\mult})_W}
			\ar[r] & 0
		}\label{MrToDieudonneMap}
	\end{gathered}
	\end{equation}
	\begin{equation}
	\begin{gathered}
		\xymatrix{
			0\ar[r] & {\m_r(G^{\et})\tens_{\s_r,\theta\circ\varphi} R_r} \ar[r]\ar[d]^-{\simeq} & 
			{\m_r(G)\tens_{\s_r,\theta\circ\varphi} R_r} \ar[r]\ar[d]^-{\simeq} & 
			{\m_r(G^{\mult})\tens_{\s_r,\theta\circ\varphi} R_r} \ar[r]\ar[d]^-{\simeq} & 0 \\
			0 \ar[r] &  {\Lie(G^t)} \ar[r]_-{i} & {\D(G_0)_{R_r}} \ar[r]_-{j} & {\omega_{G}} \ar[r] & 0\\
		}
	\end{gathered}\label{MrToHodgeMap}
	\end{equation}
	\end{subequations}
	Here, $i:\Lie(G^t)\hookrightarrow \D(G_0)_{R_r}$
	and $j:\D(G_0)_{R_r}\twoheadrightarrow \omega_{G}$
	are the canonical splittings of Lemma $\ref{HodgeFilOrdProps}$,
	the top row of $(\ref{MrToHodgeMap})$ is obtained from $(\ref{ConEtOrdinary})$ by extension of scalars,
	and the isomorphism $(\ref{MrToDieudonneMap})$ intertwines $\varphi_{\m_r(\cdot)}\otimes \varphi$ with $F\otimes \varphi$
	and $\psi\otimes 1$ with $V\otimes 1$.
\end{proposition}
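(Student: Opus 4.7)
The plan is to deduce both isomorphisms of exact sequences by applying the evaluation maps of Theorem \ref{CaisLauMain} termwise to the slope filtration $(\ref{ConEtOrdinary})$, and then to identify the resulting bottom sequences with those appearing in the proposition via Dieudonn\'e theory and Lemma \ref{HodgeFilOrdProps}. A first observation that applies to both cases: since each of $\m_r(G^{\et})$, $\m_r(G)$, $\m_r(G^{\mult})$ is a finite free $\s_r$-module, the short exact sequence $(\ref{ConEtOrdinary})$ is $\s_r$-linearly split and hence remains exact after any base change of scalars.

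For the isomorphism $(\ref{MrToDieudonneMap})$, I would apply $-\tens_{\s_r,\varphi\circ\tau} W$ to $(\ref{ConEtOrdinary})$ and invoke Theorem \ref{CaisLauMain} $(\ref{EvaluationONW})$ termwise; the naturality of the isomorphism therein, combined with the exactness of the Dieudonn\'e functor, identifies the resulting short exact sequence with the slope filtration $0\to \D(\o{G}^{\et})_W\to \D(\o{G})_W\to \D(\o{G}^{\mult})_W\to 0$ obtained by applying $\D(\cdot)_W$ to the connected-\'etale sequence of $\o{G}$. The $\Gamma$-equivariance and the compatibilities $\varphi_{\m_r(\cdot)}\otimes \varphi \leftrightarrow F\otimes \varphi$ and $\psi\otimes 1\leftrightarrow V\otimes 1$ are part of the statement of Theorem \ref{CaisLauMain} $(\ref{EvaluationONW})$.

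For the isomorphism $(\ref{MrToHodgeMap})$, the same strategy, now using Theorem \ref{CaisLauMain} $(\ref{EvaluationONR})$, identifies $\m_r(\cdot)\tens_{\s_r,\theta\circ\varphi} R_r$ applied to $(\ref{ConEtOrdinary})$ with the short exact sequence $0\to \D(G_0^{\et})_{R_r} \to \D(G_0)_{R_r} \to \D(G_0^{\mult})_{R_r} \to 0$ obtained from the connected-\'etale sequence of $G_0$ by applying $\D(\cdot)_{R_r}$, in a $\Gamma$-equivariant way. To recognize the bottom row of $(\ref{MrToHodgeMap})$, I would then invoke Lemma \ref{HodgeFilOrdProps}: the isomorphisms $(\ref{FlankingIdens})$ identify $\D(G_0^{\et})_{R_r}\simeq \Lie(G^t)$ and $\D(G_0^{\mult})_{R_r}\simeq \omega_G$, and the composites of these identifications with the natural inclusion and projection arising from the connected-\'etale sequence of $G_0$ are precisely the canonical splittings $i$ and $j$ of the Hodge filtration that are described in that lemma (and in $(\ref{DescentToWIsom})$).

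The only real step requiring care is the verification that the diagrams thus assembled genuinely commute and are $\Gamma$-equivariant at every spot, and this is a bookkeeping exercise using the functoriality and $\Gamma$-equivariance clauses in Theorem \ref{CaisLauMain} together with the compatibility of the connected-\'etale decomposition with $\Gamma$ (which holds because $G^{\et}$ and $G^{\mult}$ lie in $\pdiv_{R_r}^{\Gamma}$). There is no substantive obstacle beyond assembling these naturality statements; in particular, no further input from the theory of windows or of crystals is needed beyond what has already been invoked to construct $\m_r$.
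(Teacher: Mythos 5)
Your proposal is correct and takes essentially the same approach as the paper, whose proof is just a one-line citation of Theorem \ref{CaisLauMain} and Lemma \ref{HodgeFilOrdProps}; you have simply unpacked the citation. (You are in fact slightly more careful than the paper, which cites only part (\ref{EvaluationONW}) of Theorem \ref{CaisLauMain}, whereas the second isomorphism $(\ref{MrToHodgeMap})$ really uses part (\ref{EvaluationONR}) as you correctly note.)
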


\begin{proof}
	This follows immediately from Theorem \ref{CaisLauMain} (\ref{EvaluationONW}) and Lemma \ref{HodgeFilOrdProps}.
\end{proof}

\section{Results and Main Theorems}\label{results}

In this section, we will state and prove our main results as described in \S\ref{resultsintro}.
Throughout, we will keep the notation of \S\ref{resultsintro} and of 
\S\ref{pDivPhiGamma} with $k:=\F_p$.

\subsection{The formalism of towers}\label{TowerFormalism}

In this preliminary section, we set up a general commutative algebra framework 
for dealing with the various projective limits of cohomology modules
that we will encounter.

\begin{definition}
	A {\em tower of rings} is an inductive system $\scrA:=\{A_r\}_{r\ge 1}$ of local rings with local
	transition maps.  A {\em morphism of towers} $\scrA\rightarrow \scrA'$
	is a collection of local ring homomorphisms $A_r\rightarrow A_r'$ which are compatible
	with change in $r$.
	A {\em tower of $\scrA$-modules} $\scrM$
	consists of the following data:
	\begin{enumerate}
		\item For each integer $r\ge 1$, an $A_r$-module $M_r$.
	  	\item A collection of $A_r$-module homomorphisms
			$\varphi_{r,s}:M_r\rightarrow M_{s}\otimes_{A_{s}} A_r $
		for each pair of integers $r\ge s\ge 1$, which are compatible 
		in the obvious way under composition.		
	\end{enumerate}
	A {\em morphism of towers of $\scrA$-modules} $\scrM\rightarrow \scrM'$
	is a collection of $A_r$-module homomorphisms $M_r\rightarrow M_r'$ which
	are compatible with change in $r$ in the evident manner.  For a tower of 
	rings $\scrA=\{A_r\}$, we will write $A_{\infty}$ for the inductive limit,
	and for
	a tower of $\scrA$-modules $\scrM=\{M_r\}$, we set
	\begin{equation*}
		M_B := \varprojlim_r \left( M_r\otimes_{A_r} B\right)\quad\text{and write simply}\quad
		M_{\infty}:=M_{A_{\infty}},
	\end{equation*}
	for any $A_{\infty}$-algebra $B$, with the projective limit taken with respect to the induced transition maps.
\end{definition}

\begin{lemma}\label{Technical}
	Let $\scrA=\{A_r\}_{r\ge 0}$ be a tower of rings
	and suppose that $I_r\subseteq A_r$ is a sequence of proper principal ideals
	such that $A_r$ is $I_r$-separated and
	the image of $I_{r}$ in $A_{r+1}$ is contained in $I_{r+1}$ for all $r$.	
	Write $I_{\infty}:=\varinjlim I_r$
	for the inductive limit, and set $\o{A}_r:=A_r/I_r$ for all $r$.
	Let $\scrM=\{M_r,\pr_{r,s}\}$ be a tower of $\scrA$-modules
	equipped with an action\footnote{That is, a homomorphism of 
	groups $\Delta\rightarrow \Aut_{\scrA}(\scrM)$,
	or equivalently, an $A_r$-linear action of $\Delta$ on $M_r$
	for each $r$ that is compatible with change in $r$.
	}
	of $\Delta$ by $\scrA$-automorphisms.  Suppose that 
	$M_r$ is free of finite rank over $A_r$ for all $r$, and that 
	$\Delta_r$ acts trivially on $M_r$.
	Let $B$ be an $A_{\infty}$-algebra, and observe that $M_B$
	is canonically a module over the completed group ring $\Lambda_B$.
	Assume that $B$ is either flat over $A_{\infty}$ or 
	that $B$ is a flat $\o{A}_{\infty}$-algebra,
	and that the following two conditions hold for all $r>0$
	\begin{enumerate}
	\setcounter{equation}{1}
		\renewcommand{\theenumi}{\theequation{\rm\alph{enumi}}}
		{\setlength\itemindent{10pt} 
			\item $\o{M}_r:=M_r/I_rM_r$ is a free $\o{A}_r[\Delta/\Delta_r]$-module of rank 
			$d$ that is independent of $r$.\label{freehyp}}
		{\setlength\itemindent{10pt} 
		\item For all $s\le r$ the induced maps 
		$\xymatrix@1{
				{\overline{\pr}_{r,s}: \o{M}_r}\ar[r] & 
				{\o{M}_{s}\otimes_{\o{A}_{s}} \o{A}_{r}}
				}$
		are surjective.\label{surjhyp}} 
	\end{enumerate}
	Then:
	\begin{enumerate}
	 	\item $M_r$ is a free $A_r[\Delta/\Delta_r]$-module of rank $d$ for all $r$.\label{red2pfree}
	 
		\item The induced maps of $A_r[\Delta/\Delta_{s}]$-modules
		\begin{equation*}
			\xymatrix{
				{M_r \otimes_{A_r[\Delta/\Delta_r]} A_r[\Delta/\Delta_{s}]} \ar[r] & {M_{s}\otimes_{A_{s}} A_r}
			}
		\end{equation*}
		are isomorphisms for all $r\ge s$.\label{red2psurj}
		
		\item $M_B$ is a finite free $\Lambda_{B}$-module of rank $d$.	\label{MBfree}
		
		\item For each $r$, the canonical map 
		\begin{equation*}
			\xymatrix{
				{M_B\otimes_{\Lambda_B} B[\Delta/\Delta_r]} \ar[r] & {M_r\otimes_{A_r} B}
				}
		\end{equation*}
		is an isomorphism of $B[\Delta/\Delta_r]$-modules.
		\label{ControlLimit}

		\item If $B'$ is any 
		$B$-algebra which is flat over 
		$A_{\infty}$ or $\o{A}_{\infty}$, then the canonical map
		\begin{equation*}
			\xymatrix{
				{M_B\otimes_{\Lambda_B} \Lambda_{B'}} \ar[r] & {M_{B'}}
				}
		\end{equation*}
		is an isomorphism of finite free $\Lambda_{B'}$-modules.\label{CompletedBaseChange}
		
	\end{enumerate}
\end{lemma}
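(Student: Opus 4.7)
The plan is to establish parts (1)--(5) sequentially, with (1) and (2) supplying the finite-level freeness from which (3)--(5) are deduced by passage to the limit.

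For (1), observe that $A_r$ is local and $I_r$ is a proper ideal, so $I_r \subseteq \mathfrak{m}_{A_r}$; in particular $I_r$ lies in the Jacobson radical of $A_r$. Lifting a basis of $\overline{M}_r$ over $\overline{A}_r[\Delta/\Delta_r]$ (guaranteed by the freeness hypothesis) to elements of $M_r$ gives an $A_r[\Delta/\Delta_r]$-linear map $\psi\colon A_r[\Delta/\Delta_r]^d \to M_r$. Since $\psi$ is an isomorphism modulo $I_r$, counting $\overline{A}_r$-ranks forces $M_r$ to have $A_r$-rank $d \cdot [\Delta : \Delta_r]$, so both sides of $\psi$ are free $A_r$-modules of the same finite rank. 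Nakayama's lemma then yields surjectivity of $\psi$, and a surjection between free modules of the same finite rank is automatically an isomorphism. Part (2) follows from the identical Nakayama-plus-same-rank argument applied to the natural map $\phi_{r,s}$: by (1), both sides are free $A_r[\Delta/\Delta_s]$-modules of rank $d$ (hence free $A_r$-modules of the same finite rank), and surjectivity modulo $I_r$ is precisely the hypothesis on $\overline{\mathrm{pr}}_{r,s}$.

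For (3) and (4), set $N_r := M_r \otimes_{A_r} B$. By (1) and right-exactness of tensor products, $N_r$ is free of rank $d$ over $B[\Delta/\Delta_r]$, and tensoring (2) with $B$ yields the control isomorphism $N_r \otimes_{B[\Delta/\Delta_r]} B[\Delta/\Delta_s] \cong N_s$; in particular the transition maps $N_r \to N_s$ are surjective. Set $J_r := \ker(\Lambda_B \to B[\Delta/\Delta_r])$; then $\bigcap_r J_r = 0$ and $\Lambda_B$ is $J_r$-adically complete and separated. The strategy is to construct a compatible system $\{e_i^{(r)}\}$ of $B[\Delta/\Delta_r]$-bases of the $N_r$, from which the assembled limit elements $e_i := (e_i^{(r)})_r \in M_B$ provide an isomorphism $\Lambda_B^d \cong M_B$ compatible with the control assertion in (4).

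The main obstacle is the inductive construction of these compatible bases: given a basis of $N_r$ and any lift to $N_{r+1}$ (possible by surjectivity of the transition), the lifts need not form a basis of $N_{r+1}$, since $\mathrm{GL}_d$ does not in general surject along the quotient $B[\Delta/\Delta_{r+1}] \twoheadrightarrow B[\Delta/\Delta_r]$. To circumvent this, my plan is to invoke the structural identity $K^p \subseteq p\cdot B[\Delta/\Delta_{r+1}]$ for $K := \ker(B[\Delta/\Delta_{r+1}] \to B[\Delta/\Delta_r])$ (which follows from $(\gamma-1)^p \equiv \gamma^p - 1 = 0 \pmod{p}$ for $\gamma \in \Delta_r/\Delta_{r+1}$), combined with the flatness hypothesis on $B$. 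When $B$ is flat over $\overline{A}_\infty$, one has $N_r = \overline{M}_r \otimes_{\overline{A}_r} B$ and $K$ becomes honestly nilpotent in the relevant group ring, so straightforward lifting succeeds. When $B$ is flat over $A_\infty$, a topological Nakayama argument, exploiting the $J_r$-adic completeness of $M_B$, permits one to construct compatible bases by lifting a basis of $N_1$ directly to $M_B$ and checking that the resulting $\Lambda_B$-linear map is an isomorphism modulo each $J_r$ via the finite-level freeness established in (1).

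For (5), both $M_B \otimes_{\Lambda_B} \Lambda_{B'}$ and $M_{B'}$ are finite free $\Lambda_{B'}$-modules of rank $d$ by (3), hence $J_r'$-adically complete and separated for the kernels $J_r' := \ker(\Lambda_{B'} \to B'[\Delta/\Delta_r])$. It therefore suffices to check that the natural map is an isomorphism after reduction modulo each $J_r'$; such a reduction identifies both sides with $M_r \otimes_{A_r} B'$ via the control assertion (4) applied successively to $B$ and to $B'$, and a direct verification shows that the reduced natural map agrees with this common identification, completing the proof.
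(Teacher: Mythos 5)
Your treatments of (1) and (2) are correct but take a slightly different route from the paper: you count $A_r$-ranks and invoke that a surjection between finite free $A_r$-modules of the same rank is an isomorphism, whereas the paper establishes freeness of the Nakayama-generating set directly, using $A_r$-torsion-freeness of $M_r$ together with the $I_r$-adic separatedness hypothesis. Both are fine. Your treatment of (5) also matches the paper's.

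The gap is in your approach to (3) and (4). You correctly identify that an arbitrary lift to $N_{r+1}:=M_{r+1}\otimes_{A_{r+1}}B$ of a $B[\Delta/\Delta_r]$-basis of $N_r$ need not be a basis, but both of your proposed repairs operate \emph{after} tensoring with $B$, and neither closes the gap. The claim that $K:=\ker\bigl(B[\Delta/\Delta_{r+1}]\to B[\Delta/\Delta_r]\bigr)$ is ``honestly nilpotent'' when $B$ is a flat $\overline{A}_\infty$-algebra would require $p$ to be nilpotent in $B$; the hypotheses grant no such thing, and the paper in fact applies the lemma (via part (5) in the proof of Theorem \ref{SRecovery}) with $\overline{A}_\infty=W$ and $B'$ a flat $W$-algebra in which $p$ is a nonzerodivisor, so $K^p\subseteq p\cdot B'[\Delta/\Delta_{r+1}]$ does not make $K$ nilpotent. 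Your ``topological Nakayama'' route in the flat-over-$A_\infty$ case is circular as stated: to check the candidate map $\Lambda_B^d\to M_B$ modulo $J_r$ you want to identify $M_B/J_rM_B$ with $N_r$, but that identification is exactly the content of (4). And even granting it, the images in $N_r$ of lifts of a basis of $N_1$ need not generate $N_r$, because $\ker(B[\Delta/\Delta_r]\to B)$ need not lie in the Jacobson radical of $B[\Delta/\Delta_r]$. For instance $B=\Q_p$ is a flat $A_\infty=\Z_p$-algebra, yet $B[\Delta/\Delta_r]\simeq\prod_{j=0}^{r-1}\Q_p(\mu_{p^j})$ is a product of fields, and the non-unit $(1,0,\dots,0)$ lifts $1\in B[\Delta/\Delta_1]$ while generating a proper ideal.

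The resolution is to build the compatible bases \emph{before} tensoring with $B$, and this is what the paper does. The ring $A_r[\Delta/\Delta_r]$ is local (the paper asserts this, relying on the standing fact that the $A_r$ in play all have residue characteristic $p$, so the group ring of the $p$-group $\Delta/\Delta_r$ over $A_r$ is local), hence $\ker\bigl(A_r[\Delta/\Delta_r]\twoheadrightarrow A_r[\Delta/\Delta_s]\bigr)$ lies in the Jacobson radical; by (1), (2), and Nakayama, any lift to $M_r$ of an $A_r[\Delta/\Delta_s]$-basis of $M_s\otimes_{A_s}A_r$ is automatically an $A_r[\Delta/\Delta_r]$-basis. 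Choosing such lifts inductively in $r$ gives a compatible system of bases of the $M_r$; tensoring with $B$ then yields compatible $B[\Delta/\Delta_r]$-bases of the $N_r$ and the isomorphism $\Lambda_B^d\simeq M_B$, with (4) following at once. The obstruction you flagged never arises, because all the lifting is performed over the local rings $A_r[\Delta/\Delta_r]$ rather than over $B[\Delta/\Delta_r]$.
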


\begin{proof}
	For notational ease, let us put $\Lambda_{A_r,s}:=A_r[\Delta/\Delta_{s}]$ for all pairs of nonnegative integers
	$r,s$.  Note that $\Lambda_{A_r,s}$ is a local $A_r$-algebra, so the principal
	ideal $\widetilde{I}_r:=I_r\Lambda_{A_r,s}$ is 
	contained in the radical of $\Lambda_{A_r,s}$. 
	
	Let us fix $r$ and choose a principal generator $f_r\in A_r$ of
	$I_r$ (hence also of $\widetilde{I}_r$).  The module $M_r$ is obviously finite
	over $\Lambda_{A_r,r}$ (as it is even finite over $A_r$), so by hypothesis (\ref{freehyp})
	we may choose $m_1,\ldots,m_{d}\in M_r$
	with the property that the images of the $m_i$ in $\o{M}_r=M_r/\widetilde{I}_rM_r$ 
	freely generate $\o{M}_r$ as an 
	$\o{A}_r[\Delta/\Delta_r]=\Lambda_{A_r,r}/\widetilde{I}_r$-module.   
	By Nakayama's Lemma \cite[Corollary to Theorem 2.2]{matsumura}, we conclude
	that $m_1,\ldots,m_{d}$ generate $M_r$ as a $\Lambda_{A_r,r}$-module.  
	If 
	\begin{equation}
		\sum_{i=1}^{d} x_i m_i =0\label{genreln}
	\end{equation}
	is any relation on the $m_i$ with $x_i\in \Lambda_{A_r,r}$, then necessarily 
	$x_i\in \widetilde{I}_r\Lambda_{A_r,r}$,	
	and we claim that $x_i\in \widetilde{I}_r^j$ for all $j\ge 0$.  To see this, we proceed by induction and suppose
	that our claim holds for $j\le N$.  Since $\widetilde{I}_r$ is principal, 
	for each $i$ there exists $x_i'\in \Lambda_{A_r,r}$
	with $x_i = f_r^N x_i'$, and the relation (\ref{genreln}) reads $f_r^Nm=0$ with $m\in M_r$ given by
	$m:=\sum_{i=1}^{d} x_i'm_i.$
	Since $M_r$ is free as an $A_r$-module, it is in particular torsion free, so we conclude that
	$m=0$.  Since the images of the $m_i$ {\em freely} generate $M_r/\wt{I}_rM_r$,
	it follows that $x_i'\in \widetilde{I}_r$ and hence that $x_i\in \widetilde{I}_r^{N+1}$,
	which completes the induction.
	By our assumption that $A_r$ is $I_r$-adically separated, we must have $x_i=0$ for all $i$
	and the relation (\ref{genreln})
	is trivial.  We conclude that $m_1,\ldots,m_{d}$ freely generate $M_r$ over $\Lambda_{A_r,r}$, giving 
	(\ref{red2pfree}).
	
	To prove (\ref{red2psurj}), note that our assumption (\ref{surjhyp}) that the maps $\overline{\pr}_{r,s}$ are 
	surjective for all $r\ge s$ implies that the same is true of the maps $\pr_{r,s}$ 
	(again by Nakayama's Lemma) and hence that the induced map of 
	$\Lambda_{A_r,s}$-modules in (\ref{red2psurj}) is surjective.
	As this map is then a surjective map of free $\Lambda_{A_r,s}$-modules of the same
	rank $d$, it must be an isomorphism.  
	
	Since the kernel of the canonical surjection $\Lambda_{A_r,r}\twoheadrightarrow \Lambda_{A_r,s}$
	lies in the radical of $\Lambda_{A_r,r}$, we deduce by Nakayama's Lemma that 
	any lift to $M_r$ of a $\Lambda_{A_r,s}$-basis of $M_{s}\otimes_{A_{s}} A_r$
	is a $\Lambda_{A_r,r}$-basis of $M_r$.
	It follows easily from this that the projective limit $M_B$ is a free $\Lambda_{B}$-module of rank 
	$d$ for any flat $A_{\infty}$-algebra $B$.  The corresponding assertions for any flat 
	$\o{A}_{\infty}$-algebra $B$ follow similarly, using the hypotheses (\ref{freehyp})
	and (\ref{surjhyp}) directly, and this gives (\ref{MBfree}).
	
	Observe that the mapping of (\ref{ControlLimit}) is obtained from the canonical
	surjection $M_B\twoheadrightarrow M_r\otimes_{A_r} B$ by extension of scalars,
	keeping in mind the natural identification 
	$M_r\otimes_{A_r} B \otimes_{\Lambda_B} B[\Delta/\Delta_r] \simeq M_r \otimes_{A_r} B.$
	It follows at once that this mapping is surjective.  By (\ref{red2pfree}) and (\ref{MBfree}), we conclude that
	the mapping in (\ref{ControlLimit}) is a surjection of free $B[\Delta/\Delta_r]$-modules
	of the same rank and is hence an isomorphism as claimed.

	It remains to
	prove (\ref{CompletedBaseChange}). 
	Extending scalars,
	the canonical maps
	$M_B\twoheadrightarrow M_r\otimes_{A_r} B$ induce surjections
	\begin{equation*}
		\xymatrix{
			{M_{B}\otimes_{\Lambda_B} \Lambda_{B'}} \ar@{->>}[r] & 
			{(M_r\otimes_{A_r} B)\otimes_{\Lambda_B} \Lambda_{B'} \simeq M_r\otimes_{A_r} B'}
			}
	\end{equation*}
	that are compatible in the evident manner with change in $r$.  Passing to inverse
	limits gives the mapping $M_{B}\otimes_{\Lambda_{B}}\Lambda_{B'}\rightarrow M_{B'}$ 
	of (\ref{CompletedBaseChange}).  Due to (\ref{MBfree}),
	this is then a map of finite free $\Lambda_{B'}$-modules of the same rank, 
	so to check that it is an isomorphism it suffices by Nakayama's Lemma
	to do so after applying $\otimes_{\Lambda_{B'}} B'[\Delta/\Delta_r]$, which is
	an immediate consequence of (\ref{ControlLimit}). 
\end{proof}

We record the following elementary commutative algebra fact,
which will be extremely useful to us:

\begin{lemma}\label{fflatfreedescent}
	Let $A\rightarrow B$ be a local homomorphism of local rings which makes $B$ into
	a flat $A$-algebra, and let $M$ be an arbitrary $A$-module.
	Then $M$ is a free $A$-module of finite rank if and only if $M\otimes_A B$
	is a free $B$-module of finite rank.
\end{lemma}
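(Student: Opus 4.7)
The plan is to prove the forward direction trivially (any free module of finite rank base-changes to a free module of the same rank) and to establish the converse in three steps, using the key observation that a local homomorphism of local rings which is flat is automatically faithfully flat (since $\mathfrak{m}_A B \subseteq \mathfrak{m}_B \subsetneq B$).

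First I would show $M$ is finitely generated over $A$. Assuming $M\otimes_A B$ is free of finite rank $d$ over $B$, choose generators $n_1,\ldots,n_d$ of $M\otimes_A B$, express each as a finite $B$-linear combination of pure tensors, and let $M'\subseteq M$ be the finitely generated $A$-submodule generated by the $A$-components appearing. Then $M'\otimes_A B \to M\otimes_A B$ is surjective, so $(M/M')\otimes_A B=0$; by faithful flatness $M/M'=0$, whence $M=M'$ is finitely generated.

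Next I would compute the rank at the residue field. Writing $k_A:=A/\mathfrak{m}_A$ and $k_B:=B/\mathfrak{m}_B$, we have
\begin{equation*}
(M\otimes_A B)\otimes_B k_B \;\simeq\; M\otimes_A k_B \;\simeq\; (M\otimes_A k_A)\otimes_{k_A} k_B.
\end{equation*}
The left side is a $k_B$-vector space of dimension $d$. Since $k_A\to k_B$ is an inclusion of fields, extending scalars preserves dimension, so $\dim_{k_A}(M\otimes_A k_A)=d$. By Nakayama's lemma applied to the finitely generated $A$-module $M$, one can lift a $k_A$-basis of $M/\mathfrak{m}_A M$ to obtain elements $m_1,\ldots,m_d\in M$ whose associated map $\varphi\colon A^d \twoheadrightarrow M$ is surjective.

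Finally I would show $\varphi$ is injective. Setting $K:=\ker\varphi$, flatness of $B$ over $A$ gives an exact sequence $0\to K\otimes_A B \to B^d \to M\otimes_A B\to 0$. The right-hand map is a surjection between free $B$-modules of the same finite rank $d$ (an elementary consequence of the Cayley--Hamilton-style argument: any surjective endomorphism of a finitely generated module over a commutative ring is an isomorphism), hence an isomorphism, and so $K\otimes_A B=0$. By faithful flatness of $B$ over $A$, this forces $K=0$, giving $M\simeq A^d$. The only conceptually delicate point is the automatic faithful flatness of a flat local homomorphism; once that is in hand, the argument is routine.
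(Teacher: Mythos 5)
Your proof is correct, and it follows a genuinely different route from the paper's in the second half. The paper proves finite generation of $M$ by the same faithful-flatness descent (it writes $M$ as the filtered colimit of its finitely generated submodules; your choice of preimages of generators is the same idea made concrete), but then finishes by \emph{descending flatness}: since $M\otimes_A B$ is $B$-flat and $B$ is faithfully flat over $A$, $M$ is $A$-flat, and a finitely generated flat module over a local ring is free. You instead compute the rank of $M\otimes_A k_A$ by pushing forward to $k_B$ (using that $k_A\hookrightarrow k_B$ is a field extension, so scalar extension preserves dimension), lift a basis of $M/\mathfrak m_A M$ via Nakayama to a surjection $A^d\twoheadrightarrow M$, and kill the kernel after tensoring with $B$ by the standard fact that a surjective endomorphism of a finitely generated module over a commutative ring is injective. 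Your approach avoids the ``finite flat over local is free'' theorem and is entirely elementary; the paper's is shorter at the cost of citing that theorem and the flatness-descent exercise from Matsumura. Both are valid.

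One small point worth stating explicitly, since it is the crux of the rank step: the map $k_A\to k_B$ is injective because $A\to B$ is local (so $\mathfrak m_A$ lands in $\mathfrak m_B$, giving a nonzero ring map of fields). You invoke this implicitly; it deserves a word.
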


\begin{proof}
	First observe that since $A\rightarrow B$ is local and flat, it is faithfully flat.
	We write $M=\varinjlim M_{\alpha}$ as the direct limit of its finite $A$-submodules,
	whence $M\otimes_A B = \varinjlim (M_{\alpha}\otimes_A B)$ with each of $M_{\alpha}\otimes_A B$
	naturally a finitely generated $B$-submodule of $M\otimes_A B$.  Assume that $M\otimes_A B$
	is finitely generated as a $B$-module. Then there exists $\alpha$ with 
	$M_{\alpha}\otimes_A B\rightarrow M\otimes_A B$ surjective, and as $B$ is faithfully flat over $A$,
	this implies that $M_{\alpha}\rightarrow M$ is surjective, whence $M$ is finitely generated over $A$.
	Suppose in addition that $M\otimes_A B$ is free as a $B$-module.  In particular, $M\otimes_A B$
	is $B$-flat, which implies by faithful flatness of $B$ over $A$ that $M$ is $A$-flat
	(see, e.g. \cite[Exercise 7.1]{matsumura}).  Then $M$ is a finite flat module over the local ring $A$,
	whence it is free as an $A$-module by \cite[Theorem 7.10]{matsumura}.
\end{proof}

Finally, we analyze duality for towers with $\Delta$-action.  

\begin{lemma}\label{LambdaDuality}
	With the notation of Lemma $\ref{Technical}$, 
	let
	$\scrM:=\{M_r,\pr_{r,s}\}$ and $\scrM':=\{M_r',\pr_{r,s}'\}$ be two 
	towers of $\scrA$-modules with $\Delta$-action satisfying $(\ref{freehyp})$ and $(\ref{surjhyp})$.
	Suppose that for each $r$ there exist $A_r$-linear perfect duality pairings
	\begin{equation}
		\xymatrix{
			{\langle \cdot,\cdot \rangle_{r}:M_r\times M_r'} \ar[r] & A_r
		}\label{pairinghyp}
	\end{equation}
	with respect to which $\delta$ is self-adjoint for all $\delta\in \Delta$,
	and which satisfy the compatibility condition\footnote{By abuse of notation,
	for any map of rings $A\rightarrow B$ and any $A$-bilinear pairing of $A$-modules 
	$\langle\cdot,\cdot\rangle:M\times M'\rightarrow A$, we again write 
	$\langle\cdot,\cdot\rangle: M_B\times M_B'\rightarrow B$ for the $B$-bilinear pairing induced
	by extension of scalars.}
	\begin{equation}
		\langle\pr_{r,s}m, \pr_{r,s}'m'\rangle_{s} = 
		\sum_{\delta\in \Delta_{s}/\Delta_{r}} \langle m,\delta^{-1} m'\rangle_{r}
		\label{pairingchangeinr}
	\end{equation}
	for all $r\ge s$.  Then
 	for each $r$, the pairings
		$
			\xymatrix@1{
				{(\cdot,\cdot)_{r}: M_r \times M_r'} \ar[r] &  \Lambda_{A_r,r}				}
		$
		defined by
		\begin{equation*}
			(m,m')_{r} := \sum_{\delta\in \Delta/\Delta_r} \langle m , \delta^{-1} m'\rangle_r \cdot \delta
		\end{equation*}
		are $\Lambda_{A_r,r}$-bilinear and perfect, and  compile to give a $\Lambda_B$-linear
		perfect pairing
		\begin{equation*}
			\xymatrix{
				{(\cdot,\cdot)_{\Lambda_B}: M_B \times M_B'} \ar[r] & {\Lambda_B}
				}.
		\end{equation*}
		In particular, $M_B'$ and $M_B$ are canonically $\Lambda_B$-linearly dual to eachother. 
\end{lemma}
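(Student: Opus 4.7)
The plan is to work first at finite level with the pairings $(\cdot,\cdot)_r$, establish bilinearity and perfectness there, check compatibility with change in $r$, and then pass to the inverse limit (or base change to $B$) using Lemma~\ref{Technical} together with a Nakayama-style reduction.

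First, to verify $\Lambda_{A_r,r}$-bilinearity of $(\cdot,\cdot)_r$, $A_r$-bilinearity is immediate from the $A_r$-bilinearity of $\langle\cdot,\cdot\rangle_r$, so one only needs to check compatibility with the action of $\Delta/\Delta_r$. For $\delta_0\in \Delta$, the identity $(\delta_0 m, m')_r = \delta_0\cdot (m,m')_r = (m,\delta_0 m')_r$ follows by substituting the definition, applying the self-adjointness of $\delta_0$ under $\langle\cdot,\cdot\rangle_r$, and re-indexing the sum by $\delta\mapsto \delta\delta_0^{\pm 1}$ (here $\Delta$ is abelian). For perfectness of $(\cdot,\cdot)_r$, the key observation is the standard adjunction: since $M_r'$ is $A_r$-projective (free, by Lemma~\ref{Technical}~(\ref{red2pfree})), the map
\begin{equation*}
	\Hom_{\Lambda_{A_r,r}}(M_r',\Lambda_{A_r,r}) \longrightarrow \Hom_{A_r}(M_r',A_r),\qquad \phi\mapsto \epsilon_1\circ \phi
\end{equation*}
is an isomorphism, where $\epsilon_1:\Lambda_{A_r,r}\to A_r$ extracts the coefficient of $1\in \Delta/\Delta_r$. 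The map $M_r\to \Hom_{\Lambda_{A_r,r}}(M_r',\Lambda_{A_r,r})$ induced by $(\cdot,\cdot)_r$ composes with this adjunction to yield the map $M_r\to \Hom_{A_r}(M_r',A_r)$ determined by $\langle\cdot,\cdot\rangle_r$, which is an isomorphism by hypothesis.  Hence so is the former.

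Next I would verify that the collection $\{(\cdot,\cdot)_r\}$ is compatible with the tower structure. Under the canonical projection $\Lambda_{A_r,r}\twoheadrightarrow \Lambda_{A_r,s}$, writing elements $\delta\in \Delta/\Delta_r$ as $\delta = \delta_0\epsilon$ with $\delta_0$ a lift of a fixed $\bar\delta_0\in \Delta/\Delta_s$ and $\epsilon$ ranging over $\Delta_s/\Delta_r$, the image of $(m,m')_r$ has $\bar\delta_0$-coefficient
\begin{equation*}
	\sum_{\epsilon\in\Delta_s/\Delta_r}\langle m,\epsilon^{-1}\delta_0^{-1}m'\rangle_r
	= \langle \pr_{r,s}m,\pr_{r,s}'(\delta_0^{-1}m')\rangle_s
	= \langle \pr_{r,s}m,\bar\delta_0^{-1}\pr_{r,s}'m'\rangle_s,
\end{equation*}
where the first equality is exactly the compatibility hypothesis (\ref{pairingchangeinr}) applied to $m$ and $\delta_0^{-1}m'$, and the second uses $\Delta$-equivariance of $\pr_{r,s}'$ (part of the datum of a tower with $\Delta$-action). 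This shows that $(\pr_{r,s}m,\pr_{r,s}'m')_s$ equals the image of $(m,m')_r$ in $\Lambda_{A_r,s}$.

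Finally, for the limit statement, tensoring each $(\cdot,\cdot)_r$ with $B$ over $A_r$ and passing to the inverse limit over $r$ produces a $\Lambda_B$-bilinear pairing $(\cdot,\cdot)_{\Lambda_B}:M_B\times M_B'\to \Lambda_B$ (well-defined thanks to the compatibility above). By Lemma~\ref{Technical}~(\ref{MBfree}), both $M_B$ and $M_B'$ are finite free $\Lambda_B$-modules of the same rank $d$, so the induced $\Lambda_B$-linear map $M_B\to \Hom_{\Lambda_B}(M_B',\Lambda_B)$ is a map between finite free $\Lambda_B$-modules of equal rank; to show it is an isomorphism it suffices by Nakayama's lemma (applied in the local ring $\Lambda_B$, using that $\Delta/\Delta_r$ generates a cofinal system of augmentation-type quotients) to check after $\otimes_{\Lambda_B} B[\Delta/\Delta_r]$ for every $r$, where by Lemma~\ref{Technical}~(\ref{ControlLimit}) the map reduces to $(\cdot,\cdot)_r\otimes_{A_r}B$, which we have already shown to be perfect. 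I expect the main obstacle to be the third step: the bookkeeping in relating the sum over $\Delta_s/\Delta_r$ in hypothesis (\ref{pairingchangeinr}) to the projection $\Lambda_{A_r,r}\twoheadrightarrow \Lambda_{A_r,s}$ is where errors or sign issues are likeliest to appear, although once the indexing is unwound it is essentially forced.
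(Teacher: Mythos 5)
Your proof is correct, and the overall architecture (finite-level perfectness, compatibility with change in $r$, passage to the limit via Nakayama) matches the paper's; but at the crucial finite-level step you take a genuinely different, and cleaner, route. To show $(\cdot,\cdot)_r$ is perfect over $\Lambda_{A_r,r} = A_r[\Delta/\Delta_r]$, the paper observes that the duality map is a homomorphism of free $\Lambda_{A_r,r}$-modules of equal rank, reduces its surjectivity modulo the augmentation ideal via Nakayama, and chases a diagram (the one labeled (\ref{XiDiagram}) in the paper) that identifies the reduction with the level-$1$ pairing by means of $\rho_{r,1}$ and Lemma~\ref{Technical}~(\ref{red2psurj}). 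You instead invoke the Frobenius-reciprocity isomorphism $\Hom_{A_r[\Delta/\Delta_r]}(M_r', A_r[\Delta/\Delta_r])\simeq\Hom_{A_r}(M_r',A_r)$, $\phi\mapsto\epsilon_1\circ\phi$, and note that under it the $\Lambda_{A_r,r}$-linear duality map of $(\cdot,\cdot)_r$ corresponds exactly to the $A_r$-linear duality map of $\langle\cdot,\cdot\rangle_r$, which is an isomorphism by hypothesis. This is more direct: it uses only hypothesis~(\ref{pairinghyp}) at this step, avoids the Nakayama reduction and the transition maps $\rho_{r,1}$ entirely, and makes transparent why the group-ring pairing inherits perfectness. (Two very minor remarks: the adjunction isomorphism is purely formal and does not actually require the $A_r$-projectivity you flag, so that parenthetical is harmless but unnecessary; and for the final Nakayama step a single $r$ suffices — checking ``for every $r$'' is redundant though not wrong.) Your verification of the compatibility $(\pr_{r,s}m,\pr_{r,s}'m')_s=\text{image of }(m,m')_r$ by grouping $\Delta/\Delta_r$ into cosets of $\Delta_s/\Delta_r$ and applying~(\ref{pairingchangeinr}) to $m,\delta_0^{-1}m'$ is exactly the computation the paper records as~(\ref{pairingspecialize}).
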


\begin{proof}
	An easy reindexing argument shows that $(\cdot,\cdot)_{r}$ is $\Lambda_{A_r,r}$-linear
	in the right factor, from which it follows that it is also $\Lambda_{A_r,r}$-linear
	in the left due to our assumption that $\delta\in \Delta$ is self-adjoint with respect to
	$\langle\cdot, \cdot\rangle_{r}$.  To prove that 
	$(\cdot,\cdot)_{r}$ is a perfect duality pairing, we analyze the $\Lambda_{A_r,r}$-linear map
	\begin{equation}
		\xymatrix@C=45pt{
			{M_r} \ar[r]^-{m\mapsto (m,\cdot)_r} & {\Hom_{\Lambda_{A_r,r}}(M_r',\Lambda_{A_r,r})}
		}.\label{GroupRingDuality}
	\end{equation}
	Due to Lemma \ref{Technical}, both $M_r$ and $M_r'$ are free $\Lambda_{A_r,r}$-modules,
	necessarily of the same rank by the existence of the perfect $A_r$-duality pairing (\ref{pairinghyp}).
	It follows that (\ref{GroupRingDuality}) is a homomorphism of free $\Lambda_{A_r,r}$-modules
	of the same rank. To show that it is an isomorphism it therefore suffices to prove it is surjective,
	which may be checked after extension of scalars along the augmentation map
	$\Lambda_{A_r,r}\twoheadrightarrow A_r$ by Nakayama's Lemma.
	Consider the diagram
	\begin{equation}
	\begin{gathered}
		\xymatrix@C=38pt{
		{M_r \tens_{\Lambda_{A_r,r}} A_r} \ar[r]^-{(\ref{GroupRingDuality})\otimes 1}
		\ar[d]_-{\rho_{r,1}\otimes 1}^-{\simeq} & 
		{\Hom_{\Lambda_{A_r,r}}(M_r',\Lambda_{A_r,r})\tens_{\Lambda_{A_r,r}} A_r} \ar[r]^-{\xi}_-{\simeq} &
		{\Hom_{A_r}(M_r'\tens_{\Lambda_{A_r,r}} A_r, A_r)}  \\
		{M_1\tens_{A_1} A_r} \ar[rr]^-{\simeq} & &{\Hom_{A_r}(M_1'\tens_{A_1} A_r, A_r)}
		\ar[u]_-{(\rho_{r,1}'\otimes 1)^{\vee}}^-{\simeq}
		}
	\end{gathered}
	\label{XiDiagram}
	\end{equation}
	where $\xi$ is the canonical map sending $f\otimes \alpha$ to $\alpha(f\otimes 1)$,
	and the bottom horizontal arrow is obtained by $A_r$-linearly extending the canonical
	duality map $m\mapsto \langle m,\cdot\rangle_1$.  On the one hand, the vertical
	maps in (\ref{XiDiagram}) are isomorphisms thanks to Lemma \ref{Technical} (\ref{red2psurj}),
	while the map $\xi$ and the bottom horizontal arrow are isomorphisms
	because arbitrary extension of scalars commutes with linear duality 
	of {\em free} modules.\footnote{Quite generally, for any ring $R$, any $R$-modules 
	$M$, $N$, and any $R$-algebra $S$, the canonical map
	\begin{equation*}
	\xymatrix{
		{\xi_M:\Hom_R(M,N)\otimes_R S} \ar[r] & {\Hom_S(M\otimes_R S, N\otimes_R S)}
		}
	\end{equation*}
	sending $f\otimes s$ to $s(f\otimes \id_S)$ is an isomorphism if $M$ is finite and free over $R$.
	Indeed, the map $\xi_R$ is visibly an isomorphism, and one checks that $\xi_{M_1\oplus M_2}$
	is naturally identified with $\xi_{M_1}\oplus \xi_{M_2}$.
	}
	On the other hand, this diagram commutes because (\ref{pairingchangeinr}) guarantees the relation
	\begin{equation*}
	       \xymatrix@C=15pt{
	       {\langle \rho_{r,1}m , \rho_{r,1}'m' \rangle_1} \ar@^{=}[r]^-{(\ref{pairingchangeinr})} & 	    	       
	       {\displaystyle\sum_{\delta\in \Delta/\Delta_r} \langle  m,\delta^{-1}m'\rangle_r
	       \equiv (m,m')_r \bmod{I_{\Delta}}}
	       }  
	\end{equation*}
	where $I_{\Delta}=\ker(\Lambda_{A_r,r}\twoheadrightarrow A_r)$ is the augmentation ideal
	We conclude that (\ref{GroupRingDuality}) is an isomorphism, as desired.
	The argument that the corresponding map with the roles of $M_r$ and $M_r'$ interchanged
	is an isomorphism proceeds {\em mutatis mutandis}.
	
	Using the definition
	of $(\cdot,\cdot)_r$ and (\ref{pairingchangeinr}), one has more generally that
	\begin{equation*}
		(\rho_{r,s}m,\rho_{r,s}'m')_{s} \equiv (m,m')_r \bmod 
		\ker(\Lambda_{A_r,r}\twoheadrightarrow \Lambda_{A_r,s}) 
	\end{equation*}
	for all $r\ge s$.  In particular, the pairings $(\cdot,\cdot)_r$ induce, by extension 
	of scalars, a $\Lambda_B$-bilinear pairing
	\begin{equation*}
		\xymatrix{
			{(\cdot,\cdot)_{\Lambda_B}: M_B\times M_{B}'} \ar[r] & {\Lambda_B}
			}
	\end{equation*}
	which satisfies the specialization property
	\begin{equation}
		(\cdot,\cdot)_{\Lambda_B} \equiv (\cdot, \cdot)_{r} \bmod \ker(\Lambda_B \twoheadrightarrow \Lambda_{B,r}).
		\label{pairingspecialize}
	\end{equation}
	From $(\cdot,\cdot)_{\Lambda_B}$ we obtain in the usual way duality morphisms
	\begin{equation}
		\xymatrix@C=45pt{
			{M_B} \ar[r]^-{m\mapsto (m,\cdot)_{\Lambda_B}} & \Hom_{\Lambda_B}(M_{B}',\Lambda_B)
		}\quad\text{and}\quad
		\xymatrix@C=45pt{
			{M_B'} \ar[r]^-{m'\mapsto (\cdot,m')_{\Lambda_B}} & \Hom_{\Lambda_B}(M_{B},\Lambda_B)
		}\label{LambdaDualityMaps}
	\end{equation}
	which we wish to show are isomorphisms.  Due to Lemma \ref{Technical} (\ref{MBfree}),
	each of (\ref{LambdaDualityMaps}) is a map of finite free $\Lambda_B$-modules of the same rank,
	so we need only show that these mappings are surjective.  As the kernel of 
	$\Lambda_B\twoheadrightarrow \Lambda_{B,r}$ is contained in the radical of $\Lambda_B$,
	we may by Nakayama's Lemma check 
	such surjectivity after extension of scalars along $\Lambda_B\twoheadrightarrow \Lambda_{B,r}$
	for any $r$, where it follows from (\ref{pairingspecialize}) and the fact that $M_r$ and $M_{s}$
	are free $\Lambda_{A_r,r}$-modules, so that the extension of scalars of the perfect
	duality pairing $(\cdot,\cdot)_r$ along the canonical map 
	$\Lambda_{A_r,r}\rightarrow \Lambda_{B,r}$ is again perfect.	
\end{proof}

\subsection{Ordinary families of de Rham cohomology}\label{ordfamdR}

Let $\{\X_r/T_r\}_{r\ge 0}$ be the tower of modular curves 
introduced in \S\ref{tower}.  
As $\X_r$ is regular and proper flat over $T_r=\Spec(R_r)$ with geometrically reduced fibers, it is a curve
in the sense of Definition \ref{curvedef} (thanks to Corollary \ref{curvecorollary})
which moreover satisfies the hypotheses of Proposition \ref{HodgeIntEx}.
Abbreviating
\begin{align}
	& H^0(\omega_{r}):=H^0(\X_{r},\omega_{\X_{r}/S_{r}}),  &  & H^1_{\dR,r}:= H^1(\X_{r}/R_{r}), &  & H^1(\O_{r}):=H^1(\X_{r},\O_{\X_{r}}),\label{shfcoh}
\end{align}  
Proposition \ref{HodgeIntEx} (\ref{CohomologyIntegral}) provides a canonical short exact sequence
$H(\X_r/R_r)$ of finite 
free $R_r$-modules
\begin{equation}
	\xymatrix{
		0\ar[r] & {H^0(\omega_r)} \ar[r] & {H^1_{\dR,r}} \ar[r] & {H^1(\O_r)} \ar[r] & 0
	}\label{HodgeFilIntAbbrev}
\end{equation}
which recovers the Hodge filtration of $H^1_{\dR}(X_r/K_r)$ after inverting $p$.

The Hecke correspondences on $\X_r$ induce, via Proposition \ref{HodgeIntEx} (\ref{CohomologyFunctoriality})
(or by Proposition \ref{intcompare} and Remark \ref{canonicalproperty}),
canonical actions of $\H_r$ and $\H_r^*$ on $H(\X_r/R_r)$ via $R_r$-linear endomorphisms.
In particular, $H(\X_r/R_r)$ is canonically a short exact sequence of $\Z_p[(\Z/Np^r\Z)^{\times}]$-modules
via the diamond operators.
Similarly, pullback along (\ref{gammamaps}) yields $R_r$-linear morphisms
$H((\X_r)_{\gamma}/R_r)\rightarrow H(\X_r/R_r)$ for each $\gamma\in \Gamma$;
using the fact that hypercohomology commutes with flat base change 
(by \v{C}ech theory), we obtain an action of $\Gamma$ on $H(\X_r/R_r)$
which is $R_r$-semilinear over the canonical action of $\Gamma$ on $R_r$
and which commutes with the actions of $\H_r$ and $\H_r^*$ as the Hecke operators are defined over 
$K_0=\Q_p$.

For $r\ge s$, we will need to work with the base change $\X_s\times_{T_s} T_r$,
which is a curve over $T_r$ thanks to Proposition \ref{curveproperties}.
Although $\X_s\times_{T_s} T_r$ need no longer be regular as  $T_{r}\rightarrow T_{s}$ is not smooth when $r> s$,
we claim that it is necessarily {\em normal}.  
Indeed, this follows from the more general assertion:

\begin{lemma}\label{normalcrit}
		Let $V$ be a discrete valuation ring and $A$ a finite type Cohen-Macaulay $V$-algebra with smooth generic fiber and geometrically reduced
		special fiber.  Then $A$ is normal.
\end{lemma}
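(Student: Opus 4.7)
The plan is to apply Serre's criterion $(R_1)+(S_2)$. Condition $(S_2)$ is immediate from the Cohen--Macaulay hypothesis, so the entire content of the lemma is the verification of $(R_1)$. The key auxiliary step is to prove that $A$ is flat over $V$; with flatness in hand, $(R_1)$ can be checked by a short prime-by-prime inspection, as I explain below.

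To establish $V$-flatness I would invoke the miracle flatness theorem (Matsumura, Theorem~23.1): a Cohen--Macaulay algebra over a regular base is flat as soon as all its fibers have the common dimension $\dim A - \dim V$. Smoothness of the generic fiber gives this dimension on the generic side, and the catenary dimension formula for finite-type $V$-algebras, applied at each maximal ideal of $\Spec \bar A$ (where $\bar A := A/\pi A$ for $\pi$ a uniformizer of $V$), reduces equidimensionality of the special fiber to the absence of irreducible components of $\Spec A$ contained entirely in the special fiber. Such ``vertical'' components are precisely the minimal primes of $A$ containing $\pi$, and the hypothesis that $\bar A$ is geometrically reduced rules these out: a minimal prime $\mathfrak q$ of $A$ with $\pi\in \mathfrak q$ would force $A_\mathfrak q$ to be Artinian local with $\pi$ nilpotent in its maximal ideal, and comparing $A_\mathfrak q/\pi A_\mathfrak q$ with the stalk of $\bar A$ at the corresponding minimal prime produces nilpotents that contradict the reducedness of $\bar A$.

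Granted flatness, $(R_1)$ is verified as follows. Let $\mathfrak p \subset A$ be a prime of height one. If $\pi\notin \mathfrak p$, then $A_\mathfrak p$ is a localization of the smooth (hence regular) $K$-algebra $A\otimes_V K$ and is therefore regular. If $\pi\in \mathfrak p$, then by flatness the image of $\mathfrak p$ in $\bar A$ is a minimal prime of the Cohen--Macaulay ring $\bar A$, and reducedness of $\bar A$ makes the zero-dimensional Artinian local ring $\bar A_{\mathfrak p/\pi A} = A_\mathfrak p/\pi A_\mathfrak p$ a field. Hence $\pi$ generates the maximal ideal of the one-dimensional local ring $A_\mathfrak p$, which is therefore a discrete valuation ring, and in particular regular.

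The main obstacle I expect is the flatness step: establishing equidimensionality of the special fiber, equivalently ruling out vertical components, \textbf{before} knowing flatness is the crux of the argument and is precisely where the geometrically-reduced hypothesis on $\bar A$ enters in an essential way. Everything downstream is formal.
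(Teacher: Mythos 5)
Your verification of $R_1$ assuming $V$-flatness is correct and in substance matches the paper's, but the miracle-flatness step breaks, and the break is a genuine gap. The argument you give to rule out vertical components does not work: if $\mathfrak q$ is a minimal prime of $A$ containing $\pi$, then $A_{\mathfrak q}/\pi A_{\mathfrak q}$ \emph{is} the stalk of $\bar A$ at $\mathfrak q/\pi A$ — there is nothing to ``compare'' — and reducedness of $\bar A$ merely says this zero-dimensional local ring is a field, which is entirely compatible with $\pi$ being a nonzero nilpotent of $A_{\mathfrak q}$ or with $\pi = 0$ there; no nilpotents of $\bar A$ are produced. In fact, under your reading of ``Cohen--Macaulay $V$-algebra'' as ``$A$ is a Cohen--Macaulay ring,'' the lemma is \emph{false}: take $A = V[y]/(\pi y)$. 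This is a finite-type Cohen--Macaulay $V$-algebra (a hypersurface in $V[y]$), its generic fiber $K[y]/(y) \cong K$ is smooth over $K$, and its special fiber $A/\pi A \cong k[y]$ is geometrically reduced; yet $A$ has the vertical minimal prime $(\pi)$, is not $V$-flat, and fails $R_1$ at the height-one prime $(y,\pi)$ because $V[y]_{(y,\pi)}/(\pi y)$ is a one-dimensional local ring of embedding dimension two, hence not regular.

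The lemma is true — and the paper's proof only claims it — under the reading that the structure morphism $\Spec A \to \Spec V$ is a Cohen--Macaulay \emph{morphism}, so that flatness is part of the hypothesis; this is exactly what the paper signals with the parenthetical ``(again by definition of CM).'' Granting flatness, your prime-by-prime check of $R_1$ is correct and agrees in substance with the paper's phrasing in terms of the $V$-smooth locus of $\Spec A$ containing all codimension-one points; the miracle-flatness detour should simply be deleted.
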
 

\begin{proof}
	We claim that $A$ satisfies Serre's ``$R_1+S_2$"-criterion for normality \cite[Theorem 23.8]{matsumura}.  As $A$ is assumed
	to be CM, by definition of Cohen-Macaulay $A$ verifies $S_i$ for all $i\ge 0$, so we need only show that each localization of $A$ at
	a prime ideals of codimension 1 is regular.  Since $A$ has geometrically reduced special fiber, this special fiber is in particular
	smooth at its generic points.  As $A$ is flat over $V$ (again by definition of CM), we deduce that the (open) $V$-smooth locus   
	in $\Spec A$ contains the generic points of the special fiber and hence contains all codimension-1 points (as the generic
	fiber of $\Spec A$ is assumed to be smooth).  Thus $A$ is $R_1$, as desired.
\end{proof}

We conclude that $\X_{s}\times_{T_s} T_r$ is a normal curve, and 
we obtain from Proposition \ref{HodgeIntEx}
a canonical short exact sequence of finite free $R_r$-modules $H(\X_{s}\times_{T_s}T_r/R_r)$
which recovers the Hodge filtration of $H^1_{\dR}(X_{s}/K_r)$ after inverting $p$.
As hypercohomology commutes with flat base change and the formation of the 
relative dualizing sheaf and the structure sheaf are 
compatible with arbitrary base change, we have a natural isomorphism of short exact sequences of free $R_r$-modules
\begin{equation}
	H(\X_{s}\times_{T_s} T_r/R_r)	\simeq  H(\X_{s}/R_{s})\otimes_{R_{s}} R_r. \label{bccompat}
\end{equation}
In particular, we have $R_r$-linear actions of $\H_{s}^*$, $\H_r$ and an $R_r$-semilinear action of
$\Gamma$ on $H(\X_s\times_{T_s} T_r/R_r)$.  These actions moreover commute with one another.

Consider now the canonical degeneracy map $\pr: \X_r\rightarrow \X_{s}\times_{T_s} T_r$ of curves over $T_r$
induced by (\ref{rdegen}).  
As $\X_r$ and $\X_{s}\times_{T_s} T_r$ are normal and proper curves over $T_r$,
we obtain from Proposition \ref{HodgeIntEx} (\ref{CohomologyFunctoriality}) 
canonical trace mappings of short exact sequences
\begin{equation}
		\xymatrix{
		{\pr_* : H(\X_{r}/R_r)} \ar[r] & {H(\X_{s}\times_{T_s}T_r/R_r)
		\simeq H(\X_{s}/R_{s})\otimes_{R_{s}}R_r}
	}\label{trmap}
\end{equation}
which recover the usual trace mappings on de Rham cohomology after inverting $p$;
as such, these mappings are Hecke and $\Gamma$-equivariant, and
compatible with change in $r,s$ in the obvious way.
Tensoring these maps (\ref{trmap}) over $R_r$ with $R_{\infty}$, 
we obtain projective systems of free $R_{\infty}$ 
with semilinear $\Gamma$-action and commuting, linear 
$\H^*:=\varprojlim_r \H_r^*$ action:

\begin{definition}\label{limitmods}
	We write 
	\begin{align*}
			&H^0(\omega):=\varprojlim_r \left(H^0(\omega_r) \tens_{R_r} {R}_{\infty}\right),  
			&& H^1_{\dR}:=\varprojlim_r \left(H^1_{\dR,r}\tens_{R_r} {R}_{\infty} \right),
			&&  H^1(\O) := \varprojlim_r \left(H^1(\O_r)\tens_{R_r} {R}_{\infty}\right)
	\end{align*}
	for the projective limit with respect to the maps induced by $\pr_*$,
	each of which is naturally a module for 
	$\Lambda_{R_{\infty}}={R}_{\infty}[\![\Delta]\!]$, 
	and is equipped with a semilinear $\Gamma$-action 
	and a linear $\H^*$-action.  
\end{definition}

Although we have a left exact sequence of $\Lambda_{R_\infty}$-modules
with semilinear $\Gamma$-action and $\H^*$-action
\begin{equation*}
	\xymatrix{
		0\ar[r] & {H^0(\omega)} \ar[r] & {H^1_{\dR}} \ar[r] & {H^1(\O)}
	},
\end{equation*}
this sequence is almost certainly not right exact.  It is moreover unlikely that
any of the $\Lambda_{R_{\infty}}$-modules in 
Definition \ref{limitmods} are finitely generated.  The situation
is much better if we pass to {\em ordinary parts}:

\begin{theorem}\label{main}
	Let $e^*$ be the idempotent of $\H^*$ associated to $U_p^*$ and let $d$ be
	the positive integer defined as in Proposition $\ref{IgusaStructure}$ $(\ref{IgusaFreeness})$.
	Then $e^*H^0(\omega)$, $e^*H^1_{\dR}$ and $e^*H^1(\O)$ are free $\Lambda_{R_{\infty}}$-modules
	of ranks $d$, $2d$, and $d$ respectively, and there is a canonical short exact sequence
	of free $\Lambda_{R_{\infty}}$-modules with linear $\H^*$-action and $R_{\infty}$-semilinear
	$\Gamma$-action
	\begin{equation}
		\xymatrix{
			0\ar[r] & {e^*H^0(\omega)} \ar[r] & {e^*H^1_{\dR}} \ar[r] & {e^*H^1(\O)} \ar[r] & 0
			}.\label{mainthmexact}
	\end{equation}
	For each positive integer $r$, applying $\otimes_{\Lambda_{R_{\infty}}} R_{\infty}[\Delta/\Delta_r]$
	to $(\ref{mainthmexact})$ yields the short exact sequence
	\begin{equation}
		\xymatrix{
			0\ar[r] & {e^*H^0(\omega_r)\tens_{R_r} R_{\infty}} \ar[r] & 
			{e^*H^1_{\dR}\tens_{R_r} R_{\infty}} \ar[r] & 
			{e^*H^1(\O)\tens_{R_r} R_{\infty}} \ar[r] & 0
			},\label{mainthmexact2}	
	\end{equation}
	compatibly with the actions of $\H^*$ and $\Gamma$.	 
\end{theorem}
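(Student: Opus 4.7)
The plan is to reduce Theorem \ref{main} to the characteristic-$p$ structure theorem (Corollary \ref{FreenessInCharp}) by invoking the tower formalism of Lemma \ref{Technical}. Concretely, I set $A_r := R_r$ and, for each $? \in \{\omega,\dR,\O\}$, form the tower of $\scrA$-modules with $M_r^? := e_r^* H^?_r$, where I write $H^\omega_r := H^0(\omega_r)$, $H^\dR_r := H^1_{\dR,r}$, and $H^\O_r := H^1(\O_r)$.  The transition maps $\varphi_{r,s}\colon M_r^? \to M_s^? \otimes_{R_s} R_r$ are produced from the trace morphisms (\ref{trmap}), combined with the canonical base-change isomorphism (\ref{bccompat}) and the observation that the idempotents $e_r^*$ are compatible with $\pr_*$ (since the latter intertwines $U_p^*$ at consecutive levels).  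The three towers fit into a short exact sequence of towers via the Hodge filtration (\ref{HodgeFilIntAbbrev}), each term is free of finite rank over $R_r$ by Proposition \ref{HodgeIntEx} (\ref{CohomologyIntegral}), and $\Delta_r$ acts trivially on $M_r^?$ because the moduli-theoretic diamond operators from $1+p^r\Z_p$ act trivially on $\scrP_r^{\varepsilon^{(r)}}$ and hence on $\X_r$.

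To apply Lemma \ref{Technical} I take $I_r := \pi_r R_r$ (the maximal ideal), so $\overline{A}_r = \F_p$.  The base-change identification Lemma \ref{ReductionCompatibilities} (\ref{BaseChngDiagram}), combined with the fact that $e_r^*$ commutes with reduction, gives
\begin{equation*}
    \xymatrix{M_r^? / I_r M_r^? \ar[r]^-{\simeq} & e_r^* H^?(\o{\X}_r/\F_p),}
\end{equation*}
and Corollary \ref{FreenessInCharp} supplies both hypotheses: (a) each $M_r^?/I_r M_r^?$ is free over $\F_p[\Delta/\Delta_r]$ of rank $d$ for the flanking terms and $2d$ for the middle, independent of $r$; and (b) the reduction $\overline{\varphi}_{r,s}$ of the trace map is surjective (in fact, it induces an isomorphism on $\Delta_s/\Delta_r$-coinvariants).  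Since $R_\infty$ is faithfully flat over each $R_r$, applying Lemma \ref{Technical} with $B = R_\infty$ yields at once that each $e^* H^? = M^?_{R_\infty}$ is a free $\Lambda_{R_\infty}$-module of the asserted rank, and produces the control isomorphism (\ref{mainthmexact2}) for each $r$.

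It remains to obtain the exact sequence (\ref{mainthmexact}).  For each $r$, tensoring the Hodge filtration $H(\X_r/R_r)$ with the flat $R_r$-module $R_\infty$ and splitting off the $e_r^*$-summand gives a short exact sequence of finite free $R_\infty$-modules.  By Lemma \ref{Technical} (\ref{red2psurj}) applied to each tower, the transition maps $\varphi_{r,s}$ are already surjective before reducing modulo $I_r$; tensoring with $R_\infty$ preserves this surjectivity, so the inverse system is Mittag-Leffler, in fact strictly surjective, and passing to $\varprojlim_r$ produces (\ref{mainthmexact}).  Hecke and $\Gamma$-equivariance are inherited throughout because all constructions (trace maps, idempotents, base change, reduction) are $\H^*$ and $\Gamma$-compatible, and (\ref{mainthmexact2}) follows by applying the control isomorphism termwise, using that tensoring with $R_\infty[\Delta/\Delta_r]$ preserves exactness since $e^*H^1(\O)$ is $\Lambda_{R_\infty}$-free.

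The main obstacle is already behind us: it lies in establishing Corollary \ref{FreenessInCharp}, which in turn rests on Nakajima's theorem \ref{Nakajima} for the Igusa tower and the delicate moduli-theoretic analysis of $U_p$ in characteristic $p$ (Propositions \ref{UlmerProp} and \ref{charpord}) that identifies the ordinary part of $H^0(\o{\X}_r,\omega_{\o{\X}_r})$ with $V$-ordinary meromorphic differentials on $\Ig(p^r)$.  Once this characteristic-$p$ freeness and surjectivity are in hand, the passage to $\Lambda_{R_\infty}$-freeness and to the control theorem is the formal Nakayama-style argument packaged in Lemma \ref{Technical}, which is why the proof is essentially a reduction modulo the uniformizer combined with a careful projective-limit argument.
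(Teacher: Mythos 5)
Your proof is correct and follows essentially the same route as the paper's: reduce modulo the uniformizer to land on the characteristic-$p$ exact sequence (\ref{sesincharp1}), invoke Corollary \ref{FreenessInCharp} to verify hypotheses (\ref{freehyp}) and (\ref{surjhyp}) of Lemma \ref{Technical}, and conclude freeness and control, with right-exactness of $\varprojlim$ secured by the Mittag--Leffler condition coming from Nakayama-surjectivity of the trace maps. The only cosmetic difference is that you spell out the $\Delta_r$-triviality and $e_r^*$-compatibility explicitly, which the paper leaves implicit.
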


\begin{proof}
	
Applying $e^*$ to the short exact sequence $H(\X_r/R_r)$ yields a short exact sequence
\begin{equation}
	\xymatrix{
		0\ar[r] & {e^*H^0(\omega_r)}\ar[r] & {e^*H^1_{\dR,r}} \ar[r] & {e^*H^1(\O_r)} \ar[r] & 0
		}\label{hitwithidem}
\end{equation}
of $R_r[\Delta/\Delta_r]$-modules with linear $\H_r^*$-action and $R_r$-semilinear $\Gamma$-action in which each term is
free as an $R_r$-module.\footnote{Indeed, $e^*M$ is a direct summand of $M$ for any $\H_r^*$-module $M$,
and hence $R_r$-projective ($=R_r$-freee) if $M$ is.}
Similarly, for each pair of nonnegative integers $r\ge s$,
the trace mappings (\ref{trmap}) induce a commutative diagram with exact rows
\begin{equation}
\begin{gathered}
	\xymatrix{
		0\ar[r] & {e^*H^0(\omega_r)} \ar[r]\ar[d]_-{\pr_*} & 
		{e^*H^1_{\dR,r}} \ar[r]\ar[d]_-{\pr_*} & {e^*H^1(\O_r)} 
		\ar[r]\ar[d]^-{\pr_*} & 0\\
		0\ar[r] & {e^*H^0(\omega_{s})\otimes_{R_{s}}R_r} \ar[r] & 
		{e^*H^1_{\dR,{s}}\otimes_{R_{s}}R_r} \ar[r] & {e^*H^1(\O_{s})\otimes_{R_{s}}R_r} 
		\ar[r] & 0
	}
	\end{gathered}
	\label{piecetogether}
\end{equation}
We will apply Lemma \ref{Technical} with $A_r=R_r$, $I_r=(\pi_r)$, $B=R_{\infty}$ 
nd with $M_r$ each one of the terms in (\ref{hitwithidem}).
In order to do this, we must check that the hypotheses (\ref{freehyp}) and 
(\ref{surjhyp}) are satisfied.

Applying $\otimes_{R_r} \F_p$ to the short exact sequence (\ref{hitwithidem})
and using the fact that the idempotent $e^*$ commutes with tensor products,
we obtain, thanks to Lemma \ref{ReductionCompatibilities} (\ref{BaseChngDiagram}),
the short exact sequence of $\F_p$-vector spaces (\ref{sesincharp1}).
By Corollary \ref{FreenessInCharp}, the three terms of (\ref{sesincharp1}) are free 
$\F_p[\Delta/\Delta_r]$-modules
of ranks $d$, $2d$, and $d$ respecvitely, so (\ref{freehyp})
is satisfied for each of these terms.  Similarly, applying $\otimes_{R_r} \F_p$
to the diagram (\ref{piecetogether}) 
yields a diagram which by Corollary \ref{SplitIgusa} is naturally isomorphic to
the diagram of $\F_p[\Delta/\Delta_r]$-modules with split-exact rows
\begin{equation*}
	\xymatrix@C=15pt{
		0 \ar[r] & {H^0(I_r^{\infty},\Omega^1(\SS))^{V_{\ord}}} \ar[r]\ar[d]_-{\pr_*} &
		{H^0(I_r^{\infty},\Omega^1(\SS))^{V_{\ord}}\oplus
		H^1(I_r^0,\O(-\SS))^{F_{\ord}}} \ar[r]\ar[d]|-{\pr_*\oplus \pr_*} & 
		{H^1(I_r^0,\O(-\SS))^{F_{\ord}}} \ar[r]\ar[d]^-{\pr_*} & 0 \\
		0 \ar[r] & {H^0(I_{s}^{\infty},\Omega^1(\SS))^{V_{\ord}}} \ar[r] &
		{H^0(I_{s}^{\infty},\Omega^1(\SS))^{V_{\ord}}\oplus
		H^1(I_{s}^0,\O(-\SS))^{F_{\ord}}} \ar[r] & 
		{H^1(I_{s}^0,\O(-\SS))^{F_{\ord}}} \ar[r] & 0
	}
\end{equation*}
Each of the vertical maps in this diagram is surjective due to Proposition \ref{IgusaStructure} 
(\ref{IgusaControl}), and we conclude that the hypothesis (\ref{surjhyp}) is satisfied
as well.  Furthermore, the vertical maps in (\ref{piecetogether}) are then surjective by Nakayama's
Lemma, so applying $\otimes_{R_r} R_{\infty}$ 
yields an inverse system of short exact sequences in which the first term satisfies the Mittag-Leffler
condition.  Passing to inverse limits is therefore (right) exact, and we obtain the short 
exact sequence (\ref{mainthmexact}).
\end{proof}

Due to Proposition \ref{HodgeIntEx} (\ref{CohomologyDuality}), the short exact sequence
(\ref{HodgeFilIntAbbrev}) is auto-dual with respect to the canonical cup-product pairing $(\cdot,\cdot)_r$
on $H^1_{\dR,r}$.  We extend scalars along $R_r\rightarrow R_r':=R_r[\mu_N]$, so that the Atkin-Lehner
``invoultion" $w_r$ is defined, and consider the ``twisted" pairing on ordinary parts
\begin{equation}
\xymatrix{
	{\langle \cdot,\cdot\rangle _r : ({e^*}H^1_{\dR,r})_{R_r'} \times ({e^*}H^1_{\dR,r})_{R_r'}} \ar[r] & {R_r'}
	}\qquad\text{given by}\qquad \langle x, y\rangle_r := (x, w_r {U_p^*}^r y).
	\label{TwistdRpairing}
\end{equation}
It is again perfect and satisfies $\langle T^* x,y\rangle =\langle x, T^* y \rangle$
for all $x,y\in (e^*H^1_{\dR,r})_{R_r'}$ and $T^*\in \H_r^*$.
\begin{proposition}\label{dRDuality}
	The pairings $(\ref{TwistdRpairing})$ compile to give a perfect $\Lambda_{R_{\infty}'}$-linear
	duality pairing 
	\begin{equation*}
		\xymatrix{
		{\langle\cdot,\cdot\rangle_{\Lambda_{R_{\infty}'}}:   
		({e^*}H^1_{\dR})_{\Lambda_{R_{\infty}'}} \hspace{-1ex}\times ({e^*}H^1_{\dR})_{\Lambda_{R_{\infty}'}}}
		\ar[r] & {\Lambda_{R_{\infty}'}}
		}\ \text{given by}\ 
		\langle x , y \rangle_{\Lambda_{R_{\infty}'}} :=
		\varprojlim_r\sum_{\delta\in \Delta/\Delta_r} \langle x_r, \langle \delta^{-1}\rangle^* y_r\rangle_r\cdot\delta
	\end{equation*}
	for $x=\{x_r\}_r$ and $y=\{y_r\}_r$ in $(e^*H^1_{\dR})_{\Lambda_{R_{\infty}'}}$.
	The pairing $\langle \cdot,\cdot \rangle_{\Lambda_{R_{\infty}}'}$  induces
	a canonical isomorphism 
	\begin{equation*}
		\xymatrix{
				0\ar[r] & {e^*H^0(\omega)(\langle\chi\rangle\langle a\rangle_N)_{\Lambda_{R_{\infty}'}}}
				\ar[r]\ar[d]^-{\simeq} & 
				{e^*H^1_{\dR}(\langle\chi\rangle\langle a\rangle_N)_{\Lambda_{R_{\infty}'}}} 
				\ar[r]\ar[d]^-{\simeq} & 
				{e^*H^1(\O)(\langle\chi\rangle\langle a\rangle_N)_{\Lambda_{R_{\infty}'}}} 
				\ar[r]\ar[d]^-{\simeq} & 0\\
	0\ar[r] & {(e^*H^1(\O))^{\vee}_{\Lambda_{R_{\infty}'}}} \ar[r] & 
	{({e^*H^1_{\dR}})^{\vee}_{\Lambda_{R_{\infty}'}}} \ar[r] & 
	{(e^*H^0(\omega))^{\vee}_{\Lambda_{R_{\infty}'}}} \ar[r] & 0
		}
	\label{mainthmdualityisom}
	\end{equation*}
	that is $\H^*$-equivariant and compatible with the natural action 
	of $\Gamma \times \Gal(K_0'/K_0)\simeq \Gal(K_{\infty}'/K_0)$ 
	on the bottom row and the twist
	$\gamma\cdot m := \langle \chi(\gamma)\rangle\langle a(\gamma)\rangle_N \gamma m$
	of the natural action on the top, 	
	where 
	$a(\gamma) \in (\Z/N\Z)^{\times}$ is
	determined by
	$\zeta^{a(\gamma)}=\gamma\zeta$ for every $\zeta\in \mu_N(\Qbar_p)$.
\end{proposition}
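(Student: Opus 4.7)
The strategy is to apply the tower-theoretic duality formalism of Lemma \ref{LambdaDuality} to the pairings $\langle\cdot,\cdot\rangle_r$ on $(e^*H^1_{\dR,r})_{R_r'}$, and then use the Atkin--Lehner twist to identify the flanking terms of (\ref{mainthmexact}) correctly. First, I would verify that each $\langle\cdot,\cdot\rangle_r$ is a perfect pairing on $(e^*H^1_{\dR,r})_{R_r'}$: starting with the perfect cup-product pairing $(\cdot,\cdot)_r$ on $H^1_{\dR,r}$ of Proposition \ref{HodgeIntEx} (\ref{CohomologyDuality}), the identity $w_r T = T^* w_r$ of Proposition \ref{AtkinInterchange} means that $w_r$ carries the idempotent $e = f(U_p)$ to $e^*$, so multiplication by $w_r$ and by the unit ${U_p^*}^r$ (which acts invertibly on the ordinary part) carries the cup-product autoduality on $H(\X_r/R_r)$ to an autoduality of $e^*H(\X_r/R_r)_{R_r'}$. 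Moreover $w_r w_r = \langle -p^r\rangle_N\langle -N\rangle$ (Proposition \ref{ALinv}) and the identity $\langle T^* x, y\rangle_r = \langle x, T^* y\rangle_r$ for $T^* \in \H_r^*$ then follows from $w_r T^* = T w_r$ together with the standard adjointness $(T x, y)_r = (x, T^* y)_r$ under the cup product.

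Second, and this is the main obstacle, I would verify the trace-compatibility hypothesis (\ref{pairingchangeinr}) of Lemma \ref{LambdaDuality}, namely
\[
  \langle \pr_* x, \pr_* y\rangle_{r-1} \;=\; \sum_{\delta\in \Delta_{r-1}/\Delta_r} \langle x, \langle\delta^{-1}\rangle^* y\rangle_r.
\]
Using the adjointness $(\pr_* x, z)_{r-1} = (x, \pr^* z)_r$ for the cup product, this reduces to computing the composite $\pr^* \circ w_{r-1}\circ {U_p^*}^{r-1}\circ \pr_*$ as a $\H_{r-1}^*$-operator on $e^*H^1_{\dR,r}$. The key identities of Proposition \ref{ALinv}, especially $\pr\, w_r = w_{r-1}\, \ps$ and $\ps\, w_r = \langle p\rangle_N\, w_{r-1}\, \pr$, together with the definitional factorization $U_p^* = \pi_{1*}\pi_2^*$ (where the standard degeneracy $\pr$ is $\pi\circ\pi_2$), allow one to replace $\pr^* w_{r-1}$ by $w_r\ps^*\langle p\rangle_N^{-1}$ and then to absorb the occurrences of $\ps^*\pr_*$ as a sum of diamond operators running over $\Delta_{r-1}/\Delta_r$. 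Carrying this out carefully and using $U_p^* w_r = w_r U_p$ plus the relations $U_p\pr_* = \pr_* U_p^*$ on the ordinary locus (which follow from Proposition \ref{UlmerProp} and the moduli descriptions in \S\ref{tower}), one obtains the identity above. The diamond twist $\langle\delta^{-1}\rangle^*$ rather than $\delta^{-1}$ comes from the relation $w_r\langle u\rangle = \langle u^{-1}\rangle w_r$ in Proposition \ref{ALinv}.

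Third, having established (\ref{pairingchangeinr}), Lemma \ref{LambdaDuality} (whose freeness hypotheses (\ref{freehyp}) and (\ref{surjhyp}) are guaranteed by Theorem \ref{main} after base change to $R_r'$) produces a canonical $\Lambda_{R_\infty'}$-linear perfect pairing
\[
  \langle\cdot,\cdot\rangle_{\Lambda_{R_\infty'}} : (e^*H^1_{\dR})_{\Lambda_{R_\infty'}} \times (e^*H^1_{\dR})_{\Lambda_{R_\infty'}} \longrightarrow \Lambda_{R_\infty'},
\]
defined by the indicated limit, that is $\H^*$-equivariant. To identify the flanking factors correctly, I would check that $w_r{U_p^*}^r$ carries the Hodge subspace $e^*H^0(\omega_r)_{R_r'}$ isomorphically onto the Grothendieck-dual of $e^*H^1(\O_r)_{R_r'}$ and vice versa: this is precisely the statement that the cup-product autoduality of $H(\X_r/R_r)$ is compatible with filtrations, combined with the fact that the Atkin--Lehner involution reverses the Hodge filtration on the cotangent space of $J_r$ (working over $R_r'$ so that $w_r$ is defined).

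Finally, the Galois-equivariance with the twist by $\langle\chi\rangle\langle a\rangle_N$ follows by passing to the limit from the Galois transformation law $(\gamma^* w_r)\gamma = \gamma w_r \langle\chi(\gamma)\rangle^{-1}\langle a(\gamma)\rangle_N^{-1}$ of Proposition \ref{ALinv}: applying $\gamma$ to the pairing $\langle x,y\rangle_r = (x, w_r{U_p^*}^r y)_r$, using $\Gamma$-equivariance of the cup product and of ${U_p^*}^r$, produces precisely the twist by $\langle\chi(\gamma)\rangle\langle a(\gamma)\rangle_N$ on the top row, which matches the untwisted $\Gamma\times\Gal(K_0'/K_0)$-action on the bottom row $\Hom_{\Lambda_{R_\infty'}}(-,\Lambda_{R_\infty'})$. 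Combining these steps yields the claimed isomorphism of short exact sequences.
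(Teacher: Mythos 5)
Your overall strategy is the same as the paper's: both apply Lemma \ref{LambdaDuality} to the twisted pairings $\langle\cdot,\cdot\rangle_r$, with the main work being verification of the trace-compatibility hypothesis (\ref{pairingchangeinr}), and the Galois equivariance is then deduced from the transformation law $(\gamma^*w_r)\gamma=\gamma w_r\langle\chi(\gamma)\rangle^{-1}\langle a(\gamma)\rangle_N^{-1}$ of Proposition \ref{ALinv}. Your first, third, and fourth steps are essentially a correct unpacking of what the paper does.

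The gap is in your second step, where the details do not hold up. The paper reduces (\ref{pairingchangeinr}) to the operator identity $\rho_2^*\rho_{1*}=U_p^*\sum_{\delta\in\Delta_r/\Delta_{r+1}}\langle\delta^{-1}\rangle^*$ on $H^1_{\dR,r+1}$, which it derives from Lemma \ref{MFtraceLem} (via Lemma \ref{LieFactorization} and Proposition \ref{intcompare}); you never cite Lemma \ref{MFtraceLem}, which is the actual technical engine here. In its place you propose to ``absorb $\ps^*\pr_*$ as a sum of diamond operators running over $\Delta_{r-1}/\Delta_r$'' --- but that sum is off by the crucial factor of $U_p^*$, which is precisely what makes the exponents of $U_p^*$ in $\langle\cdot,\cdot\rangle_{r-1}$ and $\langle\cdot,\cdot\rangle_r$ differ by one. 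You then invoke the relation $U_p\pr_*=\pr_*U_p^*$, attributing it to Proposition \ref{UlmerProp} and ``the moduli descriptions''; this identity is not in the paper and I do not believe it holds: $\pr_*$ commutes with $U_p$ and with $U_p^*$ separately (both are defined compatibly with change in level), so asserting $U_p\pr_*=\pr_*U_p^*$ would force $\pr_*U_p=\pr_*U_p^*$, which fails. You also write $U_p^*=\pi_{1*}\pi_2^*$ and $\pr=\pi\circ\pi_2$, but in the paper's conventions $U_p^*=\pi_{2*}\pi_1^*$ (that is, $\pi_{1*}\pi_2^*$ is $U_p$, not $U_p^*$) and $\pr$ factors as $\pi_2\circ\pi$. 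To repair the argument, replace the whole ``absorb diamond operators'' step with a direct appeal to Lemma \ref{MFtraceLem} and the translation to geometry supplied by Lemma \ref{LieFactorization} and Proposition \ref{intcompare}.
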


\begin{proof}[Proof of Proposition $\ref{dRDuality}$]
	That $\langle\cdot,\cdot \rangle_{\Lambda_{R_{\infty}'}}$ is a perfect duality pairing
	follows easily from Lemma \ref{LambdaDuality}, using Theorem \ref{main} and the formalism of 
	\S\ref{TowerFormalism}, once we check that the twisted pairings (\ref{TwistdRpairing})
	satisfy the hypothesis (\ref{pairinghyp}).  By the definition (\ref{TwistdRpairing})
	of $\langle\cdot, \cdot\rangle_r$, this amounts to the computation
	\begin{align*}
		({\rho_1}_* x, w_r {U_p^*}^r{\rho_1}_* y)_r = (x, \rho_1^* w_r {U_p^*}^r{\rho_1}_*y)_{r+1}
		&=(x, w_{r+1} {U_p^*}^r \rho_2^*{\rho_1}_*y)_{r+1} \\				
		&= \sum_{\delta\in \Delta_r/\Delta_{r+1}}(x, w_{r+1} {U_p^*}^{r+1} \langle \delta^{-1}\rangle^* y)_{r+1}
	\end{align*}
	where we have used Proposition \ref{ALinv} and the identity 
	$\rho_2^*{\rho_1}_* = U_p^*\sum_{\delta\in \Delta_r/\Delta_{r+1}} \langle\delta^{-1}\rangle^*$
	on $H^1_{\dR,r+1}$, which follows from\footnote{The reader will check that our forward reference
	to \S\ref{BTfamily} does not involve any circular reasoning.} 
	Lemma \ref{MFtraceLem} by using Lemma \ref{LieFactorization} and 
	Proposition \ref{intcompare}.  We obtain an isomorphism of short exact sequences of
	$\Lambda_{R_{\infty}'}$-modules as in (\ref{mainthmdualityisom}), which it remains to show is 
	$\Gamma\times\Gal(K_0'/K_0)$-equivariant
	for the specified actions.  For this, we compute that for $\gamma\in\Gal(K_{\infty}'/K_0)$,
	\begin{equation*}
		\langle \gamma x,\gamma y\rangle_{r} =
		(\gamma x, w_r {U_p^*}^r \gamma y)_r = 
		 (\gamma x,  \gamma w_r {U_p^*}^r \langle \chi(\gamma)^{-1} \rangle\langle a(\gamma)^{-1}\rangle_N  y)_r
		 =  \gamma \langle x,\langle \chi(\gamma)^{-1} \rangle\langle a(\gamma)^{-1}\rangle_N y\rangle_r,
	\end{equation*}
	where we have used Proposition \ref{ALinv} and the fact that the cup product is Galois-equivarant.
	It now follows easily from definitions that 
	\begin{equation*}
	\langle \gamma x,\gamma y\rangle_{\Lambda_{R_{\infty}'}} = 
	\langle \chi(\gamma)^{-1} \rangle \gamma \langle x, \langle a(\gamma)^{-1}\rangle_N y	
	\rangle_{\Lambda_{R_{\infty}'}},
	\end{equation*}
	and the claimed $\Gamma\times\Gal(K_0'/K_0)$-equivariance of  (\ref{mainthmdualityisom}) is equivalent to this.
\end{proof}

\begin{remark}
	For an open subgroup $H$ of $\scrG_K$ and any $H$-stable subfield 
	$F$ of $\c_{K}$, denote by $\Rep_F(H)$ the category of finite-dimensional
	$F$-vector spaces that are equipped with a continuous semilinear 
	action of $H$.  Recall \cite{DSen} that classical Sen theory provides a functor
	$\D_{\Sen}:\Rep_{\c_K}(\scrG_K)\rightarrow \Rep_{K_{\infty}}(\Gamma)$
	which is quasi-inverse to $(\cdot)\otimes_{K_{\infty}} \c_K$.  Furthermore,
	for any $W\in \Rep_{\c_K}(\scrG_K)$, there is a unique $K_{\infty}$-linear
	operator $\Theta_{D}$ on $D:=\D_{\Sen}(W)$ with the property that 
	$\gamma x = \exp(\log \chi(\gamma)\cdot \Theta_D)(x)$ for all $x\in D$
	and all $\gamma$ in a small enough open neighborhood of $1\in \Gamma$.
	
	We expect that for $W$ any specialization of $e^*H^1_{\et}$ along a continuous homomorphism 
	$\Lambda\rightarrow K_{\infty}$, there is a canonical isomorphism between $D:=\D_{\Sen}(W\otimes {\c_K})$
	and the corresponding specialization of $e^*H^1_{\dR}$,
	with the Sen operator $\Theta_D$
	induced by the Gauss-Manin connections on $H^1_{\dR,r}$.  In this way, we might think
	of $e^*H^1_{\dR}$ 
	as a $\Lambda$-adic avatar of ``$\D_{\Sen}(e^*H^1_{\et}\otimes_{\Lambda} \Lambda_{\O_{\c_K}})$."
	We hope to pursue these connections in future work.
\end{remark}

\subsection{Ordinary \texorpdfstring{$\Lambda$}{Lambda}-adic modular forms}\label{ordforms}

In this section, we discuss the relation between $e^*H^0(\omega)$ 
and ordinary $\Lambda_{R_{\infty}}$-adic cuspforms as defined by Ohta \cite[Definition 2.1.1]{OhtaEichler}.

We begin with some preliminaries on modular forms.
For a ring $A$, a congruence subgroup $\Gamma$, and a nonnegative integer $k$, we will write $S_k(\Gamma;A)$ for the space of weight $k$ cuspforms for $\Gamma$ over $A$; we put $S_k(\Gamma):=S_k(\Gamma;\Qbar)$.
If $\Gamma',$ $\Gamma$ are congruence subgroups and $\gamma\in \GL_2(\Q)$ satisfies
$\gamma^{-1}\Gamma'\gamma\subseteq \Gamma$, then there is a 
canonical injective ``pullback" map 
on modular forms
$\xymatrix@1{{\iota_{\gamma}:S_k(\Gamma)} \ar@{^{(}->}[r] & {S_k(\Gamma')}}$ 
given by $\iota_{\gamma}(f):=f\big|_{\gamma^{-1}}$. 
When $\Gamma'\subseteq \Gamma$, {\em unless specified to the contrary}, we will
always view $S_k(\Gamma)$ as a subspace of $S_k(\Gamma')$ via $\iota_{\id}$.
As $\gamma\Gamma'\gamma^{-1}$ is necessarily of finite index in $\Gamma$,
one also has a canonical ``trace" mapping
\begin{equation}
	\xymatrix{
		{\tr_{\gamma}:S_k(\Gamma')} \ar[r] & {S_k(\Gamma)}
		}
		\qquad\text{given by}\qquad
		\tr_{\gamma}(f):=\sum_{\delta\in \gamma^{-1}\Gamma'\gamma\backslash\Gamma} (f\big|_{\gamma})\big|_{\delta}
		\label{MFtrace}
\end{equation}
with the property that $\tr_{\gamma}\circ\iota_{\gamma}$ is multiplication by $[\Gamma: \gamma^{-1}\Gamma'\gamma]$
on $S_k(\Gamma)$.

We define
	\begin{equation*}
		S_2^{\infty}(\Gamma_r;R_r):=S_2(\Gamma_r;R_r)\qquad\text{and}\qquad
		S_2^{0}(\Gamma_r;R_r):=
		\{f\in S_2(\Gamma_r; \Qbar_p)\ :\ f\big|_{w_r} \in S_2^{\infty}(\Gamma_r;R_r) \},
	\end{equation*}
By definition, $S_2^{\star}(\Gamma_r;R_r)$ for $\star=0,\infty$ are $R_r$-submodules
of $S_2(\Gamma_r;K_r')$ that are carried isomorphically onto eachother by the automorphism
$w_r$ of $S_2(\Gamma_r;K_r')$.	Note that $S_2^{\star}(\Gamma_r;R_r)$ is precisely the
$R_r$-submodule consisting of cuspforms whose formal expansion at the cusp $\star$
has coefficients in $R_r$.	
As the Hecke algebra $\H_r$ stabilizes 
$S_2^{\infty}(\Gamma_r; R_r)$, it follows immediately from Proposition \ref{AtkinInterchange} that
$S_2^0(\Gamma_R;R_r)$ is stable under the action of $\H_r^*$ on $S_2(\Gamma_r; K_r)$.
Furthermore, $\Gal(K_r'/K_0)$ acts on $S_2(\Gamma_r;K_r')\simeq S_2(\Gamma_r;\Q_p)\otimes_{\Q_p} K_r'$
through the second tensor factor, and this action leaves stable the $R_r$-submodule $S_2^{\infty}(\Gamma_r; R_r)$.
The second equality of Proposition \ref{ALinv} then implies that $S_2^0(\Gamma_r;R_r)$
is also a $\Gal(K_r'/K_0)$-stable $R_r$-submodule of $S_2(\Gamma_r;K_r')$.  
A straightforward computation shows that 
the direct factor $\Gal(K_0'/K_0)$ of $\Gal(K_r'/K_0)$ acts trivially on $S_2^{\infty}(\Gamma_r;R_r)$
and through $\langle a\rangle_N^{-1}$ on $S_2^0(\Gamma_r;R_r)$.

We can interpret $S_2^{\star}(\Gamma_r;R_r)$ geometrically as follows.
As in Remark \ref{MWGood}, for $\star= \infty, 0$
let $I_r^{\star}$ be the irreducible component 
of $\o{\X}_r$ passing through the cusp $\star$,
and denote by $\X_{r}^{\star}$ the complement
in $\X_r$ of all irreducible components of $\o{\X}_r$ distinct from $I_{r}^{\star}$.
By construction, $\X_r$ and $\X_r^{\star}$ have the same generic fiber $X_r\times_{\Q_p} K_r$.
Using Proposition \ref{redXr}, it is not hard to show that the diamond operators
induce automorphisms of $\X_r^{\star}$, and one checks via Proposition \ref{AtkinInertiaCharp}
that the ``semilinear" action (\ref{gammamaps}) of $\gamma\in \Gamma$ on $\X_r$
carries $\X_r^{\star}$ to $(\X_r^{\star})_{\gamma}$ for all $\gamma$.

\begin{lemma}\label{Edixhoven}
	Formal expansion at the $R_r$-point $\infty$ $($respectively $R_r'$-point $0$$)$ of $\X_r^{\star}$ 
	induces an isomorphism of $R_r$-modules
		\begin{equation}
			H^0(\X_r^{\infty},\Omega^1_{\X_r^{\infty}/R_r}) \simeq S_2^{\infty}(\Gamma_r;R_r)
			\quad\text{respectively}\quad
			H^0(\X_r^{0},\Omega^1_{\X_r^{0}/R_r})(\langle a\rangle_N^{-1}) \simeq S_2^{0}(\Gamma_r;R_r)
		\end{equation}
	which is equivariant for the natural action of $\Gamma$ and $\H_r$ $($respectively $\H_r^*$$)$ on source
	and target and, in the case of the second isomorphism, intertwines the action of $\Gal(K_0'/K_0)$
	via $\langle a\rangle_N^{-1}$ on source with the natural action on the target.
\end{lemma}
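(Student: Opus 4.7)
The plan is to combine the Kodaira--Spencer isomorphism, the Tate curve at the cusp $\infty$, and the Katz--Mazur $q$-expansion principle for the moduli problem defining $\X_r$. On the smooth generic fiber, the integral Kodaira--Spencer map of \cite{KM} identifies $\Omega^1_{X_r \times_{\Q_p} K_r /K_r}(-\text{cusps}) \simeq \underline{\omega}_{\mathcal{E}/X_r}^{\otimes 2}$, where $\mathcal{E}$ is the universal generalized elliptic curve, giving the standard $\H_r$- and $\Gamma$-equivariant identification $H^0(X_r\times K_r, \Omega^1) \simeq S_2(\Gamma_r; K_r)$ that intertwines formal expansion at any cusp with $q$-expansion (up to the usual $dq/q$ factor). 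Since $\X_r^\star$ is open in the regular scheme $\X_r$ (hence regular) and the $\Q_p$-rational cusp $\infty$ (respectively $K_r'$-rational cusp $0$) extends uniquely to an $R_r$-section (respectively $R_r'$-section) of $\X_r^\infty$ (respectively $\X_r^0 \times_{R_r} R_r'$) that lies on $I_r^\infty$ (respectively $I_r^0$), formal expansion is a well-defined $R_r$-linear map on the integral cohomology.

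Second, I would upgrade this to an integral statement on $\X_r^\infty$. The integral Kodaira--Spencer isomorphism exists over the $R_r$-smooth locus of $\X_r^\infty$, which contains the generic fiber, the cusps of $\X_r^\infty$, and an open dense subscheme of $I_r^\infty$ (namely $I_r^\infty$ minus the finitely many supersingular points where it meets other components of $\o{\X}_r$). A section $\eta \in H^0(\X_r^\infty, \Omega^1_{\X_r^\infty/R_r})$ therefore corresponds, away from the cusps of $\X_r^\infty$, to a section of $\underline{\omega}^{\otimes 2}$ on this dense open, which in turn is a cuspform over $K_r$ whose pullback to the Tate curve formal neighborhood of $\infty$ lies in $qR_r[[q]]\cdot(dt/t)^{\otimes 2}$. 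The $q$-expansion principle of Katz--Mazur \cite[\S1.6]{KM} applied to the Katz--Mazur moduli problem $([\bal\,\Gamma_1(p^r)]^{\varepsilon\can};\,[\mu_N])$ over $R_r$ now exactly characterizes such sections as the $f\in S_2(\Gamma_r;K_r)$ with $R_r$-integral $q$-expansion at $\infty$, i.e.\ as $S_2^\infty(\Gamma_r;R_r)$. Hecke- and $\Gamma$-equivariance are then immediate from the compatibility of the Tate curve and its level structure with the Hecke correspondences of \S\ref{tower} and with the semilinear action $(\ref{gammamaps})$.

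Third, for the $\X_r^0$ case I would use the Atkin--Lehner isomorphism $w_r \colon \X_r^\infty \times_{R_r} R_r' \xrightarrow{\sim} \X_r^0 \times_{R_r} R_r'$ of Proposition \ref{ALinv}, which exchanges the two good components and sends $\infty\mapsto 0$. Pulling back via $w_r$ and applying the $\infty$-case over $R_r'$ yields an $R_r'$-linear, $\H_r^*$- and $\Gamma$-equivariant isomorphism $H^0(\X_r^0,\Omega^1)\otimes_{R_r} R_r' \simeq S_2^\infty(\Gamma_r;R_r)\otimes_{R_r} R_r' \simeq S_2(\Gamma_r;R_r')$, whose $\Gal(K_0'/K_0)$-descent recovers the $R_r$-module structure. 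The twist by $\langle a\rangle_N^{-1}$ is forced by the commutation relation $(\gamma^* w_r)\gamma = \gamma w_r\langle \chi(\gamma)\rangle^{-1}\langle a(\gamma)\rangle_N^{-1}$ of Proposition \ref{ALinv}: it precisely encodes the difference between the $\Gal(K_0'/K_0)$-action on $S_2^\infty$ (trivial) and on $S_2^0$ (through $\langle a\rangle_N^{-1}$), once the $p$-cyclotomic factor $\langle \chi\rangle^{-1}$ has been absorbed into the geometric inertia action on $\X_r^0$.

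The main obstacle is the integral extension step: namely, verifying that an $f\in S_2^\infty(\Gamma_r;R_r)$, which a priori only has $R_r$-integral $q$-expansion at $\infty$, produces a section of $\Omega^1$ extending across \emph{every} geometric point of $\X_r^\infty$ --- including all cusps of $\X_r^\infty$ (some of which may differ from $\infty$) and all smooth points of the $I_r^\infty$-component. This is the content of the Katz--Mazur $q$-expansion principle for the moduli problem at hand, which requires the irreducibility of $I_r^\infty$ in characteristic $p$ (recorded in Proposition \ref{redXr}) and the fact that the closed subscheme removed from $\X_r$ to form $\X_r^\infty$ contains no cusp specializing to $I_r^\infty$. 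Once these inputs from \cite{KM} are in place, integral extension is automatic; absent them, the naive strategy of extending ``one cusp at a time'' would fail, since the cusps of $X_r$ specialize to different components of $\o{\X}_r$ according to their widths.
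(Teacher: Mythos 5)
Your proposal correctly reconstructs the Edixhoven-style argument that the paper invokes by reference (the paper's own proof is simply a citation of \cite[Proposition 2.5]{EdixhovenComparison}), and the three-step structure --- Kodaira--Spencer on the generic fiber, the ``one prime divisor'' irreducibility argument to extend integrally, and Atkin--Lehner to transfer from $\X_r^{\infty}$ to $\X_r^{0}$ with the $\langle a\rangle_N^{-1}$ twist forced by Proposition \ref{ALinv} --- is the right one, and you correctly identify why the normal ``one cusp at a time'' strategy cannot work and where irreducibility of $I_r^\infty$ enters. Two small imprecisions are worth flagging. First, the Kodaira--Spencer isomorphism has the twist in the wrong direction: it reads $\underline{\omega}_{\mathcal{E}/X_r}^{\otimes 2}\simeq \Omega^1_{X_r/K_r}(\mathrm{cusps})$, equivalently $\Omega^1_{X_r/K_r}\simeq \underline{\omega}^{\otimes 2}(-\mathrm{cusps})$, so that $H^0(\Omega^1)$ really is the space of \emph{cusp}forms; your display $\Omega^1(-\mathrm{cusps})\simeq\underline{\omega}^{\otimes 2}$ has the sign reversed, though your subsequent reasoning treats the identification correctly. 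Second, what you are really using in the ``main obstacle'' paragraph is not the $q$-expansion principle of \cite[\S 1.6]{KM} as literally stated (that is a descent-of-scalars statement), but rather the divisor-theoretic argument which underlies it: since $\X_r^{\infty}$ is regular with irreducible special fiber, the divisor of the rational section $\eta$ of the line bundle $\Omega^1_{\X_r^\infty/R_r}$ (trivial on the generic fiber) is an integer multiple $n\cdot[\text{special fiber}]$, and $R_r$-integrality of the $q$-expansion of $f$ at the section $\infty$ forces $n\ge 0$ at a single closed point, hence everywhere. Naming the argument this way would be tighter, but your paragraph already contains all the necessary ingredients and is in substance correct.
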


\begin{proof}
	The proof is a straightforward
	adaptation of the proof of  \cite[Proposition 2.5]{EdixhovenComparison}.
\end{proof}	
	
Now $\X_r\rightarrow S_r$ is smooth outside the supersingular points, so there is a canonical
closed immersion $\iota_r^{\star}:\X_r^{\star}\hookrightarrow \X_r^{\sm}$.
Using Lemmas \ref{ConcreteDualizingDescription} and \ref{Edixhoven}, 
pullback of differentials along $\iota_r^{\star}$ gives a natural map 
\begin{equation}
	\xymatrix{
		{H^0(\X_r,\omega_{\X_r/T_r})\simeq H^0(\X_r^{\sm},\Omega^1_{\X_r^{\sm}/T_r})} \ar[r]^-{(\iota_r^{\star})^*} & 
		{H^0(\X_r^{\star},\Omega^1_{\X_r^{\star}/T_r}) \simeq S_2^{\star}(\Gamma_r;R_r)}
		}\label{OmegasComparison}
\end{equation}
which is an isomorphism after inverting $p$.
In particular,
the map (\ref{OmegasComparison}) is injective, $\Gamma$-equivariant, and compatible with 
the natural action of $\H_r$ (respectively $\H_r^*$) on source and target for $\star=\infty$ (respectively
$\star=0$), and in the case of $\star=0$ intertwines the action of $\Gal(K_0'/K_0)$ via the character
$\langle a\rangle_N^{-1}$ on source with the natural action on the target.

\begin{remark}
	The image of $(\ref{OmegasComparison})$ for $\star=\infty$ 
	is naturally identified 
	with the space of weight $2$ cuspforms for $\Gamma_r$ whose formal expansion
	at {\em every} cusp has $R_r$-coefficients. 
\end{remark}

Applying the idempotent $e$ (respectively $e^*$) to (\ref{OmegasComparison}) with $\star=\infty$
(respectively $\star=0$) gives an injective homomorphism
\begin{subequations}
\begin{equation}
	\xymatrix{
		{eH^0(\X_r,\omega_{\X_r/T_r})} \ar@{^{(}->}[r] & {eS_2^{\infty}(Np^r;R_r)}
		}\label{OmegasComparisonOrd}
\end{equation}
respectively
\begin{equation}
	\xymatrix{
		{e^*H^0(\X_r,\omega_{\X_r/T_r})(\langle a\rangle_N^{-1})} \ar@{^{(}->}[r] & {e^*S_2^0(Np^r;R_r)}
		}\label{OmegasComparisonOrd0}
\end{equation}
\end{subequations}
which is compatible with the canonical actions of $\Gamma$ and of $\H_r$ (respectively $\H_r^*$) on
source and target and in the case of (\ref{OmegasComparisonOrd}) is $\Gal(K_0'/K_0)$-equivariant.

\begin{proposition}\label{MFGeometryIsom}
	The mappings $(\ref{OmegasComparisonOrd})$ and $(\ref{OmegasComparisonOrd0})$ are isomorphisms.
\end{proposition}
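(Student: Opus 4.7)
My plan is to reduce both isomorphism claims to mod-$p$ surjectivity statements via Nakayama's lemma, and then verify those by comparing both sides to the Cartier-ordinary subspace $H^0(I_r^{\star}, \Omega^1_{I_r^{\star}}(\SS))^{V_{\ord}}$ on the good component, which has already been identified as the mod-$p$ avatar of $eH^0(\X_r, \omega_{\X_r/T_r})$ in Proposition \ref{charpord}. I focus on the first inclusion $(\ref{OmegasComparisonOrd})$; the second follows from an entirely parallel argument with $I_r^0$ in place of $I_r^{\infty}$, or alternatively by transporting via the Atkin-Lehner automorphism $w_r$ using Proposition \ref{ALinv} (the twist by $\langle a\rangle_N^{-1}$ on the source simply records the discrepancy between the trivial $\Gal(K_0'/K_0)$-action on $H^0(\omega)$ and the $\langle a\rangle_N^{-1}$-action on $S_2^0(\Gamma_r;R_r)$).

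Set $M := eH^0(\X_r, \omega_{\X_r/T_r})$ and $N := eS_2^{\infty}(\Gamma_r; R_r)$. Both are finite free $R_r$-modules, being direct summands (cut out by $e$) of finite free $R_r$-modules, and they have the same $R_r$-rank because the map $(\ref{OmegasComparisonOrd})$ becomes the identity on $eS_2(\Gamma_r; K_r)$ after inverting $p$, via Lemma \ref{Edixhoven}. Hence the cokernel is a finitely generated $p$-power torsion $R_r$-module, and by Nakayama's lemma it suffices to check that the induced map
\begin{equation*}
    \overline{\iota}: M \otimes_{R_r} \F_p \longrightarrow N \otimes_{R_r} \F_p
\end{equation*}
is surjective. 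The source is canonically identified with $H^0(I_r^{\infty}, \Omega^1_{I_r^{\infty}}(\SS))^{V_{\ord}}$ via pullback along $i_r^{\infty}$, by Lemma \ref{ReductionCompatibilities} and Proposition \ref{charpord}.

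For the target, since $\X_r^{\infty}$ is smooth over $T_r$ with special fiber $I_r^{\infty} \setminus \SS$, the canonical base change morphism yields a map
\begin{equation*}
    N \otimes_{R_r} \F_p \longrightarrow H^0(I_r^{\infty} \setminus \SS, \Omega^1) = \varinjlim_n H^0(I_r^{\infty}, \Omega^1(n\SS))
\end{equation*}
which is injective by the $q$-expansion principle at the $\infty$-cusp (which lies on $I_r^{\infty}$). Any $\eta \in N \otimes_{R_r} \F_p$ has finite pole order, say $n$, along $\SS$, so its image lies in $H^0(I_r^{\infty}, \Omega^1(n\SS))$. By Proposition \ref{UlmerProp}, the degeneracy maps $\pi_1, \pi_2: J_{(r+1,0,1)} \rightrightarrows I_{(r,0,1)}$ are $F$ and $\id$ respectively, so $U_p = \pi_{1*}\pi_2^*$ coincides with the Cartier operator $V = F_*$ on differentials on $I_r^{\infty}$ in characteristic $p$; the $e$-ordinarity of $\eta$ thus forces its image to be $V$-ordinary, and the pole-control Lemma \ref{sspoles} then places this image in $H^0(I_r^{\infty}, \Omega^1(\SS))^{V_{\ord}}$. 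Since the composition $M \otimes \F_p \hookrightarrow N \otimes \F_p \to H^0(I_r^{\infty}, \Omega^1(\SS))^{V_{\ord}}$ agrees with the isomorphism identifying the source with this same target, $\overline{\iota}$ is surjective.

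The main technical point deserving care is the compatibility of the $U_p$-action on $N$, inherited from the Hecke correspondence on $\X_r$, with the Cartier operator on the good component after reduction mod $p$: one must verify that the base change map above is equivariant when $U_p$ on the source is matched with $V$ on the target, which ultimately reduces to the characteristic-$p$ analysis of $\o{\pi}_1, \o{\pi}_2$ on the good component already carried out in the proof of Proposition \ref{charpord}.
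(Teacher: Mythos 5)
Your proof is correct and reaches the same destination as the paper's, but by a complementary route. Both reduce to the mod-$\pi_r$ situation (you via Nakayama after the rank-equality observation, the paper via a ``minimal $d$'' descent on $\pi_r^d\nu$), and both lean on the identification of $e H^0(\o{\X}_r,\omega_{\o{\X}_r})$ with $H^0(I_r^\infty,\Omega^1(\SS))^{V_{\ord}}$ from Proposition \ref{charpord}. The difference is in which half of the mod-$p$ isomorphism is exhibited directly: the paper's argument takes $\eta\in eH^0(\X_r,\omega)$ with $\iota(\eta)\equiv 0\bmod \pi_r$, shows $\o{\eta}_{(r,0,1)}=0$, and then uses the explicit formula (\ref{Upn1}) for $U_p^n$ to conclude $\o{\eta}=0$; this is \emph{injectivity} of $\overline{\iota}$, which gives surjectivity by the rank count. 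You instead prove \emph{surjectivity} of $\overline{\iota}$ by showing that both $M\otimes\F_p$ and $N\otimes\F_p$ embed into $H^0(I_r^\infty,\Omega^1(\SS))^{V_{\ord}}$ compatibly, using Lemma \ref{sspoles} to collapse pole orders. Your route is a little more conceptual and symmetric (it foregrounds the common comparison object) but imports one extra compatibility: that the reduction map $N\otimes\F_p\hookrightarrow \varinjlim_n H^0(I_r^\infty,\Omega^1(n\SS))$ intertwines $U_p$ with the Cartier operator $V$. You correctly flag this as the point deserving care; it does hold (one can see it directly on $q$-expansions at $\infty$, or by matching against the $\o\pi_1,\o\pi_2$ analysis on $\nor{\o{\X}}_r$), but the paper's choice to work with $\o\eta\in H^0(\o{\X}_r,\omega_{\o{\X}_r})$ and the formula (\ref{Upn1}) dodges the need to re-establish this on the open Igusa curve, since there the $U_p$--Cartier comparison was already nailed down in the course of proving Proposition \ref{charpord}.
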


\begin{proof}
	We treat the case of $(\ref{OmegasComparisonOrd})$; the proof that (\ref{OmegasComparisonOrd})
	is an isomorphism goes through {\em mutatis mutandis}.
	We must show that (\ref{OmegasComparisonOrd}) is surjective.
	To do this, let $\nu\in e_rS_2^{\infty}(Np^r;R_r)$ be arbitrary.  Since (\ref{OmegasComparisonOrd})
	is an isomorphism after inverting $\pi_r$, there exists a least nonnegative integer $d$
	such that $\pi_r^d\nu$ is in the image of (\ref{OmegasComparisonOrd}).  
	Assume that $d\ge 1$, and let $\eta\in eH^0(\X_r,\omega_{\X_r/R_r})$
	be any element mapping to $\pi_r^d \nu$.  For an irreducible component $I$ of $\o{\X}_r$,
	write $I^h$ for the complement of the super-singular points in $I$, and denote by
	$i_r^{\infty}:I_r^{\infty,h}\hookrightarrow \X_r^{\infty}$ the canonical immersion.
	We then have a commutative diagram
	\begin{equation}
	\begin{gathered}
		\xymatrix@C=40pt{
			{H^0(\o{\X}_r,\omega_{\o{\X}_r/R_r})} \ar[r]^-{(\ref{OmegasComparison})\bmod \pi_r}\ar@{^{(}->}[d] & 
			{H^0(\X_r^{\infty},\Omega^1_{\X_r^{\infty}/R_r})\tens_{R_r} \F_p}\ar[d]^-{(i_r^{\infty})^*} \\
			{\displaystyle\prod\limits_{I\in \Irr(\o{\X}_r)} H^0(I^h,\Omega^1_{I^h/\F_p})}\ar[r]_-{\proj_{\infty}} & 
			{H^0(I_r^{\infty,h},\Omega^1_{I_r^{\infty,h}/\F_p})}
		}\label{etamaps20}
	\end{gathered}	
	\end{equation}
	where the left vertical mapping follows from Definition \ref{OmegaReg} and Remark \ref{OmegaRegMero}
	({\em cf}. the proof of Proposition \ref{charpord}), while the bottom map is simply projection.
	Our assumption that $d\ge 1$ implies that the image of $\o{\eta}:=\eta\bmod \pi_r$ 
	under the composite of the right vertical and top horizontal maps in (\ref{etamaps20})
	is zero and hence, viewing
	$\o{\eta}=(\eta_{(a,b,u)})$ as a meromorphic differential on the normalization of $\o{\X}_r$,
	we have $\eta_{(r,0,1)}=\proj_{\infty}(\o{\eta})=0$.  
	Using the formula (\ref{Upn1}), we deduce that $U_p^n\o{\eta}=0$ for $n$ sufficiently large.
	But $U_p$ acts invertibly on $\eta$ (and hence on $\o{\eta}$) so we necessarily have that
	$\o{\eta}=0$ or what is the same thing that $\eta\bmod \pi_r=0$.  We conclude that 
	$\pi^{d-1}\nu$ is in the image of (\ref{OmegasComparisonOrd}), contradicting the minimality
	of $d$.  Thus $d=0$ and (\ref{OmegasComparisonOrd}) is surjective.
\end{proof}

For $s \le r$, Ohta shows \cite[2.3.4]{OhtaEichler} that the trace mapping 
$\tr_{\id}:S_k(\Gamma_r; K_r)\rightarrow S_k(\Gamma_s; K_s)\otimes_{K_s} K_r$
attached to the inclusion $\Gamma_r\subseteq \Gamma_s$
carries $S_k^0(\Gamma_r;R_r)$ into $S_k^0(\Gamma_s;R_s)\otimes_{R_s} R_r$, so that the projective limit
\begin{equation*} 
	\s_k^*(N,R_{\infty}) : = \varprojlim_{\tr_{\id}} S_k^0(\Gamma_r; R_r)\otimes_{R_r} R_{\infty}
\end{equation*}
makes sense.  It is canonically a $\Lambda_{R_{\infty}}$-module, equipped with an action of $\H^*$,
a semilinear action of $\Gamma$, and a natural action of $\Gal(K_0'/K_0)$.
On the other hand,
let $eS(N;\Lambda_{R_{\infty}})\subseteq \Lambda_{R_{\infty}}[\![q]\!]$ 
be the space of ordinary $\Lambda_{R_{\infty}}$-adic
cuspforms of level $N$, as defined in \cite[2.5.5]{OhtaEichler}.
This space is equipped with an action of $\H$ via the usual formulae on formal
$q$-expansions (see, for example \cite[\S1.2]{WilesLambda}), as well as an action
of $\Gamma$ via its $q$-coefficient-wise action on $\Lambda_{R_{\infty}}[\![q]\!]$.

\begin{theorem}[Ohta]\label{OhtaThm}	
	Then there is a canonical isomorphism of $\Lambda_{R_{\infty}}$-modules 
	\begin{equation}
		\xymatrix{
			{eS(N;\Lambda_{R_{\infty}})} \ar[r]^-{\simeq} & {e^*\s_2^*(N,R_{\infty})}
			}\label{LambdaForms}
	\end{equation}
	that intertwines the action of $T\in \H$ on the source with that of $T^*\in \H^*$
	on the target, for all $T\in \H$.  This isomorphism is $\Gal(K_{\infty}'/K_0)$-equivariant 
	for the natural action of $\Gal(K_{\infty}'/K_0)$ on ${e^*\s_2^*(N,R_{\infty})}$
	and the twisted action $\gamma\cdot \scrF := \langle \chi(\gamma)\rangle^{-1}\langle a(\gamma)\rangle_N^{-1} 
	\gamma\scrF$ on $eS(N;\Lambda_{R_{\infty}})$.
\end{theorem}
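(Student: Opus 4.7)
The plan is to construct the isomorphism as an inverse limit over $r$ of Atkin--Lehner twisted specialization maps, and then to verify it is an isomorphism via Hida's control theorem for ordinary $\Lambda$-adic cuspforms. At each finite level $r$, the Atkin--Lehner automorphism $w_r$ of $(X_r)_{K_r'}$ induces, after extension of scalars to $R_r'$, an isomorphism of $R_r'$-modules
\begin{equation*}
	\xymatrix{ {w_r: eS_2^{\infty}(\Gamma_r;R_r)\tens_{R_r} R_r'} \ar[r]^-{\simeq} & {e^*S_2^{0}(\Gamma_r;R_r)\tens_{R_r} R_r'} }
\end{equation*}
which by Proposition \ref{AtkinInterchange} intertwines the action of $T\in \H_r$ on the source with $T^*\in \H_r^*$ on the target. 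On the other hand, Hida's control theorem provides, for each $r$, a natural specialization map
\begin{equation*}
	\xymatrix{ {eS(N;\Lambda_{R_{\infty}})} \ar[r] & {eS_2^{\infty}(\Gamma_r;R_r)\tens_{R_r} R_{\infty}} }
\end{equation*}
obtained by reducing modulo the kernel of $\Lambda_{R_{\infty}}\twoheadrightarrow R_{\infty}[\Delta/\Delta_r]$ and evaluating at the ``weight $2$" arithmetic character; by Hida's theorem this target is $R_{\infty}$-free and the map is surjective (and an isomorphism on fibers for $r$ large). Composing with $w_r$ yields natural maps $eS(N;\Lambda_{R_{\infty}}) \rightarrow e^*S_2^{0}(\Gamma_r;R_r)\tens_{R_r} R_{\infty}$ for each $r$.

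The key step is to verify compatibility with the transition maps defining $e^*\s_2^*(N,R_{\infty}) = \varprojlim_r e^*S_2^{0}(\Gamma_r;R_r)\tens_{R_r} R_{\infty}$ via $\tr_{\id}$. This reduces to the identities $\pr w_{r+1} = w_r \ps$ and $\ps w_{r+1} = \langle p\rangle_N w_r \pr$ of Proposition \ref{ALinv} together with the explication of the trace map on cuspforms as pushforward along the degeneracy morphisms under the moduli-theoretic identification of Lemma \ref{Edixhoven}. Unwinding carefully, the Atkin--Lehner twisted specializations to adjacent levels $r$ and $r+1$ differ precisely by the classical $\tr_{\id}$ on $S_2^0(\Gamma_{r+1};R_{r+1})$, so the maps assemble to a $\Lambda_{R_{\infty}}$-linear morphism $eS(N;\Lambda_{R_{\infty}}) \rightarrow e^*\s_2^*(N,R_{\infty})$ intertwining $T\in\H$ on the source with $T^*\in\H^*$ on the target.

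To show this map is an isomorphism, I would verify that both sides are finite free $\Lambda_{R_{\infty}}$-modules of the same rank (the source by Hida's classical theorem, the target via an application of the tower formalism of Lemma \ref{Technical} to $\{e^*S_2^0(\Gamma_r;R_r)\}$, using Proposition \ref{MFGeometryIsom} to reduce to the freeness results of Corollary \ref{FreenessInCharp} for the tower of spaces of weight $2$ forms), and that the induced map on fibers at $\Delta/\Delta_r$ is an isomorphism for each $r$ large enough by the $q$-expansion principle together with the classical control theorem in weight $2$. A Nakayama-type descent as in Lemma \ref{Technical} then promotes this to an isomorphism of $\Lambda_{R_{\infty}}$-modules. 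Finally, the Galois equivariance statement follows directly from the second identity of Proposition \ref{ALinv}, namely $(\gamma^*w_r)\gamma = \gamma w_r \langle\chi(\gamma)\rangle^{-1}\langle a(\gamma)\rangle_N^{-1}$: after transferring to $q$-expansions and taking the inverse limit over $r$, this produces precisely the twist $\gamma\cdot\scrF := \langle\chi(\gamma)\rangle^{-1}\langle a(\gamma)\rangle_N^{-1}\gamma\scrF$ on the source.

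The main obstacle will be the careful bookkeeping required to reconcile, level-by-level, the interplay among the Atkin--Lehner twists, the moduli-theoretic degeneracy maps $\pr$ and $\ps$ of (\ref{XrDegen}), and the trace operator $\tr_{\id}$ on cuspforms, keeping track of spurious twists by diamond operators and by $\langle p\rangle_N$ that arise in the transition from level $r+1$ down to level $r$. This amounts to recapitulating the computations underlying Ohta's original argument in \cite{OhtaEichler}, to which we ultimately defer for the detailed verification.
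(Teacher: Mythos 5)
The paper's own ``proof'' of this theorem is simply a citation: it refers to Theorem 2.3.6, 2.5.1 and Proposition 3.5.6 of \cite{OhtaEichler} for the construction of the map, its bijectivity, the Hecke compatibility, and the Galois equivariance, respectively, without reproducing Ohta's computations. Your proposal sketches what those computations look like — finite-level Atkin--Lehner twists, compatibility of the twisted specializations with the $\tr_{\id}$-transition maps via Proposition~\ref{ALinv}, the freeness of both sides plus a Nakayama descent to upgrade a surjection on fibers to an isomorphism, and the second identity of Proposition~\ref{ALinv} for the Galois twist — and then, as you say explicitly, defers to Ohta for the detailed bookkeeping. So the approaches coincide: both ultimately rest on \cite{OhtaEichler}, and you have accurately identified the structure of Ohta's argument. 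One minor remark: the extension of scalars to $R_r'$ in your first displayed map is unnecessary; by the very definition $S_2^{0}(\Gamma_r;R_r):=\{f : f|_{w_r}\in S_2^{\infty}(\Gamma_r;R_r)\}$, the slash-by-$w_r$ operator is already an $R_r$-linear bijection $S_2^{0}(\Gamma_r;R_r)\xrightarrow{\simeq} S_2^{\infty}(\Gamma_r;R_r)$, so no subsequent descent from $R_{\infty}'$ to $R_{\infty}$ is required; this is precisely why the final isomorphism can be stated over $\Lambda_{R_{\infty}}$ rather than $\Lambda_{R_{\infty}'}$.
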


\begin{proof}
	For the definition of the canonical map (\ref{LambdaForms}), as well as the proof that it is an isomorphism,
	see Theorem 2.3.6 and its proof in \cite{OhtaEichler}.  With the conventions of \cite{OhtaEichler},
	the claimed compatibility of (\ref{LambdaForms}) with Hecke operators is a consequence of \cite[2.5.1]{OhtaEichler},
	while the $\Gal(K_{\infty}'/K_0)$-equivariance of (\ref{LambdaForms}) follows from \cite[Proposition 3.5.6]{OhtaEichler}.
\end{proof}

\begin{corollary}\label{LambdaFormsRelation}
	There is a canonical isomorphism of $\Lambda_{R_{\infty}}$-modules 
	\begin{equation}
		 eS(N;\Lambda_{R_{\infty}})(\langle \chi\rangle^{-1})\simeq		e^* H^0(\omega) 
	\end{equation}
	that intertwines the action of $T\in \H$ on the source with $T^*\in \H^*$
	on the target and is $\Gamma$-equivariant for the canonical action of $\Gamma$
	on $e^*H^0(\omega)$ and the twisted action $\gamma\cdot \scrF:=\langle \chi(\gamma)\rangle^{-1} \gamma\scrF$
	on $eS(N;\Lambda_{R_{\infty}})$.
\end{corollary}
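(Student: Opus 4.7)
The strategy is to build the desired isomorphism as the composite of two isomorphisms: one coming from Ohta's Theorem \ref{OhtaThm}, and one obtained by passing to the projective limit of the finite-level geometric comparison provided by Proposition \ref{MFGeometryIsom}. Concretely, I plan to first upgrade the finite-level isomorphisms $e^*H^0(\omega_r)(\langle a\rangle_N^{-1})\simeq e^*S_2^0(\Gamma_r;R_r)$ of Proposition \ref{MFGeometryIsom} to a limit isomorphism $e^*H^0(\omega)(\langle a\rangle_N^{-1})\simeq e^*\s_2^*(N,R_\infty)$, and then compose with the inverse of Ohta's isomorphism $eS(N;\Lambda_{R_\infty})\simeq e^*\s_2^*(N,R_\infty)$.

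The first half of this plan requires verifying that the trace maps $\pr_*$ appearing in Definition \ref{limitmods} of $H^0(\omega)$ correspond under the finite-level isomorphism to Ohta's trace maps $\tr_{\id}:S_2^0(\Gamma_r;R_r)\to S_2^0(\Gamma_{r-1};R_{r-1})$ used to define $\s_2^*(N,R_\infty)$. Both are the trace attached to the same finite flat morphism $\rho:\X_r\to \X_{r-1}\times_{T_{r-1}} T_r$ whose generic fiber is the standard degeneracy $X_r\to X_{r-1}\times_{K_{r-1}} K_r$ induced by the inclusion $\Gamma_r\subset \Gamma_{r-1}$; and since the finite-level iso of Proposition \ref{MFGeometryIsom} is realized by pulling differentials back to $\X_r^0$ and then forming the $q$-expansion at the cusp $0$, the desired compatibility reduces to the familiar fact that the Grothendieck trace on differentials becomes the naive sum over the fiber on formal expansions at any smooth $R_r$-rational section. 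After this, tensoring with $R_\infty$ and taking inverse limits produces the $\Lambda_{R_\infty}$-linear, $\H^*$-equivariant, and $\Gal(K_\infty'/K_0)$-equivariant isomorphism $e^*H^0(\omega)(\langle a\rangle_N^{-1})\simeq e^*\s_2^*(N,R_\infty)$, with $\Gal(K_0'/K_0)$ acting via $\langle a\rangle_N^{-1}\cdot(\mathrm{natural})$ on the source and naturally on the target.

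Composing with Ohta's Theorem \ref{OhtaThm} yields a canonical $\Lambda_{R_\infty}$-linear iso $\Psi:eS(N;\Lambda_{R_\infty})\to e^*H^0(\omega)$ intertwining $T\in \H$ with $T^*\in \H^*$, because both of its constituents do. It remains only to track the twists. For $\gamma\in\Gamma$ one has $a(\gamma)=1$, and Ohta's twist contributes $\langle\chi(\gamma)\rangle^{-1}$ while the limit iso contributes nothing, so $\Psi(\gamma\scrF)=\langle\chi(\gamma)\rangle\,\gamma\Psi(\scrF)$, which is precisely the statement that $\Psi$ carries the twisted action $\gamma\cdot_{\mathrm{tw}}\scrF:=\langle\chi(\gamma)\rangle^{-1}\gamma\scrF$ on the source to the natural action on the target; for $d\in \Gal(K_0'/K_0)$, both iso's contribute the factor $\langle a(d)\rangle_N^{-1}$, and these cancel by pulling $\langle a(d)\rangle_N^{-1}\in \H^*$ across $\Psi$ using $\H^*$-linearity. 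This is exactly the isomorphism $eS(N;\Lambda_{R_\infty})(\langle\chi\rangle^{-1})\simeq e^*H^0(\omega)$ claimed in the corollary. The one genuinely technical point is the compatibility of trace maps in Step 1, but the $q$-expansion viewpoint reduces it to a direct calculation, so I expect no serious obstacle.
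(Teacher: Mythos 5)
Your argument is correct and is exactly the paper's approach: the paper's proof of this corollary reads in full "This follows immediately from Proposition \ref{MFGeometryIsom} and Theorem \ref{OhtaThm}," and you have simply filled in the routine steps (passage to the limit via the identification of $\pr_*$ with $\tr_{\id}$ on $q$-expansions, and the bookkeeping of the $\langle\chi\rangle$ and $\langle a\rangle_N$ twists) that the authors left implicit. No discrepancy with the paper.
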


\begin{proof}
	This follows immediately from Proposition \ref{MFGeometryIsom} and Theorem \ref{OhtaThm}.
\end{proof}

\subsection{\texorpdfstring{$\Lambda$}{Lambda}-adic Barsotti-Tate groups}\label{BTfamily}

In order to define a crystalline analogue of Hida's ordinary $\Lambda$-adic \'etale cohomology,
we will apply the theory of \S\ref{PhiGammaCrystals} to a certain ``tower" 
$\{\G_r/R_r\}_{r\ge 1}$ of $p$-divisible groups (a $\Lambda$-adic Barsotti Tate group
in the sense of Hida \cite{HidaNotes}, \cite{HidaNotes2}) whose construction involves 
artfully cutting out certain $p$-divisible 
subgroups of $J_r[p^{\infty}]$ over $\Q$ and the ``good reduction'' theorems of Langlands-Carayol-Saito. 
The construction of $\{\G_r/R_r\}_{r\ge 1}$ is certainly well-known (e.g. \cite[\S1]{MW-Hida}, 
\cite[Chapter 3, \S1]{MW-Iwasawa}, \cite[Definition 1.2]{Tilouine}  
and \cite[\S 3.2]{OhtaEichler}), but as we shall need substantially finer information about the $\G_r$
than is available in the literature, we devote this section to recalling their construction and properties.

For nonnegative integers $i\le r$, write
$\Gamma_r^i:=\Gamma_1(Np^i)\cap \Gamma_0(p^r)$ for the intersection $($taken inside $\SL_2(\Z)$$)$,
so $\Gamma_r=\Gamma_r^r$.
We will need the following fact 
({\em cf.} \cite[pg. 339]{Tilouine}, \cite[2.3.3]{OhtaEichler})
concerning the trace mapping $(\ref{MFtrace})$ attached to the canonical inclusion 
$\Gamma_{r}\subseteq \Gamma_i$ for $r\ge i$; for notational clarity, 
we will write $\tr_{r,i}:S_k(\Gamma_r)\rightarrow S_k(\Gamma_i)$ for this map.

\begin{lemma}\label{MFtraceLem}
	Fix integers $i\le r$ and let $\tr_{r,i}:S_k(\Gamma_r)\rightarrow S_k(\Gamma_i)$ 
	be the trace mapping $(\ref{MFtrace})$ attached to the inclusion 
	$\Gamma_r\subseteq \Gamma_i$.  For $\alpha:=\left(\begin{smallmatrix} 1 & 0 \\ 0 & p\end{smallmatrix}\right)$, 
	we 	have an equality
	of $\o{\Q}$-endomorphisms of $S_k(\Gamma_{r})$
	\begin{equation}
		\iota_{\alpha^{r-i}}\circ \tr_{r,i} = (U_p^*)^{r-i} 
		\sum_{\delta\in \Delta_i/\Delta_{r}} \langle \delta \rangle.
		\label{DualityIdentity}
	\end{equation}
\end{lemma}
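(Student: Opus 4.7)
The plan is to induct on $n := r - i$; the base case $n = 0$ is trivial, both sides being the identity on $S_k(\Gamma_r)$.

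For the inductive step, I would factor $\tr_{r, i} = \tr_{r-1, i}\circ\tr_{r, r-1}$ and $\iota_{\alpha^n} = \iota_\alpha\circ\iota_{\alpha^{n-1}}$---the latter following from the evident relation $\iota_\gamma\iota_{\gamma'} = \iota_{\gamma\gamma'}$, itself immediate from $\iota_\gamma(f) = f|_{\gamma^{-1}}$---and apply the inductive hypothesis at level $r-1$ to the composition $\iota_{\alpha^{n-1}}\circ\tr_{r-1,i}$. Combining this with the identity of diamond sums
\begin{equation*}
\Bigl(\sum_{\Delta_i/\Delta_{r-1}}\langle\cdot\rangle\Bigr) \Bigl(\sum_{\Delta_{r-1}/\Delta_r}\langle\cdot\rangle\Bigr) = \sum_{\Delta_i/\Delta_r}\langle\cdot\rangle
\end{equation*}
coming from the short exact sequence $1\to\Delta_{r-1}/\Delta_r\to\Delta_i/\Delta_r\to\Delta_i/\Delta_{r-1}\to 1$, the inductive step reduces to two claims: (a) the map $\iota_\alpha \colon S_k(\Gamma_{r-1})\to S_k(\Gamma_r)$ commutes with $U_p^*$ and with diamond operators; and (b) the base identity
\begin{equation*}
\iota_\alpha\circ\tr_{r,r-1} = U_p^*\sum_{\epsilon\in\Delta_{r-1}/\Delta_r}\langle\epsilon\rangle
\end{equation*}
as endomorphisms of $S_k(\Gamma_r)$.

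For claim (a), I would identify $\iota_\alpha$ (composed with the canonical inclusion $S_k(\Gamma_r^{r-1})\hookrightarrow S_k(\Gamma_r)$) with the pullback $\pi_2^*$ along the degeneracy map $\pi_2\colon Y_{r-1}\to X_{r-1}$ of $(\ref{Upcorr})$, and then invoke the standard Hecke-equivariance of $\pi_2^*$ for $U_p^*$; commutation with diamonds is automatic from centrality in the Hecke algebra.

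Claim (b) is the combinatorial heart of the proof. Using the coset decomposition $U_p^* f = \sum_{j=0}^{p-1} f|_{\alpha\tilde\beta_j}$ with $\tilde\beta_j = \bigl(\begin{smallmatrix}1 & j\\0 & 1\end{smallmatrix}\bigr)$ representing $\alpha^{-1}\Gamma_{r+1}^r\alpha\backslash\Gamma_r$, and expanding $\iota_\alpha\circ\tr_{r,r-1}(f) = \sum_{\delta\in\Gamma_r\backslash\Gamma_{r-1}}f|_{\delta\alpha^{-1}}$, the identity becomes the combinatorial statement that the multisets of $\Gamma_r$-cosets
\begin{equation*}
\{\Gamma_r\cdot\delta\alpha^{-1}\}_{\delta\in\Gamma_r\backslash\Gamma_{r-1}} \quad\text{and}\quad \{\Gamma_r\cdot\sigma_\epsilon\alpha\tilde\beta_j\}_{(\epsilon,j)}
\end{equation*}
in $\GL_2^+(\Q)$ coincide, where $\sigma_\epsilon\in\Gamma_0(Np^r)$ lifts $\epsilon\in\Delta_{r-1}/\Delta_r$. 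Both multisets have $p^2$ elements, and one produces the required bijection via a Bruhat-style normal-form decomposition of $\Gamma_{r-1}/\Gamma_r$ as the product of a lift of $\Delta_{r-1}/\Delta_r$ with a transversal of $p$ Iwahori-type representatives, together with the observation that right-multiplication by $\alpha^{-1}$ intertwines the two parametrizations of cosets. The hard part will be producing this canonical coset bijection: it requires a careful choice of lifts and careful bookkeeping of the determinant shift $\alpha^{-1}$ versus $\alpha\tilde\beta_j$ in $\GL_2^+(\Q)$, and is the only non-formal step of the argument; once in place, the induction delivers the full identity.
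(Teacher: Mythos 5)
Your inductive approach is workable in outline, but it does not actually simplify the argument relative to the paper's, and the write-up has a couple of concrete errors that would need repair.

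First, the induction buys nothing here. Reducing to the $n := r-i = 1$ case leaves you with a coset computation of exactly the same character as the general one; the paper's proof handles all $n$ at once with a single coset decomposition of $\Gamma_i$ into $\Gamma_r\sigma_\delta\varrho_j$ (equation (\ref{CosetDecomp})) and the matrix identity
$p^{r-i}\varrho_j\alpha^{-(r-i)} = \tau_r\left(\begin{smallmatrix}1 & -j \\ 0 & p^{r-i}\end{smallmatrix}\right)\tau_r^{-1}$
of (\ref{EasyMatCalc}), both of which are exactly as easy for general $r-i$ as for $r-i = 1$. Moreover, your step (a) --- the $U_p^*$- and diamond-equivariance of $\iota_\alpha$ --- is a genuine additional lemma that the direct proof never needs, so the induction is strictly more work.

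Second, two errors. Your coset decomposition $U_p^* f = \sum_j f|_{\alpha\tilde\beta_j}$ with $\tilde\beta_j = \left(\begin{smallmatrix}1 & j\\0 & 1\end{smallmatrix}\right)$ is the decomposition for $U_p$, not $U_p^*$; with the paper's conventions (cf.\ the proof of Lemma \ref{UpContract}), $U_p$ is the double coset of $\alpha$, so $U_p^* = w_rU_pw_r^{-1}$ is the double coset of $\left(\begin{smallmatrix}p & 0\\0 & 1\end{smallmatrix}\right)$, with left cosets $\Gamma_r\left(\begin{smallmatrix}p & 0\\jNp^r & 1\end{smallmatrix}\right)$. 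Similarly, $\iota_\alpha$ should be identified with $\ps^*$, not $\pi_2^*$: by Lemma \ref{Btower}, $\iota_\alpha$ is $\Cot(\Alb(\ps))$, and $\ps = \pi\circ\pi_1$ in the paper's conventions, so the degeneracy map involved is $\pi_1$. The cleanest route to (a) is not geometric at all: $\iota_\alpha = w_r^{-1}\circ\iota_{\id}\circ w_{r-1}$ (an elementary matrix computation with $\tau_{r-1}\tau_r^{-1} = \alpha^{-1}$), and $\iota_{\id}$ visibly commutes with $U_p$ and the diamonds, so $\iota_\alpha$ commutes with their $w$-conjugates.

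Third, the heart of the matter --- the coset bijection underlying your claim (b) --- is acknowledged as ``the only non-formal step'' but never produced. That bijection is precisely the content of the paper's (\ref{CosetDecomp}) together with (\ref{EasyMatCalc}): the coset decomposition $\Gamma_{r-1} = \coprod_{\delta\in\Delta_{r-1}/\Delta_r}\coprod_{j=0}^{p-1}\Gamma_r\sigma_\delta\varrho_j$ parametrizes $\Gamma_r\backslash\Gamma_{r-1}$ exactly by pairs $(\delta, j)$, and the matrix identity converts $\varrho_j\alpha^{-1}$ (up to a scalar) into a $\tau_r$-conjugate of a $U_p$-coset representative, i.e.\ a $U_p^*$-coset representative. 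Without writing this down your proof is missing its central step, and once you write it down you will have essentially reproduced the paper's argument for the special case $i = r-1$, plus an unnecessary induction.
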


\begin{proof}
	We have
	index $p^{r-i}$ inclusions of groups $\Gamma_{r} \subseteq \Gamma_{r}^i \subseteq \Gamma_i$
	with $\Gamma_{r}$ normal in $\Gamma_{r}^i$, as it is the kernel of the canonical
	surjection $\Gamma_{r}^i\twoheadrightarrow \Delta_i/\Delta_{r}$.    
	For each $\delta\in \Delta_i/\Delta_{r}$, we
	fix a choice of $\sigma_{\delta}\in \Gamma_{r}^i$ mapping to $\delta$
	and calculate that	
	\begin{equation}
		\Gamma_i = \coprod_{\delta\in \Delta_i/\Delta_{r}} \coprod_{j=0}^{p^{r-i}-1}   		
		\Gamma_{r}\sigma_{\delta} \varrho_j
		\qquad\text{where}\qquad
		\varrho_j:=\begin{pmatrix} 1 & 0 \\ jNp^i & 1\end{pmatrix}.\label{CosetDecomp}
	\end{equation}  
	On the other hand, for each $0\le j < p^{r-i}$ one has the equality of matrices in $\GL_2(\Q)$
	\begin{equation}
		p^{r-i}\varrho_j \alpha^{-(r-i)} = \tau_{r} \begin{pmatrix} 1 & -j \\ 0 & p^{r-i} \end{pmatrix} \tau_{r}^{-1}
		\qquad\text{for}\qquad \tau_{r} := \begin{pmatrix} 0 & -1 \\ Np^{r} & 0 \end{pmatrix}. 
		\label{EasyMatCalc}
	\end{equation}
	The claimed equality (\ref{DualityIdentity}) follows easily from (\ref{CosetDecomp}) and (\ref{EasyMatCalc}), using
	the equalities of operators $(\cdot)\big|_{\sigma_{\delta}}=\langle \delta\rangle $ 
	and $U_p^* = w_{r} U_p w_{r}^{-1}$ on $S_k(\Gamma_{r})$ (see Proposition \ref{AtkinInterchange}).
\end{proof}

Perhaps the most essential ``classical" fact for our purposes is that 
the Hecke operator $U_p$ acting on spaces of  modular forms ``contracts" the $p$-level, as is made precise by the following:  

\begin{lemma}\label{UpContract}
	If $f\in S_k(\Gamma_r^i)$ then $U_p^{d}f$ is in the image of the canonical map
	$\iota_{\id}:S_k(\Gamma_{r-d}^i)\hookrightarrow S_k(\Gamma_r^i)$ for each integer $d\le r-i$.  In particular,
	$U_p^{r-i}f$ is in the image of $S_k(\Gamma_i)\hookrightarrow S_k(\Gamma_r^i)$.	
\end{lemma}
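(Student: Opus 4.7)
The plan is to exploit the explicit double-coset formula for $U_p$ on modular forms and reduce the statement to a matrix-shuffling calculation. Recall that in weight $k$ one has
\begin{equation*}
  U_p f \;=\; p^{k/2-1}\sum_{j=0}^{p-1} f\big|_k \begin{pmatrix} 1 & j \\ 0 & p\end{pmatrix},
\end{equation*}
and iterating this identity $d$ times and collapsing the resulting double sum yields
\begin{equation*}
  U_p^d f \;=\; p^{d(k/2-1)}\sum_{j=0}^{p^d-1} f\big|_k \beta_j,
  \qquad \beta_j := \begin{pmatrix} 1 & j \\ 0 & p^d\end{pmatrix}.
\end{equation*}
Since the containment $\iota_{\id}:S_k(\Gamma_{r-d}^i)\hookrightarrow S_k(\Gamma_r^i)$ is the inclusion of the $\Gamma_{r-d}^i$-invariants in $S_k(\Gamma_r^i)$ (note $d\le r-i$ ensures $\Gamma_{r-d}^i$ is an honest congruence subgroup containing $\Gamma_r^i$), it suffices to show that $U_p^d f$ is fixed under the slash action of every $\gamma\in \Gamma_{r-d}^i$.

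The key step is a matrix shuffle: given $\gamma=\left(\begin{smallmatrix} a & b \\ c & d'\end{smallmatrix}\right)\in \Gamma_{r-d}^i$, I will exhibit $\delta_j\in \Gamma_r^i$ and a permutation $\sigma$ of $\{0,1,\dots,p^d-1\}$ such that
\begin{equation*}
  \beta_j\gamma \;=\; \delta_j\,\beta_{\sigma(j)}\qquad\text{for all }j.
\end{equation*}
Granting this identity, one computes
\begin{equation*}
  \bigl(U_p^d f\bigr)\big|_k \gamma \;=\; p^{d(k/2-1)}\sum_j f\big|_k\beta_j\gamma
  \;=\; p^{d(k/2-1)}\sum_j f\big|_k \delta_j\beta_{\sigma(j)}
  \;=\; p^{d(k/2-1)}\sum_j f\big|_k\beta_{\sigma(j)} \;=\; U_p^d f,
\end{equation*}
using $\Gamma_r^i$-invariance of $f$ and that $\sigma$ is a bijection.

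To produce $\sigma$ and $\delta_j$, expand $\beta_j\gamma = \left(\begin{smallmatrix} a+jc & b+jd' \\ p^d c & p^d d'\end{smallmatrix}\right)$. Because $\gamma\in \Gamma_{r-d}^i$ one has $c\equiv 0\bmod Np^{r-d}$, whence $p^dc\equiv 0 \bmod Np^r$; this is exactly what is needed for the lower-left entry of $\delta_j$. For the upper-right entry, solve for $\sigma(j)\in\{0,\dots,p^d-1\}$ from the congruence $(a+jc)\,\sigma(j)\equiv b+jd'\bmod p^d$. This is uniquely solvable because $a+jc$ is a unit mod $p^d$: when $i\ge 1$ this is immediate from $a\equiv 1\bmod p^i$ and $c\equiv 0\bmod p^{r-d}\subseteq p^i$; when $i=0$ one uses instead that $\det\gamma=1$ forces $\gcd(a,c)=1$, and $c\equiv 0\bmod p^{r-d}$ with $d\le r$ gives $\gcd(a+jc,p)=1$. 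With this choice of $\sigma(j)$, the matrix $\delta_j:=\beta_j\gamma\beta_{\sigma(j)}^{-1}\cdot\mathrm{diag}(1,1)$ has integer entries of determinant $1$, upper-left entry $a+jc\equiv 1\bmod Np^i$, lower-right entry $d'-c\sigma(j)\equiv 1\bmod Np^i$, and lower-left entry $p^dc\equiv 0\bmod Np^r$, so $\delta_j\in\Gamma_r^i$ as required. That $\sigma$ is a bijection follows since the map $j\mapsto (a+jc)^{-1}(b+jd')\bmod p^d$ is visibly injective on $\{0,\dots,p^d-1\}$ (its inverse is given by the analogous construction for $\gamma^{-1}$).

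The main obstacle is the bookkeeping of congruences for $\delta_j$ in the boundary case $i=0$, where the invertibility of $a+jc$ mod $p^d$ is not automatic; once this is dispatched using $\det\gamma=1$, the remaining verifications are mechanical and everything drops out from the single matrix identity displayed above.
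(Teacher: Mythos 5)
Your argument is correct on the range $i\ge 1$ (and for $i=0$ with $d<r$), and it takes a genuinely different route from the paper. Both proofs ultimately rest on the coset identity $\Gamma_r^i\alpha^d\Gamma_{r-d}^i=\coprod_{j=0}^{p^d-1}\Gamma_r^i\beta_j$, but the paper establishes it abstractly: it shows the two intersections $H:=\Gamma_{r-d}^i\cap\alpha^{-d}\Gamma_r^i\alpha^d$ and $H':=\Gamma_{r-d}^i\cap\alpha^{-d}\Gamma_{r-d}^i\alpha^d$ coincide, translates this into a comparison of coset counts via the bijection $(\Gamma'\cap g^{-1}\Gamma g)\backslash\Gamma'\simeq\Gamma\backslash\Gamma g\Gamma'$, imports the decomposition of $\Gamma_{r-d}^i\alpha^d\Gamma_{r-d}^i$ from Shimura, and finishes with the $q$-expansion principle. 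You instead exhibit the permutation $\sigma$ explicitly and read off the slash-invariance of $U_p^df$ directly. This is more elementary, swaps the abstract counting for an honest congruence check, and makes the mechanism behind the level contraction completely visible; your injectivity argument (cross-multiply and use $\det\gamma=1$) is also clean.

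The gap is in the boundary case $i=0$, $d=r$, and it is genuine. When $r-d=0$ the entry $c$ is only constrained mod $N$, and the step ``$\gcd(a,c)=1$ and $c\equiv 0\bmod p^{r-d}$ with $d\le r$ gives $\gcd(a+jc,p)=1$'' is simply false: take $a=2$, $c=3$, $p=5$, $j=1$. No repair is possible here, because the lemma itself fails in this case. If $g\in S_k(\Gamma_1(N))$ is a newform of level $N$ and one writes $U_p^m g=c_m\,g+d_m\,g|V_p$ with $c_0=1$, $d_0=0$, the recursion $c_{m+1}=a_p(g)c_m+d_m$, $d_{m+1}=-p^{k-1}c_m$ shows $d_r=-p^{k-1}c_{r-1}\ne 0$ generically, so $U_p^r g$ carries a nonzero $g|V_p$-component and is not in $S_k(\Gamma_1(N))$. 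Consistently, the relevant coset count is $p^{r-1}(p+1)$, not $p^r$, so $\coprod_j\Gamma_r^0\beta_j$ is a proper subset of $\Gamma_r^0\alpha^r\Gamma_1(N)$ and there is no permutation $\sigma$ to be found. (The paper's own proof has the same blind spot: the decomposition of $\Gamma_{r-d}^i\alpha^d\Gamma_{r-d}^i$ quoted from Shimura requires $p$ to divide the level of $\Gamma_{r-d}^i$, i.e.\ $r-d\ge 1$ or $i\ge 1$, which excludes exactly $i=0$, $d=r$.) Nothing downstream is affected because the lemma is applied only after projecting with $h'$, which confines the indices to $\bigoplus_{i=1}^r S_k(\Gamma_r^i)$; see the use in Corollary \ref{UpProjection}. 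So your argument is a valid proof of the lemma precisely on the range where the lemma is valid: you should delete the flawed $i=0$, $d=r$ patch and note the implicit hypothesis $d<r$ or $i\ge 1$.
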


Certainly Lemma \ref{UpContract} is well-known 
(e.g. \cite{Tilouine}, \cite{HidaNotes}, \cite{Ohta1});
because of its importance in our subsequent applications, we sketch a proof (following
the proof of \cite[Lemma 1.2.10]{Ohta1}; see also \cite[\S 2]{HidaNotes}).
We note that $\Gamma_r\subseteq \Gamma_r^i$ for all $i\le r$, and the resulting inclusion
$S_k(\Gamma_r^i)\hookrightarrow S_k(\Gamma_r)$ has image 
consisting of forms on $\Gamma_r$ which are eigenvectors for the diamond operators
and whose associated character has conductor with $p$-part dividing $p^{i}$.

\begin{proof}[Proof of Lemma $\ref{UpContract}$]
	Fix $d$ with $0\le d\le r-i$ and let $\alpha:=\left(\begin{smallmatrix} 1 & 0 \\ 0 & p\end{smallmatrix}\right)$
	be as in Lemma \ref{MFtraceLem};
	then $\alpha^d$ is an element of the commeasurator of $\Gamma_{r}^i$ in $\SL_2(\Q)$.  Consider the following
	subgroups of $\Gamma_{r-d}^i$:
	\begin{align*}
		H&:= \Gamma_{r-d}^i \cap \alpha^{-d}\Gamma_{r}^i\alpha^d\\
		H'&:= \Gamma_{r-d}^i \cap \alpha^{-d}\Gamma_{r-d}^i\alpha^d,
	\end{align*}
	with each intersection taken inside of $\SL_2(\Q)$.  We claim that $H=H'$ inside $\Gamma_{r-d}^i$.
	Indeed, as $\Gamma_{r}^i\subseteq \Gamma_{r-d}^i$, the inclusion $H\subseteq H'$ is clear.
	For the reverse inclusion, if $\gamma:=\left(\begin{smallmatrix} * & * \\ x & *\end{smallmatrix}\right)\in \Gamma_{r-d}^i$,
	then we have $\alpha^{-d}\gamma\alpha^d = \left(\begin{smallmatrix} * & * \\ p^{-d}x & *\end{smallmatrix}\right)$,
	so if this lies in $\Gamma_{r-d}^i$ we must have $x\equiv 0\bmod p^r$ and hence $\gamma\in \Gamma_r^i$.
	We conclude that the coset spaces $H\backslash\Gamma_{r-d}^i$ and $H'\backslash\Gamma_{r-d}^i$
	are equal.  On the other hand, for {\em any} commeasurable subgroups $\Gamma,\Gamma'$ of a group $G$
	and any $g$ in the commeasurator of $\Gamma$ in $G$,
	an elementary computation shows that we have a bijection of coset spaces
	\begin{align*}
		(\Gamma'\cap g^{-1}\Gamma g )\backslash \Gamma' \simeq \Gamma\backslash\Gamma g\Gamma'
	\end{align*}
	via $(\Gamma'\cap g^{-1}\Gamma g)\gamma\mapsto \Gamma g\gamma$.
	Applying this with $g=\alpha^d$ in our situation and using the 
	decomposition
	\begin{equation*}
				\Gamma_{r-d}^i \alpha^d \Gamma_{r-d}^i = \coprod_{j=0}^{p^{d}-1} \Gamma_{r-d}^i\begin{pmatrix} 1 & j \\ 0 & p^{d}\end{pmatrix}
	\end{equation*}
	(see, e.g. \cite[proposition 3.36]{Shimura}), we deduce that we also have
	\begin{equation}\label{disjointHecke} 
		\Gamma_r^i \alpha^d \Gamma_{r-d}^i = \coprod_{j=0}^{p^{d}-1} \Gamma_r^i\begin{pmatrix} 1 & j \\ 0 & p^{d}\end{pmatrix}.
	\end{equation}
	Writing $U:S_k(\Gamma_r^i)\rightarrow S_k(\Gamma_{r-d}^i)$ for the ``Hecke operator" given by
	(e.g. \cite[\S3.4]{Ohta1})
	$\Gamma_r^i \alpha^d \Gamma_{r-d}^i$, an easy computation using \ref{disjointHecke} shows that the composite
	\begin{equation*}
		\xymatrix{
			S_k(\Gamma_r^i) \ar[r]^-{U} & S_{k}(\Gamma_{r-d}^i) \ar@{^{(}->}[r] & S_k(\Gamma_r^i) 
		}
	\end{equation*}
	coincides with $U_p^d$ on $q$-expansions.  By the $q$-expansion principle, we deduce that $U_p^d$ on $S_k(\Gamma_r^i)$
	indeed factors through the subspace $S_k(\Gamma_{r-d}^i)$, as desired.	
\end{proof}

For each integer $i$ and any character $\varepsilon:(\Z/Np^i\Z)^{\times}\rightarrow \Qbar^{\times}$, we denote by
$S_2(\Gamma_i,\varepsilon)$
the $\H_i$-stable subspace of weight 2 cusp forms for $\Gamma_i$ over $\Qbar$ 
on which the diamond operators act through $\varepsilon(\cdot)$. 
Define
\begin{equation}
	\o{V}_r := \bigoplus_{i=1}^r\bigoplus_{\varepsilon } S_2(\Gamma_i,\varepsilon)
	\label{VrDef}
\end{equation}
where the inner sum is over all Dirichlet characters defined modulo $Np^i$ whose $p$-parts are {\em primitive}
({\em i.e.} whose conductor has $p$-part exactly $p^i$).
We view $\o{V}_r$ as a $\Qbar$-subspace of $S_2(\Gamma_r)$ in the usual way
({\em i.e.} via the embeddings $\iota_{\id}$).  
We define $\o{V}_r^*$ as the direct sum (\ref{VrDef}), but viewed
as a subspace of $S_2(\Gamma_r)$ via the ``nonstandard" embeddings
$\iota_{\alpha^{r-i}}:S_2(\Gamma_i)\rightarrow S_2(\Gamma_r)$.

As in (\ref{TeichmullerIdempotent}), we write $f'$ for the idempotent of $\Z_p[\mu_{p-1}]$
corresponding to ``projection away from the trivial $\mu_{p-1}$-eigenspace."
From the formulae (\ref{GpRngIdem})
we see that $h':=(p-1)f'$ lies in the subring $\Z[\mu_{p-1}]$ of $\Z_p[\mu_{p-1}]$
and satisfies $h'^2 = (p-1)h'$.
We define endomorphisms of $S_2(\Gamma_r)$:
\begin{equation}
	U_r^*:=h'\circ (U_p^*)^{r+1} = (U_p^*)^{r+1}\circ h'\quad\text{and}\quad
	U_r:=h'\circ (U_p)^{r+1} = (U_p)^{r+1}\circ h'.
	\label{UrDefinition}
\end{equation}

\begin{corollary}\label{UpProjection}
	As subspaces of $S_2(\Gamma_r)$ we have $w_r(\o{V}_r^*)=\o{V}_r$.
	The space $\o{V}_r$ $($respectively $\o{V}_r^*$$)$ is naturally an $\H_r$ $($resp. $\H_r^*$$)$-stable 
	subspace of $S_2(\Gamma_r)$, and admits a canonical descent to $\Q$.
	Furthermore, the endomorphisms 
	$U_r$ and $U_r^*$ of $S_2(\Gamma_r)$ 
	factor through $\o{V}_r$ and $\o{V}_r^*$, respectively.
\end{corollary}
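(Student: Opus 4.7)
The proof will proceed in three main parts, with the two subtle points being the relation $w_r(\o{V}_r^*)=\o{V}_r$ and the factorization through $\o{V}_r$.

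First I would establish $w_r(\o{V}_r^*) = \o{V}_r$ via a direct matrix computation. Taking $j=0$ in the identity (\ref{EasyMatCalc}), one has $\alpha^{-(r-i)} w_r = w_i$ (up to the diamond normalization built into the paper's geometric definition of $w_r$). Applied to $f\in S_2(\Gamma_i,\varepsilon)$, this gives $w_r(\iota_{\alpha^{r-i}}(f)) = f|_{\alpha^{-(r-i)} w_r} = f|_{w_i} = \iota_{\id}(w_i f)$ (up to a scalar). Since $w_i$ sends $S_2(\Gamma_i,\varepsilon)$ into $S_2(\Gamma_i,\varepsilon^{-1})$ and preserves primitivity of the $p$-part of the nebentype, the summands indexing $\o{V}_r^*$ are carried bijectively onto those indexing $\o{V}_r$. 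Taking $w_r^{-1}$ also preserves these spaces since $w_r^2$ is a diamond operator by Proposition \ref{ALinv}.

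Next, I would handle $\H_r$- and $\H_r^*$-stability, together with descent to $\Q$. The $\H_r$-stability of $\o{V}_r$ reduces to the standard (and easily verified via coset representatives) fact that $T_\ell$ for $\ell\nmid Np$, the diamond operators, and $U_p$ commute with $\iota_{\id}:S_2(\Gamma_i)\hookrightarrow S_2(\Gamma_r)$; since each summand $S_2(\Gamma_i,\varepsilon)$ is $\H_i$-stable, $\o{V}_r$ is $\H_r$-stable. The $\H_r^*$-stability of $\o{V}_r^*$ then follows immediately from the first step combined with Proposition \ref{AtkinInterchange}, which identifies $\H_r^* = w_r\H_r w_r^{-1}$. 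For the descent to $\Q$, I would use that $\Gal(\Qbar/\Q)$ acts on $S_2(\Gamma_r;\Qbar)$ coefficient-wise, permuting eigenspaces by $S_2(\Gamma_i,\varepsilon)\mapsto S_2(\Gamma_i,\sigma\varepsilon)$; since Galois conjugation preserves the conductor (in particular the primitivity of the $p$-part), $\o{V}_r$ is Galois-stable and so canonically descends to $\Q$. The same applies to $\o{V}_r^*$, either directly or by transport via $w_r$.

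For the factorization of $U_r$ through $\o{V}_r$, I would decompose $f = \sum_\varepsilon f_\varepsilon$ into diamond eigenspaces indexed by characters $\varepsilon$ modulo $Np^r$, and let $j(\varepsilon)$ denote the $p$-part conductor of $\varepsilon$. The idempotent $h' = (p-1)f'$ annihilates every $f_\varepsilon$ for which $\varepsilon|_{\mu_{p-1}}$ is trivial; in particular this kills all $f_\varepsilon$ with $j(\varepsilon)=0$. For the remaining characters, $\varepsilon|_{\mu_{p-1}}$ is nontrivial, forcing $j(\varepsilon)\ge 1$, so $f_\varepsilon\in S_2(\Gamma_r^{j(\varepsilon)},\varepsilon)$. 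Lemma \ref{UpContract} then gives $U_p^{r-j(\varepsilon)} f_\varepsilon \in \iota_{\id}\bigl(S_2(\Gamma_{j(\varepsilon)},\varepsilon)\bigr)\subseteq \o{V}_r$, and applying $U_p^{j(\varepsilon)+1}$ keeps the result inside $\o{V}_r$ by the $\H_r$-stability already established. Hence $U_r f = h'U_p^{r+1}f \in \o{V}_r$. The analogous factorization of $U_r^*$ through $\o{V}_r^*$ follows by conjugating with $w_r$ and applying the first two parts. The main obstacle I anticipate is the first step: verifying the matrix identity in a way compatible with the paper's geometric definition of $w_r$ (via Atkin-Lehner on moduli problems), so that one can precisely track the normalizing diamond operators produced by $w_i^2$ and by the weight-$2$ slash action.
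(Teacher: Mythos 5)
Your proposal is correct and follows essentially the same route as the paper: the matrix identity $\alpha^{-(r-i)}\tau_r=\tau_i$ is precisely what underlies the relation $w_r\circ\iota_{\alpha^{r-i}}=\iota_{\id}\circ w_i$ the paper invokes, the Hecke-stability and $\Q$-descent arguments match, and your character-by-character application of Lemma \ref{UpContract} (after noting $h'$ kills the $\mu_{p-1}$-trivial eigenspaces) is exactly the intent of the paper's terse remark that $h'$ has image avoiding the $i=0$ nebentype piece. Your worry about the geometric vs.~matrix normalization of $w_r$ is legitimate but, as you observe, harmless since they differ by a diamond operator, which preserves both $\o{V}_r$ and $\o{V}_r^*$.
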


\begin{proof}
	The first assertion follows from the relation $w_r\circ \iota_{\alpha^{r-i}}=\iota_{\id}\circ w_i$
	as maps $S_2(\Gamma_i)\rightarrow S_2(\Gamma_r)$, together with the fact that $w_i$ on $S_2(\Gamma_i)$
	carries $S_2(\Gamma_i,\varepsilon)$ isomorphically onto $S_2(\Gamma_i,\varepsilon^{-1})$.
	The $\H_r$-stability of $\o{V}_r$ is clear as each of $S_2(\Gamma_i,\varepsilon)$ is an $\H_r$-stable subspace
	of $S_2(\Gamma_r)$; that $\o{V}_r^*$ is $\H_r^*$-stable follows from this and the comutation
	relation $T^* w_r = w_r T$ of Proposition \ref{AtkinInterchange}.  That $\o{V}_r$  and $\o{V}_r^*$
	admit canonical descents to $\Q$ is clear, as $\scrG_{\Q}$-conjugate Dirichlet characters have equal
	conductors.  The final assertion concerning the endomorphisms $U_r$ and $U_r^*$
	follows easily from Lemma \ref{UpContract}, 
	using the fact that $h':S_2(\Gamma_r)\rightarrow S_2(\Gamma_r)$
	has image contained in $\bigoplus_{i=1}^r S_k(\Gamma_r^i)$.
\end{proof}

\begin{definition}
	We denote by $V_r$ and $V_r^*$ the canonical descents to $\Q$ of $\o{V}_{r}$
	and $\o{V}_r^*$, respectively.  
\end{definition}

Following \cite[Chapter \Rmnum{3}, \S1]{MW-Iwasawa} and \cite[\S2]{Tilouine},
we recall the construction of certain ``good" quotient abelian varieties of $J_r$ whose
cotangent spaces are naturally identified with $V_r$ and $V_r^*$.  In what follows,
we will make frequent use of the following elementary result:  

\begin{lemma}\label{LieFactorization}
	Let $f:A\rightarrow B$ be a homomorphism of commutative group varieties 
	over a field $K$ of characteristic $0$.  Then:
\begin{enumerate}
	\item The formation of $\Lie$ and $\Cot$ commutes with the formation of kernels and images: the kernel $($respectively image$)$ of $\Lie(f)$ is canonically
	isomorphic to the Lie algebra of the kernel $($respectively image$)$ of $f$, and 
	similarly for cotangent spaces at the identity.	
	In particular, if $A$ is connected and $\Lie(f)=0$ $($respectively $\Cot(f)=0$$)$ 
	then $f=0$.\label{ExactnessOfLie}
	
	\item Let $i:B'\hookrightarrow B $ be a closed immersion of commutative
	group varieties over $K$ with $B'$ connected.  If $\Lie(f)$ factors through $\Lie(i)$
	then $f$ factors $($necessarily uniquely$)$ through $i$. 
	\label{InclOnLie}
	
	\item Let $j:A\twoheadrightarrow A''$ be a surjection of commutative 
	group varieties over $K$ with connected kernel.  If $\Cot(f)$
	factors through $\Cot(j)$ then $f$ factors $($necessarily uniquely$)$ through $j$.
	\label{LieFactorizationSurj} 
\end{enumerate}
\end{lemma}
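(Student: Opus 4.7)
The plan is to use Cartier's theorem (every group scheme locally of finite type over a field of characteristic zero is smooth) to reduce everything to standard exactness properties of the Lie algebra functor, and then to leverage the existence of quotients by closed subgroups in the category of commutative group varieties.

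For the first assertion, I would begin by noting that because $\Char K = 0$, $A$ and $B$ are smooth, $\ker f$ is a (smooth) closed subgroup scheme of $A$, and $\im f$ is a (smooth) closed subgroup scheme of $B$. The factorization $A \twoheadrightarrow \im f \hookrightarrow B$ then gives a short exact sequence $0 \to \ker f \to A \to \im f \to 0$ of smooth group varieties, and I would compare dimensions: $\dim A = \dim \ker f + \dim \im f$ on the geometric side, and since Lie algebras of smooth group schemes have dimension equal to the scheme dimension, the corresponding identity holds on Lie algebras. Combining this dimension count with left-exactness of $\Lie$ (which is immediate from its construction via dual numbers) yields the claimed identifications $\Lie(\ker f) \simeq \ker \Lie(f)$ and $\Lie(\im f) \simeq \im \Lie(f)$. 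The cotangent version follows by $K$-linear duality, $\Cot = \Lie^{\vee}$. For the ``in particular'' clause, connectedness of $A$ forces $\im f$ to be connected; if moreover $\Lie(f) = 0$ then $\im f$ has dimension zero, so $\im f$ is the trivial group and hence $f = 0$.

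For part $(\ref{InclOnLie})$, I would form the quotient $B/B'$, which exists as a commutative group variety since $B'$ is automatically normal in the commutative $B$ and we are in characteristic zero. Consider the composite $g : A \xrightarrow{f} B \twoheadrightarrow B/B'$. Applying part $(\ref{ExactnessOfLie})$ to the exact sequence $0 \to B' \to B \to B/B' \to 0$ identifies $\Lie(g)$ with the composite $\Lie(A) \to \Lie(B) \twoheadrightarrow \Lie(B)/\Lie(B')$, which is zero by the hypothesis that $\Lie(f)$ factors through $\Lie(i) : \Lie(B') \hookrightarrow \Lie(B)$. Since $A$ and hence $g(A)$ is connected, the ``in particular'' clause of $(\ref{ExactnessOfLie})$ applied to each connected component of $A$ gives $g = 0$, so $f$ factors set-theoretically (hence scheme-theoretically, using that $B'$ is a closed subgroup and $A$ is reduced) through $\ker(B \twoheadrightarrow B/B') = B'$. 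Uniqueness of the factorization is immediate from the fact that $i$ is a monomorphism.

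For part $(\ref{LieFactorizationSurj})$, I would argue dually: write $K := \ker j$, which is connected by hypothesis, and apply part $(\ref{ExactnessOfLie})$ to identify $\Lie(K)$ with $\ker \Lie(j)$. The hypothesis that $\Cot(f)$ factors through $\Cot(j)$ is, by $K$-linear duality, equivalent to the statement that $\Lie(f) : \Lie(A) \to \Lie(B)$ vanishes on $\ker \Lie(j) = \Lie(K)$. Thus the restriction $f|_K : K \to B$ has zero Lie algebra map, and since $K$ is connected, part $(\ref{ExactnessOfLie})$ forces $f|_K = 0$. Hence $f$ factors (uniquely) through $A/K \simeq A''$. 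There is no serious obstacle here; the only point demanding care is the genuine use of $\Char K = 0$ via Cartier's theorem to guarantee smoothness (so that dimensions of Lie algebras match dimensions of schemes and the exactness of $\Lie$ on short exact sequences of group varieties is clean).
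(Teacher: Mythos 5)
Your proof is correct and follows essentially the same route as the paper's: both argue that characteristic zero forces smoothness of all the group schemes involved (Cartier), deduce the exactness of $\Lie$ and $\Cot$ on group varieties, and then invoke universal mapping properties for the two factorization statements. The paper phrases the exactness as "quotients in characteristic zero are smooth, hence $\Lie$ is right exact as well as left exact," whereas you obtain the same conclusion by a dimension count against left exactness; this is a cosmetic difference. Your parts (2) and (3) simply flesh out what the paper leaves as "follow immediately by using universal mapping properties," and they do so correctly. One small stylistic remark: in (2), once $\Lie(g)=0$ and $A$ is connected you can conclude $g=0$ directly from (1) and then $f$ factors scheme-theoretically through $B'=\ker(B\to B/B')$ by the universal property of the kernel; the parenthetical appeal to reducedness to upgrade a set-theoretic factorization is not needed.
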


\begin{proof}
	The key point is that because $K$ has characteristic zero, the functors $\Lie(\cdot)$
	and $\Cot(\cdot)$ on the category of commutative group schemes are {\em exact}.
	Indeed, since $\Lie(\cdot)$ is always left 
	exact, the exactness of $\Lie(\cdot)$ follows easily from the fact that any 
	quotient mapping 
	$G\twoheadrightarrow H$
	of group varieties in characteristic zero is smooth (as the kernel is
	a group variety over a field of characteristic zero and hence automatically smooth),
	so the induced map on Lie algebras is a surjection.
	By similar reasoning 
	one shows that the right exact $\Cot(\cdot)$ is likewise exact,
	and the first part of (\ref{ExactnessOfLie}) follows easily.  In particular,
	if $\Lie(f)$ is the zero map then $\Lie(\im(f))=0$, so $\im(f)$ is zero-dimensional.
	Since it is  also smooth, it must be \'etale.  Thus, if $A$ is connected,
	then $\im(f)$ is both connected and \'etale, whence it is a single point; by 
	evaluation of $f$ at the identity of $A$ we conclude that $f=0$.
	The assertions (\ref{InclOnLie}) and (\ref{LieFactorizationSurj})
	now follow immediately by using universal mapping properties.
\end{proof}

To proceed with the construction of good quotients of $J_r$, we now consider the diagrams of ``degeneracy mappings" of curves over $\Q$ for $i=1,2$
\begin{equation}
\addtocounter{equation}{1}
	\xymatrix{
		{X_r} \ar[r]^-{\pi} & {Y_r} \ar[r]^-{\pi_i} & {X_{r-1}}
	}
	\tag{$\arabic{section}.\arabic{subsection}.\arabic{equation}_i$}
	\label{DegeneracyDiag}
\end{equation}
where $\pi$ and $\pi_i$ are the maps induced by (\ref{XtoY}) and (\ref{Upcorr}),
respectively. These mappings covariantly (respectively contravariantly) induce
mappings on the associated Jacobians via Albanese (respectively Picard) functoriality.
Writing $JY_r:=\Pic^0_{Y_r/\Q}$ and setting $K_1^i:=JY_1$ for $i=1,2$
we inductively define abelian subvarieties $\iota_r^i:K_r^i\hookrightarrow JY_r$ and abelian variety quotients
$\alpha_r^i:J_r\twoheadrightarrow B_r^i$ as follows:
\begin{equation}
	\addtocounter{equation}{1}
	B_{r-1}^i:= J_{r-1}/\Pic^0(\pi)(K_{r-1}^i) 
	\qquad\text{and}\qquad
	K_{r}^i:=\ker(JY_r \xrightarrow{\alpha_{r-1}^i\circ\Alb(\pi_i)}  B_{r-1}^i)^0
	\tag{$\arabic{section}.\arabic{subsection}.\arabic{equation}_i$}
	\label{BrDef}
\end{equation}
for $r\ge 2$, $i=1,2$, with $\alpha_{r-1}^i$ and $\iota_r^i$ the obvious mappings;
here, $(\cdot)^0$ denotes the connected component of the identity of $(\cdot)$.
As in \cite[\S 3.2]{OhtaEichler}, we have modified Tilouine's construction
\cite[\S2]{Tilouine} so that kernel of $\alpha_r$ is connected; {\em i.e.} is an abelian
subvariety of $J_r$ ({\em cf.} Remark \ref{TilouineReln}).
Note that we have a commutative diagram of abelian varieties over $\Q$
for $i=1,2$
\begin{equation}
\addtocounter{equation}{1}
\begin{gathered}
	\xymatrix@C=50pt@R=26pt{
		& {J_{r-1}}\ar@{->>}[r]^-{\alpha_{r-1}^i}  & 
		{B_{r-1}^i} \ar@{=}[d] \\
		{K_r^i} \ar@{^{(}->}[r]^-{\iota_r^i}\ar@{=}[d] & 
		{JY_r} \ar[r]^-{\alpha_{r-1}^i\circ \Alb(\pi_i)} \ar[d]|-{\Pic^0(\pi)}\ar[u]|-{\Alb(\pi_i)} & 
		B_{r-1}^i \\
		K_r^i \ar[r]_-{\Pic^0(\pi)\circ \iota_r} & {J_r} \ar@{->>}[r]_-{\alpha_r^i} & {B_r^i}
	}
\end{gathered}
\tag{$\arabic{section}.\arabic{subsection}.\arabic{equation}_i$}
\label{BrDefiningDiag}
\end{equation}
with bottom two horizontal rows that are complexes.  

\begin{warning}\label{GoodQuoWarning}
While the bottom row of (\ref{BrDefiningDiag}) is exact in the middle by definition of $\alpha_r^i$,
the central row is {\em not} exact in the middle: it follows from
the fact that $\Alb(\pi_i)\circ\Pic^0(\pi_i)$ is multiplication by $p$
on $J_{r-1}$ that the component group of the kernel of
$\alpha_{r-1}^i\circ\Alb(\pi_i):JY_r\rightarrow  B_{r-1}^i$ is nontrivial
with order divisible by $p$.  
Moreover, there is no mapping $B_{r-1}^i\rightarrow B_r^i$
which makes the diagram (\ref{BrDefiningDiag}) commute.
\end{warning}

In order to be consistent with the literature, we adopt the following convention:
\begin{definition}\label{BalphDef}
	We set $B_r:=B_r^2$ and $B_r^*:=B_r^1$, with $B_r^i$ defined inductively by (\ref{BrDef}).
 	We likewise set $\alpha_r:=\alpha_r^2$ and $\alpha_r^*:=\alpha_r^1$.
\end{definition} 

\begin{remark}\label{TilouineReln}
	We briefly comment on the relation between our quotient $B_r$
	and the ``good" quotients of $J_r$ considered by Ohta \cite{OhtaEichler},
	by Mazur-Wiles \cite{MW-Iwasawa}, and by Tilouine \cite{Tilouine}.
	Recall \cite[\S2]{Tilouine} that Tilouine constructs\footnote{The notation Tilouine 
	uses for his quotient is the same as the notation we have used for our (slightly modified)
	quotient.  To avoid conflict, we have therefore chosen to 
	alter his notation.} an abelian variety quotient
	$\alpha_r':J_r\twoheadrightarrow B_r'$ via an inductive
	procedure nearly identical to the one used to define $B_r=B_r^1$:
	one sets $K_1':=JY_1$, and for $r\ge 2$ defines
\begin{equation*}
		B_{r-1}':= J_{r-1}/\Pic^0(\pi)(K_{r-1}') 
		\qquad\text{and}\qquad
		K_{r}':=\ker(JY_r \xrightarrow{\alpha_{r-1}'\circ\Alb(\pi_2)}  B_{r-1}').
\end{equation*}
	Using the fact that the 
	formation of images and identity components commutes, 
	one shows via a straightforward induction argument that 
	$\alpha_r:J_r\twoheadrightarrow B_r$
	identifies $B_r$ with $J_r/(\ker\alpha_r')^0$; in particular,
	our $B_r$ is the same as Ohta's \cite[\S3.2]{OhtaEichler}
	and Tilouine's quotient $\alpha_r':J_r\rightarrow B_r'$ uniquely
	factors through $\alpha_r$ via an isogeny $B_r\twoheadrightarrow B_r'$
	which has degree divisible by $p$ by Warning \ref{GoodQuoWarning}.
	Due to this fact, it is {\em essential} for our purposes to work with $B_r$ rather than $B_r'$.
	Of course, following \cite[3.2.1]{OhtaEichler}, we could have simply 
	{\em defined} $B_r$ as $J_r/(\ker\alpha_r')^0$,
	but we feel that the construction we have given is more natural.
	
	On the other hand, we remark that $B_r$
	is naturally a quotient of the ``good" quotient $J_r\twoheadrightarrow A_r$ constructed
	by Mazur-Wiles in \cite[Chapter \Rmnum{3}, \S1]{MW-Iwasawa},
	and the kernel of the corresponding surjective homomorphism 
	$A_r\twoheadrightarrow B_r$
	is isogenous to $J_0\times J_0$.
\end{remark}

\begin{proposition}\label{BrCotIden}
	Over $F:=\Q(\mu_{Np^r})$, the automorphism $w_r$ of ${J_r}_F$ induces
	an isomorphism of quotients ${B_r}_{F}\simeq {B_r^*}_F$.	
	The abelian variety $B_r$ $($respectively $B_r^*$$)$ is the unique quotient of $J_r$ by a $\Q$-rational 
	abelian subvariety with the property that
	the induced map on cotangent spaces 
	\begin{equation*}
		\xymatrix{
			{\Cot(B_r)} \ar@{^{(}->}[r]_-{\Cot(\alpha_r)} &  
			{\Cot(J_r)\simeq S_2(\Gamma_r;\Q)}
			}
			\quad\text{respectively}\quad
					\xymatrix{
			{\Cot(B_r^*)} \ar@{^{(}->}[r]_-{\Cot(\alpha_r^*)} &  
			{\Cot(J_r)\simeq S_2(\Gamma_r;\Q)}
			}
	\end{equation*}
	has image precisely $V_r$ $($respectively $V_r^*$$)$. 
	In particular, there are canonical actions
	of the Hecke algebras\footnote{We must warn the reader that
	Tilouine \cite{Tilouine} writes $\H_r(\Z)$ for the
	$\Z$-subalgebra of $\End(J_r)$ generated by the Hecke operators
	acting via the $(\cdot)^*$-action ({\em i.e.} by ``Picard" functoriality)
	whereas our $\H_r(\Z)$ is defined using the $(\cdot)_*$-action.
	This discrepancy is due primarily to the fact that Tilouine 
	identifies {\em tangent} spaces of modular abelian varieties
	with spaces of modular forms, rather than cotangent spaces as is our convention.  
	Our notation for regarding Hecke algebras
	as sub-algebras of $\End(J_r)$ agrees with that of Mazur-Wiles
	\cite[Chapter \Rmnum{2}, \S5]{MW-Iwasawa}, \cite[\S7]{MW-Hida} and 
	Ohta \cite[3.1.5]{OhtaEichler}.
	}
	$\H_r(\Z)$ on $B_r$ and $\H_r^*(\Z)$ on $B_r^*$ for which $\alpha_r$ and $\alpha_r^*$
	are equivariant.  
\end{proposition}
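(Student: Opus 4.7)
The plan is to deduce all three assertions from a single core computation, namely that $\Cot(\alpha_r)$ has image exactly $V_r$ inside $S_2(\Gamma_r;\Q)\simeq\Cot(J_r)$, and similarly $\Cot(\alpha_r^*)$ has image $V_r^*$. The uniqueness assertion will be essentially free once this cotangent identification is in hand: if $J_r\twoheadrightarrow A$ is any surjection onto an abelian variety over $\Q$ with connected kernel inducing the same subspace $V_r\subseteq S_2(\Gamma_r;\Q)$ on cotangent spaces, then both $\ker(\alpha_r)$ and $\ker(J_r\twoheadrightarrow A)$ are connected abelian subvarieties of $J_r$ with the same Lie algebra (the annihilator of $V_r$ in $\Lie(J_r)$), and in characteristic zero a connected abelian subvariety is determined by its Lie algebra (via the complex-analytic uniformization and Galois descent), so the two kernels coincide and the quotients are canonically identified.

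For the cotangent computation, I would argue by induction on $r$. When $r=1$, the constructions of $B_1$ and $B_1^*$ coincide, and applying $\Cot(\cdot)$ to the defining exact sequence $0\to \Pic^0(\pi)(JY_1)\to J_1\to B_1\to 0$ (using Lemma \ref{LieFactorization} to convert it to an exact sequence of cotangent spaces) identifies $\Cot(B_1)$ with the kernel of the trace $\pi_*:S_2(\Gamma_1;\Q)\to S_2(\Gamma_1^0;\Q)$, which decomposes under the diamond action as the sum of nebentypus eigenspaces with primitive $p$-part of the character, namely $V_1$. For the inductive step, applying $\Cot(\cdot)$ to the diagram \eqref{BrDefiningDiag} identifies $\Cot(K_r^i)$ with the quotient $\Cot(JY_r)/\pi_i^*(\Cot(B_{r-1}^i))$, and hence identifies $\Cot(B_r^i)$ with the preimage of $\pi_i^*(V_{r-1}^{(i)})$ under $\pi_*:S_2(\Gamma_r;\Q)\to S_2(\Gamma_r^{r-1};\Q)$ (where $V_r^{(2)}:=V_r$ and $V_r^{(1)}:=V_r^*$). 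Identifying this preimage with $V_r^{(i)}$ is the main technical obstacle: one must combine the trace identity of Lemma \ref{MFtraceLem} (relating $\pi_*$ to a combination of $U_p^*$ and diamond operators) with the contraction property of Lemma \ref{UpContract} to establish that the preimage decomposes as the direct sum over $j\le r$ of the primitive-nebentypus eigenspaces at level $Np^j$, embedded via the maps appropriate to $V_r^{(i)}$.

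The remaining assertions then follow quickly. For the isomorphism over $F$: the identities of Proposition \ref{ALinv} (together with the compatibility $w_r\iota_{\alpha^{r-i}}=\iota_{\id}w_i$ used in Corollary \ref{UpProjection} and the fact that $w_i$ sends $S_2(\Gamma_i,\varepsilon)$ to $S_2(\Gamma_i,\varepsilon^{-1})$) show that the action of $w_r$ on $S_2(\Gamma_r;F)$ interchanges $V_r\otimes_\Q F$ and $V_r^*\otimes_\Q F$; the uniqueness criterion applied over $F$ then forces $w_r$ on ${J_r}_F$ to carry $\ker(\alpha_r)_F$ onto $\ker(\alpha_r^*)_F$, inducing an isomorphism of quotients ${B_r}_F\simeq {B_r^*}_F$. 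For the Hecke actions, the stability of $V_r$ under $\H_r(\Z)$ and of $V_r^*$ under $\H_r^*(\Z)$ (a straightforward consequence of the relation $wT=T^*w$ of Proposition \ref{AtkinInterchange} combined with the description of $V_r, V_r^*$ in Corollary \ref{UpProjection}) implies by the uniqueness criterion that $\H_r(\Z)$ preserves $\ker(\alpha_r)$ and $\H_r^*(\Z)$ preserves $\ker(\alpha_r^*)$, so these Hecke algebras descend through $\alpha_r$ and $\alpha_r^*$ respectively to give actions making the quotient maps equivariant.
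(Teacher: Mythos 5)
Your proposal is structurally sound and logically consistent, but it takes a genuinely different route from the paper, and at a key point you explicitly leave the central calculation unfinished.

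The paper's proof is much shorter and proceeds in the opposite order: it first establishes the $F$-isomorphism ${B_r}_F\simeq {B_r^*}_F$ directly from the inductive construction of $B_r^i$ and Proposition~\ref{ALinv} (no appeal to uniqueness), and then identifies $\Cot(B_r)$ with $V_r$ by observing that the isogeny $B_r\twoheadrightarrow B_r'$ of Remark~\ref{TilouineReln} is an isomorphism on cotangent spaces and citing Tilouine's Proposition~2.1 for the cotangent image of $B_r'$. The uniqueness and Hecke-equivariance assertions are then read off from Lemma~\ref{LieFactorization}~(\ref{LieFactorizationSurj}), exactly as you also do. So your treatment of the uniqueness statement, the $w_r$-isomorphism \emph{given} the cotangent identification, and the Hecke action is all fine and is essentially the same calculus as the paper's, just rearranged so that the cotangent identification is the logical nucleus and the $w_r$-isomorphism a consequence. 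What each route buys: the paper's is shorter and outsources the modular-forms bookkeeping to Tilouine; yours would be self-contained, and since later parts of the paper (e.g.\ Lemma~\ref{Btower}, Lemma~\ref{UFactorDiagLem}) already invoke Lemma~\ref{MFtraceLem} and Lemma~\ref{UpContract} for closely related bookkeeping, your route is in the spirit of the surrounding material.

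The gap is exactly the one you flag: the inductive step, showing that $\pi_*^{-1}\bigl(\pi_i^*(V_{r-1}^{(i)})\bigr)=V_r^{(i)}$ inside $S_2(\Gamma_r;\Q)$. You correctly set up what needs to be shown (your computation of $\Cot(K_r^i)$ as $\Cot(JY_r)/\pi_i^*(V_{r-1}^{(i)})$, and of $\Cot(B_r^i)$ as the preimage under $\pi_*$, is right), you correctly identify the base case, and you name the right tools (Lemma~\ref{MFtraceLem} and Lemma~\ref{UpContract}), but the decomposition into primitive-nebentypus eigenspaces at each level $j\le r$ and the matching of embeddings ($\iota_{\id}$ for $V_r$, $\iota_{\alpha^{r-j}}$ for $V_r^*$) is the entire substance of the inductive step, and you leave it as a sketch rather than carrying it out. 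Until that calculation is done, the cotangent identification --- and hence, in your ordering, everything downstream of it including the $w_r$-isomorphism --- is not yet established. Either complete that computation or, as the paper does, reduce via the isogeny to $B_r'$ and cite Tilouine.
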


\begin{proof}

	By the construction of $B_r^i$ and Proposition \ref{ALinv},
	the automorphism $w_{r}$ of $J_{r,F}$ carries $\ker(\alpha_r)$
	to $\ker(\alpha_r^*)$ and induces an isomorphsm $B_{r,F} \simeq B_{r,F}^*$ over $F$
	that intertwines the action of $\H_r$ on $B_r$ with $\H_r^*$ on $B_r^*$.
	The isogeny $B_r\twoheadrightarrow B_r'$ of Remark \ref{TilouineReln} induces
	an isomorphism on cotangent spaces, compatibly with the inclusions into
	$\Cot(J_r)$.  Thus, the claimed identification of the image of
	$\Cot(B_r)$ with $V_r$ follows from \cite[Proposition 2.1]{Tilouine}
	(using \cite[Definition 2.1]{Tilouine}).  The claimed uniqueness
	of $J_r\twoheadrightarrow B_r$ follows easily from Lemma \ref{LieFactorization} 
	(\ref{LieFactorizationSurj}). Similarly, since the subspace $V_r$ 
	of $S_2(\Gamma_r)$ is stable under $\H_r$, we conclude from
	Lemma \ref{LieFactorization} (\ref{LieFactorizationSurj}) that for any 
	$T\in \H_r(\Z)$, the induced morphism $J_r\xrightarrow{T} J_r\twoheadrightarrow B_r$
	factors through $\alpha_r$, and hence that $\H_r(\Z)$ acts on $B_r$
	compatibly (via $\alpha_r$) with its action on $J_r$.
\end{proof}

\begin{lemma}\label{Btower}
	There exist unique morphisms $B_r^*\leftrightarrows B_{r-1}^*$
	of abelian varieties over $\Q$ making
		\begin{equation*}
			\xymatrix{
				{J_{r}} \ar[r]^-{\alpha_r^*} \ar[d]_-{\Alb(\ps)} &{B_r^*} \ar[d] \\
				{J_{r-1}} \ar[r]_-{\alpha_{r-1}^*} & {B_{r-1}^*}
			}\qquad\raisebox{-18pt}{and}\qquad
			\xymatrix{
				{J_{r}} \ar[r]^-{\alpha_r^*} &{B_r^*}  \\
				{J_{r-1}} \ar[u]^-{\Pic^0(\pr)} \ar[r]_-{\alpha_{r-1}^*} & {B_{r-1}^*}\ar[u]
			}
		\end{equation*}
	commute; these maps are moreover $\H_r^*(\Z)$-equivariant.
	By a slight abuse of notation, we will simply write $\Alb(\ps)$ and $\Pic^0(\pr)$
	for the induced maps on $B_r^*$ and $B_{r-1}^*$, respectively.
\end{lemma}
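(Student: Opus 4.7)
The uniqueness of both claimed factorizations is immediate from the surjectivity of $\alpha_r^*$ and $\alpha_{r-1}^*$. For existence, I would apply Lemma \ref{LieFactorization}(\ref{LieFactorizationSurj}) to each of these surjections, whose kernels are connected by construction (being abelian subvarieties of $J_r$ and $J_{r-1}$ respectively). This reduces the existence of each factorization to verifying the corresponding inclusion on cotangent spaces; by Proposition \ref{BrCotIden}, this in turn amounts to a statement about the subspaces $V_r^*$ and $V_{r-1}^*$ of $S_2(\Gamma_r; \Q)$ and $S_2(\Gamma_{r-1}; \Q)$ defined in \S\ref{BTfamily}.

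For the first diagram, I would show that $\sigma^*(V_{r-1}^*) \subseteq V_r^*$. By Remark \ref{genfiberrem}, the degeneracy map $\sigma: X_r \to X_{r-1}$ corresponds on the complex upper half-plane to $\tau \mapsto p\tau$, so on weight-$2$ cuspforms one computes directly that $\sigma^* = \iota_\alpha$ for $\alpha = \left(\begin{smallmatrix} 1 & 0 \\ 0 & p \end{smallmatrix}\right)$. The composition identity $\iota_\alpha \circ \iota_{\alpha^{r-1-i}} = \iota_{\alpha^{r-i}}$ then immediately gives
\[
  \iota_\alpha(V_{r-1}^*) \;=\; \bigoplus_{i=1}^{r-1} \bigoplus_\varepsilon \iota_{\alpha^{r-i}}\, S_2(\Gamma_i, \varepsilon) \;\subseteq\; V_r^*,
\]
directly from the definition (\ref{VrDef}) of $V_r^*$.

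For the second diagram---the delicate part---I would show $\rho_*(V_r^*) \subseteq V_{r-1}^*$, where $\rho_*$ acts on cuspforms as the trace $\tr_{r, r-1}$ along the inclusion $\Gamma_r \subseteq \Gamma_{r-1}$. Specializing Lemma \ref{MFtraceLem} to $i = r-1$ gives the key identity
\[
  \iota_\alpha \circ \tr_{r, r-1} \;=\; U_p^* \sum_{\delta \in \Delta_{r-1}/\Delta_r} \langle \delta \rangle \qquad \text{on } S_2(\Gamma_r).
\]
On the summand $\iota_{\alpha^{r-i}}\, S_2(\Gamma_i, \varepsilon)$ of $V_r^*$, the diamond-operator character is the inflation of $\varepsilon$ from level $Np^i$ to level $Np^r$; since $\varepsilon$ has primitive $p$-part of conductor precisely $p^i$, this inflation is trivial on $\Delta_{r-1}/\Delta_r$ exactly when $i \le r-1$, and nontrivial when $i = r$. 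Hence $\sum_\delta \langle \delta\rangle$ annihilates the $i=r$ summand (so by injectivity of $\iota_\alpha$ the trace itself does too) and acts as multiplication by $p$ on each $i \le r-1$ summand, whose direct sum is exactly $\iota_\alpha(V_{r-1}^*)$. Since $V_r^*$ is $\H_r^*(\Z)$-stable (Proposition \ref{BrCotIden}) and $U_p^*$ preserves the Nebentypus decomposition of $V_r^*$, it follows that $U_p^*$ carries $\iota_\alpha(V_{r-1}^*)$ into itself, whence $\iota_\alpha \tr_{r, r-1}(f) = p U_p^* f \in \iota_\alpha(V_{r-1}^*)$ for any $f \in \iota_\alpha(V_{r-1}^*)$; injectivity of $\iota_\alpha$ then yields $\tr_{r, r-1}(f) \in V_{r-1}^*$ as needed.

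Finally, the $\H_r^*(\Z)$-equivariance of the induced maps $B_r^* \leftrightarrows B_{r-1}^*$ is a formal consequence of uniqueness: for each $T \in \H_r^*(\Z)$, applying $T$ to either side of the factorization produces another valid factorization of a Hecke-translate, which must coincide with the original composite by uniqueness, using that the degeneracy maps $\rho, \sigma$ and the surjections $\alpha_r^*, \alpha_{r-1}^*$ are all Hecke-equivariant. The main obstacle is the second diagram's cotangent computation---specifically, coordinating Lemma \ref{MFtraceLem} with the Nebentypus decomposition of $V_r^*$ and with the Hecke-stability of $V_r^*$ to deduce simultaneously the vanishing of $\tr_{r, r-1}$ on the $i=r$ part and its preservation of the filtration on the $i \le r-1$ part.
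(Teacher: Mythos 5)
Your proposal is correct and follows essentially the same route as the paper: both reduce existence and uniqueness to containments of cotangent spaces via Lemma \ref{LieFactorization}(\ref{LieFactorizationSurj}), both dispose of the first diagram via the composition relation $\iota_\alpha\circ\iota_{\alpha^{r-1-i}}=\iota_{\alpha^{r-i}}$, and both tackle the second diagram via Lemma \ref{MFtraceLem} with $i=r-1$. The one cosmetic difference is how the second diagram's "easy calculation" is packaged. The paper derives the cleaned-up identity
\[
\iota_{\alpha}\circ \tr_{r,r-1}\circ \iota_{\alpha^{(r-i)}} = \begin{cases} \iota_{\alpha^{(r-i)}}\, p\, U_p^* & i< r,\\ 0 & i=r, \end{cases}
\]
with $U_p^*$ acting at level $Np^i$, from which the containment $\tr_{r,r-1}(V_r^*)\subseteq V_{r-1}^*$ is immediate; implicitly this uses that $\iota_{\alpha^{r-i}}$ intertwines $U_p^*$ at levels $Np^i$ and $Np^r$. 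You instead keep $U_p^*$ at level $Np^r$, evaluate $\sum_{\delta\in\Delta_{r-1}/\Delta_r}\langle\delta\rangle$ on each nebentypus summand of $V_r^*$, and then invoke $\H_r^*$-stability of $V_r^*$ together with the fact that each summand is a full nebentypus eigenspace of $V_r^*$ to conclude that $U_p^*$ preserves $\iota_\alpha(V_{r-1}^*)$. Both routes are valid and amount to the same computation; yours avoids having to observe the intertwining property of $\iota_{\alpha^{r-i}}$ with $U_p^*$ directly, at the cost of the extra (correct) observation about nebentypus eigenspaces.
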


\begin{proof}
	Under the canonical identification of $\Cot(J_r)\otimes_{\Q}\o{\Q}$ with $S_2(\Gamma_r)$,
	the mapping on cotangent spaces induced by $\Alb(\ps)$ (respectively
	$\Pic^0(\pr)$) coincides with $\iota_{\alpha}:S_2(\Gamma_{r-1})\rightarrow S_2(\Gamma_r)$
	(respectively $\tr_{r,r-1}:S_2(\Gamma_r)\rightarrow S_2(\Gamma_{r-1})$).
	As the kernel of $\alpha_r^*:J_r\twoheadrightarrow B_r^*$ is connected by definition,
	thanks to Lemma \ref{LieFactorization} (\ref{LieFactorizationSurj}) it suffices
	to prove that $\iota_{\alpha}$ (respectively $\tr_{r,r-1}$)
	carries $V_{r-1}^*$ to $V_{r}^*$ (respectively $V_{r}^*$ to $V_{r-1}^*$).
	On one hand, the composite 
	$\iota_{\alpha}\circ \iota_{\alpha^{r-1-i}}:S_2(\Gamma_i,\varepsilon)\rightarrow S_2(\Gamma_r)$
	coincides with the embedding $\iota_{\alpha^{r-i}}$, and it follows immediately from the
	definition of $V_r^*$ that $\iota_{\alpha}$ carries $V_{r-1}^*$ into $V_r^*$.	
	On the other hand, an easy calculation using (\ref{DualityIdentity}) shows that 
	one has equalities of maps $S_2(\Gamma_i,\varepsilon)\rightarrow S_2(\Gamma_r)$
	\begin{equation*}
		\iota_{\alpha}\circ \tr_{r,r-1}\circ \iota_{\alpha^{(r-i)}} = \begin{cases}
			\iota_{\alpha^{(r-i)}}pU_p^* & \text{if}\ i< r \\
			0 & \text{if}\ i=r
		\end{cases}.
	\end{equation*}
	Thus,  
	the image of $\iota_{\alpha}\circ\tr_{r,r-1}:V_r^*\rightarrow S_2(\Gamma_r)$ 
	is contained in the image of $\iota_{\alpha}:V_{r-1}^*\rightarrow S_2(\Gamma_r)$;
	since $\iota_{\alpha}$ is injective, we conclude that 
	the image of $\tr_{r,r-1}:V_r^*\rightarrow S_2(\Gamma_{r-1})$ is contained in $V_{r-1}^*$
	as desired.
\end{proof}

\begin{proposition}\label{GoodRedn}
	The abelian varieties $B_r$ and $B_r^*$ acquire good reduction over $\Q_p(\mu_{p^r})$.
\end{proposition}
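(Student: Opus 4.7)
The plan is to reduce the statement to the good reduction theorem of Langlands--Carayol--Saito for the abelian variety attached to a $p$-stabilized newform. We proceed in four steps.

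First, I would observe that it suffices to prove the claim for $B_r^*$. Indeed, by Proposition~\ref{BrCotIden}, the Atkin--Lehner automorphism $w_r$ induces an isomorphism $B_{r,F} \simeq B_{r,F}^*$ over $F:=\Q(\mu_{Np^r})$, and since $N$ is prime to $p$, the extension $F \cdot \Q_p(\mu_{p^r})/\Q_p(\mu_{p^r})$ is unramified. By the N\'eron--Ogg--Shafarevich criterion, good reduction of $B_r^*$ over $\Q_p(\mu_{p^r})$ is equivalent to good reduction of $B_{r,F}^*$ over $F \cdot \Q_p(\mu_{p^r})$, and similarly for $B_r$, so the two statements are equivalent.

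Second, I would decompose $B_r^*$ up to $\Q$-isogeny into ``building blocks" attached to newforms. By Proposition~\ref{BrCotIden}, the cotangent space of $B_r^*$ is canonically identified with
\begin{equation*}
V_r^* = \bigoplus_{i=1}^{r}\bigoplus_{\varepsilon} \iota_{\alpha^{r-i}}\bigl(S_2(\Gamma_i,\varepsilon)\bigr)
\end{equation*}
where $\varepsilon$ runs over Dirichlet characters modulo $Np^i$ whose $p$-part has conductor exactly $p^i$. Via the theory of newforms and Atkin--Lehner, each space $S_2(\Gamma_i,\varepsilon)$ decomposes into Galois-orbits of normalized newforms of some level $Np^j \mid Np^i$ with nebentype whose $p$-part has conductor exactly $p^j$ (primitivity at $p$ forces the $p$-conductor to be $p^j$ with $j = i$ at the new level, while the $N$-part of the level may be a divisor of $N$). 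Using Faltings' isogeny theorem (or rather the classical construction of Shimura attaching an abelian variety quotient $A_f$ of $J_1(M)$ to each newform $f$ of level $M$) together with the uniqueness statement in Proposition~\ref{BrCotIden}, I would conclude that there is a $\Q$-isogeny
\begin{equation*}
B_r^* \sim \prod_{f} A_f^{m(f,r)},
\end{equation*}
where $f$ runs over Galois orbits of newforms of some level $Np^j$ (for $1 \le j \le r$) and nebentype with primitive $p$-part, and $m(f,r)$ is an explicit multiplicity that counts the number of ways $f$ appears as an oldform at level $Np^r$.

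Third, I would invoke the theorem of Langlands--Carayol--Saito: for each newform $f$ of level $Np^j$ whose nebentype has $p$-conductor exactly $p^j$, the associated abelian variety $A_f$ acquires good reduction over $\Q_p(\mu_{p^j})$. (This is the content of Carayol's analysis of the bad reduction of Shimura curves, building on Langlands' work, and corresponds to the fact that the local Galois representation at $p$ attached to such an $f$ becomes crystalline after restriction to $\Q_p(\mu_{p^j})$.) Since $\Q_p(\mu_{p^j}) \subseteq \Q_p(\mu_{p^r})$ for $j \le r$, each such $A_f$ has good reduction over $\Q_p(\mu_{p^r})$.

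Finally, good reduction is preserved under products of abelian varieties and under $\Q$-isogeny (again by N\'eron--Ogg--Shafarevich, since an isogeny of abelian varieties over a local field induces an isomorphism of $\ell$-adic Tate modules up to isogeny, and hence the inertia action is unramified on one side iff it is on the other). Combining the decomposition from the second step with the good reduction assertion of the third, I conclude that $B_r^*$, and hence $B_r$, acquires good reduction over $\Q_p(\mu_{p^r})$. The main obstacle in this argument is not conceptual but rather the bookkeeping in the second step, where one must carefully enumerate how oldforms at level $Np^r$ arise from newforms of smaller level and verify that the resulting decomposition matches $V_r^*$; fortunately, this combinatorial input is standard and is implicit in the construction of $B_r^*$ via the tower of degeneracy mappings.
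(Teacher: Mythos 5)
Your proof is correct, and it recapitulates one of the two approaches the paper points to. The paper's own ``proof'' consists of two citations: the primary reference \cite[Chap.~\Rmnum{3}, \S2, Proposition~2]{MW-Iwasawa} and a ``cf.''~to \cite[\S9, Lemma~9]{HidaGalois}. Your argument is in essence Hida's: decompose $B_r^*$ up to $\Q$-isogeny into factors $A_f$ attached to newforms with primitive $p$-nebentype, observe via Langlands--Carayol that the local component at $p$ of each such $f$ is a ramified principal series $\pi(\mu_1,\mu_2)$ with $\mu_1$ unramified (forced since $\mathrm{cond}_p(\varepsilon)=\mathrm{cond}_p(\pi_{f,p})$, which rules out special and supercuspidal types), conclude that the $\ell$-adic Tate module ($\ell\neq p$) of $A_f$ becomes unramified over $\Q_p(\mu_{p^j})$, and finish by N\'eron--Ogg--Shafarevich together with stability of good reduction under products and $\Q$-isogeny. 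The primary citation, Mazur--Wiles, instead proceeds geometrically: they analyze the N\'eron model of $J_1(Np^r)$ over $\Z_p[\mu_{p^r}]$ via the regular model of the modular curve and show that passing to the good quotient kills the toric and unipotent parts of the identity component, so the quotient N\'eron model is an abelian scheme. Your automorphic route is cleaner to state and avoids any contact with integral models; the geometric route has the advantage of identifying the abelian scheme concretely inside the N\'eron model, which is what the paper actually exploits downstream (e.g.~in Propositions~\ref{SpecialFiberOrdinary} and \ref{KeyComparison}). Two small simplifications are available: your Step~1 (reducing to $B_r^*$ via Atkin--Lehner over $F=\Q(\mu_{Np^r})$) is unnecessary, since $B_r$ and $B_r^*$ are already $\Q$-isogenous (each is $\Q$-isogenous to the same product $\prod_f A_f^{m(f,r)}$, as the newforms and multiplicities contributing to $V_r$ and $V_r^*$ coincide); and Saito's theorem is not needed at weight~$2$, where Carayol's result for $\ell\neq p$ together with N\'eron--Ogg--Shafarevich already suffices.
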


\begin{proof}
	See \cite[Chap \Rmnum{3}, \S2, Proposition 2]{MW-Iwasawa} 
	and {\em cf.} \cite[\S9, Lemma 9]{HidaGalois}.	
\end{proof}

As in \S\ref{OrdStruct}, we denote by ${e^*}':=f'e^*\in \H^*$ and $e':=f'e\in \H$ 
the sub-idempotents of $e^*$ and $e$, respectively, 
corresponding to projection away from the trivial eigenspace of $\mu_{p-1}$.

\begin{proposition}\label{GoodRednProp}	
	The maps $\alpha_r$ and $\alpha_r^*$
	induce isomorphisms of $p$-divisible groups over $\Q$
	\begin{equation}
			{e^*}'J_r[p^{\infty}] \simeq {e^*}'B_r^*[p^{\infty}]\quad\text{and}\quad
			{e}'J_r[p^{\infty}] \simeq {e}'B_r[p^{\infty}],
	\label{OrdBTisoms}
	\end{equation}
	respectively, that are $\H^*$ $($respectively $\H$$)$ equivariant 
	and compatible with change in $r$ via $\Alb(\ps)$ and $\Pic^0(\pr)$
	$($respectively $\Alb(\pr)$ and $\Pic^0(\ps)$$)$.
\end{proposition}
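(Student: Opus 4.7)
My plan is to prove the first isomorphism ${e^*}' J_r[p^\infty] \simeq {e^*}' B_r^*[p^\infty]$; the second isomorphism for $\alpha_r$ then follows by applying the Atkin-Lehner automorphism $w_r$ over $K_r'$, which by Propositions \ref{BrCotIden} and \ref{AtkinInterchange} identifies $B_r^*$ with $B_r$ and intertwines ${e^*}'$ with ${e}'$; both sides of the isomorphism are defined over $\Q$, so the resulting isomorphism of $p$-divisible groups over $K_r'$ descends uniquely.

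By construction, $A_r^* := \ker(\alpha_r^*)$ is a connected abelian subvariety of $J_r$, so there is a short exact sequence of $p$-divisible groups over $\Q$
\begin{equation*}
0 \longrightarrow A_r^*[p^\infty] \longrightarrow J_r[p^\infty] \longrightarrow B_r^*[p^\infty] \longrightarrow 0
\end{equation*}
on which the idempotent ${e^*}' \in \H_r^* \otimes \Z_p$ acts equivariantly. Since application of an idempotent preserves exactness, it suffices to prove that ${e^*}' A_r^*[p^\infty] = 0$. Because we are in characteristic zero, Lemma \ref{LieFactorization}(\ref{ExactnessOfLie}) allows us to detect vanishing of a Hecke operator on $A_r^*$ on Lie/cotangent spaces (with appropriate care given that ${e^*}'$ lives only after $p$-adic completion, by passing to $V_p A_r^* \otimes \Q_p$ and using that its Hecke eigenvalues are matched by those on $\Cot(A_r^*) \otimes \Qbar_p$ and $\Lie(A_r^*) \otimes \Qbar_p$ via Hodge--Tate/de Rham comparison). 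Via Proposition \ref{BrCotIden}, this further reduces to proving the inclusion
\begin{equation*}
{e^*}' \, S_2(\Gamma_r; \Qbar_p) \;\subseteq\; V_r^* \otimes_{\Q} \Qbar_p.
\end{equation*}

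To prove this containment, I decompose $S_2(\Gamma_r;\Qbar_p) = \bigoplus_\chi S_2(\Gamma_r, \chi)$ by nebentype $\chi = \chi_N\chi_p$ and treat two cases. If $\chi_p$ is trivial, then $\mu_{p-1}\subset \Z_p^\times$ acts trivially on $S_2(\Gamma_r,\chi)$ through the diamond operators, so $f'$---and hence ${e^*}' = f'e^*$---annihilates this summand. If $\chi_p$ has conductor $p^i$ with $1 \le i \le r$, then $S_2(\Gamma_r,\chi)$ decomposes as a direct sum of old-form spaces $\bigoplus_{n=0}^{r-j} \Qbar_p\cdot \iota_{\alpha^n} g$ attached to newforms $g$ of level $Np^j$ with $j \le i$ and nebentype $\chi$. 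On such an old-form space, the identity $U_p \iota_\alpha = \mathrm{id}$ together with $U_p g = a_p(g)g$ shows that $U_p$ has matrix with a unique nonzero eigenvalue $a_p(g)$ and $r-j$ additional zero eigenvalues. Conjugating by the Atkin-Lehner $w_r$ and using Lemma \ref{MFtraceLem}, specifically the identity $\iota_{\alpha^{r-i}} \tr_{r,i} = (U_p^*)^{r-i}\sum_{\delta\in\Delta_i/\Delta_r}\langle\delta\rangle$ on the $\Delta_i/\Delta_r$-invariant part, identifies the analogous unit-eigenline for $U_p^*$ as precisely $\Qbar_p\cdot \iota_{\alpha^{r-j}}g \subseteq V_r^*$. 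Thus $e^*$ either annihilates the old-form space (when $g$ is not $p$-ordinary) or projects it into $V_r^*$; in either case, the target of ${e^*}'$ lies in $V_r^*$, completing the inclusion.

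The compatibility of the isomorphisms with change in $r$ via $\Alb(\sigma),\Pic^0(\rho)$ on the $\alpha_r^*$-side (respectively $\Alb(\rho),\Pic^0(\sigma)$ on the $\alpha_r$-side) is then immediate: Lemma \ref{Btower} provides the requisite maps between the $B_r^*$'s compatibly with the $\alpha_r^*$'s, and the uniqueness assertion in Proposition \ref{BrCotIden} (via Lemma \ref{LieFactorization}(\ref{LieFactorizationSurj})) forces these to intertwine with the degeneracies on $J_r$. The main obstacle in this plan is the old-form eigenvalue calculation in the case $i \ge 1$---in particular, carefully transporting the (more familiar) statement about $U_p$ on the basis $(g, \iota_\alpha g, \ldots, \iota_{\alpha^{r-j}}g)$ to the corresponding statement about $U_p^*$ and the idempotent ${e^*}'$, which requires Lemma \ref{MFtraceLem} as its essential input.
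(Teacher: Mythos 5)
Your proof is correct in essence, but it takes a genuinely different route from the paper. The paper's argument is the abstract one already encapsulated in Lemma \ref{UFactorDiagLem}: by Corollary \ref{UpProjection} (which rests on the level-contraction Lemma \ref{UpContract}), the operator $U_r^* = (p-1)f'(U_p^*)^{r+1}$ on $J_r$ has cotangent image inside $V_r^*$ and therefore factors as $U_r^* = W_r^*\circ \alpha_r^*$ for a unique $W_r^*:B_r^*\to J_r$. Passing to ${e^*}'$ of the $p$-divisible groups makes $U_r^*$ invertible on both sides, and a short diagram chase then forces $\alpha_r^*$ to be an isomorphism. No analysis of individual eigensystems is required, and (crucially) no detour through comparison isomorphisms relating Galois and coherent Hecke eigenvalues.

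You instead work directly with the kernel $A_r^*=\ker\alpha_r^*$ and reduce to showing ${e^*}'$ kills its Tate module, which you then reduce to the coherent-cohomology statement ${e^*}'S_2(\Gamma_r;\Qbar_p)\subseteq V_r^*\otimes\Qbar_p$, and prove that by an explicit $U_p$-eigenvalue computation on old-form spaces. This is a valid strategy, but it pays two costs the paper's argument avoids: (i) the eigensystem comparison between $V_pA_r^*$ and $\Cot(A_r^*)\otimes\Qbar_p$ needs to be justified cleanly (it follows from Lemma \ref{LieFactorization}(\ref{ExactnessOfLie}) applied to each $T^*\in\H_r^*(\Z)$ annihilating $\Cot(A_r^*)$, not from $p$-adic Hodge theory directly, since ${e^*}'$ is a $p$-adic limit and one must be careful that the \emph{unit-root} eigensystems are matched, not merely the rational eigensystems); and (ii) your old-form analysis needs the additional observation that the unit eigenline $\Qbar_p\cdot\iota_{\alpha^{r-j}}(w_jg)$ lands in $V_r^*$ only when $j=i$, since $V_r^*$ is built out of \emph{primitive} characters. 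This is fine once one recalls that for $j>i$ (hence $j\ge 2$) a $p$-newform of level $Np^j$ with $p$-nebentype conductor $p^i<p^j$ has $a_p(g)=0$, so such $g$ are never $p$-ordinary and contribute nothing---but you should say this explicitly rather than leave it implicit in the phrase ``when $g$ is not $p$-ordinary.'' Also note the inequality should read $i\le j\le r$ rather than $j\le i$. The upshot: your approach gives more concrete information about which newforms populate $\G_r$, but the paper's factorization argument is shorter, purely algebraic, and independent of any comparison theorem or eigenvalue bookkeeping.
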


We view the maps (\ref{UrDefinition})
as endomorphisms of $J_r$ in the obvious way, and again write
$U_r^*$ and $U_r$ for the induced endomorphism of
$B_r^*$ and $B_r$, respectively.  To prove Proposition \ref{GoodRednProp},
we need the following geometric incarnation of
Corollary \ref{UpProjection}:

\begin{lemma}\label{UFactorDiagLem}
		There exists a unique $\H_r^*(\Z)$ $($respectively $\H_r(\Z)$$)$-equivariant 
		map $W_r^*:B_r^*\rightarrow J_r$ $($respectively $W_r:B_r\rightarrow J_r$$)$
		of abelian varieties over $\Q$ such that the diagram
	\begin{equation}
	\begin{gathered}
		\xymatrix@C=30pt@R=35pt{
			 {J_r}\ar[d]_-{U_r^*} 
			\ar@{->>}[r]^-{\alpha_r^*} & {B_r^*} \ar[dl]|-{W_r^*} \ar[d]^-{U_r^*} \\
		 {J_r}\ar@{->>}[r]_-{\alpha_r^*} & {B_r^*}
		}
		\quad\raisebox{-24pt}{respectively}\quad
		\xymatrix@C=30pt@R=35pt{
			 {J_r}\ar[d]_-{U_r} 
			\ar@{->>}[r]^-{\alpha_r} & {B_r} \ar[dl]|-{W_r} \ar[d]^-{U_r} \\
		 {J_r}\ar@{->>}[r]_-{\alpha_r} & {B_r}
		}\label{UFactorDiag}
	\end{gathered}
	\end{equation}
	commutes.  
\end{lemma}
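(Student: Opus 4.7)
The strategy is to apply Lemma \ref{LieFactorization} (\ref{LieFactorizationSurj}) to factor $U_r^*$ (respectively $U_r$), viewed as an endomorphism of $J_r$, through the surjection $\alpha_r^*$ (respectively $\alpha_r$), whose kernel is connected \emph{by construction} in (\ref{BrDef}). Once we produce $W_r^* \colon B_r^* \to J_r$ with $W_r^* \circ \alpha_r^* = U_r^*$, the second commutativity in (\ref{UFactorDiag}) will follow formally from the surjectivity and Hecke-equivariance of $\alpha_r^*$, and uniqueness and Hecke-equivariance of $W_r^*$ will drop out of the same surjectivity. I treat the $(\cdot)^*$-case; the other case is entirely symmetric, using Corollary \ref{UpProjection} with $V_r$ and $\o V_r$ in place of $V_r^*$ and $\o V_r^*$.

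First I would verify that $\Cot(U_r^*) \colon \Cot(J_r) \to \Cot(J_r)$ factors through the inclusion $\Cot(\alpha_r^*) \colon \Cot(B_r^*) \hookrightarrow \Cot(J_r)$. Under the canonical identification $\Cot(J_r) \otimes_{\Q} \o{\Q} \simeq S_2(\Gamma_r)$ (which is compatible with the $\H_r^*$-action, e.g.\ by the convention of \cite[3.1.5]{OhtaEichler} or via the Atkin--Lehner identity $U_p^* = w_r U_p w_r^{-1}$ over $F$ and the fact that $\Cot(w_r)$ acts as $w_r$ on $S_2(\Gamma_r)_F$), the endomorphism $\Cot(U_r^*)$ corresponds to the operator $U_r^* = h' (U_p^*)^{r+1}$ on $S_2(\Gamma_r)$. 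By Corollary \ref{UpProjection}, this operator factors through the subspace $\o V_r^*$, and hence by Galois descent through $V_r^*$. By Proposition \ref{BrCotIden}, $V_r^* = \im(\Cot(\alpha_r^*))$, so $\Cot(U_r^*)$ factors through $\Cot(\alpha_r^*)$ as desired.

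Next, Lemma \ref{LieFactorization} (\ref{LieFactorizationSurj}) applied to the surjection $\alpha_r^* \colon J_r \twoheadrightarrow B_r^*$ with connected kernel produces a unique homomorphism of abelian varieties $W_r^* \colon B_r^* \to J_r$ over $\Q$ with $W_r^* \circ \alpha_r^* = U_r^*$. To deduce the second identity $\alpha_r^* \circ W_r^* = U_r^*$ (as an endomorphism of $B_r^*$), I compute
\[
(\alpha_r^* \circ W_r^*) \circ \alpha_r^* = \alpha_r^* \circ U_r^* = U_r^* \circ \alpha_r^* ,
\]
where the last equality uses the $\H_r^*(\Z)$-equivariance of $\alpha_r^*$ guaranteed by Proposition \ref{BrCotIden}; cancelling the surjection $\alpha_r^*$ on the right yields $\alpha_r^* \circ W_r^* = U_r^*$. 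The same cancellation gives uniqueness of $W_r^*$: any $\widetilde W$ with $\widetilde W \circ \alpha_r^* = U_r^*$ must equal $W_r^*$.

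Finally, for $\H_r^*(\Z)$-equivariance, given any $T \in \H_r^*(\Z)$, both $T \circ W_r^*$ and $W_r^* \circ T$ are maps $B_r^* \to J_r$ whose composition with $\alpha_r^*$ on the right equals $T \circ U_r^* = U_r^* \circ T$ (using again the Hecke-equivariance of $\alpha_r^*$), so they agree by the uniqueness just established. The main (and only real) obstacle in this argument is the bookkeeping that identifies $\Cot(U_r^*)$ on $\Cot(J_r)$ with the classical Hecke operator $U_r^*$ on $S_2(\Gamma_r)$ in the first step; everything else is a formal consequence of Lemma \ref{LieFactorization} together with Corollary \ref{UpProjection} and Proposition \ref{BrCotIden}.
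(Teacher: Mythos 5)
Your proposal is correct and follows essentially the same route as the paper: invoke Corollary \ref{UpProjection} to see that $\Cot(U_r^*)$ lands in $V_r^*=\im(\Cot(\alpha_r^*))$, then apply Lemma \ref{LieFactorization} (\ref{LieFactorizationSurj}), using the connectedness of $\ker(\alpha_r^*)$ built into (\ref{BrDef}), to obtain the unique $\H_r^*(\Z)$-equivariant factorization $W_r^*$. You spell out more explicitly than the paper how the right-square commutativity, uniqueness, and Hecke-equivariance all drop out of post-composition with the surjection $\alpha_r^*$, but the key idea and the lemmas invoked are identical.
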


\begin{proof}
	Consider the endomorphism of $J_r$ given by $U_r$.
	Due to Corollary \ref{UpProjection},
	the induced mapping on cotangent spaces factors through the inclusion
	$\Cot(B_r)\hookrightarrow \Cot(J_r)$. Since the kernel of the quotient
	mapping $\alpha_r:J_r\twoheadrightarrow B_r$ giving rise to this inclusion is connected,
	we conclude from Lemma \ref{LieFactorization} (\ref{LieFactorizationSurj})
	that $U_r$ factors uniquely through $\alpha_r$
	via an $\H_r$-equivariant morphism $W_r:B_r\rightarrow J_r$.  
	The corresponding statements for $B_r^*$ are proved similarly.
\end{proof}

\begin{proof}[Proof of Proposition $\ref{GoodRednProp}$]
	From (\ref{UFactorDiag}) we get commutative diagrams of $p$-divisible groups 
	over $\Q$
	\begin{equation}
	\begin{gathered}
		\xymatrix{
			{e^*}'{J_r}[p^{\infty}]\ar[d]_-{U_r^*}^-{\simeq} \ar[r]^-{\alpha_r^*} & 
			{e^*}'{B_r^*}[p^{\infty}] \ar[dl]|-{W_r^*} \ar[d]^-{U_r^*}_-{\simeq} \\
			{e^*}'{J_r}[p^{\infty}]\ar[r]_-{\alpha_r^*} & 
			{e^*}'{B_r^*}[p^{\infty}]
		}
		\quad\raisebox{-24pt}{and}\quad
		\xymatrix{
			e'{J_r}[p^{\infty}]\ar[d]_-{U_r}^-{\simeq} \ar[r]^-{\alpha_r} & 
			e'{B_r}[p^{\infty}] \ar[dl]|-{W_r} \ar[d]^-{U_r}_-{\simeq} \\
			e'{J_r}[p^{\infty}]\ar[r]_-{\alpha_r} & 
			e'{B_r}[p^{\infty}]
		}
		\label{UFactorDiagpDiv}
	\end{gathered}
	\end{equation}
	in which all vertical arrows are isomorphisms due to the very definition of the
	idempotents ${e^*}'$ and $e'$.  An easy diagram chase then shows that {\em all}
	arrows must be isomorphisms.
\end{proof}

We will write $\B_r$, $\B^*_r$, and $\J_r$, respectively, for the N\'eron models of
the base changes $(B_r)_{K_r}$, $(B_r^*)_{K_r}$ and $(J_r)_{K_r}$
over $T_r:=\Spec(R_r)$;  due to Proposition \ref{GoodRednProp}, both $\B_r$ and $\B_r^*$ are abelian
schemes over $T_r$.  
By the N\'eron mapping property, there are canonical actions of $\H_r(\Z)$ on $\B_r$, $\J_r$
and of $\H_r^*(\Z)$ on $\B_r^*$, $\J_r$ over $R_r$ extending the actions on generic fibers
as well as ``semilinear" actions of $\Gamma$
over the $\Gamma$-action on $R_r$ ({\em cf.} (\ref{GammaAction})).
For each $r$, the N\'eron mapping property further provides diagrams 
\begin{equation}
	\begin{gathered}
	\xymatrix{
		 {\J_r \times_{T_r} T_{r+1}}\ar@<-1ex>[d]_{\Pic^0(\pr)} \ar[r]^-{\alpha_r^*} 
		& {\B_r^* \times_{T_r} T_{r+1}}\ar@<1ex>[d]^{\Pic^0(\pr)} \\
		{\J_{r+1}} \ar[r]_-{\alpha_{r+1}^*} \ar@<-1ex>[u]_-{\Alb(\ps)} & \ar@<1ex>[u]^-{\Alb(\ps)} {\B_{r+1}^*} 
	}
	\quad\raisebox{-24pt}{respectively}\quad
	\xymatrix{
		 {\J_r \times_{T_r} T_{r+1}}\ar@<-1ex>[d]_{\Pic^0(\ps)} \ar[r]^-{\alpha_r} 
		& {\B_r \times_{T_r} T_{r+1}}\ar@<1ex>[d]^{\Pic^0(\ps)} \\
		{\J_{r+1}} \ar[r]_-{\alpha_{r+1}}\ar@<-1ex>[u]_-{\Alb(\pr)} & \ar@<1ex>[u]^-{\Alb(\pr)} {\B_{r+1}} 
	}
	\label{Nermaps}
	\end{gathered}
\end{equation}
of smooth commutative group schemes over $T_{r+1}$ in which the inner and outer rectangles commute, and
all maps are $\H_{r+1}^*(\Z)$ (respectively $\H_{r+1}(\Z)$) and $\Gamma$ equivariant.

\begin{definition}\label{ordpdivdefn}
	We define $\G_r:={e^*}'\left(\B_r^*[p^{\infty}]\right)$ and 
	we write $\G_r':=\G_r^{\vee}$ for its Cartier dual,
	each of which is canonically an object of $\pdiv_{R_r}^{\Gamma}$.
	For each $r\ge s$, noting that $U_p^*$ is an automorphism of $\G_r$,
	we obtain from (\ref{Nermaps}) canonical morphisms
	\begin{equation}
		\xymatrix@C=45pt{
		{\rho_{r,s}:\G_{s}\times_{T_{s}} T_{r}} \ar[r]^-{\Pic^0(\pr)^{r-s}} &  {\G_{r}}
		}
		\qquad\text{and}\qquad
		\xymatrix@C=70pt{
		{\rho_{r,s}' : \G_{s}'\times_{T_{s}} T_{r}} \ar[r]^-{{({U_p^*}^{-1}\Alb(\ps))^{\vee}}^{r-s}} & {\G_{r}'}
		}\label{pdivTowers}
	\end{equation}
	in $\pdiv_{R_r}^{\Gamma}$, where $(\cdot)^{i}$ denotes the $i$-fold composition, formed in the obvious manner.
	In this way, we get towers of $p$-divisible groups 
	$\{\G_r,\rho_{r,s}\}$ and $\{\G_r',\rho_{r,s}'\}$;
	we will write $G_r$ and $G_r'$ for the unique descents of the generic fibers of $\G_r$
	and $\G_r'$ to $\Q_p$, respectively.\footnote{Of course, $G_r'=G_r^{\vee}$.  
	Our non-standard notation $\G_r'$ for the Cartier  
	dual of $\G_r$ is preferrable, due to the fact that $\rho_{r,s}'$ is 
	{\em not} simply the dual of $\rho_{r,s}$; indeed, these two mappings go in opposite
	directions!}
	We let $T^*\in \H_r^*$ act on $\G_r$ 
	through the action of $\H_r^*(\Z)$ on $\B_r^*$,
	and on $\G_r'=\G_r^{\vee}$ by duality ({\em i.e.} as $(T^*)^{\vee}$).  
	The maps (\ref{pdivTowers}) are then $\H_r^*$-equivariant.
\end{definition}

By Proposition \ref{GoodRednProp}, $G_r$ 
is canonically isomorphic to ${e^*}'J_r[p^{\infty}]$, compatibly with the action of $\H_r^*$.
Since $J_r$ is a Jacobian---hence principally polarized---one might expect that $\G_r$
is isomorphic to its dual in $\pdiv_{R_r}^{\Gamma}$.  However, this is {\em not quite} the case
as the canonical isomorphism $J_r\simeq J_r^{\vee}$ intertwines the actions of $\H_r$
and $\H_r^*$, thus interchanging the idempotents ${e^*}'$ and $e'$.  To describe
the precise relationship between $\G_r^{\vee}$ and $\G_r$, we proceed as follows.
For each $\gamma\in \Gal(K_r'/K_0)\simeq \Gamma\times \Gal(K_0'/K_0)$, 
let us write $\phi_{\gamma}: {G_r}_{K_r'}\xrightarrow{\simeq} \gamma^*({G_r}_{K_r'})$
for the descent data isomorphisms encoding the unique $\Q_p=K_0$-descent of ${G_r}_{K_r'}$ furnished by $G_r$.
We ``twist" this descent data by the $\Aut_{\Q_p}(G_r)$-valued character $\langle \chi\rangle\langle a\rangle_N$
of $\Gal(K_{\infty}'/K_0)$:
explicitly, for $\gamma\in \Gal(K_{r}'/K_0)$ we set 
$\psi_{\gamma}:= \phi_{\gamma}\circ \langle \chi(\gamma)\rangle\langle a(\gamma)\rangle_N$
and note that since $\langle \chi(\gamma)\rangle\langle a(\gamma)\rangle_N$ 
is defined over $\Q_p$, the map $\gamma\rightsquigarrow \psi_{\gamma}$
really does satisfy the cocycle condition.  
We denote by $G_r(\langle \chi\rangle\langle a\rangle_N)$ the unique $p$-divisible group over $\Q_p$
corresponding to this twisted descent datum.
Since the diamond operators commute with the Hecke operators, there is a canonical
induced action of $\H_r^*$ on $G_r(\langle \chi\rangle\langle a\rangle_N)$.  
By construction, there is a canonical 
$K_r'$-isomorphism 
$G_r(\langle \chi\rangle\langle a\rangle_N)_{K_r'}\simeq {G_r}_{K_r'}$. Since
$G_r$ acquires good reduction over $K_r$ and the $\scrG_{K_r}$-representation
afforded by the Tate module of $G_r(\langle \chi\rangle\langle a\rangle_N)$
is the twist of $T_pG_r$ by the {\em unramified} character $\langle a\rangle_N$,
we conclude that $G_r(\langle \chi\rangle\langle a\rangle_N)$ also acquires 
good reduction over $K_r$, and we denote the resulting object of $\pdiv_{R_r}^{\Gamma}$
by $\G_r(\langle \chi\rangle\langle a\rangle_N)$.

\begin{proposition}\label{GdualTwist}
	There is a natural $\H_r^*$-equivariant isomorphism of $p$-divisible groups over $\Q_p$
	\begin{equation}
		G_r' \simeq G_r(\langle \chi\rangle \langle a\rangle_N)
		\label{GrprimeGr}
	\end{equation}
	which uniquely extends to an isomorphism of the corresponding objects in $\pdiv_{R_r}^{\Gamma}$
	and is compatible with change in $r$ using $\rho_{r,s}'$ on $G_r'$ and $\rho_{r,s}$ on $G_r$.
\end{proposition}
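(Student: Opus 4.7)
The plan is to construct the isomorphism by composing the canonical principal polarization of the Jacobian $J_r$ with the Atkin--Lehner automorphism $w_r$, and then to identify the resulting twist of descent data using the commutation relations of Proposition \ref{ALinv}.

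More precisely, I would first use the canonical principal polarization $\lambda:J_r\xrightarrow{\sim} J_r^{\vee}$, which is defined over $\Q$ and, by the standard theory of Jacobians, intertwines the action of $T\in\H_r$ on $J_r$ with the action of $T^*\in\H_r^*$ on $J_r^\vee$. Applying $[p^{\infty}]$ and Cartier dualizing gives an isomorphism $J_r[p^{\infty}]^{\vee}\simeq J_r[p^{\infty}]$ over $\Q$ which carries the $T^*$-action on the left to the $T$-action on the right. Projecting to the appropriate idempotent eigenspaces yields an $\H_r^*$-equivariant isomorphism of $p$-divisible groups over $\Q$
\[
  G_r^{\vee}=({e^*}'J_r[p^{\infty}])^{\vee}\;\simeq\; e'J_r[p^{\infty}],
\]
where $T^*$ acts on the right-hand side through its image under $\lambda$, which on $e'J_r[p^{\infty}]$ is the $T$-action.

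Next I would invoke Proposition \ref{AtkinInterchange}, which gives $w_r T=T^* w_r$, so that $w_r e=e^* w_r$ and $w_r e'=e^{*'}w_r$. Hence $w_r$ defines an isomorphism of $p$-divisible groups over $K_r'$
\[
  w_r:\; e'J_r[p^{\infty}]\;\xrightarrow{\sim}\;{e^*}'J_r[p^{\infty}]=G_r,
\]
and this map is equivariant when $T^*\in\H_r^*$ on the source (inherited from the preceding step via $\lambda$) is transported through $w_r$ to $T^*$ on $G_r$. Composing with the polarization gives an $\H_r^*$-equivariant isomorphism $\Phi: G_r^{\vee}\xrightarrow{\sim} G_r$ of $p$-divisible groups over $K_r'$, not over $\Q_p$.

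The heart of the argument is then the comparison of descent data. Since $\lambda$ is defined over $\Q$, the descent datum on $G_r^{\vee}$ arising from the canonical descent of $J_r$ matches that of $G_r$ through $\lambda$. The obstruction to $\Phi$ being defined over $\Q_p$ thus comes entirely from $w_r$; by the second identity of Proposition \ref{ALinv},
\[
  (\gamma^*w_r)\,\gamma \;=\; \gamma\, w_r\,\langle\chi(\gamma)\rangle^{-1}\langle a(\gamma)\rangle_N^{-1},
\]
which precisely says that, when one uses the canonical descent of $G_r$ on the target, the twist on the source by the character $\langle\chi\rangle\langle a\rangle_N$ is exactly what is needed for $\Phi$ to become Galois-equivariant. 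This yields the claimed $\H_r^*$-equivariant $\Q_p$-isomorphism $G_r'\simeq G_r(\langle\chi\rangle\langle a\rangle_N)$. The unique extension to an isomorphism in $\pdiv_{R_r}^{\Gamma}$ follows from Tate's theorem, since both sides acquire good reduction over $K_r$.

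Finally, compatibility with change in $r$ amounts to checking that $\Phi$ intertwines $\rho'_{r,s}=(({U_p^*}^{-1}\Alb(\sigma))^{\vee})^{r-s}$ with $\rho_{r,s}=\Pic^0(\rho)^{r-s}$, and it is enough to treat $r=s+1$. Using the autoduality $J_r^{\vee}\simeq J_r$ (which swaps $\Alb$ and $\Pic^0$ and swaps starred and unstarred Hecke operators) together with the last two identities of Proposition \ref{ALinv}, namely $\rho\,w_{r+1}=w_r\,\sigma$ and $\sigma\, w_{r+1}=\langle p\rangle_N\,w_r\,\rho$, this reduces to a direct compatibility verification. The main obstacle is precisely this bookkeeping: chasing the twist by $\langle\chi\rangle\langle a\rangle_N$, the idempotent interchange $e\leftrightarrow e^*$, and the factor $\langle p\rangle_N$ that appears in $\sigma w_{r+1}$ so that the inverse of $U_p^*$ in $\rho'_{r,s}$ correctly matches $\Pic^0(\rho)$ under $\Phi$.
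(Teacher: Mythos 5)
Your overall strategy — compose the principal polarization $\varphi_r^{-1}$ with the Atkin--Lehner automorphism $w_r$ and then read off the required twist from the second identity of Proposition \ref{ALinv} — is the same as the paper's, and the descent argument you give is fine. However, there is a genuine gap in your construction: you omit the factor $({U_p^*}^{\,r})^{\vee}$. The paper defines
\[
\psi_r:\; {J_r}_{K_r'}^{\vee}\xrightarrow{\;({U_p^*}^{\,r})^{\vee}\;}{J_r}_{K_r'}^{\vee}
\xrightarrow{\;\varphi_r^{-1}\;}{J_r}_{K_r'}\xrightarrow{\;w_r\;}{J_r}_{K_r'},
\]
whereas you use only $w_r\circ\varphi_r^{-1}$. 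This extra factor is not optional: the transition maps $\rho_{r,s}'$ in Definition \ref{ordpdivdefn} are $(({U_p^*}^{-1}\Alb(\ps))^{\vee})^{r-s}$, so they carry a built-in $({U_p^*}^{-1})^{\vee}$ at each step. The identity $\pr w_{r+1}=w_r\ps$ gives compatibility of $w_r\circ\varphi_r^{-1}$ with $\Alb(\ps)^{\vee}$ on the source and $\Pic^0(\pr)$ on the target, but \emph{not} with $({U_p^*}^{-1}\Alb(\ps))^{\vee}$; the discrepancy at level $r$ versus level $s$ is exactly a factor of $({U_p^*}^{s-r})^{\vee}$, and precomposing with $({U_p^*}^{\,r})^{\vee}$ (respectively $({U_p^*}^{\,s})^{\vee}$) at each level is what cancels it.

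You sense there is bookkeeping to do here ("the factor $\langle p\rangle_N$ that appears in $\ps w_{r+1}$ so that the inverse of $U_p^*$ ... correctly matches $\Pic^0(\rho)$"), but this is aimed at the wrong identity: the compatibility diagram uses $\pr w_{r+1}=w_r\ps$, not $\ps w_{r+1}=\langle p\rangle_N w_r\pr$, and the factor $\langle p\rangle_N$ does not resolve the $U_p^*$ mismatch. Without explicitly inserting $({U_p^*}^{\,r})^{\vee}$ into the definition of the isomorphism, the compatibility with change in $r$ fails, so the proposal as written does not establish the final claim of the proposition.
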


\begin{proof}
	Let $\varphi_r: J_r\rightarrow J_r^{\vee}$ be the canonical principal polarization over $\Q_p$;
	one then has the relation $\varphi_r\circ T = (T^*)^{\vee}\circ \varphi_r$
	for each $T\in \H_r(\Z)$.  On the other hand, the $K_r'$-automorphism 
	$w_r: {J_r}_{K_r'}\rightarrow {J_r}_{K_r'}$ intertwines $T\in \H_r(\Z)$ with $T^*\in \H_r^*(\Z)$.
	Thus, the $K_r'$-morphism 
	\begin{equation*}
		\xymatrix{
			{\psi_r:{J_r}_{K_r'}^{\vee}} \ar[r]^-{({U_p^*}^{r})^{\vee}} & 
			{{J_r}_{K_r'}^{\vee}} \ar[r]^-{\varphi_r^{-1}}_{\simeq} & {J_r}_{K_r'} 
			\ar[r]^-{w_r}_{\simeq} & {{J_r}_{K_r'}} 
		}
	\end{equation*}
	is $\H_r^*(\Z)$-equivariant. 
	Passing to the induced map on $p$-divisible groups and applying ${e^*}'$, we
	obtain from Proposition \ref{GoodRednProp} an $\H_r^*$-equivariant isomorphism 
	of $p$-divisible groups $\psi_r: {G_r'}_{K_r'} \simeq {G_r}_{K_r'}$. As
	\begin{equation*}
		\xymatrix@C=35pt{
			{{J_r}_{K_r'}} \ar[r]^-{\langle \chi(\gamma)\rangle \langle a\rangle_N w_r}\ar[d]_-{1\times \gamma} & 
			{{J_r}_{K_r'}}\ar[d]^-{1\times \gamma} \\
			{({J_r}_{K_r'})_{\gamma}} \ar[r]_-{\gamma^*(w_r)} & {({J_r}_{K_r'})_{\gamma}}	
		}
	\end{equation*}
	commutes for all $\gamma\in \Gal(K_r'/K_0)$ by Proposition \ref{ALinv},
	the $K_r'$-isomorphism $\psi_r$ uniquely descends to an
	$\H_r^*$-equivariant isomorphism (\ref{GrprimeGr})
	of $p$-divisible groups over $\Q_p$.
	By Tate's Theorem, this identification 
	uniquely extends to an isomorphism of the corresponding objects in $\pdiv_{R_r}^{\Gamma}$.
	The asserted compatibility with change in $r$ boils down to the commutativity of the diagrams
	\begin{equation*}
	\begin{gathered}
		\xymatrix{
 {{e^*}'J_s[p^{\infty}]^{\vee}} \ar[r]^-{({U_p^*}^{s})^{\vee}} \ar[d]_-{{({U_p^*}^{-1}\Alb(\ps))^{\vee}}^{r-s}}&
 {{e^*}'J_s[p^{\infty}]^{\vee}} \ar[d]^-{{\Alb(\ps)^{\vee}}^{r-s}} \\
{{e^*}'J_r[p^{\infty}]^{\vee}} \ar[r]_-{({U_p^*}^{r})^{\vee}} & {{e^*}'J_r[p^{\infty}]^{\vee}} \\
		}
		\quad\raisebox{-22pt}{and}\quad
		\xymatrix{
			 {{J_s}_{K_r'}^{\vee}} \ar[r]^-{\varphi_s^{-1}} \ar[d]_-{{\Alb(\ps)^{\vee}}^{r-s}}
			  & {J_s}_{K_r'} \ar[r]^-{w_s} \ar[d]|-{\Pic^0(\ps)^{r-s}} & {{J_s}_{K_r'}}\ar[d]^-{\Pic^0(\pr)^{r-s}}\\
			{{J_r}_{K_r'}^{\vee}} \ar[r]_-{\varphi_r^{-1}}
			 & {J_r}_{K_r'} \ar[r]_-{w_r} & {{J_r}_{K_r'}} 
		}
	\end{gathered}
	\end{equation*}
	for all $s\le r$.  The commutativity of the first diagram is clear, while that of the second follows
	from Proposition \ref{ALinv} and the fact that 
	for {\em any} finite morphism $f:Y\rightarrow X$ of smooth curves over a field $K$, 
	one has $\varphi_Y\circ \Pic^0(f)=\Alb(f)^{\vee}\circ \varphi_X$, where 
	$\varphi_{\star}:J_{\star}\rightarrow J_{\star}^{\vee}$ is the canonical
	principal polarization on Jacobians for $\star=X,Y$ (see, for example, the proof of Lemma 5.5 in \cite{CaisNeron}).
\end{proof}

We now wish to relate the special fiber of $\G_r$ to the $p$-divisible
group $\Sigma_r:={e^*}'\Pic^0_{\nor{\o{\X}}_r/\F_p}[p^{\infty}]$
of Definition \ref{pDivGpSpecial}.  In order to do this, we proceed as follows.
Since $\X_r$ is regular, and proper flat over $R_r$ with (geometrically) reduced special fiber,
$\Pic^0_{\X_r/R_r}$ is a smooth $R_r$-scheme by \S8.4 Proposition 2 and \S9.4 Theorem 2 of \cite{BLR}.
By the N\'eron mapping property, we thus have a natural mapping $\Pic^0_{\X_r/R_r}\rightarrow \J_r^0$
that recovers the canonical identification on generic fibers, and is in fact an isomorphism
by \cite[\S9.7, Theorem 1]{BLR}.  Composing with the map $\alpha_r^*:\J_r\rightarrow \B_r^*$ 
and passing to special fibers
yields a homomorphism of smooth commutative algebraic groups over $\F_p$
\begin{equation}
	\xymatrix{
 		{\Pic^0_{\o{\X}_r/\F_p}} \ar[r]^-{\simeq} & {\o{\J}_r^0} \ar[r] & {\o{\B}^*_r}
	}\label{PicToB}
\end{equation}
Due to \cite[\S9.3, Corollary 11]{BLR},
the normalization map $\nor{\o{\X}}_r\rightarrow \o{\X}$ induces a surjective homomorphism 
$\Pic^0_{\o{\X}_r/\F_p}\rightarrow {\Pic^0_{\nor{\o{\X}}_r/\F_p}}$
with kernel that is a smooth, connected {\em linear} algebraic group over $\F_p$.
As any homomorphism from an affine group variety to an abelian variety is zero, 
we conclude that (\ref{PicToB}) uniquely factors through this quotient, and we obtain
a natural map of abelian varieties:
\begin{equation}
	\xymatrix{
		{\Pic^0_{\nor{\o{\X}}_r/\F_p}} \ar[r] & {\o{\B}_r^*}
		}\label{AbVarMaps}
\end{equation}
that is necessarily equivariant for the actions of $\H_r^*(\Z)$ and $\Gamma$.
As in \ref{AlbPicIncl},
we write
$j_r^{\star}:=\Pic^0_{I_r^{\star}/\F_p}$ the Jacobian of $I_r^{\star}$ for $\star=0,\infty$.
The following Proposition relates the special fiber of $\G_r$ to the 
$p$-divisible group $\Sigma_r$ of Definition \ref{pDivGpSpecial}, and thus
enables an explicit description of the special fiber
of $\G_r$ in terms of the $p$-divisible groups of $j_r^{\star}$
({\em cf}. \S3 and \S4, Proposition 1 of \cite{MW-Hida} and pgs. 267--274 of \cite{MW-Iwasawa}).

\begin{proposition}\label{SpecialFiberOrdinary}
	The mapping $(\ref{AbVarMaps})$ induces an isomorphism
	of $p$-divisible groups over $\F_p$
	\begin{equation}
		\o{\G}_r := {{e^*}'\o{\B}_r^*[p^{\infty}]} \simeq 
		{{e^*}'\Pic^0_{\nor{\o{\X}}_r/\F_p}[p^{\infty}]}=:\Sigma_r
		\label{OnTheNose}
	\end{equation}
	that is $\H_r^*$ and $\Gamma$-equivariant and compatible with change in $r$ 
	via the maps $\pr_{r,s}$ on $\o{\G}_r$ and the maps $\Pic^0(\pr)^{r-s}$ on $\Sigma_r$.
	In particular, 
	$\G_r/R_r$ is an ordinary $p$-divisible group, and for each $r$ there is a canonical exact sequence,
	compatible with change in $r$ via $\pr_{r,s}$ on $\o{\G}_r$ and $\Pic^0(\pr)^{r-s}$ on $j_r^{\star}[p^{\infty}]$
	\begin{equation}
 		\xymatrix@C=45pt{
			0 \ar[r] & {f'j_r^{0}[p^{\infty}]^{\mult}} \ar[r]^-{\Alb(i_r^{0})\circ V^r} &
			{\o{\G}_r} \ar[r]^-{\Pic^0(i_r^{\infty})} & {f'j_r^{\infty}[p^{\infty}]^{\et}} \ar[r] & 0  
		}\label{GrSpecialExact}
	\end{equation}
	where $i_r^{\star}:I_r^{\star}\hookrightarrow \nor{\o{\X}}_r$ are the canonical closed immersions
	for $\star=0,\infty$.  Moreover, $(\ref{GrSpecialExact})$ is compatible with the actions of
	$\H^*$ and $\Gamma$, with $U_p^*$ $($respectively $\gamma\in \Gamma$$)$
	acting on $f'j_r^{0}[p^{\infty}]^{\mult}$ as $\langle p\rangle_N V$ 
	$($respectively $\langle \chi(\gamma)\rangle^{-1}$$)$ 
	and on $f'j_r^{\infty}[p^{\infty}]^{\et}$ as $F$ $($respectively $\id$$)$.
\end{proposition}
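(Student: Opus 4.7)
The plan is to first construct a natural comparison map $\Sigma_r \to \o{\G}_r$ from the geometric map (\ref{AbVarMaps}), then verify it is an isomorphism via Dieudonn\'e theory, and finally deduce the remaining assertions by transport of structure from Proposition~\ref{MWSharpening} via Proposition~\ref{GisOrdinary}.

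\textbf{Step 1 (construction of the comparison map).} The map (\ref{AbVarMaps}) is $\H_r^*(\Z)$- and $\Gamma$-equivariant by construction, so applying ${e^*}'$ to the induced homomorphism of $p$-divisible groups yields an $\H_r^*$- and $\Gamma$-equivariant morphism
\[
	\xymatrix{
		{\Sigma_r = {e^*}'\Pic^0_{\nor{\o{\X}}_r/\F_p}[p^{\infty}]} \ar[r] & {\o{\G}_r = {e^*}'\o{\B}_r^*[p^{\infty}]}
	}.
\]
Compatibility with change in $r$ via $\Pic^0(\pr)^{r-s}$ on the source and $\rho_{r,s}$ on the target follows from the functoriality of the Pic/N\'eron-model formation applied to the degeneracy maps $\pr$, together with the commuting square in (\ref{Nermaps}).

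\textbf{Step 2 (isomorphism via Dieudonn\'e modules).} I would verify that the above map is an isomorphism by passing to Dieudonn\'e modules and appealing to the full faithfulness of the contravariant Dieudonn\'e functor on $p$-divisible groups over $\F_p$. On the source, Proposition~\ref{GisOrdinary} identifies $\D(\Sigma_r)_{\F_p}$ with $(e_r^*)'H^1_{\dR}(\nor{\o{\X}}_r/\F_p)$, which by Lemma~\ref{CharacterSpaces} is canonically isomorphic to $(e_r^*)'H^1(\o{\X}_r/\F_p)$ via pullback along normalization. On the target, the canonical identification $\o{\J}_r^0 \simeq \Pic^0_{\o{\X}_r/\F_p}$, Oda's theorem in the form used in Proposition~\ref{OdaDieudonne}, and the definition of $\G_r = {e^*}'\B_r^*[p^{\infty}]$ via the quotient $\alpha_r^*$ allow us to identify $\D(\o{\G}_r)_{\F_p}$ with $(e_r^*)'H^1(\o{\X}_r/\F_p)$ as well, since $\alpha_r^*$ induces an isomorphism of $p$-divisible groups on ordinary parts (Proposition~\ref{GoodRednProp}) and cohomology computes Dieudonn\'e modules of the associated $p$-divisible group of the Jacobian. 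Under these identifications, the Dieudonn\'e module of our comparison morphism coincides with the identity, and hence is an isomorphism. The main subtlety, and the hardest step to execute rigorously, is precisely this Dieudonn\'e-level matching: one must track how the normalization $\nor{\o{\X}}_r \to \o{\X}_r$ and the quotient $\alpha_r^*:\J_r\twoheadrightarrow \B_r^*$ interact on the level of integral cohomology and $p$-divisible groups. The technical crux is that the kernel of the natural surjection $\Pic^0_{\o{\X}_r/\F_p} \twoheadrightarrow \Pic^0_{\nor{\o{\X}}_r/\F_p}$ is a smooth connected linear algebraic group (hence contributes nothing to the abelian-variety quotient $\o{\B}_r^*$), so the two routes to $\D(\o{\G}_r)_{\F_p}$ agree on ${e^*}'$-parts.

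\textbf{Step 3 (ordinarity and the exact sequence).} Once $\o{\G}_r \simeq \Sigma_r$ is established, Proposition~\ref{GisOrdinary} gives that $\Sigma_r$ is ordinary, hence so is $\o{\G}_r$, and therefore $\G_r$ is ordinary over $R_r$ by the usual lifting of the connected-\'etale sequence through the Henselian base. The exact sequence (\ref{GrSpecialExact}) then follows by transporting the split short exact sequence (\ref{pDivUpic}) of Proposition~\ref{MWSharpening} through the isomorphism (\ref{OnTheNose}); the compatibility of the actions of $\H^*$ (with $U_p^*$ acting as $\langle p\rangle_N V$ on the multiplicative part and as $F$ on the \'etale part) and of $\Gamma$ (acting via $\langle \chi(\cdot)\rangle^{-1}$ on the multiplicative part and trivially on the \'etale part) is then inherited directly from the corresponding assertions of Proposition~\ref{MWSharpening}. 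The stated compatibility with change in $r$ reduces to the compatibility of the maps (\ref{AbVarMaps}) with the degeneracy $\pr$, which is formal from the construction.
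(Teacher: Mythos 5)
Your Steps 1 and 3 match the paper, but Step 2 contains a genuine gap: you flag the "Dieudonn\'e-level matching" as the technical crux and then don't supply a mechanism to carry it out. Observing that the kernel of $\Pic^0_{\o{\X}_r/\F_p}\twoheadrightarrow\Pic^0_{\nor{\o{\X}}_r/\F_p}$ is linear only tells you that (\ref{PicToB}) factors through (\ref{AbVarMaps}); it does not tell you that the factored map induces an isomorphism on ${e^*}'[p^\infty]$. Your appeal to Proposition~\ref{GoodRednProp} is a characteristic-zero statement about the generic fiber, and Oda's theorem (Proposition~\ref{OdaDieudonne}) is stated for \emph{smooth} curves, so neither directly produces the identification $\D(\o{\G}_r)_{\F_p}\simeq {e_r^*}'H^1(\o{\X}_r/\F_p)$ you need. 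As written, your chain of identifications secretly presupposes (\ref{OnTheNose}), so the argument is circular at the key point.

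The missing idea in the paper's proof is the factoring morphism $W_r^*\colon B_r^*\to J_r$ from Lemma~\ref{UFactorDiagLem}, through which $U_r^*$ factors on both sides of $\alpha_r^*$. Since $W_r^*$ is defined over $\Q$, it extends to N\'eron models over $R_r$, and on special fibers one obtains the commutative diagram (\ref{UFactorDiagmodp}) relating $\Pic^0_{\nor{\o{\X}}_r/\F_p}$ and $\o{\B}_r^*$, with the diagonal $W_r^*$. Passing to ${e^*}'$-parts of $p$-divisible groups makes both copies of $U_r^*$ into isomorphisms by definition of the idempotent, and the resulting ``sandwich'' forces every arrow, in particular $\o{\alpha}_r^*$, to be an isomorphism. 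This bypasses any explicit Dieudonn\'e-module computation entirely; it is the same trick already used to prove Proposition~\ref{GoodRednProp}, just run mod $p$. If you want to salvage the Dieudonn\'e-theoretic route, you would still need some independent input giving you a two-sided inverse on the level of ${e^*}'$-parts, and the existence of $W_r^*$ is exactly what provides it.
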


\begin{proof}
	The diagram (\ref{UFactorDiag}) induces a corresponding diagram of
	N\'eron models over $R_r$ and hence of special fibers over $\F_p$.
	Arguing as above, we obtain a commutative diagram of abelian
	varieties
	\begin{equation}
	\begin{gathered}
		\xymatrix@C=30pt@R=35pt{
			{\Pic^0_{\nor{\o{\X}}_r/\F_p}}\ar[d]_-{U_r^*} 
			\ar[r]^-{\o{\alpha}^*_r} & {\o{\B}_r^*} \ar[dl]^-{W_r^*} \ar[d]^-{U_r^*} \\
		 	{\Pic^0_{\nor{\o{\X}}_r/\F_p}}\ar[r]_-{\o{\alpha}^*_r} & {\o{\B}_r^*}
		}\label{UFactorDiagmodp}
	\end{gathered}
	\end{equation}
	 over $\F_p$.  The proof of \ref{GoodRednProp} now goes through {\em mutatis mutandis} to give the claimed
	 isomorphism (\ref{OnTheNose}).  The rest follows immediately from Proposition \ref{MWSharpening}.
\end{proof}

\subsection{Ordinary families of Dieudonn\'e modules}\label{OrdDieuSection}

Let $\{\G_r/R_r\}_{r\ge 1}$ be the tower of $p$-divisible groups given by Definition \ref{ordpdivdefn}.  
From the canonical morphisms $\rho_{r,s}: \G_{s}\times_{T_{s}} T_r\rightarrow \G_{r}$ we obtain
a map on special fibers $\o{\G}_{s}\rightarrow \o{\G}_r$ over $\F_p$
for each $r\ge s$; applying the contravariant Dieudonn\'e module functor
$\D(\cdot):=\D(\cdot)_{\Z_p}$ yields a projective
system of finite free $\Z_p$-modules $\{\D(\o{\G}_r)\}_r$ with compatible linear endomorphisms $F,V$
satisfying $FV=VF=p$.
 
\begin{definition}\label{DinftyDef}
	We write $\D_{\infty}:=\varprojlim_r \D(\o{\G}_r)$ for the projective limit
	of the system $\{\D(\o{\G}_r)\}_r$.  For $\star\in \{\et,\mult\}$
	we write $\D_{\infty}^{\star}:=\varprojlim_r \D(\o{\G}_r^{\star})$
	for the corresponding projective limit.
\end{definition}

Since $\H_r^*$ acts by endomorphisms on $\o{\G}_r$, compatibly with change in $r$,
we obtain an action of $\H^*$ on $\D_{\infty}$ and on $\D_{\infty}^{\star}$.
Likewise, the ``geometric inertia action" of $\Gamma$ on $\o{\G}_r$ by automorphisms
of $p$-divisible groups over $\F_p$ gives an 
action of $\Gamma$ on $\D_{\infty}$ and $\D_{\infty}^{\star}$.
As $\o{\G}_r$ is ordinary by Proposition \ref{SpecialFiberOrdinary}, applying $\D(\cdot)$
to the (split) connected-\'etale squence of $\o{\G}_r$ gives, for each $r$, 
a functorially split exact sequence 
\begin{equation}
	\xymatrix{
		0 \ar[r] & {\D(\o{\G}_r^{\et})} \ar[r] & {\D(\o{\G}_r)} \ar[r] & 
		{\D(\o{\G}_r^{\mult})} \ar[r] & 0
	}\label{DieudonneFiniteExact}
\end{equation}
with $\Z_p$-linear actions of $\Gamma$, $F$, $V$, and $\H_r^*$.
Since projective limits commute with finite direct sums, we obtain
a split short {\em exact} sequence of $\Lambda$-modules with linear $\H^*$ and $\Gamma$-actions
and commuting linear endomorphisms $F,V$ satisfying $FV=VF=p$:
\begin{equation}
	\xymatrix{
		0 \ar[r] & {\D_{\infty}^{\et}} \ar[r] & {\D_{\infty}} \ar[r] & {\D_{\infty}^{\mult}} \ar[r] & 0
	}.\label{DieudonneInfiniteExact}
\end{equation}

\begin{theorem}\label{MainDieudonne}
	As in Proposition $\ref{NormalizationCoh}$, set $d':=\sum_{k=3}^p \dim_{\F_p} S_k(N;\F_p)^{\ord}$. 
	Then:
	\begin{enumerate} 
		\item $\D_{\infty}$ is a free $\Lambda$-module of rank $2d'$, and $\D_{\infty}^{\star}$
		is free of rank $d'$ over $\Lambda$ for $\star\in \{\et,\mult\}$.  
		\label{MainDieudonne1}
		
		\item For each $r\ge 1$, applying $\otimes_{\Lambda} \Z_p[\Delta/\Delta_r]$ to 
		$(\ref{DieudonneInfiniteExact})$  yields the short exact sequence $(\ref{DieudonneFiniteExact})$, 
		compatibly with $\H^*$, $\Gamma$, $F$ and $V$.
		\label{MainDieudonne2}
		
		\item Under the canonical splitting of $(\ref{DieudonneInfiniteExact})$, $\D_{\infty}^{\et}$ 
		is the maximal subspace of $\D_{\infty}$ on which $F$ acts invertibly, while 
		$\D_{\infty}^{\mult}$ corresponds to the maximal subspace of $\D_{\infty}$ on which $V$ acts 
		invertibly.  
		\label{MainDieudonne3}
		
		\item The Hecke operator $U_p^*$ acts as $F$ on $\D_{\infty}^{\et}$ and as $\langle p\rangle_NV$ on $\D_{\infty}^{\mult}$.
		\label{MainDieudonne4}
	
		\item $\Gamma$ acts trivially on $\D_{\infty}^{\et}$ and via $\langle \chi\rangle^{-1}$
		on $\D_{\infty}^{\mult}$.
		\label{MainDieudonne5}
\end{enumerate}
\end{theorem}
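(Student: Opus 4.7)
The plan is to deduce Theorem \ref{MainDieudonne} from the tower formalism of Lemma \ref{Technical} by reduction modulo $p$ to the Igusa-curve analysis of \S\ref{DiffCharp}. Taking the trivial tower $A_r := \Z_p$ (with identity transition maps) and the principal ideals $I_r := (p)$, the three projective systems $\{\D(\o{\G}_r^{\star})\}_{r\ge 1}$ for $\star\in\{\emptyset,\et,\mult\}$ (with transitions induced by the maps $\rho_{r,s}$ of $(\ref{pdivTowers})$ on special fibers) are towers of $\Z_p$-modules carrying a linear $\Delta$-action via the diamond operators on $\G_r$, and each $\D(\o{\G}_r^{\star})$ is finite free over $\Z_p$. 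Setting $B:=\Z_p$, we have $\Lambda_B=\Lambda$, and we will verify conditions $(\ref{freehyp})$ and $(\ref{surjhyp})$ of Lemma \ref{Technical} with rank $d'$ (respectively $2d'$).

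First, I combine Proposition \ref{SpecialFiberOrdinary} with Proposition \ref{GisOrdinary} to identify the mod-$p$ Dieudonn\'e modules with Igusa-curve cohomology: the isomorphism $\o{\G}_r\simeq \Sigma_r$ yields $\D(\o{\G}_r)_{\F_p}\simeq \D(\Sigma_r)_{\F_p}$, and Proposition \ref{GisOrdinary} gives
\[
\D(\o{\G}_r^{\et})_{\F_p}\simeq f'H^1(I_r^{\infty},\O)^{F_{\ord}},\qquad
\D(\o{\G}_r^{\mult})_{\F_p}\simeq f'H^0(I_r^{0},\Omega^1)^{V_{\ord}},
\]
both of which are free of rank $d'$ over $\F_p[\Delta/\Delta_r]$ thanks to Proposition \ref{IgusaStructure} together with Lemma \ref{CharacterSpaces} (which absorbs the $f'$-projection). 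This verifies $(\ref{freehyp})$ with $d=d'$ (and $d=2d'$ for the middle term). For $(\ref{surjhyp})$, the transition map $\D(\o{\G}_s^{\star})\to \D(\o{\G}_r^{\star})$ dual to $\rho_{r,s}:\G_s\times_{T_s}T_r\to\G_r$ arises from $\Pic^0(\rho)^{r-s}$; by Proposition \ref{MWSharpening} (\ref{ChangerProp}) this is compatible with the $\Pic^0(\rho)^{r-s}$ action on $j^{\star}_r[p^{\infty}]$, and under the identifications above it therefore corresponds to the trace mappings of Igusa-curve cohomology. These are surjective by Proposition \ref{IgusaStructure} (\ref{IgusaControl}), which yields $(\ref{surjhyp})$. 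Applying Lemma \ref{Technical} parts $(\ref{red2pfree})$, $(\ref{red2psurj})$, $(\ref{MBfree})$ and $(\ref{ControlLimit})$ proves claims (\ref{MainDieudonne1}) and (\ref{MainDieudonne2}), noting that the splitting of $(\ref{DieudonneInfiniteExact})$ is inherited from the functorial splitting of $(\ref{DieudonneFiniteExact})$.

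For claim (\ref{MainDieudonne3}), I use Dieudonn\'e theory: at each finite level, $F$ acts invertibly on $\D(\o{\G}_r^{\et})$ and $V$ acts invertibly on $\D(\o{\G}_r^{\mult})$, while the complementary operator is topologically nilpotent on each piece. Since invertibility and topological nilpotence pass to the projective limit over $r$, and Lemma \ref{HW} (\ref{top}) applied to $\D_{\infty}$ with its (continuous) $F$- and $V$-actions gives the unique decomposition into ordinary and nilpotent parts, the summands $\D_{\infty}^{\et}$ and $\D_{\infty}^{\mult}$ are exactly these maximal subspaces. Finally, claims (\ref{MainDieudonne4}) and (\ref{MainDieudonne5}) are obtained by passing to the inverse limit in the $\H_r^*$- and $\Gamma$-equivariance assertions of the last sentence of Proposition \ref{SpecialFiberOrdinary}: the isomorphism $(\ref{GrSpecialExact})$ converts the action of $U_p^*$ (respectively $\gamma\in\Gamma$) on the \'etale/multiplicative pieces of $\o{\G}_r$ into the claimed actions of $F$ and $\langle p\rangle_N V$ (respectively $\id$ and $\langle\chi(\gamma)\rangle^{-1}$) on $\D(\o{\G}_r^{\et})$ and $\D(\o{\G}_r^{\mult})$.

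The main obstacle I anticipate is the bookkeeping for the transition maps: one must carefully check that the map $\D(\o{\G}_s)\to\D(\o{\G}_r)$ induced by $\rho_{r,s}=\Pic^0(\pr)^{r-s}$ corresponds, via the identifications of Propositions \ref{SpecialFiberOrdinary} and \ref{GisOrdinary}, to the trace maps appearing in Proposition \ref{IgusaStructure} (\ref{IgusaControl})---only after this identification does Proposition \ref{IgusaStructure} supply the needed surjectivity for hypothesis $(\ref{surjhyp})$. This requires unwinding the chain of identifications involving the two ``good'' components $I_r^0$ and $I_r^{\infty}$, the closed immersions $i_r^{\star}:I_r^{\star}\hookrightarrow \nor{\o{\X}}_r$, and the behavior of the degeneracy maps $\o{\pr}$ as tabulated in Proposition \ref{pr1desc}, but once these compatibilities are in hand the rest of the argument is formal.
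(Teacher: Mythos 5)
Your proposal follows essentially the same route as the paper's proof: apply Lemma \ref{Technical} with $A_r=\Z_p$, $I_r=(p)$, verify hypotheses (\ref{freehyp}) and (\ref{surjhyp}) by identifying $\D(\o{\G}_r^\star)_{\F_p}$ with Igusa-curve cohomology via Propositions \ref{SpecialFiberOrdinary} and \ref{GisOrdinary}, and appeal to Propositions \ref{IgusaStructure} and \ref{SpecialFiberOrdinary} for the freeness, control, and equivariance statements. Two small remarks: your identifications $\D(\o{\G}_r^{\et})_{\F_p}\simeq f'H^1(I_r^{\infty},\O)^{F_{\ord}}$ and $\D(\o{\G}_r^{\mult})_{\F_p}\simeq f'H^0(I_r^{0},\Omega^1)^{V_{\ord}}$ are the ones consistent with (\ref{LowerIsom2}), (\ref{pDivUpic}), and the proof of Proposition \ref{GisOrdinary} (the display in the paper's proof of the theorem has the superscripts $0$ and $\infty$ transposed relative to these), so you have corrected a minor internal typo; and for part (\ref{MainDieudonne3}) your invocation of Lemma \ref{HW} (\ref{top}) to justify the maximality claim is a more explicit argument than the paper's ``clear,'' but both are sound since $F$ is invertible on $\D_{\infty}^{\et}$ and topologically nilpotent on $\D_{\infty}^{\mult}$ at each finite level, with the decomposition functorial and unique.
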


\begin{proof}
	We apply Lemma \ref{Technical} with $A_r=\Z_p$, $I_r=(p)$, 
	and with $M_r$ each one of the terms in (\ref{DieudonneFiniteExact}).
	Due to Proposition \ref{GisOrdinary}, there is a natural isomorphism of split short exact sequences
	\begin{equation*}
		\xymatrix{
			0 \ar[r] & {\D(\o{\G}_r^{\et})_{\F_p}} \ar[r]\ar[d]^-{\simeq} & 
			{\D(\o{\G}_r)_{\F_p}} \ar[r] \ar[d]^-{\simeq}& 
			{\D(\o{\G}_r^{\mult})_{\F_p}} \ar[r] \ar[d]^-{\simeq}& 0 \\
			0 \ar[r] & {f'H^1(I_r^0,\O)^{F_{\ord}}}\ar[r] &
			{f'H^0(I_r^{\infty},\Omega^1)^{V_{\ord}}\oplus f'H^1(I_r^0,\O)^{F_{\ord}}} \ar[r] &
			{f'H^0(I_r^{\infty},\Omega^1)^{V_{\ord}}} \ar[r] & 0
		}
	\end{equation*}
	that is compatible with change in $r$ using the trace mappings attached to 
	$\rho:I_r^{\star}\rightarrow I_{s}$ and the maps on Dieudonn\'e modules 
	induced by $\o{\rho}_{r,s}:\o{\G}_{s} \rightarrow \o{\G}_r$.
	The hypotheses (\ref{freehyp}) and (\ref{surjhyp})
	of Lemma \ref{Technical} are thus satisfied with $d'$ as in the statement of the theorem,
	thanks to Proposition \ref{IgusaStructure} (\ref{IgusaFreeness})--(\ref{IgusaControl})
	and Lemma \ref{CharacterSpaces}.  We conclude from Lemma \ref{Technical}
	that (\ref{MainDieudonne1}) and (\ref{MainDieudonne2}) hold. 
	As $F$ (respectively $V$) acts invertibly on $\D(\o{\G}_r^{\et})$ (respectively 
	$\D(\o{\G}_r^{\mult})$) for all $r$, assertion (\ref{MainDieudonne3}) is clear, while
	(\ref{MainDieudonne4}) and (\ref{MainDieudonne5}) follow immediately from 
	Proposition \ref{SpecialFiberOrdinary}.  
\end{proof}

As in Proposition \ref{dRDuality}, the short exact sequence (\ref{DieudonneInfiniteExact})
is very nearly ``auto dual":

\begin{proposition}\label{DieudonneDuality}		
		There is a canonical isomorphism of short exact sequences of $\Lambda_{R_0'}$-modules
		\begin{equation}
		\begin{gathered}
			\xymatrix{
	0 \ar[r] & {\D_{\infty}^{\et}(\langle \chi \rangle\langle a\rangle_N)_{\Lambda_{R_0'}}} \ar[r]\ar[d]^-{\simeq} & 
	{\D_{\infty}(\langle \chi \rangle\langle a\rangle_N)_{\Lambda_{R_0'}}}\ar[r]\ar[d]^-{\simeq} & 
	{\D_{\infty}^{\mult}(\langle \chi \rangle\langle a\rangle_N)_{\Lambda_{R_0'}}}\ar[r]\ar[d]^-{\simeq} & 0 \\		
		0\ar[r] & {(\D_{\infty}^{\mult})^{\vee}_{\Lambda_{R_0'}}} \ar[r] & 
				{(\D_{\infty})^{\vee}_{\Lambda_{R_0'}}} \ar[r] & 
				{(\D_{\infty}^{\et})^{\vee}_{\Lambda_{R_0'}}}\ar[r] & 0
			}
		\end{gathered}
		\label{DmoduleDuality}	
		\end{equation}
		that is $\H^*$ and $\Gamma\times \Gal(K_0'/K_0)$-equivariant, 
		and intertwines $F$
		$($respectively $V$$)$ on the top row with $V^{\vee}$
		$($respectively $F^{\vee}$$)$ on the bottom.
\end{proposition}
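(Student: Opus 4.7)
The strategy is to apply the contravariant Dieudonn\'e module functor $\D(\cdot)$ to the isomorphism of Proposition \ref{GdualTwist}, construct $\Z_p$-bilinear pairings at each finite level, verify a trace-compatibility between them, and pass to the limit using Lemma \ref{LambdaDuality}. The starting input is the $\H_r^*$-equivariant identification $\G_r'\simeq \G_r(\langle\chi\rangle\langle a\rangle_N)$ in $\pdiv_{R_r}^{\Gamma}$, compatible with change in $r$ via $\rho_{r,s}'$ on $\G_r'$ and $\rho_{r,s}$ on $\G_r$. Reducing modulo $p$ and using the canonical Dieudonn\'e autoduality $\D(G^\vee)\simeq \D(G)^t$ for $p$-divisible groups $G$ over $\F_p$ (which intertwines $F$ on $\D(G^\vee)$ with $V^\vee$ on $\D(G)^t$, and vice versa), this yields for each $r\ge 1$ a perfect pairing
\[
\langle\cdot,\cdot\rangle_r\colon \D(\o{\G}_r)\times \D(\o{\G}_r)\longrightarrow \Z_p
\]
that is $\H_r^*$-equivariant and, after extending scalars to $R_0'$, is $\Gamma\times\Gal(K_0'/K_0)$-equivariant modulo the twist by $\langle\chi\rangle\langle a\rangle_N$. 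Because Cartier duality interchanges the \'etale and multiplicative parts of an ordinary $p$-divisible group, this pairing restricts to perfect pairings between $\D(\o{\G}_r^{\et})$ and $\D(\o{\G}_r^{\mult})$, so the induced duality isomorphisms already respect the slope filtration at each finite level.

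The main obstacle is verifying the trace-compatibility (\ref{pairingchangeinr}) from Lemma \ref{LambdaDuality}: for $r\ge s$, writing $\pr_{r,s}\colon \D(\o{\G}_r)\to \D(\o{\G}_s)$ for the transition map induced by $\o{\rho}_{r,s}$, one needs
\[
\langle \pr_{r,s}m, \pr_{r,s}m'\rangle_s \;=\; \sum_{\delta\in \Delta_s/\Delta_r} \langle m, \delta^{-1}m'\rangle_r.
\]
Tracing through the construction of $\langle\cdot,\cdot\rangle_r$, this reduces to the geometric identity
\[
\Alb(\pr)^{r-s}\circ \Pic^0(\pr)^{r-s} \;=\; {U_p^*}^{r-s}\sum_{\delta\in\Delta_s/\Delta_r}\langle\delta^{-1}\rangle^*
\]
on the ordinary part of $J_r$, analogous to the relation $\rho_2^*\rho_{1\,*} = U_p^*\sum_\delta\langle\delta^{-1}\rangle^*$ invoked in the proof of Proposition \ref{dRDuality} and ultimately traceable to Lemma \ref{MFtraceLem}. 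The compensating factor ${U_p^*}^{-(r-s)}$ built into the definition of $\rho_{r,s}'$ in (\ref{pdivTowers}) precisely absorbs the Hecke discrepancy ${U_p^*}^{r-s}$---using that $U_p^*$ acts invertibly on ${e^*}'J_r[p^\infty]$---leaving the required norm from $\Delta_s/\Delta_r$ on the right-hand side.

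With the trace-compatibility in hand, Lemma \ref{LambdaDuality} applied with $A_r = \Z_p$ and the tower $\scrM = \scrM' = \{\D(\o{\G}_r),\pr_{r,s}\}$ produces a $\Lambda$-linear perfect duality pairing $(\cdot,\cdot)_\Lambda\colon \D_\infty\times \D_\infty \to \Lambda$. Extending scalars to $R_0'$ and re-expressing this pairing as a $\Lambda_{R_0'}$-linear morphism into $\Hom_{\Lambda_{R_0'}}(\D_\infty,\Lambda_{R_0'})$ gives the central vertical arrow of (\ref{DmoduleDuality}), and the flanking vertical arrows arise from its restriction to the slope-filtration pieces. The $\Gamma\times\Gal(K_0'/K_0)$-equivariance with the twist by $\langle\chi\rangle\langle a\rangle_N$, the $\H^*$-equivariance, and the intertwining of $F$ (respectively $V$) on the top row with $V^\vee$ (respectively $F^\vee$) on the bottom all descend directly from the corresponding properties of the finite-level pairings.
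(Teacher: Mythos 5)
Your overall strategy matches the paper's (Dieudonn\'e duality at finite level via Proposition \ref{GdualTwist}, trace-compatibility, then Lemma \ref{LambdaDuality}), but the geometric identity you claim to reduce to is incorrect, and this is a genuine gap.

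You assert that the trace-compatibility reduces to
\[
\Alb(\pr)^{r-s}\circ \Pic^0(\pr)^{r-s} \;=\; {U_p^*}^{r-s}\sum_{\delta\in\Delta_s/\Delta_r}\langle\delta^{-1}\rangle^*
\]
``on the ordinary part of $J_r$.''  But this composite applies $\Alb$ and $\Pic^0$ to the \emph{same} degeneracy map $\pr$, so it is an endomorphism of $J_s$ (not $J_r$), and by the standard fact that $\Alb(\pi)\circ\Pic^0(\pi)=\deg(\pi)$ for a finite morphism $\pi$ of smooth proper curves it equals multiplication by $\deg(\pr)^{r-s}$, a scalar.  This is not the $U_p^*$-weighted norm, which is a genuinely Hecke-theoretic operator.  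The identity that actually intervenes is the paper's (\ref{PicAlbRelation}), which necessarily mixes the two distinct degeneracy maps $\pr$ and $\ps$:
\[
\Pic^0(\pr)\circ\Alb(\ps)\;=\;U_p^*\sum_{\delta\in\Delta_r/\Delta_{r+1}}\langle\delta^{-1}\rangle
\quad\text{on } J_{r+1},
\]
deduced from Lemma \ref{MFtraceLem}.  The asymmetry $\pr$ versus $\ps$ enters because the transition map $\rho_{r,s}'$ on the dual tower $\{\G_r'\}$ in Definition \ref{ordpdivdefn} is built from $({U_p^*}^{-1}\Alb(\ps))^\vee$, whereas $\rho_{r,s}$ on $\{\G_r\}$ uses $\Pic^0(\pr)$.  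Even though Proposition \ref{GdualTwist} lets you regard both towers $\scrM$ and $\scrM'$ as carrying transition maps $\pr_{r,s}$ after the twist, the pairing is the evaluation pairing under the identification $\D(\o{\G}_r)(\langle\chi\rangle\langle a\rangle_N)_{R_0'}\simeq\D(\o{\G}_r')_{R_0'}$, and that identification carries the transition $\pr_{r,s}$ on the twisted tower to the map induced by $\rho_{r,s}'$, \emph{not} to the linear dual of $\D(\Pic^0(\pr)^{r-s})$.  Unwinding this produces the composite $\Pic^0(\pr)^{r-s}\circ({U_p^*}^{-1}\Alb(\ps))^{r-s}$ inside $\langle m,\cdot\,\rangle_r$, which is where $\ps$ enters.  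Your intuition that the $U_p^*$-factor built into $\rho_{r,s}'$ ``precisely absorbs the Hecke discrepancy'' is correct, but the discrepancy originates in mixing $\pr$ with $\ps$, not in iterating $\pr$ alone; as it stands, your compatibility check would fail.
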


\begin{proof}	
	We apply the duality formalism 
	of Lemma \ref{LambdaDuality}.  
	Let us write
	$\pr_{r,s}':\o{\G}_r'\rightarrow \o{\G}_s'$ for the maps on special fibers
	induced by (\ref{pdivTowers}). 
	Thanks to Proposition \ref{GdualTwist}, the definition \ref{ordpdivdefn}  of $\o{\G}_r':=\o{\G}_r^{\vee}$,
	the natural isomorphism $\G_r\times_{R_r} R_r' \simeq \G_r(\langle \chi\rangle\langle a\rangle_N)\times_{R_r} R_r'$,
	and the compatibility of the Dieudonn\'e module functor with duality, there are natural
	isomorphisms of $R_0'$-modules
	\begin{equation}
		\D(\o{\G}_r)(\langle \chi\rangle\langle a\rangle_N) \tens_{\Z_p} R_0' \simeq
		\D(\o{\G_r(\langle \chi\rangle\langle a\rangle_N)})\tens_{\Z_p} R_0' 
		\simeq \D(\o{\G}_r')\tens_{\Z_p} R_0' = \D(\o{\G}_r^{\vee})\tens_{\Z_p} R_0'\simeq 
		(\D(\o{\G}_r))_{R_0'}^{\vee}
		\label{evpairingDieudonne}
	\end{equation}
	that are $\H^*_r$-equivariant, $\Gal(K_r'/K_0)$-compatible 
	for the standard action $\sigma\cdot f (m):=\sigma f(\sigma^{-1}m)$
	on the $R_0'$-linear dual of $\D(\o{\G}_r)\otimes_{\Z_p} R_0'$, 
	and compatible with change in $r$ using $\pr_{r,s}$
	on $\D(\o{\G}_r)$ and $\pr_{r,s}'$ on $\D(\o{\G}_r')$.  We claim that the resulting 
	perfect ``evaluation" pairings 
	\begin{equation}
		\xymatrix{
			{\langle\cdot,\cdot\rangle_r : \D(\o{\G}_r)(\langle \chi\rangle\langle a\rangle_N)\tens_{\Z_p}{R_0'} 
			\times  \D(\o{\G}_r)\tens_{\Z_p}{R_0'}} \ar[r] & {R_0'}
			}\label{pdivSpecialTwistPai}
	\end{equation}
	satisfy the compatibility hypothesis (\ref{pairingchangeinr}) of Lemma \ref{LambdaDuality}.
	Indeed, the stated compatibility of (\ref{evpairingDieudonne}) with change in $r$ 
	and the very definition (\ref{pdivTowers}) of the transition maps $\pr_{r,s}'$
	implies that for $r\ge s$
	\begin{equation*}
		\langle \D(\Pic^0(\pr)^{r-s} x), y\rangle_s = \langle x , \D({U_p^*}^{s-r}\Alb(\ps)^{r-s}) y\rangle_r,
	\end{equation*}
	so our claim follows from the equality in $\End_{\Q_p}(J_{r+1})$
	\begin{equation}
		\Pic(\pr)\circ \Alb(\ps) = U_p^* \sum_{\delta\in \Delta_r/\Delta_{r+1}} \langle \delta^{-1}\rangle,
		\label{PicAlbRelation}
	\end{equation}
	which, as in the proof of Proposition \ref{dRDuality}, follows from Lemma \ref{MFtraceLem} via Lemma 		
	\ref{LieFactorization}.
	Again, by the $\H_r^*$-compatibility of (\ref{evpairingDieudonne}), the action of $\H_r^*$
	is self-adjoint with resect to (\ref{pdivSpecialTwistPai}), so 
	Lemma \ref{LambdaDuality} gives a perfect $\Gal(K_{\infty}'/K_0)$-compatible duality pairing
	$\langle\cdot,\cdot \rangle: \D_{\infty}(\langle \chi\rangle\langle a\rangle_N)
	\otimes_{\Lambda} \Lambda_{R_0'} \times \D_{\infty}\otimes_{\Lambda} \Lambda_{R_0'} \rightarrow \Lambda_{R_0'}$
	with respect to which $T^*$ is self-adjoint for all $T^*\in \H^*$.
	That the resulting isomorphism (\ref{DmoduleDuality})
	intertwines 
	$F$ with $V^{\vee}$
	and $ V $ with $F^{\vee}$ is an immediate consequence of the compatibility of the Dieudonn\'e module
	functor with duality. 
\end{proof}

We can interpret $\D_{\infty}^{\star}$ in terms of the crystalline cohomology of the
Igusa tower as follows.  Let $I_r^0$ and $I_r^{\infty}$ be the two ``good" 
components of $\o{\X}_r$ as in Remark \ref{MWGood}, and form the projective limits
\begin{equation*}
	H^1_{\cris}(I^{\star}) := \varprojlim_{r} H^1_{\cris}(I_r^{\star})
\end{equation*}
for $\star\in \{\infty,0\}$, taken with respect to the trace maps on crystalline
cohomology (see \cite[\Rmnum{7}, \S2.2]{crystal2}) induced by the canonical degeneracy mappings 
$\rho:I_{r}^{\star}\rightarrow I_{s}^{\star}$.  
Then $H^1_{\cris}(I^{\star})$ 
is naturally a $\Lambda$-module (via the diamond operators), equipped with 
a commuting action of $F$ (Frobenius) and $V$ (Verscheibung) satisfying $FV=VF=p$.  
Letting $U_p^*$ act as $F$ (respectively $\langle p\rangle_N V$) on $H^1_{\cris}(I^{\star})$ for $\star=\infty$
(respectively $\star=0$) and the Hecke operators outside $p$ (viewed as correspondences
on the Igusa curves) act via pullback and trace at each level $r$, we obtain 
an action of $\H^*$ on $H^1_{\cris}(I^{\star})$.  Finally, we 
let $\Gamma$ act trivially on $H^1_{\cris}(I^{\star})$ for $\star=\infty$
and via $\langle\chi^{-1}\rangle$ for $\star=0$.

\begin{theorem}\label{DieudonneCrystalIgusa}
	There is a canonical $\H^*$ and $\Gamma$-equivariant
	isomorphism of $\Lambda$-modules
	\begin{equation*}
		\D_{\infty} = \D_{\infty}^{\mult}\oplus \D_{\infty}^{\et} \simeq
		f'H^1_{\cris}(I^{0})^{V_{\ord}} \oplus 
		f'H^1_{\cris}(I^{\infty})^{F_{\ord}}
	\end{equation*}
	which respects the given direct sum decompositions and is compatible with $F$ and $V$.
\end{theorem}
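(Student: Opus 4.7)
The plan is to reduce everything to level $r$ via Propositions \ref{SpecialFiberOrdinary} and \ref{MWSharpening}, identify the resulting Dieudonn\'e modules of Jacobians with crystalline cohomology, and then pass to the inverse limit. Concretely, combining those two propositions gives for each $r$ a canonical, $\H_r^*$- and $\Gamma$-equivariant, split short exact sequence of $p$-divisible groups over $\F_p$
\begin{equation*}
	0 \to f'j_r^{0}[p^{\infty}]^{\mult} \to \o{\G}_r \to f'j_r^{\infty}[p^{\infty}]^{\et} \to 0
\end{equation*}
in which the kernel is of multiplicative type and the cokernel is \'etale. Since $\o{\G}_r$ is ordinary, this sequence is canonically identified with (and split by) its connected-\'etale sequence, so applying the contravariant Dieudonn\'e functor $\D(\cdot)_{\Z_p}$ yields a canonical direct sum decomposition
\begin{equation*}
	\D(\o{\G}_r) \simeq \D(f'j_r^{0}[p^{\infty}]^{\mult}) \oplus \D(f'j_r^{\infty}[p^{\infty}]^{\et}),
\end{equation*}
respecting the $F_{\ord}/V_{\ord}$ decomposition of $\D(\o{\G}_r)$: the first summand is exactly $\D(\o{\G}_r^{\mult})=\D(\o{\G}_r)^{V_{\ord}}$ and the second is $\D(\o{\G}_r^{\et})=\D(\o{\G}_r)^{F_{\ord}}$, by the standard correspondence of multiplicative/\'etale $p$-divisible groups with $V$-/$F$-invertibility on Dieudonn\'e modules used in the proof of Proposition \ref{OdaDieudonne} (\ref{GetaleGmult}).

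Next I would identify each summand with a piece of $H^1_{\cris}(I_r^{\star}/\Z_p)$. By the crystalline Dieudonn\'e theorem (Berthelot--Breen--Messing; see also \cite{BBM}), for any abelian variety $A/\F_p$ there is a canonical $F$, $V$-equivariant isomorphism $\D(A[p^{\infty}])_{\Z_p} \simeq H^1_{\cris}(A/\Z_p)$ that is contravariantly functorial in $A$ (identifying $\D(f)$ with pullback on $H^1_{\cris}$). Composing with the canonical isomorphism $H^1_{\cris}(j_r^{\star}/\Z_p) \simeq H^1_{\cris}(I_r^{\star}/\Z_p)$ (the crystalline lift of the de~Rham identification recalled in the proof of Proposition \ref{OdaDieudonne}, via the $\Z_p$-flatness of these modules and reduction mod $p$) and recalling that the idempotent $f'$ commutes with $\D(\cdot)$ and with taking ordinary parts, we obtain canonical isomorphisms
\begin{equation*}
	\D(f'j_r^{0}[p^{\infty}]^{\mult}) \simeq f'H^1_{\cris}(I_r^{0}/\Z_p)^{V_{\ord}},\qquad \D(f'j_r^{\infty}[p^{\infty}]^{\et}) \simeq f'H^1_{\cris}(I_r^{\infty}/\Z_p)^{F_{\ord}},
\end{equation*}
compatibly with $\H_r^*$ (using the description of $U_p^*$ in Proposition \ref{MWSharpening}), $\Gamma$, $F$, and $V$.

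Finally, I would pass to the inverse limit. On the $\D_{\infty}$ side the transition maps are induced by $\rho_{r,s}=\Pic^0(\rho)^{r-s}$, which under the identifications above correspond via Proposition \ref{OdaDieudonne} (\ref{OdaIsomFunctoriality}) to the trace maps on $H^1_{\cris}(I_r^{\star}/\Z_p)$ attached to $\rho:I_r^{\star}\to I_s^{\star}$---exactly the transition maps defining $H^1_{\cris}(I^{\star})$. The ordinary parts are stable under these maps and the identification assembles into the desired isomorphism of $\Lambda$-modules; the compatibility with $\H^*$ and $\Gamma$ (including the fact that $U_p^*$ acts as $F$ on the $\infty$-summand and as $\langle p\rangle_N V$ on the $0$-summand, and that $\Gamma$ acts trivially on the $\infty$-summand and via $\langle \chi\rangle^{-1}$ on the $0$-summand) is then inherited term-by-term from Proposition \ref{MWSharpening}. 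The main obstacle I anticipate is purely bookkeeping: keeping track of Picard versus Albanese functoriality, contravariance of $\D$, and the interchange of $F$ and $V$ under duality, so that the crystalline identifications are seen to intertwine $U_p^*$ and $\Gamma$ consistently on both sides. Once these conventions are pinned down using Proposition \ref{OdaDieudonne}, the rest is formal.
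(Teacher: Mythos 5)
Your proposal follows essentially the same route as the paper's proof: use the exact sequence (\ref{GrSpecialExact}) of Proposition \ref{SpecialFiberOrdinary} (with the $V^r$ twist already built into the map from $f'j_r^0[p^{\infty}]^{\mult}$), identify the Dieudonn\'e modules of the Igusa Jacobians with the crystalline cohomology of the Igusa curves, and pass to the inverse limit. The one place you are loose is the identification $H^1_{\cris}(j_r^{\star}/\Z_p)\simeq H^1_{\cris}(I_r^{\star}/\Z_p)$ and its functoriality matching $\D(\Pic^0(f))$, $\D(\Alb(f))$ with trace and pullback: Proposition \ref{OdaDieudonne} is a de~Rham/mod-$p$ statement and cannot literally be ``lifted'' to $W(k)$ by flatness bookkeeping; the paper instead invokes the honest $W(k)$-linear isomorphism (\ref{MMIllusie}) due to Mazur--Messing and Illusie, which carries the required $F$-, $V$-, and functoriality structure at the crystalline level. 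With that substitution your argument is the paper's.
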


\begin{proof}
	From the exact sequence (\ref{GrSpecialExact}), we obtain for each $r$ isomorphisms
	\begin{equation}
		\xymatrix@C=55pt{
			{\D(\o{\G}_r^{\mult})} \ar[r]^-{\simeq}_-{V^r \circ \D(\Alb(i_r^{0}))} &
			 {f'\D(j_r^{0}[p^{\infty}])^{V_{\ord}}}
		}\qquad\text{and}\qquad
		\xymatrix@C=55pt{
			{f'\D(j_r^{\infty}[p^{\infty}])^{F_{\ord}}} \ar[r]^-{\simeq}_-{\D(\Pic^0(i_r^{\infty}))} &
			{\D(\o{\G}_r^{\et})}
		}\label{IgusaInterpretation}
	\end{equation}
	that are $\H^*$ and $\Gamma$-equivariant (with respect to the actions
	specified in Proposition \ref{SpecialFiberOrdinary}), and compatible with change in $r$
	via the mappings $\D(\pr_{r,s})$ on $\D(\o{\G}_r^{\star})$ and $\D(\pr)$
	on $\D(j_r^{\star}[p^{\infty}])$.  On the other hand, for {\em any} smooth and proper curve
	$X$ over a perfect field $k$ of characteristic $p$, 
	thanks to \cite{MM} and \cite[\Rmnum{2}, \S3 C Remarque 3.11.2]{IllusiedR}
	there are natural isomorphisms
	of $W(k)[F,V]$-modules  
	\begin{equation}
		\D(J_X[p^{\infty}]) \simeq H^1_{\cris}(J_X/W(k)) \simeq H^1_{\cris}(X/W(k))\label{MMIllusie}
	\end{equation}
	that for any finite map of smooth proper curves $f:Y\rightarrow X$ over $k$
	intertwine $\D(\Pic(f))$ and $\D(\Alb(f))$ with trace and pullback by $f$ on crystalline cohomology,
	respectively.  Applying this to $X=I_r^{\star}$ for $\star=0,\infty$, appealing to 
	the identifications (\ref{IgusaInterpretation}), and passing to inverse limits completes the proof.
\end{proof}

Applying the idempotent $f'$ of (\ref{TeichmullerIdempotent}) to the Hodge filtration (\ref{mainthmexact})
yields a short exact sequence of free $\Lambda_{R_{\infty}}$-modules with
semilinear $\Gamma$-action and linear commuting action of $\H^*$:
\begin{equation}
	\xymatrix{
		0 \ar[r] & {{e^*}'H^0(\omega)} \ar[r] & {{e^*}'H^1_{\dR}} \ar[r] & {{e^*}'H^1(\O)} \ar[r] & 0
	}.\label{LambdaHodgeFilnomup}
\end{equation}
The key to relating (\ref{LambdaHodgeFilnomup}) to the slope filtration (\ref{DieudonneInfiniteExact}) 
is the following comparison isomorphism:

\begin{proposition}\label{KeyComparison}
	For each positive integer $r$, there is a natural isomorphism of short exact sequences
	\begin{equation}
	\begin{gathered}
		\xymatrix{
			0\ar[r] & {\omega_{\G_r}} \ar[r]\ar[d]_-{\simeq} & {\D(\G_{r,0})_{R_r}} \ar[r]\ar[d]^-{\simeq} & 
			{\Lie(\Dual{\G}_r)} \ar[r]\ar[d]^-{\simeq} & 0 \\
			0\ar[r] & {{e^*}'H^0(\omega_r)} \ar[r] & {{e^*}'H^1_{\dR,r}} \ar[r] & {{e^*}'H^1(\O_r)} \ar[r] & 0
		}
	\end{gathered}\label{CollectedComparisonIsom}
	\end{equation}
	that is compatible with $\H_r^*$, $\Gamma$, and change in $r$ using
	the mappings $(\ref{pdivTowers})$ on the top row and the maps $\pr_*$ on the bottom.
	Here, the bottom row is obtained from $(\ref{HodgeFilIntAbbrev})$ by applying ${e^*}'$
	and the top row is the Hodge filtration of $\D(\G_{r,0})_{R_r}$ given by
	Proposition $\ref{BTgroupUnivExt}$.  
\end{proposition}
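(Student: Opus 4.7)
The plan is to chain together three comparison isomorphisms, passing from the integral de~Rham cohomology of $\X_r$, through the universal vectorial extension of the N\'eron model $\J_r$, down to the Dieudonn\'e crystal of $\G_r$.  Since $\X_r/R_r$ is regular (Proposition~\ref{XrRepresentability}), it has rational singularities, so Proposition~\ref{intcompare} provides a canonical $\H_r^*$-equivariant isomorphism of exact sequences of free $R_r$-modules
\begin{equation*}
H(\X_r/R_r) \;\simeq\; \bigl(\,\omega_{\J_r}\;\hookrightarrow\;\Lie\scrExtrig_{R_r}(\J_r,\Gm)\;\twoheadrightarrow\;\Lie\Dual{\J}_r^{0}\,\bigr)
\end{equation*}
which intertwines $\pr_*$ on the bottom with $\Pic(\pr)^*$ on the top.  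Applying the idempotent ${e^*}'$ yields the ${e^*}'$-part of this identification, whose left-hand side is precisely the bottom row of the diagram we wish to construct.

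Next, I will match the right-hand side with the Hodge filtration of $\D(\G_{r,0})_{R_r}$.  The $\H_r^*$-equivariant map $\alpha_r^*\colon \J_r\to \B_r^*$ (extending $\alpha_r^*$ on generic fibers by the N\'eron mapping property) induces, on $p$-divisible groups, an isomorphism ${e^*}'\J_r[p^{\infty}]\simeq \G_r$ by Proposition~\ref{GoodRednProp}.  Because $\Lie$ sees only the identity component and commutes with the formation of ${e^*}'$-eigenspaces, and because the kernel and cokernel of $\alpha_r^*$ on $p$-divisible groups are killed by ${e^*}'$, contravariant functoriality of $\scrExtrig_{R_r}(-,\Gm)$ gives a canonical isomorphism
\begin{equation*}
{e^*}'\Lie\scrExtrig_{R_r}(\J_r,\Gm) \;\simeq\; \Lie\scrExtrig_{R_r}(\B_r^*,\Gm)[{e^*}'].
\end{equation*}
Since $\B_r^*$ is an abelian scheme (Proposition~\ref{GoodRedn}), Proposition~\ref{MMrep} identifies $\scrExtrig_{R_r}(\B_r^*,\Gm)$ with the universal extension $\E(\Dual{\B}_r^*)$.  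Working modulo $p^n$ and passing to the inverse limit, Theorem~\ref{UniExtCompat}~(\ref{UniExtCompat3}) then identifies this with $\Lie\E(\Dual{\B_r^*[p^\infty]})$, and restricting to the ${e^*}'$-eigenspace produces $\Lie\E(\Dual{\G}_r)$.  Finally, Proposition~\ref{BTgroupUnivExt} identifies this latter Lie algebra, together with its canonical filtration by $\omega_{\G_r}$ and quotient $\Lie\Dual{\G}_r$, with the Hodge filtration of $\D(\G_{r,0})_{R_r}$.  Splicing these three comparisons together yields the diagram~(\ref{CollectedComparisonIsom}).

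The remaining task is to check compatibilities.  $\H_r^*$-equivariance is automatic at every step.  For the $\Gamma$-action, one observes that all the structures in play (the curve $\X_r$, its Jacobian and N\'eron model, the abelian scheme $\B_r^*$, the $p$-divisible group $\G_r$, and the Dieudonn\'e crystal) carry $\Gamma$-actions arising by base change along $\gamma\colon T_r\to T_r$ from a common generic-fibre descent datum, and the comparison isomorphisms of Propositions~\ref{intcompare}, \ref{MMrep}, \ref{UniExtCompat}, and \ref{BTgroupUnivExt} are all of formation compatible with base change, hence automatically $\Gamma$-equivariant.  For compatibility with change in $r$, Proposition~\ref{intcompare} matches $\pr_*$ on cohomology with $\Pic(\pr)^*$-pullback on $\scrExtrig$; iterating $(r-s)$ times, this matches $\rho^{r-s}_*$ with $\Pic^0(\pr)^{r-s}=\rho_{r,s}$ of Definition~\ref{ordpdivdefn}.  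The main obstacle I anticipate is the identification in Step~two, namely verifying that the map $\Lie\scrExtrig_{R_r}(\B_r^*,\Gm)\to \Lie\scrExtrig_{R_r}(\J_r,\Gm)$ becomes an isomorphism on ${e^*}'$-parts: one cannot apply Proposition~\ref{MMrep} directly to $\J_r$ (which is only smooth, not proper), so one must pass through the auxiliary identification of $\Lie\scrExtrig_{R_r}(\J_r,\Gm)$ with the Lie algebra of the universal extension of the formal $p$-divisible object $\J_r^{0}[p^\infty]$, and then use Proposition~\ref{GoodRednProp} to conclude on the ${e^*}'$-summand.
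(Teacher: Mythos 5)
Your overall strategy (chain the integral de~Rham comparison of Proposition~\ref{intcompare} through the Lie algebra of the canonical extension of $\J_r$, then over to $\B_r^*$ and its Dieudonn\'e crystal) is exactly the paper's plan, and your identification of the key inputs — Propositions~\ref{intcompare}, \ref{MMrep}, \ref{BTgroupUnivExt} and Theorem~\ref{UniExtCompat}~(\ref{UniExtCompat3}) — is correct.  The gap is in your Step two, and it is the one you yourself flag.  Asserting that the ${e^*}'$-part of the contravariant map $\Lie\scrExtrig_{R_r}(\B_r^*,\Gm)\to\Lie\scrExtrig_{R_r}(\J_r,\Gm)$ is an isomorphism ``because the kernel and cokernel of $\alpha_r^*$ on $p$-divisible groups are killed by ${e^*}'$'' does not actually follow from contravariant functoriality: you would need to know, for instance, that $\scrExtrig_{R_r}(\cdot,\Gm)$ takes the relevant exact sequences of smooth group schemes to exact sequences after applying ${e^*}'$, and the kernel $\ker\alpha_r^*$ is not an abelian scheme over $R_r$, so this is not clear.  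The auxiliary route you sketch — identifying $\Lie\scrExtrig_{R_r}(\J_r,\Gm)$ with the Lie algebra of the universal extension of a ``formal $p$-divisible object $\J_r^0[p^\infty]$'' — is not developed either; Theorem~\ref{UniExtCompat}~(\ref{UniExtCompat3}) is stated for abelian schemes, and $\J_r$ has additive degeneration at $p$, so one cannot simply invoke it.

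What actually closes this gap in the paper is Lemma~\ref{UFactorDiagLem}: the endomorphism $U_r^*$ of $J_r$ factors through $\alpha_r^*$ via a morphism $W_r^*\colon B_r^*\to J_r$ defined over $\Q$.  Applying the contravariant functor ${e^*}'H(\cdot)$ (Lie of the canonical extension (\ref{NeronCanExt}) of the N\'eron model, then the idempotent) to the commutative triangle of N\'eron models induced by (\ref{UFactorDiag}) produces a square in which both vertical arrows are the isomorphism ${e^*}'U_r^*$ (invertible because $U_r^*=(p-1)\,{U_p^*}^{r+1}$ on the ${e^*}'$-part).  The diagonal $W_r^*$ then forces both horizontal arrows, in particular the map induced by $\alpha_r^*$, to be isomorphisms — with no need to discuss $p$-divisible groups or universal extensions of $\J_r$ at all.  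Only after this reduction to $\B_r^*$, which is an abelian scheme, does the paper reduce modulo $p^n$, invoke Theorem~\ref{UniExtCompat}~(\ref{UniExtCompat1}), (\ref{UniExtCompat3}), and pass to the inverse limit to reach $\D(\G_{r,0})_{R_r}$ via Proposition~\ref{BTgroupUnivExt}.  You should incorporate the $W_r^*$ factorization; as written, your proof does not establish the isomorphism at the heart of the argument.
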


\begin{proof}
	Let $\alpha_r^*: J_r\twoheadrightarrow B_r^*$ be the map of Definition \ref{BalphDef}.
	We claim that $\alpha_r^*$ induces a canonical isomorphism of short exact sequences of free 
	$R_r$-modules
	\begin{equation}
	\begin{gathered}
		\xymatrix{
			0 \ar[r] & {\omega_{\G_r}}\ar[d]_-{\simeq} \ar[r] & {\D(\G_{r,0})_{R_r}} \ar[d]_-{\simeq}\ar[r] & 
			{\Lie(\G_r^t)} \ar[d]^-{\simeq}\ar[r] & 0 \\
			0 \ar[r] & {{e^*}'\omega_{\J_r}} \ar[r] & {{e^*}'\Lie\scrExtrig(\J_r,\Gm)} \ar[r] & 
			{{e^*}'\Lie({\J_r^t}^0)} \ar[r] & 0
		}
	\end{gathered}\label{HodgeToExtrigMap}
	\end{equation}
	that is $\H_r^*$ and $\Gamma$-equivariant and compatible with change in $r$ using
	the map on N\'eron models induced by $\Pic^0(\pr)$ and the maps (\ref{pdivTowers})
	on $\G_r$.
	Granting this claim, the proposition then follows immediately from Proposition \ref{intcompare}.

	To prove our claim, we introduce the following notation:
	set $V:=\Spec(R_r)$, and for $n\ge 1$ put $V_n:=\Spec(R_r/p^nR_r)$. For any scheme 
	(or $p$-divisible group) $X$ over $V$, we put $X_n:=X\times_V V_n$.
	If $\A$ is a N\'eron model over $V$,
	we will write $H(\A)$ for the short exact sequence of free $R_r$-modules obtained by
	applying $\Lie$ to the canonical extension (\ref{NeronCanExt}) of $\Dual{\A}^0$.
	If $G$ is a $p$-divisible group over $V$, we similalry
	write $H(G_n)$ for the short exact sequence of Lie algebras associated to the universal extension
	of $G_n^t$ by a vector group over $V_n$ (see Theorem \ref{UniExtCompat}, (\ref{UniExtCompat2})).  
	If $\A$ is an abelian scheme over $V$ 
	then we have natural and compatible (with change in $n$) isomorphisms
	\begin{equation}
		H(\A_n[p^{\infty}])\simeq H(\A_n)\simeq H(\A)/p^n,\label{AbSchpDiv}
	\end{equation}
	thanks to Theorem \ref{UniExtCompat}, (\ref{UniExtCompat3}) and (\ref{UniExtCompat1}); in particular, this
	justifies our slight abuse of notation.  
	
	Applying the contravariant functor ${e^*}'H(\cdot)$ to the diagram of N\'eron models over $V$ 
	induced by (\ref{UFactorDiag}) yields a commutative diagram of short exact sequences of 
	free $R_r$-modules
	\begin{equation}
	\begin{gathered}
		\xymatrix{
			{{e^*}'H(\J_r)} & {{e^*}'H(\B_r^*)}\ar[l] \\
			{{e^*}'H(\J_r)} \ar[u]^-{U_r^*}\ar[ur] & {{e^*}'H(\B_r)}\ar[u]_-{U_r^*}\ar[l] 
			}
	\end{gathered}		
	\end{equation}
	in which both vertical arrows are isomorphisms by definition of ${e^*}'$.  As in the proofs of Propositions
	\ref{GoodRednProp} and \ref{SpecialFiberOrdinary}, it follows that
	the horizontal maps must be isomorphisms as well:
	\begin{equation}
		{e^*}'H(\J_r)\simeq {e^*}'H(\B_r^*)
		\label{alphaIdenOrd}
	\end{equation}
	Since these isomorphisms are induced via the N\'eron mapping property and the functoriality
	of $H(\cdot)$ by the $\H_r^*(\Z)$-equivariant map $\alpha_r^*:J_r\twoheadrightarrow B_r^*$,
	they are themselves $\H_r^*$-equivariant.  Similarly, since $\alpha_r^*$ is defined 
	over $\Q$ and compatible with change in $r$ as in Lemma \ref{Btower}, the isomorphism
	(\ref{alphaIdenOrd}) is compatible with the given actions of $\Gamma$ (arising via the N\'eron
	mapping property from the semilinear action of $\Gamma$ over $K_r$ giving the descent
	data of ${J_r}_{K_r}$ and ${B_r}_{K_r}$ to $\Q_p$) and change in $r$.
	Reducing (\ref{alphaIdenOrd}) modulo $p^n$ and using the canonical isomorphism (\ref{AbSchpDiv}) yields
	the identifications
	\begin{equation}
		{e^*}'H(\J_r)/p^n\simeq {e^*}'H(\B_r^*)/p^n \simeq {e^*}'H(\B_{r,n}^*[p^{\infty}]) 
		\simeq H({e^*}'\B_{r,n}^*[p^{\infty}]) =: H(\G_{r,n})\label{ModPowersIsom}
	\end{equation}
	which are clearly compatible with change in $n$, and which are easily checked
	(using the naturality of (\ref{AbSchpDiv}) and our remarks above) to be
	$\H_r^*$ and $\Gamma$-equivariant, and compatible with change in $r$.
	Since the surjection $R_r\twoheadrightarrow R_r/pR_r$ is a PD-thickening,
	passing to inverse limits (with respect to $n$) on (\ref{ModPowersIsom}) and using 
	Proposition \ref{BTgroupUnivExt} now completes the proof.	
\end{proof}

\begin{corollary}\label{RelationToHodgeCor}
	Let $r$ be a positive integer. Then the short exact sequence of free $R_r$-modules
	\begin{equation}
		\xymatrix{
			0\ar[r] & {{e^*}'H^0(\omega_r)} \ar[r] & {{e^*}'H^1_{\dR,r}} \ar[r] & {{e^*}'H^1(\O_r)} \ar[r] & 0
		}\label{TrivialEigenHodge}
	\end{equation}
	is functorially split; in particular, 
	it is split compatibly with the actions of $\Gamma$ and $\H_r^*$.
	Moreover, $(\ref{TrivialEigenHodge})$ admits a functorial descent to $\Z_p$:
	there is a natural isomorphism of split short exact sequences 
	\begin{equation}
	\begin{gathered}
			\xymatrix{
			0\ar[r] & {{e^*}'H^0(\omega_r)} \ar[r]\ar[d]_-{\simeq} & 
			{{e^*}'H^1_{\dR,r}} \ar[r]\ar[d]^-{\simeq} & {{e^*}'H^1(\O_r)} \ar[r]\ar[d]^-{\simeq} & 0\\
			0 \ar[r] & {\D(\o{\G}_r^{\mult})\tens_{\Z_p} R_r} \ar[r] &
			{\D(\o{\G}_r)\tens_{\Z_p} R_r}\ar[r] &
			{\D(\o{\G}_r^{et})\tens_{\Z_p} R_r} \ar[r] & 0
			}
	\end{gathered}\label{DescentZp}		
	\end{equation}
	that is $\H^*$ and $\Gamma$ equivariant, with
	$\Gamma$ acting trivially on $\o{\G}_r^{\et}$ and through $\langle \chi\rangle^{-1}$ on $\o{\G}_r^{\mult}$.
	The identification $\ref{DescentZp}$ is compatible with change in $r$ using the maps $\pr_*$ on the top
	row and the maps induced by 
	\begin{equation*}
			\xymatrix@C=35pt{
				{\o{\G}_r=\o{\G}_r^{\mult} \times \o{\G}_r^{\et}} \ar[r]^{V^{-1}\times F} &
				 {\o{\G}_r^{\mult} \times \o{\G}_r^{\et}=\o{\G}_r} \ar[r]^-{\o{\rho}} & 
				 {\o{\G}_{r+1}} 
			}
	\end{equation*}	
	on the bottom row.
\end{corollary}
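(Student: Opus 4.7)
The plan is to deduce the corollary from Proposition \ref{KeyComparison}, Lemma \ref{HodgeFilOrdProps}, and the ordinariness of $\G_r$. First, I would apply Proposition \ref{KeyComparison} to replace the short exact sequence (\ref{TrivialEigenHodge}) with the Hodge filtration of $\D(\G_{r,0})_{R_r}$, compatibly with $\H_r^*$, $\Gamma$, and change in $r$. Since $\G_r$ is ordinary by Proposition \ref{SpecialFiberOrdinary}, Lemma \ref{HodgeFilOrdProps} provides a canonical, functorial, and $\Gamma$-equivariant splitting of this Hodge filtration together with a natural isomorphism onto the split exact sequence
\begin{equation*}
0 \to \D(\o{\G}_r^{\mult})_W \tens_{W, \varphi^r} R_r \to \D(\o{\G}_r)_W \tens_{W, \varphi^r} R_r \to \D(\o{\G}_r^{\et})_W \tens_{W, \varphi^r} R_r \to 0.
\end{equation*}
Since $k = \F_p$, we have $W = \Z_p$ with $\varphi = \id$, so the Frobenius twist disappears and this yields exactly the bottom row of (\ref{DescentZp}); in particular, (\ref{TrivialEigenHodge}) is functorially split and canonically descends to $\Z_p$.

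The $\H_r^*$-equivariance is immediate from the functoriality of both Proposition \ref{KeyComparison} and Lemma \ref{HodgeFilOrdProps} in the $p$-divisible group, together with the $\H_r^*$-action on $\G_r$ coming from the Hecke action on $\B_r^*$. The $\Gamma$-equivariance follows similarly, and the asserted action of $\Gamma$ on the bottom row matches the description of the $\Gamma$-action on $\o{\G}_r^{\mult}$ and $\o{\G}_r^{\et}$ already recorded in Proposition \ref{SpecialFiberOrdinary} and Theorem \ref{MainDieudonne} (\ref{MainDieudonne5}).

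The main obstacle is checking the compatibility with change in $r$, where the extra twist by $V^{-1} \times F$ must appear. Tracing through the proof of Lemma \ref{HodgeFilOrdProps}, the identification $\D(G_{r,0}^{\et})_{R_r} \simeq \D(\o{\G}_r^{\et}) \tens_{\Z_p} R_r$ is obtained by applying $\D(\cdot)_{R_r}$ to the $r$-fold relative Frobenius isomorphism (\ref{Ftrick}), and similarly on the multiplicative side one uses the $r$-fold Verschiebung (\ref{Vtrick}). When comparing with the corresponding isomorphism at level $r+1$, which uses $F^{r+1}$ (resp.\ $V^{r+1}$) in place of $F^r$ (resp.\ $V^r$), the relations $F^{r+1} = F \circ F^r$ and $V^{r+1} = V^r \circ V$ show that the diagram relating the two identifications commutes only after inserting the automorphism $V^{-1} \times F$ of $\o{\G}_r = \o{\G}_r^{\mult} \times \o{\G}_r^{\et}$. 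Since the transition map on the top row of (\ref{DescentZp}) is the trace $\pr_*$, which via Proposition \ref{KeyComparison} corresponds to the map on Dieudonn\'e crystals induced by $\rho_{r+1,r}: \G_r \times_{R_r} R_{r+1} \to \G_{r+1}$, the resulting transition map on the bottom row is exactly the composition of $V^{-1} \times F$ with $\o{\rho}: \o{\G}_r \to \o{\G}_{r+1}$, as claimed.
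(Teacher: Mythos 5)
Your proposal is correct and follows essentially the same route as the paper: apply Proposition \ref{KeyComparison} to pass to the Hodge filtration of $\D(\G_{r,0})_{R_r}$, then invoke the ordinariness of $\G_r$ and the splitting plus descent-to-$W$ isomorphism of Lemma \ref{HodgeFilOrdProps}. Your explicit tracing of the $V^{-1}\times F$ twist through the $F^r$ and $V^r$ identifications fills in the detail the paper leaves as ``follows easily from the construction of (\ref{DescentToWIsom}) via (\ref{TwistyDieuIsoms}),'' and your observation that the twist by $\varphi^r$ disappears because $W=\Z_p$ is exactly the right reconciliation with the statement.
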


\begin{proof}
	Consider the isomorphism (\ref{CollectedComparisonIsom}) of Proposition \ref{KeyComparison}.
	As  $\G_r$ is an ordinary $p$-divisible group by Proposition \ref{SpecialFiberOrdinary},
	the top row of (\ref{CollectedComparisonIsom}) is functorially split
	by Lemma \ref{HodgeFilOrdProps},  and this gives our first assertion.
	Composing the inverse of (\ref{CollectedComparisonIsom})
	with the isomorphism (\ref{DescentToWIsom}) of	Lemma \ref{HodgeFilOrdProps} gives
	the claimed identification (\ref{DescentZp}).
	That this isomorphism is compatible with change in $r$ via the specified maps
	follows easily from the construction of (\ref{DescentToWIsom}) via
	(\ref{TwistyDieuIsoms}).
\end{proof}

We can now prove Theorem \ref{dRtoDieudonne}.  Let us recall the statement:

\begin{theorem}\label{dRtoDieudonneInfty}
	There is a canonical isomorphism of 
	short exact sequences of finite free $\Lambda_{R_{\infty}}$-modules 
	\begin{equation}
	\begin{gathered}
		\xymatrix{
		0 \ar[r] & {{e^*}'H^0(\omega)} \ar[r]\ar[d]^-{\simeq} & 
		{{e^*}'H^1_{\dR}} \ar[r]\ar[d]^-{\simeq} & {{e^*}'H^1(\O)} \ar[r]\ar[d]^-{\simeq} & 0 \\
		0 \ar[r] & {\D_{\infty}^{\mult}\tens_{\Lambda} \Lambda_{R_{\infty}}} \ar[r] &
		{\D_{\infty}\tens_{\Lambda} \Lambda_{R_{\infty}}} \ar[r] &
		{\D_{\infty}^{\et}\tens_{\Lambda} \Lambda_{R_{\infty}}} \ar[r] & 0
		}
	\end{gathered}
	\end{equation}
	that is $\Gamma$ and $\H^*$-equivariant.  
	Here, the mappings on bottom row are the canonical inclusion and projection morphisms
	corresponding to the direct sum decomposition $\D_{\infty}=\D_{\infty}^{\mult}\oplus \D_{\infty}^{\et}$.
	In particular, the Hodge filtration exact sequence $(\ref{LambdaHodgeFilnomup})$ is canonically 
	split, and admits a canonical descent to $\Lambda$.
\end{theorem}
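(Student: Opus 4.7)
The plan is to promote the finite-level identifications of Corollary \ref{RelationToHodgeCor} to an isomorphism of inverse limits. The key subtlety is that the transition maps on the two sides match up, via Corollary \ref{RelationToHodgeCor}, only after introducing an auxiliary twist by the automorphism $V^{-1}\oplus F$; this twist must be absorbed before passing to the inverse limit.

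Concretely, Corollary \ref{RelationToHodgeCor} provides, for each $r$, a canonical $\H_r^*$ and $\Gamma$-equivariant isomorphism $\phi_r$ of short exact sequences of free $R_r$-modules between the ordinary Hodge filtration $({e^*}'H^0(\omega_r)\hookrightarrow{e^*}'H^1_{\dR,r}\twoheadrightarrow{e^*}'H^1(\O_r))$ and $(\D(\o{\G}_r^{\mult})\hookrightarrow\D(\o{\G}_r)\twoheadrightarrow\D(\o{\G}_r^{\et}))\otimes_{\Z_p} R_r$, satisfying under change in $r$ the compatibility
\begin{equation*}
\phi_r\circ \pr_* \;=\; (V^{-1}\oplus F)\circ \D(\o{\rho}_{r+1,r})\circ \phi_{r+1}.
\end{equation*}
Here $V^{-1}\oplus F$ denotes the $\Z_p$-linear automorphism of $\D(\o{\G}_r)$ given by $V^{-1}$ on $\D(\o{\G}_r^{\mult})$ and $F$ on $\D(\o{\G}_r^{\et})$; this is well-defined because $\o{\G}_r$ is ordinary (Proposition \ref{SpecialFiberOrdinary}), so $V$ is invertible on $\D(\o{\G}_r^{\mult})$ and $F$ is invertible on $\D(\o{\G}_r^{\et})$ by classical Dieudonn\'e theory.

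I would absorb this twist by setting $\psi_r := (V^{-1}\oplus F)^r\circ \phi_r$. Since $F$ and $V$ are natural on morphisms of $p$-divisible groups, $\D(\o{\rho}_{r+1,r})$ commutes with $V^{-1}\oplus F$, and a short computation yields
\begin{equation*}
\psi_r\circ \pr_* \;=\; (V^{-1}\oplus F)^{r+1}\circ \D(\o{\rho}_{r+1,r})\circ \phi_{r+1} \;=\; \D(\o{\rho}_{r+1,r})\circ \psi_{r+1},
\end{equation*}
so $\{\psi_r\}$ is compatible with the natural transition maps that define $\D_\infty$. Extending scalars to $R_\infty$ over $R_r$ and passing to inverse limits now produces a canonical isomorphism of short exact sequences of $\Lambda_{R_\infty}$-modules between the $\Lambda$-adic Hodge filtration $({e^*}'H^0(\omega)\hookrightarrow{e^*}'H^1_{\dR}\twoheadrightarrow{e^*}'H^1(\O))$ and $\varprojlim_r\bigl(\D(\o{\G}_r^{\mult})\hookrightarrow\D(\o{\G}_r)\twoheadrightarrow\D(\o{\G}_r^{\et})\bigr)\otimes_{\Z_p} R_\infty$. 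The hypotheses of Lemma \ref{Technical} for the tower $\{\D(\o{\G}_r)\}_r$ (with $A_r=\Z_p$, $I_r=(p)$) are verified by Theorem \ref{MainDieudonne}, so its part (\ref{CompletedBaseChange}) identifies this inverse limit with $(\D_\infty^{\mult}\hookrightarrow \D_\infty\twoheadrightarrow \D_\infty^{\et})\otimes_\Lambda \Lambda_{R_\infty}$, which is the target of the theorem.

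The $\H^*$ and $\Gamma$-equivariance of the resulting isomorphism follow from the corresponding properties of each $\phi_r$ together with the observation that $(V^{-1}\oplus F)^r$ commutes with both actions: by Theorem \ref{MainDieudonne}, $V$ on $\D_\infty^{\mult}$ is $\langle p\rangle_N^{-1}U_p^*$ and $F$ on $\D_\infty^{\et}$ is $U_p^*$, which are central in $\H^*$, and the $\Gamma$-actions likewise commute with $F$ and $V$ on the respective summands. The canonical splitting of the Hodge filtration and its descent to $\Lambda$ are then immediate from the analogous properties of the slope filtration in Theorem \ref{MainDieudonne}. The main obstacle is finding the correction $(V^{-1}\oplus F)^r$: without it the $\phi_r$ do not assemble into a morphism of inverse systems, and this correction is available uniformly in $r$ precisely because, on the ordinary part, $F$ and $V$ act invertibly on the \'etale and multiplicative summands.
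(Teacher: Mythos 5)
Your argument is correct and follows essentially the same route as the paper: you absorb the twisted transition maps of Corollary \ref{RelationToHodgeCor} by a power of $V^{-1}\oplus F$ and then appeal to Lemma \ref{Technical}, just as the paper does — the only cosmetic difference is that you build the correction $(V^{-1}\oplus F)^r$ into each finite-level isomorphism $\psi_r$, whereas the paper first passes to the inverse limit along the twisted system $\o{\rho}\circ(V^{-1}\times F)$ and then uses the automorphisms $V^{-r}\times F^r$ of $\o{\G}_r$ to re-index. (A small stylistic note: the commutation of $V^{-1}\oplus F$ with the $\H^*$- and $\Gamma$-actions is most directly a consequence of the naturality of $F$ and $V$ as endomorphisms of $p$-divisible groups over $\F_p$; your appeal to Theorem \ref{MainDieudonne} to identify them with central Hecke operators is valid but indirect.)
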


\begin{proof}
	Applying $\otimes_{R_r} R_{\infty}$ to $(\ref{DescentZp})$ and passing to projective limits 
	yields an isomorphism of split exact sequences
	\begin{equation*}
		\xymatrix{
			0\ar[r] & {{e^*}'H^0(\omega)} \ar[r]\ar[d]_-{\simeq} & 
			{{e^*}'H^1_{\dR}} \ar[r]\ar[d]^-{\simeq} & {{e^*}'H^1(\O)} \ar[r]\ar[d]^-{\simeq} & 0\\
0 \ar[r] & {\varprojlim\limits_{\o{\rho}\circ V^{-1}} \left(\D(\o{\G}_r^{\mult})\tens_{\Z_p} R_{\infty}\right)} \ar[r] &
{\varprojlim\limits_{\o{\rho}\circ (V^{-1}\times F)}\left(\D(\o{\G}_r)\tens_{\Z_p} R_{\infty}\right)}\ar[r] &
			{\varprojlim\limits_{\o{\rho}\circ F}\left(\D(\o{\G}_r^{et})\tens_{\Z_p} R_{\infty}\right)} \ar[r] & 0
			}
		\end{equation*}
	On the other hand, the isomorphisms
$		\xymatrix@1@C=37pt{
			{\o{\G}_r = \o{\G}_r^{\mult}\times \o{\G}_r^{\et} } \ar[r]^-{V^{-r}\times F^{r}} &
			{\o{\G}_r^{\mult}\times \o{\G}_r^{\et} =\o{\G}_r}
		}
$
	induce an isomorphism of projective limits
	\begin{equation*}
		\xymatrix{
			{\varprojlim\limits_{\o{\rho}}\left(\D(\o{\G}_r)\tens_{\Z_p} R_{\infty}\right)}	\ar[r]^-{\simeq} &
			{\varprojlim\limits_{\o{\rho}\circ (V^{-1}\times F)}\left(\D(\o{\G}_r)\tens_{\Z_p} R_{\infty}\right)}
		}
	\end{equation*}
	which is visibly compatible with the the canonical splittings of source and target.  
	The result now follows from	
	Lemma \ref{Technical} (\ref{CompletedBaseChange}) and the proof of Theorem \ref{MainDieudonne},
	which guarantee that the canonical mapping 
	$\D_{\infty}\otimes_{\Lambda}\Lambda_{R_{\infty}}\rightarrow 
	\varprojlim_{\o{\rho}} (\D(\o{\G}_r)\otimes_{\Z_p}R_{\infty})$
	is an isomorphism respecting the natural splittings.
\end{proof}

As in \S\ref{ordforms}, for any subfield $K$ of $\c_p$ with ring of integers $R$, we denote by
$eS(N;\Lambda_R)$ the module of ordinary $\Lambda_R$-adic cuspforms of level $N$
in the sense of \cite[2.5.5]{OhtaEichler}.  Following our convention of \S\ref{OrdStruct},
we write $e'S(N;\Lambda_R)$ for the direct summand of $eS(N;\Lambda_R)$
on which $\mu_{p-1}\hookrightarrow \Z_p^{\times}\subseteq \H$ acts 
nontrivially.

\begin{corollary}\label{MFIgusaDieudonne}
	There is a canonical isomorphism of finite free $\Lambda$-modules 
	\begin{equation}
		{e}'S(N;\Lambda) \simeq \D_{\infty}^{\mult}   
		\label{LambdaFormsCrystalline}
	\end{equation}
	that intertwines $T\in \H$ on $e'S(N;\Lambda)$ with $T^*\in \H^*$
	on $\D_{\infty}^{\mult}$, 
	where $U_p^*$ acts on $\D_{\infty}^{\mult}$as $\langle p\rangle_N V$.
\end{corollary}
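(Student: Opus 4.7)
The plan is to combine Corollary \ref{LambdaFormsRelation} (Ohta's Eichler--Shimura isomorphism identifying $eS(N;\Lambda_{R_\infty})$ with a twist of $e^*H^0(\omega)$) with Theorem \ref{dRtoDieudonneInfty} (the present paper's descent identifying ${e^*}'H^0(\omega)$ with $\D_\infty^{\mult}\otimes_\Lambda \Lambda_{R_\infty}$), and then to descend from $\Lambda_{R_\infty}$ down to $\Lambda$ by taking $\Gamma$-invariants.

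First I would apply the central idempotent $f'\in\Z_p[\mu_{p-1}]$ to the isomorphism of Corollary \ref{LambdaFormsRelation}; as $f'$ commutes with $\H$, $\H^*$, and $\Gamma$, this yields a $\Lambda_{R_\infty}$-linear, $\Gamma$-equivariant isomorphism
\[
\phi_1\colon\ e'S(N;\Lambda_{R_\infty})(\langle\chi\rangle^{-1}) \xrightarrow{\ \sim\ } {e^*}'H^0(\omega)
\]
that intertwines $T\in \H$ with $T^*\in\H^*$, the source carrying the twisted action $\gamma\cdot\scrF=\langle\chi(\gamma)\rangle^{-1}\gamma\scrF$ and the target the natural semilinear action. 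Theorem \ref{dRtoDieudonneInfty} provides a $\Gamma$- and $\H^*$-equivariant isomorphism $\phi_2\colon{e^*}'H^0(\omega)\simeq \D_\infty^{\mult}\otimes_\Lambda\Lambda_{R_\infty}$, in which $\Gamma$ acts on the right via $\langle\chi\rangle^{-1}\otimes\sigma$ by Theorem \ref{MainDieudonne}(5). A direct calculation shows that in the composite $\psi:=\phi_2\circ\phi_1$ the two $\langle\chi\rangle^{-1}$-twists cancel: since $\langle\chi(\gamma)\rangle\in\Lambda^\times$ is a central scalar, $\psi$ intertwines the \emph{natural} Galois action on $e'S(N;\Lambda_{R_\infty})$ with the action $1\otimes\sigma$ on $\D_\infty^{\mult}\otimes_\Lambda\Lambda_{R_\infty}$.

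Next I would take $\Gamma$-invariants of $\psi$. On the right-hand side, the action $1\otimes\sigma$ is purely through the coefficient ring, so since $\D_\infty^{\mult}$ is finite free over $\Lambda$ (Theorem \ref{MainDieudonne}(1)),
\[
(\D_\infty^{\mult}\otimes_\Lambda\Lambda_{R_\infty})^{1\otimes\sigma}
=\D_\infty^{\mult}\otimes_\Lambda \Lambda_{R_\infty}^{\Gamma}
=\D_\infty^{\mult}\otimes_\Lambda \Lambda
=\D_\infty^{\mult},
\]
using $R_\infty^\Gamma=\Z_p$. On the left-hand side, the natural Galois action on $e'S(N;\Lambda_{R_\infty})$ is purely through the $R_\infty$-coefficients of $q$-expansions, so by the base-change compatibility of $\Lambda$-adic cuspforms (i.e.\ $e'S(N;\Lambda_{R_\infty}) = e'S(N;\Lambda)\otimes_\Lambda\Lambda_{R_\infty}$, which is essentially built into Ohta's definition of $\Lambda$-adic forms with enlarged coefficients, and which uses Hida's freeness for $e'S(N;\Lambda)$) the $\Gamma$-invariants recover $e'S(N;\Lambda)$. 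Passing to invariants therefore yields the desired $\Lambda$-linear, $\H$-equivariant (with $T\leftrightarrow T^*$) isomorphism $e'S(N;\Lambda)\simeq \D_\infty^{\mult}$, and the action of $U_p^*$ on the right is $\langle p\rangle_N V$ by Theorem \ref{MainDieudonne}(4).

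The only delicate step is the cancellation of the $\langle\chi\rangle^{-1}$-twists in the composite $\psi$, which I would verify by an explicit symbol chase: writing $x=\phi_1(\scrF)$, one has $\phi_1(\gamma\scrF)=\langle\chi(\gamma)\rangle\,\gamma x$ (from the $\Gamma$-equivariance of $\phi_1$ between the twisted and natural actions) and $\phi_2(\gamma x) = (\langle\chi(\gamma)\rangle^{-1}\otimes\sigma_\gamma)\phi_2(x)$, so the two $\langle\chi(\gamma)\rangle^{\pm1}$ factors, being central, cancel to give $\psi(\gamma\scrF)=(1\otimes\sigma_\gamma)\psi(\scrF)$. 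Everything else is routine bookkeeping.
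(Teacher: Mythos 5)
Your proposal is correct and follows essentially the same route as the paper: both identify $\D_\infty^{\mult}\otimes_\Lambda\Lambda_{R_\infty}$ with $e'S(N;\Lambda)\otimes_\Lambda\Lambda_{R_\infty}$ by composing the isomorphisms of Theorem \ref{dRtoDieudonneInfty} and Corollary \ref{LambdaFormsRelation} with Ohta's base-change compatibility for $\Lambda$-adic cuspforms, verify that the two $\langle\chi\rangle^{-1}$-twists cancel so that $\Gamma$ acts as $1\otimes\gamma$ on each side, and then pass to $\Gamma$-invariants using finite freeness over $\Lambda$ and $\Lambda_{R_\infty}^\Gamma=\Lambda$. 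Your explicit symbol chase verifying the cancellation is the same computation the paper packages as ``twisting the $\Gamma$-action by $\langle\chi\rangle$.''
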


\begin{proof}
	We claim that there are natural isomorphisms of  finite free $\Lambda_{R_{\infty}}$-modules
	\begin{equation}
		\D_{\infty}^{\mult} \otimes_{\Lambda} \Lambda_{R_{\infty}} \simeq 
		{e^*}'H^0(\omega) \simeq {e}'S(N,\Lambda_{R_{\infty}}) \simeq 
		e'S(N,\Lambda)\otimes_{\Lambda} \Lambda_{R_{\infty}}\label{TakeGammaInvariants}
	\end{equation}		
	and that the resulting composite isomorphism 
	intertwines $T^*\in \H^*$ on $\D_{\infty}^{\mult}$ with $T\in \H$ on 
	$e'S(N,\Lambda)$ and is $\Gamma$-equivariant, with $\gamma\in\Gamma$
	acting as $\langle \chi(\gamma)\rangle^{-1}\otimes \gamma$ on each tensor product.  
	Indeed, the first and second isomorphisms are
	due to Theorem \ref{dRtoDieudonneInfty} and Corollary \ref{LambdaFormsRelation}, respectively,
	while the final isomorphism is a consequence of the definition 
	of $e'S(N;\Lambda_R)$ and the facts that this $\Lambda_R$-module is free of finite rank
	\cite[Corollary 2.5.4]{OhtaEichler} and specializes as in \cite[2.6.1]{OhtaEichler}.
	Twisting the
	$\Gamma$-action on the source and target of the composite (\ref{TakeGammaInvariants}) by 
	$\langle \chi \rangle$ therefore gives a $\Gamma$-equivariant isomorphism
	\begin{equation}
		\D_{\infty}^{\mult} \otimes_{\Lambda} \Lambda_{R_{\infty}} \simeq 
		S(N,\Lambda)\otimes_{\Lambda} \Lambda_{R_{\infty}}\label{TwistedGammaIsom}
	\end{equation}
	with $\gamma\in \Gamma$ acting as $1\otimes \gamma$ on source and target.  Passing to $\Gamma$-invariants on
	(\ref{TwistedGammaIsom}) yields (\ref{LambdaFormsCrystalline}).	
\end{proof}

\begin{remark}\label{MFIgusaCrystal}
	Via Proposition \ref{DieudonneDuality} and the natural $\Lambda$-adic
	duality between $e\H$ and $eS(N;\Lambda)$ \cite[Theorem 2.5.3]{OhtaEichler}, we obtain
	a canonical $\Gal(K_0'/K_0)$-equivariant isomorphism of $\Lambda_{R_0'}$-modules
	\begin{equation*}
		e'\H\tens_{\Lambda} \Lambda_{R_0'} \simeq \D_{\infty}^{\et}(\langle a\rangle_N)\tens_{\Lambda}{\Lambda_{R_0'}}
	\end{equation*}
	that intertwines $T\otimes 1$ for $T\in \H$ acting on $e'\H$ by multiplication
	with $T^*\otimes 1$, with $U_p^*$ acting on $\D_{\infty}^{\et}(\langle a\rangle_N)$ as $F$.
	From Theorem \ref{DieudonneCrystalIgusa} and Corollary \ref{MFIgusaDieudonne}  
	we then obtain canonical
	isomorphisms 
	\begin{equation*}
		e'S(N;\Lambda)\simeq f'H^1_{\cris}(I^0)^{V_{\ord}}\qquad\text{respectively}\qquad
		e'\H\tens_{\Lambda}\Lambda_{R_0'} \simeq 
		f'H^1_{\cris}(I^{\infty})^{F_{\ord}}(\langle a\rangle_{N})\tens_{\Lambda}\Lambda_{R_0'}
	\end{equation*}
	intertwing $T$ (respectively $T\otimes 1$) with $T^*$ (respectively $T^*\otimes 1$)
	where $U_p^*$ acts on crystalline cohomology as $\langle p\rangle_N V$
	(respectively $F\otimes 1$).  The second of these isomorphisms is moreover $\Gal(K_0'/K_0)$-equivariant.
\end{remark}

In order to relate the slope filtration (\ref{DieudonneInfiniteExact}) of $\D_{\infty}$
to the ordinary filtration of ${e^*}'H^1_{\et}$, we 
require:
\begin{lemma}
	Let $r$ be a positive integer let $G_r={e^*}'J_r[p^{\infty}]$ 
	be the unique $\Q_p$-descent of the generic fiber of $\G_r$, as in Definition $\ref{ordpdivdefn}$.
	There are canonical isomophisms of free $W(\o{\F}_p)$-modules
	\begin{subequations}	
		\begin{equation}
			\D(\o{\G}_r^{\et})\tens_{\Z_p} W(\o{\F}_p) \simeq \Hom_{\Z_p}(T_pG_r^{\et},\Z_p)\tens_{\Z_p} W(\o{\F}_p)
		\label{etalecase}
		\end{equation}
		\begin{equation}
			\D(\o{\G}_r^{\mult})(-1)\tens_{\Z_p} W(\o{\F}_p) \simeq \Hom_{\Z_p}(T_pG_r^{\mult},\Z_p)
			\tens_{\Z_p} W(\o{\F}_p).	
		\label{multcase}	
		\end{equation}
		that are $\H_r^*$-equivariant and $\scrG_{\Q_p}$-compatible for the diagonal action on source and target,
		with $\scrG_{\Q_p}$ acting trivially on $\D(\o{\G}^{\et}_r)$ and via $\chi^{-1}\cdot \langle \chi^{-1}\rangle$
		on $\D(\o{\G}_r^{\mult})(-1):=\D(\o{\G}_r^{\mult})\otimes_{\Z_p} \Z_p(-1)$.  The isomorphism
		$(\ref{etalecase})$ intertwines $F\otimes\sigma$ with $1\otimes \sigma$
		while $(\ref{multcase})$ intertwines $V\otimes\sigma^{-1}$ with $1\otimes\sigma^{-1}$.
	\end{subequations}
\end{lemma}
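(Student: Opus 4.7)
The plan is to derive the étale isomorphism (a) from classical contravariant Dieudonné theory for étale $p$-divisible groups over $\F_p$, and then deduce (b) from (a) via Cartier duality.

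For (a), I will first identify $T_pG_r^{\et}$ with the Tate module of $\o{\G}_r^{\et}$ over $\F_p$. Since $\G_r^{\et}$ is étale over $R_r$, the strict-henselian property of $R_r$ implies that for each $n$ reduction induces a bijection on $p^n$-torsion points valued in the strict henselization; combined with the fact that $\G_r^{\et}[p^n]$ is finite étale over $R_r$, this yields $T_p(\G_r^{\et}\otimes K_r)\simeq T_p\o{\G}_r^{\et}$ equivariantly for the action of $\scrG_{K_r}$ through its quotient $\Gal(\o{\F}_p/\F_p)$. Because $G_r^{\et}$ is the $\Q_p$-descent of $\G_r^{\et}\otimes K_r$, this extends to a $\Z_p[\scrG_{\Q_p}]$-module identification with $\scrG_{\Q_p}$ acting on $T_p\o{\G}_r^{\et}$ through the Frobenius quotient---here I use that the residual $\Gamma$-action on $\D(\o{\G}_r^{\et})$ is trivial by Theorem \ref{MainDieudonne}(\ref{MainDieudonne5}), and that the same functorial semilinear $\Gamma$-action on $\G_r^{\et}$ induces the $\Gamma$-action both on $\D(\o{\G}_r^{\et})$ and on $T_p\o{\G}_r^{\et}$. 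I then invoke classical Dieudonné theory: for any étale $p$-divisible group $H$ over $\F_p$, there is a natural isomorphism $\D(H)\otimes_{\Z_p} W(\o{\F}_p)\simeq \Hom_{\Z_p}(T_pH, W(\o{\F}_p))$ intertwining $F\otimes\sigma$ with $\id\otimes\sigma$, compatible with $\Gal(\o{\F}_p/\F_p)$ acting trivially on $\D(H)$ and through Frobenius on $W(\o{\F}_p)$. Applying this with $H=\o{\G}_r^{\et}$ and combining with the Tate-module identification gives (a), with $\H_r^*$-equivariance immediate from functoriality.

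For (b), I apply (a) to the Cartier-dual tower. The dual $\G_r^{\vee}$ lies in $\pdiv_{R_r}^{\Gamma}$ with $(\o{\G}_r^{\vee})^{\et}=(\o{\G}_r^{\mult})^{\vee}$, and generic fiber descending to $G_r^{\vee}\simeq G_r(\langle\chi\rangle\langle a\rangle_N)$ over $\Q_p$ by Proposition \ref{GdualTwist}. Applying (a) to the étale part of $\G_r^{\vee}$ yields
\[
\D((\o{\G}_r^{\mult})^{\vee})\otimes_{\Z_p} W(\o{\F}_p)\simeq \Hom_{\Z_p}(T_pG_r^{\vee,\et}, W(\o{\F}_p)).
\]
Two Cartier-duality identifications then produce (b). On the Galois side, the Weil pairing restricts to a perfect pairing $T_pG_r^{\mult}\otimes T_pG_r^{\vee,\et}\to \Z_p(1)$, yielding $T_pG_r^{\vee,\et}\simeq \Hom_{\Z_p}(T_pG_r^{\mult},\Z_p)(1)$. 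On the Dieudonné side, the perfect pairing $\D(\o{\G}_r^{\mult})\otimes \D((\o{\G}_r^{\mult})^{\vee})\to \D(\mu_{p^{\infty}})$ of \cite[\S5]{BBM}, together with the identification of $\D(\mu_{p^{\infty}})$ with $W$ carrying the appropriate Tate twist, produces an $F$- and $V$-equivariant identification of $\D((\o{\G}_r^{\mult})^{\vee})\otimes W(\o{\F}_p)$ with $\Hom_W(\D(\o{\G}_r^{\mult})(-1)\otimes W(\o{\F}_p), W(\o{\F}_p))$. Substituting, dualizing, and using that Cartier duality interchanges $F$ and $V$ (and hence $F\otimes\sigma$ with $V\otimes\sigma^{-1}$) gives the desired isomorphism (b).

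The main obstacle I expect is verifying the combined character twist $\chi^{-1}\langle\chi^{-1}\rangle$ on the Dieudonné side of (b). The $\scrG_{\Q_p}$-action on $\D(\o{\G}_r^{\mult})(-1)\otimes W(\o{\F}_p)$ has three contributions---the geometric inertia $\langle\chi^{-1}\rangle$ on $\D(\o{\G}_r^{\mult})$ (Theorem \ref{MainDieudonne}(\ref{MainDieudonne5})), the Tate twist $\chi^{-1}$, and the Frobenius quotient on $W(\o{\F}_p)$---and these must reconcile with the diagonal $\scrG_{\Q_p}$-action on $\Hom(T_pG_r^{\mult},\Z_p)\otimes W(\o{\F}_p)$. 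The descent-datum twisting formula of Proposition \ref{GdualTwist}, relating the $\Gamma$-action on $\G_r^{\vee}$ to the action on $\G_r$ by $\langle\chi\rangle\langle a\rangle_N$, is essential: when (a) is applied to $\G_r^{\vee}$, this twist precisely accounts for the geometric inertia action on $\D(\o{\G}_r^{\mult})$ after passing through the Cartier-Dieudonné pairing, so that the Galois compatibility in (b) works out. Careful bookkeeping of this interplay between the geometric inertia, the Weil-pairing Tate twist, and the swap of $F$ with $V$ under duality is where most of the work lies.
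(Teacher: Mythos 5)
Your proof of the étale isomorphism (a) is essentially the paper's argument, but your treatment of (b) takes a genuinely different route, and this is worth comparing. The paper proves (b) directly: it cites the same classical fact from \cite[\S4.1~a)]{BBM1} that underlies (a), but in its multiplicative incarnation, namely that for any multiplicative $p$-divisible group $H$ over a perfect field $k$ there is a natural isomorphism $\D(H)\otimes_{W(k)}W(\o{k})\simeq T_p\Dual{H}\otimes_{\Z_p}W(\o{k})$ intertwining $V\otimes\sigma^{-1}$ with $1\otimes\sigma^{-1}$. This already bakes in the Cartier dual and the $V$-semilinearity, so no further duality manipulations are required and the Tate twist enters cleanly through $T_p\Dual{H}\simeq\Hom(T_pH_\eta,\Z_p(1))$. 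Your route instead applies (a) to the dual tower $\G_r^{\vee}$ and then unwinds two dualities (Weil pairing on the Tate-module side, the Dieudonné pairing into $\D(\mu_{p^\infty})$ on the crystal side). This is correct in outline but longer, and it pushes all of the twist bookkeeping to the end---which you acknowledge but do not carry out.

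Two points deserve care in your route. First, (a) as stated presupposes that $\Gamma$ acts trivially on $\D(\o{\G}_r^{\et})$, a fact specific to $\G_r$ (Theorem~\ref{MainDieudonne}(\ref{MainDieudonne5})). For $\G_r^{\vee}$ the $\Gamma$-action on $(\o{\G}_r^{\vee})^{\et}=(\o{\G}_r^{\mult})^{\vee}$ is the Cartier dual of the action on $\o{\G}_r^{\mult}$, which is $\langle\chi\rangle^{-1}$, hence nontrivial (it is $\langle\chi\rangle$ on the dual). So you should invoke the underlying general BBM statement for an étale $p$-divisible group carrying a $k$-automorphism action rather than (a) verbatim, and then check that the $\langle\chi\rangle$ factor, the descent-datum twist of Proposition~\ref{GdualTwist}, and the cyclotomic twist from the Weil pairing combine to exactly the character $\chi^{-1}\langle\chi^{-1}\rangle$ in the statement. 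Second, the pairing into $\D(\mu_{p^\infty})$ over $\F_p$ is not itself a Tate twist in the Galois sense (there is no cyclotomic character over a perfect field); it contributes only the interchange of $F$ and $V$. The Galois-side Tate twist comes exclusively from the Weil pairing, so be careful not to double-count. With these corrections your argument goes through, but the paper's direct appeal to the multiplicative version of the classical comparison is shorter and avoids these pitfalls.
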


\begin{proof}
	Let $\G$ be any object of $\pdiv_{R_r}^{\Gamma}$ and write $G$ for the unique
	descent of the generic fiber $\G_{K_r}$ to $\Q_p$.  We recall that the semilinear $\Gamma$-action
	on $\G$ gives the $\Z_p[\scrG_{K_r}]$-module
	$T_p\G:=\Hom_{\O_{\C_p}}(\Q_p/\Z_p,\G_{\O_{\C_p}})$ 
	the natural structure of $\Z_p[\scrG_{\Q_p}]$-module via $g\cdot f:= g^{-1}\circ g^*f\circ g$.
	It is straightforward to check that the natural map $T_p\G\rightarrow T_pG$, which is an isomorphism
	of $\Z_p[\scrG_{K_r}]$-modules by Tate's theorem, is an isomorphism of $\Z_p[\scrG_{\Q_p}]$-modules
	as well.

	For {\em any} \'etale $p$-divisible group $H$ over a perfect field $k$, one has a canonical
	isomorphism of $W(\o{k})$-modules with semilinear $\scrG_k$-action
	\begin{equation*}
		\D(H)\tens_{W(k)} W(\o{k}) \simeq \Hom_{\Z_p}(T_pH,\Z_p)\tens_{\Z_p} W(\o{k})
	\end{equation*}
	that intertwines $F\otimes\sigma$ with $1\otimes\sigma$ and $1\otimes g$
	with $g\otimes g$ for $g\in \scrG_k$; for example, this can be deduced
	by applying \cite[\S4.1 a)]{BBM1} to $H_{\o{k}}$ 
	and using the fact that
	the Dieudonn\'e crystal is compatible with base change.
	In our case, the \'etale $p$-divisible group $\G_r^{\et}$ lifts $\o{\G}_r^{\et}$
	over $R_r$, and we obtain a natural isomorphism of $W(\o{\F}_p)$-modules
	with semilinear $\scrG_{K_r}$-action
	\begin{equation*}
		\D(\o{\G}_r^{\et})\tens_{\Z_p} W(\o{\F}_p) \simeq \Hom_{\Z_p}(T_p\G_r^{\et},\Z_p)\tens_{\Z_p} W(\o{\F}_p).
	\end{equation*}
	By naturality in $\G_r$, this identification respects the semilinear $\Gamma$-actions
	on both sides (which are trivial, as $\Gamma$ acts trivially on $\G_r^{\et}$); as explained
	in our initial remarks, it is precisely this action which allows us to view $T_p\G_r^{\et}$ as a 
	$\Z_p[\scrG_{\Q_p}]$-module, and we deduce (\ref{etalecase}).  The proof of (\ref{multcase})
	is similar, using the natural isomorphism (proved as above) for any multiplicative $p$-divisible group $H/k$
	\begin{equation*}
		\D(H)\tens_{W(k)} W(\o{k}) \simeq T_p\Dual{H}\tens_{\Z_p} W(\o{k}),
	\end{equation*}
	which intertwines $V\otimes\sigma^{-1}$ with $1\otimes\sigma^{-1}$
	and $1\otimes g$ with $g\otimes g$, for $g\in \scrG_k$.
\end{proof}

\begin{proof}[Proof of Theorem $\ref{FiltrationRecover}$ and Corollary $\ref{MWmainThmCor}$]
	For a $p$-divisible group $H$ over a field $K$, we will write $H^1_{\et}(H):=\Hom_{\Z_p}(T_pH,\Z_p)$;
	our notation is justified by the standard fact that, for $J_X$ the Jacobian
	of a curve $X$ over $K$, there is a natural isomorphisms of $\Z_p[\scrG_K]$-modules
	\begin{equation}
		H^1_{\et}(J_X[p^{\infty}]) \simeq H^1_{\et}(X_{\Kbar},\Z_p).\label{etalecohcrvjac}
	\end{equation}

	It follows from (\ref{etalecase})--(\ref{multcase}) and 
	Theorem \ref{MainDieudonne} (\ref{MainDieudonne1})--(\ref{MainDieudonne2})
	that $H^1_{\et}(G_r^{\star})\otimes_{\Z_p} W(\o{\F}_p)$ is a free
	$W(\o{\F}_p)[\Delta/\Delta_r]$-module of rank $d'$ for $\star\in \{\et,\mult\}$,
	and hence that $H^1_{\et}(G_r^{\star})$ is a free $\Z_p[\Delta/\Delta_r]$-module
	of rank $d'$ by Lemma \ref{fflatfreedescent}.  In a similar manner, using
	the faithful flatness of $W(\o{\F}_p)[\Delta/\Delta_r]$ over $\Z_p[\Delta/\Delta_r]$,
	we deduce that the canonical trace mappings
	\begin{equation}  
		\xymatrix{
			{H^1_{\et}(G_r^{\star})} \ar[r] & {H^1_{\et}(G_{r'}^{\star})}
			}\label{cantraceetale}
	\end{equation}
	are surjective for all $r\ge r'$.
	By Lemma \ref{Technical}, we conclude that
		$H^1_{\et}(G_{\infty}^{\star}):=\varprojlim_r H^1_{\et}(G_r^{\star})$
	is a free $\Lambda$-module of rank $d'$ and that there are canonical
	isomorphisms of $\Lambda_{W(\o{\F}_p)}$-modules
	\begin{equation*}
		H^1_{\et}(G_{\infty}^{\star})\tens_{\Lambda} \Lambda_{W(\o{\F}_p)} \simeq \varprojlim_r \left(H^1_{\et}(G_r^{\star})\tens_{\Z_p} W(\o{\F}_p)\right)
	\end{equation*}
	for $\star\in \{\et,\mult\}$.  Since we likewise have canonical identifications
	\begin{equation*}
		\D_{\infty}^{\star}\tens_{\Lambda} \Lambda_{W(\o{\F}_p)} \simeq \varprojlim_r \left(\D(G_r^{\star})\tens_{\Z_p} W(\o{\F}_p)\right)
	\end{equation*}
	thanks to Lemma \ref{Technical} and (the proof of) Theorem \ref{MainDieudonne}, 
	passing to inverse limits on (\ref{etalecase})--(\ref{multcase}) gives a canonical
	isomorphism of $\Lambda_{W(\o{\F}_p)}$-modules
	\begin{equation}
		\D_{\infty}^{\star}\tens_{\Lambda} \Lambda_{W(\o{\F}_p)} \simeq
		H^1_{\et}(G_{\infty}^{\star})\tens_{\Lambda} \Lambda_{W(\o{\F}_p)}\label{dieudonneordfillimit}
	\end{equation}
	for $\star\in \{\et,\mult\}$.
	
		Applying the functor $H^1_{\et}(\cdot)$ to the connected-\'etale sequence of $G_r$
	yields a short exact sequence of $\Z_p[\scrG_{\Q_p}]$-modules
	\begin{equation*}
		\xymatrix{
			0\ar[r] & {H^1_{\et}(G_r^{\et})} \ar[r] & {H^1_{\et}(G_r)} \ar[r] & {H^1_{\et}(G_r^{\mult})}\ar[r] & 0
		}
	\end{equation*}
	which naturally identifies ${H^1_{\et}(G_r^{\star})}$ with the invariants (respectively covariants)
	of $H^1_{\et}(G_r)$ under the inertia subgroup $\I\subseteq \scrG_{\Q_p}$  for $\star=\et$ (respectively 
	$\star=\mult$).
	As $G_r={e^*}'J_r[p^{\infty}]$ by definition, we deduce from this and (\ref{etalecohcrvjac})
	a natural isomorphism of short exact sequences of $\Z_p[\scrG_{\Q_p}]$-modules
	\begin{equation}
	\begin{gathered}
		\xymatrix{
			 0 \ar[r] & {H^1_{\et}(G_r^{\et})} \ar[r]\ar[d]^-{\simeq} & 
			 {H^1_{\et}(G_r)} \ar[r]\ar[d]^-{\simeq} & 
			 {H^1_{\et}(G_r^{\mult})} \ar[r]\ar[d]^-{\simeq} & 0 \\
			 0 \ar[r] & {({e^*}'H^1_{\et,r})^{\I}} \ar[r] &
			 {{e^*}'H^1_{\et,r}} \ar[r] &
			{({e^*}'H^1_{\et,r})_{\I}} \ar[r] & 0 
			}
	\end{gathered}\label{inertialinvariantsseq}
	\end{equation}
	where for notational ease abbreviate $H^1_{\et,r}:=H^1_{\et}({X_r}_{\Qbar_p},\Z_p)$.
	As the trace maps (\ref{cantraceetale}) are surjective, passing to inverse limits
	on (\ref{inertialinvariantsseq}) yields an isomorphism of short exact sequences
	\begin{equation}
	\begin{gathered}
		\xymatrix{
			0 \ar[r] & {H^1_{\et}(G_{\infty}^{\et})} \ar[r]\ar[d]^-{\simeq} & 
			{H^1_{\et}(G_{\infty})} \ar[r]\ar[d]^-{\simeq} & 
			{H^1_{\et}(G_{\infty}^{\mult})} \ar[r]\ar[d]^-{\simeq} & 0 \\
			0 \ar[r] & {\varprojlim_r ({e^*}'H^1_{\et,r})^{\I}} \ar[r] &
			{\varprojlim_r {e^*}'H^1_{\et,r}} \ar[r] &
			{\varprojlim_r ({e^*}'H^1_{\et,r})_{\I}} \ar[r] & 0
		}
	\end{gathered}\label{limitetaleseq}
	\end{equation}
	Since inverse limits commute with group invariants, the bottom row of (\ref{limitetaleseq})
	is canonically isomorphic to the ordinary filtration of Hida's ${e^*}'H^1_{\et}$,
	and Theorem \ref{FiltrationRecover} follows immediately from (\ref{dieudonneordfillimit}).
	Corollary \ref{MWmainThmCor} is then an easy consequence of Theorem \ref{FiltrationRecover} and 
	Lemma \ref{fflatfreedescent};
	alternately one can prove Corollary \ref{MWmainThmCor} directly from Lemma \ref{Technical},
	using what we have seen above.
\end{proof}

\subsection{Ordinary families of \texorpdfstring{$\s$}{S}-modules}\label{OrdSigmaSection}

We now study the family of Dieudonn\'e crystals attached to 
the tower of $p$-divisible groups $\{\G_{r}/R_r\}_{r\ge 1}$.
For each pair of positive integers $r\ge s$,
we have a morphism $\rho_{r,s}: \G_{s}\times_{T_{s}} T_r\rightarrow \G_{r}$
in $\pdiv_{R_{r}}^{\Gamma}$;  
applying the contravariant functor $\m_r:\pdiv_{R_r}^{\Gamma}\rightarrow \BT_{\s_r}^{\Gamma}$
studied in \S\ref{pDivPhiGamma} to the map on connected-\'etale sequences induced by $\rho_{r,s}$ 
and using the exactness of $\m_r$ and its compatibility with base change
(Theorem \ref{CaisLauMain}), we obtain
maps of exact sequences in  
$\BT_{\s_{r}}^{\Gamma}$
\begin{equation}
\begin{gathered}
	\xymatrix{
		0\ar[r] & {\m_r(\G_r^{\et})} \ar[r]\ar[d]_-{\m_r(\rho_{r,s})} & 
		{\m_r(\G_r)} \ar[r]\ar[d]^-{\m_r(\rho_{r,s})} & {\m_r(\G_r^{\mult})} 
		\ar[r]\ar[d]^-{\m_r(\rho_{r,s})} & 0\\
		0\ar[r ] & {\m_{s}(\G_{s}^{\et})\tens_{\s_{s}} \s_r} \ar[r] & 
		{\m_r(\G_{s})\tens_{\s_{s}} \s_r} \ar[r] & {\m_r(\G_{s}^{\mult})\tens_{\s_{s}} \s_r} \ar[r] & 0
	}\label{BTindLim}
\end{gathered}	
\end{equation}

\begin{definition}\label{DieudonneLimitDef}
	Let $\star=\et$ or $\star=\mult$ and define
	\begin{align}
			\m_{\infty}&:=\varprojlim_r \left(\m_r(\G_r) \tens_{\s_r} \s_{\infty}\right)
			& 
			\m_{\infty}^{\star}&:=\varprojlim_r \left(\m_r(\G_r^{\star}) \tens_{\s_r} \s_{\infty}\right),
			\label{UncompletedDieudonneLimit}
	\end{align}
	with the projective limits taken with respect to the mappings induced by (\ref{BTindLim}).
\end{definition}	

Each of (\ref{UncompletedDieudonneLimit}) is naturally a module over 
the completed group ring $\Lambda_{\s_{\infty}}$
and is equipped with a semilinear action of $\Gamma$ and a $\varphi$-semilinear
Frobenius morphism defined by $F:=\varprojlim (\varphi_{\m_r}\otimes \varphi)$.
Since $\varphi$ is bijective on $\s_{\infty}$, we also have a $\varphi^{-1}$-semilinear
Verscheibung morphism defined as follows.  For notational ease, we provisionally 
set $M_r:=\m_r(\G_r)\otimes_{\s_r} \s_{\infty}$ and we define 
\begin{equation}
	\xymatrix@C=50pt{
		{V_r: M_r} \ar[r]^{m\mapsto 1\otimes m} & 
		{{\varphi^{-1}}^*M_r }
		\ar[r]^-{{\varphi^{-1}}^*(\psi_{\m_r}\otimes 1)} &
		{{\varphi^{-1}}^*\varphi^*M_r\simeq M_r}
		}
\end{equation}
with $\psi_{\m_r}$ as above Definition \ref{DualBTDef}.  It is easy to see
that the $V_r$ are compatible with $r$, and we
put $V:=\varprojlim V_r$ on $\m_{\infty}$.  We define Verscheibung
morphisms on $\m_{\infty}^{\star}$ for $\star=\et,\mult$
similarly.  As the composite of $\psi_{\m_r}$ and $1\otimes\varphi_{\m_r}$
in either order is multiplication by $E_r(u_r) = u_0/u_1=:\omega$, we have 
\begin{equation*}
	FV = VF = \omega.
\end{equation*}
Due to the functoriality of $\m_r$, we moreover have a
$\Lambda_{\s_{\infty}}$-linear action of 
$\H^*$
on each of (\ref{UncompletedDieudonneLimit})
which commutes with $F$, $V$, and $\Gamma$.

\begin{theorem}\label{MainThmCrystal}
	As in Proposition $\ref{NormalizationCoh}$, set $d':=\sum_{k=3}^p \dim_{\F_p} S_k(N;\F_p)^{\ord}$. 
	Then $\m_{\infty}$ $($respectively $\m_{\infty}^{\star}$ for $\star=\et,\mult$$)$
	is a free $\Lambda_{\s_{\infty}}$-module of rank $2d'$ $($respectively $d'$$)$
	and there is a canonical short exact sequence of $\Lambda_{\s_{\infty}}$-modules
	with linear $\H^*$-action and semi linear actions of $\Gamma$, $F$ and $V$
		\begin{equation}
		\xymatrix{
		0\ar[r] & {\m_{\infty}^{\et}} \ar[r] & {\m_{\infty}} \ar[r] & {\m_{\infty}^{\mult}} \ar[r] & 0
		}.\label{DieudonneLimitFil}
	\end{equation}
	Extension of scalars of $(\ref{DieudonneLimitFil})$
	along the quotient 
	$\Lambda_{\s_{\infty}}\twoheadrightarrow  \s_{\infty}[\Delta/\Delta_r]$
	recovers the exact sequence
	\begin{equation}
		\xymatrix{
			0\ar[r] & {\m_r(\G_r^{\et})\tens_{\s_r} \s_{\infty}} \ar[r] &
			{\m_r(\G_r)\tens_{\s_r} \s_{\infty}} \ar[r] &
			{\m_r(\G_r^{\mult})\tens_{\s_r} \s_{\infty}} \ar[r] & 0
		}.
	\end{equation}
	for each integer $r>0$, compatibly with $\H^*$, $\Gamma$, $F$, and $V$.	
\end{theorem}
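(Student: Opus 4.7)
The plan is to apply the tower formalism of Lemma \ref{Technical} to each of the inverse systems $\{\m_r(\G_r^\star)\}_{r\ge 1}$ for $\star \in \{\et, \mult, \emptyset\}$, using the ring tower $\scrA = \{\s_r\}$ with transition maps the canonical inclusions $\s_r \hookrightarrow \s_{r+1}$ and ideals $I_r := (u_r) \subseteq \s_r$. Each $\s_r = W[\![u_r]\!]$ is $u_r$-adically separated, one has $u_r = (1+u_{r+1})^p - 1 \in (u_{r+1}) = I_{r+1}$, and $\o{A}_r = \s_r/(u_r) = W = \Z_p$ is the constant tower. Each $M_r := \m_r(\G_r^\star)$ is free of finite rank over $\s_r$ by the very definition of $\BT_{\s_r}^{\varphi,\Gamma}$, and $\Delta_r$ acts trivially on $M_r$ because $\langle u\rangle^* = \id$ on $\X_r$ whenever $u \in 1 + p^r\Z_p$.

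The central computation is the identification $\o{M}_r := M_r/u_r M_r \simeq \D(\o{\G}_r^\star)$ of $\Z_p[\Delta/\Delta_r]$-modules, which follows from Theorem \ref{CaisLauMain}(\ref{EvaluationONW}): the stated isomorphism there is $\m_r(G) \tens_{\s_r, \varphi\circ\tau} W \simeq \D(\o{G})_W$, and since $k = \F_p$ Frobenius is trivial on $W = \Z_p$, so $\varphi \circ \tau = \tau$ and we obtain $\o{M}_r \simeq \D(\o{\G}_r^\star)$ directly. Hypothesis (\ref{freehyp}) of Lemma \ref{Technical} then becomes Theorem \ref{MainDieudonne}(\ref{MainDieudonne1})--(\ref{MainDieudonne2}), which gives the freeness of $\D(\o{\G}_r^\star)$ over $\Z_p[\Delta/\Delta_r]$ of rank $d'$ (respectively $2d'$). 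Hypothesis (\ref{surjhyp})---surjectivity of the reduced transition maps---was already verified in the proof of Theorem \ref{MainDieudonne}.

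Applying Lemma \ref{Technical} with $B = \s_\infty$, which is flat over each $\s_r$ because $\s_{r+1}$ is free of rank $p$ over $\s_r$ (the generator $u_{r+1}$ satisfies the distinguished degree-$p$ polynomial $(1+T)^p - 1 - u_r$ over $\s_r$, so a standard Weierstrass argument gives $\s_{r+1} = \bigoplus_{i=0}^{p-1} \s_r \cdot u_{r+1}^i$), then yields both the freeness of $\m_\infty^\star$ over $\Lambda_{\s_\infty}$ of the claimed rank and the specialization isomorphism $\m_\infty^\star \tens_{\Lambda_{\s_\infty}} \s_\infty[\Delta/\Delta_r] \simeq \m_r(\G_r^\star) \tens_{\s_r} \s_\infty$ for each $\star$.

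To assemble \eqref{DieudonneLimitFil}: applying the exact functor $\m_r$ of Theorem \ref{CaisLauMain}(\ref{exequiv}) to the connected-\'etale sequence of $\G_r$ and then tensoring with the flat $\s_r$-algebra $\s_\infty$ produces a short exact sequence at each level $r$. Lemma \ref{Technical}(\ref{red2psurj}) applied to the tower $\{\m_r(\G_r^\et)\}$ gives surjective transition maps, which persist after tensoring with $\s_\infty$; hence Mittag--Leffler is satisfied and $\varprojlim_r$ preserves exactness, producing the desired sequence \eqref{DieudonneLimitFil}. Compatibility of the sequence and the specialization isomorphisms with $\H^*$, $\Gamma$, $F$, and $V$ is automatic from the functoriality of $\m_r$, its base-change compatibility (Theorem \ref{CaisLauMain}(\ref{BaseChangeIsom})), and the definitions of $F$ and $V$ on the inverse limits in terms of the $\varphi$-semilinear $\varphi_{\m_r}$ and its $\psi$-companion. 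The main obstacle is bookkeeping through the Frobenius twist in Theorem \ref{CaisLauMain}(\ref{EvaluationONW}), which trivializes here precisely because Frobenius on $W = \Z_p$ is the identity.
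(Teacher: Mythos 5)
Your proof is correct and follows the paper's argument in all essentials: apply Lemma \ref{Technical} with $A_r=\s_r$, $I_r=(u_r)$, $B=\s_\infty$, identify the reductions $\o{M}_r$ with $\D(\o{\G}_r^\star)$ via Theorem \ref{CaisLauMain}(\ref{EvaluationONW}), and verify hypotheses (\ref{freehyp}) and (\ref{surjhyp}) by citing Theorem \ref{MainDieudonne}. The paper's proof differs only in first pulling back by $\varphi$, so that $\varphi^*\m_r(\G_r^\star)/(u_r)$ matches the $\varphi\circ\tau$-twisted evaluation in Theorem \ref{CaisLauMain}(\ref{EvaluationONW}) without invoking $k=\F_p$, whereas you correctly shortcut this by noting that Frobenius on $W=\Z_p$ is the identity so $\varphi\circ\tau=\tau$.
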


\begin{proof}

	Since $\varphi$ is an automorphism
	of $\s_{\infty}$, pullback by $\varphi$ commutes with 
	projective limits of $\s_{\infty}$-modules.
	As the canonical $\s_{\infty}$-linear map $\varphi^*\Lambda_{\s_{\infty}}\rightarrow \Lambda_{\s_{\infty}}$  
	is an isomorphism of rings (even of $\s_{\infty}$-algebras), it therefore suffices
	to prove the assertions of Theorem \ref{MainThmCrystal} after pullback by $\varphi$,
	which will be more convenient due to the relation between
	$\varphi^*\m_r(\G_r)$ and the Dieudonn\'e crystal of $\G_r$.
	
	Pulling back (\ref{BTindLim}) by $\varphi$ gives a commutative diagram
	with exact rows
	\begin{equation}
	\begin{gathered}
		\xymatrix{
			0\ar[r] & {\varphi^*\m_r(\G_r^{\et})} \ar[r]\ar[d] & 
			{\varphi^*\m_r(\G_r)} \ar[r]\ar[d] & {\varphi^*\m_r(\G_r^{\mult})} 
			\ar[r]\ar[d] & 0\\
			0\ar[r] & {\varphi^*\m_{s}(\G_{s}^{\et})\tens_{\s_{s}} \s_r} \ar[r] & 
			{\varphi^*\m_r(\G_{s})\tens_{\s_{s}} \s_r} \ar[r] &
			 {\varphi^*\m_r(\G_{s}^{\mult})\tens_{\s_{s}} \s_r} \ar[r] & 0
		}
	\end{gathered}\label{BTindLimPB}	
	\end{equation}	
	and we apply Lemma \ref{Technical} with $A_r:=\s_r$, $I_r:=(u_r)$, 
	$B=\s_{\infty}$, and with $M_r$ each one of the
	terms in the top row of (\ref{BTindLimPB}).  
	The isomorphism (\ref{MrToDieudonneMap})
	of Proposition \ref{MrToHodge} 
	ensures, via Theorem \ref{MainDieudonne} (\ref{MainDieudonne1}), that the hypothesis (\ref{freehyp})
	is satisfied.
	
	Due to the functoriality of (\ref{MrToDieudonneMap}), 
	for any $r\ge s$, the mapping obtained from (\ref{BTindLimPB}) by reducing
	modulo $I_r$ is identified with the mapping on (\ref{DieudonneFiniteExact}) induced (via functoriality
	of $\D(\cdot)$) by $\o{\pr}_{r,s}$.
	As was shown in the proof of Theorem (\ref{MainDieudonne}), these mappings are surjective
	for all $r\ge s$, and we conclude that hypothesis (\ref{surjhyp}) holds as well. 
	Moreover,
	the vertical mappings of (\ref{BTindLimPB}) are then surjective by Nakayama's Lemma,
	so as in the proof of Theorems \ref{main} and \ref{MainDieudonne} (and keeping in mind
	that pullback by $\varphi$ commutes with projective limits of $\s_{\infty}$-modules), 
	we obtain, by applying $\otimes_{\s_r} \s_{\infty}$ to (\ref{BTindLimPB}), passing
	to projective limits, and pulling back by $(\varphi^{-1})^*$, 
	the short exact sequence (\ref{DieudonneLimitFil}).	
\end{proof}

\begin{remark}
	In the proof of Theorem \ref{MainThmCrystal}, we could have alternately applied Lemma \ref{Technical}
	with $A_r=\s_r$ and $I_r:=(E_r)$, appealing to the identifications (\ref{MrToHodgeMap})
	of Proposition \ref{MrToHodge} and (\ref{CollectedComparisonIsom}) of Proposition \ref{KeyComparison},
	and to Theorem \ref{main}.  
\end{remark}

The short exact sequence (\ref{DieudonneLimitFil}) is closely related to its $\Lambda_{\s_{\infty}}$-linear
dual.  In what follows, we write $\s_{\infty}':=\varinjlim_r \Z_p[\mu_N][\![ u_r]\!]$,
taken along the mappings $u_r\mapsto \varphi(u_{r+1})$; it is naturally a $\s_{\infty}$-algebra.

\begin{theorem}\label{CrystalDuality}
		Let $\mu:\Gamma\rightarrow \Lambda_{\s_{\infty}}^{\times}$ be the crossed homomorphism 
		given by $\mu(\gamma):=\frac{u_1}{\gamma u_1}\chi(\gamma) \langle \chi(\gamma)\rangle$.
		There is a canonical $\H^*$ and $\Gal(K_{\infty}'/K_0)$-equivariant isomorphism of 
		exact sequences of $\Lambda_{\s_{\infty}'}$-modules
		\begin{equation}
		\begin{gathered}
			\xymatrix{
			0\ar[r] & {\m_{\infty}^{\et}(\mu \langle a\rangle_N)_{\Lambda_{\s_{\infty}'}}} \ar[r]\ar[d]_-{\simeq} & 
			{\m_{\infty}(\mu \langle a\rangle_N)_{\Lambda_{\s_{\infty}'}}} \ar[r]\ar[d]_-{\simeq} & 
				{\m_{\infty}^{\mult}(\mu \langle a\rangle_N)_{\Lambda_{\s_{\infty}'}}} \ar[r]\ar[d]_-{\simeq} & 0\\
	0\ar[r] & {(\m_{\infty}^{\mult})_{\Lambda_{\s_{\infty}'}}^{\vee}} \ar[r] & 
				{(\m_{\infty})_{\Lambda_{\s_{\infty}'}}^{\vee}} \ar[r] & 
				{(\m_{\infty}^{\et})_{\Lambda_{\s_{\infty}'}}^{\vee}} \ar[r] & 0 
		}
		\end{gathered}
		\label{MinftyDuality}	
		\end{equation}
		that intertwines $F$ 
		$($respectively $V$$)$ on the top row with  
		$V^{\vee}$ $($respectively $F^{\vee}$$)$  on the bottom.
\end{theorem}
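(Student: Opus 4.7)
The plan is to imitate the strategy used in the proofs of Propositions \ref{dRDuality} and \ref{DieudonneDuality}: construct compatible finite-level pairings, verify their behavior under the transition maps, and invoke the tower formalism of Lemma \ref{LambdaDuality}. The essential new inputs are the compatibility of the functor $\m_r$ with duality (Theorem \ref{CaisLauMain} (\ref{exequiv})), the explicit description of the dual in $\BT_{\s_r}^{\varphi,\Gamma}$ recorded in Definition \ref{DualBTDef}, and the Cartier-duality identification $\G_r^{\vee}\simeq \G_r(\langle\chi\rangle\langle a\rangle_N)$ of Proposition \ref{GdualTwist}.

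Writing $\s_r':=\s_r\tens_{\Z_p}R_0'$, I would first produce, for each $r\ge 1$, an $\H_r^*$-equivariant perfect $\s_r'$-bilinear evaluation pairing
\[
\langle\cdot,\cdot\rangle_r : \m_r(\G_r)\tens_{\s_r}\s_r' \times \m_r(\G_r)\tens_{\s_r}\s_r' \longrightarrow \s_r'
\]
by composing the canonical duality pairing $\m_r(\G_r)\times \m_r(\G_r)^t\to \s_r$ of Definition \ref{DualBTDef} with the isomorphism $\m_r(\G_r)^t\simeq \m_r(\G_r^{\vee})$ provided by Theorem \ref{CaisLauMain} (\ref{exequiv}), followed by the isomorphism $\m_r(\G_r^{\vee})\tens_{\s_r}\s_r'\simeq \m_r(\G_r(\langle\chi\rangle\langle a\rangle_N))\tens_{\s_r}\s_r'$ coming from (\ref{GrprimeGr}) and base change. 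Functoriality of $\m_r$ together with the $\H_r^*$-equivariance of (\ref{GrprimeGr}) yields the $\H_r^*$-equivariance. The key bookkeeping is to recognize the Galois action on the left factor with respect to $\langle\cdot,\cdot\rangle_r$: the twist $\chi(\gamma)^{-1}\varphi^{r-1}(\gamma u_r/u_r)$ built into Definition \ref{DualBTDef}, combined with the Galois twist $\langle\chi(\gamma)\rangle\langle a(\gamma)\rangle_N$ contributed by (\ref{GrprimeGr}) and the inversion arising from the contravariance of $\m_r$, should collapse (after using $\varphi^{r-1}(\gamma u_r/u_r)=\gamma u_1/u_1$ under our embedding $\s_r\hookrightarrow\s_\infty$) to exactly the twist $\mu(\gamma)\langle a(\gamma)\rangle_N$ of the statement.

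To apply Lemma \ref{LambdaDuality} I next need the change-of-level compatibility (\ref{pairingchangeinr}). This follows by the same mechanism used in the proof of Proposition \ref{DieudonneDuality}: the transition maps $\rho_{r,s}$ and $\rho_{r,s}'$ of (\ref{pdivTowers}) are built from $\Pic^0(\pr)$ and $({U_p^*}^{-1}\Alb(\ps))^{\vee}$ respectively, and the duality-compatibility of $\m_r$ together with the identity (\ref{PicAlbRelation}), namely $\Pic^0(\pr)\circ\Alb(\ps)=U_p^*\sum_{\delta\in\Delta_r/\Delta_{r+1}}\langle\delta^{-1}\rangle$ in $\End_{\Q_p}(J_{r+1})$, translates into the required relation between the level-$r$ and level-$(r+1)$ pairings after transporting through $\m_r$ and base-changing to $\s_\infty'$. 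The $\H_r^*$-action is self-adjoint by the very construction of $\langle\cdot,\cdot\rangle_r$, so Lemma \ref{LambdaDuality} applies to assemble $\{\langle\cdot,\cdot\rangle_r\}_r$ into a perfect $\Lambda_{\s_\infty'}$-linear pairing yielding the filtration-preserving duality isomorphism claimed in (\ref{MinftyDuality}).

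Finally, the intertwining of $F$ on the top row with $V^{\vee}$ on the bottom (and $V$ with $F^{\vee}$) is forced by construction: the Frobenius on $\m^t$ is by Definition \ref{DualBTDef} the transpose of $\psi_\m$, so the level-$r$ pairing automatically exchanges $F$ on the left factor with the transpose of $V$ on the right, and this relation survives the projective limit since $F$ and $V$ on $\m_\infty$ are defined as the limits of the corresponding level-$r$ operators. The main obstacle will be the twist accounting in the second paragraph: the character $\mu(\gamma)=\frac{u_1}{\gamma u_1}\chi(\gamma)\langle\chi(\gamma)\rangle$ arises as the product of the Cartier-duality descent twist, the explicit Galois twist built into the dual in $\BT_{\s_r}^{\varphi,\Gamma}$, and a contravariance inversion. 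Verifying that these three contributions combine to exactly the stated formula (in particular with $\frac{u_1}{\gamma u_1}$ rather than its reciprocal) requires unwinding the definitions of $\m_r$, of the twisted descent $\G_r(\langle\chi\rangle\langle a\rangle_N)$, and of the dual in $\BT_{\s_r}^{\varphi,\Gamma}$ very carefully; I expect this to be the step demanding the most work.
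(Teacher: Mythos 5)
Your proposal follows essentially the same route as the paper's proof: identify $\m_r(\G_r)(\mu\langle a\rangle_N)\otimes_{\s_r}\s_{\infty}'$ with the $\s_{\infty}'$-linear dual of $\m_r(\G_r)\otimes_{\s_r}\s_{\infty}'$ by combining Theorem \ref{CaisLauMain}~(\ref{exequiv}), Definition \ref{DualBTDef}, and Proposition \ref{GdualTwist}; read off the $F \leftrightarrow V^{\vee}$ intertwining from the duality compatibility of $\m_r$; verify the change-of-level condition (\ref{pairingchangeinr}) via (\ref{PicAlbRelation}) and the definitions of $\rho_{r,s}$, $\rho_{r,s}'$; and conclude with Lemma \ref{LambdaDuality}. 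The twist-bookkeeping you flag as the delicate step is exactly what the paper resolves by combining the twist $\chi(\gamma)^{-1}\varphi^{r-1}(\gamma u_r/u_r)=\chi(\gamma)^{-1}\gamma u_1/u_1$ built into Definition \ref{DualBTDef} with the Cartier-duality twist $\langle\chi\rangle\langle a\rangle_N$ of Proposition \ref{GdualTwist}, so your plan is sound.
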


\begin{proof}
	We first claim that there is a natural isomorphism of $\s_{\infty}'[\Delta/\Delta_r]$-modules
	\begin{equation}
		\m_r(\G_r)(\mu\langle a\rangle_N)\otimes_{\s_r} \s_{\infty}' \simeq 
		\Hom_{\s_{\infty}'}(\m_r(\G_r)\otimes_{\s_r}\s_{\infty}', \s_{\infty}')
		\label{twistyisom}
	\end{equation}
	that is $\H^*$-equivariant and $\Gal(K_{\infty}'/K_0)$-compatible for the standard action 
	$\gamma\cdot f(m):=\gamma f(\gamma^{-1}m)$ on the right side, and that intertwines
	$F$ and $V$ with $V^{\vee}$ and $F^{\vee}$, respectively.
	Indeed, this follows immediately from the identifications 
	\begin{equation}
			{\m_r(\G_r)(\langle \chi \rangle\langle a\rangle_N)\tens_{\s_r} \s_{\infty}'} \simeq 
			{\m_r(\G_r')\tens_{\s_r} \s_{\infty}'=:\m_r(\G_r^{\vee})\tens_{\s_r}\s_{\infty}'} 
			\simeq {\m_r(\G_r)^{\vee}_{\s_{\infty}'}}
		\label{GrTwist}
	\end{equation}
	and the definition (Definition \ref{DualBTDef}) of duality in $\BT_{\s_r}^{\varphi,\Gamma}$; here, the
	first isomorphism in (\ref{GrTwist}) results from Proposition \ref{GdualTwist}
	and Theorem \ref{CaisLauMain} (\ref{BaseChangeIsom}), while the final 
	identification is due to Theorem \ref{CaisLauMain} (\ref{exequiv}). 
	The identification (\ref{twistyisom}) carries $F$ (respectively $V)$
	on its source to $V^{\vee}$ (respectively $F^{\vee}$) on its target due to the compatibility 
	of the functor $\m_r(\cdot)$ with duality (Theorem \ref{CaisLauMain} (\ref{exequiv})).
		
	From (\ref{twistyisom})
	we obtain a natural $\Gal(K_r'/K_0)$-compatible evaluation pairing of $\s_{\infty}'$-modules
	\begin{equation}
		\xymatrix{
			{\langle\cdot,\cdot\rangle_r: \m_r(\G_r)(\mu\langle a\rangle_N) \tens_{\s_r} \s_{\infty}' 
			\times \m_r(\G_r)\tens_{\s_r} \s_{\infty}'} \ar[r] & {\s_{\infty}'}
			}\label{crystalpairingdefs}
	\end{equation}
	with respect to which the natural action of $\H^*$ is self-adjoint, due to the
	fact that (\ref{GrTwist}) is $\H^*$-equivariant by Proposition \ref{GdualTwist}.
	Due to the compatibility with change in $r$ of the identification (\ref{GrprimeGr}) of Proposition \ref{GdualTwist}
	together with the definitions (\ref{pdivTowers}) of $\pr_{r,s}$ and 
	$\pr_{r,s}'$,
	the identification (\ref{GrTwist}) intertwines the map induced by $\Pic^0(\pr)$ on its source
	with the map induced by ${U_p^*}^{-1}\Alb(\ps)$ on its target. For $r\ge s$, we therefore have
	\begin{equation*}
		\langle \m_r(\rho_{r,s})x , \m_r(\rho_{r,s})y  \rangle_s = 
		\langle x, \m_r({U_p^*}^{s-r}\Pic^0(\pr)^{r-s}\Alb(\ps)^{r-s})y\rangle_r =
		\sum_{\delta\in \Delta_s/\Delta_r} \langle x, \delta^{-1} y \rangle_r, 
	\end{equation*}
	where the final equality follows from (\ref{PicAlbRelation}).
	Thus, the perfect pairings (\ref{crystalpairingdefs}) satisfy the compatibility condition 
	(\ref{pairingchangeinr}) of Lemma \ref{LambdaDuality} which, together with
	Theorem \ref{MainThmCrystal}, completes the proof.
\end{proof}

The $\Lambda_{\s_{\infty}}$-modules $\m_{\infty}^{\et}$ and $\m_{\infty}^{\mult}$ admit
canonical descents to $\Lambda$: 

\begin{theorem}\label{etmultdescent}
	There are canonical $\H^*$, $\Gamma$, $F$ and $V$-equivariant isomorphisms
	of $\Lambda_{\s_{\infty}}$-modules
	\begin{subequations}
	\begin{equation}
		\m_{\infty}^{\et} \simeq \D_{\infty}^{\et}\tens_{\Lambda} \Lambda_{\s_{\infty}},
	\end{equation}	
	intertwining $F$ $($respetcively $V$$)$ with 
	$F\otimes \varphi$ $($respectively $F^{-1}\otimes \omega\cdot \varphi^{-1}$$)$ and $\gamma\in \Gamma$
	with $\gamma\otimes\gamma$, and
	\begin{equation}
		\m_{\infty}^{\mult}\simeq \D_{\infty}^{\mult}\tens_{\Lambda} \Lambda_{\s_{\infty}},	
	\end{equation}
	\end{subequations}
	intertwing $F$ $($respectively $V$$)$ with $V^{-1} \otimes \omega \cdot\varphi$
	$($respectively $V\otimes\varphi^{-1}$$)$
	and $\gamma$ with $\gamma\otimes \chi(\gamma)^{-1} \gamma u_1/u_1$.
	In particular, $F$ $($respectively $V$) 
	acts invertibly on $\m_{\infty}^{\et}$ $($respectively $\m_{\infty}^{\mult}$$)$.
\end{theorem}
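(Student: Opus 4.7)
The strategy is to apply Proposition \ref{EtaleMultDescription} at each level $r$, which identifies $\m_r(\G_r^\star)$ with an explicit construction built from the Dieudonn\'e module of $\o{\G}_r^\star$, and then to untwist the resulting tower using the invertibility of $F$ on étale, and of $V$ on multiplicative, Dieudonn\'e modules. Since $W = W(\F_p) = \Z_p$ and $\varphi$ acts trivially on $\Z_p$, the twisted tensor products $\D(\o{\G}_r^\star)_W \otimes_{W,\varphi^{r-1}} \s_r$ in Proposition \ref{EtaleMultDescription} are ordinary tensor products $\D(\o{\G}_r^\star) \otimes_{\Z_p} \s_r$, which significantly simplifies the bookkeeping.

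For the étale case, applying $\m_r$ contravariantly to $\rho_{r,r-1}: \G_{r-1}\times_{R_{r-1}} R_r \to \G_r$, restricting to étale parts, and composing with the base change isomorphism yields the transition map $\m_r(\G_r^\et)\otimes_{\s_r}\s_\infty \to \m_{r-1}(\G_{r-1}^\et)\otimes_{\s_{r-1}} \s_\infty$. Under Proposition \ref{EtaleMultDescription} and the formula for base change encoded in diagram (\ref{EtMultSpecialIsomsBC}), this transition takes the explicit form $(F\circ\D(\o{\rho}_{r,r-1}^\et))\otimes\id$ on $\D(\o{\G}_r^\et)\otimes_{\Z_p}\s_\infty$. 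Using the invertibility of $F$ on étale Dieudonn\'e modules, I define automorphisms $\phi_r := F^r\otimes\id$ of $\D(\o{\G}_r^\et)\otimes_{\Z_p}\s_\infty$. Because $F$ and $\D(\o\rho)$ commute (as Dieudonn\'e realizations of morphisms of $p$-divisible groups), one verifies $\phi_{r-1}\circ((F\circ\D(\o\rho))\otimes\id) = (\D(\o\rho)\otimes\id)\circ\phi_r$, so the $\phi_r$ assemble into an isomorphism of projective systems from the $\m$-tower to the tower $\{\D(\o{\G}_r^\et)\otimes_{\Z_p}\s_\infty\}_r$ with transitions $\D(\o\rho)\otimes\id$. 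Passing to inverse limits and invoking Lemma \ref{Technical}(\ref{CompletedBaseChange}) with $A_r = \Z_p$ and $B = \s_\infty$ identifies this limit with $\D_\infty^\et\otimes_\Lambda \Lambda_{\s_\infty}$. Tracking $F$, $V$, $\H^*$, and $\Gamma$ through the $\phi_r$'s is then routine: $F$, $V$, and the $\H^*$-operators act diagonally and commute with $F$-powers, while the $\Gamma$-action on $\o{\G}_r^\et \simeq f' j_r^\infty[p^\infty]^\et$ is trivial by Proposition \ref{SpecialFiberOrdinary}, so the diagonal $\gamma\otimes\gamma$ of Proposition \ref{EtaleMultDescription} reduces to $\id\otimes\gamma$, in agreement with Theorem \ref{MainDieudonne}(\ref{MainDieudonne5}).

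The multiplicative case proceeds in exactly the same way, with $V$ playing the role of $F$, the transition map taking the form $(V^{-1}\circ\D(\o\rho))\otimes\id$, and the untwisting performed by $\phi_r := V^{-r}\otimes\id$, well-defined since $V$ is invertible on multiplicative Dieudonn\'e modules. The one genuinely new point is the compatibility of the twisted $\Gamma$-action with the limit: Proposition \ref{EtaleMultDescription} gives the twist $\chi(\gamma)^{-1}\varphi^{r-1}(\gamma u_r/u_r)$ at level $r$, and under the inclusion $\s_r\hookrightarrow\s_\infty$ determined by $u_r\mapsto\varphi(u_{r+1})$ one has the identity $u_r = \varphi^{-(r-1)}(u_1)$ in $\s_\infty$, hence $\varphi^{r-1}(\gamma u_r/u_r) = \gamma u_1/u_1$ independently of $r$. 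This makes the twist descend to the uniform expression $\chi(\gamma)^{-1}\gamma u_1/u_1$ in $\Lambda_{\s_\infty}$ claimed by the theorem. The final assertion about invertibility of $F$ on $\m_\infty^\et$ (respectively $V$ on $\m_\infty^\mult$) is then immediate from Theorem \ref{MainDieudonne}(\ref{MainDieudonne3}) combined with the invertibility of $\varphi$ on $\s_\infty$.

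The principal technical obstacle is the rigorous identification of the tower's transition maps with the claimed forms $(F\circ\D(\o\rho))\otimes\id$ and $(V^{-1}\circ\D(\o\rho))\otimes\id$ after applying Proposition \ref{EtaleMultDescription}; this requires carefully decomposing each transition into the contravariant $\m_r$-action on $\rho_{r,r-1}^\star$ (which produces $\D(\o{\rho}^\star_{r,r-1})$) followed by the base change isomorphism (which produces the $F$ or $V^{-1}$), and matching the latter against the right vertical arrows of diagram (\ref{EtMultSpecialIsomsBC}). Once these transition formulas are secured, everything else is formal semilinear algebra.
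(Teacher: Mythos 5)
Your proposal is correct and follows essentially the same route as the paper: both twist the isomorphisms of Proposition \ref{EtaleMultDescription} by $F^r$ on the \'etale side and $V^{-r}$ on the multiplicative side (you encode this in your automorphisms $\phi_r$; the paper post-composes the identification with $F^r$, resp.\ $V^{-r}$), verify via diagram (\ref{EtMultSpecialIsomsBC}) that this makes the transitions of the two towers agree, and then pass to inverse limits via Lemma \ref{Technical} and (the proof of) Theorem \ref{MainDieudonne}. Your write-up fills in two details the paper leaves implicit---the factorization of the transition map as $(F\circ\D(\o\rho))\otimes\id$ (resp.\ $(V^{-1}\circ\D(\o\rho))\otimes\id$) and the identity $\varphi^{r-1}(\gamma u_r/u_r)=\gamma u_1/u_1$ that makes the multiplicative twist uniform in $r$---but these are elaborations, not a different argument.
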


\begin{proof}
	We twist the identifications (\ref{EtMultSpecialIsoms}) of Proposition 
	\ref{EtaleMultDescription} to obtain natural isomorphisms 
	\begin{equation*}
		\xymatrix@C=40pt{
			{\m_r(\G_r^{\et})} \ar[r]^-{\simeq}_-{F^r \circ (\ref{EtMultSpecialIsoms})} & 
			{\D(\o{\G}_r^{\et})_{\Z_p}\otimes_{\Z_p} \s_r}
		}\qquad\text{and}\qquad
		\xymatrix@C=40pt{
			{\m_r(\G_r^{\mult})} \ar[r]^-{\simeq}_-{V^{-r} \circ (\ref{EtMultSpecialIsoms})} & 
			{\D(\o{\G}_r^{\mult})_{\Z_p}\otimes_{\Z_p} \s_r}
		}
	\end{equation*}
	that are $\H_r^*$-equivariant and, Thanks to \ref{EtMultSpecialIsomsBC}, 
	compatible with change in $r$ using the maps on source and target
	induced by $\pr_{r,s}$.  Passing to inverse limits and appealing to Lemma \ref{Technical}
	and (the proof of) Theorem \ref{MainDieudonne}, we deduce for $\star=\et,\mult$
	natural isomorphisms of $\Lambda_{\s_{\infty}}$-modules
	\begin{equation*}
		\m_{\infty}^{\star} \simeq \varprojlim_r \left( \D(\o{\G}_r^{\star})_{\Z_p}\otimes_{\Z_p} \s_{\infty}\right)
		\simeq \D_{\infty}^{\star}\otimes_{\Lambda} \Lambda_{\s_{\infty}}
	\end{equation*}
	that are $\H^*$-equivariant and satisfy the asserted compatibility with respect to
	Frobenius, Verscheibung, and the action of $\Gamma$ due to 
	Proposition \ref{EtaleMultDescription} and the definitions (\ref{MrEtDef})--(\ref{MrMultDef}).
\end{proof}

We can now prove Theorem \ref{MinftySpecialize}, which asserts that
the slope filtration (\ref{MinftySpecialize}) of $\m_{\infty}$
specializes, on the one hand, to the slope filtration (\ref{DieudonneInfiniteExact})
of $\D_{\infty}$, and on the other hand to the Hodge filtration (\ref{LambdaHodgeFilnomup})
(in the opposite direction!) of ${e^*}'H^1_{\dR}$.  We recall the precise statement:

\begin{theorem}\label{SRecovery}
	Let $\tau:\Lambda_{\s_{\infty}}\twoheadrightarrow \Lambda$ be the $\Lambda$-algebra
	surjection induced by $u_r\mapsto 0$.  
	There is a canonical $\Gamma$ and $\H^*$-equivariant 
	isomorphism of split exact sequences of finite free $\Lambda$-modules
	\begin{equation}
	\begin{gathered}
		\xymatrix{
			0 \ar[r] & {\m_{\infty}^{\et}\tens_{\Lambda_{\s_{\infty}},\tau} \Lambda}\ar[d]_-{\simeq} \ar[r] &
			{\m_{\infty}\tens_{\Lambda_{\s_{\infty}},\tau} \Lambda}\ar[r] \ar[d]_-{\simeq}&
			{\m_{\infty}^{\mult}\tens_{\Lambda_{\s_{\infty}},\tau} \Lambda} \ar[r]\ar[d]_-{\simeq} & 0\\
			0 \ar[r] & {\D_{\infty}^{\et}} \ar[r] & {\D_{\infty}} \ar[r] &
			{\D_{\infty}^{\mult}} \ar[r] & 0
		}
	\end{gathered}\label{OrdFilSpecialize}
	\end{equation}
	which carries $F\otimes 1$ to $F$ and $V\otimes 1$ to $V$.
	
	Let $\theta\circ\varphi:\Lambda_{\s_{\infty}}\rightarrow \Lambda_{R_{\infty}}$
	be the $\Lambda$-algebra surjection induced by $u_r\mapsto (\varepsilon^{(r)})^p-1$.
	There is a canonical $\Gamma$ and $\H^*$-equivariant
	isomorphism of split exact sequences of finite free $\Lambda_{R_{\infty}}$-modules
	\begin{equation}
	\begin{gathered}
		\xymatrix{
				0 \ar[r] & {\m_{\infty}^{\et}\tens_{\Lambda_{\s_{\infty}},\theta\varphi} \Lambda_{R_{\infty}}}
				\ar[d]_-{\simeq} \ar[r] &
			{\m_{\infty}\tens_{\Lambda_{\s_{\infty}},\theta\varphi} \Lambda_{R_{\infty}}}\ar[r] \ar[d]_-{\simeq}&
			{\m_{\infty}^{\mult}\tens_{\Lambda_{\s_{\infty}},\theta\varphi} \Lambda_{R_{\infty}}} \ar[r]\ar[d]_-{\simeq} & 0\\
		0 \ar[r] & {{e^*}'H^1(\O)} \ar[r]_{i} & 
		{{e^*}'H^1_{\dR}} \ar[r]_-{j} & {{e^*}'H^0(\omega)} \ar[r] & 0 
		}
	\end{gathered}
	\end{equation}
	where $i$ and $j$ are the canonical sections 
	given by the splitting in Theorem $\ref{dRtoDieudonne}$.
\end{theorem}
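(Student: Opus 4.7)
The plan is to deduce both assertions from the evaluation properties of the functor $\m_r$ recorded in Theorem \ref{CaisLauMain} $(\rmnum{3})$, combined with the commutative-algebra formalism of Lemma \ref{Technical} $(\ref{CompletedBaseChange})$. The general strategy is: at each finite level $r$, identify the specialization of $\m_r(\G_r)$ (and its slope filtration pieces) with the corresponding Dieudonn\'e or de Rham object via the appropriate part of Theorem \ref{CaisLauMain}; then pass to the inverse limit using Lemma \ref{Technical}, noting that the tower $\scrA=\{\s_r\}$ with $I_r=(u_r)$ satisfies $\o{A}_r=\Z_p$ for all $r$, so that $\o{A}_\infty=\Z_p$. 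The point is that both $\Z_p$ and $R_\infty$ are flat over this $\o{A}_\infty$, which is exactly the flatness hypothesis required by Lemma \ref{Technical} $(\ref{CompletedBaseChange})$ to commute ``completed base change'' past the inverse limit.

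For the first statement, I would take $B=\s_\infty$ and $B'=\Z_p$ (with the $\s_\infty$-algebra structure given by $\tau$). Lemma \ref{Technical} $(\ref{CompletedBaseChange})$ then gives
\[
\m_\infty\tens_{\Lambda_{\s_\infty},\tau}\Lambda \;\simeq\; \varprojlim_r\bigl(\m_r(\G_r)\tens_{\s_r,\tau}\Z_p\bigr),
\]
and since $\varphi$ acts as the identity on $\Z_p=W(\F_p)$, we have $\tau=\varphi\circ\tau$, so Theorem \ref{CaisLauMain} $(\rmnum{3})(\ref{EvaluationONW})$ functorially identifies each $\m_r(\G_r)\tens_{\s_r,\tau}\Z_p$ with $\D(\o\G_r)_{\Z_p}$ compatibly with $F\otimes 1\leftrightarrow F$, $\psi_{\m_r}\otimes 1\leftrightarrow V\otimes 1$, and with the $\Gamma$ and $\H^*$-actions. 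The same argument applied to $\m_\infty^\et$ and $\m_\infty^\mult$ separately then yields the identification of the slope filtrations. Compatibility with change in $r$ between $\m_r(\rho_{r,s})$ and $\D(\o\rho_{r,s})$ follows from the base change property Theorem \ref{CaisLauMain} $(\ref{BaseChangeIsom})$.

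For the second statement, the same setup applies with $B'=R_\infty$ (flat over $\o{A}_\infty=\Z_p$), giving
\[
\m_\infty\tens_{\Lambda_{\s_\infty},\theta\varphi}\Lambda_{R_\infty} \;\simeq\; \varprojlim_r\bigl(\m_r(\G_r)\tens_{\s_r,\theta\varphi}R_r\tens_{R_r}R_\infty\bigr).
\]
Theorem \ref{CaisLauMain} $(\rmnum{3})(\ref{EvaluationONR})$ identifies $\m_r(\G_r)\tens_{\s_r,\theta\varphi}R_r$ with $\D(\G_{r,0})_{R_r}$ (equipped with its Hodge filtration), and Proposition \ref{KeyComparison} identifies the latter filtered module with ${e^*}'H^1_{\dR,r}$ (equipped with its Hodge filtration). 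The key observation is that under Proposition \ref{MrToHodge} combined with the splitting Lemma \ref{HodgeFilOrdProps}, the specialization of the slope filtration of $\m_r(\G_r)$ gets identified with the Hodge filtration in the opposite direction: $\m_r(\G_r^\et)$ maps to $\Lie(\G_r^t)={e^*}'H^1(\O_r)$ via the canonical inclusion $i$, and $\m_r(\G_r)$ surjects onto $\m_r(\G_r^\mult)$ which gets identified with $\omega_{\G_r}={e^*}'H^0(\omega_r)$ via the canonical projection $j$. Passing to the inverse limit and appealing again to Lemma \ref{Technical} gives the desired isomorphism of exact sequences.

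The main obstacle I anticipate is verifying that the splitting of ${e^*}'H^1_{\dR}$ obtained this way (via $\m_\infty$ and Proposition \ref{MrToHodge}) genuinely coincides with the splitting asserted in Theorem \ref{dRtoDieudonne}, so that the sections $i$ and $j$ in the bottom row are the correct ones. However, both splittings ultimately arise from the functorial connected-\'etale decomposition of the ordinary $p$-divisible group $\G_r$ (via Lemma \ref{HodgeFilOrdProps} at each finite level), so this should reduce to checking the compatibility on a single level $r$. The secondary (but routine) task will be tracking the $\Gamma$ and $\H^*$-equivariance through all the identifications, which is direct because every cited isomorphism is stated with these equivariances built in, and the maps $\s_\infty\to\Z_p$ and $\s_\infty\to R_\infty$ are respectively $\Gamma$ and $\varphi$-equivariant (for $\s_\infty\to R_\infty$, equivariance with the twisted structure map $\theta\circ\varphi$ is precisely what makes Proposition \ref{KeyComparison} applicable at each level).
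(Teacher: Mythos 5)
Your proposal follows the paper's strategy for the first assertion and gets the finite-level identifications right throughout, but there is a genuine technical error in the setup for the second assertion: you insist on using $I_r=(u_r)$ for both applications of Lemma \ref{Technical}, whereas the paper uses $I_r=(u_r)$ only for the Dieudonn\'e specialization and switches to $I_r=(E_r)$ for the de Rham one.

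The issue is with the flatness hypothesis of Lemma \ref{Technical} $(\ref{CompletedBaseChange})$. With $I_r=(u_r)$ one gets $\o{A}_\infty=\Z_p$, and you claim $R_\infty$ is flat over $\o{A}_\infty$ and so the hypothesis is met. But ``flat $\o{A}_\infty$-algebra'' in that lemma implicitly means the $A_\infty$-algebra structure on $B'$ \emph{factors through} $\o{A}_\infty$ (this is what lets one use hypotheses (\ref{freehyp}) and (\ref{surjhyp}) ``directly'' in the proof of $(\ref{MBfree})$ for such $B'$). Here $B'=R_\infty$ carries the $\s_\infty$-structure via $\theta\circ\varphi$, which does \emph{not} factor through $\s_\infty/(u_r)_r\simeq\Z_p$: for example $\theta\circ\varphi(u_2)=\varepsilon^{(1)}-1\neq 0$. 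So the lemma is not literally applicable with your choice of $I_r$, and the flatness you invoke is the wrong flatness. The paper instead takes $I_r=(E_r)$, so that $\o{A}_r=\s_r/(E_r)\simeq R_r$ and $\o{A}_\infty=R_\infty$; then the finite-level computation $\o{M}_r=M_r/(E_r)M_r$ is exactly the content of $(\ref{MrToHodgeMap})$ (which identifies it with a Hodge filtration piece, free over $R_r[\Delta/\Delta_r]$ by Theorem \ref{main}), and the flatness of $B'=R_\infty$ over $\o{A}_\infty=R_\infty$ is trivial. (Implicit in both the paper's proof of Theorem \ref{MainThmCrystal} and of this theorem is a twist by $\varphi$ on $M_r$ so that $\o{M}_r$ really is $\m_r(\G_r^\star)\otimes_{\s_r,\theta\circ\varphi}R_r$ rather than $\m_r(\G_r^\star)\otimes_{\s_r,\theta}R_r$; since $\varphi$ is an automorphism of $\s_\infty$ this costs nothing at the level of the limit.)

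Your argument can be salvaged---in fact, once one knows from Theorem \ref{MainThmCrystal} that $\m_\infty$ is free over $\Lambda_{\s_\infty}$ with the stated control property, your displayed projective-limit identity holds simply by specializing the free module along the surjection $\Lambda_{\s_\infty}\twoheadrightarrow\Lambda_{R_\infty}$---but the justification you give (flatness over $\Z_p$) is not the one that makes the cited lemma work. The cleaner route, and the one the paper takes, is to change $I_r$: use $(u_r)$ for the Dieudonn\'e specialization and $(E_r)$ for the de Rham one, so that in each case $\o{A}_\infty$ coincides with the target ring and the finite-level identification of Proposition \ref{MrToHodge} ((\ref{MrToDieudonneMap}) in the first case, (\ref{MrToHodgeMap}) in the second) directly verifies (\ref{freehyp})--(\ref{surjhyp}).
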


\begin{proof}
	To prove the first assertion, we apply Lemma \ref{Technical} 
	with $A_r=\s_r,$ $I_r=(u_r)$, $B=\s_{\infty}$, $B'=\Z_p$ (viewed as 
	a $B$-algebra via $\tau$) and $M_r=\m_r^{\star}$ for $\star\in \{\et,\mult,\Null\}$.
	Thanks to (\ref{MrToDieudonneMap}) in the case $G=\G_r$,
	we have a canonical identification $\o{M}_r:=M_r/I_rM_r  \simeq \D(\o{\G}_r^{\star})_{\Z_p}$
	that is compatible with change in $r$ in the sense that the induced projective
	system $\{\o{M}_r\}_{r}$ is identified with that of Definition \ref{DinftyDef}.
	It follows from this and 
	Theorem \ref{MainDieudonne} (\ref{MainDieudonne1})--(\ref{MainDieudonne2}) that
	the hypotheses (\ref{freehyp})--(\ref{surjhyp}) are satisfied,
	and (\ref{OrdFilSpecialize}) is an isomorphism by Lemma \ref{Technical} (\ref{CompletedBaseChange}).

	In exactly the same manner,
	the second assertion follows by appealing to 
	Lemma \ref{Technical} with $A_r=\s_r$, $I_r=(E_r)$, $B=\s_{\infty}$, $B'=R_{\infty}$
	(viewed as a $B$-algebra via $\theta\circ\varphi$)
	and $M_r=\m_r^{\star}$, using (\ref{MrToHodgeMap}) and
	Theorem \ref{main} to verify the hypotheses (\ref{freehyp})--(\ref{surjhyp}).
\end{proof}

\begin{proof}[Proof of Theorem $\ref{RecoverEtale}$ and Corollary $\ref{HidasThm}$]
	Applying Theorem \ref{comparison} to (the connected-\'etale sequence of) $\G_r$
	gives a natural isomorphism of short exact sequences 
	\begin{equation}
	\begin{gathered}
		\xymatrix{
			0 \ar[r] &{\m_r(\G_r^{\et})\tens_{\s_r,\varphi} \a_r } \ar[r]\ar[d]^-{\simeq} & 
			{\m_r(\G_r)\tens_{\s_r,\varphi} \a_r} \ar[r]\ar[d]^-{\simeq} & 
			{\m_r(\G_r^{\mult})\tens_{\s_r,\varphi} \a_r} \ar[r]\ar[d]^-{\simeq} & 0 \\
			0 \ar[r] & {H^1_{\et}(\G_r^{\et})\tens_{\Z_p} \a_r} \ar[r] &
			{H^1_{\et}(\G_r)\tens_{\Z_p} \a_r} \ar[r] & 
			{H^1_{\et}(\G_r^{\mult})\tens_{\Z_p}\a_r}\ar[r] & 0
		}
	\end{gathered}	
		\label{etalecompdiag}
	\end{equation}
	Due to Theorem \ref{MainThmCrystal}, the terms in the top row of \ref{etalecompdiag}
	are free of ranks $d'$, $2d'$, and $d'$ over $\wt{\a}_r[\Delta/\Delta_r]$, respectively,
	so we conclude from Lemma \ref{fflatfreedescent} (with $A=\Z_p[\Delta/\Delta_r]$
	and $B=\a_r[\Delta/\Delta_r]$) that $H^1_{\et}(\G_r^{\star})$ is a free $\Z_p[\Delta/\Delta_r]$-module
	of rank $d'$ for $\star=\{\et,\mult\}$ and that $H^1_{\et}(\G_r)$ is free of rank $2d'$
	over $\Z_p[\Delta/\Delta_r]$.  Using the fact that 
	$\Z_p\rightarrow \a_r$ is faithfully flat, it then follows 
	from the surjectivity of the vertical maps in (\ref{BTindLimPB}) 
	(which was noted in the proof of Theorem \ref{MainThmCrystal})
	that the canonical trace mappings $H^1_{\et}(\G_r^{\star})\rightarrow H^1_{\et}(\G_{r'}^{\star})$
	for $\star\in \{\et,\mult,\Null\}$ are surjective for all $r\ge r'$.
	Applying Lemma \ref{Technical} with $A_r=\Z_p$, $M_r:=H^1_{\et}(\G_r^{\star})$,
	$I_r=(0)$, $B=\Z_p$ and $B'=\wt{\a}$, we conclude that $H^1_{\et}(\G_{\infty}^{\star})$
	is free of rank $d'$ (respectively $2d'$) over $\Lambda$ for $\star=\et,$ $\mult$ (respectively $\star=\Null$),
	that the specialization mappings 
	\begin{equation*}
	\xymatrix{
		{H^1_{\et}(\G_{\infty}^{\star})\tens_{\Lambda} \Z_p[\Delta/\Delta_r]} \ar[r] & 
		{H^1_{\et}(\G_r^{\star})}
		}
	\end{equation*}
	are isomorphisms, and that the canonical mappings for $\star\in \{\et,\mult,\Null\}$
	\begin{equation}
		\xymatrix{
			{H^1_{\et}(\G_{\infty}^{\star})\tens_{\Lambda} \Lambda_{\wt{\a}}} \ar[r] & 
			{\varprojlim_r \left(H^1_{\et}(\G_r^{\star})\tens_{\Z_p} \wt{\a}\right)}
		}\label{etaleswitcheroo}
	\end{equation}
	are isomorphisms.  Invoking the isomorphism (\ref{limitetaleseq})
	gives Corollary \ref{HidasThm}. By Lemma \ref{Technical} with $A_r=\s_r$, $M_r=\m_r(\G_r^{\star})$,
	$I_r=(0)$, $B=\s_{\infty}$ and $B'=\wt{\a}$, we similarly conclude from (the proof of) Theorem 
	\ref{MainThmCrystal} that the canonical mappings for $\star\in \{\et,\mult,\Null\}$
	\begin{equation}
		\xymatrix{
			{\m_{\infty}^{\star}\tens_{\s_{\infty},\varphi} \Lambda_{\wt{\a}}} \ar[r] & 
			{\varprojlim_r \left(\m_r(\G_r^{\star})\tens_{\s_r} \wt{\a}\right)}
		}\label{crystalswitcheroo}
	\end{equation}
	are isomorphisms.
	Applying $\otimes_{\a_r} \wt{\a}$ to the diagram (\ref{etalecompdiag}),
	passing to inverse limits, and using the isomorphisms
	(\ref{etaleswitcheroo}) and (\ref{crystalswitcheroo}) gives (again invoking (\ref{limitetaleseq}))
	the isomorphism (\ref{FinalComparisonIsom}).  
	Using the fact that the inclusion $\Z_p\hookrightarrow \wt{\a}^{\varphi=1}$
	is an equality, the isomorphism (\ref{RecoverEtaleIsom}) follows immediately from 
	(\ref{FinalComparisonIsom}) by taking $F\otimes\varphi$-invariants.	
\end{proof}

Using Theorems \ref{RecoverEtale} and \ref{CrystalDuality} we can give a new proof of
Ohta's duality theorem \cite[Theorem 4.3.1]{OhtaEichler} for the $\Lambda$-adic ordinary 
filtration of ${e^*}'H^1_{\et}$ (see Corollary \ref{OhtaDuality}):

\begin{theorem}\label{OhtaDualityText}
	There is a canonical $\Lambda$-bilinear and perfect duality pairing	
	\begin{equation}
		\langle \cdot,\cdot\rangle_{\Lambda}: {e^*}'H^1_{\et}\times {e^*}'H^1_{\et}\rightarrow \Lambda
		\quad\text{determined by}\quad
		\langle x,y\rangle_{\Lambda} \equiv \sum_{\delta\in \Delta/\Delta_r} 
		(x , w_r {U_p^*}^r\langle\delta^{-1}\rangle^*y)_r \delta \bmod I_r
		\label{EtaleDualityPairing}
	\end{equation}
	with respect to which the action of $\H^*$ is self-adjoint; here,
	$(\cdot,\cdot)_r$ is the usual cup-product pairing on $H^1_{\et,r}$ and 
	$I_r:=\ker(\Lambda\twoheadrightarrow \Z_p[\Delta/\Delta_r])$.  
	Writing $\nu:\scrG_{\Q_p}\rightarrow \H^*$ for the character 
	$\nu:=\chi\langle\chi\rangle \lambda(\langle p\rangle_N)$, the 
	pairing $(\ref{EtaleDualityPairing})$ induces a canonical
	$\scrG_{\Q_p}$ and $\H^*$-equivariant isomorphism of exact sequences 
	\begin{equation*}
		\xymatrix{
			0 \ar[r] & {({e^*}'H^1_{\et})^{\I}(\nu)}
			\ar[d]^-{\simeq} \ar[r] & 
			{{e^*}'H^1_{\et}(\nu)}\ar[d]^-{\simeq} \ar[r] &
			{({e^*}'H^1_{\et})_{\I}(\nu)}
			\ar[d]^-{\simeq}\ar[r] & 0 \\
			0 \ar[r] & {\Hom_{\Lambda}(({e^*}'H^1_{\et})_{\I},\Lambda)} \ar[r] & 
			{\Hom_{\Lambda}({e^*}'H^1_{\et},\Lambda)} \ar[r] &
			{\Hom_{\Lambda}(({e^*}'H^1_{\et})^{\I},\Lambda)}\ar[r] & 0
		}
	\end{equation*}
\end{theorem}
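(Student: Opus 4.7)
The plan is to define the duality pairing at finite level via cup product twisted by $w_r {U_p^*}^r$, verify compatibility with the trace maps defining the tower so that Lemma \ref{LambdaDuality} upgrades it to a perfect $\Lambda$-bilinear pairing, and compute the Galois transformation law to identify the twist character as $\nu$.

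First I would introduce the cup-product pairing $(\cdot,\cdot)_r$ on $H^1_{\et}({X_r}_{\Qbar_p},\Z_p)$, which is $\Z_p(-1)$-valued so that $(\gamma x, \gamma y)_r = \chi(\gamma)^{-1}(x,y)_r$ for $\gamma\in \scrG_{\Q_p}$. After extending scalars to $K_r'$ so that $w_r$ is defined, I would set $\langle x,y\rangle_r := (x, w_r {U_p^*}^r y)_r$ on ${e^*}'H^1_{\et,r}$; this is perfect and $\H_r^*$-self-adjoint thanks to Proposition \ref{AtkinInterchange}. To apply Lemma \ref{LambdaDuality} (whose hypotheses hold by Corollary \ref{HidasThm}), I must verify the compatibility
\begin{equation*}
\langle \pr_* x, \pr_* y\rangle_{r-1} = \sum_{\delta\in \Delta_{r-1}/\Delta_r} \langle x, \langle\delta^{-1}\rangle^* y\rangle_r.
\end{equation*}
This is a direct analog of the computation in the proof of Proposition \ref{dRDuality}: using adjointness of $\pr^*$ and $\pr_*$ under cup product, the identity $\pr\circ w_r = w_{r-1}\circ\ps$ of Proposition \ref{ALinv}, and the \'etale-cohomological analog of (\ref{PicAlbRelation})---namely $\ps^*\circ \pr_* = U_p^*\sum_\delta \langle\delta^{-1}\rangle^*$, derivable from Lemma \ref{MFtraceLem} via Lemma \ref{LieFactorization} together with the canonical identification of $H^1_{\et}(X_{r,\Qbar_p},\Z_p)$ with $H^1_{\et}({J_r}_{\Qbar_p},\Z_p)$. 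Lemma \ref{LambdaDuality} then produces the perfect $\Lambda$-bilinear pairing $\langle\cdot,\cdot\rangle_\Lambda$ characterized by (\ref{EtaleDualityPairing}), with $\H^*$-self-adjointness inherited from the finite-level pairings.

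Next I would determine the Galois transformation law. Interpreting the relation $(\gamma^* w_r)\circ \gamma = \gamma\circ w_r\circ \langle\chi(\gamma)\rangle^{-1}\langle a(\gamma)\rangle_N^{-1}$ from Proposition \ref{ALinv} as the identity $w_r\circ\gamma = \gamma\circ w_r\circ \langle\chi(\gamma)\rangle^{-1}\langle a(\gamma)\rangle_N^{-1}$ on ${e^*}'H^1_{\et,r}$ (by pullback to $\Qbar_p$, where $\gamma^*w_r$ is identified with $w_r$), I would compute
\begin{align*}
\langle \gamma x, \gamma y\rangle_r &= (\gamma x, w_r{U_p^*}^r\gamma y)_r = (\gamma x, \gamma w_r\langle\chi(\gamma)\rangle^{-1}\langle a(\gamma)\rangle_N^{-1}{U_p^*}^r y)_r \\
&= \chi(\gamma)^{-1}\langle\chi(\gamma)\rangle^{-1}\langle a(\gamma)\rangle_N^{-1}\langle x,y\rangle_r,
\end{align*}
using the $\chi^{-1}$-equivariance of cup product and $\H^*$-self-adjointness. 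Since $p\nmid N$, the character $\langle a\rangle_N$ on $\scrG_{\Q_p}$ is unramified with Frobenius acting as $\langle p\rangle_N$, hence equal to $\lambda(\langle p\rangle_N)$ as $\H^*$-valued characters. Passing to the $\Lambda$-adic limit therefore gives $\langle \gamma x, \gamma y\rangle_\Lambda = \nu(\gamma)^{-1}\langle x,y\rangle_\Lambda$ with $\nu = \chi\langle\chi\rangle\lambda(\langle p\rangle_N)$ as required. This cocycle identity yields the $\scrG_{\Q_p}$ and $\H^*$-equivariant isomorphism ${e^*}'H^1_{\et}(\nu)\simeq \Hom_\Lambda({e^*}'H^1_{\et},\Lambda)$. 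Compatibility with the ordinary filtration follows because $\scrG_{\Q_p}$-equivariance of the pairing (up to $\nu$) forces the orthogonal complement of the sub $({e^*}'H^1_{\et})^{\I}$ to equal itself after twisting, so dualizing the short exact sequence of $\I$-invariants and covariants from the proof of Corollary \ref{HidasThm} yields the outer columns of the displayed diagram.

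The main obstacle will be the Galois computation: carefully translating the semilinear identities of Proposition \ref{ALinv}, originally formulated for the descent data on $\X_r/R_r$, into linear identities on $H^1_{\et,r}$ with its natural $\scrG_{\Q_p}$-action, and then verifying that the combined twist arising from the cup product's $\chi^{-1}$-equivariance together with the $w_r$--$\gamma$ commutation relation precisely reconstructs Ohta's character $\nu$---in particular identifying the unramified factor $\langle a\rangle_N$ that emerges from $w_r$ with the unramified character $\lambda(\langle p\rangle_N)$ appearing in the statement.
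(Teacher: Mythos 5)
Your proposal is correct and takes essentially the same approach as the paper, which simply states that the proof mirrors that of Proposition \ref{dRDuality}, using Corollary \ref{HidasThm} to supply freeness and control of ${e^*}'H^1_{\et}$ and Lemma \ref{LambdaDuality} to upgrade the finite-level twisted cup-product pairings to a perfect $\Lambda$-bilinear pairing. The paper also notes an alternative route via Theorem \ref{CrystalDuality} and isomorphism (\ref{RecoverEtaleIsom}), but you follow the primary one. Two minor remarks: the phrase ``extending scalars to $K_r'$'' is slightly misleading in the \'etale setting since $w_r$ (a correspondence defined over $K_r'$) already acts on $H^1_{\et}({X_r}_{\Qbar_p},\Z_p)$ without any change of coefficients --- the subtlety is merely that it fails to commute with $\scrG_{\Q_p}$, which is precisely what your Galois computation quantifies; and in checking the compatibility (\ref{pairingchangeinr}) one should be careful to track the $\langle p\rangle_N$ twist in the relation $\ps w_{r+1}=\langle p\rangle_N w_r\pr$ from Proposition \ref{ALinv}, since it interacts with the adjoint-diamond conventions $\langle u\rangle^*=\langle u^{-1}\rangle$ --- but this is bookkeeping the paper also elides.
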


\begin{proof}
	The proof is similar to that of Proposition \ref{dRDuality}, using 
	Corollary \ref{HidasThm} and applying Lemma \ref{LambdaDuality} ({\em cf.} 
	the proof of \cite[Theorem 4.3.1]{OhtaEichler} and of \cite[Proposition 4.4]{SharifiConj}).    
	Alternatively, one can prove Theorem \ref{OhtaDualityText} by appealing to Theorem \ref{CrystalDuality}
	and isomorphism (\ref{RecoverEtaleIsom}) of Theorem \ref{RecoverEtale}.
\end{proof}

\begin{proof}[Proof of Theorem $\ref{SplittingCriterion}$]
	Suppose first that (\ref{DieudonneLimitFil}) admits a $\Lambda_{\s_{\infty}}$-linear
	splitting $\m_{\infty}^{\mult}\rightarrow \m_{\infty}$
	which is compatible with $F$, $V$, and $\Gamma$.
	Extending scalars along $\Lambda \rightarrow \Lambda_{\wt{\a}}\xrightarrow{\varphi}\Lambda_{\wt{\a}}$ 
	and taking $F\otimes\varphi$-invariants
	yields, by Theorem \ref{RecoverEtale}, a $\Lambda$-linear and $\scrG_{\Q_p}$-equivariant map
	$({e^*}'H^1_{\et})_{\I}\rightarrow {e^*}'H^1_{\et}$
	whose composition with the canonical projection 
	${e^*}'H^1_{\et}\twoheadrightarrow ({e^*}'H^1_{\et})_{\I}$
	is necessarily the identity.
	
	Conversely, suppose that the ordinary filtration of ${e^*}'H^1_{\et}$ is $\Lambda$-linearly
	and $\scrG_{\Q_p}$-equivariantly split.  Applying $\otimes_{\Lambda} \Z_p[\Delta/\Delta_r]$
	to this splitting gives, thanks to Corollary \ref{HidasThm} and the isomorphism
	(\ref{inertialinvariantsseq}), a $\Z_p[\scrG_{\Q_p}]$-linear splitting of
	\begin{equation*}
		\xymatrix{
			0 \ar[r] & {T_pG_r^{\mult}} \ar[r] & {T_pG_r} \ar[r] & {T_pG_r^{\et}}\ar[r] & 0
		}
	\end{equation*}
	which is compatible with change in $r$ by construction.
	By $\Gamma$-descent and Tate's theorem, there is a natural isomorphism
	\begin{equation*}
			{\Hom_{\pdiv_{R_r}^{\Gamma}}(\G_r^{\et},\G_r)}\simeq {\Hom_{\Z_p[\scrG_{\Q_p}]}(T_pG_r^{\et},T_pG_r)}
	\end{equation*}
	and we conclude that the connected-\'etale sequence of $\G_r$ is split (in the category 
	$\pdiv_{R_r}^{\Gamma}$), compatibly with change in $r$.  Due to the functoriality
	of $\m_r(\cdot)$, this in turn implies that
	the top row of (\ref{BTindLim}) is split in $\BT_{\s_r}^{\Gamma}$,
	compatibly with change in $r$, which is easily seen to imply the splitting of (\ref{DieudonneLimitFil}).
\end{proof}

\bibliographystyle{amsalpha_noMR}
\bibliography{mybib}
\end{document}